\font\teneufm=eufm10 \font\seveneufm=eufm7
\font\fiveeufm=eufm5
\def\frak#1{{\fam\eufmfam\relax#1}}
\let\goth\mathfrak
\def\zz{\frak z}
\def\gg{\goth g}
\def\ee{\frak e}
\def\GG{\frak G}
\def\HH{\frak H}
\def\gg{\goth g}
\def\gh{\goth h}
\def\gt{\goth t}
\def\gG{\goth G}
\def\gP{\goth P}
\def\gL{\goth L}
\def\gH{\goth H}
\def\gY{\goth Y}
\def\gF{\goth F}
\def\gX{\goth X}
\def\gT{\goth T}
\def\gZ{\goth Z}
\def\gg{\goth g}
\def\gz{\goth z}
\def\sl{\goth{s}\goth{l}}
\def\1{\mbox{\bf 1}}
\def\IP{\Bbb P}
\def\cH{\hbox{\v H}}
\def\rad{\mathop{\rm rad}\nolimits}
\def\corad{\mathop{\rm corad}\nolimits}
 \DeclareMathOperator{\Hom}{Hom}
\DeclareMathOperator{\Aut}{Aut} \DeclareMathOperator{\Int}{Int}
\DeclareMathOperator{\Out}{Out}
\DeclareMathOperator{\Ad}{Ad} \DeclareMathOperator{\Spin}{\bf Spin}
\DeclareMathOperator{\SO}{\bf SO} 
\DeclareMathOperator{\PGL}{\bf PGL}
 \DeclareMathOperator{\GL}{\bf GL} 
\DeclareMathOperator{\SL}{\bf SL}
\newtheorem{theorem}{Theorem}[section]
\newtheorem{claim}[theorem]{Claim}
\newtheorem{corollary}[theorem]{Corollary}
\newtheorem{lemma}[theorem]{Lemma}
\newtheorem{proposition}[theorem]{Proposition}
\theoremstyle{definition}
\newtheorem{remark}[theorem]{Remark}
\newtheorem{example}[theorem]{Example}
\newtheorem{examples}[theorem]{Examples}
\newtheorem{definition}[theorem]{Definition}
\numberwithin{equation}{section}
\def\Z{\Bbb Z}
\def\C{\Bbb C}
\def\Q{\Bbb Q}
\def\G{\mathbb G}
\def\C{\mathbb C}
\def\R{\mathbb R}
\def\bA{\text{\rm \bf A}}
\def\bB{\text{\rm \bf B}}
\def\bC{\text{\rm \bf C}}
\def\bD{\text{\rm \bf D}}
\def\bE{\goth E}
\def\bbE{\text{\rm \bf E}}
\def\bF{\text{\rm \bf F}}
\def\bH{\text{\rm \bf H}}
\def\bM{\text{\rm \bf M}}
\def\bG{\text{\rm \bf G}}
\def\bN{\text{\rm \bf N}}
\def\bU{\text{\rm \bf U}}
\def\bW{\text{\rm \bf W}}
\def\Mor{\text{\rm Mor}}
\def\bZ{\text{\rm \bf Z}}
\def\bV{\text{\rm \bf V}}
\def\bO{\text{\rm \bf O}}
\def\pr{\prime}
\def\lra{\longrightarrow}
\def\bX{\text{\rm \bf X}}
\def\CL{{\cal L}}
\def\cL{{\cal L}}
\def\rAut{\text{\rm Aut}}
\def\fet{\text{\it f\'et}}
\def\bGL{\text{\rm \bf GL}}
\def\bPGL{\text{\rm \bf PGL}}
\def\PSO{\text{\rm \bf PSO}}
\def\bT{\text{\rm \bf T}}
\def\bS{\text{\rm \bf S}}
\def\bL{\text{\rm \bf L}}
\def\bQ{\text{\rm \bf Q}}
\def\bP{\text{\rm \bf P}}
\def\bY{\text{\rm \bf Y}}
\def\P{\mathbb P}
\def\be{\mathbf e}
\def\bg{{\bf g}}
\def\bs{{\pmb\sigma}}
\def\bx{{\pmb x}}
\def\by{{\pmb y}}
\def\bmu{{\pmb\mu }}
\def\bnu{{\pmb\nu }}
\def\wh{\widehat}
\def\wt{\tilde}
\def\us{\underset}
\def\os{\overset}
\def\ol{\overline}
\def\id{\text{\rm id}}
\def\et{\text{\rm \'et}}
\def\q{\quad}
\def\2int{\mathop{2\int}\nolimits}
\def\rank{\mathop{\rm rank}\nolimits}
\def\dim{\mathop{\rm dim}\nolimits}
\def\Spec{\mathop{\rm Spec}\nolimits}
\def\Hom{\mathop{\rm Hom}\nolimits}
\def\Stab{\mathop{\rm Stab}\nolimits}
\def\Gal{\mathop{\rm Gal}\nolimits}
\def\Int{\mathop{\rm Int}\nolimits}
\def\Pic{\mathop{\rm Pic}\nolimits}
\def\Br{\mathop{\rm Br}\nolimits}
\def\Aut{\text{\rm{Aut}}}
\def\Out{\text{\rm{Out}}}
\def\bAut{\text{\bf{Aut}}}
\def\bOut{\text{\bf{Out}}}
\def\Int{\mathop{\rm Int}\nolimits}
\def\resp.{\mathop{\rm resp.}\nolimits}
\def\limproj{\mathop{\oalign{lim\cr
\hidewidth$\longleftarrow$\hidewidth\cr}}}
\def\limind{\mathop{\oalign{lim\cr
\hidewidth$\longrightarrow$\hidewidth\cr}}}
\def\Im{\mathop{\rm Im}\nolimits}
\def\lgr{\longrightarrow}
\font\math=cmmi10
\def\varpi{\hbox{\math\char'44}}
\def\simlgr{\buildrel\sim\over\lgr}
\def\pa{\S\kern.15em }
\def\un{\uppercase\expandafter{\romannumeral 1}}
\def\deux{\uppercase\expandafter{\romannumeral 2}}
\def\trois{\uppercase\expandafter{\romannumeral 3}}
\def\quatre{\uppercase\expandafter{\romannumeral 4}}
\def\cinq{\uppercase\expandafter{\romannumeral 5}}
\def\six{\uppercase\expandafter{\romannumeral 6}}
\def\gg{\goth g}
\def\et{\acute et}
\def\hrf{\hbox to .2in{\hrulefill}}
\def\hfl#1#2#3{\smash{\mathop{\hbox to#3{\rightarrowfill}}\limits
^{\scriptstyle#1}_{\scriptstyle#2}}}
\def\gfl#1#2#3{\smash{\mathop{\hbox to#3{\leftarrowfill}}\limits
^{\scriptstyle#1}_{\scriptstyle#2}}}
\def\kalg{k\text{--}alg}
\begin{document}

\title{Torsors, Reductive Group Schemes and Extended Affine Lie Algebras}

\author{Philippe Gille$^{\rm 1}$ and Arturo Pianzola$^{\rm 2,3}$}
\date{}
 \maketitle
$^{\rm 1}${\it UMR 8553 du CNRS, Ecole Normale Sup\'erieure, 45 rue
d'Ulm, 75005 Paris, France. } 

$^{\rm 2}${\it Department of Mathematical Sciences, University of
Alberta, Edmonton, Alberta T6G 2G1, Canada.}

$^{\rm 3}${\it Centro de Altos Estudios en Ciencia Exactas, Avenida de Mayo 866, (1084) Buenos Aires, Argentina.}
 \maketitle 
 \begin{abstract}
 \noindent We give a detailed description of the torsors that correspond to multiloop algebras. These algebras are twisted forms of simple Lie algebras extended over Laurent polynomial rings. They play a crucial role in the construction of Extended Affine Lie Algebras (which are higher nullity analogues of the affine  Kac-Moody Lie algebras). The torsor approach that we take draws heavily for the theory of reductive group schemes developed by M. Demazure and A. Grothendieck. It also allows us to find a bridge between multiloop algebras and the  work of F. Bruhat and J. Tits on reductive groups over complete local fields.\\
 
\noindent {\em Keywords:} Reductive group scheme, torsor, multiloop algebra. Extended Affine Lie Algebras.  \\

\noindent {\em MSC 2000} 17B67, 11E72, 14L30, 14E20.
\end{abstract}
{\small
\tableofcontents }

\vskip14mm

\section{Introduction}

$\,\,\,\,\,\,\,\,\, \,\,\,\,\,\,\,\,\,\,\,\,\,\,\,\,\,\,\,\,\, \,\,\,\,\,\,\,\,\, \,\,\,\,\,\,\,\,\,\,\,\,\,\,\,\,\,\,\,\,\,\,\,\,\,\,\,\,\,\,\,\,\, \,\,\,\,\,\,\,\,\,\,\,\,\,\,\,\,\,\,\,\,\,\,\,\,\,\,\,\,\,\,\,\, \, \text{\it To our good friend Benedictus Margaux}$

\bigskip

Many interesting  infinite
dimensional Lie algebras can be thought as being ``finite
dimensional'' when viewed, not as algebras over the given base
field, but rather as algebras over their centroids. From this point
of view, the algebras in question look like ``twisted forms" of simpler
objects with which one is familiar. The quintessential example of
this type of behaviour is given by the affine Kac-Moody Lie
algebras. Indeed the algebras that we are most interested in, Extended Affine Lie Algebras (or EALAs for short), can roughly  be thought of as higher nullity analogues of the affine Kac-Moody Lie algebras. Once  the twisted form point of view is taken the theory of reductive group schemes developed by Demazure and Grothendieck [SGA3] arises naturally.

Two key concepts which are common to \cite{GP2} and the present work are those  of a {\it twisted form of an algebra,} and of a {\it multiloop algebra}. At this point we briefly recall what these objects are, not only for future reference, but also to help us redact a more comprehensive Introduction. 
\smallskip

\centerline{***}

\smallskip

Unless specific mention to the contrary throughout this paper $k$ will denote a field of characteristic $0, $ and $\overline k$ a fixed algebraic closure of $k.$ We denote $\kalg$ the category of associative unital commutative $k$--algebras, and $R$  object of $\kalg.$
Let $n \geq 0$ and $m >0$ be integers that we assume are fixed in our discussion. Consider the Laurent polynomial rings $R = R_n= k[t^{\pm 1}_1,\dots,t^{\pm
1}_n]$  and  $R'  = R_{n, m} =  k[t^{\pm
\frac{1}{m}}_1,\dots,t^{\pm \frac{1}{m}}_n].$ For convenience we also consider the direct limit $R'_{\infty}  ={\limind} R_{n, m}$ taken over $m$ which in practice will allow us to ``see" all the $R_{n,m}$ at the same time. The natural map $R\to R^\pr$ is not only faithfully flat but also
\'etale. If $k$ is algebraically closed this extension is Galois and plays a crucial role in the study of multiloop algebras. The explicit description of $\Gal(R'/R)$  is given below.

Let $A$ be a $k$--algebra. We are in general interested in understanding forms (for the $fppf$-topology) of the algebra $A \otimes_k R,$ namely algebras $\cL$ over $R$ such that 
\begin{equation}\label{trivialize}
\cL \otimes _R S\simeq A\otimes _k S \simeq (A\otimes _k
R)\otimes _R S
\end{equation}
for some  faithfully flat and finitely presented  extension $S/R.$ The case which is of most interest to us is when $S$ can be taken to be a Galois extension $R'$ of $R$ of Laurent polynomial algebras described above.\footnote{The Isotriviality Theorem of \cite{GP1} and \cite{GP3} shows that this assumption is superflous if $\bAut(A)$ is a an algebraic $k$--group whose connected component is reductive,  for example if $A$ is a finite dimensional simple Lie algebra.} 

Given a form $\cL$
as above for which (\ref{trivialize}) holds, we say that $\cL$ is
{\it trivialized} by $S.$  The $R$--isomorphism classes of such
algebras can be computed by means of cocycles, just as one does in
Galois cohomology:
\begin{equation}\label{correspondence}
 \text{\it Isomorphism classes of}\, S/R\text{\it --forms of}\; A \otimes_k R
\longleftrightarrow H_{fppf}^1\big(S/R,\bAut(A)\big).
\end{equation}
The right hand side is the part ``trivialized by $S$" of the pointed set of non-abelian cohomology  on the flat site of $\Spec (R)$ with coefficients in the sheaf of groups $\bAut (A).$ In the case when $S$ is Galois over $R$ we can indeed identify  $H_{fppf}^1\big(S/R,\bAut(A)\big)$ with the ``usual" Galois cohomology set  $H^1\big(\Gal(S/R),\bAut(A)(S)\big)$ as in [Se].

Assume now that $k$ is algebraically closed and fix a compatible set of primitive 
$m$--th roots of unity $\xi_m ,$ namely such that  $\xi _{me} ^e = \xi_m
$ for all $e > 0.$ We can then identify $\Gal(R'/R)$ with ${(\Z/m\Z )}^n$ where for each
 ${\bf e} = (e_1,\dots ,e_n)\in \Z^n$ the corresponding element $\ol{\bf e} = (\ol e_1,\cdots,\ol e_n) \in \Gal(R'/R)$  acts on $R'$
via $
^{\ol {\bf e}} t^{\frac{1}{m}}_i = \xi  ^{e_i}_{m}
t^{\frac{1}{m}}_i.$
\smallskip

 The primary example of  forms  $\cL$ of $A \otimes_k R$ which are trivialized by a Galois extension $R'/R$ as above are the multiloop algebras based on $A.$ These are defined as follows. Consider an
$n$--tuple $\bs = (\sigma  _1,\dots,\sigma  _n)$ of commuting
elements of ${\rm Aut}_k(A)$ satisfying $\sigma  ^{m}_i = 1.$  For
each $n$--tuple $(i_1,\dots ,i_n)\in \Z^n$ we consider the
simultaneous eigenspace
$
A_{i_1 \dots i_n} =\{x\in A:\sigma  _j(x) = \xi  ^{i_j}_{m} x \,
\, \text{\rm for all} \,\, 1\le j\le n\}.
$
Then $A = \sum A_{i_1 \dots i_n}, $ and $A = \bigoplus A_{i_1 \dots
i_n}$ if we restrict the sum to those $n$--tuples $(i_1,\dots ,i_n)$
for which  $0 \leq i_j < m_j.$

The  multiloop algebra corresponding to $\bs$, commonly denoted by
$L(A,\bs),$ is defined by
$$
L(A,\bs) = \us{(i_1,\dots ,i_n)\in \Z^n}\oplus\,
A_{i_1\dots i_n} \otimes t^{\frac{i_1}{m}} \dots
t^{\frac{i_n}{m}}_n \subset A\otimes _k R' \subset A \otimes_k R'_{\infty}
$$
Note that $L(A,\bs),$ which does not depend on the choice of common period $m,$ is not only a $k$--algebra (in general infinite-dimensional), but also naturally an $R$--algebra. It is when $L(A,\bs)$ is viewed as an $R$--algebra that Galois cohomology and the theory of torsors enter into the picture. Indeed a rather simple calculation shows that 
$$ L(A,\bs) \otimes _R R'\simeq A \otimes _k R' \simeq (A\otimes _k
R)\otimes _R R'.$$
 Thus $L(A,\bs)$ corresponds to a torsor over $\Spec(R)$ under $\bAut(A).$  

When $n =1$ multiloop algebras are called simply loop algebras.  To illustrate our methods, let us look at the case of (twisted) loop
algebras  as
 they appear in the theory of affine Kac-Moody Lie algebras. Here $n =1,$ $k = \C$ and $A = \gg$ is a  finite-dimensional simple Lie algebra. Any such  $\cL$  is
 naturally a Lie algebra over $R:=\C [t^{\pm 1}]$
and
$ \cL \otimes _R S\simeq \gg \otimes _\C S \simeq (\gg\otimes _\C
R)\otimes _R S $
for some (unique)  $\gg,$ and
some finite \'etale extension $S/R.$  In particular,
$\cL$ is an $S/R$--form of the $R$--algebra $\gg\otimes_\C R$, with
respect to the \'etale topology of $\Spec(R)$.  Thus $\cL$
corresponds to a torsor over $\Spec(R)$ under $\bAut(\gg)$ whose
isomorphism class is an element of the pointed set $H^1_{\text{
\'et}}\big(R,\bAut(\gg)\big).$ We may in fact take $S$ to be $R' = \C[t^{\pm
\frac{1}{m}}].$

Assume that $A$ is a finite-dimensional. The crucial point in the
classification of forms of $A\otimes_k R$ by cohomological methods
is  the exact sequence of pointed sets
\begin{equation}\label{exact}
H_{\et}^1 \big(R,\bAut^0(A)\big) \to H_{\et}^1\big(R,\bAut
(A)\big) \os\psi  \longrightarrow H_{\et}^1\big(R,\bOut(A)\big),
\end{equation}
 where $\bOut(A)$ is
the (finite constant) group of connected components of the algebraic $k$--group $\bAut(A).$\footnote{Strictly speaking we should be using the affine $R$--group scheme $\bAut(A \otimes_k R)$ instead of the algebraic $k$--group $\bAut(A).$ This harmless and useful abuse of notation will be used throughout the paper.}

 Grothendieck's theory of the algebraic fundamental group allows us
to identify  $H_ {\et}^1\big(R,\bOut(A)\big)$ with the set of
conjugacy classes of $n$--tuples of commuting elements of the corresponding finite (abstract) group
$\Out(A)$ (again under the assumption that $k$ is algebraically closed). This is an important cohomological invariant attached to any twisted form of $A \otimes_k R.$   
We point out that the cohomological information is always about the twisted forms viewed as algebras over $R$ (and {\it not} $k$). In practice, as the affine Kac-Moody case illustrates, one is interested in understanding  these algebras as objects over $k$  (and {\it not} $R$). A technical tool (the centroid trick) developed and used in \cite{ABP2} and \cite{GP2}  allows us to compare $k$ vs $R$ information.

We begin by looking at the nullity $n = 1 $ case. The map
$\psi$ of (\ref {exact}) is injective [P1]. This fundamental fact follows from a general result about the vanishing of $H^1$ for reductive group schemes over certain Dedekind rings which includes $k[t^{\pm 1}].$ This result can be thought of as an analogue of ``Serre Conjecture I" for some very special  rings of dimension $1$. 
It follows from what has been said that we can attach a
conjugacy class of the finite group $\Out(A)$ that characterizes
$\cL$ up to $R$--isomorphism. In particular, if $\bAut(A)$ is
connected, then all forms (and consequently, all twisted loop
algebras) of $A$ are {\it trivial}, i.e. isomorphic to
$A\otimes_k R$ as $R$--algebras. This yields  the classification of the affine Kac-Moody Lie algebras by purely cohomological methods. One can in fact {\it define} the affine algebras by such methods (which is a completely different approach than the classical definition by generators and relations).

Surprisingly enough the analogue of ``Serre Conjecture II" for $k[t^{\pm 1}_1, t^{\pm1}_2]$ fails, as explained in \cite{GP2}.  The single family of counterexamples known are the the so-called Margaux algebras.  The classification of forms in nullity 2 case is in fact quite interesting and challenging. Unlike the nullity one case there are forms which are not multiloop algebras (the Margaux algebra is one such example). The classification in nullity $2$  by cohomological methods, both over $R$ and over $k,$  will be given in $\S9$ as an application of one of our main results (the Acyclicity Theorem). This classification (over $k$ but not over $R$) can also be attained entirely by EALA methods \cite{ABP3}. The two approaches complement each other and are  the culmination of a project started a decade ago. We also provide  classification results for  loop Azumaya algebras in \S13.

Questions related to the classification and characterization of EALAs in arbitrary nullity are at the heart of our work. In this situation $A = \gg$ is a finite dimensional simple Lie algebra over $k.$ The twisted forms relevant to EALA theory are always multiloop algebras based on $\gg$ \cite {ABFP}.  It is therefore desirable to try to characterize and understand the part of $H_ {\et}^1 \big(R,\bAut(\gg)\big)$ corresponding to multiloop algebras.
We address this problem by introducing the concept of {\it loop} and {\it toral} torsors (with $k$ not necessarily algebraically closed). These concepts are key ideas within our work. It is easy to show using a theorem of Borel and Mostow that  a multiloop algebra based on $\gg,$ viewed as a Lie algebra over $R_n$, always admits a Cartan subalgebra (in the sense of \cite{SGA3}). We establish that the converse also holds.

Central to our work is the study of the canonical map 
\begin{equation} \label{RtoF}
H_ {\et} ^1 \big(R_n,\bAut(\gg)\big) \to H_ {\et} ^1 \big(F_n,\bAut(\gg)\big)
\end{equation}
where $F_n$ stands for the iterated Laurent series field $k((t_1)) \dots ((t_n)).$ The Acyclicity Theorem proved in \S8  shows that the restriction of the canonical map (\ref{RtoF}) to the subset  $H_{loop}^1 \big(R_n,\bAut(\gg)\big) \subset H_ {\text{\rm
\'et}} ^1 \big(F_n,\bAut(\gg)\big)$ of classes of loop torsors is bijective. This has strong  applications to the classification of EALAs. Indeed $H_{\et}^1 \big(F_n,\bAut(\gg)\big)$ can be studied using Tits' methods for algebraic groups over complete local fields. In  particular EALAs can be naturally attached Tits indices and diagrams, combinatorial root data and relative and absolute types. These are important invariants which are extremely useful for classification purposes. Setting any applications aside, and perhaps more importantly,  we believe that  the theory and methods that we are putting forward display an intrinsic beauty, and show just how powerful the methods developed in \cite{SGA3} really are.
\medskip

\noindent{\bf Acknowledgement} The authors would like to thank M. Brion, V. Chernousov and the referee for their valuable comments.

\medskip

\section{Generalities on the algebraic fundamental group,
torsors, and reductive group schemes}

Throughout this section $\gX$ will denote a scheme, and $\gG$ a group scheme over $\gX.$  
\subsection{The fundamental group}\label{subfundamentalgroup}

Assume that $\gX$ is connected and locally noetherian.  Fix a
geometric point $a$ of $\gX$ i.e. a morphism $a:\;\Spec(\Omega) \to \gX$
where $\Omega   $ is an algebraically closed field.

Let $\gX_{\fet}$ be the category of finite \'etale covers of $\gX,$ and
$F$ the covariant functor from $\gX_{\fet}$ to the category of finite
sets given by
$$
F(\gX^\pr) =\{\text{\rm geometric points of}\; \gX^\pr\;\text{\rm
above}\; a\}.
$$
That is, $F(\gX')$ consists of all morphisms $a^\pr : \;\Spec\,(\Omega
) \to \gX^\pr$ for which the diagram
$$
\begin{matrix}
{} &{} &\gX^\pr\\
&\os {a^\pr}\nearrow &\downarrow\\
\Spec\, (\Omega  ) &\us a\rightarrow &\gX
\end{matrix}
$$
commutes. The group of automorphism  of the functor $F$ is called
the {\it algebraic fundamental group of} $\gX$ {\it at} $a,$ and  is
denoted by $\pi_1(\gX,a).$ If $\gX = \Spec(R)$ is affine, then $a$ corresponds to a ring homomorphism $R \to \Omega$  and we will denote the fundamental group by $\pi_1(R, a).$

The functor $F$ is pro-representable: There exists a directed set
$I,$ objects $(\gX_i)_{i\in I} $ of $\gX_{\fet},$ surjective morphisms
$\varphi  _{ij} \in \,\Hom_\gX(\gX_j,\gX_i)$ for $i\le j$ and geometric
points $a_i\in F(\gX_i)$ such that
\begin{equation}\label{FG1}
  a_i =\varphi  _{ij} \circ
a_j
\end{equation}
\begin{equation} \label{FG2}
 \text{\it The canonical map}\;  f:\limind\; \Hom_\gX(\gX_i,\gX^\pr)\to F(\gX^\pr) \,\, \text{\it is bijective,}
\end{equation}
where the map $f$ of (\ref{FG2}) is as follows:  Given
$\varphi :\gX_i\to \gX^\pr$ then $f(\varphi  ) = F(\varphi  )(a_i).$
 The elements of
$\limind\; \Hom_\gX(\gX_i,\gX^\pr)$ appearing in (\ref{FG2}) are by
definition the morphisms  in the category of pro-objects over $\gX$
(see \cite{EGAIV} \S8.13 for details). It is in this sense that
$\limind\;\Hom(\gX_i,-)$ pro-represents $F.$

Since the $\gX_i$ are finite and \'etale over $\gX$ the morphisms
$\varphi _{ij}$ are affine. Thus the inverse limit
$$
\gX^{sc} = \limproj\;\gX_i
$$
exist in the category of schemes over $\gX$   \cite{EGAIV} \S8.2.  For any
scheme $\gX^\pr$ over $\gX$ we thus have a canonical map
\begin{equation}\label{natural}
\Hom_{Pro-\gX}(\gX^{sc},\gX^\pr)  \buildrel {\rm def} \over = \limind\;\Hom_\gX(\gX_i,\gX^\pr) \simeq F(\gX') \to \;\Hom_\gX(\gX^{sc},\gX^\pr)
\end{equation}
obtained by considering the canonical morphisms $\varphi  _i: \gX^{\rm
sc}\to \gX_i.$

\begin{proposition}\label{representability} Assume $\gX$ is noetherian. Then  $F$ is represented
by $\gX^{sc};$ that is, there exists a
 bijection
$$
F(\gX^\pr) \simeq \Hom_\gX(\gX^{sc},\gX^\pr)
$$
which is functorial on the objects $\gX^\pr$ of $\gX_{\fet}.$
\end{proposition}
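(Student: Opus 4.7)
The plan is to show that the natural map
$$
\alpha\colon \limind \Hom_\gX(\gX_i,\gX') \to \Hom_\gX(\gX^{sc},\gX'),
$$
which sends a class represented by $\varphi\colon \gX_i \to \gX'$ to $\varphi\circ\pi_i$ (where $\pi_i\colon \gX^{sc}\to \gX_i$ is the canonical projection coming from the inverse limit), is a bijection for every $\gX'$ in $\gX_{\fet}$. Composing with the bijection $F(\gX')\simeq \limind \Hom_\gX(\gX_i,\gX')$ provided by (FG2) then produces the desired identification $F(\gX')\simeq \Hom_\gX(\gX^{sc},\gX')$, and functoriality in $\gX'$ is transparent from the construction.

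The key ingredient is that $\Hom_\gX(-,\gX')$ commutes with filtered inverse limits along affine transition maps whenever $\gX'$ is of finite presentation over $\gX$. In our setting, each $\gX_i$ is finite \'etale over $\gX$, so the transition morphisms $\varphi_{ij}$ are finite and, in particular, affine. This is exactly what ensures the existence of $\gX^{sc} = \limproj \gX_i$ in the category of schemes over $\gX$, as recalled in the excerpt. Since $\gX$ is noetherian and $\gX'\to \gX$ is finite \'etale, $\gX'$ is of finite presentation over $\gX$. The standard limit theorem of \cite{EGAIV}, \S 8 then yields
$$
\Hom_\gX(\limproj \gX_i, \gX') \;=\; \limind \Hom_\gX(\gX_i, \gX'),
$$
which is precisely the bijectivity of $\alpha$.

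Unpacking the two halves: surjectivity is the substantive part and asserts that any $\gX$-morphism $f\colon \gX^{sc}\to \gX'$ factors through some finite stage $\gX_i$, because the finitely many equations cutting out $\gX'$ \'etale-locally over $\gX$ descend to a finite level of the system. Injectivity is the routine statement that if two morphisms $\gX_i\rightrightarrows \gX'$ become equal after pulling back to $\gX^{sc}$, then they already agree at some later stage of the directed system, the equalizer once again being determined by finitely presented data. The only real obstacle I anticipate is bookkeeping: matching the precise formulation in \cite{EGAIV} to the present setup, verifying that $\gX^{sc}$ indeed falls within the scope of the cited limit theorem, and checking that ``finite \'etale over a noetherian base'' delivers the finite presentation hypothesis needed for $\gX'$. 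Once these hypotheses are lined up, the proof becomes entirely formal.
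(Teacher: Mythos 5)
Your proposal is correct and takes essentially the same route as the paper: both reduce the claim to the bijectivity of the canonical map $\limind\Hom_\gX(\gX_i,\gX')\to\Hom_\gX(\gX^{sc},\gX')$, invoke the limit theorem of \cite{EGAIV} \S 8.13 (the paper cites prop.\ 8.13.1 specifically) using that the $\gX_i$ are noetherian (hence quasicompact and quasiseparated) and $\gX'$ is of finite presentation over $\gX$, and then conclude via (\ref{FG2}).
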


\begin{proof} Because the $\gX_i$ are affine over $\gX$ and $\gX$ is noetherian,
each $\gX_i$ is noetherian; in particular, quasicompact and
quasiseparated.
 Thus, for $\gX^\pr/\gX$ locally of finite presentation, in particular for $\gX'$ in $\gX_{\fet}$,  the map
(\ref{natural}) is bijective \cite[prop 8.13.1]{EGAIV}. The
Proposition now follows from (\ref{FG2}).
\end{proof}

\begin{remark} The bijection of Proposition \ref{representability}  could
be thought along the same lines as those of (\ref{FG2})  by
considering the ``geometric point'' $a^{ sc}\in \limproj F(\gX_i)$
satisfying $a^{sc}\mapsto a_i$ for all $i\in I.$
\end{remark}

In computing $\gX^{ sc} = \limproj \gX_i$ we may replace $(\gX_i)_{i\in
I}$ by any cofinal family.  This allows us to assume that the $\gX_i$
are (connected) Galois, i.e. the $\gX_i$ are connected and the (left)
action of $\text{\rm Aut}_\gX(\gX_i)$ on $F(\gX_i)$ is transitive. We then
have
$$
F(\gX_i) \simeq \;\Hom_{Pro-\gX}(\gX^{sc},\gX_i) \simeq
\;\Hom_\gX(\gX_i,\gX_i) = \text{\rm Aut}_\gX(\gX_i).
$$
Thus $\pi_1(\gX,a)$ can be identified with the group
$ \limproj \,\text{\rm Aut} _\gX(\gX_i)^{opp}.$  Each $\text{\rm
Aut}_\gX(\gX_i)$ is finite, and this endows $\pi_1(\gX,a)$ with the
structure of a profinite topological group.

The group $\pi_1(\gX,a)$ acts on the right on $\gX^{sc}$ as the
inverse limit of the finite groups $\text{\rm Aut}_\gX(\gX_i).$
Thus, the group $\pi_1(\gX,a)$ acts on the left on each set $F(\gX^\pr)
=\;\Hom_{Pro-\gX}(\gX^{sc},\gX^\pr)$ for all $\gX^\pr\in \gX_{\fet}.$ This
action is continuous since the structure morphism $\gX^\pr\to \gX$
``factors  at the finite level", i.e  there exists a morphism $\gX_i\to
\gX^\pr$ of $\gX$--schemes for some $i \in I.$  If $u:\gX^\pr \to \gX^{\pr\pr}$ is a morphism of
$\gX_{\fet},$ then the map $F(u) :F(\gX^\pr) \to F(\gX^{\pr\pr})$ clearly
commutes with the action of $\pi_1(\gX,a).$ This construction provides
an equivalence between $\gX_{\fet}$ and the category of finite sets equipped with a continuous
$\pi_1(\gX,a)$--action.

The right action of $
\pi_1(\gX,a)$ on $\gX^{sc}$ induces an action of $\pi_1(\gX,a)$ on
$\gG(\gX^{sc}) = \;\Mor_\gX(\gX^{sc},\gG),$ namely
$$
^\gamma  f(z) = f(z^\gamma  )  \q \forall \gamma \in \pi_1(\gX,a), \q f\in
\gG(\gX^{sc}), \q z\in \gX^{sc}.
$$

\begin{proposition}\label{discrete} Assume $\gX$ is noetherian and that 
 $\gG$ is locally of finite
presentation over $\gX.$ Then $\gG(\gX^{sc})$ is a discrete
$\pi_1(\gX,a)$--module and the canonical map
$$
\us\longrightarrow{\text{\rm lim}}\,H^1\big(\text{\rm
Aut}_\gX(\gX_i),\gG(\gX_i)\big) \to H^1\big(\pi_1(\gX,a),\gG(\gX^{sc})\big)
$$
is bijective.
\end{proposition}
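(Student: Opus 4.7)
The plan is to reduce both statements to the continuity result of \cite{EGAIV} combined with the standard description of continuous cohomology of a profinite group in terms of its finite quotients.

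For discreteness, I would first invoke \cite[\S8.14]{EGAIV}: since $\gG$ is locally of finite presentation and $\gX^{sc}=\limproj \gX_i$ is an inverse limit of quasicompact quasiseparated $\gX$--schemes with affine transition maps, the canonical map
$$
\limind\, \gG(\gX_i) \longrightarrow \gG(\gX^{sc})
$$
is a bijection. Thus every $f\in \gG(\gX^{sc})$ is the pullback of some $f_i\in \gG(\gX_i)$, and the action of $\pi_1(\gX,a)$ on $f$ factors through the finite quotient $\text{\rm Aut}_\gX(\gX_i)^{opp}$. Equivalently, the open normal subgroup $N_i=\ker\big(\pi_1(\gX,a)\twoheadrightarrow\text{\rm Aut}_\gX(\gX_i)^{opp}\big)$ fixes $f$, so $\gG(\gX^{sc})$ is a discrete $\pi_1(\gX,a)$--module.

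For the $H^1$ assertion, I would appeal to the general principle that for a profinite group $G=\limproj G/N$ acting continuously on a discrete module $M$, one has
$$
H^1(G,M)\;\simeq\;\limind\, H^1(G/N,M^N),
$$
where $N$ ranges over a cofinal family of open normal subgroups. Applied to $G=\pi_1(\gX,a)$, $M=\gG(\gX^{sc})$ and the cofinal system $\{N_i\}$, this identifies the target of the map in the proposition with $\limind\, H^1(\text{\rm Aut}_\gX(\gX_i)^{opp},\gG(\gX^{sc})^{N_i})$.

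It remains to identify $\gG(\gX^{sc})^{N_i}$ with $\gG(\gX_i)$, which is a Galois descent statement. For $j\ge i$, the cover $\gX_j\to \gX_i$ is finite étale and Galois with group $N_i/N_j=\text{\rm Aut}_{\gX_i}(\gX_j)$; since $\gG$ is a sheaf for the fpqc topology (it is representable by assumption), descent gives $\gG(\gX_i)=\gG(\gX_j)^{N_i/N_j}$. Taking the direct limit over $j\ge i$, using that the formation of fixed points for a discrete action commutes with filtered colimits, and using the continuity result once more to rewrite $\limind_{j\ge i}\gG(\gX_j)=\gG(\gX^{sc})$, yields $\gG(\gX_i)=\gG(\gX^{sc})^{N_i}$. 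Combined with the previous display this gives the desired bijection.

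The substantive point — and the place where a careful argument is needed — is the compatibility step: one must check that the natural map from $H^1$ of the finite quotient with coefficients in $\gG(\gX_i)$ to the continuous cohomology of $\pi_1(\gX,a)$ on $\gG(\gX^{sc})$ really does factor through inflation maps and that both injectivity and surjectivity follow from the fact that every continuous $1$--cocycle (respectively every coboundary relation) factors through some finite quotient; this is where the representability hypothesis on $\gG$, yielding the sheaf property together with the EGA continuity theorem, is used twice — once to represent cocycles by cocycles valued in a single $\gG(\gX_i)$, and once to identify $N_i$--invariants with $\gG(\gX_i)$.
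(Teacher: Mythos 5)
Your proof is correct and follows essentially the same route as the paper: discreteness via EGA~IV~8.13.1 to reduce a point of $\gG(\gX^{sc})$ to some $\gG(\gX_i)$, on which the open kernel $N_i$ acts trivially, and then the standard identification of continuous $H^1$ of a profinite group with the colimit over finite quotients. The only place you elaborate beyond the paper is in explicitly verifying $\gG(\gX^{sc})^{N_i}=\gG(\gX_i)$ by Galois descent and commutation of invariants with filtered colimits; the paper leaves this step implicit, so your version is a small but genuine tightening, not a different argument.
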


\begin{remark}\label{continuous} Here and elsewhere when a profinite group $A$ acts discretely on a module $M$ the corresponding cohomology $ H^1(A,M) $ is the {\it continuous} cohomology as defined in \cite{Se}. Similarly, if a group $H$ acts in both $A$ and $M,$ then $\Hom_H(A,M)$ stands for the continuous group homomorphism of $A$ into $M$ that commute with the action of $H.$
\end{remark}

\begin{proof}
To show that $\gG(\gX^{sc})$ is discrete amounts to showing that
the stabilizer in $\pi_1(\gX,a)$ of a point of $f\in \gG(\gX^{sc})$
is open. But if $\gG$ is locally of finite presentation then
$\gG(\gX^{sc}) = \gG(\limproj\,\gX_i) = \limind\, \gG(\gX_i)$
(\cite{EGAIV} prop. 8.13.1), so we may assume that $f\in \gG(\gX_i)$
for some $i.$ The result is then clear, for the stabilizer we are
after is the inverse image
 under the continuous map $\pi_1(\gX,a) \to \text{\rm
Aut}_\gX(\gX_i)$  of the stabilizer of $f$ in $\text{\rm Aut}_\gX(\gX_i)$
(which is then open  since
  $\text{\rm Aut}_\gX(\gX_i)$ is given the
discrete topology).

By definition
$$
H^1\big(\pi_1(\gX,a),\gG(\gX^{sc})\big) =\limind
\big(\pi_1(\gX,a)/U, \gG(\gX^{sc})^U\big)
$$
where the limit is taken over all open normal subgroups $U$ of
$\pi_1(\gX,a).$ But for each such $U$ we can find $U_i\subset U$ so
that $ U_i = \;\text{\rm ker}\big(\pi_1(\gX,a) \to \,\text{\rm
Aut}_\gX(\gX_i)\big). $ Thus
$$
H^1\big(\pi_1(\gX,a),\gG(\gX^{sc}\big) = \limind\;H^1\big(\text{\rm Aut}_\gX(\gX_i),\gG(\gX_i)\big)
$$
as desired.
\end{proof}

Suppose now that our $\gX$ is a geometrically connected $k$--scheme, where $k$ is of arbitrary characteristic.
We will denote
$\gX \times_k {\ol k}$ by $\ol{\gX}.$  Fix a  geometric point $\ol a :\Spec(\ol k) \to \ol \gX.$ Let $a$ (resp. $b$) be the geometric points
of $\gX$ [resp. $\Spec(k)$]  given by the composite maps
 $a : \Spec(\ol k) \buildrel \ol a \over \to  \ol{\gX} \to \gX$ [resp. $b : \Spec(\ol k)
\buildrel \ol a \over \to   \gX \to \Spec(k)].$ Then by
\cite[th\'eor\`eme IX.6.1]{SGA1} $\pi_1\big(\Spec(k), b\big) \simeq \Gal(k) = \Gal(k_s/k)$
where $k_s$ is the separable closure of
$k$ in $\ol k$, and the sequence
\begin{equation}\label{fundamentalexact}
1 \to \pi_1(\ol{\gX}, \ol a) \to  \pi_1(\gX,  a) \buildrel p \over \lgr
\Gal(k) \to 1
\end{equation}
is exact.

\subsection{Torsors}
 
 Recall that a (right)  {\it torsor over
$\gX$ under $\gG$}  (or simply a $\gG$--torsor if $\gX$ is understood) is a scheme $\bE$ over $\gX$ equipped with a right action
of $\gG$ for which there exists a faithfully flat morphism $\gY \to \gX$,
locally of finite presentation, such that $\bE \times_\gX \gY \simeq \gG
\times_\gX \gY = \gG_{\gY},$ where $\gG_{\gY}$ acts on itself by right translation.

A $\gG$--torsor $\bE$ is {\it locally trivial} (resp. {\it \'etale locally
trivial}) if it admits a trivialization by an open Zariski (resp.
\'etale) covering of $\gX.$  If $\gG$ is affine, flat and locally of finite
presentation over $\gX$, then $\gG$--torsors over $\gX$ are classified by the pointed
set of cohomology $H^1_{fppf}(\gX,\gG)$ defined by means of cocycles
\`a la \v Cech. If $\gG$ is smooth, any $\gG$--torsor is \'etale
locally trivial (cf. \cite{SGA3}, Exp.
{\uppercase\expandafter{\romannumeral 24}}), and their classes are
then measured by $H^1_{\et}(\gX,\gG)$. In what follows the
$fppf$-topology will be our default choice, and we will for
convenience denote $H^1_{fppf}$ simply by $H^1.$
Given a base change $\gY\to \gX$, we denote by  $H^1(\gY/\gX,
\gG)$ the kernel of the base change map
$H^1(\gX,\gG) \to H^1(\gY,\gG_{\gY}).$ As it is customary, and when no confusion is possible,  we will  denote in what follows $H^1(\gY,\gG_{\gY})$ simply by $H^1(\gY,\gG)$

Recall that a torsor $\bE$ over $\gX$ under $\gG$ is called {\it
isotrivial} if it is trivialized by some {\it finite} \'etale
extension of $\gX,$ that is,
$$
[\bE] \in H^1(\gX^\pr/\gX,\gG) \subset H^1(\gX,\gG)
$$
for some $\gX^\pr$ in $ \gX_{\fet}.$ We denote by $H^1_{iso}(\gX,\gG)$ the 
subset of $H^1(\gX,\gG)$ consisting of classes of isotrivial torsors.

\begin{proposition}  
Assume that $\gX$  is  noetherian and that $\gG$ is locally of finite
presentation over $\gX.$  Then
$$
H^1_{iso}(\gX,\gG) = \;\text{\rm ker}\, \big(H^1(\gX,\gG) \to H^1(\gX^{sc},\gG)\big).
$$
\end{proposition}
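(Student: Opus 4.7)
The plan is to verify the two inclusions separately, using as the main tools the pro-representability of the fiber functor established in Proposition \ref{representability} together with the limit formula from \cite[Prop. 8.13.1]{EGAIV} for morphisms into schemes locally of finite presentation.

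For the inclusion $H^1_{iso}(\gX,\gG) \subseteq \ker\bigl(H^1(\gX,\gG) \to H^1(\gX^{sc},\gG)\bigr)$, I would take an isotrivial class $[\bE] \in H^1(\gX'/\gX,\gG)$ for some $\gX' \in \gX_{\fet}$. It suffices to produce an $\gX$-morphism $\gX^{sc}\to \gX'$, for then the trivialization of $\bE\times_\gX \gX'$ pulls back to one on $\gX^{sc}$. Such a morphism exists by Proposition \ref{representability}: we have $\Hom_\gX(\gX^{sc},\gX') \simeq F(\gX')$, and $F(\gX')$ is nonempty because $\gX' \to \gX$ is a surjective finite \'etale cover, so the geometric point $a$ of $\gX$ lifts to $\gX'$.

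For the reverse inclusion, I would start with $[\bE] \in \ker\bigl(H^1(\gX,\gG) \to H^1(\gX^{sc},\gG)\bigr)$, i.e.\ $\bE(\gX^{sc})\ne \emptyset$. The key input is that $\bE$ is itself locally of finite presentation over $\gX$: fppf-locally on $\gX$ the torsor $\bE$ becomes isomorphic to $\gG$, and local finite presentation descends fppf. With $\gX$ noetherian and the transition maps $\gX_j \to \gX_i$ affine, \cite[Prop. 8.13.1]{EGAIV} then yields
\[
\bE(\gX^{sc})\;=\;\bE\bigl(\limproj \gX_i\bigr)\;=\;\limind\,\bE(\gX_i),
\]
so any section on $\gX^{sc}$ descends to a section on some $\gX_i$. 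This trivialization of $\bE\times_\gX \gX_i$ shows $[\bE] \in H^1(\gX_i/\gX,\gG) \subseteq H^1_{iso}(\gX,\gG)$.

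The only delicate point is the fppf descent of ``locally of finite presentation'' from $\gG$ to the torsor $\bE$, required in order to invoke the limit formula; once that is in hand, the rest of the argument is formal and mirrors the proof structure of Proposition \ref{discrete}.
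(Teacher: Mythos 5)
Your proof is correct, and for the hard (converse) direction it takes a genuinely different and more elementary route than the paper. The paper invokes the Grothendieck--Margaux theorem on passage to the limit in non-abelian \v Cech cohomology, namely the bijectivity of $\limind H^1(\gX_i,\gG)\to H^1(\gX^{sc},\gG)$, and then reads off the statement from injectivity. You instead observe that the total space $\bE$ is itself a scheme locally of finite presentation over $\gX$ (this does descend along the fppf trivializing cover; it is the standard descent of the property ``locally of finite presentation'' along fppf morphisms, cf.\ [EGAIV, \S2.7]), so the basic limit formula $\bE(\gX^{sc}) = \limind \bE(\gX_i)$ of [EGAIV, 8.13.1] applies directly: a section over $\gX^{sc}$ is a section over some $\gX_i$. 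This replaces a result about $H^1$ by a result about $\Hom$, which is cleaner and requires only the tools already used to prove Propositions \ref{representability} and \ref{discrete}; the trade-off is that it gives only what is needed here, whereas the Margaux theorem supplies the full bijectivity of the colimit of $H^1$'s, a fact the authors may want elsewhere. For the easy inclusion the two arguments are essentially the same: you use Proposition \ref{representability} to produce $\gX^{sc}\to\gX'$ directly (with the needed nonemptiness of $F(\gX')$ following from connectedness of $\gX$), while the paper first reduces to connected $\gX'$ and produces a morphism $\gX_i\to\gX'$; both rest on the same pro-representability.
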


\begin{proof}
Assume $\bE$ is trivialized by $\gX^\pr\in \gX_{\fet}.$  Since the
connected components of $\gX^\pr$ are also in $\gX_{\fet}$\footnote{There exists a finite Galois connected covering $\gF \to \gX$ such that $\gF \times_\gX \gX^\pr \cong \gF \sqcup \dots \sqcup \gF$ ($r$ times).
If we decompose  $\gX^\pr = \gY_1  \sqcup \dots \sqcup \gY_m$ into its connected components we have
$$
\gX^\pr \times_\gX \gF = \gY_1 \times_\gX \gF  \sqcup \dots \sqcup \gY_m \times_\gX \gF =  \gF \sqcup \dots \sqcup \gF.
$$
It follows that each $\gY_i \times_\gX \gF$ is a disjoint union of copies of $\gF$, 
hence $\gY_i \times_\gX \gF$ is finite \'etale over $\gF$ for $i=1,..,m$. 
Then each  $\gY_i$ is  \'etale over $\gX$ by proposition 17.7.4.vi of \cite{EGAIV}.
 By  descent, each $\gY_i$ is  finite  over $\gX$ \cite[prop. 2.7.1 xv]{EGAIV}, so
 each  $\gY_i/\gX$ is finite \'etale.
} there exists
a morphism $\gX_i\to \gX^\pr$ for some $i\in I.$  But then $\bE\times_\gX
\gX_i = \bE\times_\gX \gX^\pr \times_{\gX^\pr}\gX_i = \gG_{\gX^\pr}\times_{\gX^\pr}
\gX_i = \gG_{\gX_i}$ so that $\bE$ is trivialized by $\gX_i.$  The image of
$[\bE]$ on $H^1(\gX^{sc},\gG)$ is thus trivial.
 
Conversely assume $[\bE]\in H^1(\gX,\gG)$ vanishes under the base change
$\gX^{sc}\to \gX.$  Since the $\gX_i$ are quasicompact and
quasiseparated and $\gG$ is locally of finite presentation, a
theorem of Grothendieck-Margaux \cite{Mg} shows that the canonical
map
$$
\limind \;H^1(\gX_i,\gG) \to H^1(\gX^{sc},\gG)
$$
is bijective. Thus $\bE\times_{\gX} \gX_i\simeq \gG_{\gX_i}$ for some $i\in
I.$
\end{proof}

\subsection{An example: Laurent polynomials in characteristic $0$}\label{laurent} We look
in detail at an example that is of central
importance to this work, namely the case when $\gX=\;\Spec\, (R_n)$
where $ R_n = k[t^{\pm 1}_1, \dots,  t^{\pm 1}_n]$ is the Laurent
polynomial ring in $n$--variables with coefficients on a field $k$ of
characteristic $0.$ 

Fix once and for all a compatible set
$(\xi _m)_{m\ge 0}$ of primitive $m$--roots of unity in $\ol k$
(i.e. $\xi ^\ell_{m\ell} = \xi _m).$ Let $\{k_\lambda \}_{\lambda
\in \Lambda }$ be the set of finite Galois extensions of $k$ which
are included in $\ol k.$  Let $\Gamma _\lambda =\;\Gal\,(k_\lambda
/k)$ and $\Gamma  = \limproj\;\Gamma _\lambda .$ Then $\Gamma  $
coincides with the algebraic fundamental group of $\Spec\,(k)$ at
the geometric point $\Spec\,(\ol k).$

Let $\varepsilon  :R_n \to k$ be the evaluation map at $t_i 
=1.$ The composite map $R_n\os\varepsilon  \to k\hookrightarrow
\ol k$ defines a geometric point $a$ of $\gX$ and a geometric point $\ol a$ of  $\ol{\gX}=\;\Spec\, (\ol{R}_n)$ where $\ol{R}_n = \ol{k}[t^{\pm 1}_1, \dots,  t^{\pm
1}_n].$

Let $I$ be the subset of $\Lambda  \times \Z_{>0}$ consisting of all
pairs $(\lambda  ,m)$ for which $k_\lambda  $ contains $\xi_m.$ Make
$I$ into a directed set by declaring that $(\lambda  ,\ell)\le (\mu
,n) \Longleftrightarrow k_\lambda  \subset k_\mu  $ and $\ell\vert
n.$

Each
$$
R^\lambda  _{n,m} = k_\lambda  [t^{\pm \frac{1}{m}}_1,\dots, t^{\pm \frac{1}{m}}_n]
$$
is a Galois extension of $R_n$ with Galois group $\Gamma _{m,\lambda
} = (\Z/m\Z)^n\rtimes \Gamma  _\lambda  $ as follows:  For ${\bf e} = (e_1,\dots,e_n)\in \Z^n$ we have $^{\ol{\bf e}} t_j^\frac{1}{m} = \xi  ^{e_j}_j
t_j^\frac{1}{m}$ where $^- :\Z^n \to (\Z/mZ)^n$ is the canonical map, and the
group $\Gamma  _\lambda  $ acts naturally on $R^\lambda  _{n,m}$
through its action on $k_\lambda.$ It is immediate from the
definition that for $\gamma \in \Gamma _\lambda $ we have $\gamma
{\bf e}  \gamma ^{-1}:t_j^\frac{1}{m}\mapsto (^\gamma  \xi _j)^{e_j}t_j^\frac{1}{m}.$  Thus
if $^\gamma  \xi _j = \xi ^{f_j}_j$ then $\gamma  {\bf e}\gamma
^{-1} = {\bf e}^\pr$ where ${\bf e}^\pr = (f_1e_1,\dots f_n e_n).$

If $(\lambda  ,\ell)\le (\mu  ,n)$ we have a canonical inclusion
$R^\lambda  _{n,\ell}\subset R^\mu  _{n,m}.$  For $i = (\lambda
,\ell)\in I$ we let $\gX_i =\;\Spec\,(R^\lambda  _{n,\ell}).$  The
above gives morphisms $\varphi_{ij} : \gX_j\to \gX_i$ of $\gX$--schemes
whenever $i \le j.$

We have \cite{GP3}
$$
\begin{aligned}
\gX^{sc} = \limproj\;\gX_i &=\;\Spec(\limind\, \gX_i) \\
&=\;\Spec\,(\ol R_{n,\infty  })\\
\end{aligned}
$$
where $ \ol R_{n,\infty  } = \us m{\limind} \; \ol R_{n,m }$ with $\ol R_{n,m  } = \ol k[t^{\pm
\frac{1}{m}}_1,\dots,t^{\pm \frac{1}{m}}_n].$ Thus
\begin{equation}\label{FGLaurent}
\pi_1(\gX,a) = \limproj\Gamma  _{m,\lambda  } = \wh{\Z}(1)^n \rtimes
\;\Gal \,(k).
\end{equation}

\noindent where $\wh{\Z}(1)$ denotes the abstract group $\wh{\Z} = \limproj_m \,  \bmu_m(\ol k)$ equipped with the natural action of the absolute Galois group $\Gal(k) = \Gal(\ol k/ k).$

\begin{remark}\label{salade} Consider the affine $k$--group scheme
${_\infty \bmu}= \limproj_m \bmu_m$. It corresponds to the Hopf algebra
$$
k[{_\infty \bmu}] = \limind_m \, k[\bmu_m] = \limind_m \, \frac{ k[t]}{t^m - 1}.
$$
Then we have ${_\infty\bmu}(\overline k) \simeq \wh{\Z}$ and
 ${_\infty\bmu}(\overline k)$ is equipped with a (canonical) structure
of profinite $\Gal( k)$--module.
\end{remark}

\begin{remark}\label{preGLn} Let the notation be that of Example \ref{laurent}. Since  $\Z^n$ is the character group of the 
the torus $\GG_{m,k}^{n}$, we have 
an automorphism $\GL_n(\Z) \simeq \bAut_{gr}( \GG_m^{n,k})^{\rm op}$. This defines a  
left action  $\GL_n(\Z)$ on $R_n$ and 
 a right  action of    $\GL_n(\Z)$ on the torus $\GG_{m,k}^{n}$.
Furthermore, by universal nonsense,  this action extends uniquely to the simply connected covering $\ol R_{n,\infty}$ at the geometric point $\ol a$. The extended action on the torus $\Spec(\ol R_{n,m})$ with character group
$(\frac{1}{m}\Z)^n$ is given by the extension of the action of $\GL_n(\Z)$ from $\Z^n$ to $(\frac{1}{m}\Z)^n$ inside $\Q^n$. The group  $\GL_n(\Z)$ acts (on the right) 
on $\pi_1(R_n)$, so we can  consider the semidirect product of groups 
$\GL_n(\Z) \ltimes \pi_1(R_n)$ which  acts then on $\ol R_{n,\infty}$ (see   \S8.4 for details).
\end{remark}

By taking the action of  $\GL_n(\Z)$  on $R_n$ described in Remark \ref{preGLn}  we can twist
the  $R_n$--module $R_n$ by an element $g \in \GL_n(\Z)$. We denote the resulting  
twisted $R_n$--algebra by $R_n^g.$\footnote{The multiplication on the $R_n$-algebras  $R_n^g$ and $R_n$ coincide. 
It is the action of $R_n$  that is different. See \S\ref{subsecsemi} for details.}

\begin{lemma}\label{galois}
Let $S$ be a connected finite  \'etale cover of $R_n.$
Let $L \subset S$ be the integral closure
of $k$ in $S$. Then there exists $g \in \GL_n(\Z)$,
 $a_1,...,a_n \in L^\times$ and positive integers $d_1,...,d_n$ such that
 $d_1 \mid d_2 \cdots \mid d_n$ and  
$$
S \otimes_{R_n} {R^g_n} \simeq_{R_n\text{-alg}} (R_n \otimes_k L)\bigl[ \sqrt[d_1]{a_1t_1},  \cdots , \sqrt[d_n]{a_nt_n} \bigr].
$$
In particular, $S$ is $k$--isomorphic to $R_n \otimes_k L$ and $\Pic(S)=0$.
\end{lemma}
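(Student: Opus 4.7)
\emph{Setup and reduction to the geometric case.} Since $R_n$ is regular and $S$ is finite \'etale over $R_n$, $S$ is a regular Noetherian ring; as it is also connected, it is an integral domain. Hence $L$, being integral over $k$ inside a domain, is a finite separable field extension of $k$ (finiteness because $\text{Frac}(S)$ is a finitely generated field extension of $k$, whose algebraic closure of $k$ is automatically finite over $k$). The inclusion $L \hookrightarrow S$ exhibits $S$ as a finite \'etale cover of $R_n \otimes_k L = L[t_1^{\pm 1}, \ldots, t_n^{\pm 1}]$; since $L$ is separably closed in $S$ by maximality, $\ol S := S \otimes_L \ol k$ is a connected finite \'etale cover of $\ol R_n := \ol k[t_1^{\pm 1}, \ldots, t_n^{\pm 1}]$.

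\emph{Smith normal form to produce $g$ and the $d_i$.} By the equivalence between connected finite \'etale covers of $\ol R_n$ and open subgroups of $\pi_1(\ol R_n, \ol a) \simeq \wh \Z(1)^n$, the cover $\ol S$ corresponds to some open subgroup $H$. Pick $m \geq 1$ with $m\wh\Z(1)^n \subset H$, and view $H/m\wh\Z(1)^n$ as a subgroup of $(\Z/m\Z)^n$. By the Smith normal form for $\Z/m\Z$-modules, combined with the surjectivity of $\GL_n(\Z) \twoheadrightarrow \GL_n(\Z/m\Z)$, there exist $g \in \GL_n(\Z)$ and positive integers $d_1 \mid d_2 \mid \cdots \mid d_n$ (each dividing $m$) whose induced action on $\wh\Z(1)^n$ transforms $H$ into $\bigoplus_{i=1}^n d_i \wh\Z(1)$. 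Correspondingly, the twisted cover $\ol S \otimes_{\ol R_n} \ol R_n^g$ is $\ol R_n$-isomorphic to the standard Kummer cover $\ol k[\tau_1^{\pm 1}, \ldots, \tau_n^{\pm 1}]$ with $\tau_i^{d_i} = t_i$.

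\emph{Kummer descent via Hilbert 90 (the main technical step).} The Galois group $\Gamma := \Gal(\ol k/L)$ acts semilinearly on $(S \otimes_{R_n} R_n^g) \otimes_L \ol k = \ol S \otimes_{\ol R_n} \ol R_n^g$ via the second factor, fixing each $t_i$. Transporting this action along the isomorphism of the previous paragraph yields $\sigma(\tau_i) = \chi_i(\sigma)\,\tau_i$ for a continuous $1$-cocycle $\chi_i : \Gamma \to \bmu_{d_i}(\ol k) \hookrightarrow \ol k^\times$. By Hilbert 90 each $\chi_i$ is a coboundary, giving $b_i \in \ol k^\times$ with $\chi_i(\sigma) = b_i/\sigma(b_i)$. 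Setting $s_i := b_i\tau_i$ we obtain $\sigma(s_i) = s_i$ for all $\sigma \in \Gamma$, so by faithfully flat descent $s_i$ lies in $S \otimes_{R_n} R_n^g$. Moreover $s_i^{d_i} = b_i^{d_i} t_i$, and $a_i := b_i^{d_i}$ is $\Gamma$-invariant because $\chi_i(\sigma)^{d_i}=1$, so $a_i \in L^\times$. Both $S \otimes_{R_n} R_n^g$ and $(R_n \otimes_k L)[\sqrt[d_1]{a_1 t_1}, \ldots, \sqrt[d_n]{a_n t_n}]$ are locally free $(R_n \otimes_k L)$-modules of the same rank $d_1 \cdots d_n$, so the inclusion induced by $s_i \leftrightarrow \sqrt[d_i]{a_i t_i}$ is an isomorphism. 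The delicate step here is the simultaneous Galois descent of the $n$ Kummer generators and the verification that the resulting $a_i$ actually lie in $L^\times$ (rather than only in $\ol k^\times$).

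\emph{The in particular assertions.} The elements $s_i := \sqrt[d_i]{a_i t_i}$ are units (since $a_i t_i$ is) and $t_i = a_i^{-1} s_i^{d_i}$, so $(R_n \otimes_k L)[\sqrt[d_1]{a_1 t_1}, \ldots, \sqrt[d_n]{a_n t_n}] = L[s_1^{\pm 1}, \ldots, s_n^{\pm 1}] \simeq R_n \otimes_k L$ as $k$-algebras. Because the $g$-twist leaves the underlying $k$-algebra structure unchanged (only the $R_n$-module structure changes), $S \simeq_k R_n \otimes_k L$. Finally $\Pic(R_n \otimes_k L) = \Pic(L[t_1^{\pm 1}, \ldots, t_n^{\pm 1}]) = 0$, since $L[t_1, \ldots, t_n]$ is a UFD and Laurent localization of a regular ring preserves triviality of $\Pic$.
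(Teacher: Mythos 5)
Your proof is essentially correct, and it takes a genuinely different route from the paper's. You work over the algebraic closure from the outset: $\ol S := S\otimes_L \ol k$ is a connected finite \'etale cover of $\ol R_n$, you apply Smith normal form to the open subgroup of $\pi_1(\ol R_n)\simeq \wh{\Z}(1)^n$ corresponding to $\ol S$ to normalize the cover to a standard Kummer cover after a $\GL_n(\Z)$--twist, and then you descend to $L$ by trivializing the cocycles $\chi_i:\Gal(\ol k/L)\to \bmu_{d_i}(\ol k)$ with classical Hilbert~90. The paper instead works entirely at a \emph{finite} level with a specific Galois covering $R_{n,m}\otimes_k k'$ of $R_n$, reduces to $L=k$, exhibits $S^g$ explicitly as a $\bmu_{d_1}\times\cdots\times\bmu_{d_n}$--torsor over $R_n^g$ by constructing a cocycle out of the subgroup $H$ of the semidirect product $\bmu_m(k')^n\rtimes\Gal(k'/k)$, and then identifies the torsor through Kummer theory over $R_n^g$ (using $H^1(R_n^g,\bmu_{d_j})\simeq (R_n^g)^\times/((R_n^g)^\times)^{d_j}$ together with $(R_n^g)^\times=k^\times\times\Z^n$). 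Both proofs rest on elementary divisors and on Hilbert~90 in some guise, but your descent-from-$\ol k$ argument is structurally cleaner and avoids the cocycle bookkeeping inside the semidirect product; the paper's argument is more self-contained in that it never leaves the finite Galois layer and identifies the $a_i\in k^\times$ by a direct comparison of Kummer classes.

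Two small points you should repair. First, the map $\GL_n(\Z)\to\GL_n(\Z/m\Z)$ is \emph{not} surjective in general: its image has determinant in $\{\pm 1\}\subset(\Z/m\Z)^\times$, which is a proper subgroup as soon as $(\Z/m\Z)^\times\neq\{\pm 1\}$ (e.g.\ $m=5$). The element $g\in\GL_n(\Z)$ you want does exist, but the correct justification is the elementary divisor theorem over $\Z$: the lattice $H\cap\Z^n\subset\Z^n$ (whose closure is $H$) admits a stacked basis $d_1f_1,\dots,d_nf_n$ with $(f_i)$ a $\Z$--basis of $\Z^n$ and $d_1\mid\cdots\mid d_n$, and $g$ is the corresponding change of basis. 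Second, the final step ``injective map between locally free modules of the same rank is an isomorphism'' is not valid in general (consider multiplication by a non-unit on a rank-one module). The clean fix is immediate, though: after the faithfully flat base change $-\otimes_L\ol k$, your map becomes the identity of the standard Kummer cover $\ol k[\tau_1^{\pm 1},\dots,\tau_n^{\pm 1}]$, hence it is an isomorphism by descent. With these repairs the argument is complete.
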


\begin{proof} 
Note that  $R_n^g$ and  $R_n$ have the same units. For convenience 
in what follows we will for simplicity denote   $(R_n)^g$ by $R_n^g,$  $(R_{n,m})^g$ by $R^g_{n,m}$ and $S \otimes_{R_n} {R_n^g}$ 
by $S^g.$

 By Galois theory there exists a
finite Galois extension
$k'/k$ and a positive integer $m$ such that $\mu_m(\ol k) \subset k'$ and $S \simeq_{R_n} ( R_{n,m} \otimes_k k')^H$
where $H$ is a subgroup of ${\Gal( R_{n,m} \otimes_k k'/ R_n)}
=\bmu_m(k')^n \rtimes \Gal(k'/k)$. 
Hence $S$ is geometrically connected and $S$ is a finite \'etale cover of $R_n \otimes_k L$.  
We can assume without loss of generality  that $k=L$.
To say that $L=k$ is to say that 
the map $H \to  \Gal(k'/k)$ is onto. We consider the following commutative
diagram
$$
\begin{CD}
1 @>>> \bmu_m(k')^n \cap H  @>>> H  @>>>  \Gal(k'/k) @>>>  1 \\
&&@VVV @VVV \mid \mid \\
1 @>>> \bmu_m(k')^n @>>> \Gal( R_{n,m} \otimes_k k'/ R_n) @>>>  \Gal(k'/k)
@>>>  1. \\
\end{CD}
$$
Note that the action  $ \Gal(k'/k)$ on  $\bmu_m(k')^n$ normalizes
$\bmu_m(k')^n \cap H$.
Hence $\bmu_m^n(k') \cap H$ is the group of $k'$--points of  a split $k$--group $\bnu$ of multiplicative type.
By considering the corresponding characters groups,
we get a   surjective homomorphism
$(\Z/m\Z)^n = \widehat{  \bmu_m^n }  \to
\widehat {\bnu}$ of finite abelian groups.
An element  $g \in \GL_n(\Z) \subset \Aut_k(R_n)$ transforms the diagram above
to yield
$$
\begin{CD}
1 @>>> \bnu(k')  @>>> H  @>>>  \Gal(k'/k) @>>>  1 \\
&&@VVV @VVV \mid \mid \\
1 @>>> \bmu_m(k')^n @>>> \Gal( R_{n,m} \otimes_k k'/ R_n) @>>>  \Gal(k'/k)
@>>>  1. \\
&& @V{g^*}VV @V{g^*}VV   \mid \mid  \\
1 @>>> \bmu_m(k')^n @>>> \Gal( R_{n,m}^g \otimes_k k'/ R_n^g) @>>>  \Gal(k'/k)
@>>>  1. \\
&& @AAA @AAA  \mid \mid   \\
1 @>>> ^g\bnu(k')  @>>> H^g  @>>>   \Gal(k'/k)  @>>>  1 \\
\end{CD}
$$
The action of $\GL_n(\Z)$
on  $(\Z/m\Z)^n = \widehat{  \bmu_m^n }$ is the 
left action provided by the homomorphism  $\GL_n(\Z) \to \GL_n(\Z/ m\Z)$. 
By elementary facts about generators of finite abelian groups
 there exists $g \in \GL_n(\Z)$  and
positive integers $d_1,...,d_n$ such that $d_1 \mid d_2 \cdots \mid d_n \mid m$
 for which  the following
holds
$$
\begin{CD}
(\Z/m\Z)^n =  \widehat{  \bmu_m^n }  \enskip
@>>> \enskip  \widehat { \bnu}  \\
@V{g^*}V{\simeq}V @V{\simeq}VV \\
(\Z/m\Z)^n =  \widehat{  \bmu_m^n }  \enskip
@>>>  \enskip \Z/ (m/d_1)\Z \oplus \cdots
  \Z / (m/d_n)\Z. \\ 
\end{CD}
$$
This base change leads to the following commutative diagram
$$
\begin{CD}
1 @>>> \bmu_{m/d_1}(k') \times \cdots \times \bmu_{m/d_n}(k') @>>> H^g  @>>>
\Gal(k'/k) @>>>  1 \\
&&@VVV @VVV \mid \mid \\
1 @>>> \bmu_m(k')^n @>>> \Gal( R_{n,m}^g \otimes_k k'/ R_n^g) @>>>  \Gal(k'/k)
@>>>  1. \\
\end{CD}
$$
We claim that  $S^g$ is equipped with an $R_{n}^g$--torsor structure under
$\bmu:=\bmu_{d_1} \times \cdots \bmu_{d_n}$.
The diagram above provides a bijection
$$
\Gal( R_{n,m}^g \otimes_k k'/ R_n^g)/ H^g\simlgr \bmu(k'),
$$
hence a map $\psi :  \Gal( R_{n,m}^g \otimes_k k'/ R_n^g)  \lgr \bmu( k')$
which is a cocycle for the standard action
of  $\Gal( R_{n,m}^g \otimes_k k'/ R_n^g)$ on  $\bmu(k')$
as we now check. We shall use the following two facts 

\smallskip

(I) $\psi$ is right $H^g$--invariant;

\smallskip

(II) the restriction of $\psi$ to  $\bmu_m(k')^n$ is a morphism
of $\Gal(k'/k)$--modules.

\medskip

\noindent We are given $\gamma_1, \gamma_2  \in
 \Gal( R_{n,m}^g \otimes_k k'/ R_n^g)$. Since $^gH$ surjects onto
$\Gal(k'/k)$,  we can write $\gamma_i=   \alpha_i h_i $
with $ \alpha_i \in  \bmu_m(k')^n$ and  $ h_i \in {H^g}$ for
$i=1,2$. We have 

\begin{eqnarray} \nonumber
\psi(\gamma_1 \gamma_2) &= &
\psi( \alpha_1 \,  h_1 \,  \alpha_2 \, h_2) \\ \nonumber
&= &
\psi( \alpha_1 \,  h_1 \,  \alpha_2 \,  h_1^{-1} h_1 h_2 ) \\ \nonumber
&= &
\psi( \alpha_1 \,  h_1 \,  \alpha_2 \,  h_1^{-1}  ) \qquad [by \,\, \text{\rm (I)}\,]  \\ \nonumber
&=&
\psi( \alpha_1 )\,  \psi(  h_1 \, \alpha_2 \, h_1^{-1}) \qquad [by \,\, \text{\rm (II)}\,] \\ \nonumber
&= & \psi(\alpha_1)  \,  h_1  \, \psi(\alpha_2)\,  h_1^{-1} \qquad [by\,\, \text{\rm (I)}\,]   \\ \nonumber
&= & \psi(\gamma_1) \, \gamma_1 . \psi( \gamma_2).
\end{eqnarray}

\noindent Denote by $\tilde S$ the $\bmu$--torsor over $R_n^g$ defined by $\psi$, 
that is
$$
\tilde S := \Bigl\{ \,  x \in {R}_{n,m}^g \otimes_k k' \, \mid \, 
\psi(\gamma) \, . \gamma(x)= x \quad \forall \gamma \in\Gal( R_{n,m}^g \otimes_k k'/ R_n^g)\, 
\Bigr\}.
$$
Since $\psi$ is trivial over $H^g$, we have $\tilde S \subset S^g$.
But $\tilde S$ and $S^g $ are both finite \'etale coverings of
$R_n^g$ of  degree $\mid \bmu(k') \!\mid$, hence $\tilde S = S^g$.
Since $\Pic(R_n^g)=0$, we can use  Kummer theory (see \cite{M} III 4.10), 
namely the isomorphism 
$$
H^1(R_n^g, \bmu) = \prod\limits_{j=1,...,n} H^1(R_n^g, \bmu_{d_j}) \simeq 
 \prod\limits_{j=1,...,n} {R_n^g}^\times/ ({R_n^g}^\times)^{ d_j}. 
$$ 
for determining the structure of $\tilde S.$  Since ${R_n^g}^\times=k^\times \times \Z^n$, there exist 
scalars $a_1,...,a_n \in k^\times$ and monomials $x_1,...,x_n$ in the $t_i$ 
such that the class of $\tilde S/R_n^g$ in 
$H^1(R_n, \bmu)$ is given by $(a_1 \, x_1, \cdots, a_n x_n)$.
In terms of covering, this means that 
$\tilde S= k\bigl[ \sqrt[d_1]{a_1x_1},  \cdots , \sqrt[d_n]{a_nx_n} \bigr]$.
Extending  scalars to  $k'$, we have
$$
\tilde S\otimes_k k' = ( R_{n,m}^g \otimes_k k')^{\bmu(k')}
= k'\bigl[ \sqrt[d_1]{t_1},  \cdots , \sqrt[d_n]{t_n} \bigr].
$$
From this it follows that  $x_i=t_i$ mod $\big((R_n^g \otimes_k k')^\times\big)^{^{ d_i}}$
and  $x_i=t_i$ mod $({R_n^g}^\times)^{^{d_i}}$. We conclude that 
$\tilde S= k\bigl[ \sqrt[d_1]{a_1t_1},  \cdots , \sqrt[d_n]{a_nt_n} \bigr]$.
\end{proof}

\subsection{Reductive group schemes: Irreducibility and isotropy}\label{sec-irr}

The notation that we are going to use throughout the paper deserves some comments. We will tend to use boldface characters, such as $\bG,$ for algebraic groups over $k,$  as also for  group schemes over $\gX$ that are obtained from groups over $k.$ A quintessential example is $\bG_\gX = \bG \times_k \gX.$ For arbitrary group schemes, or more generally group functors,  over $\gX$ we shall tend to use german characters, such as $\gG.$ This duality of notation will be particularly useful when dealing with twisted forms over $\gX$ of groups that come from $k.$ 
 
 The concept of reductive group scheme over  $\gX$ and all related terminology is that of \cite{SGA3}.\footnote{The references to \cite{SGA3} are so prevalent that they will henceforth be given by simply listing the Expos\'e number.   Thus   XII. 1.3, for example, refers to section 1.3 of  Expos\'e XII of \cite{SGA3}. } 
 
 For convenience we now recall and introduce some concepts and notation attached to a reductive group scheme $\gH$  over $\gX.$ We denote by $\rad(\gH)$ (resp. $\corad(\gH)$)  its radical (resp. coradical) torus, 
that is its  maximal central subtorus (resp. its maximal toral quotient) of $\gH$ [XII.1.3].
 
 We say that a  $\gH$ is {\it reducible} if it admits 
 a proper  parabolic subgroup  $\gP$  such that $\gP$ contains a Levi
subgroup $\gL$  (see XXVI).\footnote{The concept of proper parabolic subgroup is not defined in \cite{SGA3}. By  proper  we mean that $\gP_{\overline s}$ is a proper 
 subgroup of $\gG_{\overline x}$ for all geometric points 
$\overline x$ of $\gX$.} The opposite notion is  {\it irreducible.}
If $\gX$ is affine, the notion of reducibility for $\gH$ is equivalent to the existence of a proper parabolic subgroup  $\gP$ (XXVI.2.3), so there is no ambiguity with the terminology of \cite{CGP} and \cite{GP2}.

By extension, if an affine group $\gG$ over $\gX$ acts on $\gH$,
we say that the action is {\it reducible} if 
it normalizes  a couple $(\gP,\gL)$ where $\gP$ is a  proper  parabolic subgroup 
of $\gH$ and $\gL$ a Levi subgroup of $\gP$. The action is otherwise called {\it irreducible.}

We say that $\gH$ over is {\it  isotropic}
if   $\gH$ admits a subgroup isomorphic to $\G_{m,\gX}$. The opposite notion is {\it anisotropic}.

\medskip

If the base scheme $\gX$ is semi-local and connected (resp. normal),
one can show that $\gH$ is anisotropic  if and only if   $\gH$ is irreducible 
and  the torus $\rad(\gH)$  (or equivalently $\corad(\gH)$) is anisotropic (XXVI.2.3, resp. 
\cite{Gi4}).

\medskip

Similarly  we say that the  action of $\gG$ on $\gH$ is  {\it isotropic} if 
it centralizes  a   split subtorus $\gT$ of $\gH$ with the property that all geometric fibers of $\gT$ are non-trivial.
Otherwise the action  is {\it anisotropic.}  One checks that this is the case if and only if the action of $\gG$ on $\gH$ is irreducible 
and the action of $\gG$ on the torus $\rad(\gH)$  (or equivalently to  $\corad(\gH)$)
 is anisotropic.

\section{Loop, finite and toral torsors}

 Throughout this section $k$ will denote a
field of arbitrary characteristic, $\gX$ a geometrically connected noetherian scheme over $k$, and
$\gX^{sc} = \limproj\;\gX_i$ its simply connected cover as
described in \S\ref{subfundamentalgroup}. Let $\bG$ a group scheme over $k$ which is locally of finite presentation. We will maintain the notation of
the previous section, and  assume that $\Omega =\ol k.$ Consider the
fundamental exact sequence (\ref{fundamentalexact}). The geometric
point $a$ corresponds to a point of $\gX(\ol k).$

\subsection{Loop torsors}

Because of (\ref{FG1}), the geometric points $a_i :\;\Spec\,(\ol k)
\to \gX_i$ induce
 a geometric point $a^{sc}:\,\Spec\,(\ol k)
\to \limproj \,\gX_i = \gX^{sc}.$ We thus have a group homomorphism
\begin{equation}
\label{Eq3} \bG( k_s) \to \bG(\ol k) \os{\bG(a^{sc})}\longrightarrow \bG(\gX^{sc}).
\end{equation}

The group $\pi_1(\gX,a)$ acts on $k_s,$ hence on $\bG(k_s),$ via the
group homomorphism $\pi_1(\gX,a)\to\,\Gal \,(k)$ of
(\ref{fundamentalexact}). This action is continuous, and together
with (\ref{Eq3}) yields a map
$$
H^1\big(\pi_1(\gX,a),\bG(k_s)\big) \to
H^1\big(\pi_1(\gX,a),\bG(\gX^{sc})\big),
$$
where we remind the reader that these $H^1$ are defined in the ``continuous" sense (see Remark \ref{continuous}). On the other hand, by Proposition \ref{discrete} and basic
properties of torsors trivialized by Galois extensions, we have
$$
\begin{aligned}
H^1\big(\pi_1(\gX,a),\bG(\gX^{sc})\big)
&= \limind \;H^1\big(\text{\rm Aut}_\gX(\gX_i),\bG(\gX_i)\big)\\
&= \limind \;H^1(\gX_i/\gX,\bG)\subset H^1(\gX,\bG).
\end{aligned}
$$

By means of the foregoing observations we make the following.
\begin{definition} \label{looptorsor} A torsor $\bE$
over $\gX$ under $\bG$ is called a {\it loop torsor} if its
isomorphism class $[\bE]$ in $H^1(\gX,\bG)$ belongs to the image of the
composite map
\begin{equation}\label{looptorsor}
H^1\big(\pi_1(\gX,a),\bG(k_s)\big) \to
H^1\big(\pi_1(\gX,a),\bG(\gX^{sc})\big)\subset H^1(\gX,\bG).
\end{equation} 
\end{definition}
\medskip

We will denote by $H^1_{loop}(\gX, \bG) $ the subset of
$H^1(\gX, \bG)$ consisting of classes of loop torsors. They are given by (continuous) cocycles in the image of the natural map $Z^1\big(\pi_1(\gX,a),\bG(k_s)\big)  \to Z^1(\gX,\bG),$ which  we call {\it loop cocycles.}

\begin{examples}\label{kloopexamples} (a) If $\gX =\,\Spec\,(k)$ then  $H^1_{loop}(\gX, \bG)$ is nothing but the usual Galois
cohomology of $k$
with coefficients in $\bG.$

\vskip.2cm
(b) Assume that  $k$ is separably closed. Then the action of $\pi_1(\gX,a)$ on
$\bG(k_s)$ is trivial, so that
$$
H^1\big(\pi_1(\gX,a),\bG(k_s)\big)
=\;\Hom\big(\pi_1(\gX,a),\bG(k_s)\big)/\text{\rm Int}\, \bG(k_s)
$$
where the group $\text{\rm Int}\;\bG(k_s)$ of inner automorphisms of
$\bG(k_s)$ acts naturally on
\newline $\Hom\big(\pi_1(\gX,a),\bG(k_s)\big).$ To be precise,  $\text{\rm Int}(g)(\phi) : x \to g^{-1}\phi(x)g$ for all 
$g \in \bG(k_s),$ $\phi \in \Hom\big(\pi_1(\gX,a),\bG(k_s)\big)$ and $x \in \pi_1(\gX,a).$
Two particular cases are important:

\vskip.2cm (b1) $\bG$ abelian: In this case
$H^1\big(\pi_1(\gX,a),\bG(k_s)\big)$ is just the group of
continuous homomorphisms from $\pi_1(\gX,a)$ to $\bG(k_s).$

\vskip.2cm (b2) $\pi_1(\gX,a) =\widehat{\Z}^n:$ In this case
$H^1\big(\pi_1(\gX,a),\bG(k_s)\big)$ is the set of conjugacy
classes of $n$--tuples ${\bf \sigma} = (\sigma_1,\dots,\sigma_n)$ of commuting
elements of finite order of $\bG(k_s).$ That the elements are of finite order follows from the continuity assumption. 

\vskip.2cm (c) Let $\gX=\;\Spec\,(k[t^{\pm 1}])$ with $k$
algebraically closed of characteristic $0.$  If $\bG$ is a {\it
connected } linear algebraic group over $k$ then $H^1(\gX,\bG) = 1$
(\cite[prop. 5]{P1}).  We see from  (b2) above that the canonical map 
$$
H^1\big(\pi_1(\gX,a),\bG(k_s)\big) \to H^1(\gX,\bG)
$$
of (\ref{looptorsor}) need not be injective. It need not be surjective either (take $\gX = \IP^1_k$ and $\bG = \bG_{m,k}).$

\vskip.2cm (d) If the canonical map $\bG(k_s) \to \bG(\gX^{sc})$
is bijective, e.g. if $\gX$ is  a geometrically integral
projective variety\ over $k$ (i.e. a geometrically integral
closed subscheme of $\mathbb{P}_k^n$ for some $n,$)  then
$H^1_{loop}(\gX,\bG) = H^1\big(\pi_1(\gX,a),\bG(k_s)\big).$
\end{examples}

\begin{remark}\label{klooptorsion} The notion of loop torsor behaves well  under twisting by a Galois cocycle  
$z \in Z^1\big(\Gal(k), \bG(k_s)\big).$  Indeed   the torsion map $
\tau_z^{-1}: H^1(\gX, \bG) \to
H^1(\gX, {_z\bG})
$
maps loop classes to loop classes. 
\end{remark}

\subsection{Loop reductive groups} \label{seckloopgroup}
Let $\gH$ be a reductive group scheme over $\gX.$  Since $\gX$ is connected, for all $x \in \gX$ the geometric fibers $\gH_{\ol x}$ are reductive group schemes of the same ``type" (see \cite[XXII.2.3]{SGA3}.  By Demazure's  theorem there exists  a unique split reductive group $\bH_0$ over $k$ such that $\gH$ is a twisted form (in the \'etale topology of $\gX$) of $\gH_0 = \bH_0 \times_k \gX.$ We will call $\bH_0$ the Chevalley $k$--form of $\gH.$ The $\gX$--group $\gH$ corresponds to a torsor $\bE$ over $\gX$ under the group scheme $\bAut(\gH_0),$ namely $\bE = {\rm \bf Isom}_{\rm group}(\gH_0, \gH).$  We recall that $\bAut(\gH_0)$ is representable by a smooth and separated group scheme over $\gX$ by XXII 2.3. By definition $\gH$ is then the  contracted product  $\bE \wedge^{\bAut(\gH_0)} \gH_0$ (see \cite{DG} III \S 4 n$^{\rm o}$3 for details.)

We now define one of the central concepts of our work.

\begin{definition} \label{defkloopgroup}
We say that a group scheme $\gH$ over $\gX$ is  {\it loop reductive}
if it is reductive  and if $\bE$  is
a loop torsor.
\end{definition}

We look more closely to the affine case  $\gX=\Spec(R)$.
Concretely, let $\bH_0 = \Spec(k[\bH_0])$ be a split reductive
$k$--group and consider the corresponding $R$--group 
$\gH_0  = \bH_0 \times_k R,$ whose Hopf algebra is $R[\gH_0]  = k[\bH_0] \otimes_k R.$  

 Let $\gH$ be an $R$--group which is a twisted form of $\gH_0$ 
trivialized by the universal covering $R^{sc}.$ Then to a
 trivialization $ \gH_0 \times_R R^{sc}
\cong  \gH \times_R R^{sc}$,  we can attach a cocycle $u \in
Z^1 \big(\pi_1(R,a), \bAut(\gH_0)(R^{sc})\big)$ from which
 $\gH$ can be recovered by Galois descent as we now explain in the form of a Remark for future reference.

\begin{remark}\label{cocycleconvention} 
{\rm There are two possible conventions as to the meaning of the cocycles $u$ of
 $Z^1 \big(\pi_1(R,a), \bAut(\gH_0)(R^{sc})\big).$
 On the one hand $\gH_0$ can be thought of as the affine scheme 
$\Spec(R[\gH_0]),$  $\bAut(\gH_0)(R^{sc})$ as the (abstract) 
group of automorphisms of the $R^{sc}$--group 
$\Spec(R^{sc}[\gH_0])$ where $R^{sc}[\gH_0]  =  R[\gH_0] \otimes_R R^{sc},$ and $\pi_1(X,a)$ as the  opposite 
 group of automorphisms of 
  $\Spec(R^{sc}) /\Spec(R)$ acting naturally 
on $\bAut(\gH_0)(R^{sc})$.

 We will adopt an (anti) equivalent second point of view that
 is much more convenient for our calculations. We view 
$\bAut(\gH_0)(R^{sc})$ as the group of automorphisms of 
the $R^{sc}$--Hopf algebra $R^{sc}[\bH_0] =  R[\gH_0]  \otimes_R R^{sc} \simeq 
 k[\bH_0] \otimes_k R^{sc}$ on which 
the Galois group $\pi_1(R,a)$ acts naturally. 
Then the $R$--Hopf algebra
 $R[\gH]$  corresponding to $\gH$ is given by 
$$R[\gH] = \{ x \in R^{sc}[\bH_0]  : u_{{\gamma}}{{^\gamma}}x = 
a \,\, \text{\rm for all} \, \, {\gamma} \in \pi_1(R,a) \}.$$ 
To say then that $\gH$ is $k$--loop reductive is to say that $u$ can be chosen 
so that  $u_\gamma \in \bAut(\bH_0)(\ol k) \subset \bAut(\bH_0)(R^{sc}) = \bAut(\gH_0)(R^{sc})$ for all $\gamma \in \pi_1(R,a).$}
\end{remark}

\subsection{Loop torsors at a rational base point} \label{basepoint}

  If our geometric point $a$ lies above a $k$--rational
point $x$ of $\gX,$ then $x$ corresponds to a
  section of the structure morphism $\gX \to \Spec(k)$ which maps $b$ to $a.$ This yields
  a group homomorphism $x^* : \Gal(k) \to \pi_1(\gX,a)$ that splits the sequence (\ref{fundamentalexact})
  above. This
splitting defines an  action of $\Gal(k)$ on the profinite group  $\pi_1(\ol \gX , \ol a)$, hence
a semidirect product identification
\begin{equation}\label{splitfundamental}
 \pi_1(\gX,a) \simeq  \pi_1(\ol \gX,\ol a) \rtimes \Gal(k).
\end{equation}
We have seen an example of (\ref{splitfundamental}) in Example \ref{laurent}.

\begin{remark}\label{profinite} {\rm
By the structure of extensions of profinite groups \cite[\S 6.8]{RZ},
it follows that  $\pi_1(\ol{\gX}, \ol a)$ is the projective limit
of a system $\big(H_\alpha  \rtimes \Gal(k)\big)$ where the $H_\alpha$'s are
finite groups.  The Galois action on each $H_\alpha$
defines a twisted finite constant $k$--group $\bnu_\alpha$.
We define
$$
\bnu= \limproj_\alpha \bnu_\alpha.
$$
The $\bnu_\alpha$ are affine $k$--groups such that
$$
\bnu(\ol k )= \limproj_\alpha \, \bnu_\alpha(\ol k  ) \, = \,  \pi_1( \ol{\gX},\ol a).
$$
Note that $\bnu(k_s )=\bnu(\ol k)$. 
In the case when $\gX = \Spec(R_n),$ where as before $R_n = k[t_1^{\pm{1}},\dots, t_n^{\pm{1}}]$ with $k$ of  characteristic zero and $a$ is the geometric point described in Example \ref{laurent},   the above construction yields the affine $k$--group $_\infty \bmu$
defined in Remark \ref{salade}.}
\end{remark}

By means of the decomposition (\ref{splitfundamental}) we can think
of loop torsors as being comprised of a geometric and an arithmetic
part, as we now explain.

Let $\eta \in  Z^1\big(\pi_1(\gX,a), \bG(k_s)\big).$ The restriction
$\eta_{\mid \Gal(k)}$ is called the {\it arithmetic part} of $\eta$
and its denoted by $\eta^{ar}.$
  It is easily seen that $\eta^{ar}$ is in fact a cocycle in $Z^1\big(\Gal(k), \bG(k_s)\big)$. If $\eta$ is fixed in our discussion,
   we will at times denote  the cocycle $\eta^{ar}$ by the more
  traditional notation $z.$ In particular, for $s \in \Gal(k)$
 we write $z_s$ instead   of $\eta^{ar}_s.$ 

   Next we consider the restriction of $\eta$ to
  $\pi_1( \ol{\gX} ,\ol a)$ that we denote by $\eta^{geo}$
and called the {\it geometric part} of $\eta.$

We thus have a  map 
$$
\begin{CD}
\Theta \, \, : \, \,   & Z^1 \big(\pi_1(\gX,a), \bG(k_s)\big) @>>> Z^1\big(
\Gal(k), \bG(k_s)\big) \times
\Hom\big( \pi_1(\ol{\gX} ,\ol a), \bG(k_s)\big) \\ 
&\eta & \mapsto & \bigl( \quad \eta^{ar} \quad , \quad \eta^{geo} \quad \bigr)
\end{CD}
$$ 

The group $\Gal(k)$ acts on $\pi_1(\ol{\gX} ,\ol a)$ by conjugation. On
$\bG(k_s),$ the Galois group $\Gal(k)$ acts on two different ways. 
There is the natural action arising for the action of $\Gal(k)$ on
$k_s$ that as customary we will denote by $^sg,$ and there is also
the twisted action given by the cocycle $\eta^{ar} = z$.
Following Serre we denote this last by $^{s'}g$. Thus $^{s'}g = z_s
{^sg} {z_{s}}^{-1}.$ Following  standard practice to view the abstract group
$\bG(k_s)$ as a $\Gal(k)$--module with the twisted action by $z$ we
write $_z{\bG(k_s)}.$

 For $s \in \Gal(k)$ and $h \in \pi_1( \ol{\gX}, \ol a)$, we have

\begin{eqnarray} \nonumber
\eta^{geo}_{shs^{-1}} & = & \eta_{shs^{-1}}= \eta_{s}{^s(\eta_{hs^{-1}})}
\qquad \hbox{ [$\eta$ is a cocycle]}\\ \nonumber
 & =&  z_s \,\,  ^s(\eta_{hs^{-1})}  \qquad \hbox{ [$\eta_s = \eta^{\rm ar}_s = z_s$]}\\ \nonumber
& =&  z_s \,\,  ^s( \eta^{geo}_h \,  z_{s^{-1}} )\qquad \hbox{ [$\eta$ is a cocycle and $h$ acts trivially on $\bG(k_s)$]} \\ \nonumber
& =&  z_s \, \, \,  {^s \eta^{\rm geo}_h} \, z_s^{-1} \qquad \hbox{ [$1 = z_s \, ^s z_{s^{-1}}$].}
\end{eqnarray}

\noindent This shows that  $\eta^{\rm geo}: \pi_1(\ol{\gX} ,\ol a) \to
{_z\bG(k_s)}$ commutes with the action of $\Gal(k).$ In other words,
$\eta^{geo} \in \Hom_{\Gal(k)}\big( \pi_1( \ol{\gX} ,\ol a), {_z\bG}(k_s)\big)$.

\begin{lemma}\label{Theta} The  map $\Theta$ defines a bijection between
$Z^1\big(\pi_1(\gX,a), \bG(k_s)\big)$ and couples $( z, \eta^{geo})$ with $z
\in  Z^1\big(\pi_1(\gX,a), \bG(k_s)\big)$ and $\eta^{geo} \in \Hom_{\Gal(k)}\big(
\pi_1(\overline{\gX}, \overline{a}), {_z\bG}(k_s)\big)$.
\end{lemma}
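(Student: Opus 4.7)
The plan is to exploit the semidirect product decomposition \eqref{splitfundamental}, namely $\pi_1(\gX,a) \simeq \pi_1(\overline{\gX},\overline{a}) \rtimes \Gal(k)$, to reconstruct a cocycle $\eta$ from its arithmetic and geometric parts. Every element of $\pi_1(\gX,a)$ can be written uniquely as $hs$ with $h \in \pi_1(\overline{\gX},\overline{a})$ and $s \in \Gal(k)$, so the cocycle identity forces
\[
\eta_{hs} = \eta_h \cdot {^h \eta_s} = \eta^{geo}_h \cdot z_s,
\]
where we used that $\pi_1(\overline{\gX},\overline{a})$ acts trivially on $\bG(k_s)$. This formula proves injectivity of $\Theta$: the cocycle $\eta$ is determined by the pair $(z,\eta^{geo})$. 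Well-definedness of $\Theta$ (that $\eta^{geo}$ is indeed $\Gal(k)$-equivariant into $_z\bG(k_s)$) has already been checked in the computation preceding the lemma.

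For surjectivity, the strategy is to start from a pair $(z, \eta^{geo})$ with $z \in Z^1(\Gal(k),\bG(k_s))$ and $\eta^{geo} \in \Hom_{\Gal(k)}(\pi_1(\overline{\gX},\overline{a}), {_z\bG}(k_s))$, \emph{define} $\eta_{hs} := \eta^{geo}_h \cdot z_s$, and verify the cocycle identity by direct computation. For $h_1 s_1, h_2 s_2 \in \pi_1(\gX,a)$ we have $(h_1 s_1)(h_2 s_2) = h_1 (s_1 h_2 s_1^{-1}) s_1 s_2$, and applying the definition together with the homomorphism property of $\eta^{geo}$ and the cocycle condition on $z$ yields
\[
\eta_{(h_1 s_1)(h_2 s_2)} = \eta^{geo}_{h_1} \cdot \eta^{geo}_{s_1 h_2 s_1^{-1}} \cdot z_{s_1} \cdot {^{s_1} z_{s_2}}.
\]
The $\Gal(k)$-equivariance of $\eta^{geo}$ for the twisted action gives $\eta^{geo}_{s_1 h_2 s_1^{-1}} = z_{s_1} \cdot {^{s_1} \eta^{geo}_{h_2}} \cdot z_{s_1}^{-1}$. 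Substituting and simplifying (using that the action of $\pi_1(\gX,a)$ on $\bG(k_s)$ factors through $\Gal(k)$, so $h_1$ acts trivially) one obtains
\[
\eta^{geo}_{h_1} \cdot z_{s_1} \cdot {^{s_1}(\eta^{geo}_{h_2} \cdot z_{s_2})} = \eta_{h_1 s_1} \cdot {^{h_1 s_1} \eta_{h_2 s_2}},
\]
which is exactly the cocycle identity.

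The main (minor) obstacle is just bookkeeping: one must consistently distinguish between the natural and the twisted $\Gal(k)$-action on $\bG(k_s)$, and remember that $\pi_1(\overline{\gX},\overline{a})$ acts trivially on $\bG(k_s)$ so that $\eta^{geo}$ really is a continuous homomorphism (not merely a cocycle). Continuity of the assembled $\eta$ follows from continuity of $z$ and $\eta^{geo}$ together with the profiniteness of $\pi_1(\gX,a)$, so the resulting $\eta$ lies in $Z^1(\pi_1(\gX,a),\bG(k_s))$ as required, completing the proof.
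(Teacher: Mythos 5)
Your proof is correct and follows the same approach as the paper: both define the candidate cocycle by $\eta_{hs} := \eta^{geo}_h z_s$ using the splitting (\ref{splitfundamental}) and verify the cocycle identity by combining the homomorphism property of $\eta^{geo}$, the cocycle condition on $z$, and the $\Gal(k)$-equivariance of $\eta^{geo}$. The only stylistic difference is that the paper packages the verification as the statement that $\gamma \mapsto (\eta_\gamma,\gamma)$ is a homomorphism section of $\bG(k_s)\rtimes\pi_1(\gX,a)\to\pi_1(\gX,a)$, whereas you carry out the cocycle computation explicitly; these are the same calculation in different notation.
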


\begin{proof} Since a $1$--cocycle is determined by its image on generators,
the map $\Theta$ is injective. For the surjectivity, assume we are
given $z \in  Z^1(\pi_1(\gX,a), \bG(k_s))$ and $\eta^{geo} \in
\Hom_{\Gal(k)}\big( \pi_1({\ol \gX} ,\ol a), {_z\bG}(k_s)\big)$. We define then 
${\eta: \pi_1(\gX,a) \to \bG(k_s)}$ by $\eta_{hs}:= \eta^{geo}_h \, z_s$
This map is continuous, its restriction to $\pi_1( \ol{\gX} ,\ol a)$ 
(resp. $\Gal(k)$ ) is $\eta^{geo}$ (resp. $z$). Finally, since
$\eta$ is a section of the projection map $\bG(k_s) \rtimes
\pi_1(\gX,a) \to \pi_1(\gX,a)$, it is a cocycle.
\end{proof}

We finish this section by recalling some basic properties of the
twisting bijection. Let $\eta \in Z^1\big(\pi_1(\gX,a), \bG(k_s)\big)$ and
consider its corresponding pair $\Theta(\eta) = (z, \eta^{geo}).$ We
can apply the same construction to the twisted $k$--group ${_z\bG}.$
This leads to a map $\Theta_z$ that  attaches to a cocycle
$\eta' \in Z^1\big(\pi_1(\gX,a), {_z\bG}(k_s)\big)$ a pair $(z', {\eta'}^{\rm
geo})$ along the lines explained above. Note that by Lemma
\ref{Theta} the pair $(1, \eta^{\rm geo})$ is in the image of
$\Theta_z.$ More precisely.

\begin{lemma} \label{looptwist} Let $\eta \in Z^1\big(\pi_1(\gX,a), \bG(k_s)\big)$. With the above notation, the inverse of the twisting
map {\rm \cite{Se}}
$$
\tau_z^{-1}:   Z^1\big(\pi_1(\gX,a), \bG(k_s)\big) \simlgr Z^1\big(\pi_1(\gX,a), {
_z\bG}(k_s)\big)
$$
satisfies $\Theta_z \circ \tau_z^{-1}(\eta) = (1, \eta^{\rm geo})$.
\end{lemma}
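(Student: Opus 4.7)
The plan is to unravel the definitions of the twisting map and the decomposition $\Theta$ and verify the two coordinates separately. Recall that the twisting map from \cite{Se} sends $\eta' \in Z^1\big(\pi_1(\gX,a), {_z\bG}(k_s)\big)$ to the cocycle $g \mapsto \eta'_g \, z_g \in Z^1\big(\pi_1(\gX,a), \bG(k_s)\big)$; equivalently
$$
\tau_z^{-1}(\eta)_g \;=\; \eta_g \, z_g^{-1} \qquad \text{for all } g \in \pi_1(\gX,a),
$$
where here, and throughout, we regard $z = \eta^{\rm ar}$ as a cocycle on the whole of $\pi_1(\gX,a)$ by inflation along $p : \pi_1(\gX,a) \twoheadrightarrow \Gal(k)$.

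First I would compute the arithmetic part. Using the splitting $x^{*}: \Gal(k) \to \pi_1(\gX,a)$, the arithmetic part of $\tau_z^{-1}(\eta)$ evaluated at $s \in \Gal(k)$ is $\eta_{x^{*}(s)} \, z_{x^{*}(s)}^{-1}$. By the very definition of the arithmetic part, $\eta_{x^{*}(s)} = \eta^{\rm ar}_s = z_s$, so this product is trivial. Hence $\bigl(\tau_z^{-1}(\eta)\bigr)^{\rm ar} = 1$, which takes care of the first coordinate of $\Theta_z\bigl(\tau_z^{-1}(\eta)\bigr)$.

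Next I would compute the geometric part. For $h \in \pi_1(\ol\gX,\ol a) = \ker(p)$, the inflation convention gives $z_h = z_{p(h)} = 1$, so
$$
\tau_z^{-1}(\eta)_h \;=\; \eta_h \cdot 1 \;=\; \eta_h \;=\; \eta^{\rm geo}_h,
$$
which shows $\bigl(\tau_z^{-1}(\eta)\bigr)^{\rm geo} = \eta^{\rm geo}$ as an abstract map. Combining both computations yields $\Theta_z \circ \tau_z^{-1}(\eta) = (1,\eta^{\rm geo})$.

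The only point that requires genuine care, and what I would single out as the main obstacle, is checking that the output really lies in the domain of $\Theta_z$: namely, that $\eta^{\rm geo}$, a priori viewed as taking values in $\bG(k_s)$, is in fact $\Gal(k)$--equivariant as a map into $_{z}\bG(k_s)$ (so that the pair $(1,\eta^{\rm geo})$ is a legitimate image of $\Theta_z$). This, however, is exactly the computation performed just before Lemma~\ref{Theta}, which established that $\eta^{\rm geo} \in \Hom_{\Gal(k)}\bigl(\pi_1(\ol\gX,\ol a), {_z\bG}(k_s)\bigr)$; no new verification is needed, and the lemma follows.
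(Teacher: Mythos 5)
Your proof is correct, and it takes the only reasonable route: the paper in fact states Lemma~\ref{looptwist} without a written proof (it is presented as a ``more precise'' version of the observation, derived from Lemma~\ref{Theta}, that $(1,\eta^{\rm geo})$ lies in the image of $\Theta_z$), so you are simply supplying the suppressed two-line computation. Your identification $\tau_z^{-1}(\eta)_\gamma = \eta_\gamma z_{p(\gamma)}^{-1}$ matches the paper's twisting convention (see the explicit formula $(\tau_\eta\phi)(\gamma)=\phi(\gamma)\,\eta(\gamma)$ used later in the proof of Lemma~\ref{cocycles}(3)), the two coordinate checks are exactly right, and you correctly flag that the well-definedness of the target pair rests on the equivariance of $\eta^{\rm geo}$ established just before Lemma~\ref{Theta}. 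No gaps.
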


\begin{remark} \label{direct}{\rm Consider the special case when  the semi-direct
 product  is direct, i.e.
$\pi_1(\gX,a)= \pi_1(\ol{\gX} ,\ol a) \times \Gal(k)$. In other words, the affine $k$--group $\bnu$ defined above is constant  so that  
$$ \eta^{geo}_h = z_s \,\,  {^s\eta^{geo}_h}
\, z_s^{-1}
$$
for all $h \in \pi_1( \ol{\gX} ,\ol a)$ and  $s \in \Gal(k)$. The
torsion map  
$$
\tau_z^{-1}: Z^1\big(\pi_1({\ol \gX} ,\ol a), \bG(k_s)\big) \to
Z^1\big(\pi_1(\ol{\gX},\ol a), {_z\bG}(k_s)\big) 
$$ 
maps the cocycle $\eta$  to the homomorphism  $\eta^{\rm geo}:
\pi_1(\ol{\gX} ,\ol a) \to {_z\bG}(k_s)$.}

\end{remark}

We give now one more reason to call $\eta^{geo}$ the
geometric part  of $\eta$.

\begin{lemma}\label{salade2} 
 Let $\bnu$ be the affine $k$--group scheme defined in
  Remark \ref{profinite}. Then for each linear algebraic $k$--group
  $\bH$, there  is a natural bijection
$$
\Hom_{k-gp}( \bnu, {\bH}) \simlgr
\Hom_{\Gal(k)}\bigl(  \pi_1( {\ol \gX} ,\ol a) ,  {\bH}(k_s) \bigr) .
$$
\end{lemma}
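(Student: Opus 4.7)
The plan is to reduce the statement to a finite-level computation, where it becomes the standard equivalence between finite étale $k$-group schemes and finite continuous $\Gal(k)$-modules.

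First, I would invoke Remark \ref{profinite} to write $\bnu=\limproj_{\alpha}\bnu_\alpha$, where each $\bnu_\alpha$ is a twisted finite constant $k$-group with $\bnu_\alpha(k_s)=H_\alpha$ and the Galois action on $H_\alpha$ is the one obtained from the splitting (\ref{splitfundamental}). Dually $\bnu=\Spec(A)$ with $A=\limind_\alpha k[\bnu_\alpha]$. Since $\bH$ is linear algebraic, the $k$-Hopf algebra $k[\bH]$ is finitely generated; hence any $k$-algebra map $k[\bH]\to A$ factors through some $k[\bnu_\alpha]$. This gives
$$
\Hom_{k-gp}(\bnu,\bH)\;=\;\limind_\alpha\,\Hom_{k-gp}(\bnu_\alpha,\bH).
$$

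Second, for each finite étale $k$-group scheme $\bnu_\alpha$, I would apply Grothendieck's Galois theory: taking $k_s$-points yields an equivalence between the category of finite étale $k$-group schemes and the category of finite groups equipped with a continuous $\Gal(k)$-action, and this equivalence is natural in the target. Concretely, a $k$-group morphism $\bnu_\alpha\to\bH$ is determined by the induced group homomorphism $\bnu_\alpha(k_s)\to\bH(k_s)$, and a $\Gal(k)$-equivariant homomorphism in the reverse direction is realized by faithfully flat descent of the corresponding morphism of $k_s$-group schemes. This yields a natural bijection
$$
\Hom_{k-gp}(\bnu_\alpha,\bH)\;\simlgr\;\Hom_{\Gal(k)}\bigl(H_\alpha,\bH(k_s)\bigr).
$$

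Finally, I would pass to the limit. Since $\bH(k_s)$ carries the discrete topology, a continuous $\Gal(k)$-equivariant homomorphism $\pi_1(\ol\gX,\ol a)\to\bH(k_s)$ is exactly one that factors through one of the finite quotients $H_\alpha$ (here \emph{continuous} is built into the notation by Remark \ref{continuous}). Combining the two displays yields
$$
\Hom_{k-gp}(\bnu,\bH)\;=\;\limind_\alpha \Hom_{\Gal(k)}\bigl(H_\alpha,\bH(k_s)\bigr)\;=\;\Hom_{\Gal(k)}\bigl(\pi_1(\ol\gX,\ol a),\bH(k_s)\bigr),
$$
which is the desired bijection, manifestly natural in $\bH$.

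The main obstacle is the middle step: one must verify that the natural bijection between morphisms of finite étale $k$-group schemes and $\Gal(k)$-equivariant homomorphisms of their $k_s$-points is compatible with the specific $\Gal(k)$-action on $\bnu_\alpha(k_s)=H_\alpha$ coming from the semidirect product splitting (\ref{splitfundamental}). This is, however, exactly how the twisted finite constant $k$-groups $\bnu_\alpha$ were constructed in Remark \ref{profinite}, so the compatibility is by design and no further checking is needed beyond unwinding the definitions.
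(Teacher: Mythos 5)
Your proof is correct and follows essentially the same three-step argument as the paper: reduce to the finite level, invoke the equivalence between finite étale $k$-group schemes and finite continuous $\Gal(k)$-groups, and pass to the colimit. The only difference is cosmetic: you justify the identification $\Hom_{k-gp}(\bnu,\bH)=\limind_\alpha\Hom_{k-gp}(\bnu_\alpha,\bH)$ by a direct finite-generation argument on $k[\bH]$, whereas the paper cites \cite{SGA3} ${\rm VI}_B$ 10.4.
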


\begin{proof} First we recall that $\Hom_{\Gal(k)}\bigl(  \pi_1( {\ol \gX} ,\ol a) ,  {\bH}(k_s) \bigr)$ stands for the {\it continuous} homomorphisms from  $  \pi_1( {\ol \gX} ,\ol a)$ to $ {\bH}(k_s)$ that commute with the action of $\Gal(k)$.

Write $\bnu= \limproj \bnu_\alpha$ as an inverse limit
of twisted constant  finite $k$--groups.
Since $\bH$ and the $\bnu_\alpha$ are of finite presentation  we have by applying \cite{SGA3} ${\rm VI}_{B}$ 10.4  that
$$
\Hom_{k-gp}( \bnu, {\bH}) = \limind_\alpha
\Hom_{k-gp}( \bnu_\alpha, \bH)
$$
$$=   \limind_\alpha \Hom_{\Gal(k)}\big( \bnu_\alpha( k_s), \bH( k_s)\big)
= \Hom_{\Gal(k)}\big(  \pi_1({\ol \gX} ,\ol a) ,  {\bH}(k_s) \big). 
$$
\end{proof}

This permits to see purely geometric $k$--loop torsors
in terms of homomorphisms of affine  $k$--group schemes.

\subsection{Finite torsors}\label{finitetorsors} 

Throughout this section we assume that $\bG$ is a smooth
 affine $k$--group, and $\gX$ a scheme over $k.$  Let $\bG_\gX = \bG
 \times_k \gX$ be the $\gX$--group obtained from $\bG$ by base change.

 Following our  convention  a torsor over $\gX$ under $\bG$ means under $\bG_\gX,$ and we write $H^1(\gX, \bG)$ instead of $H^1(\gX, \bG_\gX).$

\begin{definition}\label{finitetorsor} A torsor $\bE$ over $\gX$ under $\bG$  is said to be {\it finite} if it admits
a reduction to a finite $k$--subgroup $\bS$ of $\bG$; this is to say, the class of $\bE$ belongs to the image of the natural map $H^1(\gX, \bS) \to H^1(\gX, \bG) $ for some finite subgroup $\bS$ of $\bG$. 
\end{definition}

We denote by
$H^1_{finite}(\gX, \bG)$ the subset of  $H^1(\gX, \bG)$ consisting of
classes of finite torsors, that is
$$
H^1_{finite}(\gX, \bG):= \bigcup\limits_{\bS \subset \bG} {\rm
Im}\big( H^1(\gX, \bS) \to H^1(\gX, \bG) \big) .
$$
where $\bS$ runs over all finite $k$--subgroups of $\bG$. 
\medskip

The case when $k$ is of characteristic $0$  is well known.

\begin{lemma}\label{lem1} Assume that $k$ is of characteristic $0.$
Then  $H^1_{finite}(\gX, \bG) \subset H^1_{loop}(\gX, \bG).$ If
in addition $k$ is algebraically closed, then $H^1_{finite}(\gX, \bG)
= H^1_{loop}(\gX, \bG).$
\end{lemma}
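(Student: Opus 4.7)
The argument splits naturally into the two inclusions.

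\emph{Inclusion $H^1_{finite}(\gX,\bG)\subset H^1_{loop}(\gX,\bG)$.} Fix a finite $k$--subgroup $\bS\subset\bG$. Because $\mathrm{char}(k)=0$ the group $\bS$ is étale. The plan is to check that the composite $H^1(\gX,\bS)\to H^1(\gX,\bG)$ already factors through the loop construction. Since $\bS_\gX\to\gX$ is finite étale, every $\bS_\gX$--torsor $\bE$ over $\gX$ is finite étale, hence trivialized by some $\gX'\in\gX_{\fet}$; equivalently, by $\gX^{sc}$. Thus $H^1(\gX,\bS)=H^1_{iso}(\gX,\bS)$, and Proposition \ref{discrete} identifies this with $H^1\bigl(\pi_1(\gX,a),\bS(\gX^{sc})\bigr)$. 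The next step is to show that the natural map $\bS(k_s)\to\bS(\gX^{sc})$ is an isomorphism of $\pi_1(\gX,a)$--modules (the source being acted on via $\pi_1(\gX,a)\to\Gal(k)$ from (\ref{fundamentalexact})). For this one uses that $\bS$ is finite étale over $k$, so $\bS(Y)$ for $Y$ connected reduces to embeddings of the finite separable $k$--algebra defining $\bS$ into the algebraic closure of $k$ in $\Gamma(Y,\mathcal O_Y)$; passing through the cofinal system $(\gX_i)$ of connected Galois covers (which includes the pullback of every finite Galois subextension of $k_s/k$ thanks to geometric connectedness of $\gX$) yields $\bS(\gX^{sc})=\limind\bS(\gX_i)=\bS(k_s)$. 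Functoriality of the composite then gives a commutative square
\begin{equation*}
\begin{CD}
H^1\bigl(\pi_1(\gX,a),\bS(k_s)\bigr) @>>> H^1\bigl(\pi_1(\gX,a),\bG(k_s)\bigr) \\
@| @VVV \\
H^1(\gX,\bS) @>>> H^1(\gX,\bG),
\end{CD}
\end{equation*}
whose right vertical arrow realises $H^1_{loop}(\gX,\bG)$. Hence every finite class is a loop class.

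\emph{Inclusion $H^1_{loop}(\gX,\bG)\subset H^1_{finite}(\gX,\bG)$ when $k$ is algebraically closed.} Since $k=k_s$, Example \ref{kloopexamples}(b) identifies a loop class with a continuous homomorphism $\phi:\pi_1(\gX,a)\to\bG(k)$, taken modulo $\bG(k)$--conjugation. The module $\bG(k)=\bG(\gX^{sc})$ is discrete in the sense of Proposition \ref{discrete}, so continuity of $\phi$ forces $\ker(\phi)$ to be open and $\Gamma:=\phi(\pi_1(\gX,a))$ to be a \emph{finite} subgroup of $\bG(k)$. Because $k$ is algebraically closed of characteristic $0$, the corresponding constant group scheme $\bS=\underline{\Gamma}$ is a finite étale $k$--subgroup of $\bG$. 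The cocycle $\phi$ factors through $\bS(k)$, producing a class in $H^1\bigl(\pi_1(\gX,a),\bS(k)\bigr)=H^1(\gX,\bS)$ (by the first part) whose image under $\bS\hookrightarrow\bG$ is $[\phi]$. Combined with the first inclusion, equality follows.

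\emph{Main obstacle.} The delicate step is the identification $\bS(\gX^{sc})\simeq\bS(k_s)$ as $\pi_1(\gX,a)$--modules in the first part: it rests on the geometric connectedness of $\gX$ (so that the projection $\pi_1(\gX,a)\to\Gal(k)$ is surjective and every finite Galois subextension $L/k$ appears in some $\gX_i$), together with a connectedness argument for $\gX_i$ controlling $\bS(\gX_i)$. Once this identification is in place, both inclusions reduce to formal manipulations with cocycles and the defining commutative diagram of the loop map.
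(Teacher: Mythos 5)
Your argument is correct and follows essentially the same strategy as the paper's proof: for the forward inclusion, identify $H^1(\gX,\bS)$ with $H^1\bigl(\pi_1(\gX,a),\bS(k_s)\bigr)$ — the paper cites SGA1 Exp.~XI directly, whereas you re-derive it via Proposition \ref{discrete}, the observation $H^1(\gX,\bS)=H^1_{iso}(\gX,\bS)$, and the hands-on identification $\bS(\gX^{sc})\simeq\bS(k_s)$, but the Galois-theoretic content is the same — and for the reverse inclusion both arguments are identical, using that a continuous homomorphism from $\pi_1(\gX,a)$ into the discrete group $\bG(k)$ has finite image and hence factors through a constant finite subgroup of $\bG$. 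One incidental slip: the equality ``$\bG(k)=\bG(\gX^{sc})$'' you assert in the second part is false in general (the right-hand side is typically much larger), but your argument never uses it — discreteness of the abstract group $\bG(k)$ alone is what forces the image of a continuous $\phi$ to be finite.
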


\begin{proof} Let $\bS$ be a finite subgroup of the $k$--group  $\bG.$ Since $k$ is of
characteristic $0$ the group $\bS$ is \'etale.
Thus $\bS$ corresponds to a finite abstract group $S$ together with
a continuous action of $\Gal(k)$ by group automorphisms. More
precisely (see \cite{SGA1} or \cite{K2} pg.184) $S = \bS(\overline{k})$ with the natural action of
$\Gal(k).$ Similarly the \'etale $\gX$-group $\bS_\gX$ corresponds to
$S$ with the action of $\pi_1(\gX, a)$ induced from the homomorphism
$\pi_1(\gX,a) \to \Gal(k).$

By Exp. XI of \cite{SGA1} we have
\begin{equation}
H^1(\gX, \bS) \buildrel {\rm def} \over = H^1(\gX, \bS_\gX) = H^1\big(\pi_1(\gX,a), S\big) = H^1(\pi_1(\gX,a), \bS(\ol{k}))
\end{equation}
which shows that $H^1(\gX, \bS) \subset H^1_{loop}(\gX, \bG).$

If $k$ is algebraically closed any $k$-loop torsor $\bE$ is given by a
continuous group homomorphism $f_{\bE} : \pi_1(\gX,a) \to \bG(k),$ as explained in  Example \ref{kloopexamples}(b). Then the
image of $f_{\bE}$ is a finite subgroup of $\bG(k)$ which gives rise to
a finite (constant) algebraic subgroup $\bS$ of $\bG.$ By
construction $[\bE]$ comes from $H^1(\gX, \bS).$
\end{proof}

\subsection{Toral torsors}\label{toralclasses}

Let $k$, $\bG$ and $\gX$ be as in the previous section.  Given a torsor $\bE$ over $\gX$ under $\bG_\gX$ we can
consider the twisted $\gX$--group $_{\bE}\bG_\gX = {\bE} \wedge^{\bG_\gX} \bG_\gX.$  Since no confusion will
arise we will denote $_{\bE}\bG_\gX$ simply by $_{\bE}\bG.$ We say that
our torsor $\bE$ is {\it toral} if the twisted $\gX$--group $_{\bE}\bG$
admits a  maximal torus (XII.1.3). We denote by $H^1_{
toral}(\gX, \bG) \subset H^1(\gX, \bG)$ the set of classes of toral torsors.

We recall the following useful result.

\begin{lemma}\label{toral}

\begin{enumerate}

\item  Let $\bT$ be a maximal torus of
$\bG$.\footnote{We remind the reader that we are abiding by \cite{SGA3} conventions and terminology. In the expression ``maximal torus of $\bG$ "  we view $\bG$ as a $k$--group, namely a group scheme over $\Spec(k).$ In particular $\bT$ is a $k$-group...}  Then
$$
H^1_{toral}(\gX,\bG) = {\rm Im}\Big(
H^1\big(\gX,\bN_{\bG}(\bT)\big) \to H^1(\gX, \bG)\Big).
$$

\item Let $1 \to \bS \to \bG' \buildrel p \over \to \bG \to 1$ be a central extension of
$\bG$ by a $k$--group $\bS$ of multiplicative type. Then the  diagram
$$
\begin{CD}
H^1_{toral}(\gX, \bG') & \enskip \subset \enskip  & H^1(\gX, \bG') \\
@V{p_*}VV @V{p_*}VV \\
H^1_{toral}(\gX, \bG) & \enskip  \subset \enskip  & H^1(\gX, \bG) \\
\end{CD}
$$
 is cartesian.

\end{enumerate}

\end{lemma}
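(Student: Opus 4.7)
For part (1), the forward direction is a direct twisting computation. If $[\bE]$ is the image of a class $[\bF] \in H^1(\gX,\bN_{\bG}(\bT))$, then $\bE \cong \bF \wedge^{\bN_{\bG}(\bT)} \bG$. Since $\bN_{\bG}(\bT)$ acts on $\bG$ by inner automorphisms and normalizes $\bT$, the twisted group $_{\bE}\bG$ contains the twisted torus $_{\bF}\bT_\gX$, and since the property of being a maximal torus is stable under fppf twisting (it can be checked on geometric fibers), $_{\bF}\bT_\gX$ is a maximal torus of $_{\bE}\bG$. Hence $[\bE] \in H^1_{\rm toral}(\gX,\bG)$.

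For the reverse direction, suppose $_{\bE}\bG$ admits a maximal torus $\gT$. The key input is the representability of the scheme of maximal tori: by XII.1.10 this scheme is identified with $_{\bE}\bG/_{\bE}\bN_{\bG}(\bT) \cong \bE \wedge^{\bG} \bG/\bN_{\bG}(\bT)$. The existence of $\gT$ provides a section of this $\gX$-scheme. By the standard correspondence between reductions of structure group of a $\bG$-torsor $\bE$ to a subgroup $\bH$ and sections of $\bE/\bH \to \gX$ (e.g.\ \cite{DG} III\S4), this section corresponds to an $\bN_{\bG}(\bT)$-torsor $\bF$ with $[\bE] = p_*[\bF]$. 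I expect this invocation of the scheme of maximal tori to be the main technical point.

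For part (2), to say the diagram is cartesian means $[\bE'] \in H^1(\gX,\bG')$ lies in $H^1_{\rm toral}(\gX,\bG')$ if and only if $p_*[\bE'] \in H^1_{\rm toral}(\gX,\bG)$. The ``only if'' direction: given a maximal torus $\gT'$ of $_{\bE'}\bG'$, the image $p(\gT')$ is a torus in $_{p_*\bE'}\bG$, and a dimension count using $\mathrm{rank}(\bG') = \mathrm{rank}(\bG) + \dim \bS^0$ together with $\bS^0 \subset \gT'$ (since $\bS^0$ is a central torus) shows that $p(\gT')$ has the rank of $_{p_*\bE'}\bG$, hence is maximal.

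For the ``if'' direction, suppose $\gT$ is a maximal torus of $_{p_*\bE'}\bG$. Since $\bS$ is central, the central extension twists to a central extension $1 \to \bS \to {_{\bE'}\bG'} \to {_{p_*\bE'}\bG} \to 1$. Let $\tilde\gT := p^{-1}(\gT)$, fitting in $1 \to \bS \to \tilde\gT \to \gT \to 1$. An extension of a group of multiplicative type by a group of multiplicative type is again of multiplicative type (IX.2.1), so $\tilde\gT$ is of multiplicative type; hence $\tilde\gT^0$ is a torus (IX.8.1). Comparing dimensions, $\dim \tilde\gT^0 = \dim \tilde\gT = \dim \gT + \dim \bS = \mathrm{rank}(\bG) + \dim \bS^0 = \mathrm{rank}(\bG')$, so $\tilde\gT^0$ is a maximal torus of $_{\bE'}\bG'$. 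This shows $[\bE']$ is toral, completing the cartesian property.
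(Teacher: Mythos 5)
Your proof is correct. For part~(1) you essentially reconstruct the argument of \cite[3.1]{CGR2}, which the paper simply cites: the twisting direction is immediate, and the converse goes through the identification of the scheme of maximal tori of $_{\bE}\bG$ with $\bE\wedge^{\bG}\bG/\bN_{\bG}(\bT)$, so that a maximal torus of $_{\bE}\bG$ is a section and hence a reduction of $\bE$ to $\bN_{\bG}(\bT)$. For part~(2), however, your route is genuinely different from the paper's. The paper first treats the special case where $\bS$ equals the reductive center of $\bG'$, in which case XII~4.7 gives the bijection between maximal tori of $_{\bE'}\bG'$ and of $_{\bE'}\bG$ directly, and then reduces the general central extension to this case by passing to $\bG'/\bZ'\simeq \bG/\bZ$. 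You instead handle an arbitrary central $\bS$ of multiplicative type in one step, by showing that $\gT'\mapsto p(\gT')$ and $\gT\mapsto p^{-1}(\gT)^0$ are mutually inverse on maximal tori; the key input is IX on extensions of multiplicative type groups, which replaces the invocation of XII~4.7. Your argument is more self-contained but requires slightly more care (e.g.\ in positive characteristic $\bS$ need not be smooth, so $p^{-1}(\gT)$ need not be smooth and one must argue via the multiplicative-type structure rather than smoothness), whereas the paper's reduction lets XII~4.7 absorb those subtleties.
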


\begin{proof} (1) is established in \cite[3.1]{CGR2}.

\smallskip

\noindent (2) Consider first the case when $\bS$ is the reductive center of $\bG'$,
We are given an $\gX$--torsor $\bE'$ under $\bG'$
and consider the surjective morphism of $\gX$--group schemes ${_{\bE'}\bG'} \to {_{\bE'}\bG}$
whose  kernel is $\bS \times_k \gX$.
By   XII 4.7 there is a natural one-to-one correspondence between
maximal tori of the $\gX$--groups  ${_{\bE'}\bG'}$ and  ${_{\bE'}\bG}$.
Hence $\bE'$ is a toral $\bG'$--torsor if and only if $\bE' \wedge^{\bG'_\gX} \bG_\gX$ is
a toral $\bG$--torsor.
The  general case follows form the fact that $\bG'/\bZ' \simeq \bG/\bZ$  where
$\bZ'$ (resp. $\bZ$) is the reductive center of $\bG$.
\end{proof}

In an important case the property of a torsor being toral is of
infinitesimal nature.

\begin{lemma}Assume that
$\bG$ is semisimple of adjoint type. For a $\gX$--torsor $\bE$ under $\gG$  the following conditions are equivalent:

\begin{enumerate}

\item $\bE$ is  toral.
\item   The Lie algebra ${\mathcal  Lie}(_{\bE}\bG)$  admits a Cartan subalgebra.
\end{enumerate} 
\end{lemma}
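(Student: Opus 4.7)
The plan is to deduce both implications from a single structural fact in \cite{SGA3} Exp.~XIV: for a semisimple group scheme $\gH$ of adjoint type over $\gX$, the assignment $\gT \mapsto \mathcal{Lie}(\gT)$ gives a bijection between maximal tori of $\gH$ and Cartan subalgebras of $\mathcal{Lie}(\gH)$. More precisely, the functor of Cartan subalgebras of the Lie algebra of a reductive $\gX$--group is representable by a smooth $\gX$--scheme, and the natural morphism from the scheme of maximal tori into it (given by $\gT \mapsto \mathcal{Lie}(\gT)$) is a closed immersion in general and an isomorphism in the adjoint case.

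For the implication $(1) \Rightarrow (2)$ the argument is immediate: if $\gT$ is a maximal torus of $_{\bE}\bG$, then $\mathcal{Lie}(\gT)$ is by construction a Cartan subalgebra of $\mathcal{Lie}(_{\bE}\bG)$ by the discussion in \cite{SGA3} Exp.~XIII--XIV. This direction makes no use of the adjointness hypothesis and is the ``easy'' half.

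For the implication $(2) \Rightarrow (1)$, I would start from a Cartan subalgebra $\gc \subset \mathcal{Lie}(_{\bE}\bG)$ and apply the cited bijection to produce a maximal torus $\gT$ of $_{\bE}\bG$ with $\mathcal{Lie}(\gT) = \gc$. The hypothesis of the SGA3 result is applicable because $_{\bE}\bG$, being an \'etale twist of the semisimple adjoint group $\bG$, is itself semisimple of adjoint type. An equivalent, more concrete construction is to take $\gT$ to be the scheme-theoretic centralizer $\bZ_{_{\bE}\bG}(\gc)$; it is precisely the adjointness of $_{\bE}\bG$ that forces this centralizer to be a maximal torus rather than a strictly larger Cartan subgroup with nontrivial center. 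Once the torus $\gT$ is produced, $\bE$ is toral by definition.

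The principal obstacle is simply locating the precise SGA3 reference for the adjoint-case bijection and verifying that adjointness (rather than, say, a characteristic-zero assumption) is exactly the condition under which the closed immersion from the scheme of maximal tori into the scheme of Cartan subalgebras becomes an isomorphism. Granting this, both directions of the equivalence follow immediately.
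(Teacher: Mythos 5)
Your proposal is correct and takes essentially the same route as the paper, which simply cites XIV th\'eor\`emes 3.9 and 3.18 of \cite{SGA3} for the natural one-to-one correspondence between maximal tori of the adjoint group scheme $_{\bE}\bG$ and Cartan subalgebras of its Lie algebra. Your additional remarks about representability and the centralizer construction are just unpacking of that same cited bijection.
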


\begin{proof}
 By  XIV th\'eor\`emes 3.9 and 3.18 there exists a natural  one-to-one correspondence between
 the maximal tori of $_{\bE}\bG$ and Cartan subalgebras of ${\mathcal Lie}(_{\bE}\bG).$
\end{proof}

\noindent Recall the following result \cite{CGR2}.

\begin{theorem} \label{bogo}
Let $R$ be a commutative ring and $\bG$ a smooth affine
group scheme over $R$ whose connected component of the identity $\bG^0$
is reductive.  Assume further that one of the following holds:
\medskip

(a) $R$ is an algebraically closed field, or

\medskip

(b) $R=\Z$, $\bG^0$ is a  Chevalley group, and
the order of the Weyl group of the geometric fiber
$\bG_{\overline{s}}$ is independent of $s \in \Spec(\Z)$, or

\medskip

(c) $R$ is a semilocal ring, $\bG$ is connected, and
the radical torus ${\rm rad}(\bG)$ is isotrivial.

\medskip

\noindent
Then there exist a maximal torus $\bT $ of  $\bG,$ and
a finite $R$--subgroup $\bS \subset \bN_{\bG}(\bT)$,
such that

\smallskip
\begin{enumerate}
\item $\bS$ is an extension of a finite
 twisted constant $R$--group
 by a finite $R$--group of multiplicative type,

\smallskip 

\item the natural map $ H^1_{fppf}(\gX, \bS) \lra H^1_{fppf}\big(\gX, \bN_{\bG}(\bT)\big) $
is surjective for any  $R$--scheme $\gX$ satisfying the  condition:
\begin{equation} \label{e.C}
\text{$\Pic(\gX') = 0$ for every generalized Galois cover $\gX'/\gX$,}
\end{equation} where by a generalized
 Galois cover $\gX' \to \gX$ we understand a $\Gamma$--torsor
for some twisted finite constant $\gX$--group scheme $\Gamma.$ \qed

\end{enumerate}
\end{theorem}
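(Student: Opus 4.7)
The plan is to construct the maximal torus $\bT$ and the finite subgroup $\bS \subset \bN_{\bG}(\bT)$ explicitly, then deduce the cohomological surjectivity by a twisting argument controlled by hypothesis (C).

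For the torus: in case (a) any reductive group over an algebraically closed field admits a maximal torus; in case (b) we use the standard split maximal torus of the Chevalley group $\bG^0$ over $\Z$; in case (c) the existence of a maximal torus $\bT \subset \bG$ defined over $R$ follows from the conjugacy of maximal tori of reductive groups over connected semilocal rings together with the isotriviality of $\rad(\bG)$. Set $\bN := \bN_{\bG}(\bT)$. The quotient $\bW := \bN/\bT$ is a finite twisted constant $R$-group (an extension of the component group $\bG/\bG^0$ by the Weyl group of $(\bG^0, \bT)$). To build $\bS$, I adapt Tits' construction of the extended Weyl group: produce canonical preimages $n_\alpha \in \bN(R')$ of the simple reflections for a suitable finite \'etale cover $R' \to R$ splitting $\bT$ (using the Chevalley/Tits cross-section formulas in the root subgroups), and take the $R$-subgroup generated by these preimages together with a sufficiently large finite subgroup of $\bT$ (classically $\bT[2]$) chosen so that the resulting subgroup is closed under conjugation and descends from $R'$ to $R$. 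The outcome is a finite subgroup $\bS \subset \bN$ that surjects onto $\bW$ with kernel $\bS \cap \bT$ finite of multiplicative type; the short exact sequence $1 \to \bS \cap \bT \to \bS \to \bW \to 1$ exhibits $\bS$ as the required extension.

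For the cohomological part, the surjection $\bS \to \bW$ forces $\bN = \bT \cdot \bS$, so $\bN/\bS \simeq \bT/(\bT \cap \bS) =: \bQ$ is a torus. Given a class $[\bE] \in H^1_{fppf}(\gX, \bN)$, the standard twisting trick applied to the sequence $1 \to \bS \to \bN \to \bQ \to 1$ reduces the lifting question to showing triviality of the image of $[\bE]$ in $H^1(\gX, {_{\bE}\bQ})$. Let $\gX' \to \gX$ be the $\bW$-torsor arising from the image of $[\bE]$ in $H^1(\gX, \bW)$; by construction this is a generalized Galois cover in the sense of condition (C), and over $\gX'$ the twisted torus ${_{\bE}\bQ}_{\gX'}$ becomes isomorphic to a power of $\mathbb{G}_m$. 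Hypothesis (C) yields $\Pic(\gX') = 0$, hence $H^1(\gX', {_{\bE}\bQ}) = 0$ by Hilbert 90. A Shapiro-style descent along the finite \'etale cover $\gX' \to \gX$, using that the residual obstructions live in cohomology groups of $\bW$ with values in a permutation module of units and are killed after a further generalized Galois cover where (C) is again available, transfers the vanishing back to $\gX$.

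The main obstacle is the descent of the Tits-type construction of $\bS$ to the base ring $R$ in case (c): the classical extended Weyl group is designed over $\Z$ for a split group, so one must combine the isotriviality of $\rad(\bG)$ with conjugacy of maximal tori to obtain a genuinely finite $R$-form of $\bS$ whose quotient is the twisted $\bW$. The cohomological step is then largely formal, but one has to verify that (C) is robust under taking iterated generalized Galois covers, which is implicit in the formulation since a $\Gamma$-torsor over a $\Gamma'$-torsor of $\gX$ is again a generalized Galois cover of $\gX$.
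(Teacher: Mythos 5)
The theorem you are proving is not actually proved in the paper: it is quoted from the reference \cite{CGR2} (Chernousov--Gille--Reichstein, \emph{Resolution of torsors by abelian extensions II}), hence the \texttt{\textbackslash qed} attached directly to the statement and the introductory phrase ``Recall the following result.'' There is therefore no internal argument to compare with. Evaluating your reconstruction on its own terms, the first half (existence of $\bT$, construction of $\bS$ via the Tits extended Weyl group and the cross-section formulas, with $\bS \cap \bT$ finite of multiplicative type and $\bS/(\bS \cap \bT) \simeq \bW$) is in the spirit of \cite{CGR2}, though you should also note that the isotriviality of $\rad(\bG)$ in case (c) is what lets the Tits construction descend to the base, not merely what produces a maximal torus.

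The cohomological half contains a genuine gap. You write ``$\bN/\bS \simeq \bT/(\bT \cap \bS) =: \bQ$ is a torus'' and then invoke ``the sequence $1 \to \bS \to \bN \to \bQ \to 1$'' as if it were a short exact sequence of group schemes, so that the standard twisting argument applies. But $\bS$ is \emph{not} normal in $\bN$: for $t \in \bT$ and a Tits representative $n_\alpha$ one has $t\,n_\alpha\,t^{-1} = n_\alpha\cdot\bigl(w_\alpha(t)t^{-1}\bigr)$, and $w_\alpha(t)t^{-1}$ is generically not in $\bS \cap \bT$. Consequently $\bN/\bS$ is only a scheme (a $\bT$-homogeneous space), not a quotient group, and there is no exact sequence $1 \to \bS \to \bN \to \bQ \to 1$. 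The ``standard twisting trick'' you appeal to therefore does not produce a well-defined obstruction class in $H^1(\gX,{}_{\bE}\bQ)$. Worse, the twisted quotient $\bE\wedge^{\bN}(\bN/\bS)$ need not even carry a natural torsor structure under a twisted form of $\bQ$, because the $\bQ$-torsor structure on $\bN/\bS$ is not $\bN$-equivariant. The correct argument must instead work with the genuine extension $1 \to \bT \to \bN \to \bW \to 1$: one forms the $\bW$-torsor $\gX' = \bE \wedge^{\bN}\bW$ (a generalized Galois cover), twists by it, and separately analyses the obstruction to lifting along $1 \to \bS\cap\bT \to \bS \to \bW \to 1$ together with the comparison with $1 \to \bT\cap\bS \to \bT \to \bQ \to 1$, using condition~(\ref{e.C}) to kill the relevant $H^1$ of the twisted torus $\bQ$. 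Relatedly, your ``Shapiro-style descent \ldots killed after a further generalized Galois cover'' is not an argument: $H^1(\gX',\bQ) = 0$ does not by itself imply $H^1(\gX,{}_{\gX'}\bQ) = 0$; one needs an inflation--restriction analysis or a direct computation of $H^1(\gX,{}_{\gX'}\bQ)$ in terms of units and $\Pic$ of the covers, which is exactly where the full strength of hypothesis~(\ref{e.C}) enters.
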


\begin{corollary}\label{corbogo} Let $\bG$ be a linear algebraic  $k$--group
whose connected component of the identity $\bG^0$ is reductive.
Assume that one of the following holds:

\medskip

(i) $k$ is algebraically closed;

\medskip

(ii) $\bG$ is obtained by base change from a smooth affine $\Z$--group
satisfying the hypothesis of Theorem \ref{bogo}(b);

\medskip

(iii) $\bG$ is reductive.

\smallskip

If the $k$--scheme $\gX$ satisfies  condition (\ref{e.C}), then

\begin{enumerate}
\item $H^1_{toral}(\gX, \bG) \subset H^1_{finite}(\gX,\bG)$.

\item If furthermore ${\rm char}(k)=0$, we have
$H^1_{toral}(\gX, \bG) \subset H^1_{finite}(\gX,\bG) \subset H^1_{loop}(\gX,\bG)$.
\end{enumerate}

\end{corollary}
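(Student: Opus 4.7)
The plan is to lift the toral class in two successive stages: first to a class under the normalizer of a maximal torus via Lemma \ref{toral}(1), and then further to a class under a distinguished finite subgroup supplied by Theorem \ref{bogo}. Once the lift reaches a finite subgroup, membership in $H^1_{finite}(\gX,\bG)$ is automatic, and then under the characteristic zero hypothesis Lemma \ref{lem1} yields the loop inclusion.

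Starting from $[\bE]\in H^1_{toral}(\gX,\bG)$, I first choose a maximal torus $\bT$ of $\bG$ (that is, of $\bG^0$) and check that exactly one clause of Theorem \ref{bogo} applies in each case: under hypothesis (i) one invokes clause (a) with $R=k$; under (ii) one invokes clause (b) over $R=\Z$ and base changes the resulting $\bT$ and $\bS$ to $k$; under (iii) one takes $R=k$, which is a connected semilocal ring, and uses the fact that every torus over a field is automatically isotrivial so that $\rad(\bG)$ meets the isotriviality assumption of clause (c). In each case the theorem delivers a finite $k$--subgroup $\bS\subset\bN_{\bG}(\bT)$ for which the map $H^1(\gX,\bS)\to H^1(\gX,\bN_{\bG}(\bT))$ is surjective, since $\gX$ satisfies condition (\ref{e.C}) by assumption.

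The remainder is a formal cohomology chase. Lemma \ref{toral}(1) produces $[\bF]\in H^1(\gX,\bN_{\bG}(\bT))$ mapping to $[\bE]$, and the surjectivity from Theorem \ref{bogo} further lifts $[\bF]$ to some $[\bF']\in H^1(\gX,\bS)$. The composition
$$H^1(\gX,\bS)\ \longrightarrow\ H^1(\gX,\bN_{\bG}(\bT))\ \longrightarrow\ H^1(\gX,\bG)$$
then exhibits $[\bE]$ as lying in the image coming from a finite $k$--subgroup of $\bG$, which proves (1). For (2) one simply appends Lemma \ref{lem1}, which gives $H^1_{finite}(\gX,\bG)\subset H^1_{loop}(\gX,\bG)$ in characteristic zero.

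The main obstacle I anticipate is verification rather than invention: one has to confirm that in case (iii) the field $k$ genuinely qualifies as a connected semilocal ring in the sense of Theorem \ref{bogo}(c) and that the isotriviality of $\rad(\bG)$ over a field is automatic, and to check that the hypothesis (\ref{e.C}) stated in the Corollary is precisely what is needed for the surjectivity in Theorem \ref{bogo}(2). Once these routine checks are in place, the argument is nothing more than a direct concatenation of Lemma \ref{toral}(1), Theorem \ref{bogo}, and Lemma \ref{lem1}.
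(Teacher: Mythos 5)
Your proposal is correct and fleshes out exactly what the paper compresses into the single sentence ``The first statement is immediate'': you chain Lemma \ref{toral}(1) with the surjectivity in Theorem \ref{bogo}(2) to reduce a toral class through $\bN_{\bG}(\bT)$ down to a finite subgroup $\bS$, and then append Lemma \ref{lem1} for the characteristic-zero loop inclusion. The routing of hypotheses (i), (ii), (iii) to clauses (a), (b), (c) of Theorem \ref{bogo} that you spell out is the intended mechanism, so this is the same argument, just made explicit.
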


The first statement is immediate.
The second one follows from Lemma \ref{lem1}. \qed

\section{Semilinear considerations}\label{secsemi}

Throughout this section $\tilde{k}$ will denote  an object of $\kalg.$ We will denote by  $\Gamma$ a subgroup of the group $ \rAut_{k-alg}(\tilde{k}).$ The elements of $\Gamma$ are thus $k$--linear automorphisms of the ring $\tilde{k}.$ For convenience we will denote the action of an element  $\gamma \in \Gamma$ on an element $\lambda \in \tilde{k}$ by $^\gamma{\lambda}.$

\subsection{Semilinear morphisms}\label{subsecsemi} Given an object $R$ of $\tilde{k}$--alg (the category of associative
unital commutative $\tilde{k}$--algebras), we will denote the action of and element $\lambda \in \tilde{k}$ on an element $r \in R$ by $\lambda_{R} \cdot r,$ or simply  $\lambda_{R} \, r$ or  $\lambda r$ if no confusion is possible.

Given an element $\gamma \in \Gamma,$ we denote by $R^\gamma$ the object of
$\tilde{k}$--alg which coincides with $R$ as
a ring, but where the $\tilde{k}$--module structure is now obtained
by ``twisting " by $\gamma:$
$$ \lambda_ {{R^\gamma}} \cdot r = ({^\gamma{\lambda}}) _{R} \cdot r
$$

One verifies immediately that 
\begin{equation}\label{twistR}
({R^\gamma})^\tau = R^{\gamma \tau}
\end{equation}
for all $\gamma,
\tau \in \Gamma.$ It is important to emphasize that (\ref{twistR}) is an {\it equality} and not a canonical identification.

Given a morphism $\psi : A \to R$ of $\tilde{k}$--algebras and an element $\gamma \in \Gamma$
 we can view $\psi$ as a map $\psi_\gamma : A^\gamma \to R^\gamma$ (recall that $A= A^\gamma$ and $R = R^\gamma$ as rings, hence also as sets). It is immediate to verify that $\psi_\gamma$ is also a morphism of $\tilde{k}$--algebras. By (\ref{twistR}) we have $(\psi_\gamma)_\tau = \psi_{\gamma \tau}$ for all $\gamma,
\tau \in \Gamma.$

The map $\psi \to \psi_\gamma$ gives a natural correspondence
\begin{equation}\label{twistRmor}
\Hom_{\tilde{k}-alg}(A, R) \to \Hom_{\tilde{k}-alg}({A^\gamma}, {R^\gamma}).
\end{equation}
In view of (\ref{twistR}) we have also a natural (and equivalent) correspondence
\begin{equation}\label{twistRmorbis}
\Hom_{\tilde{k}-alg}(A, R^\gamma) \to \Hom_{\tilde{k}-alg}({A^\gamma}^{^{-1}}, R).
\end{equation}
that we record for future use.

\begin{remark}\label{equalities} {\rm 
 (i)  Let $\gamma, \sigma \in \Gamma.$ It is clear from the definitions that the $k$-algebra isomorphism $\gamma : \tilde{k} \to \tilde{k}$ induces a $\tilde{k}$-algebra isomorphism  $\gamma_{\sigma} : \tilde{k}^\sigma \to \tilde{k}^{\gamma \sigma}.$ If no confusion is possible we will denote $\gamma_{\sigma}$ simply by $\gamma.$

One checks that the $\tilde{k}$--algebras $(R \otimes_k \tilde{k})^\gamma$ and  $ R \otimes_k \tilde{k}^\gamma$ are {\it equal} (recall that both algebras have $R \otimes_k \tilde{k}$ as underlying sets). We thus have a
$\tilde{k}$--algebra isomorphism $$1 \otimes \gamma : R \otimes_k \tilde{k} \to  R \otimes_k \tilde{k}^\gamma = (R\otimes \tilde{k})^\gamma,$$ 
or more generally
$$1 \otimes \gamma_{\sigma} : R \otimes_k \tilde{k}^{\sigma} \to  R \otimes_k \tilde{k}^{\gamma \sigma} = (R\otimes \tilde{k})^{\gamma \sigma}.$$ 
\smallskip
(ii) If $A$ is an object $\tilde{k}$--alg, and $\gamma \in \Gamma$, then the $\tilde{k}$--algebras $A$ and $A^{\gamma}$ have the same ideals.}
\end{remark}

 Given a $\tilde{k}$--functor $\gX,$  that is a functor from the category $\tilde{k}$--alg to the category of sets (see \cite{DG} for details), and an element $\gamma \in \Gamma$
we can  define a new the $\tilde{k}$--functor $^\gamma\gX$ by setting
 \begin{equation}\label{twistXfunctor}
 ^\gamma\gX(R) = \gX(R^\gamma)
 \end{equation}
 and  $^\gamma\gX(\psi) = \gX(\psi_\gamma)$ where $\psi : R \to S$ is as above. The diagram
 
 $$
\begin{CD}\label{twistmap}
 \gX'(R^{\gamma})@>{=}>>  \gX(R^{\gamma}) \\
@V{^\gamma\gX(\psi)}VV @V{\gX(\psi_\gamma)}VV\\
{^\gamma\gX'}(R)@>{=}>> {^\gamma\gX} (R)\\
\end{CD}
$$ 
 then commutes by definition, and one can indeed easily verify  that $^\gamma\gX$ is a $\tilde{k}$--functor. We call $^\gamma \gX$ the {\it twist of} $\gX$ {\it by} $\gamma.$
 
 Similarly to the case of $\tilde{k}$--algebras described in (\ref{twistR}) we have the {\it equality} of functors
 \begin{equation}\label{twistX}
 ^\gamma(^\tau\gX) = {^{\gamma \tau}\gX}
 \end{equation} for all $\gamma,
\tau \in \Gamma.$

A  morphism $f  : \gX' \to \gX$ induces a morphism ${^\gamma f }  : {^\gamma\gX'} \to {^\gamma\gX}$ by setting $^{\gamma}f(R) = f(R^{\gamma}).$ We thus have the commutative diagram
 $$
\begin{CD}\label{twistmap}
 \gX'(R^{\gamma})@>{f(R^{\gamma})}>>  \gX(R^{\gamma}) \\
@V{=}VV @V{=}VV\\
{^\gamma\gX'}(R)@>{^{\gamma}f}(R)>> {^\gamma\gX} (R)\\
\end{CD}
$$
This gives  a natural bijection
\begin{equation}\label{twistXmor}
\Hom_{\tilde{k}-fun}(\gX', \gX) \to \Hom_{\tilde{k}-fun}({^\gamma\gX'}, {^\gamma\gX})
\end{equation}
given by $f \mapsto {^\gamma f }.$ This correspondence is compatible with the action of $\Gamma,$ this is $^\gamma(^\tau f) = {^{\gamma \tau}f}.$ As before we will for future use explicitly write down an equivalent version
of this last bijection, namely
\begin{equation}\label{twistXmorbis}
\Hom_{\tilde{k}-fun}({^\gamma}^{^{-1}}\gX', \gX) \to \Hom_{\tilde{k}-fun}(\gX', {^\gamma\gX})
\end{equation}

\subsection{Semilinear morphisms}
A $\tilde{k}$--functor morphism $f :  {^\gamma\gX} \to \gZ$ is
called a {\it semilinear morphism  of type}
$\gamma$ {\it from} $\gX$ {\it to}  $\gZ.$ We denoted the set of such morphisms by $\Hom_\gamma(\gX, \gZ),$ and
set $\Hom_\Gamma(\gX, \gZ) = \cup_{\gamma \in \Gamma} \Hom_\gamma(\gX, \gZ).$\footnote{The alert reader may question whether the ``type" is well defined.  Indeed it may happen that $^\gamma\gX$ and $\gX$ are the {\it same} $\tilde{k}$-functor even though $\gamma \neq 1.$ This ambiguity can be formally resolved by defining  semilinear morphism  of type
$\gamma$ as pairs ($f :  {^\gamma\gX} \to \gZ, \gamma).$ We will omit this complication of notation in what follows since no confusion will be  possible within our context. Note that the union of sets $\cup_{\gamma \in \Gamma} \Hom_\gamma(\gX, \gZ)$ is thus disjoint by definition.}
These are the $\Gamma$--{\it semilinear morphisms} from
$\gX$ to $\gZ.$

If $f  : {^\gamma\gX} \to \gY$ and $g :  {^\tau\gY} \to \gZ$ are
semilinear of type $\gamma$ and $\tau$ respectively, then the map
$gf : {^{\tau \gamma}\gX }\to \gZ$ defined by $(gf)(R) = g(R) \circ
f(R^\tau)$ according to the sequence
\begin{equation}
{^{\tau \gamma}\gX }(R) =  {^{\gamma}\gX }(R^{\tau}) \buildrel f(R^{\tau}) \over
\to \gY(R^{\tau}) = {^{\tau}\gY }(R) \buildrel g(R) \over
\to \gZ(R)
\end{equation}
is semilinear  of type $\tau \gamma.$

The above considerations give the set $\Aut_\Gamma(\gX)$ of invertible elements of $\Hom_\Gamma(\gX, \gX)$ a group structure whose elements are   called $\Gamma$--{\it semilinear automorphisms of} $\gX.$ There is a
group homomorphism $t : \Aut_\Gamma(\gX) \to \Gamma$ that assigns to
a semilinear automorphism of $\gX$ its type.

\begin{remark}\label{fiberproduct} Fix a $\tilde{k}$--functor $\gY$. Recall that the category of $\tilde{k}$--functors over $\gY$ consists of $\tilde{k}$--functors $\gX$ equipped with a structure morphism $\gX \to \gY$. This category admits fiber products: Given $f_1 : \gX_1 \to \gY$ and $f_2 : \gX_2 \to \gY$
then  $\gX_1 \times_{\gY} \gX_2$ is given by
$$(\gX_1 \times_{\gY} \gX_2)(R) =  \{(x_1,x_2) \in \gX_1(R) \times \gX_2(R) : f_1(R)(x_1) = f_2(R)(x_2)\}.$$

Semilinearity extends to fiber products under the right conditions. Suppose $f_1 : \gX_1 \to \gY$ and $f_2 : \gX_2 \to \gY$ are as above, and that the action of $\Gamma$ in  $\gX_i$ and $\gY$ is  compatible in the obvious  way. Then for each $\gamma
\in \Gamma$ the ``structure morphisms"  $^{\gamma}f_i : {^{\gamma}\gX}_i \to {^{\gamma}\gY}$ defined above can be seen to verify 
\begin{equation}\label{fiberproduct}
{^\gamma}(\gX_1 \times_{\gY} \gX_2) = {^\gamma \gX_1} \times_{^\gamma \gY}  {^\gamma \gX_2}
\end{equation}
for all $\gamma \in \Gamma.$

\end{remark}

\subsection{Case of affine schemes}
Assume that  $\gX$ is affine, that is $\gX = {\rm Sp}_{\tilde{k}}A = \Hom_{\tilde{k}-alg}(A, -).$ If $\gamma \in \Gamma$ then \begin{equation}\label{affinetwist}
{^\gamma \gX} = {\rm Sp}_{\tilde{k}}{A^\gamma}^{^{-1}}
\end{equation}
 as can be seen from (\ref{twistRmorbis}). In particular ${^\gamma \gX}$ is also affine. Our next step is to show that  semilinear twists of  schemes are also schemes.

Assume that $\gY $ is an open subfunctor of $\gX.$  We claim that ${^\gamma \gY}$ is an open subfunctor of ${^\gamma \gX}.$  We must show that for all affine functor $ {\rm Sp}_{\tilde{k}}A$ and all morphism $f : {\rm Sp}_{\tilde{k}}A \to {^\gamma \gX}$ there exists an
ideal $I$ of $A$ such that $f^{-1}(^\gamma \gY) = D(I)$ where
$$ D(I)(R) = \{ \alpha \in \Hom(A, R) : Rf(I) = R\}.$$

Let us for convenience denote $ {\rm Sp}_{\tilde{k}}A$ by $\gX',$ and $\gamma^{-1}$ by $\gamma'.$ Our morphism $f$ induces  a morphism ${{^\gamma}'}f  : {{^\gamma}'}\gX' \to \gX$ by the considerations described above. Because $\gY$ is open in $\gX$ and
${{^\gamma}^{'}\gX'}  = {\rm Sp}_{\tilde{k}-alg}A{^\gamma}$  is affine there exists and ideal $I$ of $A{^\gamma}$ such that $({{^\gamma}'}f)^{-1}(\gY) = D(I).$ Applying this to the $\tilde{k}$--algebra $R^{\gamma}$  we obtain
\begin{equation}\label{open}
{{^\gamma}'}f(R^{\gamma})^{-1}(\gY(R^{\gamma}) = \{ \alpha \in \Hom_{\tilde{k}-alg}(A{^\gamma}^{'}, R^{\gamma}) : R^{\gamma}\alpha(I) = R^{\gamma} \}.
\end{equation}
On the other hand ${{^\gamma}'}f(R^{\gamma})^{-1} = f(R)^{-1}$ and $\gY(R^{\gamma}) = {^{\gamma} \gY}(R).$ Finally in the right hand side of (\ref{open}) we have  $\Hom_{\tilde{k}-alg}(A{^\gamma}^{'}, R^{\gamma}) =  \Hom_{\tilde{k}-alg}(A, R) $ and $R^{\gamma}\alpha(I) = R^{\gamma}$ if and only if $R\alpha(I) = R.$ Since $I$ is also an ideal of the $\tilde{k}$--algebra $A$ this completes the proof that  ${^\gamma \gY}$ is an open subfunctor of ${^\gamma \gX}.$

If $\gX$ is local then so is ${^{\gamma} \gX}.$ Indeed, given a $\tilde{k}$-algebra $R$ and and element  $f \in R$ then $f$ can naturally be viewed as  an element of $R^{\gamma}$ (since $R$ and $R^{\gamma}$ coincide as rings), and it is immediate to verify
that $(R_f)^{\gamma} = (R^{\gamma})_f.$ Using that it is then clear that the sequence

\begin{equation}\label{local}
{{^\gamma}}\gX(R) \to {{^\gamma}}\gX(R_{f_i})  \rightrightarrows {{^\gamma}}\gX(R_{f_if_j})
\end{equation}
is exact whenever $ 1 = f_1 + \dots +f_n.$

Since $R$ is a field if and only if $R^{\gamma}$ is a field it is clear that if $\gX$ is covered by a family of open subfunctors $(\gY_i)_{i \in I}$, then ${^{\gamma} \gX}$ is covered by the open subfunctors ${^{\gamma} \gY_i}.$ From this it follows
that if $\gX$ is a scheme then so is ${^{\gamma} \gX}.$

\begin{remark}\label{twistedschemes} {\rm Let $\gX$ is a $\tilde{k}$--scheme defined along traditional lines (and not as a special type of $\tilde{k}$--functor), and let $\gX$ also denote the restriction to the category of affine $\tilde{k}$--schemes of the functor of points of $\gX.$  If we define (again along traditional lines) ${^{\gamma}\gX} =  \gX \times_{\Spec(\tilde{k})} \Spec(\tilde{k}{^{\gamma}}^{^{-1}}),$ then it can be shown that the functor of points of ${^{\gamma}\gX}$ (restricted to the category  of affine $\tilde{k}$--schemes) coincides with the twist by $\gamma$ of $\gX$ that we have defined.}
\end{remark}

\begin{remark}\label {affinecase} We look in detail at the case when our  $\tilde{k}$--scheme  is an affine group scheme $\gG.$ Thus $\gG = {\rm Sp}_{\tilde{k}}\tilde{k}[\gG]$ for some $\tilde{k}$--Hopf algebra $\tilde{k}[\gG]$ (see \cite{DG} II \S1 for details). 

Let $\epsilon_{\gG} : \tilde{k}[\gG] \to \tilde{k}$ denote the counit map. As $\tilde{k}$-modules we have $\tilde{k}[\gG] = \tilde{k} \oplus I_{\gG}$ where $I_{\gG}$ is the kernel of $\epsilon_{\gG}.$  Let $\gamma \in \Gamma.$ As explained in (\ref{affinetwist}) we have  $^\gamma{\gG} = {\rm Sp}_{\tilde{k}}\tilde{k}[\gG]{^\gamma}{^{^{-1}}}= \Hom_{\tilde{k}-alg}(\tilde{k}[\gG]{^\gamma}{^{^{-1}}}, -).$ We leave it to the reader to verify that $\epsilon_{_{^\gamma{\gG}}} : \gamma \circ \epsilon_{\gG}.$ As an abelian group $I_{\gG} = I_{_{^\gamma{\gG}}},$ but in this last the action of $\tilde{k}$ is obtained through the action of $\tilde{k}$ in $\tilde{k}[\gG]^\gamma.$ 

Next we make some relevant observations about Lie algebras from a functorial point of view (\cite{DG} II \S4). Recall that the group functor $\mathfrak{Lie}(\gG)$  attaches to an object $R$ in $\tilde{k}$-alg the kernel of the group homomorphism $\gG(R[\epsilon]) \to \gG(R)$ where $R[\epsilon]$ is the $\tilde{k}$--algebra of dual numbers of $R,$ and the group homomorphism comes from the functorial nature of $\gG$ applied to the morphism $R[\epsilon] \to R$ in $\tilde{k}$--alg that maps $\epsilon \mapsto 0.$ By definition ${\mathcal Lie}(\gG) = \mathfrak{Lie}(\gG)(\tilde{k}).$ In particular ${\mathcal  Lie}(\gG) \subset \gG(\tilde{k}[\epsilon] )= \Hom_{\tilde{k}{\text{\rm -alg}}}(\tilde{k}[\gG], \tilde{k}[\epsilon]).$ Every element $x \in {\mathcal Lie}(\gG)$ is given by 
\begin{equation}\label {Liealgebra element}
x : a \mapsto \epsilon_{\gG}(a) + \delta_x(a)\epsilon 
\end{equation}
with $\delta_x \in {\rm Der}_{\tilde{k}}(\tilde{k}[\gG], \tilde{k})$ where $\tilde{k}$ is viewed as a $\tilde{k}[\gG]$--module via the counit map of $\gG$. In what follows we write $x = \epsilon_{\gG} + \delta_x\epsilon.$ The map $x \mapsto \delta_x$  is in fact a $\tilde{k}$-module isomorphism
${\mathcal Lie}(\gG) \simeq {\rm Der}_{\tilde{k}}(\tilde{k}[\gG], \tilde{k}).$ In  particular if $\lambda \in \tilde{k}$ then $\lambda x  \in {\mathcal Lie}(\gG)$ is such that $\delta_{\lambda x} = \lambda \delta_x.$  

Similar considerations apply to the affine $\tilde{k}$-group $^\gamma{\gG}.$  We have
 ${\mathcal  Lie}(^\gamma{\gG})  = \break {\rm Der}_{\tilde{k}}(\tilde{k}[\gG]{^\gamma}{^{^{-1}}}, \tilde{k}).$  Note that if $y \in {\mathcal Lie}(^\gamma{\gG})$ corresponds to $\delta_y \in  {\rm  Der}_{\tilde{k}}(\tilde{k}[\gG]{^\gamma}{^{^{-1}} }, \tilde{k}),$ then under the action of $\tilde{k}$ on ${\mathcal Lie}(^\gamma{\gG})$ 
 the element $\lambda y$ corresponds to the derivation $\lambda \delta_y$ and {\it not} to $(^{\gamma}\lambda)\delta_y$: The ``$\gamma$ part"  is taken into consideration already by the fact that $y \in {\mathcal Lie}(^{\gamma}\gG)$ and that $\delta_y \in {\rm Der}_{\tilde{k}}(\tilde{k}[\gG]{^\gamma}{^{^{-1}}}, \tilde{k}).$ 

\end{remark}

\subsection{Group functors}
Let from now on $\gG$ denote a $\tilde{k}$-group functor. If $\gH$ is a subgroup functor of $\gG$ we let
$$
\Aut_\gamma(\gG, \gH) = \Bigl\{f \in \Aut_\gamma(\gG) \, \,  | \, \,  {^\gamma \gH} = f^{-1}(\gH) \Bigr\}.$$ It is easy to verify then
that $\Aut_\Gamma(\gG, \gH) = \cup_{\gamma \in \Gamma} \Aut_\gamma(\gG, \gH)$
is a subgroup of $\Aut_\Gamma(\gG).$

\begin{proposition}\label{semilinear} Let $\Pi_{\tilde{k}/k}\gG$ be the Weil
restriction of $\gG$ to $k$ (which we view
as a $k$--group functor). There exists a canonical group homomorphism
$$\tilde{} \,  : \Aut_\Gamma(\gG) \to \Aut(\Pi_{\tilde{k}/k}\gG).$$
\end{proposition}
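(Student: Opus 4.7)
The plan is to define the map $\tilde{\,}$ explicitly on each semilinear automorphism and then verify in turn that the resulting endomorphism of $\Pi_{\tilde k/k}\gG$ is natural in $R$, a group automorphism, and that $f\mapsto \tilde f$ respects composition. For $f\in\Aut_\gamma(\gG)$ of type $\gamma\in\Gamma$ and for each $R$ in $k$--alg, I would set $\tilde f(R)$ to be the composition
\[
\gG(R\otimes_k \tilde k) \xrightarrow{\gG(1\otimes\gamma)} \gG\bigl(R\otimes_k \tilde k^\gamma\bigr)=\gG\bigl((R\otimes_k \tilde k)^\gamma\bigr)={^\gamma\gG}(R\otimes_k \tilde k) \xrightarrow{f(R\otimes_k \tilde k)} \gG(R\otimes_k \tilde k),
\]
where the first arrow comes from the $\tilde k$--algebra isomorphism $1\otimes\gamma$ of Remark~\ref{equalities}(i), the equality in the middle is the definition of ${^\gamma\gG}$ applied to $R\otimes_k\tilde k$, and the last arrow is the component of the natural transformation $f$. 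Both arrows are group isomorphisms (the first because $1\otimes\gamma$ is a ring isomorphism and $\gG$ is a group functor, the second because elements of $\Aut_\gamma(\gG)$ are understood as morphisms of group functors), so $\tilde f(R)$ is a group automorphism of $(\Pi_{\tilde k/k}\gG)(R)=\gG(R\otimes_k\tilde k)$.

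Naturality of $\tilde f$ in $R$ reduces to two ingredients: for any $k$--algebra morphism $\phi:R\to R'$, the identity $(1\otimes\gamma)\circ(\phi\otimes 1)=(\phi\otimes 1)_\gamma\circ(1\otimes\gamma)$ holds as $\tilde k$--algebra maps $R\otimes_k\tilde k\to(R'\otimes_k\tilde k)^\gamma$ by a direct check, while the naturality of $f:{^\gamma\gG}\to\gG$ applied to $\phi\otimes 1$ gives $\gG(\phi\otimes 1)\circ f(R\otimes\tilde k)=f(R'\otimes\tilde k)\circ\gG((\phi\otimes 1)_\gamma)$; composing these produces the naturality square for $\tilde f$ on $\Pi_{\tilde k/k}\gG$. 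For multiplicativity $\widetilde{gf}=\tilde g\circ\tilde f$ with $f\in\Aut_\gamma(\gG)$ and $g\in\Aut_\tau(\gG)$ (so $gf$ has type $\tau\gamma$), unwinding the definition of $gf$ from \S\ref{subsecsemi} yields
\[
\widetilde{gf}(R)=g(R\otimes\tilde k)\circ f\bigl((R\otimes\tilde k)^\tau\bigr)\circ \gG\bigl(1\otimes(\tau\gamma)\bigr).
\]
Using the factorisation $1\otimes(\tau\gamma)=(1\otimes\tau_\gamma)\circ(1\otimes\gamma)$ of $\tilde k$--algebra maps and the naturality of $f$ applied to $1\otimes\tau:R\otimes\tilde k\to(R\otimes\tilde k)^\tau$, which gives $f((R\otimes\tilde k)^\tau)\circ\gG(1\otimes\tau_\gamma)=\gG(1\otimes\tau)\circ f(R\otimes\tilde k)$, the right-hand side collapses to $\tilde g(R)\circ\tilde f(R)$. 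Clearly $\widetilde{\id_\gG}=\id$, and the invertibility of $\tilde f$ (with $\widetilde{f^{-1}}$ as inverse, by the multiplicativity just established) completes the verification.

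The only delicate point is the bookkeeping of the various ``$1\otimes\gamma$''-type arrows: one must keep careful track of whether source or target carries the twisted $\tilde k$--structure, and exploit the type-change notation $\psi_\gamma$ from \S\ref{subsecsemi} together with the equality $(R\otimes_k\tilde k)^\gamma=R\otimes_k\tilde k^\gamma$ of Remark~\ref{equalities}(i) in order to write naturality and multiplicativity as clean identities of $\tilde k$--algebra morphisms that can then be transported through $\gG$ and $f$.
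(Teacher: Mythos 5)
Your proposal is correct and follows essentially the same route as the paper: you define $\tilde f(R)$ by precomposing $f(R\otimes_k\tilde k)$ with $\gG(1\otimes\gamma)$, exactly as the paper does, and verify functoriality in $R$ and multiplicativity of $f\mapsto\tilde f$ by tracking the identity $(R\otimes_k\tilde k)^\gamma=R\otimes_k\tilde k^\gamma$ and the naturality of $f$ as a morphism ${^\gamma\gG}\to\gG$. The paper presents the multiplicativity check via a commuting rectangle and writes the type-change morphism $1\otimes\tau_\gamma$ with a mild abuse of notation as $1\otimes\gamma_2$; your more explicit subscripted bookkeeping is a notational variant of the same computation, not a different argument.
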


\begin{proof} As observed in Remark \ref{equalities} the map $\gamma : \tilde{k} \to \tilde{k}^\gamma$ is an isomorphism
of $\tilde{k}$--alg, and for $R$  in $k$--alg $(R \otimes_k \tilde{k})^\gamma = R \otimes_k \tilde{k}^\gamma$. We thus have a
$\tilde{k}$--algebra isomorphism $1 \otimes \gamma : R \otimes_k \tilde{k} \to
(R\otimes \tilde{k})^\gamma.$  For a given $f \in \Aut_\Gamma(\gG),$ the
composite map

$$\tilde{f}(R) : (\Pi_{\tilde{k}/k}\gG)(R) = \gG(R \otimes_k \tilde{k})
\buildrel \gG(1 \otimes \gamma)
\over \lra \gG\big((R \otimes_k \tilde{k})^\gamma\big) = $$

$$ = {^\gamma \gG}(R \otimes_k \tilde{k}) \buildrel f(R \otimes_k \tilde{k}) \over
\to \gG(R \otimes_k \tilde{k}) = (\Pi_{\tilde{k}/k}\gG)(R)$$
is an  automorphism of the group $(\Pi_{\tilde{k}/k}\gG)(R).$ One readily
verifies that the
family $\tilde{f} = \tilde{f}(R)_{R \in k-{\rm alg}}$ is
functorial on $R,$ hence an
automorphism of $\Pi_{\tilde{k}/k}\gG.$

To check that \,\, $\tilde{}$ \,\,  is a group homomorphism we consider two elements $f_1, f_2  \in \Aut_\Gamma(\gG)$ of type $\gamma_1$ and $\gamma_2$ respectively. Recall that $\gamma_2$ induces a $\tilde{k}$-algebra homomorphism $1 \otimes{\gamma_2}_\sigma: R \otimes \tilde{k}^{\gamma} \to R \otimes \tilde{k}^{^{{\gamma_2} \gamma}}$  for all $\sigma \in \Gamma $ [see Remark \ref{equalities}(i)]. Since $\gamma$ will be understood from the context we will denote this homomorphism simply by $1 \otimes \gamma_2.$  By functoriality we get the following commutative diagram [see Remark \ref{equalities}(1)]

$$
\begin{CD}
\gG(R \otimes \tilde{k}^{\gamma_1})@>\gG(1 \otimes \gamma_2)>>
\gG(R \otimes \tilde{k}^{\gamma_2 \gamma_1}) \\
@VV{=}V @VV{=} V\\
{^{\gamma_1}}\gG(R \otimes \tilde{k}) @>{^\gamma}\gG(1 \otimes \gamma_2)>>{^{\gamma_1}}\gG(R \otimes \tilde{k}^{\gamma_2}) \\
@VV{f_1(R \otimes \tilde{k})}V @VV{f_1(R \otimes \tilde{k}^{\gamma_2}})V \\
\gG(R \otimes \tilde{k}) @>\gG(1 \otimes \gamma_2)>> \gG(R \otimes \tilde{k}^{\gamma_2} )\\
\end{CD}
$$

Since $f_2 \circ f_1$ is of type $\gamma_2 \gamma_1,$ by definition we have
$$\widetilde{f_2 \circ f_1}(R \otimes \tilde{k}) = (f_2 \circ f_1)(R \otimes \tilde{k})  \circ \gG(1 \otimes \gamma_2  \gamma_1).$$Thus
$$ \widetilde{f_2 \circ f_1}(R \otimes \tilde{k}) =  (f_2 \circ f_1)(R \otimes \tilde{k}) \circ \gG(1 \otimes \gamma_2 \circ 1 \otimes \gamma_1) $$
$$ =  (f_2 \circ f_1)(R \otimes \tilde{k}) \circ \gG(1 \otimes \gamma_2) \circ \gG(1 \otimes \gamma_1) $$
$$=    f_2(R \otimes \tilde{k}) \circ f_1(R \otimes \tilde{k}^{\gamma_2} ) \circ  \gG(1 \otimes \gamma_2) \circ \gG(1 \otimes \gamma_1) $$
$$=  f_2(R \otimes \tilde{k} )\circ  \gG(1 \otimes \gamma_2) \circ f_1(R \otimes \tilde{k}) \circ \gG(1 \otimes \gamma_1) $$
$$= \tilde{f_2}(R \otimes \tilde{k} )\circ \tilde{f_1}(R \otimes \tilde{k}).$$
\end{proof}

\begin{example} (a) Consider the case of the trivial $\tilde{k}$--group
$\ee_{\tilde{k}}.$ Each set $\Aut_\gamma(\ee_{\tilde{k}}) = {\rm Isom}({^\gamma\ee_{\tilde{k}}}, \ee_{\tilde{k}})$ consists of one element which we
denote by $\gamma_{*}.$  Then
$\Aut_\Gamma(\ee_{\tilde{k}}) \simeq \Gamma.$ We have $\Pi_{\tilde{k}/k}\ee_{\tilde{k}} =  \ee_k.$ In particular $\Aut(\Pi_{\tilde{k}/k}\ee_{\tilde{k}}) = 1$ and the
homomorphism \,\, $\tilde{} \,  : \Aut_\Gamma(\gG) \to
\Aut(\Pi_{\tilde{k}/k}\gG)$ is in this case necessarily trivial. In affine
terms $\ee_{\tilde{k}}$ is represented by $\tilde{k}$ and ${^\gamma\ee_{\tilde{k}}}$ by
$\tilde{k}^{\gamma^{-1}}.$ Then the $\tilde{k}$--group isomorphism $\gamma_{*} :
{^\gamma\ee_{\tilde{k}}} \to \ee_{\tilde{k}}$ corresponds to the $\tilde{k}$--Hopf algebra
isomorphism $\gamma^{-1} : \tilde{k} \to \tilde{k}^{\gamma^{-1}}.$

(b) Consider the case when $\Gamma$ is the Galois group of the
extension $\Bbb C/ \Bbb R,$ and $\gG$ is the additive $\Bbb
C$--group. Then $\Aut_\Gamma(\gG)$ can be identified with the group of automorphisms of
$(\Bbb C, +)$ which are of the form $z \mapsto \lambda z$ or $z
\mapsto \lambda \overline{z}$ for some $\lambda \in \Bbb C^\times.$
The Weil restriction of $\gG$ to $\Bbb R$ is the two-dimensional
additive $\Bbb R$--group. Thus $\Aut(\Pi_{\tilde{k}/k}\gG) = {\rm GL}_2(\Bbb
R).$

The above examples show that, even if $\tilde{k}/k$ is a finite
Galois extension of fields and $\gG$ is a connected linear algebraic
group over $\tilde{k}$, the
 homomorphism $f \mapsto \tilde{f}$
need be neither injective nor surjective
\end{example}

\begin{corollary}\label{semilinearcor1} Assume that $\gG = {\rm Sp}_{\tilde{k}} \tilde{k}[\gG]$ is an affine $\tilde{k}$-group. The group $\Aut_\Gamma(\gG)$
acts naturally on the
groups $\gG(\tilde{k})$ and $\gG(\tilde{k}[\epsilon]).$ Furthermore the action of
an element $f \in \Aut_\gamma(\gG)$ on
 $\gG(\tilde{k}[\epsilon])$ stabilizes ${\mathcal Lie}(\gG) \subset \gG(\tilde{k}[\epsilon]).$ 
The induced
 map  ${\mathcal  Lie}(f) : {\mathcal  Lie}(\gG) \to {\mathcal  Lie}(\gG)$ is an
automorphism of ${\mathcal Lie}(\gG)$ viewed as a
Lie algebra over $k.$ This automorphism is $\tilde{k}$--semilinear, i.e.,
${\mathcal Lie}(f )(\lambda x) = ({^\gamma\lambda}) {\mathcal  Lie}(f )(x)$ for
all $\lambda \in \tilde{k}$ and $x \in {\mathcal  Lie}(\gG).$ 
\end{corollary}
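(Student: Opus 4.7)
My plan is to derive the Corollary as a direct consequence of Proposition \ref{semilinear} together with the functorial description of the Lie algebra recalled in Remark \ref{affinecase}.

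First I would construct the two actions. Since $(\Pi_{\tilde{k}/k}\gG)(k) = \gG(\tilde{k})$ and $(\Pi_{\tilde{k}/k}\gG)(k[\epsilon]) = \gG(\tilde{k}[\epsilon])$, evaluating the automorphism $\tilde f \in \Aut(\Pi_{\tilde{k}/k}\gG)$ produced in Proposition \ref{semilinear} at $R = k$ and $R = k[\epsilon]$ yields group automorphisms of $\gG(\tilde{k})$ and $\gG(\tilde{k}[\epsilon])$, functorial in $f$. This defines the two actions of $\Aut_\Gamma(\gG)$.

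That $\tilde f(k[\epsilon])$ stabilizes ${\mathcal Lie}(\gG)$ is immediate from the naturality of $\tilde f$ applied to the $k$--algebra morphism $k[\epsilon] \to k$, $\epsilon \mapsto 0$: the resulting commutative square shows that $\tilde f(k[\epsilon])$ sends the kernel of $\gG(\tilde{k}[\epsilon]) \to \gG(\tilde{k})$ to itself. The restriction ${\mathcal Lie}(f) := \tilde f(k[\epsilon])\vert_{{\mathcal Lie}(\gG)}$ is then a group automorphism, and since any automorphism of a $k$--group functor induces a Lie algebra automorphism of its Lie algebra, ${\mathcal Lie}(f)$ is a $k$--Lie algebra automorphism of ${\mathcal Lie}(\Pi_{\tilde{k}/k}\gG) = {\mathcal Lie}(\gG)$.

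The heart of the proof is the semilinearity statement. I would use the fact that scalar multiplication by $\lambda \in \tilde{k}$ on ${\mathcal Lie}(\gG)$ is realized by the $\tilde{k}$--algebra endomorphism $m_\lambda : \tilde{k}[\epsilon] \to \tilde{k}[\epsilon]$, $\epsilon \mapsto \lambda\epsilon$; namely $\gG(m_\lambda)(x) = \lambda x$ for $x \in {\mathcal Lie}(\gG)$, which is precisely the relation $\delta_{\lambda x} = \lambda \delta_x$ of Remark \ref{affinecase}. A direct computation on elements $\mu + \nu\epsilon \in \tilde{k}[\epsilon]$ yields the identity of $\tilde{k}$--algebra maps $\tilde{k}[\epsilon] \to \tilde{k}[\epsilon]^\gamma$
\[
(1 \otimes \gamma) \circ m_\lambda \;=\; (m_{{^\gamma\lambda}})_\gamma \circ (1 \otimes \gamma),
\]
since both sides send $\mu + \nu\epsilon$ to $\gamma(\mu) + \gamma(\nu)\gamma(\lambda)\epsilon$. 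Combining this identity, the formula $\tilde f(k[\epsilon]) = f(\tilde{k}[\epsilon]) \circ \gG(1 \otimes \gamma)$ from Proposition \ref{semilinear}, and the naturality of $f : {^\gamma\gG} \to \gG$ applied to $m_{{^\gamma\lambda}} : \tilde{k}[\epsilon] \to \tilde{k}[\epsilon]$, one obtains
\[
\tilde f(k[\epsilon]) \circ \gG(m_\lambda) \;=\; \gG(m_{{^\gamma\lambda}}) \circ \tilde f(k[\epsilon]),
\]
which evaluated at $x \in {\mathcal Lie}(\gG)$ yields ${\mathcal Lie}(f)(\lambda x) = {^\gamma\lambda} \cdot {\mathcal Lie}(f)(x)$. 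The only delicate point I foresee is the consistent book--keeping of the semilinear twists $R^\gamma$ in the naturality squares; once these identifications are tracked carefully as above, no substantive obstacle remains.
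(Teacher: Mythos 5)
Your argument is correct, and it takes a genuinely different (if closely related) route from the paper's. The paper proves semilinearity by unwinding the description of Lie algebra elements as $x = \epsilon_\gG + \delta_x\epsilon$ with $\delta_x \in \Der_{\tilde{k}}(\tilde{k}[\gG],\tilde{k})$, manipulating the derivations $\delta_{\lambda x} = \lambda\delta_x$ explicitly, and invoking at the end that the map $f(\tilde{k}[\epsilon])$ restricted to $\mathcal{L}ie({}^\gamma\gG)$ is a $\tilde{k}$--linear isomorphism onto $\mathcal{L}ie(\gG)$. You instead encode scalar multiplication by $\lambda$ as the $\tilde{k}$--algebra endomorphism $m_\lambda$ of $\tilde{k}[\epsilon]$, establish the commutation identity $(1\otimes\gamma)\circ m_\lambda = (m_{{}^\gamma\lambda})_\gamma \circ (1\otimes\gamma)$ by direct inspection, and then conclude by applying $\gG$ (a covariant functor on $\tilde{k}$--algebras) together with the naturality of $f : {}^\gamma\gG \to \gG$ with respect to $m_{{}^\gamma\lambda}$. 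Both proofs rest on the same backbone, namely the formula $\tilde f(k[\epsilon]) = f(\tilde{k}[\epsilon]) \circ \gG(1\otimes\gamma)$ from Proposition \ref{semilinear}, but you trade the explicit derivation bookkeeping for a purely functorial commutation argument. Your version has the merit of making the semilinearity a transparent consequence of how $1\otimes\gamma$ intertwines $m_\lambda$ with $m_{{}^\gamma\lambda}$, while the paper's version stays closer to the concrete description of $\mathcal{L}ie(\gG)$ set up in Remark \ref{affinecase} and may be easier to check element--by--element. The one bookkeeping point you flagged yourself --- being careful with the identifications ${}^\gamma\gG(\phi) = \gG(\phi_\gamma)$ and $(\tilde{k}[\epsilon])^\gamma = \tilde{k}^\gamma[\epsilon]$ --- is indeed the only delicate spot, and you handle it correctly.
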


\begin{proof}  We maintain the notation and use the facts presented in Remark \ref{affinecase}. Let $x \in {\mathcal  Lie}(\gG)$ and write $x = \epsilon_{\gG} + \delta_x\epsilon.$   If $\lambda \in \tilde{k}$ then $\lambda x  \in {\mathcal  Lie}(\gG)$ is such that $\delta_{\lambda x} = \lambda \delta_x.$ 

By definition $(\Pi_{\tilde{k}/k}\gG)(k) = \gG(\tilde{k})$ and
$(\Pi_{\tilde{k}/k}\gG)(k[\epsilon]) = \gG(\tilde{k}[\epsilon]).$ The action of
an element $f \in \Aut_\Gamma(\gG)$ on these two groups is then given
by the automorphisms $\tilde{f}_k$ and $\tilde{f}_{k[\epsilon]}$
of the previous Proposition. Thus if  we let  $\gamma_\epsilon : \tilde{k}[\epsilon] \to \tilde{k}[\epsilon]^\gamma$ denote the isomorphism of $\tilde{k}$--alg induced by $\gamma$ the map $\tilde{f}_{k[\epsilon]}$ is then obtained by restricting to ${\mathcal  Lie}(\gG)$ the composite map
$$\gG(\tilde{k}[\epsilon]) = \Hom_{\tilde{k}-alg}(\tilde{k}[\gG], \tilde{k}[\epsilon]) \buildrel \gG(\gamma_\epsilon) \over\to \Hom_{\tilde{k}-alg}(\tilde{k}[\gG], \tilde{k}[\epsilon]^\gamma)=$$
$$ = \Hom_{\tilde{k}-alg}(\tilde{k}[\gG]{^\gamma}{^{^{-1}}}, \tilde{k}[\epsilon]) = {^\gamma{\gG}}(\tilde{k}[\epsilon])\buildrel f(\tilde{k}[\epsilon]) \over\to \Hom_{\tilde{k}-alg}(\tilde{k}[\gG], \tilde{k}[\epsilon]) = \gG(\tilde{k}[\epsilon]).
$$
Using the fact that $\gamma_{\epsilon}  \circ \epsilon_{\gG} = \gamma \circ \epsilon_{\gG} = \epsilon_{_{^\gamma{\gG}}} $ it easily follows that 
\begin{equation}\label{semiLie}
{\mathcal  Lie}(f)(x) = \tilde{f}_{k[\epsilon]}(x) = f(\tilde{k}[\epsilon]) \circ \big(\epsilon_{_{^\gamma{\gG}}} + (\gamma \circ \delta_x)\epsilon \big)
\end{equation}
Let $ y = \epsilon_{_{^\gamma{\gG}}} + (\gamma \circ \delta_x)\epsilon \in {\mathcal  Lie}({^\gamma{\gG}}).$  If $\lambda \in \tilde{k}$ then we have 
$${\mathcal  Lie}(f)(\lambda x) = f(\tilde{k}[\epsilon]) \circ \big(\epsilon_{_{^\gamma{\gG}}} + (\gamma \circ \delta_{\lambda x})\epsilon \big)$$
$$= f(\tilde{k}[\epsilon]) \circ \big(\epsilon_{_{^\gamma{\gG}}} + (^\gamma (\lambda \delta_x)\epsilon \big)$$
$$= f(\tilde{k}[\epsilon]) \circ \big(\epsilon_{_{^\gamma{\gG}}} + {^\gamma} \lambda (\gamma \circ \delta_x)\epsilon \big)$$
$$= f(\tilde{k}[\epsilon]) \big( ({^\gamma}\lambda) y )\big)$$
where $({^\gamma}\lambda) y $ is the action of the element ${^\gamma}\lambda \in \tilde{k}$ on the element $y \in {\mathcal  Lie}({^\gamma}\gG)$, as explained in the  last paragraph of Remark \ref{affinecase}. Since the restriction of $f(\tilde{k}[\epsilon]) $ to ${\mathcal  Lie}({^\gamma}\gG)$ induces an isomorphism ${\mathcal  Lie}({^\gamma}\gG) \to {\mathcal  Lie}(\gG)$ of $\tilde{k}$-Lie algebras, this restriction is in particular $\tilde{k}$-linear. It follows that
$${\mathcal  Lie}(f)(\lambda x) = f(\tilde{k}[\epsilon]) \big(({^\gamma}\lambda) y \big) = ({^\gamma}\lambda)  f(\tilde{k}[\epsilon]) (y ) = ( {^\gamma}\lambda) {\mathcal  Lie}(f)(x).$$
This shows that ${\mathcal  Lie}(f)$ is semilinear. We leave it to the reader to verify that ${\mathcal  Lie}(f)$ is an automorphism of ${\mathcal  Lie}(\gG)$ as a Lie algebra over $k.$
\end{proof}

\begin{remark} There is no natural action of $\Aut_\Gamma(\gG)$ on $\gG.$
\end{remark}

\subsection{Semilinear version of a theorem of Borel-Mostow}
Throughout this section $k$ denotes a field of characteristic $0.$

\begin{theorem}\label{semilinearBM} (Semilinear Borel-Mostow)
 Let $\tilde{k}/k$ be
a finite Galois extension of fields with Galois group $\Gamma.$ Suppose we are
given a quintuple
$\big(\gg, H, \psi, \phi, (H_i)_{0 \leq i \leq s}\big)$ where

$\gg$ is a (finite dimensional) reductive Lie algebra over $\tilde{k},$

$H$ is a group,

$\psi$ is a group homomorphism from $H$ into the Galois group $\Gamma,$

$\phi$ is a group homomorphism from $H$ into the group $\Aut_k(\gg)$
of automorphisms of $\gg$ viewed as a
Lie algebra over $k,$

 $(H_i)_{1 \leq i \leq s}$ is a finite family of subgroups of $H$ for which the following two conditions hold:

(i) If we let the group $H$ act on $\gg$ via $\phi$ and
on $\Gamma$ via $\psi$, namely
 $^h x = {^{\phi(h)}}x$ and ${^h}\lambda = {^{\psi(h)}\lambda}$
for all $h \in H,$  $x \in \gg,$ and $\lambda \in \tilde{k},$ then the
action of $H$ in $\gg$ is semilinear, i.e.,  ${^h}(\lambda x) {^h} = {^h}{\lambda} {^h} x.$

(ii) $\ker(\psi) = H_s \supset H_{s - 1} \supset ... \supset H_1
\supset H_0 = 0.$ Furthermore, each $H_i$ is normal in $H,$ the
elements of $\phi(H_i)$ are semisimple,\footnote{Because $H_i \subset
\ker(\psi)$ the action of the elements of $H_i$  on $\gg$ is
$\tilde{k}$--linear. The assumption is that $\phi(\theta)$ be semisimple
as  a $\tilde{k}$--linear endomorphisms of $\gg$ for all $\theta \in H_i.$} and the quotients $H_i /
H_{i-1}$ are cyclic.

Then there exists a Cartan subalgebra  of $\gg$ which is stable
under the action of $H$.
\end{theorem}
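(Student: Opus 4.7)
The plan is to argue by induction on $s$. The base case $s=0$ will be handled by Galois descent, and the inductive step will use the classical theorem of Borel--Mostow to reduce the problem to one about the fixed-point subalgebra $\gg^{H_1}$.

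For the base case, the hypothesis $\ker(\psi)=0$ means that $\psi$ identifies $H$ with a finite subgroup of $\Gamma$. Setting $\tilde k_0 := \tilde k^{\psi(H)}$, the semilinearity condition (i) is precisely the statement that $\phi$ equips $\gg$ with a Galois descent datum for $\tilde k/\tilde k_0$. Consequently $\gg_0 := \gg^H$ is a $\tilde k_0$-Lie algebra with $\gg_0\otimes_{\tilde k_0}\tilde k\simeq\gg$; since reductivity descends under faithfully flat base change, $\gg_0$ is reductive. Because $\tilde k_0$ has characteristic zero, $\gg_0$ admits a Cartan subalgebra $\gc_0$, and then $\gc := \gc_0\otimes_{\tilde k_0}\tilde k$ is a Cartan subalgebra of $\gg$ which is manifestly $H$-stable.

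For the inductive step $s\geq 1$, I would single out the cyclic normal subgroup $H_1=\langle\theta\rangle\subset\ker(\psi)$, whose generator acts as a semisimple $\tilde k$-linear automorphism of $\gg$. The classical Borel--Mostow theorem supplies two facts: (a) $\gg^{H_1}$ is again reductive over $\tilde k$, and (b) the assignment $\gc\mapsto\gc\cap\gg^{H_1}$ is a bijection between the $H_1$-stable Cartan subalgebras of $\gg$ and the Cartan subalgebras of $\gg^{H_1}$, with inverse $\gc'\mapsto\gz_\gg(\gc')$. Because $H_1\triangleleft H$, the $H$-action descends to an action of $H/H_1$ on $\gg^{H_1}$, and the induced quintuple $(\gg^{H_1},H/H_1,\bar\psi,\bar\phi,(H_{i+1}/H_1)_{0\leq i\leq s-1})$ satisfies the hypotheses with filtration of length $s-1$: semilinearity passes to the quotient, normality and cyclic quotients follow from the correspondence theorem, and restrictions of semisimple operators to invariant subspaces remain semisimple. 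By the induction hypothesis there is an $H/H_1$-stable Cartan $\gc'$ of $\gg^{H_1}$.

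Let $\gc$ be the $H_1$-stable Cartan of $\gg$ attached to $\gc'$ via the Borel--Mostow bijection. For any $h\in H$ the subalgebra $\phi(h)(\gc)$ is again $H_1$-stable (as $H_1\triangleleft H$), and a short computation using normality gives $\phi(h)(\gc)^{H_1}=\phi(h)(\gc^{H_1})=\phi(h)(\gc')=\gc'$, the last equality by the $H/H_1$-invariance of $\gc'$; injectivity of the bijection then forces $\phi(h)(\gc)=\gc$, so $\gc$ is $H$-stable as required. The main obstacle is pinning down the Borel--Mostow bijection for a single semisimple $\theta$ (most efficiently by passing to the adjoint group of $\gg$ and invoking Steinberg's correspondence between $\theta$-stable maximal tori of $\bG$ and maximal tori of $(\bG^\theta)^0$) and verifying its equivariance under the subgroup of $\Aut_k(\gg)$ that normalizes $H_1$; once that naturality is in hand, the inductive machine runs essentially formally and terminates at the descent base case.
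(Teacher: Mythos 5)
Your proof is correct and follows essentially the same strategy as the paper: induction on $s$, with Galois descent handling the base case $s=0$ and the inductive step passing to the fixed-point subalgebra $\gg^{H_1}$ of the semisimple generator of the cyclic normal subgroup $H_1$. The only real difference is presentational: the paper proves directly and only the needed half of your Borel--Mostow correspondence (namely that $\gc'\mapsto\zz_\gg(\gc')$ sends Cartans of $\gg^\theta$ to Cartans of $\gg$, via Bourbaki and a theorem of Pianzola), and verifies $H$-stability of $\zz_\gg(\gc')$ by plain functoriality of centralizers under $k$-Lie automorphisms rather than by an injectivity argument as you do.
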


\begin{proof}
We will reason by  induction on  $s.$ If $s = 0$ we can identify by assumption (ii) $H$
with a subgroup $\Gamma_0$ of $\Gamma$ via $\psi.$ Let $\tilde{k}_0 = \tilde{k}^{\Gamma_0}.$ This yields a
semilinear action of $\Gamma_0$ on $\gg.$ By Galois descent the
fixed point $\gg^{\Gamma_0}$ is a Lie algebra over $\tilde{k}_0$ for which
the canonical map $\rho : \gg^{\Gamma_0} \otimes_{\tilde{k}_0} \tilde{k}\simeq \gg$
is a $\tilde{k}$--Lie algebra isomorphism. If $\gh_0$ is a Cartan subalgebra
of $\gg^{\Gamma_0}$ then $\rho(\gh_0 \otimes_{\tilde{k}_0} \tilde{k})$ is a Cartan
subalgebra of $\gg$ which is $H$--stable as one can easily verify with the aid of assumption (i).

Assume $s \geq 1$ and consider a generator $\theta$ of the cyclic group
$H_1.$ As we have already
observed the action of $\theta$ on $\gg$ is $\tilde{k}$--linear.   If $V$ is a
$\tilde{k}$--subspace of $\gg$ stable
under $\theta$ we will denote by $V^\theta$  the subspace of fixed points.
Before continuing with
we establish the following crucial fact:

\begin{claim}\label{claim}  $\gg^\theta$ is a reductive Lie algebra over $\tilde{k}.$
If $\gh$ is a Cartan
subalgebra of $\gg^{\theta},$ then $\zz_{\gg}(\gh)$ is a Cartan subalgebra
of $\gg.$
\end{claim} 
  Since $\phi(\theta)$ is an automorphism of the $\tilde{k}$--Lie algebra $\gg$ we
see that  $\gg^\theta$ is
  indeed a Lie subalgebra of $\gg.$ Let $\gg'$ and $\gz$ denote the
derived algebra and the centre of $\gg$
  respectively. Because $\gg$ is reductive $\gg'$ is semisimple and
$\gg = \gg' \times \gz.$ Clearly
  $\theta$ induces by restriction automorphisms (also denoted by $\theta$)
of $\gg'$ and of $\gz.$ By
  \cite{Bbk} Ch. 8 \S1 cor. to prop. 12. $(\gg')^\theta$ is reductive, and
  therefore $\gg^\theta = (\gg')^\theta \times \gz^\theta$ is also reductive.

 Every Cartan subalgebra $\gh$ of $\gg^\theta$ is  of the form $\gh= \gh'
\times \gz^\theta$ for
 some Cartan subalgebra $\gh'$ of $({\gg'})^{\theta}.$
Clearly $\zz_{\gg}(\gh) = \zz_{\gg'}(\gh') \times \gz.$ By
 \cite{P3} theorem 9 the centralizer  $\zz_{\gg'}(\gh')$ is a Cartan
subalgebra of $\gg',$ so the claim follows.
\medskip

We now return to the proof of the Theorem. Since $H_1$ is normal in
$H$ we have an induced action (via $\phi$) of $H' =  H/H_1$ on the reductive $\tilde{k}$--Lie algebra $\gg^{\theta}.$  We have
induced group homomorphisms $\phi' : H'  \to \Aut_k(\gg^{\theta})$
and $\psi' : H' \to \Gamma$ (this last since $H_1 \subset
\ker(\psi)$). For $0 \leq i < s$ define $H'_i = H_{i + 1}/H_1.$ We
apply the induction assumption to the quintuple $\big(\gg^\theta,
H', \psi', \phi', (H'_i)_{0 \leq i \leq s - 1}\big).$ This yields
the existence of a Cartan subalgebra   $\gh$ of $\gg^{\theta}$ which
is stable under the action of $H'$ given by $\phi'.$ This means
that, back in $\gg$, the algebra $\gh$ is stable under our original
action of $H$ given by $\phi.$ But then the centralizer of $\gh$ in
$\gg$ is also stable under this action, and we can now conclude by
(\ref{claim}) .
\end{proof}

\begin{remark} If $\psi$ is the trivial map  the Theorem reduces
to the ``Main result (B)" of Borel and
Mostow \cite{BM} for $\gg.$  The use of  (\ref{claim}) allows for a
slightly more direct proof of this result.
\end{remark}

We shall use the above semilinear version
of Borel-Mostow's theorem \ref{corBM1} to establish the following result
which will play a crucial role in the the proof of the existence 
of maximal tori
 on twisted groups corresponding to loop torsors.

\begin{corollary}\label{corBM1} Let $\tilde{k}/k$ be a finite Galois
extension with Galois group $\Gamma.$
Let $\bG$ be a reductive group over $\tilde{k}.$ Let $H$ be a group,
and assume we are given a group homomorphism $\rho : H  \to
\Aut_\Gamma(\bG)$  for which we can find a family of subgroups
$(H_i)_{0 \leq i \leq s}$ of $H$ as in the Theorem, that is  $\ker(t
\circ \rho) = H_s \supset H_{s - 1} \supset ... \supset H_1 \supset
H_0 = 0$ where $t : \Aut_\Gamma(\bG) \to \Gamma$ is the type
morphism, each $H_i$ is normal in $H,$ the elements of $\rho(H_i)$
act semisimply on the $\tilde{k}$--Lie algebra ${\mathcal  Lie}(\bG)$, and the
quotients $H_i / H_{i-1}$ are cyclic. 
\noindent Then there exists a maximal torus $\bT$ of $\bG$ such
that
 $\rho$ has values in  $\Aut_\Gamma(\bG,\bT) \subset \Aut_\Gamma(\bG)$. Namely if $h \in H$ and $(t \circ \rho)(h) = \gamma \in \Gamma,$ then $\rho(h) : {^\gamma}\bG \to \bG$ induces by restriction an isomorphism $ {^\gamma}\bT \to \bT.$
\end{corollary}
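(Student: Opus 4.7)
The plan is to apply Theorem \ref{semilinearBM} to the reductive $\tilde{k}$--Lie algebra $\gg = {\mathcal Lie}(\bG)$. I would set $\psi = t \circ \rho : H \to \Gamma$ and define $\phi : H \to \Aut_k(\gg)$ by $\phi(h) = {\mathcal Lie}(\rho(h))$, using Corollary \ref{semilinearcor1} to obtain each ${\mathcal Lie}(\rho(h))$ as a $k$--linear automorphism of $\gg$. Functoriality of the ${\mathcal Lie}$ construction makes $\phi$ a group homomorphism. The semilinearity assertion ${\mathcal Lie}(f)(\lambda x) = ({^\gamma \lambda})\,{\mathcal Lie}(f)(x)$ provided by Corollary \ref{semilinearcor1} is exactly hypothesis (i) of Theorem \ref{semilinearBM}. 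Hypothesis (ii) is inherited directly from the hypothesis on the filtration $(H_i)$: the $H_i$ are normal in $H$, their quotients are cyclic, and the elements of $H_i \subset \ker(t \circ \rho) = \ker\psi$ act $\tilde{k}$--linearly on $\gg$ (by definition of $\Aut_\Gamma$) and semisimply by assumption.

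Theorem \ref{semilinearBM} then produces a Cartan subalgebra $\gh \subset \gg$ that is $H$--stable, i.e.\ $\phi(h)(\gh) = \gh$ for every $h \in H$. Next I would transport this to the group level using the natural bijection between maximal tori of a reductive group in characteristic zero and Cartan subalgebras of its Lie algebra (XIV.3.9 and 3.18, the very correspondence already invoked for toral torsors in Section 3): let $\bT$ be the unique maximal torus of $\bG$ with ${\mathcal Lie}(\bT) = \gh$.

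It then remains to verify the semilinear stability of $\bT$. Fix $h \in H$ and let $\gamma = \psi(h)$. The semilinear automorphism $\rho(h) : {^\gamma\bG} \to \bG$ sends the maximal torus ${^\gamma\bT}$ of ${^\gamma\bG}$ to some maximal torus $\bT'$ of $\bG$. Taking Lie algebras and using ${\mathcal Lie}({^\gamma\bT}) = {^\gamma\gh}$ (same underlying set as $\gh$, with twisted scalar action), the induced map $\phi(h)$ carries ${^\gamma\gh}$ onto $\phi(h)(\gh) = \gh = {\mathcal Lie}(\bT)$. By the uniqueness of the maximal torus associated to a Cartan subalgebra, $\bT' = \bT$, so $\rho(h)$ does restrict to an isomorphism ${^\gamma\bT} \to \bT$, which is the desired conclusion $\rho(h) \in \Aut_\Gamma(\bG,\bT)$.

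The main delicate point is the bookkeeping around the twists: $\gh$ does not change as a set under twisting by $\gamma$, but its $\tilde{k}$--module structure does, and one must check carefully that ``$\gh$ is a Cartan subalgebra of $\gg$'' transfers to ``${^\gamma\gh}$ is a Cartan subalgebra of ${^\gamma\gg}$'', and that the Cartan--to--torus bijection is functorial with respect to semilinear automorphisms. This is essentially formal given the framework set up in Section \ref{secsemi}, but needs to be spelled out to link the statement at the Lie algebra level with the asserted statement at the group scheme level.
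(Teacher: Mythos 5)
Your proof takes the same overall route as the paper: apply Theorem~\ref{semilinearBM} to $\gg={\mathcal Lie}(\bG)$ via the homomorphism $\phi(h)=\widetilde{\rho(h)}$ supplied by Corollary~\ref{semilinearcor1}, obtain an $H$--stable Cartan subalgebra $\gt$, and then pass to the maximal torus $\bT$ with ${\mathcal Lie}(\bT)=\gt$. Where the two arguments part ways is precisely in the closing step that you yourself flag as delicate. The paper identifies $\bT=\bZ_\bG(\gt)$ (which it has to justify in a footnote), computes $\bT(\tilde{k})$ concretely as the centralizer of $\gt$ in $\bG(\tilde{k})$, shows that $\widetilde{\rho_\epsilon(h)}$ stabilizes this set of $\tilde{k}$--points, and then deduces the scheme-theoretic equality $\rho(h)({^\gamma}\bT)=\bT$. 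You instead invoke functoriality of $\mathfrak{Lie}$ together with the uniqueness of the maximal torus attached to a Cartan subalgebra; this is a clean alternative and avoids the $\tilde{k}$--point computation, at the cost of needing to verify that $\mathfrak{Lie}(\bG)(\gamma)(\gt)={\mathcal Lie}({^\gamma}\bT)$ and that $\phi(h)$ factors as $\mathfrak{Lie}(\bG)(\rho(h))\circ\mathfrak{Lie}(\bG)(\gamma)$, which is exactly formula~(\ref{rhoLie}) in the paper.

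One small inaccuracy worth fixing: you describe ${\mathcal Lie}({^\gamma}\bT)$ as ``$\gh$ with twisted scalar action, same underlying set.'' That is not quite what happens here. Following Remark~\ref{affinecase}, ${\mathcal Lie}(\gG)={\rm Der}_{\tilde{k}}(\tilde{k}[\gG],\tilde{k})$ and ${\mathcal Lie}({^\gamma}\gG)={\rm Der}_{\tilde{k}}(\tilde{k}[\gG]^{\gamma^{-1}},\tilde{k})$; the $\tilde{k}$--linearity constraint on derivations changes, so the two are genuinely different subsets of ${\rm Hom}_k$, related by $\delta\mapsto\gamma\circ\delta$ rather than by identity. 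The correct statement you need is $\mathfrak{Lie}(\bG)(\gamma)(\gt)={\mathcal Lie}({^\gamma}\bT)$, which follows from compatibility of $\mathfrak{Lie}$ with the twist applied to the closed subgroup $\bT\subset\bG$. With this fixed, your argument goes through and is essentially parallel to the paper's, just formulated at the Lie algebra level rather than at the level of $\tilde{k}$--points.
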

\begin{proof} Let $h \in H.$  If $(t \circ \rho)(h) = \gamma$ then according to the various definitions we have the following commutative diagram.

$$
\begin{CD}
   \bG(\tilde{k}) @>\bG(\gamma)>> \bG(\tilde{k}^\gamma)  = {^\gamma}\bG(\tilde{k})@>\rho(h)(\tilde{k})>> \bG(\tilde{k})  \\
 @VVV @VVV @VVV \\
  \bG(\tilde{k}[\epsilon]) @>\bG(\gamma_\epsilon)>> \bG(\tilde{k}[\epsilon]^\gamma)  = {^\gamma}\bG(\tilde{k}[\epsilon]) @>\rho(h)(\tilde{k}[\epsilon])>> \bG(\tilde{k}[\epsilon])\\
 @AAA @AAA @AAA \\
 {\mathcal  Lie}(\bG) @>\mathfrak{Lie}(\gG)(\gamma)>> {\mathcal  Lie}(\bG^\gamma)  @>\mathfrak{Lie}(\gG)(\rho(h))>> {\mathcal  Lie}(\bG).
\end{CD}
$$
where we have denoted by $\gamma_\epsilon : \tilde{k}[\epsilon] \to \tilde{k}[\epsilon]^\gamma$ the $\tilde{k}$-algebra isomorphism induced by $\gamma.$
For convenience in what follows we will denote ${\mathcal  Lie}(\bG)$ by $\gg.$  By Corollary
\ref{semilinearcor1} we obtain by composing $\rho$ with the map \, $\tilde{}$ \, defined in Proposition \ref{semilinear} a group homomorphism
$\phi : H \to \Aut_k(\gg),$ namely $\phi(h) = \widetilde{\rho(h)}$, which together with the group
homomorphism $\psi = t \circ \rho : H \to \Gamma$ and the $H_i$
satisfy the assumptions of Theorem \ref{semilinearBM}. It follows that there exists
a Cartan subalgebra $\gt$ of $\gg$ which is stable under the action
of $H$ defined by $\phi.$ 

Note that by definition 
\begin{equation}\label{rho}
 \widetilde{\rho(h)} = \rho(h)(\tilde{k}) \circ \bG(\gamma)
 \end{equation}
 which is nothing but the top row of our diagram above. Similarly with the notation of Corollary \ref{semilinearcor1} we have 
 \begin{equation}\label{rhoLie}
 {\mathcal  Lie}(\widetilde{\rho(h)}) = \widetilde{\rho_\epsilon(h)}|_{_{\gg}} = \mathfrak{Lie}(\bG)(\rho(h)) \circ \mathfrak{Lie}(\bG)(\gamma)
 \end{equation}
 where $\widetilde{\rho_\epsilon(h)}$ stands for the middle row of our diagram, namely $\rho(h)(\tilde{k}[\epsilon]) \circ \bG(\gamma_\epsilon).$ 

Let $\bT$ be the maximal torus of $\bG$ whose Lie algebra is
$\gt$ [XIV.6.6.c].  We have $\bT = \bZ_{\bG}(\gt)$ where the centralizer is taken respect to the adjoint action of $\bG$ on $\gg.$\footnote{We could not find a reference for this basic fact in the literature. By [XIII 5.3] we have $\bN_{\bG}(\bT) = \bN_{\bG}(\gt).$ Since the natural homomorphism $\bN_{\bG}(\bT)/ \bT \to \Aut(\gt)$ is injective we obtain
$\bT = \bZ_{\bG}(\gt).$}  

Given an element $g \in \bG(\tilde{k})$ we will denote its natural image in $\bG(\tilde{k}[\epsilon])$ by $g_\epsilon.$ Since we are working over a base field the $\tilde{k}$-points of $\bT = \bZ_{\bG}(\gt)$ can be computed in the naive way, namely
\begin{equation}\label{T(k')}
 \bT(\tilde{k}) = \{ g \in \bG(\tilde{k}) : g_\epsilon x {g_\epsilon}^{-1} = x \,\,\text{\rm for all}\,\, x \in \gt \subset \bG(\tilde{k}[\epsilon])\}
\end{equation}
Since $\tilde{\rho_\epsilon(h)}$ is an automorphism of the (abstract) group $\bG(\tilde{k}[\epsilon])$ we obtain
\begin{equation}
 \bT(\tilde{k}) = \{ g \in \bG(\tilde{k}) : \tilde{\rho_\epsilon(h)}(g_\epsilon)\tilde{\rho_\epsilon(h)}(x) \big(\tilde{\rho_\epsilon(h)}({g_\epsilon})\big)^{-1} = \tilde{\rho_\epsilon(h)}(x) \,\,\text{\rm for all}\,\, x \in \gt \}
\end{equation}
But since $\tilde{\rho_\epsilon(h)}$ stabilizes $\gt$ this last reads
\begin{equation}\label{T(k')bis}
 \bT(\tilde{k}) = \{ g \in \bG(\tilde{k}) : \tilde{\rho_\epsilon(h)}(g_\epsilon)x \big(\tilde{\rho_\epsilon(h)}({g_\epsilon})\big)^{-1} = x \,\,\text{\rm for all}\,\, x \in \gt \}
\end{equation}

Note that by the commutativity of the top square of our diagram we have $\big(\tilde{\rho(h)}(g)\big)_\epsilon = \tilde{\rho_\epsilon(h)}(g_\epsilon).$ Thus from (\ref{T(k')bis}) we obtain that $\tilde{\rho_\epsilon(h)}\big(\bT(\tilde{k})\big) = \bT(\tilde{k}).$ On the other hand by (\ref{rho}) we have  $\tilde{\rho_\epsilon(h)}\big(\bT(\tilde{k})\big) =  \rho(h)(\tilde{k}) \Big(\bG(\gamma)\big(\bT(\tilde{k})\big)\Big).$ But by definition $\Big(\bG(\gamma)\big(\bT(\tilde{k})\big)\Big) = {^\gamma}\bT(\tilde{k}).$  Thus our $\tilde{k}$-group homomorphism $\rho(h) : {^\gamma}\bG : \to \bG$ is such that the two tori $\rho(h)({^\gamma}\bT)$ and $\bT$ of $\bG$ have the same $\tilde{k}$-points. This forces 
 $\rho(h)({^\gamma}\bT) = \bT.$ 
\end{proof}

Next we give a crucial application of the semilinear considerations 
developed thus far to the existence of maximal tori for
 certain loop groups.

\subsection{Existence of maximal tori in loop groups}

We come back to the case of $R=R_n = k[t^{\pm 1}_1,\dots, t^{\pm 1}_n]$ where $k$ is a field of characteristic zero. This is the ring that plays a central role in all applications to infinite-dimensional Lie theory.
It is not true in general that a reductive $R_n$--group admits a maximal
 torus; however.

\begin{proposition}\label{existenceoftori} 
Let $\gG$ be a loop reductive group scheme over $R_n$
(see definition \ref{defkloopgroup}).
 Then  $\gG$ admits a maximal torus.
\end{proposition}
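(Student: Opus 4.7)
The plan is to apply the semilinear Borel--Mostow theorem (Corollary \ref{corBM1}) to the finite Galois group that controls $\gG$, thereby producing a maximal torus of the Chevalley form preserved by the loop cocycle, and then to descend this torus to $\gG$ by Galois descent.

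Let $\bH_0$ denote the split Chevalley $k$-form of $\gG$, so that $\gG$ is classified by a loop cocycle $u \in Z^1\bigl(\pi_1(R_n, a), \bAut(\bH_0)(k_s)\bigr)$. Continuity of $u$ and the finite presentation of $\bAut(\bH_0)$ allow me to choose a finite Galois extension $k'/k$ containing $\bmu_m(\ol k)$ for some $m \geq 1$ such that $u$ factors through the natural surjection $\pi_1(R_n, a) \twoheadrightarrow H := (\Z/m\Z)^n \rtimes \Gal(k'/k)$ with values in $\bAut(\bH_0)(k')$ (see (\ref{FGLaurent})). Set $\tilde k = k'$, $\Gamma = \Gal(k'/k)$, $\bG = \bH_0 \otimes_k k'$, and define
$$\rho : H \longrightarrow \Aut_\Gamma(\bG), \qquad \rho(\ol e, \gamma) := u(\ol e, \gamma) \circ \tilde\gamma,$$
where $\tilde\gamma \in \Aut_\Gamma(\bG)$ is the canonical semilinear automorphism of type $\gamma$ coming from the Galois action on $k'$. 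The cocycle identity for $u$, together with the facts that $(\Z/m\Z)^n$ acts trivially on $\bAut(\bH_0)(k')$ and that conjugation of a $k'$-linear $\varphi$ by $\tilde\gamma$ in $\Aut_\Gamma(\bG)$ produces the Galois twist of $\varphi$, shows that $\rho$ is a group homomorphism whose type morphism $t \circ \rho$ is the canonical projection $H \twoheadrightarrow \Gamma$.

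To apply Corollary \ref{corBM1}, I filter $\ker(t \circ \rho) = (\Z/m\Z)^n$ by the coordinate subgroups $H_i := (\Z/m\Z)^i \times \{0\}^{n-i}$ for $0 \leq i \leq n$. The key point is that the conjugation action of $\Gamma$ on $(\Z/m\Z)^n$ rescales all $n$ coordinates by a common unit depending only on the action on $\bmu_m$ (see \S\ref{laurent}); hence each $H_i$ is normal in $H$, and each quotient $H_i/H_{i-1}$ is cyclic of order $m$. For $h \in H_i$, the element $\rho(h) = u(h)$ is a $k'$-linear automorphism of $\bG$ of finite order (since $u$ restricted to $(\Z/m\Z)^n$ is an honest group homomorphism into a finite subgroup of $\bAut(\bH_0)(k')$), and therefore acts semisimply on the reductive $k'$-Lie algebra ${\mathcal Lie}(\bG)$ because $\mathrm{char}(k) = 0$. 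Corollary \ref{corBM1} now furnishes a maximal torus $\bT_0 \subset \bG$ such that for every $h = (\ol e, \gamma) \in H$, the isomorphism $\rho(h) : {^\gamma}\bG \to \bG$ restricts to an isomorphism ${^\gamma}\bT_0 \simeq \bT_0$.

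Finally, I descend. Extending scalars to $R^{sc}$, the closed subgroup scheme $\bT_0 \otimes_{k'} R^{sc} \subset \bH_0 \otimes_k R^{sc}$ is stable under the twisted Galois action $\gamma\cdot x := u_\gamma \cdot {^\gamma}x$ of $\pi_1(R_n, a)$: this reduces to the invariance property produced by Corollary \ref{corBM1} on the finite quotient $H$, combined with the fact that the ideal of $\bT_0$ in $k'[\bG]$ already determines $\bT_0 \otimes_{k'} R^{sc}$ in $\bH_0 \otimes_k R^{sc}$. Galois descent along $R^{sc}/R_n$ in the form of Remark \ref{cocycleconvention} then yields a closed subgroup scheme $\gT \subset \gG$ over $R_n$ that becomes $\bT_0 \otimes_{k'} R^{sc}$ after base change to $R^{sc}$, hence is a torus of the same rank as $\bT_0$ and is maximal because maximality of tori in a reductive group scheme is a condition on geometric fibers. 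The step I expect to be the most subtle is the careful construction of $\rho$ in the semilinear framework of \S\ref{subsecsemi} and the verification that it is a genuine group homomorphism of the correct type; after that, the normality and semisimplicity conditions feeding into Corollary \ref{corBM1} follow directly from the explicit form of the $\Gamma$-action on $\wh{\Z}(1)^n$ and from finite-order semisimplicity in characteristic zero.
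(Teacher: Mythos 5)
Your proposal is correct and follows essentially the same strategy as the paper's proof: you assemble a semilinear homomorphism $\rho$ from the loop cocycle $u$ together with the canonical Galois semilinear action, invoke the semilinear Borel--Mostow Corollary~\ref{corBM1} to produce a $\rho$-invariant maximal torus of the Chevalley form, and then descend it to $R_n$. You are in fact somewhat more explicit than the paper on the filtration of $(\Z/m\Z)^n$ by coordinate subgroups and on why $\rho$ restricted to this kernel consists of finite-order (hence semisimple) automorphisms, while treating the Yoneda/opposite-group bookkeeping and the final descent step more lightly than the paper does; both gaps close as you anticipate.
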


\begin{proof}  We try to recreate the situation of the
 semilinear Borel-Mostow theorem. We can assume that $\gG$ is split after 
base extension to the Galois covering 
 $ \tilde{R} = \tilde{k}[t^{\pm 1/m}_1,\dots, t^{\pm 1/m}_n]$
where  $m$ is a  positive integers and 
 $\tilde{k}/k$ is a finite Galois extension of fields 
  containing all primitive $m$-th
 roots of unity of $\overline{k}.$  
 Recall from Example \ref{laurent}  that
$\tilde{R}$
is a Galois extension of $R$ with Galois group 
$\tilde{\Gamma} = (\Z/m\Z)^n\rtimes \Gamma $ as follows:  For ${\bf e} =  (e_1,\dots,e_n)\in \Z^n$ we have $^{\ol{\bf e}}(\lambda t_j^{\frac{1}{m}}) = \lambda \xi  ^{e_j}_m
t_j^{\frac{1}{m}}$ for all $\lambda \in \tilde{k}$, where $^- :\Z^n \to (\Z/mZ)^n$ is the canonical map, while the Galois 
group $\Gamma = \Gal(\tilde{k}/k)$ acts naturally on $\tilde{R}$
through its action on $\tilde{k}.$

Let $\bG_0$ be the Chevalley $k$--form of $\gG$ (see \S \ref{seckloopgroup}). By assumption, we can assume that
$\gG$ is the twist of $\gG_0 = \bG_0 \times_k  R$ by a loop cocycle
$$
u: \tilde \Gamma \to \bAut(\bG_0)(\tilde k).
$$
 The homomorphism 
$\psi  : \tilde{\Gamma} = (\Z/m\Z)^n\rtimes \Gamma \to \Gamma$ is defined to be the natural projection. For convenience we will adopt the following notational convention. The elements of $\tilde{\Gamma}$ will be denoted by $\tilde{\gamma}$, and the image under $\phi$ of such an element (which belongs to $\Gamma$), by the corresponding greek character: that is $\psi(\tilde{\gamma}) = \gamma.$ 

Consider the reductive $\tilde{k}$--group $\bG = \Spec(\tilde{k}[\bG_0])$ where, as usual, $\tilde{k}[\bG_0]$ denotes the $\tilde{k}$--Hopf algebra $\tilde{k} \otimes_k k[\bG_0].$ Consider for each $\tilde{\gamma}$ the map $f(\tilde{\gamma}) : \tilde{k}[\bG] \to \tilde{k}[\bG]$ defined by
\begin{equation}\label{f}
f(\tilde{\gamma}) =  u_{\tilde{\gamma}} \circ \gamma.
\end{equation}
Since each $u_{\tilde{\gamma}}$ is an automorphism of the $\tilde{k}$--Hopf algebra $\tilde{k}[\bG],$ it follows that $f(\tilde{\gamma})$ is in fact a $\tilde{k}$-Hopf algebra isomorphism $\tilde{k}[\bG] \to \tilde{k}[\bG]^{\gamma}.$ As such it can be thought of, by Yoneda considerations and (\ref{affinetwist}), as an element of $\Aut_\gamma(\bG)$ of type $\gamma^{-1}$ which we will denote by $\rho(\tilde{\gamma}).$ 

Since the restriction of the action of $\tilde{\gamma}$ on $\tilde{R}[\bG]$ to $\tilde{k}[\bG]$ is given by $\gamma,$ the cocycle condition on $u$ shows that for all $\tilde{\alpha}, \tilde{\beta} \in \tilde{\Gamma}$ we have
\begin{equation}\label{fmor}
\rho(\tilde{\alpha}\tilde{\beta}) =  \rho(\tilde{\alpha}) \rho(\tilde{\beta})
\end{equation}
where this last product takes place in $\Aut_{\Gamma}(\bG).$ Thus $\rho$ is a group homomorphism and $f(\tilde{\gamma})$ can be viewed as a $\tilde{k}$-Hopf algebra morphism from $\tilde{k}[\bG]$ to $\tilde{k}[\bG]^{\gamma}$ 

From (\ref{fmor}), the various definitions and the ``anti equivalent" nature of Yoneda's correspondence it follows that the map $\tilde{\gamma} \to \rho(\tilde{\gamma})$ can be viewed as a group homomorphism $\rho : \tilde{\Gamma}^{\rm opp}  \to \Aut_\Gamma(\bG),$  where $\tilde{\Gamma}^{\rm opp} $ is the opposite group of $\tilde{\Gamma}.$ Since $\rho(\tilde{\gamma})$ is of type $\gamma^{-1}$ we can complete the necessary semilinear picture by defining $\phi : \tilde{\Gamma}^{\rm opp}  \to \Gamma$ to be the map $\tilde{\gamma} \to \gamma^{-1}.$ The kernel of the composite map $t \circ \rho$ is precisely $ (\Z/m\Z)^n$, and the elements of this kernel act trivially on $\tilde{k}[\bG]$, in particular their corresponding action on the Lie algebra of $\bG$ is trivial, hence semisimple. We can thus apply Corollary \ref{corBM1}; the role of $H$ now being played by $\tilde{\Gamma}^{\rm opp}.$

Let $\bT$ be a torus $\bG$ such that  $\rho(\tilde{\gamma})({^\gamma}^{^{-1}}\bT) = \bT$ for all $\tilde{\gamma} \in \tilde{\Gamma}.$ The torus $\bT$ corresponds to a Hopf ideal $I$ of the Hopf $\tilde{k}$--algebra $\tilde{k}[\bG]$ representing $\bG.$ Each $\rho(\tilde{\gamma}),$ which corresponds to the $\tilde{k}$-Hopf algebra isomorphism $f(\tilde{\gamma})$ described in (\ref{f}), induces a $\tilde{k}$--Hopf algebra isomorphism $\overline{f}(\tilde{\gamma})$ from $\tilde{k}[\bT]$ to $\tilde{k}[\bT]^\gamma$ where $\tilde{k}[\bG]/I = \tilde{k}[\bT]$ is the Hopf algebra representing $\bT.$  For future use we observe that the  resulting action of $\tilde{\Gamma}$ on $\tilde{k}[\bT]$ is $\Gamma$--{\it semilinear} in the sense that if $\lambda \in \tilde{k}$ and $a \in \tilde{k}[\bT]$ then
\begin{equation}\label{gammasemilinear}
 \overline{f}(\tilde{\gamma})(\lambda a) = \overline{f}(\tilde{\gamma})(\lambda_{\tilde{k}[\bT]} . a) = \lambda_{\tilde{k}[\bT]^\gamma} .\big( \overline{f}(\tilde{\gamma})(a)\big)  = ({^\gamma}\lambda)\overline{f}(\tilde{\gamma})(a)
 \end{equation}
This follows immediately from the definition of $f(\tilde{\gamma}).$

Consider the reductive $\tilde{R}$-group $\tilde{\gG} = \bG \times_{\tilde{k}} \tilde{R}$ and its maximal torus   $\tilde{\gT} = \bT \times_{\tilde{k}} \tilde{R}.$ We want to define an action of $\tilde{\Gamma}$ as automorphisms of the $R$--Hopf algebra $\tilde{R}[\gT] = \tilde{k}[\bT] \otimes_{\tilde{k}} \tilde{R}$ so that the action is $\tilde{\Gamma}$--semilinear, this is
\begin{equation}\label{tildesemilinear}
^{\tilde{\gamma}}(xs) = {^{\tilde{\gamma}}}x {^{\tilde{\gamma}}}s 
\end{equation}
for all $\tilde{\gamma} \in \tilde{\Gamma}$, $s \in \tilde{R}$ and $x \in \tilde{R}[\gT].$ By Galois descent this will show that the maximal torus $\tilde{\gT} $ of $\tilde{\gG}$ descends to a torus (necessarily maximal) $\gT$ of $\gG.$

To give the desired semilinear action consider, for a given fixed $\tilde{\gamma} \in \tilde{\Gamma},$ the map
$$
 \tilde{k}[\bT] \times \tilde{R} \to \tilde{k}[\bT] \otimes_{\tilde{k}}\tilde{R} = \tilde{R}[\gT]
$$
defined by
\begin{equation}\label{descentmap}
(a,s) \mapsto \overline{f}(\tilde{\gamma}) (a)\otimes \,{^{\tilde{\gamma}}s}
\end{equation}
for all $a \in \tilde{k}[\bT]$ and $s \in \tilde{R}.$ From (\ref{gammasemilinear}) and the fact that ${^{\tilde{\gamma}}}s = {^\gamma}s$ if $s \in \tilde{k} \subset \tilde{R}$ it follows that the above map is $\tilde{k}$-balanced, hence that induces a morphism of $\tilde{k}$-spaces
\begin{equation}\label{descenttensormap}
\widehat{f}(\tilde{\gamma}) :  \tilde{k}[\bT] \otimes_{\tilde{k}}\tilde{R}= \tilde{R}[\gT]  \to  \tilde{R}[\gT]
\end{equation}
satisfying
\begin{equation}\label{descenttensormapprop}
\widehat{f}(\tilde{\gamma}) : a \otimes s \mapsto \overline{\rho}(\tilde{\gamma}) (a)\otimes{^{\tilde{\gamma}}}s
\end{equation}
for all $a \in \tilde{k}[\bT]$ and $s \in \tilde{R}.$ From (\ref{gammasemilinear}) and (\ref{descenttensormapprop}) we then obtain  an action of the group $\tilde{\Gamma}$ on the Hopf algebra $\tilde{R}[\gT]$ as prescribed by (\ref{tildesemilinear}). 
\end{proof}

\subsection{Variations of a result of Sansuc.}

We shall need the following variantion of a well-known and useful result \cite[1.13]{Sa}.

\begin{lemma} \label{sansuc} Assume that $k$ is of characteristic zero.
 Let $\bH$ be a linear algebraic group over $k$ and let $\bU$ be a normal unipotent 
subgroup of $\bH$.

\begin{enumerate}
 \item Let $k'/k$ be a finite Galois extension of fields. Let $\Gamma$ be a finite group acting 
on $k'/k$.
 Then the map
$$
H^1\big(\Gamma, \bH(k')\big) \to H^1\big(\Gamma, (\bH/ \bU)(k')\big)  
$$
is bijective.

\item Let $R$ be an object in $\kalg.$ Then the map
$$
H^1(R, \bH) \to H^1(R, \bH/ \bU)
$$
is bijective.
\end{enumerate}

\end{lemma}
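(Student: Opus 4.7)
Both statements share a common structure: devissage on $\bU$ to reduce to the abelian case, then apply the long exact sequence of (non-abelian) cohomology attached to $1\to \bU \to \bH \to \bH/\bU \to 1$, exploiting vanishing theorems for abelian unipotent groups in characteristic zero.

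\textbf{Step 1 (Devissage).} Let $\bU = \bU^{(0)} \supset \bU^{(1)} \supset \cdots \supset \bU^{(m)} = 1$ be the derived series of $\bU$. Each $\bU^{(i)}$ is characteristic in $\bU$, hence normal in $\bH$, and the successive quotients $\bU^{(i)}/\bU^{(i+1)}$ are commutative unipotent. Filtering the surjection $\bH \to \bH/\bU$ through the tower of $\bH/\bU^{(i)}$, it is enough to prove both statements in the special case where $\bU$ is abelian, in which case $\bU \simeq \mathbb{G}_a^n$ over $k$ since $\mathrm{char}(k)=0$.

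\textbf{Step 2 (Cohomology vanishing for abelian $\bU$).} In case (1), the normal basis theorem gives $k' \simeq k[\Gamma]$ as $k[\Gamma]$-module, so Shapiro's lemma yields $H^i(\Gamma,k')=0$ for $i\geq 1$, and consequently $H^i\bigl(\Gamma,\bU(k')\bigr) = H^i\bigl(\Gamma,(k')^n\bigr) = 0$. In case (2), $\bU_R \simeq \mathbb{G}_{a,R}^n$, and $H^i_{\fppf}(R,\mathbb{G}_{a,R})$ coincides with $H^i(\Spec(R),\mathcal{O})$, which vanishes for $i\geq 1$ because $\Spec(R)$ is affine. The same vanishing persists after twisting by a $1$-cocycle $\alpha$ with values in $\bH$ (or $\bH/\bU$), since the adjoint action factors through $\Aut_{\mathrm{gr}}(\bU)=\GL_n$: in case (1), Hilbert 90 ensures that every $k$-form of $\mathbb{G}_a^n$ is trivial, so $\,{}_{\alpha}\bU(k') \simeq (k')^n$ as $\Gamma$-module; in case (2), $\,{}_{\alpha}\bU$ is the vector group scheme associated with a locally free $\mathcal{O}_R$-module $\mathcal{L}$ of rank $n$, and $H^i_{\fppf}(R,{}_{\alpha}\bU) = H^i_{\mathrm{Zar}}(\Spec(R),\mathcal{L})=0$ for $i\geq 1$.

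\textbf{Step 3 (Bijectivity).} With this vanishing, bijectivity in both settings follows from the long exact sequence of non-abelian cohomology. For \emph{injectivity}, standard twisting reduces to showing that the fiber over the trivial class is trivial: if $\alpha \in Z^1$ maps to a coboundary in the quotient, a modification brings $\alpha$ into the $\bU$-part, so its class is detected by $H^1$ of $\bU$ (or a twist thereof), which is zero. For \emph{surjectivity}, given a class $[\beta]$ on $\bH/\bU$, abelian-ness of $\bU$ provides a $2$-cocycle obstruction with values in the $\beta$-twisted $\bU$; this obstruction vanishes by Step 2. In the scheme-theoretic setting of (2), the argument is most cleanly phrased in terms of torsors: the obstruction to lifting a $(\bH/\bU)$-torsor $\bE'$ to an $\bH$-torsor is a gerbe on $\Spec(R)$ banded by the twist $\,{}_{\bE'}\bU$, and vanishing of $H^2_{\fppf}(R,{}_{\bE'}\bU)$ (again by affine quasi-coherent cohomology) kills the obstruction.

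\textbf{Main obstacle.} The only non-formal point is the treatment of twisted forms of $\bU$: one must know that after twisting by $\bH$-cocycles (which need not normalize $\bU$ trivially), the resulting $R$-group scheme still has trivial higher cohomology. In case (1) this is handled by $\Aut_{\mathrm{gr}}(\mathbb{G}_a^n)=\GL_n$ combined with Hilbert 90, and in case (2) by the observation that the twist remains a vector group scheme attached to a quasi-coherent sheaf on the affine scheme $\Spec(R)$. Once this is in hand, both assertions reduce to standard diagram chases.
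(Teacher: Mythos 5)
Your overall architecture (devissage to the case of a vector group, then vanishing of higher cohomology with coefficients in twists of that vector group) is the same as the paper's; the choice of filtration differs slightly (you use the derived series of $\bU$, the paper peels off a central split subgroup $\bU_0 \simeq \GG_a^n$ of $\bU$), but either devissage works. Part (2) of your argument is essentially identical to the paper's, including the appeal to Giraud's obstruction class in $\cH^2(R, {_\bE\bU})$ and the vanishing of quasi-coherent cohomology on an affine scheme; the only point you elide is that $\bAut_{\text{gr}}(\GG_a^n) \simeq \bGL_n$ (the paper isolates this as a separate Lemma).

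For part (1), however, there is a genuine gap. The statement allows $\Gamma$ to be \emph{any} finite group acting on $k'/k$, not the Galois group: the action is given by an arbitrary homomorphism $\Gamma \to \Gal(k'/k)$ which may well have a nontrivial kernel (and this generality is essential — in the proof of Theorem \ref{punctural} the lemma is applied with $\tilde\Gamma = \bmu_n(\tilde k)\rtimes\Gamma$ acting on $\tilde k$ through the projection to $\Gal(\tilde k/k)$, a non-injective map). Your normal basis theorem argument gives $k' \simeq k[\Gal(k'/k)]$ as a module over the Galois group, which does not give a Shapiro statement for a group $\Gamma$ that is not a subgroup of $\Gal(k'/k)$. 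More seriously, your treatment of the twisted coefficients via ``Hilbert 90 ensures every $k$-form of $\GG_a^n$ is trivial'' uses $H^1\big(\Gal(k'/k), \GL_n(k')\big)=1$, but what you need is the vanishing of $H^1\big(\Gamma, \GL_n(k')\big)$ for the possibly non-faithfully-acting $\Gamma$; this can fail (e.g.\ the kernel of $\Gamma \to \Gal(k'/k)$ contributes $\Hom$ into $\GL_n(k')$). The paper avoids all of this with a single observation: since $\operatorname{char}(k)=0$, $\bU(k') \simeq (k')^n$ is a $\Q$-vector space, hence a uniquely divisible abelian group, and this property is insensitive to twisting the $\Gamma$-action; the cohomology of \emph{any} finite group with values in a uniquely divisible abelian group vanishes in positive degrees. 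Your proof could be repaired by replacing the normal basis/Shapiro/Hilbert 90 chain with this unique-divisibility observation, which is both shorter and correct in the required generality.
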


\begin{proof} The $k$--group $\bU$ 
admits a non-trivial characteristic   central split unipotent subgroup 
$\bU_0 \simeq \GG_a^n$ \cite[IV.4.3.13]{DG}. We can then form the following commutative diagram
of exact sequence of algebraic $k$--groups
$$
\begin{CD}
&&&&&&1 \\
&&&&&& @VVV \\
&& 1 &&  && \bU / \bU_0 \\
&& @VVV &&  @VVV \\ 
 1 @>>> \bU_0 @>>> \bH @>>> \bH/ \bU_0  @>>> 1 \\ 
&& @VVV @V {\cong}VV @VVV \\
 1 @>>> \bU @>>> \bH @>>> \bH/ \bU  @>>> 1 \\ 
&& @VVV && @VVV \\
&& \bU / \bU_0 &&  && 1 \\
&& @VVV &&   \\ 
&& 1 \\
\end{CD}
$$
If the Lemma holds for the morphisms $\bH \to \bH / \bU_0$ and
$\bH/ \bU_0  \to  \bH/ \bU$, it holds for  $\bH \to \bH/ \bU$.
Without loss of generality, we can therefore assume by  devissage that  
$\bU=\GG_a^n$.

\smallskip
\noindent (1) Since by Hilbert's Theorem 90 (additive form) and devissage $H^1(k',\bU)=0$,\footnote{See \cite{GMB} Lemme 7.3 for a more general result.} we have an exact sequence of $\Gamma$--groups
$$
1 \to \bU(k') \to \bH(k') \to (\bH/ \bU)(k') \to 1.
$$
For each   $c \in Z^1\big(\Gamma, (\bH/ \bU)(k') \big)$, 
the group $_c{\big(\bU(k')\big)}$ is a uniquely divisible 
abelian group, so  $H^i\big(\Gamma, {_c(\bU(k'))}\big)=0$ for all $i>0$.
By applying a basic result on non-abelian cohomology
\cite[\S I.5, corollary to prop. 41]{Se},
the vanishing of these $H^2$ implies  that the map 
$H^1\big(\Gamma, \bH(k')\big) \to H^1\big(\Gamma, (\bH/ \bU)(k')\big) $
is surjective. Similarly, 
for each $z \in Z^1\big(\Gamma, \bH(k') \big)$,
the group $0= H^1\big(\Gamma, {_z(\bU(k'))}\big) $ maps onto the subset of $H^1\big(\Gamma, \bH(k')\big) $ consisting of classes of cocycles whose image in $H^1\big(\Gamma, (\bH/ \bU)(k')\big) $ coincides with that of $[z].$ We conclude
 that the map
$H^1\big(\Gamma, \bH(k')\big) \to H^1\big(\Gamma, (\bH/ \bU)(k')\big) $
is bijective. 

\smallskip

\noindent(2) 
Let us first prove the injectivity by using the classical torsion trick.
We are given  a $\bH/ \bU$--torsor $\bE$ over $\Spec(R)$. 
We can twist the exact sequence of $R$--group schemes 
$1 \to \bU_R \to \bH_R  \to \bH_R/ \bU_R \to 1$ by $\bE$ and get the 
twisted sequence $1 \to {_\bE\bU} \to {_\bE\bH}  \to {_\bE\bH} / {_\bE\bU} \to 1$,
where as usual we write ${_\bE\bU}$ instead of ${_\bE\bU}_R  $ and ${_\bE\bH}$ instead of ${_\bE\bH}_R.$
We consider the following commutative diagram of sets \cite[III.3.3.4]{Gi}
$$
\begin{CD}
 && H^1( R, \bH) @>>>  H^1( R, \bH/ \bU) \\
&& @A{torsion}A{\simeq}A @A{torsion}A{\simeq}A \\
 H^1( R, {_\bE\bU}) @>>>  H^1( R, {_\bE\bH}) @>>>  H^1( R, {_\bE\bH} / {_\bE\bU} ) \\
\end{CD}
$$
where the bottom map is an exact sequence of pointed sets. 
 Indeed $\bGL_n$ is the group of automorphisms of the group scheme $\bG_a^n$ (see Lemma \ref{rep} below).
 It follows that 
$_\bE\bU$ corresponds to  a  locally free sheaf over $\Spec(R)$.
By \cite[pp 16-17]{Gr1} (or \cite[III.3.7]{M}), we have  
$\cH^i(R, {_\bE\bU}) = 0$ for all $i>0$.\footnote{ All the $\cH^i$ that we consider  coincide with the corresponding  $H^i$ defined in terms of derived functors.}  So the the map $H^1( R, {_\bE\bH}) \to H^1( R, {_\bE\bH} / {_\bE\bU})$ has trivial kernel
 and the fiber of $ H^1( R, \bH) \to   H^1( R, \bH/ \bU)$ is only $[\bE]$.

For   surjectivity, if we are given a 
 $\bH/ \bU$--torsor $\bE$ over $\Spec(R)$ then by \cite[IV.3.6.1]{Gi}
there is a class 
$$
\Delta([\bE]) \in {\cH}^2(R, {_\bE\bU})
$$
which is the obstruction to the existence of a lift of $[\bE]$ to $H^1(R, \bH)$.
Here $_\bE\bU$ is the $R$--group scheme obtained by twisting $\bG_a^n$ by the $R$--torsor $\bE$. 
Since $_\bE\bU$ corresponds to  a locally free sheaf, 
the same reasoning used above  
shows that the obstruction
 $\Delta([\bE])$ vanishes as desired. 
\end{proof}

\begin{lemma}\label{rep} Let $\gX$ be a scheme of characteristic $0.$ 
Let $\cal E$ be a locally free $\gX$--sheaf of finite rank and  let $\bV(\cal E)$
be the associated ``additive" $\gX$--group scheme.  Then the 
natural homomorphism of fpqc sheaves
$$
\alpha : \GL(\cal E) \to \bAut_{\gX-\text{\it gr}}(\bV(\cal E)) 
$$
is an $\gX$--group sheaf isomorphism. In particular,  $\bAut_{\gX-gr}\big(\bV(\cal E)\big)$ is an $\gX$--group scheme.\end{lemma}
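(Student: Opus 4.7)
The plan is to reduce to the affine free case and then compute automorphisms of $\bG_a^n$ directly via Hopf algebra considerations, taking advantage of characteristic $0$ to identify the primitive elements.

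First, since both sides are fpqc sheaves and the statement to be proved is that $\alpha$ is an isomorphism of such sheaves, it is local on $\gX$ for the Zariski topology. I would therefore pass to an affine open $\Spec(R) \subset \gX$ on which $\cE$ is free, so that $\bV(\cE) = \bG_{a,R}^n$ and $\GL(\cE) = \GL_{n,R}.$ Since we have to check that $\alpha$ is an isomorphism after any base change $R \to S,$ it is enough to prove the following: for every $\mathbb{Q}$--algebra $S$, the natural map
\[
\GL_n(S) \;\longrightarrow\; \Aut_{S\text{-gr}}(\bG_{a,S}^n)
\]
is bijective. Injectivity is immediate, since an element of the left hand side acts on the standard coordinates of $\bG_{a,S}^n$ by the corresponding matrix, so it is detected on $S$--points.

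For surjectivity, I would translate the right hand side into Hopf algebra language. The coordinate ring of $\bG_{a,S}^n$ is the polynomial algebra $A_S = S[x_1,\dots,x_n]$, equipped with the coproduct $\Delta(x_i) = x_i \otimes 1 + 1 \otimes x_i$ and counit $\epsilon(x_i)=0$. A group scheme automorphism $\phi$ of $\bG_{a,S}^n$ corresponds to an $S$--Hopf algebra automorphism $\phi^\sharp$ of $A_S$, and such a morphism is determined by the $n$ elements $\phi^\sharp(x_i) \in A_S$, each of which must be \emph{primitive}, i.e.\ satisfy $\Delta(f) = f \otimes 1 + 1 \otimes f$. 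Thus surjectivity reduces to the claim:

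\smallskip

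\emph{Claim.} If $S$ is a $\mathbb{Q}$--algebra, then the primitive elements of $A_S = S[x_1,\dots,x_n]$ are exactly the $S$--linear combinations of $x_1,\dots,x_n$.

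\smallskip

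The claim is proved by a direct computation: for $f = \sum_\alpha c_\alpha x^\alpha \in A_S$, the condition $f(x+y) = f(x)+f(y)$ in $A_S \otimes_S A_S = S[x_1,\dots,x_n,y_1,\dots,y_n]$ forces $\binom{\alpha}{\beta} c_\alpha = 0$ for every multi-index $0 < \beta < \alpha$. In characteristic $0$ (here we genuinely need the hypothesis on $\gX$) these binomial coefficients are invertible in $S$, hence $c_\alpha = 0$ whenever $|\alpha|\geq 2$; the constant term also vanishes since $\epsilon(f)=0$.

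Granting the claim, each $\phi^\sharp(x_i) = \sum_j m_{ji} x_j$ for a unique matrix $M = (m_{ji}) \in M_n(S)$, giving an $S$--algebra endomorphism which is automatically compatible with $\Delta$ and $\epsilon$. Applying the same argument to $\phi^{-1}$ shows that $M$ lies in $\GL_n(S)$, so $\phi$ comes from $\alpha(M)$. This establishes surjectivity, completes the bijection, and consequently shows that $\bAut_{\gX\text{-gr}}(\bV(\cE))$ is representable as an $\gX$--group scheme via the isomorphism with $\GL(\cE)$.

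The main (and essentially only) obstacle is the identification of primitive elements; all characteristic-$p$ phenomena (e.g.\ Frobenius endomorphisms of $\bG_a$, which are primitive but not linear) are ruled out exactly by the invertibility of the non-trivial binomial coefficients in a $\mathbb{Q}$--algebra. The rest of the argument is bookkeeping that reduces the sheaf-theoretic statement to this elementary computation.
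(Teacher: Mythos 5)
Your proof is correct, and it takes a genuinely different route from the paper's. The paper first reduces (by Zariski localization, then fpqc descent along $\Q \to \ol\Q$) to the case of $\bG_{a,\ol\Q}^n$, invokes Hochschild--Mostow's representability criterion to see that $\bAut_{gr}(\bG_a^n)$ is a linear algebraic $\ol\Q$--group, and then checks bijectivity on $\ol\Q$--points via the equivalence of categories between nilpotent Lie algebras and unipotent algebraic groups in characteristic $0$. Your argument instead works directly over an arbitrary $\Q$--algebra $S$: you translate automorphisms of $\bG_{a,S}^n$ into Hopf algebra automorphisms of $S[x_1,\dots,x_n]$, observe that these are determined by the images of the $x_i$ which must be primitive, and compute that the primitive elements are exactly the degree-one forms because the binomial coefficients $\binom{\alpha}{\beta}$ are units in a $\Q$--algebra. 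This yields the bijection with $M_n(S)$, and invertibility follows by applying the same analysis to $\phi^{-1}$. What the paper's route buys is that it sidesteps any explicit computation, delegating everything to two cited structure theorems; what your route buys is that it is self-contained and elementary, establishes the identification uniformly over all $\Q$--algebras without needing fpqc descent to a separably closed field, and in particular proves representability of $\bAut_{\gX\text{-}gr}(\bV(\cE))$ as a byproduct rather than needing it as an input. One cosmetic remark: you should say explicitly that the bijection you construct \emph{is} the map $\alpha$ (i.e.\ that the matrix recovered from $\phi^\sharp$ is, up to transpose depending on conventions, the one whose linear action induces $\phi$ on points); this is immediate, but since the lemma is about $\alpha$ and not merely about an abstract bijection, it is worth one sentence.
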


Our convention is that of \cite[\S 2]{DG}, namely
$\bV(\cal E)(\gX')$ $= H^0(\gX', \cal E \otimes_{ {\mathcal O}_\gX }  {\mathcal O}_{X'} )$ for every scheme $\gX'$ over $\gX$.

\begin{proof} 
It is clear that $\alpha$ is a morphism of $\gX$--groups. For showing that $\alpha$ is an isomorphism of sheaves, we may assume that $\gX= \Spec(R)$ is affine and that ${\cal E} = R^n$.
This in turn reduces to the case of $R = \Q$ and ${\cal E}= \Q^n$.
By descent, it will suffice to establish the result for  $R = \ol \Q$ and ${\cal E} = (\ol \Q)^n$.
Now on $\ol \Q$-schemes the functor  $S \mapsto  \bAut_{gr}(\bV({\cal E}) )(S)$
 is representable by a linear algebraic  $\ol \Q$-group $\bH$  according to  Hochschild-Mostow's criterion \cite[th. 3.2]{HM}. Therefore
we can check the fact  that   $\alpha : \bGL_n \to  \bH$ is an isomorphism on $\ol \Q$--points.
But this readily follows from the equivalence of categories 
between nilpotent  Lie algebras and algebraic unipotent groups \cite[\S IV \S2 cor.4.5]{DG}. Since $\GL(\cal E)$ is an $\gX$--group scheme, $\alpha$ is an isomorphism of $\gX$--group schemes.
\end{proof}

\section{Maximal tori of group schemes over the punctured line}

Let $\bG$ be a linear algebraic $k$--group. One of the central results of \cite{CGP} is the existence of maximal tori for twisted groups of the form ${_{\bE}\bG}$ where $[\bE] \in H^1(k[t^{\pm 1}], \bG).$\footnote{If the  characteristic of $k$ is sufficiently large.} This result is used to describe
the nature of torsors over $k[t^{\pm 1}]$ under $\bG.$ In our present work we are ultimately interested in the classification of reductive groups over Laurent polynomial rings when $k$ is  of characteristic $0,$ and applications to infinite
dimensional Lie theory. In understanding twisted forms of $\bG$ the relevant objects are torsors under $\bAut(\bG)$, and not $\bG.$ It is therefore essential to have an analogue of the \cite{CGP} result mentioned above, but for arbitrary twisted groups, not just inner forms.\footnote{$\bAut(\bG)$ need not be an algebraic group. Even if it is, the fact that it need not be connected leads to considerable technical complications (stemming from the fact that, unlike the affine line, the punctured line has non-trivial geometric \'etale coverings). As already mentioned, these difficulties have to be dealt with if one is interested in the study of twisted forms of $\bG_R$ or its Lie algebra.} This is one of the crucial theorems of our paper.

\begin{theorem}\label{punctural}
Let $R=k[t^{\pm 1}]$  where $k$ is a field of characteristic $0.$
Every reductive group scheme $\gG$ over $R$ admits a maximal torus.
\end{theorem}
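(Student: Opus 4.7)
Let $\bG_0$ be the split Chevalley $k$--form of $\gG$, so that $\gG$ is the twisted form of $\bG_0 \times_k R$ classified by a torsor
$$[\bE] \in H^1\big(R, \bAut(\bG_0)\big).$$
My plan is to show that $\bE$ is a loop torsor and then invoke Proposition~\ref{existenceoftori} in the case $n = 1$.

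A pinning of $\bG_0$ yields a splitting of the exact sequence of $k$--groups
$$1 \to \bG_0^{\rm ad} \to \bAut(\bG_0) \to \bOut(\bG_0) \to 1,$$
where $\bG_0^{\rm ad} = \bAut(\bG_0)^0$ and $\bOut(\bG_0)$ is finite \'etale over $k$ (cf.\ XXIV). The $k$--group inclusion $\bOut(\bG_0) \hookrightarrow \bAut(\bG_0)$ coming from this splitting induces a section
$$\sigma : H^1\big(R, \bOut(\bG_0)\big) \longrightarrow H^1\big(R, \bAut(\bG_0)\big)$$
of the map $\psi$ of (\ref{exact}). Because $\bOut(\bG_0)$ is finite \'etale over $k$, Grothendieck--Galois theory identifies
$$H^1\big(R, \bOut(\bG_0)\big) \simeq H^1\big(\pi_1(R,a), \bOut(\bG_0)(k_s)\big),$$
so every class on the right hand side is a loop class. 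Since $\sigma$ is induced by a $k$--group homomorphism, it sends loop classes to loop classes; set $[\bE_0] = \sigma\bigl(\psi([\bE])\bigr) \in H^1_{loop}\bigl(R, \bAut(\bG_0)\bigr)$, so that $\psi([\bE_0]) = \psi([\bE])$.

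Now I invoke the injectivity of $\psi$ in nullity one, namely Pianzola's theorem \cite{P1} recalled in the Introduction; this is a ``Serre Conjecture~I''--type vanishing result specific to the Dedekind ring $R = k[t^{\pm 1}]$. It forces $[\bE] = [\bE_0]$, so $\bE$ is a loop torsor and $\gG$ is loop reductive in the sense of Definition~\ref{defkloopgroup}. Proposition~\ref{existenceoftori} then produces a maximal torus of $\gG$, as desired.

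The heart of the argument is the injectivity of $\psi$, which is special to $n = 1$ and known to fail for higher nullity (as witnessed by the Margaux algebras in nullity two). The remaining steps—the pinning splitting of $\bAut(\bG_0)$ and the identification of $H^1$ of a finite \'etale $k$--group over $R$ with $\pi_1$--cohomology—are formal consequences of the machinery already developed in Sections~2 and~3.
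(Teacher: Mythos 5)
Your proof reduces the theorem to the claim that $\psi : H^1\bigl(R,\bAut(\bG_0)\bigr) \to H^1\bigl(R,\bOut(\bG_0)\bigr)$ is injective, citing \cite{P1}. This is the fatal gap: that injectivity is equivalent (by the standard twisting argument) to the vanishing $H^1(R,\gG)=1$ for the relevant inner twists $\gG$ of $\bG_0^{\rm ad}$ over $R$, and that vanishing is a theorem about $R = k[t^{\pm 1}]$ \emph{with $k$ algebraically closed}. Indeed, Example \ref{kloopexamples}(c) in the paper states the vanishing from \cite{P1} precisely under the hypothesis that $k$ is algebraically closed, and the paragraph of the Introduction invoking injectivity of $\psi$ is still under the earlier standing assumption ``Assume now that $k$ is algebraically closed.'' Theorem \ref{punctural} makes no such restriction. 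For a concrete counterexample to the step you need: take $k = \R$ and $\bG_0 = \SL_2$, so $\bAut(\bG_0) = \PGL_2$ and $\bOut(\bG_0) = 1$; the Hamilton quaternions give a nontrivial class in $H^1\bigl(\R[t^{\pm 1}], \PGL_2\bigr)$ lying in $\ker(\psi)$, so $\psi$ is not injective. (More dramatically, Corollary \ref{dim-one} of the paper — once proved — identifies $H^1(R,\bG)$ with $H^1(k((t)),\bG)$, which is far from trivial for semisimple $\bG$ when $k$ is not algebraically closed.)

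The strategic skeleton you chose is the right one, and it agrees with the paper: reduce to showing that the classifying torsor $\bE$ is a loop torsor, then invoke Proposition \ref{existenceoftori}. Your identification of $\sigma\bigl(\psi([\bE])\bigr)$ as a loop class in the same fiber of $\psi$ is also sound, since $\bOut(\bG_0)$ is finite \'etale over $k$ and $\sigma$ is induced by a $k$--group homomorphism (so preserves loop classes). What is missing is an argument showing that \emph{every} class in that fiber is a loop class, without assuming the fiber is a singleton. The paper handles precisely this point by a completely different and substantially harder route: after reducing to the simple simply connected case (Lemma \ref{lem-punctural}) and to an isotrivial cocycle $z \in Z^1\bigl(\tilde\Gamma, \bAut(\bG)(\tilde R)\bigr)$, it introduces the Bruhat--Tits twin building of $\bG$ over $L = \tilde k(\tilde t)$, applies the fixed-point theorem to the $z$--twisted $\tilde\Gamma$--action, uses Abramenko's transitivity and barycenter/type arguments to push the fixed point to a hyperspecial point, computes the resulting stabilizer group $J_\lambda$ via Ramanathan's Proposition \ref{global}, and finally uses Lemma \ref{sansuc} to conclude that $[z]$ is cohomologous to a cocycle with values in $\bAut(\bG)(\tilde k)$, i.e.\ a loop cocycle. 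This replaces your appeal to injectivity of $\psi$ by a genuinely geometric argument that works over any field of characteristic zero, which is exactly why Theorem \ref{punctural} is one of the hard results of the paper rather than a formal corollary of the nullity-one vanishing.
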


\begin{corollary} \label{corpunctural}
Let $k$ and $R$ be as above. Let $\gG$ be a
smooth affine group scheme over $R$ whose connected component of the identity $\gG^0$
is reductive. Then

\begin{enumerate}
 \item $H^1_{toral}(R, \gG) = H^1(R, \gG).$
\item If $\gG$ is constant, i.e. $\gG= \bG \times_k R$
for some linear algebraic $k$--group $\bG$, then
$$
H^1_{toral}(R, \gG) = H^1_{loop}(R, \gG) =  H^1(R, \gG).
$$
\end{enumerate}

\end{corollary}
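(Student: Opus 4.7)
For part (1), the plan is to reduce directly to Theorem \ref{punctural} via the standard twisting trick. Given a class $[\bE] \in H^1(R, \gG)$, consider the twisted $R$--group scheme $_\bE\gG$, which is again smooth affine with reductive identity component $(_\bE\gG)^0 = {_\bE(\gG^0)}$. Applying Theorem \ref{punctural} to the reductive $R$--group ${_\bE(\gG^0)}$ produces a maximal torus $\gT$; since maximal tori of a smooth affine group scheme are contained in the identity component, $\gT$ is automatically a maximal torus of $_\bE\gG$. This says exactly that $[\bE]$ is toral, so $H^1(R,\gG) \subset H^1_{toral}(R,\gG)$, and the opposite inclusion is built into the definition.

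For part (2), the plan is to combine (1) with Corollary \ref{corbogo}(2), which reduces matters to checking the Picard-type condition (\ref{e.C}) for $\gX = \Spec(R)$. By definition a generalized Galois cover $\gX' \to \Spec(R)$ is a torsor under some twisted finite constant $\Spec(R)$--group scheme, hence in particular finite \'etale. Decomposing $\gX'$ into its connected components, each of these is again a connected finite \'etale cover of $\Spec(R)$ (by the standard argument that the connected components of a finite \'etale cover are themselves finite \'etale, used already elsewhere in this section). Lemma \ref{galois} in the case $n=1$ then asserts directly that any such cover has trivial Picard group, so $\Pic(\gX') = 0$ and (\ref{e.C}) holds.

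With (\ref{e.C}) at our disposal and $\mathrm{char}(k) = 0$, Corollary \ref{corbogo}(2) provides $H^1_{toral}(R, \bG) \subset H^1_{loop}(R, \bG)$. Combining this with (1) applied to $\gG = \bG \times_k R$ yields the chain
\[
H^1(R, \bG) \;=\; H^1_{toral}(R, \bG) \;\subset\; H^1_{loop}(R, \bG) \;\subset\; H^1(R, \bG),
\]
which forces equality throughout. The substantive input here is Theorem \ref{punctural}; everything else is essentially bookkeeping. The one delicate point to monitor is that Corollary \ref{corbogo} demands $\bG$ satisfy one of its hypotheses (i)--(iii): when $\bG$ is itself reductive one sits in case (iii) and nothing further is needed, while in the general case where only $\bG^0$ is known to be reductive one reduces to the reductive situation by applying the above to $\bG^0$ and dealing with the finite \'etale quotient $\bG/\bG^0$ separately via the long exact cohomology sequence.
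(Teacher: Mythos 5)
Your proof matches the paper's approach precisely: part (1) is the twisting reduction to Theorem \ref{punctural}, and part (2) verifies condition (\ref{e.C}) via Lemma \ref{galois} and then invokes Corollary \ref{corbogo}(2).

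The point you flag about the hypotheses (i)--(iii) of Corollary \ref{corbogo} is a genuine subtlety which the paper's one-line proof does not address: if $k$ is not algebraically closed and $\bG$ is disconnected (and not obtained by base change from a suitable $\Z$-group), none of (i)--(iii) holds literally. However, the remedy you sketch via the exact sequence $1 \to \bG^0 \to \bG \to \bG/\bG^0 \to 1$ does not propagate as easily as you suggest. To analyse the fiber of $H^1(R,\bG) \to H^1(R,\bG/\bG^0)$ over a class $[c]$ one must twist by $c$, and ${_c\bG^0}$ is then an $R$--group scheme which is no longer of the form $\bH \times_k R$ for a $k$--group $\bH$; the loop-torsor machinery and Corollary \ref{corbogo}, which are formulated for constant groups, do not directly apply to the twisted fiber. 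What actually covers the general disconnected case is Theorem \ref{jo} in \S 6 (the inclusion $H^1_{toral}(R_n,\bG) \subset H^1_{loop}(R_n,\bG)$ for $\bG^0$ reductive), proved there by a substantially longer argument independent of Corollary \ref{corbogo} and appearing after the present Corollary. At this point in the paper it is safest to read part (2) as implicitly carrying one of the hypotheses (i)--(iii) of Corollary \ref{corbogo}; the unconditional statement is better derived from Theorem \ref{jo} once it is available.
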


The first assertion is an immediate corollary of the Theorem while
the second then follows  from Corollary \ref{corbogo}.2 and Lemma \ref{galois}. \qed
\medskip

The proof of the Theorem relies on Bruhat-Tits twin buildings and
 Galois  descent considerations. We begin by establishing the following useful reduction.

\begin{lemma}\label{lem-punctural} It suffices to establish Theorem \ref{punctural}
under the assumption that $\gG$ is a  twisted form of a  simple simply connected Chevalley $R$--group.\footnote{The usual  algebraic group literature would use the term ``almost simple" in this situation. We adhere  throughout to  the terminology of \cite{SGA3}. }
\end{lemma}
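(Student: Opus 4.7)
The argument proceeds via three standard reductions that carry us from an arbitrary reductive $R$--group scheme $\gG$ down to a twisted form of a simple simply connected Chevalley group. The key observation in each step is that the property ``admits a maximal torus'' is preserved under the relevant operation.

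\smallskip

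\noindent\emph{Step 1 (Reduction to simply connected semisimple).} Let $\gZ = \rad(\gG)$ be the radical torus of $\gG,$ and let $\gG^{\mathrm{der}}$ denote its derived subgroup, which is a semisimple $R$--group scheme (XXII.6). The multiplication morphism $\gG^{\mathrm{der}} \times_R \gZ \to \gG$ is a central isogeny whose kernel is the finite multiplicative $R$--group $\gG^{\mathrm{der}} \cap \gZ.$ Since central isogenies transport maximal tori bijectively (XXII.4), producing a maximal torus in $\gG$ is equivalent to producing one in $\gG^{\mathrm{der}}.$ Passing next through the simply connected central cover $\widetilde{\gG^{\mathrm{der}}} \to \gG^{\mathrm{der}}$—another central isogeny—reduces the problem to the simply connected semisimple $R$--group $\widetilde{\gG^{\mathrm{der}}}.$

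\smallskip

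\noindent\emph{Step 2 (Isotypic decomposition).} By the structure theorem for simply connected semisimple group schemes (XXIV.5), any such group over the connected base $\Spec(R)$ decomposes canonically as a finite product
$$
\widetilde{\gG^{\mathrm{der}}} \;\simeq\; \prod_{i} \Pi_{R_i/R}(\gH_i),
$$
where each $R_i$ is a connected finite \'etale $R$--algebra and each $\gH_i$ is an absolutely almost simple simply connected $R_i$--group scheme. Maximal tori of a finite product are products of maximal tori of the factors; moreover, since $R_i/R$ is finite \'etale, Weil restriction is exact and sends a maximal torus $\gT_i \subset \gH_i$ to a maximal torus $\Pi_{R_i/R}(\gT_i) \subset \Pi_{R_i/R}(\gH_i).$ Hence it suffices to produce a maximal torus in each absolutely almost simple simply connected $\gH_i.$

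\smallskip

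\noindent\emph{Step 3 (Reduction to the original base over a larger field).} Applying Lemma~\ref{galois} with $n=1,$ each connected finite \'etale $R$--algebra $R_i$ is $k$--isomorphic to $L_i \otimes_k R = L_i[t^{\pm 1}],$ where $L_i$ is the integral closure of $k$ in $R_i$ and is a finite field extension of $k.$ Replacing the ground field $k$ by $L_i$ (which remains of characteristic zero), the existence problem for $\gH_i$ over $\Spec(R_i)$ becomes the existence problem for an absolutely almost simple simply connected group scheme over $L_i[t^{\pm 1}].$ By Demazure's theorem this group is a twisted form of a unique simple simply connected Chevalley group over $L_i[t^{\pm 1}],$ which is exactly the reduced situation asserted by the Lemma. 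The only nontrivial input is the decomposition invoked in Step~2; everything else reduces to routine compatibilities of maximal tori with central isogenies, direct products, and Weil restrictions along finite \'etale covers, and is where I expect no genuine difficulty.
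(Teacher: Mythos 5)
Your proposal is correct and follows essentially the same route as the paper: reduce to the simply connected case, decompose by XXIV.5 into Weil restrictions of simple simply connected group schemes along connected finite \'etale covers, invoke Lemma~\ref{galois} to recognize each cover as a Laurent polynomial ring over a finite extension of $k$, and conclude with Demazure's theorem together with the compatibility of maximal tori with products and finite \'etale Weil restriction. The only superficial difference is in Step~1, where you pass through the central isogeny $\gG^{\mathrm{der}} \times_R \rad(\gG) \to \gG$ and the simply connected cover of $\gG^{\mathrm{der}}$, whereas the paper cites [XII.4.7.c] directly for the bijection between maximal tori of $\gG$, its adjoint group, and the simply connected cover of the latter---the underlying idea is the same.
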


\begin{proof}  Assume  that Theorem \ref{punctural} holds in the simple simply connected case.
By [XII.4.7.c], there is a natural one-to-one correspondence
between the maximal tori of $\gG$, its adjoint group $\gG_{ad}$ and
those of the simply connected covering $\tilde{\gG}_{ad}$ of $\gG_{ad}$.
We can thus assume  without lost of generality that
$\gG$ is simply connected. By
 [XXIV.5.10] we have
$$
\gG = \prod\limits_{i=1,...,l}  \, \prod\limits_{S_i/R}\gG_i
$$
where each $S_i$ is a connected finite \'etale covering of $R$ and each
$\gG_i$ a simple  simply connected $S_i$--group scheme.
By Demazure's main  theorem, the $S_i$--groups
$\gG_i$ are twisted forms of simple simply connected Chevalley groups.
Since  by Lemma \ref{galois}  $S_i$ is a  Laurent polynomial
ring, our hypothesis implies
that each of the $S_i$--groups $\gG_i$ admits a maximal torus $\gT_i.$
Then our $R$--group
$\gG$ admits the maximal torus $\prod\limits_{i=1,...,l} \,  \prod\limits_{S_i/R} \gT_i$.
\end{proof}

\subsection{Twin buildings}\label{sect-twin}

Throughout this section $k$ denotes a field of characteristic $0.$ We set $R =k[t^{\pm 1}],$ $K = k(t)$ and $\widehat{K} = K((t)).$ For a ``survival kit" on euclidean buildings, we recommend Landvogt's paper \cite{L}.

Let $\tilde{R}$ be  a finite Galois extension of $R$ of the form $\tilde{R}=\tilde{k}[t^{\pm {1 \over n}}]$ where $\tilde{k}/k$ is
a finite Galois extension of $k$ containing all $n$-roots of unity in $\overline{k}.$
Then as we have already seen
$\tilde{\Gamma} :=\Gal(\tilde{R}/R) = \bmu_n(\tilde{k}) \rtimes \Gamma$ where $\Gamma =\Gal(\tilde{k}/k)$.

Set   $\tilde{t}= t^{1 \over n}$ We let $L = \tilde{k}(\, \tilde{t}\, ) = \tilde{k}( \tilde{t}^{^{\,-1}})$, and consider the 
two completions $\widehat{L}_+ = \tilde{k}((\, \tilde{t} \, ))$ and $\widehat{L}_- = \tilde{k}((\tilde{t}^{^{\,-1}}))\,$ of $L$ at $0$ and
$\infty$ respectively, as well as their corresponding valuation rings $\widehat{A}_+ = \tilde{k}[[\, \tilde{t} \,]]$ and  $\widehat{A}_{-} = \tilde{k}[[\tilde{t}^{^{\,-1}}]]$. 

Let $\bG$ be a split simple simply connected group over $k.$ Let  $\bT$ be a maximal split torus of $\bG, \,$ $\bB^+$  a Borel subgroup of 
$\bG$ which contains $\bT,$ and $\bB^{-}$ the corresponding opposite Borel subgroup (which also contains $\bT$).
We denote by $\bW= \bN_\bG(\bT)$ the corresponding Weyl group and
by $\Delta_\pm$ the Dynkin diagram attached to  $(\bG, \bB^{\pm},\bT)$.

Following Tits \cite{T3}, we consider the twin building
 ${\cal B}= {\cal B}_+ \times {\cal B}_{-}$ of $\bG_{0} \times_k L$ with respect to
the two completions $\widehat{L}_+$ and $\widehat{L}_-$. Recall that
 $\cal B$ comes equipped with an action of the group
$\bG(L)$, hence also of $\bG(\tilde{R}).$ The split torus $\bT_{0} \times_k L$
gives rise to a twin apartment ${\cal A}= {\cal A}_+ \times {\cal A}_{-}$ of ${\cal B}$.
The Borel subgroups $\bB^\pm$  define the fundamental chambers   ${\cal C}_\pm$ of  ${\cal A}_\pm$, each of which
is an open simplex whose vertices are given by the extended Dynkin diagram 
$\tilde \Delta_\pm$ of  $\Delta_\pm$.

Recall that the group functor $\bAut(\bG)$ is an affine group scheme. The group $\bAut(\bG)(L)$ acts on $\cal B$ by ``transport of
structure" \cite{L} 1.3.4.\footnote{The group in question acts on the set of maximal split tori, hence permutes the apartments around.} This leads to an action of $\bG(L)$ on $\cal B$ via 
 $\Int : \bG \to \bAut(\bG).$ This  action
coincides with the ``standard"  action of $\bG(L)$ mentioned
before because $\bG$ is semisimple. By taking into account the
natural action of $\tilde{\Gamma} \simeq \Gal(\widehat{L}_{+}/\widehat{K})$
on $\cal B$ we conclude
that the twin building ${\cal B}$ is equipped with an action of the
semi-direct product ${\Aut(\bG)(\tilde{R}) \rtimes \Gamma}$ which is 
compatible (via the adjoint action) with the action of
$\bG(\tilde{R}).$

The hyperspecial  group $\bG(\widehat A_\pm)$ fixes a unique point
$\phi_{\pm}$ of ${\cal A}_{\pm}$ \cite[\S 9.1.19.c]{BT1}. 
Recall that the hyperspecial points of ${\cal B}_\pm$ are $\bG(\widehat{L}_\pm)$-conjugate to 
$\phi_\pm$ of   ${\cal B}_\pm$, and can therefore be identified with the set of left cosets
$$
\bG(\widehat{L}_\pm) / \bG(\widehat{A}_\pm)  \simeq \bG(\widehat{L}_\pm) . \, \phi_\pm  \, \subset \, {\cal B}_\pm  . 
$$
More generally each facet of the building ${\cal B}_\pm$ has a type \cite[\S2.1.1 ]{BT1}
which is a  subset of $\tilde \Delta_\pm$ and the  type of a point $x \in {\cal B}_\pm$ 
is the type of its underlying facet $F_x.$\footnote{Namely the smallest facet containing $x$ in its closure.}
The type of the chamber ${\cal C}_\pm$ is $\emptyset$ and the type of 
an hyperspecial point is $\tilde \Delta_\pm \setminus \Delta_\pm,$ namely the extra vertex of the affine Dynkin diagram.

\subsection{Proof of Theorem \ref{punctural}}

By Lemma \ref{lem-punctural}, we can assume that $\gG$ is simple simply connected.
By the Isotriviality Theorem
\cite[cor. 2.16]{GP1}, we know that our  $R$--group $\gG$ is
isotrivial. This means that there exists a finite Galois covering
$S/ R$ and  a ``trivialization" $f: \bG \times_k S \simeq \gG
\times_R S$ where $\bG$ is a  split simple simply connected  $k$--group. In our terminology,  $\bG$ is the Chevalley $k$--form of $\gG.$

Because of the structure of the algebraic fundamental group of $R$
we may assume without loss of generality that $S = \tilde{R}$ is as in \S \ref{sect-twin}, and we keep all the notation therein. What is so special about this situation is that
$R$ and $\tilde{R}$ ``look the same", namely they are both Laurent polynomial rings in one variable with coefficients in a field.

We have  $\Spec(\tilde{R}) =  {\Bbb P}^1_{\tilde{k}} \setminus \{0, \infty\}$ and the action of $\tilde{\Gamma}$ on $\tilde{R}$ extends to ${\Bbb P}^1_{\tilde{k}}$ since $\tilde{R}$ is regular of dimension $1$. 

For $\tilde{\gamma} \in \tilde{\Gamma}$ consider the map
$z_{\tilde\gamma}= f^{-1} \circ \, {^{\tilde\gamma} f}: \tilde\Gamma \to \bAut(\bG)(\tilde{R})$,
where $\bAut(\bG)$ stands for the group scheme of automorphisms of
the $\Z$-group $\bG$.
  Then
$z = (z_{\tilde\gamma})_{\tilde\gamma \in \tilde\Gamma}$ is a cocycle in $Z^1\big(\tilde\Gamma,
\bAut(\bG)(\tilde{R})\big)$ where the Galois group $\tilde\Gamma$ acts
naturally on $\bAut(\bG)(\tilde{R})$ via its action on $\tilde{R}$. Descent theory tells us  that $\bG $ is isomorphic
to the twisted $R$-group ${_z\bG} $.\footnote{Recall that for
convenience ${_z\bG}$ is shorthand notation for 
${_z(\bG \times_{k} R}) = {_z(\bG_R})$.}

The action of $\tilde\Gamma$  on $\bAut(\bG)(\tilde{R})$ allows   us  to consider
the semidirect product group $\bAut(\bG)(\tilde{R}) \rtimes \tilde\Gamma.$
We then
have a group homomorphism
\begin{equation}\label{psi}
\psi_z: \tilde\Gamma \to
 \bAut(\bG)(\tilde{R}) \rtimes \tilde\Gamma
 \end{equation}
  given by 
$\psi_z(\gamma)= z_\gamma \, \gamma$ which is a section 
of the projection map $\bAut(\bG)(\tilde{R}) \rtimes \tilde\Gamma \to \tilde\Gamma$.

Let  $\bT$ be a maximal split of $\bG$.
Set $L= \tilde k( \tilde t),$  and let $A_{+}$ (resp. $A_{-}$) be the local
 ring of ${\Bbb P}^1_{\tilde{k}}$ at $0$ (resp. $\infty$).
The composite map (see \S 5.1)
$$
\tilde\Gamma \buildrel \psi_z \over \longrightarrow \bAut(\bG)(\tilde{R}) \rtimes \tilde\Gamma \to \Aut( {\cal B})
$$
is a group homomorphism. The corresponding action of $\tilde\Gamma$ on $\cal{B}$ will be referred to as the  {\it twisted action} of $\tilde\Gamma$ on the building.
 We now appeal to  the Bruhat-Tits
 fixed point theorem \cite[\S 3.2]{BT1} to obtain a point
$p=(p_+,p_{-} ) \in {\cal B}$ 
 which is fixed under the twisted action, i.e. $\psi_z(\tilde\gamma).p=z_{\tilde\gamma} \, \tilde\gamma(p)=p$ for all $\tilde\gamma \in \tilde\Gamma$.
Abramenko's result \cite[Proposition 5]{A} states that $\bG(\tilde{R}).  {\cal A} =  {\cal B}.$
  There thus exists $g \in \bG(\tilde{R})$
such that $p$ belongs to the apartment $g . {\cal A}$. Up to replacing
$z_{\tilde\gamma}$ by $\Int(g)^{-1} \, z_{\tilde\gamma} \, \Int(\tilde\gamma(g))$,
we can therefore assume that $p$ belongs to ${\cal A}$.

We shall use several times that $\tilde\Gamma$ acts trivially on ${\cal A}$ under the standard action.
To see this one reduces to the action of $\widetilde \Gamma$  on  
${\cal A}_+= \phi_+ + \widehat T \otimes_\Z \R$.
Firstly  $\widetilde \Gamma$ stabilizes the group  $\bG(\widehat A_+)$ so it  fixes $\phi_+$.
Secondly it acts trivially on
$\widehat T$ so acts trivially on ${\cal A}_+$.

Observe that since $\tilde\gamma(p)=p$, we have that $z_{\tilde\gamma} \, . \, p=p$ for all
$\tilde\gamma \in \tilde\Gamma$.

Let $F_{p_\pm}$ be the facet associated to $p_\pm$ and choose a vertex $q_\pm$ of
$\overline F_{p_\pm}$. The transforms of  $q_\pm$ by $z_{\tilde\gamma}$ and $\tilde\gamma$
are vertices of $\overline F_{p_\pm}$, so $\psi_z(\tilde\gamma).q_\pm$
belongs  to ${\cal A}$.
We define
$$
x_\pm = {\rm Barycentre}\Bigl(  \psi_z(\tilde\gamma).q_\pm, \tilde\gamma \in \tilde\Gamma \Bigr) \in
{\cal A},
$$
where the barycentre stands for the riemannian's one as defined by Pansu \cite[\S 4.2]{P}.

Let $d$ be the  integer attached to $\bG$  in \cite[\S 2]{Gi1}, and set
$m= d  \mid \! \tilde\Gamma \! \mid$.
Let $s \in \overline L$ (a fixed algebraic closure of $L$) be such that $s^m = \tilde{t}.$  We have accordingly $s^{mn} = t.$ Set
$$
R'= k'[\, s^{\pm 1} \,]
$$
where $k'$ is a Galois extension of $k$ which contains
$\tilde{k}$ and all $mn$-roots of unity in $\overline{k}.$
Then $R'$ is Galois over $R$ of Galois group
$$
\Gamma'= \bmu_{mn}(k') \rtimes \Gal(k'/k).
$$
By Galois theory the map $\upsilon : \Gamma' \to \tilde\Gamma$ given by
\begin{equation}\label{Galoismap}
\upsilon : (\xi, \theta) \mapsto (\xi^m, \theta_{| \tilde{k}})
\end{equation}
is a surjective group homomorphism.

We consider the twin building ${\cal B'}= {\cal B'}_+ \times
{\cal B'}_{-} $  of $\bG$
which is constructed in the manner described above after replacing, mutatis mutandis, the relevant objects attached to $\tilde{R}$ by those of  $R'$ .  We have
a restriction map  \cite[\S II.4]{Ro}
$\rho_\pm : {\cal B}_\pm \to  {\cal B'}_\pm$
which gives rise to
$$
\rho = (\rho_+, \rho_{-}): {\cal B} \to {\cal B'}.
$$
Furthermore, if $\gamma' \in \Gamma'$ and we set $\upsilon(\gamma') = \tilde\gamma$ then the following diagram commutes
$$
\begin{array}{lc}
   \big{(*)\ \ \ }&
\begin{CD}
\cal{B} & \enskip \buildrel \rho \over \lgr \enskip  & \cal{B'}\\
@V{\tilde\gamma}VV @V{\gamma'}VV \\
\cal{B} & \enskip \buildrel \rho \over \lgr \enskip  & \cal{B'}\ \\
\end{CD}
\end{array}
$$
where the actions of $\tilde\Gamma$ and $\Gamma'$ are the twisted actions. If we define $z' : \Gamma' \to \bAut(\bG)(R')$ by
$$z': \gamma' \mapsto z'_{\gamma'} = z_{\tilde\gamma} \in \bAut(\bG)(\tilde{R}) \subset \bAut(\bG)(R')$$
then $z'$ is a cocycle and the classes $[z]$ and $[z']$  in $H^1\big(R, \bAut(\bG)\big)$ are the same.

\begin{lemma} $\rho_\pm(x_\pm)$ is a  hyperspecial point of  ${\cal B'}_\pm$.
\end{lemma}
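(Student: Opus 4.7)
The plan is to track the denominators of the coordinates of $x_\pm$ in the apartment $\mathcal{A}_\pm$ and combine this with the fact that the restriction map $\rho_\pm$ dilates by the ramification index $m$ of $\widehat{L}'_\pm/\widehat{L}_\pm$, so that the scaled coordinates of $\rho_\pm(x_\pm)$ fall on the lattice of hyperspecial vertices.

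\textbf{Step 1.} Identify $\mathcal{A}_\pm$ with the affine space $\phi_\pm + V$, where $V = X_*(\bT) \otimes_{\Z} \R$, and let $\mathcal{L} \subset V$ be the lattice of translations such that the hyperspecial vertices of $\mathcal{A}_\pm$ are precisely $\phi_\pm + \mathcal{L}$. The integer $d$ of \cite[\S 2]{Gi1} has the defining property that every vertex of the fundamental alcove of $\mathcal{A}_\pm$ lies in $\phi_\pm + \frac{1}{d}\mathcal{L}$. Since the translation subgroup $X_*(\bT) \subset \mathcal{L}$ of the affine Weyl group $W_{\mathrm{aff}}$ acts transitively on the alcoves of $\mathcal{A}_\pm$, this bound extends from the fundamental alcove to every vertex of $\mathcal{A}_\pm$.

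\textbf{Step 2.} The element $\psi_z(\tilde{\gamma})$ is an isometry of $\mathcal{B}_\pm$ fixing $p_\pm$, so it preserves the facet $F_{p_\pm}$ and permutes the vertices of its closure $\overline{F_{p_\pm}} \subset \mathcal{A}_\pm$. In particular each $\psi_z(\tilde{\gamma}) \cdot q_\pm$ is a vertex of $\mathcal{A}_\pm$, hence lies in $\phi_\pm + \frac{1}{d}\mathcal{L}$. Because $\mathcal{A}_\pm$ is a flat Euclidean space, the Riemannian barycenter coincides with the arithmetic mean, whence
\[
x_\pm \;=\; \frac{1}{|\tilde{\Gamma}|}\sum_{\tilde{\gamma}\in\tilde{\Gamma}} \psi_z(\tilde{\gamma})\cdot q_\pm \;\in\; \phi_\pm + \frac{1}{d\,|\tilde{\Gamma}|}\,\mathcal{L} \;=\; \phi_\pm + \frac{1}{m}\,\mathcal{L}.
\]

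\textbf{Step 3.} Since $s^m = \tilde{t}$, the extension $\widehat{L}'_\pm/\widehat{L}_\pm$ is totally ramified of degree $m$. Under the identifications $\mathcal{A}_\pm \simeq \phi_\pm + V$ and $\mathcal{A}'_\pm \simeq \phi'_\pm + V$ pinned by the hyperspecial origins, the restriction map $\rho_\pm$ is the affine dilation $\phi_\pm + v \mapsto \phi'_\pm + m v$ (see \cite[\S II.4]{Ro} and \cite[\S 2.1.1]{L}): the normalized valuation on $\widehat{L}'_\pm$ restricts to $m$ times the one on $\widehat{L}_\pm$, so each affine root pulls back with a factor of $m$ and the lattice $\mathcal{L}$ for $\mathcal{A}'_\pm$ corresponds to the lattice $\frac{1}{m}\mathcal{L}$ inside $\mathcal{A}_\pm$ under $\rho_\pm$. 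Combining with Step 2,
\[
\rho_\pm(x_\pm) \;\in\; \phi'_\pm + \mathcal{L},
\]
which is exactly the set of hyperspecial vertices of $\mathcal{A}'_\pm$. The main technical point is the dilation formula for $\rho_\pm$ in Step 3; once that is granted, the choice $m = d\,|\tilde{\Gamma}|$ together with the denominator bound from Step 1 forces the conclusion.
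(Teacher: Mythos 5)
Your argument is in the same spirit as the paper's (both ultimately track denominators of barycenters inside the apartment under a ramified base change), but you try to replace the paper's appeal to Lemmas 2.2.a and 2.3' of \cite{Gi1} with a bare-hands lattice computation. This is a legitimate reformulation, but as written it contains a gap and an unchecked assertion.

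The gap is in Step~1. You claim that the translation subgroup $X_*(\bT) \subset W_{\mathrm{aff}}$ ``acts transitively on the alcoves of $\mathcal{A}_\pm$.'' That is false: the full affine Weyl group $W_{\mathrm{aff}} = W \ltimes X_*(\bT)$ acts simply transitively on alcoves, and the translation subgroup has index $|W|$ in this action. The conclusion you want --- that every vertex of $\mathcal{A}_\pm$ lies in $\phi_\pm + \frac{1}{d}\mathcal{L}$ once the vertices of the fundamental alcove do --- is still true, but the correct argument is that the \emph{entire} affine Weyl group preserves $\phi_\pm + \frac{1}{d}\mathcal{L}$: the finite Weyl group $W$ fixes $\phi_\pm$ and stabilizes $\mathcal{L}$ (hence $\frac{1}{d}\mathcal{L}$), and the translations by $X_*(\bT) \subset \mathcal{L} \subset \frac{1}{d}\mathcal{L}$ preserve it too. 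You need to invoke the full $W_{\mathrm{aff}}$, not just its translation part.

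The unchecked assertion is your characterization of $d$: that ``every vertex of the fundamental alcove lies in $\phi_\pm + \frac{1}{d}\mathcal{L}$'' is exactly what the integer $d$ of \cite[\S 2]{Gi1} is designed to do. This is precisely the content of \cite[Lemma 2.2.a]{Gi1}, which the paper invokes by name. You are not free to simply assert it as a ``defining property''; you would have to verify it against Gille's actual definition of $d$ (or cite the same lemma, at which point you are running the paper's argument). Similarly, the Step~3 claim that $\rho_\pm$ is the affine dilation by $m$ in the coordinates you fixed is normalization-dependent, and the paper's factoring of $\rho_\pm$ through the three intermediate fields $\tilde{k}((\tilde{t})) \subset k'((\tilde{t})) \subset k'((s^{m/d})) \subset k'((s))$ is exactly what makes this dilation and the denominator bookkeeping precise: the unramified step does not change type, the degree-$d$ ramified step turns every vertex hyperspecial (Gille 2.2.a), and the degree-$(m/d)$ step handles the barycenter of $m/d = |\tilde\Gamma|$ hyperspecial points (Gille 2.3'). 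Your single-shot lattice argument implicitly recombines those two lemmas, so if you want an independent proof you must actually prove them rather than presuppose the properties they establish.
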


\begin{proof} We look at the case $\rho_{+}(x_+)$.
 We need to consider the intermediate extensions
$$
\widehat L_+= \tilde{k}(( \, {\tilde{t}}\, )) \enskip \subset  \enskip
k'(( \, \tilde{t} \,)) \enskip   \subset  \enskip  k'(( \, s^d\, ))
\enskip \subset  \enskip  k'(( \,s\, )).
$$
The map $\rho_{+}$ is the composite of the corresponding maps for the intermediate fields, namely
$$
{\cal B}_+ = {\cal B}_+ (\bG_{0,\,  \tilde{k}(( \, {\tilde{t}}\, ))} )\enskip  \buildrel \rho_{1,+} \over \lgr  \enskip  {\cal B}\bigl(\bG_{0, \, k'(( \, \tilde{t} \,))} \bigr) \enskip
\buildrel \rho_{d,+} \over \lgr  \enskip  {\cal B}\bigl(\bG_{0, \,   k'(( \, s^d\, )) } \bigr)  \enskip
 \buildrel \rho_{\frac{m}{d},+} \over \lgr  \enskip
{\cal B}\bigl(\bG_{0, \, k'(( \,s\, ))} \bigr){\cal B'}_+ .
$$
The first map does not change the type.
By \cite[lemma 2.2.a]{Gi1}, the image under $\rho_{d,+}$ of any vertex is
a  hyperspecial point. We have

\begin{eqnarray} \nonumber 
\rho_+( x_+) & = &
\rho_+\Bigl[ {\rm Barycenter}\Bigl(  \psi_z(\tilde\gamma).q_\pm, \gamma \in \tilde\Gamma\Bigr) \Bigr] \\ \nonumber
&= & \rho_{\frac{m}{d},+}\Bigl[ {\rm Barycenter}\Bigl(  \rho_{d,+} \circ \rho_{1,+}(\psi_z(\tilde\gamma).q_\pm), \gamma \in \tilde\Gamma\Bigr) \Bigr].
\end{eqnarray}

\noindent By \cite[ Lemma 2.3' in the errata]{Gi1},
we know that the image under $\rho_{\frac{m}{d}, +}$ of the barycentre
of $\frac{m}{d}$ hyperspecial points of a  common apartment (namely the one attached to the torus $\bT$) is  a hyperspecial point, so we conclude that
$\rho_+( x_+)$ is a  hyperspecial point.
\end{proof}

In view of diagram (*) above it follows that by  replacing $\tilde{R}$ by $R'$ we may  assume
without loss of generality
that the points $p_\pm \in {\cal A}_\pm$ are hyperspecial. Note that by construction, the points $\rho_\pm( x_\pm)$ of  ${\cal B'}_{\pm}$
are fixed by both actions (standard and twisted)  of $\Gamma'$, so that after our further extension of base  ring we may assume that
$\rho_\pm( x_\pm)$ of  ${\cal B}_{\pm}$ are fixed by both actions of $\tilde\Gamma.$

Since $\bT(\tilde{R})$ acts transitively on the sets
$ \phi_{\pm}  + (\widehat T)^0 \subset
 {\cal A}_{\pm}$ of hyperspecial points of  ${\cal A}_{\pm}$, there exists $g \in \bT(\tilde{R})$ and a cocharacter
$\lambda \in (\widehat {\bT})^0 $ such that
$$
g . (\psi_+, x^\lambda_{-} ) = (x_{+}, x_{-})=x.
$$
where  $ x^\lambda_{-} := \phi_{-} + \lambda$ (recall that we have a map
$(\widehat {\bT})^0  \to {\cal A}_{-}=  (\widehat {\bT})^0 \otimes_\Z \R$ defined by
$\theta \mapsto \phi_{-}  + \theta$). Up to replacing
the cocycle $z$ by $z'$ where $z'_{\tilde\gamma} = {\rm Int}(g)^{-1} \, z_{\tilde\gamma} \, {^{{\tilde\gamma}}}{\rm Int}( g)$, we
may assume that $\psi_z(\tilde{\gamma}). (\phi_+,  x^\lambda_{-} )= 
 z_\gamma . (\phi_+, x^\lambda_{-} ) = (\phi_+, x^\lambda_{-} )$ for every
 $\tilde\gamma \in \tilde\Gamma$.
In particular, $$
z_{\tilde\gamma} \in \Stab_{\bAut(\bG)(\widehat L_+)}(\phi_+)
= \bAut(\bG)(\widehat A_+)
$$
hence
$$
z_{\tilde\gamma} \in \,  \bAut(\bG)(\tilde{R}) \cap \bAut(\bG)(\widehat A_+)\bAut(\bG)( \tilde{k}[\, \tilde {t} \,])
$$
for each $\tilde\gamma \in \tilde\Gamma$.
Let $g_\lambda:=\lambda(\, \tilde{t} \,) \in \bT(\tilde{R}) \subset  \bG(\tilde{R})$.
We have $g_\lambda. \phi_{-} =  x^\lambda_{-}$ and therefore
$$
z_{\tilde{\gamma}} \, \in \,  \Stab_{\bAut(\bG)(\widehat L_{-})}(x^\lambda_{-}) 
$$
$$
{\rm Int}(g_\lambda)\,  \Stab_{\bAut(\bG)(\widehat L_{-})}(\phi_{-}) \,{\rm Int}(g_\lambda)^{-1}
= {\rm Int}(g_\lambda) \bAut(\bG)(\widehat A_{-}) {\rm Int}(g_\lambda^{-1}) .
$$
It follows that for each $\tilde\gamma \in \tilde\Gamma$
 \begin{eqnarray} \nonumber
z_{\tilde\gamma} \in J_\lambda & := &\bAut(\bG)( \tilde{k}[\, \tilde{t} \, ]) \, \cap
\, {\rm Int}(g_\lambda) \, \bAut(\bG)(\widehat A_{-})
 \, {\rm Int}(g_\lambda^{-1}) \\ \nonumber
& = & \bAut(\bG)( k[\, \tilde{t} \,]) \, \cap \,
 {\rm Int}(g_\lambda) \, \bAut(\bG)(\tilde{k}[ \, {\tilde{t}}\,^{^{-1}}\, ])
\, {\rm Int}(g_\lambda^{-1}) .
\end{eqnarray}

Note that for $\tilde\gamma \in \tilde\Gamma$ we have
$$
{^ {\tilde\gamma}} g_{\lambda} = {^ {\tilde\gamma}}\lambda(t') = \lambda(t') \, \big(\lambda(t')^{-1} \, {^{\tilde \gamma}}\lambda(t')\big) \in \lambda(t')\bT(\tilde{k}) \subset \lambda(t')\bG(\widehat{A}_{-}) = g_{\lambda}\, \bG(\widehat{A}_{-})
$$
\noindent From this it follows not only that the  subgroup $J_{\lambda}$ of
 $\bAut(\bG)(\tilde{R})$ is  stable under the (standard) action of
$\tilde\Gamma,$ but also that
$$
[z] \in {\rm Im} \Bigl( H^1\bigl(\tilde\Gamma, J_\lambda \bigr)
 \to H^1\bigl(\tilde\Gamma, \bAut(\bG)(\tilde{R}) \bigr) \Bigr).
$$
It turns out that  the structure of the group $J_\lambda$
 is known by a computation of Ramanathan, as we shall see in Proposition \ref{global} below.
We have
$$
J_\lambda = \bU_\lambda(\tilde{k}) \rtimes
\bZ_{\bAut(\bG)} (\lambda) (\tilde{k}) \,  \subset \, \bAut(\bG) ( \tilde{k}[\, \tilde{t} \,]) ,
$$
where $\bU_\lambda$ is a  unipotent $k$--group.  Lemma \ref{sansuc}
shows that the map
$$
H^1\bigl(\tilde\Gamma,  \bZ_{\bAut(\bG)} (\lambda) (\, \tilde{k} \,) \bigr)
\to H^1\bigl(\tilde\Gamma, J_\lambda \bigr)
$$
is bijective.
Summarizing, we have the commutative diagram
$$
\begin{CD}
H^1\bigl(\tilde\Gamma, \bZ_{\bAut(\bG)} (\lambda) (\, \tilde{k} \,) \bigr) @>>>
H^1\bigl(\tilde\Gamma, \bAut(\bG) (\, \tilde{k} \,) \bigr) \\
@VV{\cong}V @VVV \\
H^1\bigl(\tilde\Gamma, J_\lambda \bigr) @>>> H^1\bigl(\tilde\Gamma, \bAut(\bG)(\tilde{R}) \bigr) \\
\end{CD}
$$
which shows that $$
[z] \in {\rm Im} \Bigl( H^1\bigl(\tilde\Gamma, \bAut(\bG) (\, \tilde{k} \,) \bigr)
 \to H^1\bigl(\tilde\Gamma, \bAut(\bG)(\tilde{R}) \bigr) \Bigr).
$$
This means that  $[z]$ is cohomologous to a loop cocycle, and we can now conclude by Proposition \ref{existenceoftori} \qed

\section{ Internal characterization of loop torsors and applications}

We continue to assume that  our base field $k$ of characteristic zero. Let $R_n = k[t_1^{\pm 1},...,t_n^{\pm 1}]$
and $\gX = \Spec(R_n).$ As explained in Example \ref{FGLaurent} we have
$\pi_1(R_n, a) \simeq \widehat {\Bbb Z}^n \rtimes \Gal(k),$ where the action of $\Gal(k)$ on $\widehat{\Z}^n$ is given by our fixed choice of compatible roots of unity in $\overline{k}$. For convenience in what follows we will denote $\pi_1(R_n, a) $ simply by $\pi_1(R_n).$

Throughout this section  $\bG$ denotes a linear algebraic $k$--group.

\subsection{Internal characterization of loop torsors}
We first observe that loop torsors make sense over 
$R_0=k$, namely $H^1_{loop}(R_0, \bG)$ is the usual Galois cohomology $H^1(k, \bG).$

Section \ref{basepoint} shows that $Z^1\big( \pi_1(R_n), \bG(\ol k)\big)$
is given by couples $(z, \eta^{geo})$ where
$z \in Z^1( \Gal(k), \bG(\ol k))$
and $\eta^{geo} \in \Hom_{\Gal(k)}( \pi_1(\ol X ,\ol a), {_z\bG}(\ol k)) = \Hom_{\Gal(k)}(\widehat {\Bbb Z}^n , {_z\bG}(\ol k)) \simeq \Hom_{\Gal(k)}({_\infty}\bmu , {_z\bG})$. We are now ready to state and establish the
 internal characterization of $k$-loop torsors as toral
torsors.

\begin{theorem}\label{jo} Assume that $\bG^0$ is reductive. Then
$H^1_{toral}(R_n, \bG) = H^1_{loop}(R_n, \bG)$.
\end{theorem}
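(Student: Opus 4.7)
I will prove the theorem by establishing the two inclusions separately; the inclusion ``toral $\subseteq$ loop'' is much harder and carries the bulk of the content.

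For $H^1_{loop}(R_n, \bG) \subseteq H^1_{toral}(R_n, \bG)$: the canonical morphism $\bG \to \bAut(\bG^0)$ sends the $\bG$-torsor $\bE$ to the $\bAut(\bG^0)$-torsor classifying ${}_{\bE}\bG^0$, and it sends loop cocycles to loop cocycles by the obvious functoriality of the construction in Definition \ref{looptorsor}. Thus, if $[\bE]$ is loop then ${}_{\bE}\bG^0$ is loop reductive in the sense of Definition \ref{defkloopgroup}, and Proposition \ref{existenceoftori} provides a maximal torus of ${}_{\bE}\bG^0$. Since a maximal torus of ${}_{\bE}\bG$ is by the conventions of [SGA3] a maximal torus of the identity component, the torsor $\bE$ is toral.

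For $H^1_{toral}(R_n, \bG) \subseteq H^1_{loop}(R_n, \bG)$: given $[\bE] \in H^1_{toral}(R_n, \bG)$, Lemma \ref{toral}(1) lifts $[\bE]$ to a class in $H^1\big(R_n, \bN_{\bG}(\bT)\big)$ for some maximal torus $\bT$ of $\bG$. Applying Theorem \ref{bogo} should produce a finite $k$--subgroup $\bS \subset \bN_{\bG}(\bT)$ for which the map $H^1(R_n, \bS) \to H^1\big(R_n, \bN_{\bG}(\bT)\big)$ is surjective. Invoking Theorem \ref{bogo} for $\gX = \Spec(R_n)$ requires checking condition (\ref{e.C}): every generalized Galois cover of $\Spec(R_n)$ is a disjoint union of connected finite \'etale covers, each of which by Lemma \ref{galois} is, up to a $\GL_n(\Z)$--twist, a Kummer-type extension of the form $(R_n \otimes_k L)[\sqrt[d_1]{a_1 t_1}, \dots, \sqrt[d_n]{a_n t_n}]$ with vanishing Picard group. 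Once such $\bS$ is in hand, the lifted class lies in the image of $H^1(R_n, \bS) \to H^1(R_n, \bG)$, hence in $H^1_{finite}(R_n, \bG)$, and Lemma \ref{lem1} completes the argument via $H^1_{finite}(R_n, \bG) \subseteq H^1_{loop}(R_n, \bG)$.

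The main obstacle is invoking Theorem \ref{bogo} (equivalently Corollary \ref{corbogo}) in the required generality. Corollary \ref{corbogo} imposes one of the hypotheses (i)--(iii), none of which directly covers a general linear algebraic $k$--group whose identity component is reductive but which may itself fail to be connected, over an arbitrary characteristic-zero field. I expect to handle this by first base-changing to $\ol k$, where hypothesis (i) applies and the conclusion is immediate from Corollary \ref{corbogo}, and then descending back to $k$ via the arithmetic/geometric decomposition of loop cocycles from Lemma \ref{Theta}; the arithmetic part $\eta^{ar}$ can be absorbed through the torsion/twisting bijection of Remark \ref{klooptorsion}, which preserves loop classes. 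This delicate descent, preserving both the toral and loop properties simultaneously, is the technical heart of the proof.
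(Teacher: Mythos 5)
Your argument for the inclusion $H^1_{loop}(R_n, \bG) \subseteq H^1_{toral}(R_n, \bG)$ is essentially the same as the paper's: both pass through the adjoint action on $\bG^0$ (the paper goes through $\bAut(\bG^0_{ad})$ rather than $\bAut(\bG^0)$ and takes some care to reduce to the split Chevalley form via the torsion bijection of Remark \ref{klooptorsion}, but the crux is Proposition \ref{existenceoftori} in both cases). That direction is fine.

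The inclusion $H^1_{toral}(R_n, \bG) \subseteq H^1_{loop}(R_n, \bG)$ has a genuine gap, and it is exactly the one you flag yourself: neither Theorem \ref{bogo} nor Corollary \ref{corbogo} applies to an arbitrary linear algebraic $k$--group $\bG$ with $\bG^0$ reductive when $k$ is an arbitrary field of characteristic $0$. Condition (iii) of Corollary \ref{corbogo} forces $\bG$ connected; conditions (i) and (ii) force either $k=\ol k$ or $\bG$ to be a $\Z$--form. Your proposed repair — base-change to $\ol k$, use Corollary \ref{corbogo}(i), then ``descend back to $k$ via the arithmetic/geometric decomposition of Lemma \ref{Theta}'' — does not close the gap, and in fact is circular: the decomposition $\eta \mapsto (\eta^{\rm ar}, \eta^{\rm geo})$ is only available once you already know that $[\bE]$ is represented by a cocycle $\eta$ with values in $\bG(k_s)$, i.e.\ once you already know $[\bE]$ is loop. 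Having $\bE \times_{R_n} \ol R_n$ loop over $\ol R_n$ gives you a cocycle with values in $\bG(\ol k)$ only after the base change, with no mechanism to rigidify it into a cocycle for the full $\pi_1(R_n)$ with values in $\bG(k_s)$.

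The paper avoids Theorem \ref{bogo} for this direction altogether. It still passes through $\bN_\bG(\bT)$ via Lemma \ref{toral}(1), but then reduces to the case where $\bG^0$ is a torus and handles that case by a direct diagram chase on the exact sequence $1 \to \bT \to \bG \to \bnu \to 1$ taken over $\pi_1(R_n)$: the obstruction class $\Delta([c]) \in H^2\bigl(\pi_1(R_n),{}_c(\bT(\ol k))\bigr)$ vanishes because its image in $H^2\bigl(\pi_1(R_n),{}_c\bT(\ol R_{n,\infty})\bigr)$ does, and the needed surjectivity on $H^1$ comes from the unique divisibility of $\bT(\ol R_{n,\infty})/\bT(\ol k)$. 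Both facts are Lemma \ref{tori1}, which is the genuine technical input you would need and do not have. Indeed the paper remarks immediately after its proof that the ``finite subgroup'' route via Corollary \ref{corbogo} \emph{does} give a simpler proof, but only under the extra hypotheses (i)--(iii) — confirming that the obstacle you identified is real, not merely a matter of packaging.
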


\bigskip

First we establish an auxiliary useful result.

\begin{lemma}\label{tori1} 1) Let $\gH$ be an $R_n$ group of multiplicative type. Then for all $i \geq1$ the natural abstract group homomorphisms
$$H^i\big(\pi_1(R_n) , {\gH}(\ol R_{n,\infty})\bigr)
\to H^i(R_n, \, \gH) \to  H^i(F_n, \,  \gH).
$$
are all isomorphisms.

2) Let $\bT$ be a $k$--torus. Let $c \in Z^1\big(\pi_1(R_n), \bAut(\bT)(\ol k)\big) \subset 
\newline Z^1\big(\pi_1(R_n) ,  \bAut(\bT)(\ol R_{n,\infty})\big)$ be a cocycle, and consider the twisted
$R_n$--torus $_c\bT = {_c}(\bT \times_k R_n).$ Consider the  natural maps
$$
H^i\big(\pi_1(R_n) ,\,  _c(\bT(\ol k)\big) \to
H^i\big(\pi_1(R_n) , {_c\bT}(\ol R_{n,\infty})\bigr)
\to H^i(R_n, \, _c\!\bT) \to  H^i(F_n, \,  _c\!\bT)
$$
Then.

(i) If $i = 1$ then the first group homomorphism is surjective and the last one is an isomorphism.

(ii) If $i > 1$ then all the maps are group isomorphisms.

\end{lemma}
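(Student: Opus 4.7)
The plan is to handle both parts through the pro-Galois cover $\ol R_{n,\infty}/R_n$ with group $\pi_1(R_n) \simeq \widehat\Z(1)^n \rtimes \Gal(k)$ together with a concrete description of $\ol R_{n,\infty}^\times$. For Part 1, since $\car(k) = 0$ and $\gH$ is of multiplicative type, $\gH$ is isotrivial; Lemma \ref{galois} then shows that after passing to $\ol R_{n,m}$ for some $m$ (and \emph{a fortiori} to $\ol R_{n,\infty}$), $\gH$ becomes isomorphic to a product of copies of $\G_m$ and $\bmu_\ell$. The Hochschild--Serre spectral sequence for the pro-Galois cover,
\begin{equation*}
E_2^{p,q} = H^p_{\mathrm{cts}}\bigl(\pi_1(R_n),\, H^q_{\et}(\ol R_{n,\infty},\, \gH)\bigr) \Longrightarrow H^{p+q}_{\et}(R_n,\, \gH),
\end{equation*}
reduces the first isomorphism to the vanishing $H^q_{\et}(\ol R_{n,\infty}, \gH) = 0$ for $q \geq 1$. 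I would prove this by d\'evissage via the Kummer sequence, using two inputs: $\Pic(\ol R_{n,\infty}) = \varinjlim_m \Pic(\ol R_{n,m}) = 0$ (the last assertion of Lemma \ref{galois}), and the explicit description $\ol R_{n,\infty}^\times \simeq \ol k^\times \oplus \Q^n$, which is uniquely divisible so that $\ol R_{n,\infty}^\times/\ell = 0$ for every $\ell$.

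For the comparison with $F_n$, I would run the parallel Hochschild--Serre argument for the maximal tamely ramified extension $F_n^{\mathrm{tame}}/F_n$, whose Galois group is again $\widehat\Z(1)^n \rtimes \Gal(k)$. The inclusion $\ol R_{n,\infty} \hookrightarrow F_n^{\mathrm{tame}}$ induces a map between the two spectral sequences; that it is an isomorphism on $E_2$-terms for multiplicative type coefficients reduces to checking $\gH(\ol R_{n,\infty}) \to \gH(F_n^{\mathrm{tame}})$ is a $\pi_1(R_n)$-equivariant quasi-isomorphism, which one does separately on $\G_m$ and $\bmu_\ell$: for $\bmu_\ell$ the two groups of $\ell$-th roots of unity coincide, and for $\G_m$ the quotient of unit groups is uniquely divisible, contributing nothing to cohomology.

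For Part 2, set $\Lambda := X_*(\bT)$. Tensoring the exact sequence $0 \to \ol k^\times \to \ol R_{n,\infty}^\times \to \Q^n \to 0$ with $\Lambda$ and twisting by $c$ yields a short exact sequence of $\pi_1(R_n)$-modules
\begin{equation*}
0 \to {_c}\bT(\ol k) \to {_c}\bT(\ol R_{n,\infty}) \to {_c}(\Lambda \otimes_\Z \Q^n) \to 0.
\end{equation*}
The right-hand term is a uniquely divisible discrete $\pi_1(R_n)$-module: its underlying group is a $\Q$-vector space, the geometric piece $\widehat\Z(1)^n$ acts trivially on $\Q^n$ (roots of unity are killed modulo $\ol k^\times$), and the twisted $\pi_1(R_n)$-action factors discretely through $\Aut(\Lambda)$. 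Continuous cohomology of a profinite group with values in a uniquely divisible discrete module vanishes in positive degrees (each finite-quotient cohomology is annihilated both by the group order and by divisibility of the module). Hence $H^i\bigl(\pi_1(R_n),\, {_c}(\Lambda \otimes \Q^n)\bigr) = 0$ for $i \geq 1$, and the long exact sequence yields surjectivity of the first arrow when $i = 1$ and bijectivity when $i \geq 2$. The middle and right-hand arrows are then covered by Part 1 applied to the multiplicative type group $\gH = {_c}\bT$.

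The main obstacle I anticipate is the vanishing $H^q_{\et}(\ol R_{n,\infty}, \G_m) = 0$ for $q \geq 2$: although each finite stage $\ol R_{n,m}$ carries a nontrivial Brauer group (by K\"unneth for products of $\G_m$), the transition maps ought to annihilate every such class in the colimit thanks to divisibility of units, but verifying this acyclicity rigorously is the one subtle technical point. Everything else amounts to formal cohomological manipulations once the unit-group structure of $\ol R_{n,\infty}$ is firmly in hand.
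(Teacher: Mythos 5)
Your Part 2 essentially reproduces the paper's own argument: identify $\bT(\ol R_{n,\infty})/\bT(\ol k)$ with $X_*(\bT) \otimes_\Z \Q^n$, note unique divisibility, and read off the conclusion from the long exact cohomology sequence. That part is fine and the verification that the cohomology of a uniquely divisible discrete module vanishes in positive degrees is correctly handled.

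In Part 1 there is a genuine gap, and you flagged it yourself. The Hochschild--Serre reduction to the vanishing of $H^q(\ol R_{n,\infty}, \gH)$ for $q \geq 1$ is correct, but your proposed d\'evissage to $\G_m$ and $\bmu_\ell$ requires knowing $H^q(\ol R_{n,\infty}, \G_m) = 0$ for all $q \geq 1$, and you only establish the case $q = 1$ (vanishing of $\Pic$). The heuristic that divisibility of units in the colimit ``ought to'' annihilate the Brauer group and higher cohomology is not a proof: $\Br$ is not controlled by the unit group, and one really would have to compute the transition maps $\Br(\ol R_{n,m}) \to \Br(\ol R_{n,md})$ and show they become zero in the colimit. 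The paper sidesteps $\G_m$-cohomology altogether with a torsion argument: for $q \geq 1$ the groups $H^q(\ol R_{n,\infty}, \gH)$ are torsion (automatic for $q \geq 2$, and clear for $q = 1$ since $\Pic$ vanishes), so they are the increasing union of the images of $H^q(\ol R_{n,\infty}, {_m}\gH)$ where ${_m}\gH$ denotes the $m$-torsion subscheme of $\gH$; the vanishing $H^q(\ol R_{n,\infty}, {_m}\gH) = 0$ is then quoted from the authors' earlier work on Laurent polynomial rings with finite multiplicative coefficients. This isolates and cites exactly the needed input instead of resting on an unproved divisibility claim. Likewise, for the isomorphism $H^i(R_n, \gH) \to H^i(F_n, \gH)$ the paper simply cites its predecessor rather than running a second spectral sequence over $F_n^{\mathrm{tame}}$ as you propose, which in your route would import the same missing vanishing a second time, now on the field side.
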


\begin{proof} 1) The second isomorphism is proposition 3.4.3 of \cite{GP2}. As for the first isomorphism we consider the Hochschild-Serre spectral sequence \newline
$H^p\big(\pi_1(R_n), H^q(\ol{R}_{n, \infty}, \gH)\big) \implies H^{p + q}(R_n, \gH).$ From the fact that the group $H^p(\ol{R}_{n, \infty}, \gH)$ is torsion for $p \geq 1$ it follows  that the map $\limind_m H^p(\ol{R}_{n, \infty}, {_m}\gH) \to H^p(\ol{R}_{n, \infty}, \gH)$, where  ${_m}\gH$ stands for the kernel of  the ``multiplication by $m$" map, is surjective. By {\it loc. cit.} cor. 3.3 $H^p(\ol{R}_{n, \infty}, {_m}\gH)$ vanishes for all $m \geq 1.$ Hence $H^p(\ol{R}_{n, \infty}, \gH) = 0.$ The spectral sequence degenerates and yields the isomorphisms $H^i\big(\pi_1(R_n) , {\gH}(\ol R_{n,\infty})\bigr)
\simeq H^i(R_n, \, \gH)$ for all $i \leq 1.$

2) We begin with an observation about the notation used in the statement of the Lemma. The subgroup $\bT(\overline{k})$ of $\bT(\ol R_{n,\infty})$  is stable under the (twisted) action of $\pi_1(R_n) $ on ${_c\bT}(\ol R_{n,\infty}).$  To view $\bT(\overline{k})$ as a $\pi_1(R_n)$--module
with this twisted action we write ${_c}(\bT(\overline{k})).$

The fact that the last two maps are  isomorphism for all $i \geq 1$  is
a special case (1).  For the first map, we first analyse the
 $\pi_1(R_n)$--module  $A= \bT( \ol R_{n,\infty})/
 \bT( \ol k)$.
We have
\begin{eqnarray} \nonumber
A & = & \limind\limits_m \enskip  \bT( \ol R_{n,\infty}) \, / \,  \bT( \ol k)
\\ \nonumber
 & = & \limind\limits_m  \enskip
(\widehat \bT)^0 \otimes_\Z   \ol R_{n,m}^\times \,
/   (\widehat \bT)^0 \otimes_\Z   {\ol k}^\times \\ \nonumber
 & = &  (\widehat \bT)^0 \otimes_\Z  \,
\limind\limits_m   \enskip  \ol R_{n,m}^\times
\, / \,  {\ol k}^\times \\ \nonumber
 & = &  (\widehat \bT)^0 \otimes_\Z \,
\limind\limits_m \enskip   (\Z^n)_m  \,\,\text{\rm (where} \,\, (\Z^n)_m =  \Z^n {\rm )}\\ \nonumber
 & = &  (\widehat \bT)^0 \otimes_\Z     \Q^n
\end{eqnarray}
\noindent given that the transition map  $(\Z^n)_m \to (\Z^n)_{md}$ is
multiplication by $d$.
It follows that $A,$ hence also ${_cA},$  is uniquely divisible.

We consider the sequence of continuous $\pi_1(R_n)$--modules

\begin{equation}\label{divisible}
1 \to  {_c}\big(\bT(\ol k)\big) \to
\,  {_c\bT}(\ol R_{n,\infty}) \to \, {_cA} \to 1.
\end{equation}
From the fact that ${_cA}$  is uniquely divisible it follows that the group homomorphisms
\begin{equation}\label{maps}
H^i\big(\pi_1(R_n) ,\,  {_c}(\bT(\ol k))\big) \to
H^i\big(\pi_1(R_n) , \,  {_c\bT}(\ol R_{n,\infty})\big)
\end{equation}
are surjective for all $i \geq 1$ and bijective if $i > 1.$

\end{proof}

We can proceed now with the proof of Theorem \ref{jo}.

\begin{proof}
Let us show first show that
$H^1_{loop}(R_n, \bG) \subset H^1_{toral}(R_n, \bG)$.

\smallskip

\noindent{\it Case 1: $\bG=\bAut(\bH_0)$ where $\bH_0$ is a semisimple Chevalley 
$k$--group :} Let $\phi: \pi_1(R_n) \to \bAut(\bH_0)(\overline{k})$ 
be a loop cocycle. Consider the twisted $R$--group $_\phi\bG.$\footnote{We recall, for the last time, that $_\phi\bG$ is short hand notation  for $_\phi(\bG_R).$} Proposition \ref{existenceoftori} shows that
 the connected component of the identity  $(_\phi\bG)^0=  {_\phi}(\bG^0)$ of $_\phi\bG$
admits a   maximal $R$-torus. Therefore  $_\phi\bG$ admits a maximal $R$-torus, hence
$\phi$ defines a toral $R$-torsor.

\smallskip

\noindent{\it Case 2: $\bG=\bAut(\bH)$ where  $\bH$ is a  semisimple 
$k$--group :} Denote by $\bH_0$ the Chevalley $k$--form of $\bH$.
There exists a cocycle $z: \Gal(k) \to \bG(\overline{k})$ such that
$\bH$ is isomorphic to the twisted $k$--group ${_z\bH_0}$.
We can assume then that  $\bH={_z\bH_0}$ and
 $\bG={_z\bAut(\bH_0)}$.
The torsion bijection $
\tau_z: H^1(R, \bG)=  H^1(R, {_z\bAut(\bH_0)})  \simlgr
H^1(R, \bAut(H_0) )
$
exchanges loop classes (resp. toral classes) according to Remark
 \ref{klooptorsion}. Case 1 then yields 
$H^1_{loop}(R_n, \bG) \subset H^1_{toral}(R_n, \bG)$.

\smallskip

\noindent{\it General case.} The $k$--group acts by conjugacy 
on $\bG^0$, its center $\bZ(\bG^0)$ and then on its  adjoint quotient
$\bG^0_{ad}$. Denote by 
$f: \bG \to \bAut(\bG^0_{ad})$ this action.
Let $\phi : \pi_1(R_n)  \to \bG(\overline{k})$ be a loop cocycle.
We have to show that the twisted $R$--group scheme $_\phi\bG$
admits a   maximal torus. Equivalently, we need to show that
 $(_\phi\bG)^0 =  _\phi(\bG^0)$ admits a maximal torus
which is in turn  equivalent to the fact that 
  $(_{f_*\phi}\bG)^0_{ad}={_{f_*\phi}(\bG^0_{ad})}$ 
admits a maximal torus \cite[XII.4.7]{SGA3}.
But $f_*\phi$ is a loop cocycle for $\bAut(\bG^0_{ad})$, so defines 
a toral $R$--torsor under $\bAut(\bG^0)$ according to Case 2.
Thus $_{f_*\phi}(\bG^0_{ad})$ admits a maximal torus as desired.

\medskip

To establish the reverse inclusion we consider the  quotient group $\bnu= \bG/\bG^0,$ which is a finite and \'etale $k$--group. In particular $\bnu \times_k \ol{k}$ is constant and finite, and one can
easily see as a consequence that the natural map $\bnu(\ol{k}) \to \bnu(\ol R_{n,\infty})$  is an isomorphism. We first establish the result for tori and then the general case.

\smallskip

\noindent{\it $\bG^0$ is a torus $\bT$:}
We again appeal to the isotriviality theorem of \cite{GP1} to see that
$H^1( \pi_1(R_n), \bG( \ol R_{n,\infty})) \simlgr  H^1(R_{n}, \bG)$.
We consider the following commutative diagram of continuous
$\pi_1(R_n)$--groups
$$
\begin{CD}
1 @>>> \bT(\ol k) @>>> \bG(\ol k) @>>> \bnu(\ol k) @>>>  1 \\
&&@VVV @VVV \mid \mid \\
1 @>>> \bT(\ol  R_{n,\infty}) @>>> \bG( \ol R_{n,\infty}) @>>> \bnu(\ol R_{n,\infty}) @>>>  1. \\
\end{CD}
$$
This gives rise to an exact sequence of pointed sets
$$
\begin{CD}
1 @>>> H^1\bigl( \pi_1(R_n),  \bT(\ol k) \bigr) @>>>
H^1\bigl( \pi_1(R_n), \bG(\ol k)\bigr) @>>>
H^1\bigl( \pi_1(R_n),\bnu(\ol k) \bigr)  \\
&&@VVV @VVV \mid \mid \\
1 @>>> H^1\bigl( \pi_1(R_n),\bT(\ol  R_{n,\infty})\bigr)
@>>>  H^1\bigl( \pi_1(R_n), \bG( \ol R_{n,\infty})\bigr) @>>>
 H^1\bigl( \pi_1(R_n), \bnu(\ol R_{n,\infty}) \bigr) \,. \\
\end{CD}
$$
We are given a cocycle
 $z \in  Z^1\bigl( \pi_1(R_n), \bG( \ol R_{n,\infty})\bigr) .$
Denote by $c$ the image of $z$ in  $Z^1\bigl( \pi_1(R_n),\bnu(\ol k) \bigr) = Z^1\bigl( \pi_1(R_n),\bnu(\ol R_{n,\infty})\bigr)$ under the bottom map.
Under the top map, the obstruction to lifting $[c]$ to $H^1\bigl( \pi_1(R_n), \bG(\ol k)\bigr)$
is given by a class $\Delta([c]) \in
H^2\bigl(\pi_1(R_n) ,\,  _c\!(\bT(\ol k)) \bigr)$ \cite[\S I.5.6]{Se}.
This class vanishes in
$H^2\bigl(\pi_1(R_n), \,  {_c}\bT(\ol R_{n,\infty})) \bigr)$, so
Lemma \ref{tori1} shows that
 $\Delta([c])=0$. Hence $c$ lifts to a loop
 cocycle $a \in  Z^1\bigl( \pi_1(R_n), \bG( \ol k)\bigr)$.
By twisting by $a$ we obtain the following commutative diagram of pointed sets
$$
\begin{CD}
1 @>>> H^1\Bigl( \pi_1(R_n),\,   {_a}\big(\bT(\ol k)\big) \,  \Bigr) @>>>
H^1\Bigl( \pi_1(R_n),\,  {_a}\big(\bG(\ol k)\big) \, \Bigr) @>>>
H^1\Bigl( \pi_1(R_n),\,  {_c}\big(\bnu(\ol k)\big) \, \Bigr)  \\
&&@V{}VV @VVV \mid \mid \\
1 @>>> H^1\Bigl( \pi_1(R_n),\, { _a}\big(\bT(\ol  R_{n,\infty})\big) \, \Bigr)
@>>>  H^1\Bigl( \pi_1(R_n), {_a}\big(\bG( \ol R_{n,\infty})\big) \Bigr) @>>>
 H^1\Bigl( \pi_1(R_n),\,  {_c}\big(\bnu(\ol R_{n,\infty})\big) \,  \Bigr) \, \\
&& @VVV \\
&& 1
\end{CD}
$$
where the surjectivity of the left map comes from Lemma \ref{tori1},  ${_a}(\bG(\overline{k}))$ denotes $\bG(\overline{k})$ as a $\pi_1(R_n)$--submodule of ${_a}\bG( \ol R_{n,\infty}),$ and similarly for ${_c}\big(\bnu)(\overline{k})\big) = {_c}\big(\bnu(\ol R_{n,\infty})\big).$
We consider the  torsion map
$$
\tau_a:  H^1\bigl( \pi_1(R_n), {_a\bG}( \ol R_{n,\infty}) \bigr) \simlgr
 H^1\bigl( \pi_1(R_n), {\bG}( \ol R_{n,\infty}) \bigr).
$$
Then
$$
\tau_a^{-1}( [z]) \in \ker\Bigl(  H^1\bigl( \pi_1(R_n),\,  {_a\bG}( \ol R_{n,\infty}) \,  \bigr) \to
 H^1\bigl( \pi_1(R_n), \,  {_c}\big(\bnu(\ol R_{n,\infty})\big)  \Bigr).
$$
The diagram above shows that
$\tau_a^{-1}( [z])$ comes from
$H^1\bigl( \pi_1(R_n),\,  _a\!(\bG(\ol k)) \, \bigr)$, hence
$[z]$ comes from $H^1\bigl( \pi_1(R_n), \bG(\ol k)\bigr)$ as desired.
We conclude that   $H^1(R_{n}, \bG)$ is covered by loop torsors.

\smallskip
\noindent{\it General case : } Let $\bT$ be a maximal torus of $\bG$. Consider the commutative diagram
$$
\begin{CD} 
H^1_{loop}\big(R_n,\bN_\bG(T)\big) @>>> H^1\big(R_n,\bN_\bG(\bT)\big) \\
@VVV   @VVV \\
H^1_{loop}(R_n,\bG) @>>> H^1_{toral}(R_n,\bG). \\
\end{CD}
$$
The right vertical map is surjective according to Lemma \ref{toral}.1. The
top horizontal map is surjective by the previous case.
We conclude that the bottom horizontal map is surjective as desired.
\end{proof}

\begin{corollary}\label{corjo}
Let $\gG$ be a reductive $R$--group. 
Then $\gG$ is loop reductive if and only if $\gG$ admits a maximal 
torus.
\end{corollary}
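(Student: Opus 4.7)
The plan is to reduce the corollary to Theorem~\ref{jo} applied to the linear algebraic $k$-group $\bG = \bAut(\bH_0)$, where $\bH_0$ is the Chevalley $k$-form of $\gG$. Write $\bE = {\rm \bf Isom}_{\rm group}(\bH_0 \times_k R_n,\, \gG)$ for the associated $\bAut(\bH_0)$-torsor over $R_n$, so that $\gG \simeq {_\bE}(\bH_0 \times_k R_n)$ by descent. By Definition~\ref{defkloopgroup}, $\gG$ is loop reductive precisely when $[\bE] \in H^1_{loop}\bigl(R_n, \bAut(\bH_0)\bigr)$. The implication ``$\gG$ loop reductive $\Rightarrow$ $\gG$ admits a maximal torus'' is exactly Proposition~\ref{existenceoftori}, so the real content is the converse.

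For the converse, the key preliminary step is the equivalence
\[
\gG \text{ admits a maximal torus} \iff [\bE] \in H^1_{toral}\bigl(R_n, \bAut(\bH_0)\bigr).
\]
To establish it, I would use the canonical identification $_\bE \bAut(\bH_0) \simeq \bAut(\gG)$ obtained from $\gG = {_\bE}(\bH_0 \times_k R_n)$. Since $k$ is of characteristic zero, $\bAut(\bH_0)^0 = (\bH_0)_{ad}$, and twisting preserves identity components, so $\bAut(\gG)^0 = \gG_{ad}$. A maximal torus of the (possibly disconnected) group scheme $_\bE\bAut(\bH_0)$ is by definition a maximal torus of its identity component $\gG_{ad}$, and by [XII.4.7.c] the assignment $\gT \mapsto \gT/Z(\gG)$ gives a bijection between maximal tori of $\gG$ and of $\gG_{ad}$. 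Threading these identifications together yields the displayed equivalence.

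With the translation in hand, the proof concludes formally: Theorem~\ref{jo} applied to $\bG = \bAut(\bH_0)$, whose identity component $(\bH_0)_{ad}$ is reductive, gives
\[
H^1_{toral}\bigl(R_n, \bAut(\bH_0)\bigr) \;=\; H^1_{loop}\bigl(R_n, \bAut(\bH_0)\bigr).
\]
Combining this equality with the equivalence above, if $\gG$ admits a maximal torus then $[\bE]$ is toral, hence loop, which is precisely the statement that $\gG$ is loop reductive.

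The main subtlety is the translation step, not the invocation of Theorem~\ref{jo}. One must be careful that $\bAut(\bH_0)$ is in general disconnected, so ``admitting a maximal torus'' has to be interpreted in the SGA3 sense (existence of a maximal torus in the identity component), and one must justify the canonical isomorphism $_\bE\bAut(\bH_0) \simeq \bAut(\gG)$ as well as the passage between $\gG$ and $\gG_{ad}$ at the level of maximal tori. Once these essentially bookkeeping points are dealt with, everything else is a citation of previously established results.
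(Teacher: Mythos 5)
There is a genuine gap. Theorem~\ref{jo} is stated for a \emph{linear algebraic} $k$-group $\bG$ with reductive identity component, and you apply it with $\bG = \bAut(\bH_0)$. But for a general reductive Chevalley group $\bH_0$ (one whose radical torus has positive dimension), $\bAut(\bH_0)$ is \emph{not} a linear algebraic $k$-group: it sits in $1 \to (\bH_0)_{ad} \to \bAut(\bH_0) \to \bOut(\bH_0) \to 1$ with $\bOut(\bH_0)$ a constant $k$-group that can be infinite (it contains the automorphisms of the cocharacter lattice of $\rad(\bH_0)$, i.e.\ a copy of some $\GL_r(\Z)$). So $\bAut(\bH_0)$ may have infinitely many connected components and is not covered by the hypotheses of Theorem~\ref{jo}. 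Your argument is complete only when $\bH_0$ is semisimple, which is precisely the easy first case the paper disposes of before saying ``We deal now with the general case.''

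The missing ingredient is a devissage that the paper carries out: since $p_*[\bE]$ in $H^1\bigl(R_n, \bOut(\bH_0)\bigr)$ is represented by a continuous homomorphism $\pi_1(R_n) \to \bOut(\bH_0)(k)$, its image $\bOut(\bH_0)^\sharp$ is \emph{finite}, so $\bAut(\bH_0)^\sharp := p^{-1}\bigl(\bOut(\bH_0)^\sharp\bigr)$ \emph{is} an affine algebraic $k$-group to which Theorem~\ref{jo} applies. One then checks (using that $\bAut(\bH_0)/\bAut(\bH_0)^\sharp = \bOut(\bH_0)/\bOut(\bH_0)^\sharp$ and the Cartesian-square criterion for reduction of a torsor to a subgroup) that $[\bE]$ reduces to a class $[\gF]$ for $\bAut(\bH_0)^\sharp$; the maximal-torus hypothesis on $\gG$ then transfers to $\gF$ being toral, Theorem~\ref{jo} makes $\gF$ a loop torsor, and the change-of-groups map back to $\bAut(\bH_0)$ gives that $\bE$ is loop. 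Your ``translation step'' (identifying $_\bE\bAut(\bH_0)$ with $\bAut(\gG)$ and using [XII.4.7.c] to pass between maximal tori of $\gG$ and $\gG_{ad}$) is fine and matches the paper; the problem is purely the illegitimate direct application of Theorem~\ref{jo} to a group scheme that is not of finite type.
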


\begin{proof}    
Let $\bG$ be the  Chevalley $k$--form of $\gG$.
 To $\gG$ corresponds  a class $[\bE] \in H^1\big(R_n, \bAut(\bG)\big).$ 
When  $\bG$ is semisimple,  $\bAut(\bG)$ is an  affine algebraic $k$--group and
the Corollary follows 
from  $H^1_{toral}\big(R_n, \bAut(\bG)\big) = H^1_{loop}\big(R_n, \bAut(\bG)\big)$.

We deal now with the general case. We already know tby \ref{existenceoftori} that every loop reductive group  is toral.  Conversely assume that 
$\gG$ admits a maximal  torus.
Consider the exact sequence of smooth $k$--groups [XXIV.1.3.(iii)]
$$
1 \to \bG_{ad} \to \bAut(\bG) \buildrel p \over \lgr  \bOut(\bG) \to 1,
$$
where $\bG_{ad}$ is the adjoint group of $\bG$ and $\bOut(\bG)$ is a constant $k$--group. Since $R_n$ is a noetherian normal domain,
we know that $R_n$--torsors under $\bOut(\bG)$ are isotrivial  [X.6]. Furthermore by \cite[XI \S5]{SGA1}
$$
{\rm Hom}_{ct}\big( \pi_1(R_n), \bOut(\bG)(k)\big) / \sim \enskip \simlgr \enskip H^1\big(R_n, \bOut(\bG)\big).
$$
So $p_*[\bE]$ is given by a continous homomorphism $\pi_1(R_n) \to  \bOut(\bG)(k)$ whose image 
we denote by $\bOut(\bG)^\sharp$. This is a finite group so that
 $\bAut(\bG)^\sharp:= p^{-1}( \bOut(\bG)^\sharp)$ is an affine algebraic $k$--group. 
We consider the square of  pointed sets
$$
\begin{CD}
H^1\big(R_n, \bAut(\bG)^\sharp\big) @>{p^\sharp_*}>> H^1\big(R_n, \bOut(\bG)^\sharp\big)  \\
@VVV @VVV \\
H^1\big(R_n, \bAut(\bG)\big) @>{p_*}>> H^1\big(R_n, \bOut(\bG)\big) . \\
\end{CD}
$$
Since $\bAut(\bG)/ \bAut(\bG)^\sharp = \bOut(\bG)/ \bOut(\bG)^\sharp$, 
this square is cartesian as can be seen by using the criterion of reduction of a torsor to  a subgroup \cite[III.3.2.1]{Gi}.
By construction,  $[p_* \bE]$ comes from  $H^1\big(R_n, \bOut(\bG)^\sharp\big)$, hence
$[\bE]$ comes from a class $[\gF] \in H^1\big(R_n, \bAut(\bG)^\sharp\big)$.
Our assumption is that the $R_n$--group $\gG= {_\bE\bG}= {_\gF\bG}$ contains a maximal torus,
so $\gG_{ad}= {_\gF\bG_{ad}}$ contains a  maximal torus and
${_\gF}\big(\bAut(\bG)^\sharp\big)$ contains a  maximal torus.
In other words, $\gF$ is a toral $R_n$--torsor under  $\bAut(\bG)^\sharp$.
From the equality
$H^1_{toral}\big(R_n, \bAut(\bG)^\sharp\big) = H^1_{loop}\big(R_n, \bAut(\bG)^\sharp\big)$, it follows 
that that $\gF$ is a loop torsor under  $\bAut(\bG)^\sharp$. By applying the  change of groups
$\bAut(\bG)^\sharp \to \bAut(\bG)$, we conclude that $\bE$ is a loop torsor under $\bAut(\bG),$ hence 
that $\gG$ is  loop reductive.
\end{proof}

Lemma \ref{toral}.2 yields the following fact.

\begin{corollary}
 Let $1 \to \bS \to \bG' \buildrel p \over \to \bG \to 1$ be a central extension of
$\bG$ by a $k$--group $\bS$ of multiplicative type. Then the  diagram
$$
\begin{CD}
H^1_{loop}(R_n, \bG') & \enskip \subset \enskip  & H^1(R_n, \bG') \\
@V{p_*}VV @V{p_*}VV \\
H^1_{loop}(R_n, \bG) & \enskip  \subset \enskip  & H^1(R_n, \bG) \\
\end{CD}
$$
 is cartesian.

\end{corollary}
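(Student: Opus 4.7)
The plan is to reduce this statement to the toral version already furnished by Lemma \ref{toral}.2, using the key identification $H^1_{toral}(R_n, -) = H^1_{loop}(R_n, -)$ of Theorem \ref{jo}. First, I would observe that since $\bS$ is central of multiplicative type and $p$ is surjective, whenever $\bG^0$ is reductive so is $(\bG')^0$; this is the hypothesis needed to apply Theorem \ref{jo} to both $\bG$ and $\bG'$. (Absent an explicit hypothesis in the statement, this is the natural interpretation in the present section, where all groups under consideration are linear algebraic $k$--groups whose identity component is reductive.)

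Next I would unwind what ``cartesian'' means here. Since both horizontal maps are inclusions of subsets, the square is cartesian if and only if
\[
H^1_{loop}(R_n, \bG') \;=\; (p_*)^{-1}\bigl( H^1_{loop}(R_n, \bG) \bigr) \;\cap\; H^1(R_n, \bG')
\]
inside $H^1(R_n, \bG')$; equivalently, a class $[\bE'] \in H^1(R_n, \bG')$ belongs to $H^1_{loop}(R_n, \bG')$ if and only if $p_*[\bE'] \in H^1_{loop}(R_n, \bG)$.

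The one direction (loop goes to loop) is immediate from functoriality of the loop construction applied to $p : \bG' \to \bG$. For the reverse direction, suppose $[\bE'] \in H^1(R_n, \bG')$ has $p_*[\bE'] \in H^1_{loop}(R_n, \bG)$. By Theorem \ref{jo} applied to $\bG$, this means $p_*[\bE'] \in H^1_{toral}(R_n, \bG)$. Lemma \ref{toral}.2, which asserts that the analogous square for toral torsors is cartesian, then yields $[\bE'] \in H^1_{toral}(R_n, \bG')$. Applying Theorem \ref{jo} a second time, now to $\bG'$, gives $[\bE'] \in H^1_{loop}(R_n, \bG')$, as required.

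There is no real obstacle here: the whole argument is a two-step formal manipulation once Theorem \ref{jo} and Lemma \ref{toral}.2 are in hand. The only point worth being careful about is confirming that the hypotheses of Theorem \ref{jo} transfer from $\bG$ to $\bG'$, which, as noted above, follows because a central extension of a reductive group by a group of multiplicative type is again reductive.
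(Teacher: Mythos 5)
Your proof is correct and is precisely what the paper intends: the paper introduces this Corollary with the one-line remark that ``Lemma \ref{toral}.2 yields the following fact,'' leaving implicit the two applications of Theorem \ref{jo} needed to translate between loop and toral classes, and you have spelled out exactly those two applications together with the (correct) observation that reductivity of $\bG^0$ passes to $(\bG')^0$.
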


\begin{remark} For a $k$--group $\bG$ satisfying the condition
of Corollary \ref{corbogo}, one can prove in a simpler way
 that toral $\bG$--torsors over $R_n$
are loop torsors by reducing to the case of
a finite \'etale group.
\end{remark}

\begin{remark} Given an integer $d \geq 2$, the Margaux algebra 
(both the Azumaya and Lie versions)
\cite[3.22 and example 5.7 ]{GP2} provides an example of a $\bPGL_d$-torsor over $\C[t_1^{\pm 1}, t_2^{\pm 1}]$ which is not a loop torsor. The underlying $\PGL_d$-torsor is therefore not toral.
This  means that  the Margaux Azumaya algebra
 does not contain any (commutative) \'etale $\C[t_1^{\pm 1}, t_2^{\pm 1}]$-subalgebra of rank $d,$ and that the Margaux Lie algebra, viewed as a Lie algebra over $\C[t_1^{\pm 1}, t_2^{\pm 1}]$, does not contain any Cartan subalgebras (in the sense
of \cite{SGA3}).
 \end{remark}

\begin{remark}\label{laurentremark}
More generally, for each each   positive integer $d$, we
 have
$H^1_{toral}(R_n[x_1, ..., x_d], \bG) = H^1_{loop}(R_n[x_1,...,x_d], \bG)$.  Since $\pi_1( R_n[x_1,...,x_d]) \simeq\pi_1(R_n)$ and
 $\bN_\bG(\bT)(S)=  \bN_\bG(\bT)(S[x_1,...,x_d])$ for every finite \'etale covering $S$ of $R_n$,  the proof we have given works just the same in this case.
\end{remark}

\subsection{Applications to (algebraic) Laurent series.}

Let $F_n = k((t_1))((t_2))...((t_n)).$ In an analogous fashion  to what  we did in the case of $R_n$ we define $F_{n,m} = k((t_1^{\frac{1}{m}}))((t_2^{\frac{1}{m}}))...((t_n^{\frac{1}{m}}))$ and $F_{n, \infty} = \limind F_{n,m}.$

\begin{remark}\label{series} {\rm  (a) If $\tilde{k}$ is a field extension of $k$ the natural map $\tilde{k} \otimes_k F_{n,m}  \to  \tilde{k}((t_1^{\frac{1}{m}}))((t_2^{\frac{1}{m}}))...((t_n^{\frac{1}{m}}))$ is injective. If the
extension is {\it finite}, then this map is an isomorphism.
We will find it convenient (assuming that the field $\tilde k$ is fixed in our discussion)  to denote $\tilde{k}((t_1^{\frac{1}{m}}))((t_2^{\frac{1}{m}}))...((t_n^{\frac{1}{m}}))$ simply by $\tilde{F}_{n,m}.$
\smallskip

(b) The field $\limind \ol{k}((t_1^{\frac{1}{m}}))((t_2^{\frac{1}{m}}))...((t_n^{\frac{1}{m}}))$ is algebraically closed.  We will denote  the algebraic closure of
$F_n$ (resp. $F_{n,m}$, $F_{n,\infty}$) in this field by 
$\overline{F_{n}}$ (resp.  $\overline{F_{n,m}}$,
$\overline{F_{n, \infty}}$). As mentioned in (a) we
have a natural injective ring homomorphsm $\ol{k} \otimes_k F_{n, \infty} \to
\overline{F_{n}}.$

\smallskip

(c) There is a natural group morphism $\pi_1(R_n) \to \Gal(F_n)$ given by considering the Galois extensions $\tilde{R}_{n,m} = \tilde{k} \otimes_k R_{n,m}$ of $R_n$ and $\tilde{F}_{n,m}$ of $F_n$ respectively, where $\tilde{k}
\subset \ol{k}$ is a finite Galois extension of $k$ containing all $m$-roots of unity. These homomorphisms are in fact  isomorphisms.\footnote{ If  $k$ is algebraically closed  this was proved in \cite{GP3} cor 2.14.}  For by applying successively
the structure theorem for local fields \cite{GMS} \S 7.1 p. 17, we have
$\Gal(F_n)= {_{\infty} \bmu}^n(\ol k) \rtimes \Gal(k)$. This means that
$$
\Gal(F_n) = \limproj \Gal\bigl(   \tilde{k}((t_1^{\frac{1}{m}}))((t_2^{\frac{1}{m}}))...((t_n^{\frac{1}{m}})) / F_n\bigr)
$$
for $m$ running over all integers and $\tilde k$ running over all  finite Galois extensions of
$k$ inside $\ol k$  containing a primitive $m$--root of unity. Since at each step we have an isomorphism
$$
\Gal\bigl(   \tilde{k} \otimes R_{n,m} / R_n\bigr) \cong \Gal\bigl(   \tilde{k}((t_1^{\frac{1}{m}}))((t_2^{\frac{1}{m}}))...((t_n^{\frac{1}{m}})) / F_n\bigr) \cong \bmu_m^n(\tilde k) \rtimes
\Gal(\tilde k/k),
$$
we conclude that  $\pi_1(R_n) \cong \Gal(F_n)$.

\smallskip (d) It follows from (c) that the base change $R_n \to F_n$  induces an equivalence of categories between finite \'etale coverings of  $R_n$ and
finite \'etale coverings of  $F_n$. Furthermore, if $\bE/R_n$ is a finite \'etale covering
of $R_n$, we have $\bE(R_n)= \bE(F_n)$. Indeed, 
$\bE$ is split by some Galois covering $\tilde{R}_{n,m} = \tilde{k} \otimes_k R_{n,m}$ and
$\bE(R_n)=  \bE( \tilde{R}_{n,m})^{\Gal(\tilde{R}_{n,m}/R_n)} 
= \bE( \tilde{F}_{n,m})^{\Gal(\tilde{F}_{n,m}/F_n)} = \bE(F_n)$.
}
\end{remark}

\begin{proposition} \label{onto}
The canonical map  $$
H^1_{loop}(R_n, \bG) \to  H^1(F_n, \bG)
$$ is surjective.
\end{proposition}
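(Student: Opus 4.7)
The plan is to reduce, by successive cohomological steps, to the torus case, where Lemma \ref{tori1} provides surjectivity directly. All reductions exploit the isomorphism $\pi_1(R_n) \simeq \Gal(F_n)$ of Remark \ref{series}(c).

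First, applying Lemma \ref{sansuc}(2) to the unipotent radical $\bU$ of $\bG^0$ gives $H^1(R_n, \bG) \simeq H^1(R_n, \bG/\bU)$ and $H^1(F_n, \bG) \simeq H^1(F_n, \bG/\bU)$; the corresponding Galois statement (Lemma \ref{sansuc}(1) for $\pi_1(R_n)$ acting on $\bU(\bar k)$) preserves loop classes. Thus we may assume $\bG^0$ is reductive. Since $F_n$ is perfect of characteristic zero, every reductive $F_n$-group admits a maximal torus (Grothendieck, [XIV]), so every class in $H^1(F_n, \bG)$ is toral. Lemma \ref{toral}(1), adapted to $\bG$ with reductive identity component via the exact sequence $1 \to \bG^0 \to \bG \to \bnu \to 1$ with $\bnu$ finite \'etale, shows that every such class arises from $H^1(F_n, \bN)$ where $\bN := \bN_\bG(\bT)$ for a chosen maximal torus $\bT$ of $\bG^0$. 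It therefore suffices to prove surjectivity of $H^1_{loop}(R_n, \bN) \to H^1(F_n, \bN)$.

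Next, $\bN$ fits in an extension $1 \to \bT \to \bN \to \bQ \to 1$ where $\bQ$ is a finite \'etale $k$-group. Given a class $[x] \in H^1(F_n, \bN)$ represented by a cocycle $x : \Gal(F_n) \to \bN(\overline{F_n})$, its composition with $\bN \to \bQ$ has values in $\bQ(\bar k) = \bQ(\overline{F_n})$ and thereby defines a loop cocycle $\tilde w : \pi_1(R_n) \to \bQ(\bar k)$. Invoking Serre's twisting trick (\cite{Se} \S I.5) via a set-theoretic lift of $\tilde w$ to $\bN(\bar k)$, the class $\tau_{\tilde w}^{-1}[x]$ lies in the fiber of $H^1(F_n, {_{\tilde w}\bN}) \to H^1(F_n, {_{\tilde w}\bQ})$ above the base point, and by the exact sequence of pointed sets this fiber equals the image of $H^1(F_n, {_{\tilde w}\bT})$.

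For the $R_n$-torus $\gT := {_{\tilde w}\bT}$, Lemma \ref{tori1}(1) gives $H^1(R_n, \gT) \simeq H^1(F_n, \gT)$, while Lemma \ref{tori1}(2) shows $H^1\big(\pi_1(R_n), \gT(\bar k)\big)$ surjects onto $H^1(R_n, \gT)$. The image of this composition is $H^1_{loop}(R_n, \gT)$ by definition, so $H^1_{loop}(R_n, \gT) \to H^1(F_n, \gT)$ is surjective. A loop preimage in $H^1_{loop}(R_n, {_{\tilde w}\bT})$ pushes forward to $H^1_{loop}(R_n, {_{\tilde w}\bN})$; applying $\tau_{\tilde w}$ (which preserves loopness by Remark \ref{klooptorsion}) gives a class in $H^1_{loop}(R_n, \bN)$ whose image in $H^1(F_n, \bN)$ is $[x]$, and pushing this forward through $\bN \to \bG$ yields the desired loop preimage of the original class in $H^1(F_n, \bG)$. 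The main difficulty is the twisting step: one must verify that Serre's bijection preserves loopness when twisting by the loop cocycle $\tilde w$ valued in $\bQ(\bar k)$, so that the twisted $R_n$-torus $_{\tilde w}\bT$ has its loop cohomology accessible via Lemma \ref{tori1}. This is implicit in Remark \ref{klooptorsion} but requires careful bookkeeping, especially because the extension $1 \to \bT \to \bN \to \bQ \to 1$ is not central and the set-theoretic lift of $\tilde w$ to $\bN(\bar k)$ need not itself be a cocycle.
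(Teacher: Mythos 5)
Your structural outline — reduce to $\bG^0$ reductive via Lemma \ref{sansuc}, then pass to $H^1(F_n, \bN)$ with $\bN = \bN_\bG(\bT)$ using that all $F_n$-classes are toral and Lemma \ref{toral}(1), and finally attack the extension $1 \to \bT \to \bN \to \bQ \to 1$ — mirrors the paper's strategy. (The paper reduces to the case $\bG^0$ a torus via the same normalizer diagram and then cites the argument of Theorem \ref{jo}.) Your reduction to $\bG^0$ reductive is in fact spelled out more carefully than in the paper itself.

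However, the twisting step has a genuine gap, and it is not a matter of bookkeeping. You cannot apply Serre's torsion map $\tau_{\tilde w}^{-1}$ to $H^1(F_n, \bN)$: the twisted group $_{\tilde w}\bN$ is not defined, since $\tilde w$ takes values in $\bQ(\ol k)$ and $\bQ$ does not act on $\bN$; and a set-theoretic lift of $\tilde w$ to $\bN(\ol k)$ need not be a cocycle, so the torsion map is simply unavailable. You notice this at the end but leave it unresolved — and it is exactly the missing central step that the paper's reference to Theorem \ref{jo} supplies. Concretely: consider the obstruction class $\Delta([\tilde w]) \in H^2\bigl(\pi_1(R_n), {_{\tilde w}}(\bT(\ol k))\bigr)$ to lifting $[\tilde w]$ through $\bN(\ol k) \to \bQ(\ol k)$ (Serre \S I.5.6). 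Because $x$ itself lifts $\tilde w$ over $\ol{F_n}$, the image of $\Delta([\tilde w])$ in $H^2(F_n, {_{\tilde w}}\bT)$ vanishes; the $i > 1$ isomorphism of Lemma \ref{tori1}(2)(ii) then forces $\Delta([\tilde w]) = 0$, so $\tilde w$ lifts to an honest loop cocycle $a \in Z^1\bigl(\pi_1(R_n), \bN(\ol k)\bigr)$. Only after constructing $a$ can one twist by it — now a genuine cocycle making $_a\bN$ an $R_n$-group — and then invoke Lemma \ref{tori1}(1) for the twisted torus $_a\bT = {_{\tilde w}}\bT$ as you intended. Without the lift $a$, the argument does not close.
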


\begin{proof} We henceforth identify  $\pi_1(R_n)$ with $\Gal(F_n)$ as described in Remark \ref{series}(c).
The proof is very similar to that of Theorem \ref{jo}, and we maintain the notation therein. Again we proceed
in two steps.

\smallskip

\noindent{\it First  case: $\bG^0$ is a torus $\bT$:}
We consider the following commutative diagram of continuous
$\pi_1(R_n)$--groups
$$
\begin{CD}
1 @>>> \bT(\ol k) @>>> \bG(\ol k) @>>> \bnu(\ol k) @>>>  1 \\
&&@VVV @VVV \mid \mid \\
1 @>>> \bT( \ol {F_n}) @>>> \bG(\ol {F_n}) @>>> \bnu(\ol{F_n}) @>>>  1 . \\
\end{CD}
$$
This gives rise to an exact sequence of pointed sets
$$
\begin{CD}
1 @>>> H^1\bigl( \pi_1(R_n),  \bT(\ol k) \bigr) @>>>
H^1\bigl( \pi_1(R_n), \bG(\ol k)\bigr) @>>>
H^1\bigl( \pi_1(R_n),\bnu(\ol k) \bigr)  \\
&&@VVV @VVV \mid \mid \\
1 @>>> H^1\bigl( F_n,\bT\bigr)
@>>>  H^1\bigl( F_n, \bG \bigr) @>>>
 H^1\bigl( F_n, \bnu \bigr) \,. \\
\end{CD}
$$
We are given a cocycle
 $z \in Z^1\bigl( \Gal(F_n), \bG( \ol {F_n} )\bigr) =  Z^1\bigl( \pi_1(R_n), \bG( \ol {F_n} )\bigr) $, and consider its image
$c \in  Z^1\bigl( \pi_1(R_n),\bnu(\ol {F_n}) \bigr)$. By reasoning as in Theorem \ref{jo}
we see that
$[z]$ comes from $H^1\bigl( \pi_1(R_n), \bG(\ol k)\bigr)$ as desired. We conclude that   $H^1(F_{n}, \bG)$ is covered by $k$-loop torsors.

\bigskip

\bigskip

\smallskip
\noindent{\it General case: } Let $\bT$ be a maximal torus of $\bG$.
$$
\begin{CD}
H^1_{loop}\big(R_n, \bN_\bG(T)\big) @>>> H^1\big(F_n,\bN_\bG(\bT)\big) \\
@VVV   @VVV \\
H^1_{loop}(R_n,\bG) @>>> H^1(F_n,\bG). \\
\end{CD}
$$
The reasoning is again identical to the one used in Theorem \ref{jo}.
\end{proof}

\section{Isotropy of loop torsors}

As before $\bG$ denotes  a linear algebraic group
over a field $k$ of characteristic zero. $R_n$ and  $\pi_1(R_n)$ are as in the previous section.

\subsection{Fixed point statements}

Let $\eta:  \pi_1(R_n) \to \bG(\ol{k})$ be a continuous cocycle.
Consider as before a Galois extension $\tilde{R}_{n,m} = \tilde{k} \otimes_k R_{n,m} $ of $R_n$ where $\tilde{k} \subset \ol{k}$ is a
finite Galois extension of $k$ containing all $m$--roots of unity
in $\ol k$, chosen so that our cocycle $\eta$ factors through the Galois group
\begin{equation}\label{Galoisnm}
\tilde{\Gamma}_{n,m} = \Gal(\tilde{R}_{n,m}/R_n) \cong \bmu_m^n(\tilde k) \rtimes
\Gal(\tilde k/k)
\end{equation}
We assume henceforth that $\bG$ acts on a $k$--scheme $\bY.$ The Galois group
$\tilde{\Gamma}_{n,m}$ acts naturally on
$\bY(\tilde{R}_{n,m}),$  and we
denote this action by $\gamma : y \mapsto {^{\gamma}y}.$ By means of
$\eta$ we get a twisted action of $\tilde{\Gamma}_{n,m}$  on
$\bY(\tilde{R}_{n,m})$  which we denote by $\gamma : y \mapsto
{^{\gamma '}y}$ where
\begin{equation}\label{Gammadescent}
{^{\gamma '}y} = \eta_\gamma \, . {^{\gamma}y}
\end{equation}

By Galois descent (\ref{Gammadescent}) leads to a twisted form of
the $R_n$--scheme $\bY_{R_n}.$ One knows that this twisted form is up
to isomorphism independent of the Galois extension
$\tilde{R}_{n,m}$ chosen through which  $\eta$ factors. We
will denote this twisted form  by  ${_\eta \bY_{R_n}},$ or simply by $_{\eta}\bY$ following the conventions that have been previously mentioned regarding this matter.

Let $(z, \eta^{geo})$ be the  couple associated to $\eta$ according
to Lemma \ref{Theta}. Thus $z \in Z^1\big(\Gal(k), \bG(\ol{k})\big)$ and $\eta^{geo} \in
\Hom_{k-gp}( {_\infty \bmu^n}, {_z\bG} )$ by taking into account
 Lemma \ref{salade2}.  By means of $z$ we
construct a twisted form ${_z \bY}$ of the $k$--scheme $\bY$
which comes equipped with an action of  ${_z \bG}$.
Via $\eta^{geo},$ this defines an  algebraic   action  of the affine
$k$-group ${_\infty \bmu}^n$ on ${_z \bY}.$
At the level of $\ol k$--points of ${_\infty \bmu}^n$, the action is given by

\begin{equation}\label{etageotwistedaction}
\widehat{n} . y = \eta^{geo}(\widehat{n}) . y
\end{equation}
for all $\widehat{n} \in {_\infty \bmu}^n(\ol k) = \limproj_m  \bmu_m^n({\ol k})$ and $y \in {_z\bY}(\ol k)$.
We denote by ${(_z\bY)}^{\eta^{geo}}$ the closed fixed point subscheme for the action of
${_\infty \bmu}^n$ (see \cite{DG} II \S 1 prop. 3.6.d). We have
$$
{(_z \bY)}^{\eta^{geo}}(\ol k)= \Big\{  y \in  {_z \bY}(\ol k) = \bY(\ol k) \, \mid  y= \eta^{geo}(\widehat{n}) . y  \, \, \enskip \forall \, 
\widehat{n} \in {_\infty \bmu}^n(\ol k) 
    \Big\}
$$
and in terms of rational points
\begin{equation}\label{fix}
{(_z \bY)}^{\eta^{geo}}(k)= {_z \bY}(k) \cap {({_z \bY})}^{\eta^{geo}}(\ol k)
= \Big\{  y \in  {_z \bY}(k) \, \mid  y= \eta^{geo}(\widehat{n}) . y \,  \enskip \, \forall \, 
\widehat{n} \in {_\infty \bmu}^n(\ol k)
    \Big\}.
\end{equation}
where we recall that
$$
{_z \bY}(k) 
= \Big\{  y \in  \bY(\ol k) \, \mid  y= z_{\gamma} . {^ \gamma}y \,  \enskip \, \forall \, 
\gamma \in \Gal(k)
    \Big\}.
$$

\begin{theorem} \label{godown}

\begin{enumerate}
\item Let $\bG$ act on  $\bY$ as above, and assume that $\bY$ is 
projective ( i.e. a 
closed subscheme in $\mathbb{P}^n_k$).
Let $\eta:  \pi_1(R_n) \to \bG(\ol{k})$ be  a (continuous)  cocycle,
 and ${_\eta \bY}$ be the corresponding twisted form of $\bY_{R_n}.$
 The following are equivalent:

\begin{enumerate}
\item $(_\eta\bY)(R_n) \not= \emptyset$,
\item $(_\eta \bY)(K_n) \not= \emptyset$,
\item $(_\eta \bY)(F_n) \not= \emptyset$,
\item ${(_z \bY)}^{\eta^{geo}}(k) \not= \emptyset$.
 \end{enumerate}

\item Let $\bS$ be a closed $k$--subgroup of $\bG$.
Let $\bY$ be a smooth $\bG$--equivariant compactification of
$\bG/ \bS$ (i.e., $\bY$ is projective $k$--variety that
 contains $\bG/ \bS$ as an open dense
$\bG$-subvariety). Then the following are equivalent:

\begin{enumerate}
\item $[\eta]_{ K_n} \in
{\rm Im}\bigl( H^1(K_n, \bS) \to   H^1(K_n, \bG)  \bigr) $,
\item $[\eta]_{F_n} \in
{\rm Im}\bigl( H^1(F_n, \bS) \to   H^1(F_n, \bG)  \bigr) $,
\item   $ {(_z \bY)}^{\eta^{geo}}(k)  \not= \emptyset$.
 \end{enumerate}

 \end{enumerate}

\end{theorem}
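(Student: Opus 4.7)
The plan is to prove Part~1 via the cycle $(a)\Rightarrow(b)\Rightarrow(c)\Rightarrow(d)\Rightarrow(a)$, in which the first two implications are tautological from the inclusions $R_n\subset K_n\subset F_n$. For $(d)\Rightarrow(a)$ I will verify directly that a fixed point $y\in({_z\bY})^{\eta^{geo}}(k)$, viewed as an element of $\bY(\overline k)\subset \bY(\overline R_{n,\infty})$, satisfies the $\eta$-twisted descent relation for $\pi_1(R_n)={_\infty\bmu}^n(\overline k)\rtimes\Gal(k)$: because $y$ involves no $t_i$-variable, the standard action of the profinite factor ${_\infty\bmu}^n$ on $y$ is trivial, so the equation $y=\eta_\gamma\cdot {^\gamma y}$ decouples into exactly the two conditions defining $({_z\bY})^{\eta^{geo}}(k)$ in~(\ref{fix}). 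Galois descent then delivers an $R_n$-point of $_\eta\bY$.

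The main content is the direction $(c)\Rightarrow(d)$. After fixing a finite Galois cover $\tilde R_{n,m}/R_n$ through which $\eta$ factors, with Galois group $\tilde\Gamma_{n,m}=\bmu_m^n(\tilde k)\rtimes\Gal(\tilde k/k)$ as in (\ref{Galoisnm}), an $F_n$-point of $_\eta\bY$ corresponds to a point $y_n\in\bY(\tilde F_{n,m})$ satisfying $y_n=\eta_\gamma\cdot {^\gamma y_n}$ for all $\gamma\in\tilde\Gamma_{n,m}$. Since $\tilde F_{n,m}$ is the fraction field of the complete discrete valuation ring $\tilde F_{n-1,m}[[t_n^{1/m}]]$ and $\bY$ is projective, the valuative criterion of properness extends $y_n$ to a point over this DVR, and reduction modulo $t_n^{1/m}$ produces $y_{n-1}\in\bY(\tilde F_{n-1,m})$. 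The generator $\gamma_n$ of the $n$-th cyclic factor $\bmu_m\subset\tilde\Gamma_{n,m}$ acts trivially on the residue field, so its equivariance equation specialises to $y_{n-1}=\eta_{\gamma_n}\cdot y_{n-1}$, i.e.\ a pointwise fixed-point condition for $\eta_{\gamma_n}$, while the residual subgroup $\tilde\Gamma_{n-1,m}$ continues to act compatibly. Iterating this reduction $n$ times, removing one $t_i^{1/m}$ at a time, yields a point $y_0\in\bY(\tilde k)$ which is fixed by every $\eta_{\gamma_i}$ and is equivariant under the $z$-twisted action of $\Gal(\tilde k/k)$; a final descent from $\tilde k$ to $k$ places $y_0$ in $({_z\bY})^{\eta^{geo}}(k)$, as required.

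For Part~2 the implication $(a)\Rightarrow(b)$ is immediate, and $(b)\Rightarrow(c)$ follows from the standard torsor/quotient dictionary: $[\eta]_{F_n}$ lifts to $H^1(F_n,\bS)$ iff $_\eta(\bG/\bS)$ admits an $F_n$-point, and via the open embedding $\bG/\bS\hookrightarrow\bY$ this gives $(_\eta\bY)(F_n)\neq\emptyset$, whence Part~1 produces~(c). The real difficulty is $(c)\Rightarrow(a)$: a fixed point $y$ yields, by Part~1 applied to $\bY$, an $R_n$-point of $_\eta\bY$, but this point may lie on the boundary $_\eta(\bY\setminus\bG/\bS)$, and to prove~(a) one must exhibit a $K_n$-point of the open subscheme $_\eta(\bG/\bS)$. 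The plan is to exploit that $\eta^{geo}$ has finite image consisting of semisimple elements (characteristic zero), so that the fixed subscheme $({_z\bY})^{\eta^{geo}}$ is smooth: if some fixed point already lies in the open orbit, the argument used in $(d)\Rightarrow(a)$ applied to $\bG/\bS$ (projectivity is not needed in that direction) produces a $K_n$-point of $_\eta(\bG/\bS)$ directly; otherwise one uses the local $\bG$-orbit structure of the smooth equivariant compactification together with the density of $\bG/\bS$ in $\bY$ to deform $y$ inside the smooth fixed locus into the open stratum. Controlling this deformation compatibly with the orbit structure and the $\eta^{geo}$-action is the principal technical obstacle expected in the proof.
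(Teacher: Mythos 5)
Your Part~1 argument tracks the paper's own proof closely: the easy implications $(a)\Rightarrow(b)\Rightarrow(c)$ and $(d)\Rightarrow(a)$ are handled the same way, and the specialization-at-$t_n=0$ iteration for $(c)\Rightarrow(d)$ is the right mechanism. Two small points you should make explicit: the paper first twists by $z$ (Lemma~\ref{looptwist}) to reduce to $z$ trivial before running the induction; and for the iteration to go through you must observe that the successive fixed loci $\bY':=\bY^{\eta_n^{geo}}$ are \emph{closed} subschemes of the projective $\bY$ (by \cite{DG}~II~\S1~prop.~3.6.(d)), hence again projective, so the valuative criterion remains available at every step. These are not optional: without them the induction is not well-posed.

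For Part~2, however, there is a real gap in your $(c)\Rightarrow(a)$, and you in fact flag it yourself. Part~1 only hands you an $R_n$-point (equivalently a $K_n$-point) of the \emph{compactification} $_\eta\bY$, and you then need a $K_n$-point of the \emph{open} twisted homogeneous space $_\eta(\bG/\bS)$. Your proposal to "deform $y$ inside the smooth fixed locus into the open stratum" is not a workable argument: smoothness of $({_z\bY})^{\eta^{geo}}$ does not let you move a $k$-rational fixed point off a boundary $\bG$-orbit into the dense orbit while preserving rationality, and the orbit stratification of a smooth equivariant compactification gives no such deformation in general. The paper resolves this step by invoking Florence's theorem \cite{F}: a homogeneous space under a linear algebraic group over a field such as $K_n$ (an iterated Laurent/fraction field of the relevant kind) that admits a rational point on a smooth compactification already has a rational point. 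This is a genuine theorem with content, not something recoverable by a local deformation; your argument needs to cite it (or prove an equivalent statement) to close the implication $(c)\Rightarrow(a)$.
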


\begin{proof}
(1) Again we twist the action $\bG \times \bY \to  \bY$  by $z$ to obtain an action ${_z \bG} \times {_z\bY} \to {_z\bY}$.
Lemma \ref{looptwist}
enables us to
 assume without loss of generality that  $z$ is the trivial
 cocycle.
We are thus left to deal with a $k$--homomorphism
$\eta^{geo}: {_\infty \bmu}^n \to \bG$ which factors at the finite level through
$ \bmu_m^n \to \bG$ for $m$ large enough.
This allows us to reason  by means  of  a suitable  covering
$\tilde{R}_{n,m}$ as in (\ref{Galoisnm}).

\smallskip

\noindent  $(a) \implies (b) \implies (c)$ are obtained by applying
the base change $R_n \subset K_n \subset F_n.$

\smallskip

\noindent $(c) \Longrightarrow (d)$: Each $\gamma \in
\tilde{\Gamma}_{n,m}$ induces an automorphism  of
$\tilde{R}_{n,m} \otimes_{R_n} F_n \simeq F_{n,m}
\otimes_k \tilde{k} = \tilde{F}_{n,m}$ which we also denote by
$\gamma$ (even though the notation $\gamma \otimes 1$ would be more
accurate.) Since $\tilde{R}_{n,m}$ trivializes $_\eta \bY,$ the Galois extension $\tilde{F}_{n,m}$ of $F_n$ (whose Galois group we identify with $\tilde{\Gamma}_{n,m}$) splits ${_\eta}\bY_{F_n}.$ By Galois descent
$$
_\eta \bY(F_n) = \Bigl\{ y \in \bY(\tilde{F}_{n,m}) \, \mid \,   \eta_{\gamma}\, . {^\gamma}y = y
\enskip \forall \gamma \in \tilde{\Gamma}_{n,m} \Bigr\}.
$$
Since $z$ is trivial, 
this last equality reads
$$
_\eta \bY(F_n) = \Bigl\{ y \in \bY(\tilde{F}_{n,m}) \, \mid \,   \eta^{geo}({\ol \gamma})\, . {^\gamma}y = y
\enskip \forall \gamma \in \tilde{\Gamma}_{n,m} \Bigr\}.
$$
where $\ol \gamma$ is the image of $\gamma$ under the map $\tilde{\Gamma}_{n,m} \to\bmu_m^n(\tilde k)$ given by (\ref{Galoisnm}).
Hence we  have  ${_\eta \bY}(F_n) \subset  \bY(F_{n,m})$ and
$$
_\eta \bY(F_n) = \Bigl\{ y \in \bY(F_{n,m}) \, \mid \,   \eta^{geo}({
\gamma})\, . {^\gamma}y = y
\enskip \forall  \gamma \in  \bmu_m^n( \ol k)  \Bigr\}.
$$

\noindent Since $\bY$ is proper over $k$, we have
$$
{_\eta}\bY(F_n)  = \Bigl\{ y \in \bY({F}_{n-1,m}[[t_n^{\frac{1}{m}}]]) \, \mid \,   \eta^{geo}({ \gamma}). {^\gamma}y =  y \enskip \,   \gamma \in  \bmu_m^n( \ol k) \Bigr\}.
$$
Our hypothesis is that this last set is not empty. By specializing at
$t_n=0$, we get that

\begin{equation}\label{t=0}
\Bigl\{ y \in \bY({F}_{n-1,m}) \, \mid \,   \,    \eta^{geo}({\gamma}). {^\gamma}y = y \enskip \,\forall \gamma \in  \bmu_m^n( \ol k) \Bigr\} \not = \emptyset.
\end{equation} 
We write now $\bmu_m^n( \ol k)= \bmu_m^{n-1}(\ol k) \times \bmu_m(\ol k)$
which provides a decomposition of $\eta^{geo}$ into two $k$-homomorphisms ${\eta'}^{geo} : \bmu_m^{n-1} \to \bG$  and  ${\eta_n}^{geo} : \bmu_m \to \bG$.
We define $\eta'=(1, {\eta'}^{geo})$, 
 ${\eta_n}=(1 , {\eta_n}^{geo})$
and
  $$
\bY' := \bY^{\eta_n^{geo}}.
$$ 
By \cite{DG} II \S 1 prop. 3.6.(d) we know that $\bY' $ is a closed subscheme of $\bY$, hence 
a projective $k$--variety.
Observe that $\bmu_m^{n-1}$ acts on  $\bY'$.

\begin{claim} The set (\ref{t=0}) is included in 
${_{\eta'}}\bY' (F_{n-1})$.\footnote{This inclusion is in fact an equality, but this stronger statement is not needed.}
\end{claim}

To look at the invariants under the action of $\bmu_m^n( \ol k)$, we first look at
the invariants under the last factor $\bmu_m( \ol k),$ and then the first $(n-1)$-factor
$\bmu_m^{n-1}( \ol k)$
By restricting the  condition  to  elements of the form $(1, \gamma_n)$ for
$\gamma_n \in \bmu_m(\ol k)$, we see that our set is included in  
$$
\Bigl\{ y \in \bY({F}_{n-1,m}) \, \mid \,   \,    \eta_n^{geo}({\gamma_n}). y = y \enskip \,\forall \gamma_n \in  \bmu_m( \ol k) \Bigr\}
$$ 
because $\bmu_m(\ol k)$ acts trivially on $F_{n-1,m}$.
By identity (\ref{fix}) applied to the base change of
$\eta_n^{geo}$ to the field $F_{n-1}$, this is nothing but
$\bY^{\eta_n^{geo}}({F}_{n-1,m})$. Looking now at the invariant condition for 
the elements of the form  $(\gamma', 1)$ for 
$\gamma' \in \bmu^{n-1}_m(\ol k)$, it follows that 
$$ 
\Bigl\{ y \in \bY({F}_{n-1,m}) \, \mid \,   \,    \eta^{geo}({\gamma}). {^\gamma}y = y \enskip \,\forall \gamma \in  \bmu_m^n( \ol k) \Bigr\}
 \qquad \qquad \qquad \qquad\qquad \qquad
$$ 
$$
\qquad \qquad \qquad \qquad  \, \subset \, 
\Bigl\{ y \in \bY^{\eta_n^{geo}}({F}_{n-1,m}) \, \mid \,   \,    {\eta'}^{geo}({\gamma'}). {^{\gamma'}}y = y \enskip \,\forall \gamma' \in  \bmu_m^{n-1}( \ol k) \Bigr\}
= {_{\eta'}}\bY' (F_{n-1}).
$$ 
By induction on $n$, we get that inside $({_{\eta'}}\bY') (F_{n-1})$ we have
${\bY'}^{\eta'_{geo}}(k)  \not = \emptyset.$ 
 Thus $\bY(k)^{\eta^{geo}}  \not = \emptyset$
as desired.

\smallskip

\noindent $(d) \Longrightarrow (a)$:
Since
 $$
({_\eta} \bY)(R_n) = \Bigl\{ y \in \bY(\tilde{R}_{n,m}) \, \mid \,  
 \eta^{geo}({\ol \gamma})\, . {^\gamma}y = y
\enskip \forall \gamma \in \tilde{\Gamma}_{n,m} \Bigr\},
$$
 the inclusion $\bY(k) \subset \bY(\tilde{R}_{n,m})$ yields the  inclusion
$$
(\bY^{\eta^{geo}})(k) \subset ({_\eta}\bY)(R_n).
$$
Thus if $(\bY^{\eta^{geo}})(k) \not = \emptyset$, then
$({_\eta }\bY)(R_n) \not = \emptyset$.

\smallskip

\noindent (2) The quotient $\bG/\bS$ is representable by Chevalley's theorem \cite[\S III.3.5]{DG}.
The only non trivial implication is 
$(c) \Longrightarrow (a).$ Let $\bX= (\bG/\bS) \times_k R_n.$
By (1), we have
${_\eta}\bY(K_n) \not = \emptyset$.
In other words, the $K_n$-homogeneous space
${_\eta}\bX$ under ${_\eta}\bG$
has a $K_n$-rational point on the compactification ${_\eta}\bY$.
By Florence's theorem \cite{F}, ${_\eta\bX}(K_n) \not= \emptyset$, hence
$(a)$. \end{proof}

\subsection{Case of flag varieties}

The $k$--group $\bG^0/R_u(\bG)$ is  reductive.
Let $\bT$ be a maximal $k$--torus of $\bG^0/ R_u(\bG)$.
This data permits to choose   a basis $\Delta$ of the root system
$\Phi(\bG^0/R_u(\bG) \times_k {\ol k}, \bT \times_k {\ol k})$
or in other words to choose a Borel 
 subgroup $\bB$ of the $\ol{k}$--group $\bG^0/R_u(\bG) \times_k {\ol k}$.
It is well known that there is a one-to-one correspondence between
the  subsets of $\Delta$ and 
the parabolic subgroups of $\bG^0 \times_k \ol k$ containing
$\bB$, which is provided by the standard parabolic subgroups
 $(\bP_I)_{I \subset \Delta}$ of $\big(\bG^0/ R_u(\bG)\big) \times_k \ol k$ \cite[\S 21.11]{Bo}.
We have $\bP_\Delta= \big(\bG^0/ R_u(\bG)\big) \times_k \ol k$ and $\bP_\emptyset= \bB$.
Furthermore we
 know that each parabolic subgroup of $\big(\bG^0/ R_u(\bG)\big)\times_k \ol{k}$ is  $\big(\bG^0/ R_u(\bG)\big)(\ol k)$--conjugate to a unique standard parabolic subgroup. This allows us to define the {\it type}
of an arbitrary   parabolic
subgroup
of $\bG^0/ R_u(\bG)$. It can happen that 
two different standard parabolic 
subgroups
of the $\ol k$--group $\big(\bG^0/ R_u(\bG)\big) \times_k \ol k$ are conjugate under $\bG(\ol k)$:
There are in general fewer conjugacy classes of parabolic subgroups.
If $\bP$ is a parabolic subgroup of the $k$--group $\bG^0/R_u(\bG)$,
we denote by $\bN_\bG(\bP)$ its normalizer for the conjugacy  action  
of $\bG$ on $\bG^0/R_u(\bG)$.

\begin{lemma}\label{proj}
The quotient $\bG/\bN_\bG(\bP)$ is a projective $k$--variety.
\end{lemma}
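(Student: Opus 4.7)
The plan is to identify $\bG/\bN_\bG(\bP)$ with a $\bG$-orbit inside the Grothendieck--Demazure scheme of parabolic subgroups $\mathrm{Par}(\bG^0/R_u(\bG))$, which by [XXVI.3] is a smooth projective $k$-scheme, and then to show that this orbit is a union of connected components of $\mathrm{Par}(\bG^0/R_u(\bG))$, hence closed.

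First I would reduce to the reductive case. Set $\bar{\bG} := \bG/R_u(\bG)$ and $\bar{\bG}^0 := \bG^0/R_u(\bG)$, so that $\bar{\bG}^0$ is reductive and $\bar{\bG}/\bar{\bG}^0 \simeq \bG/\bG^0$ is finite étale over $k$. Since $\bG^0$ is normal in $\bG$ and $R_u(\bG) = R_u(\bG^0)$ is characteristic in $\bG^0$, the subgroup $R_u(\bG)$ is normal in $\bG$; moreover it acts trivially on $\bar{\bG}^0$ by conjugation, so $R_u(\bG) \subset \bN_\bG(\bP)$ and $\bN_\bG(\bP)$ is the preimage of $\bN_{\bar{\bG}}(\bP)$ under $\bG \to \bar{\bG}$. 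Consequently there is a canonical $\bG$-equivariant isomorphism
\[
\bG/\bN_\bG(\bP) \;\simeq\; \bar{\bG}/\bN_{\bar{\bG}}(\bP),
\]
so it suffices to establish projectivity of the right-hand side.

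Next, the conjugation action of $\bar{\bG}$ on $\bar{\bG}^0$ induces an algebraic action of $\bar{\bG}$ on $\mathrm{Par}(\bar{\bG}^0)$. By definition, the scheme-theoretic stabilizer of the $k$-point $\bP$ of $\mathrm{Par}(\bar{\bG}^0)$ under this action is precisely $\bN_{\bar{\bG}}(\bP)$, so the orbit morphism factors through an immersion
\[
\bar{\bG}/\bN_{\bar{\bG}}(\bP) \;\hookrightarrow\; \mathrm{Par}(\bar{\bG}^0),
\]
whose image is the $\bar{\bG}$-orbit $\bar{\bG}\cdot\bP$.

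The last step is to show that $\bar{\bG}\cdot\bP$ is a closed subscheme of $\mathrm{Par}(\bar{\bG}^0)$. By [XXVI.3], $\mathrm{Par}(\bar{\bG}^0)$ decomposes into a disjoint union of the smooth projective connected components $\mathrm{Par}_t(\bar{\bG}^0)$ indexed by the types $t$ of parabolics, and each $\mathrm{Par}_t(\bar{\bG}^0)$ is a single $\bar{\bG}^0$-orbit isomorphic to $\bar{\bG}^0/\bP_t$ (this uses the self-normalization of parabolics in the connected reductive group $\bar{\bG}^0$). Thus the $\bar{\bG}^0$-orbit of $\bP$ is the whole connected component of $\mathrm{Par}(\bar{\bG}^0)$ containing $\bP$. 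Because $\bar{\bG}/\bar{\bG}^0$ is finite, the orbit $\bar{\bG}\cdot\bP$ is a \emph{finite} union of $\bar{\bG}^0$-orbits, hence a finite union of connected components of $\mathrm{Par}(\bar{\bG}^0)$. This is a closed (and open) subscheme of the projective $k$-scheme $\mathrm{Par}(\bar{\bG}^0)$, so $\bG/\bN_\bG(\bP)\simeq\bar{\bG}\cdot\bP$ is projective, as desired.

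There is no serious obstacle here; the only point requiring some care is the verification that the orbit map really identifies $\bar{\bG}/\bN_{\bar{\bG}}(\bP)$ with its image as a subscheme of $\mathrm{Par}(\bar{\bG}^0)$, but this is standard once one invokes the schematic framework of [XXVI].
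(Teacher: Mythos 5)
Your proof is correct, but it takes a genuinely different route from the paper's. The paper's argument (after the same reduction to $\bG^0$ reductive) exploits the quotient morphism $\bG/\bP \to \bG/\bN_\bG(\bP)$: this is a torsor under the finite group $\bN_\bG(\bP)/\bP$, and since $\bG/\bP$ is projective (being a finite union of translates of the flag variety $\bG^0/\bP$), étale descent of properness \cite[prop.\ 2.7.1]{EGAIV} shows that $\bG/\bN_\bG(\bP)$ is proper; combined with the quasiprojectivity coming from Chevalley's theorem this gives projectivity. You instead embed $\bar{\bG}/\bN_{\bar{\bG}}(\bP)$ into the Grothendieck--Demazure scheme of parabolics and identify it with a finite union of its connected components, which is closed and hence projective. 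Both arguments ultimately hinge on the same ingredients from [XXVI] (projectivity of flag varieties, self-normalization of parabolics, finiteness of $\bG/\bG^0$), but the paper works \emph{over} $\bG/\bN_\bG(\bP)$ by descent along a finite cover, whereas you work \emph{inside} a known projective ambient scheme and show closedness; your approach also makes smoothness of $\bG/\bN_\bG(\bP)$ immediate, while the paper's makes the relation to $\bG/\bP$ transparent. One small point worth spelling out in your step 4: the connected components $\mathrm{Par}_t$ of the scheme of parabolics over a non-algebraically closed $k$ are indexed by Galois-invariant types (sections of a twisted constant sheaf), but since your $\bP$ is a parabolic defined over $k$, its type is automatically Galois-invariant and the component $\mathrm{Par}_t$ through $\bP$ is indeed $\bar{\bG}^0/\bP$, so this causes no trouble.
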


\begin{proof} We can assume that $\bG^0$ is reductive.
Since $\bG^0/\bP$ is projective and is a connected component of  $\bG/\bP$,
$\bG/\bP$ is projective as well.
The point is that the morphism $\bG/\bP \to \bG/\bN_\bG(\bP)$
is a $\bN_\bG(\bP)/ \bP$--torsor. Since the affine $k$--group 
$\bN_\bG(\bP)/ \bP$ is finite, \'etale descent tells us that 
$\bG/\bN_\bG(\bP)$ is proper \cite[prop. 2.7.1]{EGAIV}.
But   $\bG/\bN_\bG(\bP)$ is quasiprojective, hence projective. 
\end{proof}

Given a loop cocycle  $\eta: \pi_1(R_n) \to \bG(\ol k)$ with coordinates
$(z, \eta^{geo})$ as before, we focus on the special case of 
  flag varieties  of parabolic
subgroups of $\bG^0/R_u(\bG)$.

\begin{corollary}\label{flag}
\begin{enumerate}
\item Let $I \subset \Delta$. The following are equivalent:

\begin{enumerate}
\item The $R_n$--group ${_\eta}\big(\bG^0/ R_u(\bG)\big)$ admits a parabolic 
subgroup scheme of type $I$;
\item The $R_n$--group ${_\eta}\big(\bG^0/ R_u(\bG)\big)_{R_n}$ admits a parabolic 
subgroup of type $I$;
\item The $F_n$--group ${_\eta}\big(\bG^0/ R_u(\bG)\big)_{F_n}$ admits a parabolic subgroup   of type $I$;

\item There exists a parabolic subgroup 
$\bP$ of the $k$--group$ _z(\bG^0/ R_u(\bG))$ which is of type $I$ and
which is normalized by $\eta^{geo}$, i.e.,
$\eta^{geo}$ factorizes through $\bN_{_z\bG}(\bP)$.
\end{enumerate}

\item  The following are equivalent:
\begin{enumerate}
\item ${_\eta\big(\bG^0/ R_u(\bG)\big)}_{R_n}$ is irreducible
 (i.e has no proper parabolic subgroups);
\item ${_\eta\big(\bG^0/ R_u(\bG)\big)}_{K_n}$ is irreducible;
\item ${_\eta\big(\bG^0/ R_u(\bG)of type \big)}_{F_n}$ is irreducible;
\item The $k$--group homomorphism $\eta^{geo}: {_\infty  \bmu}^n \to
{_z \bG} \to \bAut({_z \bG}^0)$ is irreducible (see \S \ref{sec-irr}).
\end{enumerate}

\item  The following are equivalent:
\begin{enumerate}
\item ${_\eta  \big(\bG^0/ R_u(\bG)\big)}_{R_n}$ is anisotropic;
\item ${_\eta  \big(\bG^0/ R_u(\bG)\big)}_{K_n}$ is anisotropic;
\item ${_\eta  \big(\bG^0/ R_u(\bG)\big)}_{F_n}$ is anisotropic;
\item The $k$--group homomorphism $\eta^{geo}: {_\infty \bmu}^n \to
{_z \bG} \to  \bAut({_z \bG}^0)$ is anisotropic (see \S \ref{sec-irr}).
\end{enumerate}

\end{enumerate}

\end{corollary}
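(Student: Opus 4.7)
The plan is to reduce each part to Theorem \ref{godown} applied to a suitable projective flag variety, together with the structural material of \S\ref{sec-irr}.

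For part (1), since $R_u(\bG)$ is a characteristic subgroup acting trivially by conjugation on $\bG^0/R_u(\bG)$, I may assume without loss of generality that $\bG^0$ is reductive. Fix a parabolic subgroup $\bP$ of $\bG^0$ of type $I$ and set $\bY := \bG/\bN_\bG(\bP)$; by Lemma \ref{proj} this is a projective $k$-variety carrying the natural $\bG$-action by left translation. By Galois descent, sections of $_\eta\bY$ over $R_n$ (respectively over $K_n$, $F_n$) are in bijection with $R_n$- (respectively $K_n$-, $F_n$-) parabolic subgroup schemes of $_\eta(\bG^0/R_u(\bG))$ of type $I$. Applying Theorem \ref{godown}(1) to $(\bG,\bY,\eta)$ directly yields the equivalences $(a)\iff(b)\iff(c)\iff {(_z\bY)}^{\eta^{geo}}(k)\neq\emptyset$. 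A $k$-point of $_z\bY$ is a parabolic $\bP$ of $_z(\bG^0/R_u(\bG))$ of type $I$ with stabilizer $\bN_{_z\bG}(\bP)$, so the fixed-point condition under $\eta^{geo}$ is precisely that $\eta^{geo}$ factorizes through $\bN_{_z\bG}(\bP)$, which is condition (d).

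For part (2), irreducibility at the $R_n$, $K_n$ or $F_n$ level is equivalent (by XXVI.2.3, our bases being affine) to the absence of any proper parabolic subgroup, i.e.\ to the conjunction over all $I\subsetneq \Delta$ of the negation of part (1); this makes $(a)\iff(b)\iff(c)$ immediate. For (d), I would argue that if $\eta^{geo}$ normalizes a proper parabolic $\bP$ of $_z\bG^0$, then it acts on the set of Levi subgroups of $\bP$, which is a torsor under $R_u(\bP)(\overline k)$; because $_\infty\bmu^n$ is diagonalizable and $R_u(\bP)$ is unipotent in characteristic zero, the vanishing of the relevant nonabelian cohomology produces a Levi subgroup fixed by $\eta^{geo}$. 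Thus the action of $\eta^{geo}$ is reducible in the sense of \S\ref{sec-irr} exactly when it normalizes a proper parabolic, and the equivalence $(c)\iff(d)$ follows from part (1)(d) applied to each $I\subsetneq\Delta$.

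For part (3), since $R_n$ is normal and $K_n$, $F_n$ are fields (hence semi-local and connected), the criterion recalled in \S\ref{sec-irr} reduces anisotropy at each level to irreducibility together with anisotropy of the radical torus. Part (2) already handles the irreducibility piece, so what remains is the torus piece. The radical $\rad(_\eta\bG^0)$ is canonically the twist $_\eta\bT$ of the $k$-torus $\bT := \rad(\bG^0)$, and by Remark \ref{series}(c) the identifications $\pi_1(R_n)\cong \Gal(F_n)$, combined with the fact that tori are classified by their cocharacter lattice with continuous Galois action, show that anisotropy of $_\eta\bT$ over any of $R_n$, $K_n$, $F_n$ boils down to the same condition: the non-existence of a non-zero $\pi_1(R_n)$-invariant cocharacter of $\bT$ under the action prescribed by $\eta$. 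Using Lemma \ref{Theta} to decompose $\eta$ as $(z,\eta^{geo})$, such invariant cocharacters correspond exactly to $\G_{m,k}$-subtori of $\rad(_z\bG^0)$ centralized by $\eta^{geo}$, yielding condition (d). The main obstacle will be this last step: carefully tracking the interaction between the semidirect product structure $\pi_1(R_n)\cong \widehat\Z^n\rtimes\Gal(k)$ and the splitting provided by Lemma \ref{Theta}, so that the arithmetic piece $z$ is absorbed into the base $_z\bT$ while the geometric action of $_\infty\bmu^n$ through $\eta^{geo}$ captures precisely the anisotropy condition in the sense of \S\ref{sec-irr}.
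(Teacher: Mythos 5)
Your reduction to Theorem \ref{godown} via a projective flag variety is the right idea and matches the paper's strategy, but the proof of part (1) has a real gap at the very start. You write ``Fix a parabolic subgroup $\bP$ of $\bG^0$ of type $I$'' and then form $\bY = \bG/\bN_\bG(\bP)$. After twisting by the arithmetic part $z$, however, the $k$--group $_z\bG^0$ need not possess any parabolic subgroup of type $I$ over $k$; types are defined with respect to a Borel of the $\overline k$--group, and over $k$ only some types are realized. If no such $\bP$ exists, then $\bY$ is not defined and Theorem \ref{godown} cannot be invoked; yet it is exactly condition (c) that one must use to \emph{prove} the existence of such a $\bP$. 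The paper does this by extending scalars along $F_n \hookrightarrow F_{n,m}$, where the twist is split, and then descending the existence of a type-$I$ parabolic from $F_{n,m}$ back to $k$ using a specialization argument over iterated Laurent series fields (\cite[lemma 5.24]{CGP}). That step is not a formality and is missing from your argument.

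A second, smaller imprecision in part (1): you claim that $R_n$--sections of $_\eta\bY$ are ``in bijection with $R_n$--parabolic subgroup schemes of $_\eta(\bG^0/R_u(\bG))$ of type $I$.'' When $\bG$ is disconnected the quotient $\bG/\bN_\bG(\bP)$ has connected components other than $\bG^0/\bP$ on which conjugation may permute types, so the bijection does not hold as stated. What is actually used in the paper is the reduction-to-a-subgroup dictionary: $_\eta\bX(F_n)\neq\emptyset$ if and only if $[\eta]_{F_n}$ lies in the image of $H^1\big(F_n,\bN_\bG(\bP)\big)\to H^1(F_n,\bG)$, and combining this with the transitivity of $\bG(k)$ on $\bX(k)$ (which follows from the injectivity of $H^1\big(k,\bN_\bG(\bP)\big)\to H^1(k,\bG)$, cf.\ \cite[cor.~2.7.2]{Gi4}) produces the factorization of $\eta^{geo}$ through a normalizer. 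You should replace the descent-bijection claim by this cohomological argument.

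Once part (1) is repaired, your parts (2) and (3) are substantively correct and follow the paper's route. The extra digression about $\eta^{geo}$ acting on Levi subgroups in part (2) is unnecessary: the equivalence $(c)\iff(d)$ there is the conjunction over $I\subsetneq\Delta$ of the negations in part (1), and the definitions in \S\ref{sec-irr} already allow one to pass between normalizing a proper parabolic and the reducibility of the $\eta^{geo}$--action. Your reduction in part (3) to the torus $\rad(_z\bG^0)$, and the identification of invariant cocharacters via Lemma \ref{looptwist}, is exactly the paper's argument.
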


\begin{proof} Without loss of generality, we can factor out by $R_u(\bG)$
and assume that $\bG^0$ is reductive.
As in the proof of Theorem \ref{godown}, we 
can assume by twisting that $z$ is trivial and
reason ``at the finite level":

\begin{claim}\label{killed} There exists a positive integer $m$ such that
$[\eta] \in H^1(R_n, \bG)$ is trivialized by the base change $R_{n,m}/R_n$. 
\end{claim}
 
Indeed by continuity $\eta^{geo}: {_\infty\bmu}^n \to \bG$ factors through a morphism $f:  {\bmu}^n_m \to \bG$ and
$[\eta]= f_*[ \bE_{n,m}]$ where $\bE_{n,m}= \Spec(R_{n,m})/ \Spec(R_n)$
stands for the standard    ${\bmu}^n_m$-torsor.
In particular, the class $[\eta] \in H^1(R_n, \bG)$ is trivialized by
the covering $R_{n,m}/R_n$  as above.

\smallskip

\noindent (1) $(a) \Longrightarrow (b) \Longrightarrow (c)$: obvious.

\smallskip

\noindent $(c) \Longrightarrow (d)$: We assume that  ${_\eta \bG^0}_{F_n}$ 
admits a $F_n$-parabolic subgroup $\bQ$ of type $I$.
Hence \break ${_\eta \bG^0}_{F_n} \times_{F_n} F_{n,m} =  \bG^0_{F_{n,m}}$
admits a $F_{n,m}$--parabolic subgroup of type $I$. Since $F_{n,m}$ is an
iterated Laurent serie field over $k$, 
it implies that  $\bG^0$ admits a parabolic subgroup $\bP$ of type $I$  
 (see the proof of \cite[lemma 5.24]{CGP}). We consider 
the  $R_n$--scheme
 $\bX:={ _\eta \big(\bG/ \bN_\bG(\bP)\big)} \times_k R_n$ 
which by  descent considerations \cite[2.7.1.vii]{EGAIV} is proper since $\bG/ \bN_\bG(\bP)$ is.

\begin{claim}
 $\bX(F_n) \not = \emptyset$. 
\end{claim}

The $F_n$--group ${_\eta \bG}^0/F_n$ admits a subgroup $\bQ$
such that  ${\bQ \times_k \ol{F}_n}$ is 
 $\bG^0(\ol F_n)$-conjugate to  ${\bP \times_k \ol{F}_n} \subset \bG^0 \times_k \ol{F}_n$.
Let  $g \in \bG({\ol F_n})$ be such that 
$\bQ \times_{F_n}{\ol F_n} =   g \,(\bP  \times_{k}{\ol F_n}) \, g^{-1}$.
As in  \cite[III.2, lemme 1]{Se}, we check that  the cocycle 
$\gamma \mapsto g^{-1} \, \eta_\gamma \, ^\gamma g$ is cohomologous to
$\eta$ and has value in  $\bN_\bG(\bP)({\ol F_n})$. 
In other words, 
the $F_n$--torsor corresponding to $\eta$ admits a 
reduction to the subgroup $\bN_\bG(\bP),$ i.e.
$$
[\eta] \in {\rm Im} \Bigl(   H^1\big(F_n, \bN_\bG(\bP)\big) \to   H^1(F_n, \bG)   \Bigr) .
$$
 This implies that  $\bX(F_n) \not = \emptyset$ ({\it ibid}, I.5, prop. 37)
and the Claim is proven.

 By Theorem \ref{godown}.1, we have
$(\bX^{\eta^{geo}})(k) \not = \emptyset$, so that there exists an element $x \in (\bX^{\eta^{geo}})(k)$. 
Since $ H^1\big(k, \bN_\bG(\bP)\big)$ injects in $H^1(k,\bG)$ (see \cite[cor. 2.7.2]{Gi4}), we have 
$\bX(k)= \bG(k)/ \bN_\bG(\bP)(k)$, i.e. $\bX(k)$ is homogeneous under 
$\bG(k)$. 

Hence there exists $g \in \bG(k)$ such that $x= g.x_0$ where $x_0$ stands for 
the image of $1$ in $X(k)$. 
We have
$$
\eta^{geo}(\bmu^n_m(\ol k)) \subset \Stab_{ \bG(\ol k)}(x)g \, \Stab_{ \bG(\ol k)}(x_0) g^{-1}
=g\bN_\bG(\bP)(\ol k) g^{-1}= \bN_\bG(g \bP g^{-1} )(\ol k) .
$$ 
Thus $\eta^{geo}$ normalizes a parabolic subgroup  of type $I$ of the $k$--group of $\bG^0.$

\smallskip

\noindent $(d) \Longrightarrow (a)$: We may assume that $\eta$ has values in
$\bN_\bG(\bP)(\ol k)$. In that case, the twisted $R_n$--group scheme ${_\eta\bG}^0$
admits the parabolic subgroup  ${_\eta\bP}/R_n$.

\smallskip

\noindent (2)  Follows of (1).

\smallskip

\noindent (3) Recall that a $k$--group $\bH$ with
reductive connected component of the identity  $\bH^0$ is anisotropic if and only if
it is irreducible  and its connected center $\bZ(\bH^0)^0$
is an anisotropic torus.  Statement (3) reduces then to the case
where $\bG^0$ is a $k$--torus $\bT$. We are then given a continuous action
of $\pi_1(R_n)$ on the cocharacter group $\widehat \bT^0(\ol k)$.
It is convenient to work with the opposite assertions  to $(a)$, $(b)$ $(c)$and $(d),$ which we denote by $(a')$,
 $(b')$ $(c')$ and $(c')$ respectively.

\smallskip

\noindent $(a') \Longrightarrow (b')$: If the $R_n$--torus ${_\eta}\bT:= {_\eta}\bT_{R_n}$ is isotropic, so
is the $K_n$--torus $_\eta\bT \times_{R_n} {K_n}$.
\smallskip

\noindent $(b') \Longrightarrow (c')$: If the $K_n$--torus $_\eta\bT \times_{R_n} K_n$ is isotropic, so
is the $F_n$--torus $_\eta\bT \times_{R_n} {F_n}$.

\smallskip

\noindent $(c') \Longrightarrow (d')$: By Lemma \ref{looptwist}
we have
$$
\Hom_{F_n-gr}( \bG_m, {_\eta\bT}_{F_n})  = \Hom_{F_n-gr}( \bG_m, {\bT}_{F_n})^{\eta_{geo}}.
$$
If $_\eta\bT$ is isotropic, then this group is not zero and
the $k$--group morphism $\eta^{geo} : {_\infty \bmu}^n \to {_z\bG}$ fixes a cocharacter of
${\bT} = ({\bG})^0$, hence $(c')$. 

\smallskip

\noindent $(d') \Longrightarrow (a')$:  We assume that
the morphism $\eta^{geo} :  {_\infty \bmu}^n  \to {\bG}$ fixes a cocharacter $\lambda : \bG_m \to 
{\bT}$. Since

$$
\Hom_{K_n-gr}( \bG_m, {_\eta\bT}_{K_n})  \simeq \Hom_{K_n-gr}( \bG_m, {\bT}_{K_n})^{\eta_{geo}}.
$$
it follows that $\lambda$ provides a non-zero cocharacter of
$_\eta\bG_{K_n}$, hence $(a')$.

\end{proof}

As in the case of loop torsors \cite[cor. 3.3]{GP2}, 
the Borel-Tits theorem has the following consequence.

\begin{corollary}\label{index}
The minimal elements (with respect to  inclusion) of the set of
parabolic subgroups of the $k$--group ${_z\bG}^0$ which are normalized by $\eta^{geo}$ are
all conjugate under $_z\bG^0(k)$. The type $I(\eta)$ of this
conjugacy class is the Witt-Tits index of the $F_n$--group
\break $_\eta \big( \bG^0/R_u(\bG)\big) \times_{R_n} F_n$.
\end{corollary}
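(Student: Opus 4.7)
The plan is to reduce to the pure geometric loop case by twisting and then invoke Corollary~\ref{flag} together with the classical Witt--Tits theorem, exactly paralleling the proof of \cite[cor.\,3.3]{GP2} cited at the head of the paragraph.

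First, using the torsion bijection from Lemma~\ref{looptwist}, I will replace $\eta$ by $\tau_z^{-1}(\eta)$ to reduce to the case where the arithmetic part is trivial. In that case $_z\bG = \bG$ and $\eta^{geo}\colon {_\infty\bmu^n} \to \bG$ is a genuine $k$--group homomorphism. Both the $\bG^0(k)$--conjugacy classes of minimal $\eta^{geo}$--normalized parabolics and the Witt--Tits index of $_\eta(\bG^0/R_u(\bG)) \times_{R_n} F_n$ transform compatibly under this twist, so the reduction is legitimate.

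Second, I will match the types. Corollary~\ref{flag}(1) asserts that for each $I \subset \Delta$, a parabolic subgroup $\bP \subset \bG^0$ of type $I$ normalized by $\eta^{geo}$ exists if and only if the $F_n$--group $_\eta(\bG^0/R_u(\bG)) \times_{R_n} F_n$ admits a parabolic subgroup of type $I$. On the $F_n$--side this set of types is an upper set whose unique minimum, by the classical Witt--Tits theorem for reductive groups over fields, is the Witt--Tits index $I(\eta)$. Transporting back through Corollary~\ref{flag}(1), the minimal $\eta^{geo}$--normalized parabolics of $\bG^0$ all share this common type $I(\eta)$, which is the second assertion of the corollary.

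Third --- the crux of the proof --- I must show that any two such minimal parabolics $\bP_1, \bP_2$ of type $I(\eta)$ are $\bG^0(k)$--conjugate. They correspond to two $k$--points of the projective $k$--variety $\bY = \bG^0/\bN_{\bG^0}(\bP_1)$ (projective by Lemma~\ref{proj}) which are fixed under the $_\infty\bmu^n$--action through $\eta^{geo}$. By Theorem~\ref{godown}.2 applied with $\bS = \bN_{\bG^0}(\bP_1)$ and compactification $\bY$, these fixed $k$--points correspond to reductions of the $F_n$--torsor $[\eta]_{F_n}$ to $\bN_{\bG^0}(\bP_1)$. Over $F_n$ the Witt--Tits theorem guarantees that the two minimal $F_n$--parabolics ${_\eta\bP_1} \times_{R_n} F_n$ and ${_\eta\bP_2} \times_{R_n} F_n$ lie in a single $_\eta\bG^0(F_n)$--orbit, so these two reductions coincide; tracing back via the fixed-point equivalence of Theorem~\ref{godown}.1 then yields the required element of $\bG^0(k)$ conjugating $\bP_1$ to $\bP_2$. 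The main obstacle is precisely this descent from $F_n$--conjugacy to $k$--conjugacy, which is exactly what Theorem~\ref{godown} was designed to address, so all the necessary machinery is already in place; care is needed only with the fixed-point bookkeeping in the transporter scheme.
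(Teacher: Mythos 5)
Your step 1 is fine and your invocation of Corollary~\ref{flag} and Witt--Tits over $F_n$ in step 2 is the right tool, but step 3 contains a genuine gap, and step 2 sweeps a smaller one under the rug. Theorem~\ref{godown} is an equivalence of \emph{non-emptiness} conditions; it gives no bijection between $k$--rational fixed points and $F_n$--reductions, and certainly no functorial mechanism to ``trace back'' an $F_n$--conjugating element $h\in{_\eta\bG}^0(F_n)$ to an element of $\bG^0(k)$. So the final sentence of your step 3 does not actually produce $g\in\bG^0(k)$ with $g\bP_1 g^{-1}=\bP_2$. In fact no fixed-point machinery is needed at this stage: once it is established that every inclusion--minimal $\eta^{geo}$--normalized parabolic has type $I(\eta)$, the parabolics $\bP_1,\bP_2$ are conjugate over $\ol k$ (same type), and the Borel--Tits theorem --- two parabolic $k$--subgroups of a connected reductive $k$--group that are conjugate over $\ol k$ are already conjugate under $\bG^0(k)$, equivalently $\ker\bigl(H^1(k,\bP_1)\to H^1(k,\bG^0)\bigr)$ is trivial --- gives the conclusion directly. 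This is exactly the Borel--Tits input the paper cites just before the statement; the detour through Theorem~\ref{godown} is both unnecessary and, as you have written it, incomplete.

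The smaller gap in step 2 is that knowing the set of types admitting an $\eta^{geo}$--normalized parabolic equals $\{I: I\supseteq I(\eta)\}$ does not by itself show that an inclusion--minimal such $\bP$ has type exactly $I(\eta)$; a priori $\bP$ could have type $I\supsetneq I(\eta)$ while containing no smaller $\eta^{geo}$--normalized parabolic. One must rule this out, e.g.\ by applying Corollary~\ref{flag}(1) with $\bG$ replaced by $\bN_\bG(\bP)$, through which $\eta^{geo}$ factors: the Levi quotient of ${_\eta\bP}\times_{R_n}F_n$ inherits a parabolic of type corresponding to $I(\eta)$ from the ambient $F_n$--group, so Corollary~\ref{flag} produces an $\eta^{geo}$--normalized parabolic of type $I(\eta)$ contained in $\bP$, contradicting minimality unless $I=I(\eta)$.
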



\subsection{Anisotropic loop torsors}

For anisotropic loop classes, we have the following
beautiful picture.

\begin{theorem}\label{k-aniso} Assume that  $\bG^0$ is reductive.
Let $\eta, \eta' : \pi_1(R_n) \to
\bG(\ol k)$ be two loop cocycles.
Assume that $({_\eta}\bG)_{F_n}$ is anisotropic.
 Then the following are equivalent:

\begin{enumerate}

\item $[\eta] = [\eta'] \in H^1(R_n, \bG)$,

\item $[\eta]_{K_n} = [\eta']_{K_n}  \in H^1(K_n, \bG)$,

\item $[\eta]_{F_n} = [\eta']_{F_n}  \in H^1(F_n, \bG)$.
\end{enumerate}

\end{theorem}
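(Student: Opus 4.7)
The implications $(1)\Rightarrow(2)\Rightarrow(3)$ are immediate from the functoriality of Galois cohomology under the base changes $R_n \hookrightarrow K_n \hookrightarrow F_n$; the real content lies in $(3)\Rightarrow(1)$.

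I would apply the torsion trick. Set $\gG = {_\eta\bG}$, whose identity component is loop reductive by Corollary~\ref{corjo}. The torsion bijection $\tau_\eta \colon H^1(R_n,\gG) \xrightarrow{\sim} H^1(R_n,\bG)$ commutes with the base change to $F_n$ and sends the trivial class to $[\eta]$. Setting $[\xi] = \tau_\eta^{-1}([\eta'])$, the hypothesis translates into $[\xi]_{F_n} = 0$ in $H^1(F_n,\gG)$, and the goal becomes $[\xi] = 0$ in $H^1(R_n,\gG)$. Because $\eta$ and $\eta'$ take values in $\bG(\overline{k})$, the explicit formula for the torsion exhibits $\xi$ as a loop cocycle of $\gG$ (see Remark~\ref{klooptorsion}). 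Moreover, since $({_\xi\gG})_{F_n} \simeq \gG_{F_n}$ is anisotropic by the hypothesis, Corollary~\ref{flag}(3) ensures that ${_\xi\gG}$ is anisotropic already over $R_n$ and that the geometric part $\xi^{geo}$ acts anisotropically on the relevant $k$-group.

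By Theorem~\ref{jo} and Lemma~\ref{toral}(1), $[\xi]$ lifts to a class $[\xi'] \in H^1(R_n, \bN_\gG(\bT))$ for a maximal torus $\bT \subset \gG$ (existing by Proposition~\ref{existenceoftori}). The anisotropy of $\gG_{F_n}$ forces $\bT_{F_n}$, and hence $\bT$ (via Corollary~\ref{flag}(3)), to be anisotropic. Since maximal tori in an anisotropic reductive group over a field are rationally conjugate, the element of $\gG(\overline{F_n})$ witnessing $[\xi]_{F_n}=0$ can be modified to normalize $\bT$, giving $[\xi']_{F_n}=0$ in $H^1(F_n,\bN_\gG(\bT))$. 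One then compares $H^1(R_n,\bN_\gG(\bT))$ and $H^1(F_n,\bN_\gG(\bT))$ via the exact sequence $1\to\bT\to\bN_\gG(\bT)\to\bW\to 1$: the $\bT$-part is handled by Lemma~\ref{tori1}, which gives the bijectivity $H^1(R_n,\bT)\simeq H^1(F_n,\bT)$ for the anisotropic torus $\bT$, while the $\bW$-part is the same over $R_n$ and $F_n$ through the isomorphism $\pi_1(R_n)\simeq\Gal(F_n)$ of Remark~\ref{series}(c) (since $\bW$-torsors correspond to finite \'etale coverings). Combining these yields $[\xi']=0$ in $H^1(R_n,\bN_\gG(\bT))$, hence $[\xi]=0$.

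The main obstacle I foresee is the descent step from $\bN_\gG(\bT)$-cohomology over $F_n$ to $R_n$: while the $\bT$-level is controlled by Lemma~\ref{tori1} and the $\bW$-level by the equivalence between finite \'etale covers of $R_n$ and $F_n$, assembling these into a diagram chase which genuinely yields triviality (as opposed to merely a matching modulo the Weyl group) requires careful book-keeping with the various $\xi$-twists and a verification that the loop cocycle structure is respected throughout. The anisotropy of $\bT$ is precisely what makes this assembly work, since it rigidifies the $\bT$-cohomology under the base change to $F_n$.
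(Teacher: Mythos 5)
Your proposal diverges from the paper's route at the outset and, more importantly, has a gap that I don't believe can be closed along the lines you indicate.

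The paper proves $(3)\Rightarrow(1)$ by decomposing $\eta=(z,\eta^{geo})$ and $\eta'=(z',\eta'^{geo})$ into arithmetic and geometric parts, showing $[z]=[z']$ via Lemma~\ref{bateau} (which uses triviality of $\ker(H^1(k,\bH)\to H^1(F_n,\bH))$), reducing to the purely geometric Theorem~\ref{k-aniso-bis}, and then proving \emph{that} theorem by an induction on $n$ via a Bruhat--Tits building argument (uniqueness of the fixed point in the building of the anisotropic group, specialization at $t_n=0$, Florence's theorem to descend conjugation to $k$-points). Your proposal takes a different path: you twist by the full cocycle $\eta$ rather than its arithmetic part, lift $[\xi]$ to $H^1(R_n,\bN_\gG(\bT))$ for an anisotropic maximal torus $\bT$ of $\gG$, and attempt a cohomological comparison between $R_n$ and $F_n$ on the $\bT$- and $\bW$-pieces. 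You make no use of buildings.

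The specific failure point is the assertion that ``maximal tori in an anisotropic reductive group over a field are rationally conjugate.'' This is false. For instance, take $D$ the degree-$d$ division algebra over $F_2=\C((t_1))((t_2))$ with Brauer invariant $1/d$; the anisotropic group $\SL_1(D)$ has non-conjugate maximal tori coming from non-isomorphic degree-$d$ subfields $F_2(t_1^{1/d})$ and $F_2(t_2^{1/d})$ of $D$ (Skolem--Noether shows non-isomorphic subfields give non-conjugate tori). Consequently your step ``$[\xi']_{F_n}=0$ in $H^1(F_n,\bN_\gG(\bT))$'' is not justified: $[\xi']_{F_n}$ lies in the fiber of $H^1(F_n,\bN_\gG(\bT))\to H^1(F_n,\gG)$ over the trivial class, and that fiber is precisely $\gG(F_n)\backslash(\gG/\bN_\gG(\bT))(F_n)$, which can be nontrivial when maximal tori are not rationally conjugate. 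Without this step, the rest of the argument never gets off the ground. Moreover, even granting it, the ``careful book-keeping'' you acknowledge at the end — combining the bijectivity of the $\bT$- and $\bW$-pieces into injectivity for $\bN_\gG(\bT)$ — is not a routine diagram chase: it is in effect the Acyclicity Theorem for $\bN_\gG(\bT)$, and in the paper's logical architecture the Acyclicity Theorem (Theorem~\ref{acyclic}, Lemma~\ref{irr-case}) is \emph{derived from} Theorem~\ref{k-aniso}, so invoking it would be circular. The content that makes the paper's proof go through, and that your approach is missing, is the Bruhat--Tits fixed-point argument of Theorem~\ref{k-aniso-bis}.
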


We consider first the case of purely geometric
 loop cocycles. Note that this is the set of {\it all} loop cocycles
if $k$ is algebraically closed.

\begin{theorem}\label{k-aniso-bis} 
 Let $\eta, \eta' : \pi_1(R_n) \to
\bG(\ol k)$ be two loop cocycles of the form $\eta= (1, \eta^{geo})$ and
 $\eta'= (1, \eta^{'geo}).$  Assume that $\eta$ is anisotropic.
Then the following are equivalent:

\begin{enumerate}
\item $\eta^{geo}$ and ${\eta'}^{geo}$ are conjugate under $\bG(k)$,

\item $[\eta]= [\eta'] \in H^1(R_n, \bG)$,

\item $[\eta]_{K_n} = [\eta']_{K_n}  \in H^1(K_n, \bG)$,

\item $[\eta]_{F_n} = [\eta']_{F_n}  \in H^1(F_n, \bG)$.
\end{enumerate}

\end{theorem}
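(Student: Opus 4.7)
The implications $(1) \Rightarrow (2) \Rightarrow (3) \Rightarrow (4)$ are formal: for $(1) \Rightarrow (2)$, an element $y \in \bG(k)$ conjugating $\eta^{geo}$ to $\eta'^{geo}$ is $\pi_1(R_n)$-invariant since it is $k$-rational, so the conjugation identity reads $\eta'_\gamma = y^{-1}\eta_\gamma\,{^\gamma y}$ for all $\gamma$, exhibiting $y$ as a coboundary over $R_n$; the remaining implications are base change.

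For the substantive direction $(4) \Rightarrow (1)$, my plan is to reformulate the equality $[\eta]_{F_n}=[\eta']_{F_n}$ as a fixed-point question on a projective variety and to invoke Theorem \ref{godown}.1, applied to the group $\bG \times \bG$. View $\bG$ as a $(\bG \times \bG)$-variety via the two-sided action $(a,b)\cdot g = agb^{-1}$, so that $\bG = (\bG \times \bG)/\Delta(\bG)$. The pair $(\eta,\eta') : \pi_1(R_n) \to (\bG \times \bG)(\overline k)$ is then a loop cocycle of purely geometric type, and an element $g \in \bG(\overline{F_n})$ satisfying $\eta'_\gamma = g^{-1}\eta_\gamma\,{^\gamma g}$ gives precisely an $F_n$-point of the twist ${_{(\eta,\eta')}\bG}$: taking $y = g$, the relation $y = \eta_\gamma \cdot {^\gamma y} \cdot (\eta'_\gamma)^{-1}$ is equivalent to our cocycle identity. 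Hence hypothesis $(4)$ is the assertion ${_{(\eta,\eta')}\bG}(F_n) \neq \emptyset$.

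Next, I will fix a smooth $(\bG \times \bG)$-equivariant projective compactification $\bY \supset \bG$; for $\bG$ semisimple of adjoint type this is the wonderful compactification of De Concini-Procesi, and the reduction of the general case to this one is a devissage through $\bG^0$, $\bG^0/R_u(\bG)$, its adjoint quotient, and the coradical torus (analogous to Lemma \ref{lem-punctural}). Theorem \ref{godown}.1 applied to $\bY$ with the purely geometric cocycle $(\eta,\eta')$ (so $z = 1$) then yields a $k$-point $y \in \bY^{(\eta^{geo},\eta'^{geo})}(k)$, i.e.\ $y \in \bY(k)$ with $\eta^{geo}(h) \cdot y \cdot \eta'^{geo}(h)^{-1} = y$ for every $h \in \bmu_m^n$ (with $m$ large enough that both geometric parts factor through $\bmu_m^n$). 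Once $y$ is shown to lie in the open dense orbit $\bG \subset \bY$, the equation rearranges to $\eta'^{geo}(h) = y^{-1}\eta^{geo}(h)y$, which is assertion $(1)$.

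The hard part will therefore be the boundary analysis: excluding a $k$-point $y$ lying on a proper $(\bG \times \bG)$-orbit of $\bY$. Each such boundary orbit has stabilizers contained in $\bP^- \times \bP$ for a \emph{proper} parabolic $\bP$ of $\bG^0/R_u(\bG)$ and $\bP^-$ its opposite, so a $(\eta^{geo},\eta'^{geo})$-fixed boundary $k$-point would force $\eta^{geo}(\bmu_m^n) \subset \bP^-$, whence $\eta^{geo}$ would normalize the proper parabolic $\bP^-$. This is exactly what Corollary \ref{flag}(2)-(3) forbids under the anisotropy of $\eta$. The main obstacle is running this boundary-stabilizer analysis uniformly: in the clean wonderful-compactification setting it is transparent, but for a general reductive $\bG$ one must both construct a serviceable compactification and verify that boundary $k$-points fixed by the pair of geometric parts indeed descend to parabolics of $\bG^0/R_u(\bG)$ normalized by $\eta^{geo}$.
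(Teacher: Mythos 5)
Your proposal takes a genuinely different route from the paper: the paper proves $(4)\Rightarrow(1)$ by a Bruhat--Tits building argument (using the \emph{enlarged} building of $\bG$ over $\tilde F_{n,m}$, the Bruhat--Tits--Rousseau fixed-point theorem, and an induction on $n$ via specialization at $t_n=0$), whereas you recast the question as a fixed-point problem on an equivariant compactification of $\bG$ viewed as $(\bG\times\bG)/\Delta(\bG)$. Your formal reductions are sound: $(1)\Rightarrow(2)$ via the trivial Galois action on $\bG(k)$ is correct, the identification of $[\eta]_{F_n}=[\eta']_{F_n}$ with ${}_{(\eta,\eta')}\bG(F_n)\neq\emptyset$ is right, and the appeal to Theorem \ref{godown}.1 (or better, \ref{godown}.2 applied to $\bG\times\bG$ and $\bS=\Delta(\bG)$) correctly yields a $k$-point $y\in\bY^{(\eta^{geo},\eta'^{geo})}(k)$. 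For $\bG$ semisimple of adjoint type with the wonderful compactification, the boundary analysis you sketch is also essentially correct: boundary orbits fiber $(\bG\times\bG)$-equivariantly over $\bG/\bP_I^-\times\bG/\bP_I$ with $I\subsetneq\Delta$, so a fixed boundary $k$-point $y$ maps to a $k$-pair $(\bQ^-,\bQ)$ of opposite proper $k$-parabolics and forces $\eta^{geo}(\bmu_m^n)\subset\bQ^-$, contradicting irreducibility via Corollary \ref{flag}.

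The genuine gap is the devissage, which you flag but underestimate. Lemma \ref{lem-punctural} reduces \emph{existence of maximal tori} to the simple simply connected case by exploiting the fact that maximal tori correspond bijectively under central isogenies and Weil restrictions; this has no analogue here, because conjugacy of cocycles $\bmu_m^n\to\bG$ under $\bG(k)$ is not preserved when you replace $\bG^0$ by its adjoint quotient or by its derived subgroup (the fibers of $H^1(\cdot,\bG)\to H^1(\cdot,\bG^0_{ad})$ are controlled by $H^1$ and $H^2$ of the center, which are nontrivial over $R_n$). More concretely, your boundary-stabilizer argument fails outright for the coradical torus factor: for $\bG^0$ a torus $\bT$, any smooth $(\bT\times\bT)$-equivariant compactification of $\bT$ is a toric variety, whose boundary points have stabilizer \emph{all} of $\bT\times\bT$ (there are no proper parabolics of a torus), so a fixed boundary $k$-point gives no contradiction. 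The torus case must instead be handled by a direct cohomological argument (showing $H^1(\pi_1(R_n),\bT(\ol k))\hookrightarrow H^1(F_n,\bT)$ when $\bT$ is anisotropic, using the unique divisibility of $\bT(\ol R_{n,\infty})/\bT(\ol k)$), and combining this with the semisimple part and the disconnectedness of $\bG$ is precisely where the work lies. The paper's enlarged-building approach handles the derived group, the coradical torus, and $\bG/\bG^0$ in one coherent structure (the $V$-factor of $\mathcal{B}_n$ is exactly the torus part), which is why it does not need the wonderful compactification. Your approach would become a genuine alternative proof if you supplied: (i) an explicit cohomological treatment of the anisotropic-coradical factor, and (ii) a correct gluing argument combining it with the adjoint semisimple case — neither of which is a formal consequence of Lemma \ref{lem-punctural}.
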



\begin{proof}  Recall that  $\eta^{geo}, {\eta'}^{geo} : {_\infty\bmu} \to \bG$ are affine $k$--group homomorphisms that factor through the algebraic group $\bmu_m^n$ for $m$ large enough. The meaning of (1) is that there exists $g \in \bG(k)$ such that ${\eta'}^{geo} = \Int(g) \circ {\eta}^{geo}.$

The implications $1) \Longrightarrow 2)
\Longrightarrow 3) \Longrightarrow 4)$ are obvious.
We shall prove the implication  $4) \Longrightarrow 1)$.
Assume, therefore,  that $[\eta]_{F_n}=  [\eta']_{F_n} \in H^1(F_n, \bG)$.

Let $\bT$ be a maximal torus of $\bG^0$ and let $\bN=\bN_\bG(\bT)$ and  $\bW= \bN/ \bT$.
Since the maximal tori of $\bG^0 \times_k \ol{k}$ are all conjugate under $\bG^0(\ol k)$, 
the map $\bN_\bG(\bT) \to \bG/\bG^0$ is surjective.
Let   $\tilde{k}$ be a finite Galois extension 
which contains   $\bmu_m(\ol k)$,    splits $\bT$ and such that the natural map
$\bN(\tilde k) \to ({\bG/\bG^0})(\ol k)$ is surjective. We furthermore assume without 
loss of generality that  our choice of $m$ and $\tilde k$ trivialize $\eta$ and $\eta'.$

Set
$\tilde \Gamma_{n, m}= \bmu_m^n(\ol k) \rtimes \Gal(\tilde{k}/k)$.
In terms of cocycles, our hypothesis  means that there exists
$h_n \in {\bG}(\tilde{F}_{n,m})$ such that
\begin{equation}\label{*}
h_n^{-1} \, \eta(\gamma) \,{^{\gamma }}h_n =  \eta'( \gamma)
\quad \forall \gamma \,  \in \tilde {\Gamma_n}. 
\end{equation}

\noindent Our goal is to show that we can  actually find such an
element inside ${\bG}(k)$.
 We reason by means of a building argument, and
 appeal to Remark \ref{series} to 
view $\tilde{F}_{n,m}$ as a complete local field
with residue field $\tilde{F}_{n-1,m}$.
Note that $F_n= (\tilde{F}_{n,m})^{\tilde \Gamma_{n,m}},$ and that $F_n$ can be viewed as
complete local field with residue field $F_{n-1}$.

Let $\bC= \bG^0/ \bD\bG^0$ be the coradical  of $\bG^0.$ This is a $k$--torus which is
 split by $\tilde k$. Recall that the (enlarged) Bruhat-Tits building  ${\cal B}_n$ 
of the $\tilde{F}_{n,m}$--group $\bG \times_k \tilde{F}_{n,m}$ \cite[\S 2.1]{T2} is defined by
$$
{\cal B}_n
= {\cal B} \times \enskip  V 
$$
where $V=  {\widehat \bC}^0 \otimes_\Z \R$, and ${\cal B}$ is the building of the semisimple $\tilde{F}_{n,m}$--group $\bD\bG^0 \times_k \tilde{F}_{n,m}.$ The building ${\cal B}_n$ is equipped with a natural action of
${\bG}(\tilde{F}_{n,m}) \rtimes \tilde{\Gamma}_{n,m}$.
By  \cite[9.1.19.c]{BT1} the group  $\bD\bG^0(\tilde F_{n-1,m}[[t_n^{1 \over m}]])\, $ fixes a 
unique (hyperspecial) point $\phi_d \in {\cal B}(\bD\bG^0 \times_k \tilde{F}_{n,m})$ and
  ${\rm Stab}_{\bD\bG^0(\tilde F_{n,m})} (\phi_d) = 
\bD\bG^0\bigl(\tilde{F}_{n-1,m}[[t_n^{1 \over m}]] \bigr)$.

Since the bounded group $\bG(\tilde F_{n-1,m}[[t_n^{1 \over m}]])\rtimes \tilde \Gamma_{m,n}$
fixes at least one point of the building ${\cal B}(\bD\bG^0 \times_k \tilde{F}_{n,m})$; such a point is necessarily $\phi_d$ which is then fixed under 
$\bG(\tilde F_{n-1,m}[[t_n^{1 \over m}]])\rtimes \tilde \Gamma_{m,n}$.

\begin{claim}\label{claim5}  There exists a point $\phi=(\phi_d,v) \in {\cal B}_n$ such that 

\begin{enumerate}

\item   $\tilde \Gamma_{m,n}$ fixes $\phi$; 

\item ${\rm Stab}_{\bG(\tilde F_{n,m})} (\phi) = 
\bG\bigl(\tilde{F}_{n-1,m}[[t_n^{1 \over m}]] \bigr)$.

\end{enumerate}
\end{claim}

We use the fact that 
$\bG^0(\tilde F_{n,m})$ acts on  $V$
by  translations under the map
$$
\bG^0(\tilde F_{n,m}) \buildrel  q \over \lgr \bC(\tilde{F}_{n,m}) = {\widehat \bC}^0 \otimes_\Z {\tilde F_{n,m}^\times} \buildrel - {\rm ord}_{t_n} \over \lgr 
 {\widehat \bC}^0.
$$
It follows that for each $v \in V$, we have
$$
{\rm Stab}_{\bG^0(\tilde F_{n,m})}(v) = {\rm Stab}_{\bG^0(\tilde F_{n,m})}(V)= 
q^{-1}\bigl(\bC( \tilde F_{n-1,m}[[t_n^{1 \over m}]]  )\bigr). \leqno{(*)}
$$
Since $q$ maps  $\bG^0( \tilde F_{n-1,m}[[t_n^{1 \over m}]])$ into
$\bC( \tilde F_{n-1,m}[[t_n^{1 \over m}]])$, it follows that 
$\bG^0( \tilde F_{n-1,m}[[t_n^{1 \over m}]])$ fixes pointwise $\phi_d \times V$.

Let us choose now the vector $v$ by considering
 the action of the group $\bN(\tilde k) \rtimes \tilde \Gamma_{m,n}$
on $V$. Since this action is trivial on $\bT(\tilde k)$, it provides an action of the finite group $\bW(\tilde k) \rtimes \tilde \Gamma_{m,n}$ on $V$. But  this action is affine, so 
there is at least one $v \in V$  which is fixed under  $\bN(\tilde k) \rtimes \tilde \Gamma_{m,n}$.
The point $\phi=(\phi_d,v)$ is $\tilde \Gamma_{m,n}$-invariant, hence $(1)$.
We now use  that $\bN(\tilde k)$ surjects onto 
$(\bG/\bG^0)(\tilde k)= (\bG/\bG^0)(\tilde F_{n-1,m}[[t_n^{1 \over m}]]) = (\bG/\bG^0)(\tilde F_{n,m})$, hence that
$$
\bG(\tilde F_{n-1,m}[[t_n^{1 \over m}]])= \bG^0(\tilde F_{n-1,m}[[t_n^{1 \over m}]]) \, . \, \bN\bigl(\tilde k\bigr), \enskip
\bG(\tilde F_{n,m})= \bG^0(\tilde F_{n,m} ) \, . \, \bN\bigl(\tilde k\bigr).
$$
Since $\bN\bigl(\tilde k\bigr)$ fixes $\phi$, we have
$$
{\rm Stab}_{\bG(\tilde F_{n,m})} (\phi) =  {\rm Stab}_{\bG^0(\tilde F_{n,m})} (\phi) \, . \, \bN\bigl(\tilde k\bigr), 
$$
and it remains to show that  $\bG^0(\tilde F_{n-1,m}[[t_n^{1 \over m}]]) = {\rm Stab}_{\bG^0(\tilde F_{n,m})} (\phi)$. 
Since $\bT \times_k \tilde k $ is split, the map $\bT\times_k \tilde k \to \bC\times_k \tilde k$ is split and we have the 
decompositions
$$
\bG^0(\tilde F_{n-1,m}[[t_n^{1 \over m}]])= \bD\bG^0(\tilde F_{n-1,m}[[t_n^{1 \over m}]]) \, . \,
\bT(\tilde F_{n-1,m}[[t_n^{1 \over m}]] ) $$
and
$$
\bG^0(\tilde F_{n,m})= \bD\bG^0(\tilde F_{n,m} ) \, . \, 
\bT( \tilde F_{n,m}) .
$$
The first equality shows that $\bG^0(\tilde F_{n-1,m}[[t_n^{1 \over m}]])$ fixes $\phi$
hence that $\bG^0(\tilde F_{n-1,m}[[t_n^{1 \over m}]]) \subset  {\rm Stab}_{\bG^0(\tilde F_{n,m})} (\phi)$. 

As for the reversed inclusion consider an element $g \in {\rm Stab}_{\bG(\tilde F_{n,m})} (\phi)$. Then $q(g) \in 
\bC( \tilde F_{n-1,m}[[t_n^{1 \over m}]])$. The map
$\bG(\tilde F_{n-1,m}[[t_n^{1 \over m}]]) 
\buildrel  q \over \lgr \bC(\tilde F_{n-1,m}[[t_n^{1 \over m}]])$ is surjective, hence we can assume that $g \in \bD\bG^0(\tilde F_{n,m})$. Since $g. \phi_d=\phi_d$,
$g$ belongs to  $\bD\bG^0\bigl(\tilde{F}_{n-1,m}[[t_n^{1 \over m}]] \bigr)$ as desired.
This finishes the proof of our claim.

\medskip

We consider the twisted action of $ \tilde{\Gamma}_{n,m}$
on  ${\cal B}_n$  defined by 
$$
{\gamma } \, * \,   x = \eta(\gamma) \, . \, {^\gamma} x.
$$

The extension of local fields (with respect to $t_n$)
$\tilde{F}_{n,m}/F_n$ is tamely ramified. The
Bruhat-Tits-Rousseau theorem
states that the Bruhat-Tits  building of
$({_\eta\bG^0})_{F_n}$
can be identified with  ${\cal B}_n^{\tilde \Gamma_{n,m}},$
i.e. the fixed points of the building ${\cal B}_n$ under the twisted action (\cite{Ro} and \cite{Pr}).
But by Corollary  \ref{flag}.3, the  $F_{n-1}((t_n))$--group   ${_\eta\bG}^0 \times_{R_n} F_{n-1}((t_n))$ is anisotropic, so its building consists of a single
point, which is in fact $\phi.$ Indeed since our loop cocycle has value in $G(\tilde k) \rtimes \Gamma_{n,m}$, 
$\phi$ is  fixed under the twisted action of 
$\tilde \Gamma_{n,m}$. This shows that
$$
{\cal B}_n^{ \Gamma_{n,m}} = \{ \phi \}.
$$

We claim that $h_n. \phi=\phi$. We have

\vskip-4mm

\begin{eqnarray} \nonumber
\gamma * (h_n \, . \, \phi) & = & \eta(\gamma) \, \, ^\gamma (h_n) \, . \, {^\gamma \phi} \\  \nonumber 
& = & \eta(\gamma) \, \, ^\gamma (h_n) \, . \,  \phi \qquad 
\hbox{ [$\phi$ is invariant under $\tilde \Gamma_{m,n}$]}  \\  \nonumber 
& = &   h_n \, . \, \eta'(\gamma)  \, \,  \phi \qquad 
\hbox{[relation \enskip \ref{*}]}  \\  \nonumber 
& = &   h_n \, . \,  \phi \qquad 
\hbox{[$\eta'(\gamma) \in \bG(\tilde k)]$ and claim \ref{claim5}]}  \\  \nonumber 
\end{eqnarray}

\noindent for every $\gamma \in  \tilde \Gamma_{n,m}$.   
Hence $h_n \, . \, \phi \in {\cal B}_n^{ \Gamma_{n,m}}$ and therefore
$h_n \, . \, \phi=\phi$ as desired.

It then follows that $h_n \in
 {\bG}(\tilde{F}_{n-1,m}[[t_n^{1 \over m}]])$.
By specializing (\ref{*}) at $t_n=0$, we obtain an element
 $h_{n-1} \in \bG( \tilde{F}_{n-1,m})$ such that
\begin{equation}\label{**}
h_{n-1}^{-1} \, \eta(\gamma  ) \, \ {^\gamma}h_{n-1} =  \eta^{'}(\gamma)
\quad \forall \gamma \in \tilde \Gamma_{n,m}.
\end{equation}
Since $\eta$ and $\eta'$ have  trivial arithmetic part, it follows
that $h_{n-1}$ is invariant under $\Gal(\tilde{k}/k)$.
We apply now the relation (\ref{**}) to the generator
$\tau_n$ of $\Gal\bigl(\tilde{F}_{n,m}/ \tilde{ F}_{n-1,m}((t_n)) \bigr).$
This yields
\begin{equation}\label{***}
h_{n-1}^{-1}\, \eta(\tau_n)   \, h_{n-1} =  \eta'(\tau_n),
\end{equation}
where $\eta(\tau_n),\,  \eta'(\tau_n) \in {\bG}(\tilde{k})$
and $h_{n-1} \in  \bG(F_{n-1,m})=\bG(\tilde{F}_{n-1,m})^{\Gal(\tilde k/k)}$. If we denote by
$\bmu_m^{(n)}$  the last factor
 of $\bmu_m^{n}$ then (\ref{***}) implies that ${\eta^{geo}}_{\mid \bmu_m^{(n)}} $ and ${ \eta^{'geo}}_{\mid \bmu_m^{(n)}}$
are conjugate under $\bG(F_{n-1,m})$.

\begin{claim} 
 ${\eta^{geo}}_{\mid \bmu_m^{(n)}} $ and 
$ {\eta^{'geo}}_{\mid \bmu_m^{(n)}}$ are conjugate  under  $\bG(k)$.
\end{claim}

The transporter $\bX:=\{h \in \bG  \, \mid \,  \Int(h) \circ {\eta^{geo}}_{\mid \bmu_n^{(n)}} = {\eta'^{geo}}_{\mid \bmu_n^{(n)}}
 \}$ is 
a non-empty $k$--variety which is a
 homogeneous space under the group
 $\bZ_{\bG}({\eta^{geo}}_{\mid \bmu_n^{(n)}})$. 
Since $\bX(F_{n-1,m}) \not = \emptyset$ and $F_{n-1,m}$ is an iterated
Laurent series field over $k$,  Florence's theorem  \cite[\S 1]{F}
shows that 
$\bX(k) \not= \emptyset$. The claim is thus proven.

\smallskip

Without loss of generality we may therefore assume that
${\eta^{geo}}_{\mid \bmu_m^{(n)}} =  {\eta^{'geo}}_{\mid \bmu_m^{(n)}}$.
The finite multiplicative $k$--group $ \bmu_m^{(n)}$ acts on $\bG$ via $\eta^{geo}$, and we let $\bG_{n-1}$ denote the $k$--group which is the centralizer of this action \cite[II 1.3.7]{DG}. The connected component of the identity of $\bG_{n-1}$ is   reductive (\cite{Ri}, proposition 10.1.5).
Since the action of $ \bmu_m^{(n-1)}$  on $\bG$ given by  $\eta^{geo}$ commutes with that of $\bmu_m^{(n)}, $ the $k$--group morphism $\eta^{geo}: \bmu_m^n \to \bG$ factors through  $\bG_{n-1}.$ Similarly for  $\eta^{'geo}$.
Denote by $\eta_{n-1}^{geo}$ (resp. ${\eta'}_{n-1}^{geo}$) the restriction of 
$\eta^{geo}$ (resp. ${\eta'}^{geo}$) to the $k$--subgroup 
$\bmu_m^{n-1}= \bmu_n^{(1)} \times \cdots  \times \bmu_n^{(n-1)}$ of $\bmu_m^{n}$.
Set $\tilde \Gamma_{n-1, m} := \bmu_m^{n-1}(\ol k) \rtimes \Gal(\tilde k/k)$ 
and consider the loop cocycle $\eta_{n-1}: \tilde \Gamma_{n-1,m} \to \bG_{n-1}(\tilde k)$
attached to $(1, \eta^{geo}_{n-1}),$ and similarly  for ${\eta'}_{n-1}^{geo}$.

The crucial point for the induction argument we want to use is the fact  that $\eta^{geo}_{n-1}: \bmu_m^{n-1} \to \bG_{n-1}$ is anisotropic. For otherwise the $k$--group
$\bG_{n-1}^0$ admits a non-trivial split subtorus 
$\bS$ which is normalized by  $\bmu_m^{n-1}$. But then $\bS$ is
a non-trivial split subtorus of $\bG^0$ which is normalized by
$\bmu_m^{n}$, and this contradicts the anisotropic assumption on   $\eta^{geo}$.

Inside  $\bG_{n-1}( \tilde F_{n-1,m} )$, relation (\ref{**}) yields that 
$$
h_{n-1}^{-1} \, \, \eta_{n-1}(\gamma) \,  {^\gamma}h_{n-1} = 
 \eta'(\gamma)
\quad \forall \gamma \in \tilde \Gamma_{n-1}.
$$
which is similar to (\ref{*}).
 By induction on $n$, we may assume that $\eta^{geo}_{n-1}$ is $\bG_{n-1}(k)$--conjugate to 
${\eta'}_{n-1}^{geo}$. Thus $\eta^{geo}$ is $\bG(k)$--conjugate to 
${\eta'}^{geo}$ as desired.
\end{proof}

Before establishing Theorem \ref{k-aniso}, we need the following preliminary step.

\begin{lemma}\label{bateau} Let $\bH$ be a linear algebraic $k$--group.
It two loop classes $[\eta],  \, [\eta'] $ of $H^1\big(\pi_1(R_n),
\bH(\ol k)\big)$ have same image in $H^1(F_n, \bH)$,
then  $[\eta^{ar}]=  [{\eta'}^{ar}] $ in $ H^1(k,\bH)$.
\end{lemma}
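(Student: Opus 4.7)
The plan is to exploit the identification $\pi_1(R_n) \simeq \Gal(F_n)$ from Remark \ref{series}(c), under which the arithmetic section $\Gal(k) \hookrightarrow \pi_1(R_n)$ becomes the natural splitting of the projection $\Gal(F_n) \twoheadrightarrow \Gal(k)$ induced by the semidirect product decomposition $\Gal(F_n) \simeq {_\infty\bmu}^n(\ol k) \rtimes \Gal(k)$. The fixed subfield of this section inside $\ol{F_n}$ is $F_{n,\infty} := \limind_m F_{n,m}$, where $F_{n,m} = k((t_1^{1/m}))\cdots((t_n^{1/m}))$: indeed, an element of $\tilde F_{n,m}$ is fixed by the section iff it is $\Gal(\tilde k/k)$-invariant (trivial action on the $t_i^{1/m}$), which cuts out precisely $F_{n,m}$. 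By Galois descent, $H^1(F_{n,\infty}, \bH) \simeq H^1(\Gal(k), \bH(\ol{F_n}))$, and the base change $H^1(F_n, \bH) \to H^1(F_{n,\infty}, \bH)$ corresponds to restricting a cocycle $\Gal(F_n) \to \bH(\ol{F_n})$ to the section subgroup $\Gal(k) \subset \Gal(F_n)$. For a loop cocycle $\eta$ with values in $\bH(\ol k) \subset \bH(\ol{F_n})$, this restriction is precisely $\eta^{ar}$, and its class is the image of $[\eta^{ar}] \in H^1(k, \bH)$ under the natural map $H^1(k, \bH) \to H^1(F_{n,\infty}, \bH)$ induced by $k \hookrightarrow F_{n,\infty}$.

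Consequently, the hypothesis $[\eta]_{F_n} = [\eta']_{F_n}$ yields the equality of $[\eta^{ar}]$ and $[{\eta'}^{ar}]$ in $H^1(F_{n,\infty}, \bH)$. The lemma is therefore reduced to the claim that the map $H^1(k, \bH) \to H^1(F_{n,\infty}, \bH)$ is injective. Since $H^1$ of a group scheme of finite type commutes with filtered colimits of rings, we have $H^1(F_{n,\infty}, \bH) = \limind_m H^1(F_{n,m}, \bH)$, and injectivity is thus equivalent to injectivity of $H^1(k, \bH) \to H^1(F_{n,m}, \bH)$ for each $m \geq 1$. Substituting $s_i := t_i^{1/m}$, the field $F_{n,m}$ is nothing but the iterated Laurent series field $k((s_1))\cdots((s_n))$ over $k$ in $n$ variables.

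The crux is therefore the following claim, which I would prove by induction on $n$: for any field $K$ of characteristic zero and any linear algebraic $K$-group $\bH$, the map $H^1(K, \bH) \to H^1(K((s)), \bH)$ is injective. To establish this, I would combine two ingredients: (i) the isomorphism $H^1(K, \bH) \simeq H^1(K[[s]], \bH)$, with inverse given by the $s \mapsto 0$ specialization; this is valid because $K[[s]]$ is a henselian discrete valuation ring and $\bH$ is smooth, so torsors over $K[[s]]$ are determined by their special fibers via Hensel's lemma (equivalently, the section $K \hookrightarrow K[[s]]$ and specialization $K[[s]] \twoheadrightarrow K$ induce mutually inverse maps on $H^1$); and (ii) the injectivity $H^1(K[[s]], \bH) \hookrightarrow H^1(K((s)), \bH)$, which is the classical purity statement for torsors under smooth affine groups over regular one-dimensional local rings.

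The principal technical obstacle is step (ii) for a general linear algebraic $\bH$ that need not be connected or reductive. I would handle this by d\'evissage. First, Lemma \ref{sansuc} applied to the exact sequence $1 \to R_u(\bH^0) \to \bH^0 \to \bH^0/R_u(\bH^0) \to 1$ allows one to replace $\bH^0$ by its reductive quotient, reducing the connected-component part of the question to the reductive case, where purity is standard. Second, the exact sequence $1 \to \bH^0 \to \bH \to \bH/\bH^0 \to 1$ with finite \'etale quotient reduces to purity for $\bH/\bH^0$, which is a consequence of the inflation-restriction exact sequence applied to $1 \to I \to \Gal(K((s))) \to \Gal(K) \to 1$ (with $I$ the inertia subgroup), upon observing that $I$ acts trivially on the finite \'etale coefficients. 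A standard non-abelian twisting argument \`a la Serre \cite{Se} then combines these pieces into the desired injectivity for $\bH$.
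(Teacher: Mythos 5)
Your proposal is correct and takes a genuinely different route from the paper, so a comparison is warranted.

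The paper's proof first twists $\bH$ by $\eta^{ar}$ so that $\eta$ becomes purely geometric, then increases the level $m$ so that the geometric parts of both cocycles die over $R_{n,m}$, whence the image of $[\eta']$ in $H^1(F_{n,m},\bH)$ is the base change of $[{\eta'}^{ar}]$; the hypothesis then places $[{\eta'}^{ar}]$ in $\ker\big(H^1(k,\bH)\to H^1(F_n,\bH)\big)$, whose triviality is cited from Florence \cite[\S 5.4]{F}, and the conclusion follows by untwisting. You instead observe that the arithmetic section $\Gal(k)\hookrightarrow\pi_1(R_n)\simeq\Gal(F_n)$ has $F_{n,\infty}$ as its fixed field, so that base change to $F_{n,\infty}$ \emph{is} restriction of cocycles to $\Gal(\ol{F_n}/F_{n,\infty})\simeq\Gal(k)$, and for a loop cocycle this restriction is literally $\eta^{ar}$, landing in $\bH(\ol k)\subset\bH(\ol{F_n})$. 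This eliminates both the twisting step and the ``increase $m$'' step: the hypothesis immediately gives that $[\eta^{ar}]$ and $[{\eta'}^{ar}]$ have equal images in $H^1(F_{n,\infty},\bH)$, and one only needs injectivity of $H^1(k,\bH)\to H^1(F_{n,\infty},\bH)$, which by passage to the limit and the substitution $s_i=t_i^{1/m}$ is the same statement the paper extracts, namely injectivity of $H^1(k,\bH)\to H^1(F_n,\bH)$. Your reduction is cleaner in that it gives the equality of the two arithmetic classes directly rather than reducing one of them to the trivial class.

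The remaining difference is that you sketch a proof of the injectivity $H^1(k,\bH)\to H^1(F_n,\bH)$ by d\'evissage (Lemma \ref{sansuc} to kill the unipotent radical, Nisnevich plus Hensel for the reductive piece, inflation along $\Gal(K((s)))\twoheadrightarrow\Gal(K)$ for the finite \'etale piece, glued by a non-abelian twisting argument) whereas the paper simply cites \cite[\S 5.4]{F}. Your d\'evissage is essentially a reproof of Florence's result and is sound, though the last twisting step (showing that the equality $y_L=\delta_L(\sigma)$ in $H^1\big(L,\bH^0\big)$ forces $y=\delta(\sigma)$, using $\nu(K)=\nu(K((s)))$) deserves to be spelled out, and the phrase ``classical purity over regular one-dimensional local rings'' would be more accurately described as the Nisnevich/Grothendieck–Serre injectivity for reductive groups over a DVR, since the general smooth affine case is exactly what the d\'evissage is supplying.
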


\begin{proof}
Up to twisting $\bH$ by $\eta^{ar}$, the standard torsion argument
allows us   to assume with no loss of generality that   $\eta^{ar}$ is trivial, i.e. that $\eta$ is purely geometrical.
We are thus left to deal with the case of  a $k$--group homomorphism $\eta^{geo} : {_\infty\bmu} \to \bH$ that factors through  some
$\bmu_m^n \to \bH$ for $m > 0$ large enough. Hence $[\eta]$ is trivialized  by the extension $\tilde{R}_{n,m}/R_n$ and
its image in $H^1(F_n, \bH)$ by the extension $\tilde{F}_{n,m}/F_n,$ where $\tilde{R}_{n,m}$ and $\tilde{F}_{n,m}$ are as above.

By further  increasing  $m$,  the same reasoning allows us to assume that the image of $\eta'$ in $H^1\big(R_{n,m}, \bH(\ol k)\big)$ is
purely arithmetic. More precisely, that the map
$$
Z^1\big( \pi_1(R_n), \bH(\ol k)\big) \to Z^1\big( \pi_1(R_{n,m}), \bH(\ol k)\big)
$$
maps $({\eta'}^{geo}, {\eta'}^{ar})$ to  $(1, {\eta'}^{ar})$ where  the coordinates are as in Section \ref{basepoint}.
But our hypothesis implies that the image of $[\eta']$ in $H^1(F_{n,m}, \bH)$ is trivial, hence
$$
[{\eta'}^{ar} ] \in \ker\bigl(    H^1(k, \bH) \to  H^1(F_n, \bH) \bigr) .
$$
Since $F_n$ is an iterated  Laurent series field over $k$, this kernel is trivial (see \cite[\S 5.4]{F}),
and we conclude that $[{\eta'}^{ar} ]=1 \in H^1(k, \bH)$.

\end{proof}

We are now ready to proceed with the proof of Theorem \ref{k-aniso}.

\begin{proof} The implications $1) \Longrightarrow 2)
\Longrightarrow 3) $ are obvious.
We shall prove the implication  $3) \Longrightarrow 1)$ by using the previous result.
By assumption $[\eta]_{F_n}=  [\eta']_{F_n} \in H^1(F_n, \bG)$.
It is convenient  to work at finite level as we have done previously, namely with cocycles
$$
\eta, \eta^{'}:  \tilde \Gamma_{n,m} \to {\bG}(\tilde k)
$$
with $\tilde \Gamma_{n,m} :=  \bmu_m^n( \tilde k) \rtimes \Gal(\tilde{k}/k)$ where
$m > 0 $ large enough and $\tilde{k}/k$ is a finite Galois extension extension containing
all $m$--roots of unity in $\ol{k}.$
We associate to $\eta$ its arithmetic-geometric coordinate pair $(z, \eta^{geo})$ where
$z \in Z^1(  \Gal(\tilde{k}/k), \bG(\tilde k))$
and $\eta^{geo} :  \bmu_m^n \to {_z\bG}$ is a $k$--group homomorphism. Similar considerations apply to
$\eta',$ and its corresponding pair $(z', {\eta'}^{geo}).$
By Lemma \ref{bateau}, we have
 $[z]=[z'] \in H^1(k, \bG)$.
Without lost of generality we may assume that $z=z'$.
Consider  the commutative diagram
$$
\begin{CD}
H^1\big(  \tilde \Gamma_{n,m} , {_z\bG}(\tilde k)\big)
 @>>> H^1\big(  \tilde \Gamma_{n,m} , {_z\bG}(\tilde F_{n,m})\big)  \\
 @V{\tau_z}V{\wr}V @V{\tau_z}V{\wr}V \\
H^1\big(  \tilde \Gamma_{n,m} , {\bG}(\tilde k)\big)  @>>> H^1\big(  \tilde \Gamma_{n,m} , {\bG}(\tilde F_{n,m})\big) . \\
\end{CD}
$$
where the vertical arrows are the torsion bijections. Thus  $\tau_z^{-1}[\eta] = \tau_z^{-1}[\eta'] \in H^1\big(  \tilde \Gamma_{n,m} , {_z\bG}(\tilde F_{n,m})\big)$.
 By Corollary \ref{flag}.3, $\eta^{geo} : \bmu_m^n \to {_z\bG}$ is an anisotropic $k$--group homomorphism.
We can thus apply  Theorem \ref{k-aniso-bis} to conclude that $\eta^{geo}$ and ${\eta'}^{geo}$ are conjugate
under ${_z\bG}(k)$ hence  $\tau_z^{-1}[\eta]  = \tau_z^{-1}[\eta'] $ in $H^1\big(  \tilde \Gamma_{n,m} , {_z\bG}(\tilde k)\big),$ and therefore  $[\eta] = [\eta']$ in $H^1\big(  \tilde \Gamma_{n,m} , {_z\bG}(\tilde k)\big)$ as desired.
\end{proof}

\begin{corollary} \label{inject} Let the assumptions be as in the  Theorem, and
let $H^1\big(\pi_1(R_n), \bG(\ol k)\big)_{an}$ denote
the preimage of  $H^1(F_n, \bG)_{an}$ under the composite map
 $H^1\big(\pi_1(R_n), \bG(\ol k)\big) \to  H^1(R_n, \bG) \to  H^1(F_n, \bG)$. Then
 $H^1\big(\pi_1(R_n), \bG(\ol k)\big)_{an}$ injects into $H^1(F_n, \bG)$. \qed
\end{corollary}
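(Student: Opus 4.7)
The plan is to observe that the proof of Theorem \ref{k-aniso} already produces equality at the level of cocycles for $\pi_1(R_n)$ with values in $\bG(\ol k)$, not merely in $H^1(R_n,\bG)$, and then to repackage this into the injectivity claim. Fix two loop classes $[\eta],[\eta']\in H^1\bigl(\pi_1(R_n),\bG(\ol k)\bigr)_{an}$ having the same image in $H^1(F_n,\bG)$, and let $\beta\in H^1(F_n,\bG)_{an}$ denote that common image. The $F_n$--group $_\eta\bG\times_{R_n} F_n$ is then anisotropic.

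As a first step I would apply Lemma \ref{bateau} to the two loop classes: their images in $H^1(F_n,\bG)$ coincide, so $[\eta^{ar}]=[{\eta'}^{ar}]\in H^1(k,\bG)$. Choosing a common representative $z\in Z^1\bigl(\Gal(k),\bG(\ol k)\bigr)$ and modifying $\eta,\eta'$ by coboundaries (which does not change their classes in $H^1(\pi_1(R_n),\bG(\ol k))$), I may assume $\eta^{ar}={\eta'}^{ar}=z$.

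Next I would pass to the twisted group $_z\bG$ via the torsion bijection from Remark \ref{klooptorsion} and Lemma \ref{looptwist}, which identifies $H^1\bigl(\pi_1(R_n),\bG(\ol k)\bigr)$ with $H^1\bigl(\pi_1(R_n),{_z\bG}(\ol k)\bigr)$ and carries $[\eta]$ (resp.\ $[\eta']$) to the class of a purely geometric loop cocycle $\eta^{geo}$ (resp.\ ${\eta'}^{geo}$) viewed as a $k$--group homomorphism ${_\infty\bmu}^n\to{_z\bG}$. Because the torsion bijection commutes with the base change $R_n\to F_n$ and preserves the $F_n$--isomorphism class of the twisted group, the anisotropy of $_\eta\bG\times_{R_n}F_n$ translates into anisotropy of the $F_n$--group attached to $\eta^{geo}$.

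At this point the hypotheses of Theorem \ref{k-aniso-bis} are in force for $\eta^{geo},{\eta'}^{geo}$ viewed as loop cocycles for $_z\bG$: both have trivial arithmetic part, $\eta^{geo}$ is anisotropic, and $[\eta^{geo}]_{F_n}=[{\eta'}^{geo}]_{F_n}$. The conclusion of that theorem is the strong statement that $\eta^{geo}$ and ${\eta'}^{geo}$ are conjugate under $_z\bG(k)\subset {_z\bG}(\ol k)$, which directly says $[\eta^{geo}]=[{\eta'}^{geo}]$ in $H^1\bigl(\pi_1(R_n),{_z\bG}(\ol k)\bigr)$. Reversing the torsion bijection yields $[\eta]=[\eta']$ in $H^1\bigl(\pi_1(R_n),\bG(\ol k)\bigr)$, which is the required injectivity. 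The main obstacle is entirely absorbed into Theorem \ref{k-aniso-bis}, whose proof upgrades an equality of cocycles over $\tilde F_{n,m}$ to one over $k$ using the Bruhat--Tits fixed-point argument together with Florence's theorem; once that step is available, the present corollary is immediate.
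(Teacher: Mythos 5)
Your proposal is correct and follows exactly the route the paper intends: the corollary is stated without proof (just $\qed$) precisely because, as you observe, the proof of Theorem \ref{k-aniso} already delivers equality of loop classes in $H^1\bigl(\pi_1(R_n),\bG(\ol k)\bigr)$ rather than merely in $H^1(R_n,\bG)$, via Lemma \ref{bateau}, the torsion bijection, and Theorem \ref{k-aniso-bis}. The one small care you rightly take — that replacing $\eta'$ by a $\bG(\ol k)$-coboundary equivalent cocycle leaves its class in $H^1\bigl(\pi_1(R_n),\bG(\ol k)\bigr)$ unchanged while normalizing its arithmetic part to $z$ — is exactly what makes "without loss of generality $z=z'$" legitimate at this finer level.
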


\medskip

\section{Acyclicity}

We have now arrived to one of the main results of our work

\begin{theorem}\label{acyclic}  Let  $\bG$ be  a linear algebraic group over a field $k$ of characteristic $0$.
Then
the natural restriction map $H^1(R_n,\bG) \to H^1(F_n,\bG)$ induces a bijection
$$
H^1_{loop}(R_n,\bG) \simlgr H^1(F_n,\bG).
$$
In particular, the inclusion map $H^1_{loop}(R_n,\bG) \to H^1(R_n,\bG)$ admits a canonical section.
\end{theorem}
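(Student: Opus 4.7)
The plan has two parts, corresponding to the two directions of the claimed bijection; the ``canonical section'' assertion is then a formal consequence. Surjectivity of $H^1_{loop}(R_n,\bG)\to H^1(F_n,\bG)$ is Proposition~\ref{onto}, so the only real content is injectivity. Once injectivity is established, the composite
$$
H^1(R_n,\bG)\;\longrightarrow\;H^1(F_n,\bG)\;\xleftarrow{\;\sim\;}\;H^1_{loop}(R_n,\bG)
$$
will serve as the canonical retraction, since restricting a loop class to $F_n$ and then lifting via the inverse bijection recovers the original loop class.

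For injectivity, suppose $\eta,\eta'\in Z^1(\pi_1(R_n),\bG(\ol k))$ are loop cocycles with $[\eta]_{F_n}=[\eta']_{F_n}$. First I would invoke Lemma~\ref{bateau} to conclude $[\eta^{ar}]=[{\eta'}^{ar}]$ in $H^1(k,\bG)$; then, using the torsion bijection of Lemma~\ref{looptwist} (together with Remark~\ref{klooptorsion}) to translate the problem to the twisted group ${_z\bG}$ with $z=\eta^{ar}$, I may assume that both $\eta$ and $\eta'$ are \emph{purely geometric}, i.e.\ arise from $k$--group homomorphisms ${_\infty\bmu}^n\to\bG$.

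Next comes the reduction to the anisotropic situation, where Theorem~\ref{k-aniso} (equivalently Corollary~\ref{inject}) can be applied. Let $I$ be the common Witt--Tits index of ${_\eta\bG}_{F_n}\cong{_{\eta'}\bG}_{F_n}$. By Corollary~\ref{flag}(1) and Corollary~\ref{index} there exist minimal parabolic subgroups $\bP$ and $\bP'$ of $\bG^0$, of type $I$, normalized by $\eta^{geo}$ and ${\eta'}^{geo}$ respectively. After conjugating by an element of $\bG(k)$ (which alters the cocycles only within their cohomology classes), I would arrange $\bP=\bP'$, so that both cocycles factor through $\bN_\bG(\bP)$, producing loop classes $[\tilde\eta],[\tilde{\eta'}]\in H^1_{loop}(R_n,\bN_\bG(\bP))$ lifting $[\eta],[\eta']$. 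Sansuc's Lemma~\ref{sansuc}(2) identifies $H^1(R_n,\bN_\bG(\bP))$ with $H^1(R_n,\bH)$, where $\bH:=\bN_\bG(\bP)/R_u(\bP)$; let $[\bar\eta],[\bar{\eta'}]$ denote the images. By the minimality of $\bP$ (Corollary~\ref{flag}(3) applied after further quotienting by the split part of the center of $\bH^0$), the associated twist of $\bH$ over $F_n$ is anisotropic. Since $[\bar\eta]_{F_n}=[\bar{\eta'}]_{F_n}$ by construction, Theorem~\ref{k-aniso} forces $[\bar\eta]=[\bar{\eta'}]$ in $H^1(R_n,\bH)$. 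Pulling back through the Sansuc isomorphism and then pushing forward to $\bG$ gives $[\eta]=[\eta']$ in $H^1(R_n,\bG)$, completing injectivity.

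The main obstacle I anticipate is the compatibility step in paragraph three: arranging that both geometric cocycles normalize a \emph{common} minimal parabolic $\bP$ via $k$--rational conjugation. The subtlety is that while Corollary~\ref{index} ensures all minimal parabolics normalized by a single $\eta^{geo}$ are $\bG^0(k)$--conjugate, the parabolics produced separately from $\eta^{geo}$ and ${\eta'}^{geo}$ have only a priori the same geometric type; one needs to use the $F_n$--isomorphism coming from $[\eta]_{F_n}=[\eta']_{F_n}$ together with a Florence--type descent argument (analogous to the use of \cite{F} in the proof of Theorem~\ref{godown}) to align them over $k$. The second delicate point is that the Levi quotient $\bH$ carries a residual split central torus, so that Theorem~\ref{k-aniso} can only be applied after an explicit identification of the anisotropic kernel; this needs to be tracked carefully so that the Sansuc identification preserves the relevant information when one lifts back to $H^1(R_n,\bN_\bG(\bP))$.
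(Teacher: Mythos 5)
Your overall architecture is the paper's: surjectivity is Proposition~\ref{onto}, injectivity begins with Lemma~\ref{bateau} and a twist to the purely geometric case, and one then reduces via a minimal parabolic and Lemma~\ref{sansuc}. Two of your peripheral remarks are, however, off. Aligning $\bP$ and $\bP'$ is easier than you anticipate: once Corollary~\ref{flag}(1) produces $k$--parabolics of $\bG^0$ of the same type $I$, ordinary Borel--Tits theory over the field $k$ already gives $\bG^0(k)$--conjugacy; no further Florence-type descent is needed at that level (Florence is used \emph{inside} the proof of Corollary~\ref{flag}, not on top of it). On the other hand you silently pass from $[\eta]_{F_n}=[\eta']_{F_n}$ in $H^1(F_n,\bG)$ to ``$[\bar\eta]_{F_n}=[\bar{\eta'}]_{F_n}$ by construction'' in $H^1(F_n,\bN_\bG(\bP))$; this transfer is not automatic and requires the injectivity of $H^1(F_n,\bN_\bG(\bP))_{\mathrm{irr}}\to H^1(F_n,\bG)$, which the paper gets from \cite[th.~2.15]{Gi4}.

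The genuine gap is the step you label as the ``second delicate point.'' You claim that after passing to the Levi quotient $\bH=\bN_\bG(\bP)/R_u(\bP)$ and ``further quotienting by the split part of the center of $\bH^0$,'' the twisted group over $F_n$ becomes anisotropic, and you then apply Theorem~\ref{k-aniso} to $\bH$ itself. Both halves of this are problematic. After the parabolic reduction the twisted group $_\eta\bH^0_{F_n}$ is only \emph{irreducible}; its radical still carries an $F_n$--split torus coming from the split rank, so Theorem~\ref{k-aniso} cannot be applied to $\bH$ directly. Reducing from irreducible to anisotropic is exactly the content of Lemma~\ref{irr-case}, and it is not a one-liner. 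The torus one must kill is not the maximal $k$--split subtorus $\bA$ of the connected center $\bC$ of $\bH^0$, but the maximal $k$--split subtorus $\bC_0$ of the $\eta^{geo}$--\emph{fixed} part $\bC^\sharp=(\bC^{p\circ\eta^{geo}})^0$. This distinction matters: $\bA$ is merely characteristic, so $\eta^{geo}$ preserves but need not centralize it, and then $H^1(R_n,{}_\eta\bA)$ need not vanish, wrecking the exact-sequence argument; $\bC_0$, being inside the $\eta^{geo}$--fixed part, is centralized, so ${}_\eta\bC_0=\bC_0$ is split over $R_n$ and $H^1(R_n,\bC_0)=0$ by $\mathrm{Pic}(R_n)=0$. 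Moreover even with the right torus it is not immediate that $q\circ\eta^{geo}$ is anisotropic on $\bG^0/\bC_0$ rather than merely irreducible; the Claim inside Lemma~\ref{irr-case} proves this using semisimplicity of representations of $\bmu_m^n$ to push any split centralized torus of the quotient back into $\bC_0$. You flag that something delicate happens here, but you neither identify the correct torus nor supply the anisotropy argument, and your stated fix (quotienting by $\bA$) would fail.
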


\subsection{The proof}

For any $k$--scheme $\gX$, we denote by
  $H^1(\gX , \bG)_{irr} \subset
 H^1(\gX , \bG)$\footnote{We remind the reader that $H^1(\gX , \bG)$ stands for $H^1(\gX , \bG_{\gX}).$} the subset  consisting of  classes of
$\bG$--torsor $\bE$ over $\gX$ for which the twisted reductive $\gX$--group scheme  $_{\bE}\big(\bG^0/ R_u(\bG^0)\big)_{\gX}$ does not contain a proper parabolic subgroup which admits a Levi subgroup.\footnote{Recall that the assumption on the existence of the Levi subgroup is superfluous whenever $\gX$ is affine.}
Set  $H^1_{loop}(\gX , \bG)_{irr}
= H^1_{loop}(\gX , \bG) \, \cap  \, H^1(\gX , \bG)_{irr}$.
We begin with the following special case.

\begin{lemma}\label{irr-case}  $H^1_{loop}(R_n , \bG)_{irr}$ injects into  $H^1(F_n, \bG)$.
\end{lemma}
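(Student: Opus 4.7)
The plan is to adapt the building-theoretic argument from the proof of Theorem \ref{k-aniso-bis}, weakening the anisotropy hypothesis there to irreducibility. Given $[\eta], [\eta'] \in H^1_{loop}(R_n, \bG)_{irr}$ sharing the same image in $H^1(F_n, \bG)$, Lemma \ref{bateau} gives $[\eta^{ar}] = [\eta'^{ar}]$ in $H^1(k, \bG)$; after the standard torsion reduction by $z := \eta^{ar}$ we may assume that both cocycles are purely geometric, so that they correspond to $k$-homomorphisms $\eta^{geo}, \eta'^{geo} : {_\infty\bmu}^n \to \bG$ factoring through $\bmu_m^n \to \bG$ for a suitable $m$. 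The task reduces, as in Theorem \ref{k-aniso-bis}, to proving that $\eta^{geo}$ and $\eta'^{geo}$ are conjugate under $\bG(k)$, and I would proceed by induction on $n$.

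Pick $\tilde k$ splitting everything and an element $h_n \in \bG(\tilde F_{n,m})$ realizing the equality $[\eta]_{F_n} = [\eta']_{F_n}$. Corollary \ref{flag}(2) converts irreducibility of ${_\eta}(\bG^0/R_u(\bG))_{F_n}$ into irreducibility of $\eta^{geo}$, and forces the derived subgroup of this $F_n$-group to be anisotropic semisimple. The special-point construction in the proof of Theorem \ref{k-aniso-bis} then produces a distinguished $\phi = (\phi_d, v) \in \mathcal{B}_n$ fixed by $\tilde\Gamma_{n,m}$ under the twisted action, with stabilizer in $\bG(\tilde F_{n,m})$ equal to $\bG\bigl(\tilde F_{n-1,m}[[t_n^{1/m}]]\bigr)$, and the building argument places $h_n\cdot\phi$ in $\mathcal{B}_n^{\tilde\Gamma_{n,m}}$.

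The main obstacle is that in the irreducible setting $\mathcal{B}_n^{\tilde\Gamma_{n,m}}$ is no longer the singleton $\{\phi\}$: the derived direction still collapses to $\phi_d$, but the translation factor becomes $\{\phi_d\} \times V^{\tilde\Gamma_{n,m}}$, reflecting the cocharacters of a possibly nontrivial maximal $F_n$-split central torus of ${_\eta}\bG^0$. To kill this ambiguity I would apply Lemma \ref{tori1} to the twisted coradical torus ${_{\eta'}}\bC$: the image of $h_n$ in $\bC(\tilde F_{n,m})$ defines a cocycle class that is trivial in $H^1(F_n, {_{\eta'}}\bC)$, hence already trivial in $H^1(R_n, {_{\eta'}}\bC)$, and a coboundary correction furnishes an element $s \in {_{\eta'}}\bG^0(F_n)$ such that replacing $h_n$ by $h_n\cdot s$ preserves the conjugation relation (since $s \in {_{\eta'}}\bG^0(F_n)$ amounts to $^\gamma s = \eta'(\gamma)^{-1}\,s\,\eta'(\gamma)$) while sending $h_n\cdot\phi$ to $\phi$, thereby forcing $h_n \in \bG\bigl(\tilde F_{n-1,m}[[t_n^{1/m}]]\bigr)$.

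Once $h_n$ lies in this subring, the remainder proceeds as in Theorem \ref{k-aniso-bis}: specialize $t_n \to 0$ to produce $h_{n-1} \in \bG(\tilde F_{n-1,m})$ satisfying the analogous conjugation relation; apply Florence's theorem on the transporter variety to conjugate ${\eta^{geo}}|_{\bmu_m^{(n)}}$ to ${\eta'^{geo}}|_{\bmu_m^{(n)}}$ by an element of $\bG(k)$; then pass to the centralizer $\bG_{n-1}$ of this common restriction, whose identity component is reductive. The residual homomorphism $\bmu_m^{n-1} \to \bG_{n-1}$ remains irreducible, since any proper parabolic of the centralizer would lift to a proper parabolic of $\bG^0$ normalized by $\eta^{geo}$, so the induction hypothesis applies in dimension $n-1$ and yields $\bG(k)$-conjugacy. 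The base case $n=0$ is trivial, as loop classes over $k$ coincide with ordinary Galois classes.
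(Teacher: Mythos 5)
Your proposal takes a genuinely different route from the paper, and it contains a real gap. The paper does \emph{not} re-run the building argument of Theorem \ref{k-aniso-bis} under the weaker irreducibility hypothesis. Instead it performs a one-shot algebraic reduction: after the torsion reduction to the purely geometric case it singles out the canonical maximal $k$-split subtorus $\bC_0 \subset \bC$ of the connected centre that is centralized by $\eta^{geo}$ (defined via the rational cocharacters of $(\bC^{p\circ\eta^{geo}})^0$), shows by a short representation-theoretic argument (semisimplicity of $\bmu_m^n$-modules) that the induced cocharacter $q\circ\eta^{geo}: {}_\infty\bmu^n\to\bG/\bC_0$ is actually \emph{anisotropic}, and then uses $H^1(R_n,\bC_0)=0$ (since $\Pic(R_n)=0$) to reduce, by a diagram chase, the injectivity over $R_n$ to the injectivity over $R_n$ for $\bG/\bC_0$ -- which is precisely Theorem \ref{k-aniso-bis}. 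So the anisotropic theorem is invoked once as a black box, after a reduction that never touches the building.

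The gap in your argument is the ``coboundary correction'' step. You correctly identify that, in the irreducible but non-anisotropic case, $\mathcal{B}_n^{\tilde\Gamma_{n,m}}$ is the enlarged building of $({}_\eta\bG^0)_{F_n}$ and is an affine space $V'$ rather than a point, so $h_n\cdot\phi$ and $\phi$ may differ by a translation. But to turn this into the conjugacy needed, you must produce $s\in {}_{\eta'}\bG^0(F_n)$ moving $h_n\cdot\phi$ to $\phi$, and you offer no argument that such an $s$ exists. The group $\bG^0(F_n)$ acts on $V'$ only through a lattice (the image of the $t_n$-valuation composed with the coradical map), while $h_n$ lives in $\bG(\tilde F_{n,m})$ and moves $\phi$ by an element of a \emph{finer} lattice coming from the ramified covering; there is no reason the resulting difference should be reachable by an $F_n$-rational element. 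Moreover the argument you sketch -- trivializing a class via Lemma \ref{tori1} -- never actually produces a genuine $1$-cocycle with values in ${}_{\eta'}\bC$ (after the reduction only the $\bnu$-parts of $\eta$ and $\eta'$ are matched, not their $\bC$-parts), and even if it did, lifting an element of $\bC(F_n)$ back to $\bG^0(F_n)$ requires the vanishing of an additional $H^1$ with coefficients in the derived group, which you do not address. The paper sidesteps all of this by doing the split-torus quotient upfront, where the only cohomological vanishing needed is $H^1(R_n,\bC_0)=0$ for the \emph{split} torus $\bC_0$, a consequence of $\Pic(R_n)=0$.

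Your inductive claim that the residual homomorphism into the centralizer $\bG_{n-1}$ remains irreducible would also need justification (via pseudo-parabolics $\bP(\lambda)$ with $\lambda$ a cocharacter centralized by $\bmu_m^n$), but this is a lesser issue; once you invoke Theorem \ref{k-aniso-bis} as a black box after the reduction, as the paper does, no induction on $n$ is required at this stage at all.
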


\begin{proof} 
By Lemma \ref{sansuc}, we can assume without loss of generality that
$\bG^0$ is reductive.
We have an exact sequence
$1 \to \bG^0 \buildrel i \over \lgr   \bG \buildrel p \over \lgr  \bnu \to 1$ where $\bnu$ is a finite \'etale $k$--group.
We are given two loop cocycles $\eta$, $\eta'$ in $Z^1(R_n, \bG)$ which have the same image
in $H^1(F_n, \bG)$, and for which  the twisted  $F_n$--groups ${_\eta \bG}^0$, ${_{\eta'} \bG}^0$
are irreducible. 
Since $H^1\big(\pi_1(R_n), \bnu(\ol k)\big) \simlgr H^1(F_n, \bnu)$, it follows that
$p_*[\eta]= p_*[\eta']$ in $H^1\big(\pi_1(R_n), \bnu(\ol k)\big)$.
We can thus assume without loss of generality that
$p_*\eta= p_*\eta' $ in  $Z^1\big(\pi_1(R_n), \bnu(\ol k)\big)$.
Furthermore, 
as in the proof of Theorem \ref{k-aniso} the standard twisting argument reduces the problem to the case of purely
geometric loop torsors. In particular, the group actions of $\eta^{geo}$ and 
${\eta'}^{geo}$ are irreducible according to Corollary \ref{flag}.3.

Let   $\bC$ be the connected center of $\bG^0$. Then $\bC$ is a $k$--torus
equipped with an action  of $\bnu$.
We consider its $k$--subtorus
$\bC^\sharp := ( \bC^{p\circ \eta^{geo}})^0$
and denote by $\bC_0$ its maximal $k$--split subtorus which is  defined
by $\widehat{\bC_0}^0 = \widehat{ \bC^\sharp}^0(k)$.
By construction, $\eta^{geo}: {_\infty\bmu}^n \to \bG$ centralizes 
$\bC^\sharp$ and $\bC_0.$ Similarly for  ${\eta'}^{geo}$.
The $k$--torus $\bC_0$ is  a split subtorus of $\bC$ centralized by $\eta^{geo}$
and maximal for this property.
We consider the exact sequence of
$k$--groups
$$
1 \to \bC_0 \to \bG \buildrel q \over \lgr  \bG/\bC_0 \to 1
$$
\begin{claim} The composite $q\circ \eta^{geo} :  {_\infty\bmu}^n \to \bG/\bC_0$
 is anisotropic.
\end{claim}

Let us establish the claim. We are given a  split subtorus $\bS$ of the $k$--group $\bG^0$
which is centralized by  $q\circ \eta^{geo}$. Since  $q\circ \eta^{geo}$  is irreducible,
$\bS$ is central in $\bG^0/ \bC_0$. We consider $\bM=q^{-1}(\bS)$, this is an extension
of $\bS$ by $\bC_0$, so it is a split $k$--torus. By the semisimplicity of the 
category of representations of ${_\infty\bmu}^n$, we see that ${_\infty\bmu}^n$ 
acts trivially on $\bM$.
Then $\bM=\bC_0$ and $\bS=1,$ and the claim thus holds.

Next we twist  the  sequence of $R_n$--groups
$$
1 \to \bC_0 \to \bG \buildrel q \over \lgr  \bG/\bC_0 \to 1
$$
 by $\eta$ to obtain
$$
1 \to \bC_0 \to {_\eta \bG} \to {_\eta(\bG /\bC_0)} \to 1.
$$
This leads to  the commutative exact diagram of pointed sets
$$
\begin{CD} 
0= H^1(R_n, \bC_0) @>>>  H^1(R_n, \bG) @>{q_*}>>  H^1(R_n, \bG / \bC_0)  \\
   && @A{\tau_{\eta}}A{\simeq}A @A{\tau_{\eta}}A{\simeq}A \\
 0= H^1(R_n, \bC_0) @>>> H^1(R_n, {_\eta\bG}) @>>>  H^1\big(R_n, {_\eta(\bG / \bC_0)}\big)  \\
 \end{CD}
$$
where the vertical maps are the  torsion bijections.
Note that  $H^1(R_n, \bC_0)$ vanishes since $\Pic(R_n)=0$.
By diagram chasing we have  $[\eta]=[\eta']$ in $H^1(R_n, \bG)$
if and only if  $q_*[\eta]=q_*[\eta']$ in $H^1(R_n, \bG/ \bC_0)$.
Since $q_*[\eta]_{F_n}=q_*[\eta']_{F_n}$ in $H^1(F_n, \bG/ \bC_0)$ it will suffice to establish the Lemma for $\bG/ \bC_0$. The claim states  that $q_*\eta^{geo}$ is anisotropic,
therefore $q_*[\eta]=q_*[\eta']$ in $H^1(R_n, \bG/ \bC_0)$ by Theorem \ref{k-aniso-bis}.
\end{proof}

We can now proceed to prove Theorem \ref{acyclic}.

\begin{proof} The surjectivity of the map
$H^1_{loop}(R_n,\bG) \to H^1(F_n,\bG)$ is a special case of Proposition
\ref{onto}. Let us establish injectivity.
We are given  two loop cocycles $\eta, \eta' \in
Z^1\big(\pi_1(R_n),\bG(\ol k)\big)$ having the same image in $H^1(F_n, \bG)$.
Lemma \ref{bateau} shows that  $[\eta^{ar}]= [{\eta'}^{ar}]$ in
$H^1(k,\bG)$. Up to twisting $\bG$ by $\eta^{ar}$, we may assume that
$\eta$ and $\eta'$ are purely geometrical loop torsors.
The proof now proceeds  by  reduction to the irreducible case, i.e. to the case when
${_\eta\bG}^0 \times_{R_n} F_n$ is irreducible.

Let $\bQ$ be a minimal $F_n$--parabolic subgroup
of ${_\eta\bG}^0 \times_{R_n} F_n$. Corollary \ref{flag}
shows that the $k$--group $\bG^0$ admits a parabolic subgroup $\bP$
of the same type as $\bQ$ which is normalized by $\eta.$
The same statement shows that $\eta'$ normalizes a parabolic
subgroup, say $\bP'$, of  the same type than $\bP.$  Since by Borel-Tits theory $\bP'$ is $\bG^0(k)$--conjugate
to $\bP$  we may assume that
$\eta'$ normalizes $\bP$ as well.
Furthermore, $\bP$ is minimal for $\eta$ (and $\eta'$)
with respect to this property. We can view then
$\eta$, $\eta'$ as elements of
$Z^1_{loop}\big(R_n, \bN_{\bG}(\bP)\big)_ {irr}$.
We look at the following commutative diagram
$$ 
\begin{CD}
H^1_{loop}\big(R_n, \bN_{\bG}(\bP)\big)_ {irr}
@>>> H^1(R_n, \bG) \\
@VVV@VVV \\
 H^1\big(F_n, \bN_{\bG}(\bP)\big)_ {irr}
@>{\sim}>> H^1(F_n, \bG) \\
\end{CD}
$$
Since the bottom map is injective (see \cite[th. 2.15]{Gi4}), it   will suffice  to show that
$H^1_{loop}\big(R_n, \bN_{\bG}(\bP)\big)_ {irr}$ injects in
$H^1\big(F_n, \bN_{\bG}(\bP_I)\big)$.
Since the unipotent radical $\bU$ of $\bP$ is a split unipotent group, 
we have $$
H^1\big(R_n, \bN_{\bG}(\bP)\big)
 \simeq H^1\big(R_n, \bN_{\bG}(\bP)/ \bU\big),
$$ and similarly for $F_n$ by 
 Lemma \ref{sansuc}. So we are reduced to showing that
$H^1_{loop}\big(R_n, \bN_{\bG}(\bP)/ \bU\big)_ {irr}$ injects in
$H^1\big(F_n, \bN_{\bG}(\bP)/ \bU\big)$,
which is covered by Lemma \ref{irr-case}. This completes the proof of
injectivity.
\end{proof}

\medskip

\subsection{Application:  Witt-Tits decomposition}

By using the Witt-Tits decomposition over $F_n$ \cite[th. 2.15]{Gi4}, we get the following.

\begin{corollary}\label{WT3} Assume that $\bG^0$ is a split reductive $k$--group.
Let $\bP_{I_1}$,  ...  , $\bP_{I_l}$ be representatives
of the $\bG(\ol k )$-conjugacy classes of parabolic subgroups of $\bG^0.$
Let $\bL_{I_j}$ be a Levi subgroup of $\bP_{I_j}$ for $j=1,...l$.
Then
$$
\bigsqcup\limits_{j=1,...,l} \, H^1_{loop}\big(R_n, \bN_{\bG}(\bP_{I_j}, \bL_{I_j})\big)_ {irr}
\enskip \simeq \enskip  H^1_{loop}(R_n, \bG) \simeq  H^1(F_n,\bG).
$$
\end{corollary}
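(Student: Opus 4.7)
The plan is to deduce this from Theorem~\ref{acyclic} applied not only to $\bG$ itself, but also to each of the subgroups $\bN_{\bG}(\bP_{I_j}, \bL_{I_j})$, and then to import the Witt--Tits decomposition over the iterated Laurent series field $F_n$ from \cite[th.~2.15]{Gi4}. The second bijection in the statement, namely $H^1_{loop}(R_n,\bG) \simeq H^1(F_n,\bG)$, is immediate from Theorem~\ref{acyclic}, so the entire content lies in establishing the first bijection.

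First I would write down, for each index $j$, the commutative square
\[
\begin{CD}
H^1_{loop}\big(R_n, \bN_{\bG}(\bP_{I_j}, \bL_{I_j})\big) @>>> H^1_{loop}(R_n, \bG) \\
@VVV @VVV \\
H^1\big(F_n, \bN_{\bG}(\bP_{I_j}, \bL_{I_j})\big) @>>> H^1(F_n, \bG)
\end{CD}
\]
in which the horizontal arrows are induced by the inclusion $\bN_{\bG}(\bP_{I_j}, \bL_{I_j}) \hookrightarrow \bG$ and the vertical arrows are the restriction maps. Since $\bN_{\bG}(\bP_{I_j}, \bL_{I_j})$ is a closed subgroup of $\bG$, hence a linear algebraic $k$-group, Theorem~\ref{acyclic} applies to it and makes the left vertical arrow a bijection; the right vertical arrow is a bijection for the same reason applied to $\bG$.

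Next I would argue that the irreducibility subset on the $R_n$-side matches its counterpart on the $F_n$-side under the left vertical bijection. This is precisely the content of Corollary~\ref{flag}(2): for a loop cocycle $\eta$ with values in $\bN_{\bG}(\bP_{I_j}, \bL_{I_j})$, irreducibility of the twisted reductive group $_{\eta}\bigl(\bN_{\bG}(\bP_{I_j}, \bL_{I_j})^0/R_u(\cdot)\bigr)$ over $R_n$ is equivalent to irreducibility of its base change to $F_n$. Consequently the vertical bijection of Theorem~\ref{acyclic} restricts to a bijection
\[
H^1_{loop}\big(R_n, \bN_{\bG}(\bP_{I_j}, \bL_{I_j})\big)_{irr} \;\simeq\; H^1\big(F_n, \bN_{\bG}(\bP_{I_j}, \bL_{I_j})\big)_{irr}.
\]

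Finally I would invoke the Witt--Tits decomposition over $F_n$ of \cite[th.~2.15]{Gi4}, which gives
\[
\bigsqcup_{j=1,\dots,l} H^1\big(F_n, \bN_{\bG}(\bP_{I_j}, \bL_{I_j})\big)_{irr} \;\simeq\; H^1(F_n, \bG),
\]
and assemble the three isomorphisms to obtain the claimed decomposition of $H^1_{loop}(R_n,\bG)$. There is no genuine obstacle: the whole argument is the transfer, via the acyclicity theorem, of the known Witt--Tits picture over $F_n$ to the Laurent polynomial ring $R_n$. The only point demanding care is the compatibility of the ``irreducible'' subsets across the vertical bijections, which is handled cleanly by Corollary~\ref{flag}(2); the rest is diagram chasing.
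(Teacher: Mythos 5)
Your proposal is correct and follows essentially the same route the paper intends: the paper derives Corollary~\ref{WT3} directly from Theorem~\ref{acyclic} together with the Witt--Tits decomposition over $F_n$ from \cite[th.~2.15]{Gi4}, and the compatibility of the ``irreducible'' subsets across the acyclicity bijection is exactly the content of Corollary~\ref{flag}(2). You have merely spelled out the diagram chase that the paper leaves implicit.
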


\begin{remark}
 It follows from the Corollary that we have a ``Witt-Tits decomposition" for loop torsors.
Furthermore, if we are interested in purely geometrical irreducible  loop torsors,
then we have a nice description in terms of $k$--group homomorphisms ${_\infty \bmu} \to \bG$ as described in Theorem \ref{k-aniso-bis}. This corresponds
 to the embedding of $\Hom_{k-gp}({_\infty \bmu}_{irr}^n, \bG) / \bG(k)$ in $H^1(R_n,\bG)$.
\end{remark}

For future use we record the  connected case.

\begin{corollary}\label{WT4}  Assume that $\bG$ is a split reductive group.

\begin{enumerate} 
\item Let $\bP_{I_1}$,  ...  , $\bP_{I_l}$ be the standard 
$k$--parabolic subgroups containing a given Borel subgroup of $\bG/k$.
Then
$$
\bigsqcup\limits_{j=1,...,l} \, H^1_{loop}(R_n, \bP_{I_j})_ {irr}
\enskip \simeq \enskip  H^1_{loop}(R_n, \bG) \simeq  H^1(F_n,\bG).
$$
\item If $k$ is algebraically closed$$
\Hom_{k-gp, irr}({_\infty \bmu}^n, \bG) / \bG(k) \simeq
H^1_{loop}(R_n, \bG)_{irr} \simeq  H^1(F_n,\bG)_{irr}.
$$
\end{enumerate}
\end{corollary}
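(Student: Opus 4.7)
The plan is to deduce both assertions from the Witt--Tits decomposition of Corollary \ref{WT3} together with the Acyclicity Theorem \ref{acyclic}, with one small additional input (Sansuc) in part (1) and the dictionary of Lemma \ref{salade2} in part (2).

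For (1), since $\bG$ is connected reductive, I would first verify that $\bN_{\bG}(\bP_{I_j},\bL_{I_j})=\bL_{I_j}$, allowing me to simplify the decomposition of \ref{WT3}. Parabolics in connected reductive groups are self-normalizing, so $\bN_{\bG}(\bP_{I_j})=\bP_{I_j}$; writing $\bP_{I_j}=\bL_{I_j}\ltimes\bU_{I_j}$, an element $u\in\bU_{I_j}$ that normalizes $\bL_{I_j}$ gives (modulo $\bL_{I_j}\cap\bU_{I_j}=\{1\}$) an element of the finite group $\bN_{\bG}(\bL_{I_j})/\bL_{I_j}$, hence a finite subgroup of the unipotent $k$--group $\bU_{I_j}$; in characteristic zero this forces $u=1$. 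Moreover, because $\bG$ is $k$--split, the standard parabolics containing a fixed Borel form a full set of representatives for the $\bG(\ol k)$--conjugacy classes of parabolic subgroups of $\bG$.

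Next, I would apply Sansuc's Lemma \ref{sansuc} to the projection $\bP_{I_j}\twoheadrightarrow\bP_{I_j}/\bU_{I_j}\cong\bL_{I_j}$ to obtain a bijection $H^1(R_n,\bP_{I_j})\simeq H^1(R_n,\bL_{I_j})$, then check that it restricts to a bijection on loop classes: the forward direction is functoriality of loop cohomology applied to $\bP_{I_j}\twoheadrightarrow\bL_{I_j}$, while the inverse is supplied by the Levi section $\bL_{I_j}\hookrightarrow\bP_{I_j}$. The irreducibility condition is preserved on the nose, since the twisted group ${_\eta\bG}$ depends only on the pushforward of $\eta$ to $H^1(R_n,\bG)$. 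Substituting into \ref{WT3} and combining with $H^1_{loop}(R_n,\bG)\simeq H^1(F_n,\bG)$ from \ref{acyclic} then yields (1).

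For (2), with $k$ algebraically closed the fundamental sequence \ref{fundamentalexact} collapses to $\pi_1(R_n)=\widehat{\Z}^n$ with trivial Galois action, so every loop cocycle is purely geometric. By Example \ref{kloopexamples}(b), a loop class is a $\bG(k)$--conjugacy class of continuous homomorphisms $\widehat{\Z}^n\to\bG(k)$, and by Lemma \ref{salade2} this is the same datum as a $\bG(k)$--conjugacy class of $k$--group homomorphisms ${_\infty\bmu}^n\to\bG$. Corollary \ref{flag}.2 identifies irreducibility of the loop torsor over $R_n$ with irreducibility of the corresponding $k$--group homomorphism, and equally with irreducibility of the associated $F_n$--group. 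Injectivity on the irreducible locus comes from Lemma \ref{irr-case} (which reduces the irreducible case to the anisotropic one, to which Theorem \ref{k-aniso-bis} applies), and surjectivity onto $H^1(F_n,\bG)_{irr}$ is \ref{acyclic} combined once more with \ref{flag}.2. The chain of bijections in (2) follows.

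The only step demanding genuine care is the identification $\bN_{\bG}(\bP_{I_j},\bL_{I_j})=\bL_{I_j}$ and the verification that Sansuc's bijection restricts to loop classes; both are short classical arguments and no serious obstacle is expected once \ref{WT3}, \ref{irr-case} and \ref{acyclic} are in hand.
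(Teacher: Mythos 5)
Your argument for part (1) is correct and follows the route the paper clearly has in mind: for connected $\bG$ parabolics are self-normalizing and a Levi of $\bP_I$ is its own normalizer inside $\bP_I$ (the slick way to see $u\in\bU_I$, $u\bL_Iu^{-1}=\bL_I\Rightarrow u=1$ is the uniqueness of the element of $\bU_I$ conjugating two Levis of $\bP_I$, \cite[XXVI.1]{SGA3}, rather than your finite-subgroup argument, but both work in characteristic $0$), so $\bN_\bG(\bP_{I_j},\bL_{I_j})=\bL_{I_j}$, and Sansuc's bijection $H^1(R_n,\bP_{I_j})\simeq H^1(R_n,\bL_{I_j})$ is compatible with the map to $H^1(R_n,\bG)$ and hence restricts to loop and irreducible classes. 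Substituting into Corollary~\ref{WT3} and invoking Theorem~\ref{acyclic} gives (1).

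Part (2) has a genuine gap where you claim ``injectivity on the irreducible locus comes from Lemma \ref{irr-case}.'' Lemma \ref{irr-case} asserts that $H^1_{loop}(R_n,\bG)_{irr}$ \emph{as a subset of $H^1(R_n,\bG)$} injects into $H^1(F_n,\bG)$; this is a statement about cohomology \emph{classes}, and its proof produces an equality $[\eta]=[\eta']$ in $H^1(R_n,\bG)$, not a $\bG(k)$-conjugacy between the cocycles $\eta^{geo}$ and ${\eta'}^{geo}$. What (2) actually requires is the finer, cocycle-level statement that the map
$\Hom_{k\text{-gp},irr}({_\infty\bmu}^n,\bG)/\bG(k)\to H^1_{loop}(R_n,\bG)$
is injective, and the correct reference for that is Theorem~\ref{k-aniso-bis} — but that theorem needs $\eta^{geo}$ \emph{anisotropic}, not merely irreducible. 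The reduction in the proof of Lemma~\ref{irr-case} factors out the fixed central split torus $\bC_0$, and while it identifies the $H^1$-classes it does not control the $\bC_0$-valued ``discrepancy'' between the two cocycles: after conjugating to arrange $q_*\eta^{geo}=q_*{\eta'}^{geo}$ in $\bG/\bC_0$, the two homomorphisms may still differ by a nontrivial $\psi:\widehat{\Z}^n\to\bC_0(k)$, which over $R_n$ gives a trivial torsor because $\Pic(R_n)=0$. Concretely, take $\bG=\GG_m$, $n=1$, $k=\overline{k}$: every continuous homomorphism $\widehat{\Z}\to k^\times$ is irreducible (there are no proper parabolics), so $\Hom_{k\text{-gp},irr}({_\infty\bmu},\GG_m)/\GG_m(k)\simeq\Q/\Z$, but $H^1_{loop}(R_1,\GG_m)_{irr}\subset\Pic(R_1)=0$ is a point. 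So the left bijection in (2) fails unless one adds a hypothesis. For $\bG$ \emph{semisimple} the radical is trivial, hence ``irreducible $=$ anisotropic'' and Theorem~\ref{k-aniso-bis} gives exactly what is needed; alternatively one can keep general reductive $\bG$ but replace the naive conjugacy on homomorphisms by the coarser equivalence modulo the central torus, which is precisely what the paper does later in Theorem~\ref{mainn} with the relation $\sim_I$. You should either add the semisimplicity hypothesis or flag that the left identification in (2), as stated, is only a surjection for non-semisimple $\bG$.
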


Using our  choice of roots of unity (\ref{laurent}), 
we have ${_\infty\bmu} \simeq \widehat \Z$. So the left  handside is
$\Hom_{gp}\big({\widehat \Z}^n, \bG(k)\big)_{irr} / \bG(k)$, namely
the  $\bG(k)$--conjugacy classes
of finite order irreducible
 pairwise commuting elements $(g_1,...,g_n)$ (irreducible in the sense that the elements do not belong to a proper parabolic subgroup).

\subsection{Classification of semisimple $k$--loop adjoint groups}

Next  we discuss in detail the important  case where our algebraic group is the group  $\bAut(\bG)$ of automorphisms of a  split semisimple group $\bG$ of adjoint type. This is the situation needed to classify forms of the $R_n$--group $\bG \times_k R_n$
 and of the corresponding $R_n$--Lie algebra $\mathfrak{g} \otimes_k R_n$ where $\mathfrak{g}$ is the Lie algebra of $\bG.$ Indeed it is this particular case, and its applications to infinite- dimensional Lie theory as described in \cite{P2} and  \cite{GP2} for example,  that have motivated our present work.

We fix a  ``Killing couple"  $\bT \subset \bB$ of  $\bG,$ as well as  a base  $\Delta$ of the  corresponding root system.
For each subset $I \subset \Delta$ we define as usual $$
\bT_I = \bigl( \bigcap\limits_{\alpha \in I} \ker(\alpha) \bigr)^0.
$$
Since $\bG$ is adjoint, we know that the roots define an isomorphism
$\bT \simeq (\GG_m)^{| \Delta |},$ hence
 $\bT_I \simeq (\GG_m)^{| \Delta \setminus I |}$.
The centralizer  $\bL_I:= \bC_{\bG}(\bT_I),$ is the standard Levi subgroup of the
parabolic subgroup $\bP_I = \bU_I \rtimes \bL_I$ attached to $I$.
Since $\bG$ is of adjoint type, we know that $\bL_I/ \bT_I$ is a semisimple  $k$--group of adjoint type.

We have a split exact sequence of $k$--groups
$$
1 \to \bG \to \bAut(\bG) \to \bOut(\bG) \to 1
$$
where $\bOut(\bG)$ is the finite constant $k$--group corresponding to the finite (abstract) group $\Out (\bG)$ of symmetries  of the Dynkin diagram of $\bG.$ [XXIV \S3].\footnote{In \cite{SGA3} the group $\bOut(\bG)$ is denoted by
$\bAut({\rm Dyn}).$}
For  $I \subset \Delta$, we need to describe the normalizer $\bN_{\Aut(\bG)}(\bL_I)$
of $\bL_I$.
Following \cite[16.3.9.(4)]{Sp}, we define the subgroup of
$I$-automorphisms of $\bG$ by
$$
\bAut_I(\bG) = \bAut(\bG,\bP_I, \bL_I) 
$$
where the latter group is the subgroup of $\bAut(\bG)$ that stabilizes both $\bP_I$ and $\bL_I.$ There is then an exact sequence
$$
1 \to \bL_I \to \bAut_I(\bG) \to \bOut_I(\bG) \to 1,
$$
where $\bOut_I(\bG)$ is the finite constant group corresponding to  the subgroup of $\Out(\bG)$ consisting of elements that stabilize $I \subset \Delta.$ Then the preceding Corollary reads
$$
\bigsqcup\limits_{ [I] \subset \Delta/ \Out(\bG)} \, H^1_{loop}\big(R_n, \bAut_I(\bG)\big)_ {irr}
\enskip \simeq \enskip  H^1_{loop}\big(R_n, \bAut(\bG)\big) \simeq  H^1\big(F_n,\bAut(\bG)\big).
$$
By \cite[cor. 3.5]{Gi4},
 $H^1_{loop}\big(R_n, \bAut_I(\bG)\big)_ {irr} \simeq H^1\big(F_n, \bAut_I(\bG)\big)_ {irr}$
can be seen as a subset of
$H^1_{loop}\big(R_n, \bAut_I(\bG)/\bT_I\big)_ {an} \simeq H^1\big(F_n, \bAut_I(\bG)/\bT_I\big)_ {an}$.
We come now to another of the central results of the paper.

\begin{theorem}\label{mainn} Assume that $k$ is algebraically closed and of characteristic $0.$ Let $\bG$ be a 
 simple $k$--group of adjoint type. Let $\bT \subset \bB$, $I$, and $\Delta$ be as above.
On the set $\Hom_{k-gp}\big( {_\infty\bmu}^n, \bAut_I(\bG)\big)$ define the equivalence relation
$\phi \sim_I \phi'$ if there exists $g \in \bAut_I(\bG)(k)$ such that
$\phi$ and  $g \phi' g^{-1}$ have same image in $\Hom_{k-gp}\big( {_\infty\bmu}^n, \bAut_I(\bG)/ \bT_I\big)$.
 Then we have a decomposition
 $$
  \bigsqcup\limits_{ [I] \subset \Delta/ \Out(\bG)} \, 
\Hom_{k-gp}( {_\infty\bmu}^n, \bAut_I(\bG))_{an} / \sim_I
\enskip \simlgr \enskip H^1_{loop}(R_n, \bG) \enskip \simlgr H^1(F_n, \bG).
$$
where $\Hom_{k-gp}\big(  {_\infty\bmu}^n, \bAut_I(\bG)/\bT_I\big)_{an}$ stands for the set of anisotropic
group homomorphisms ${_\infty \bmu}^n \to  \bAut_I(\bG)/\bT_I$.

\end{theorem}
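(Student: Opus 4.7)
The plan proceeds by combining three ingredients already established: the Witt--Tits decomposition (Corollary \ref{WT3}), the acyclicity theorem (Theorem \ref{acyclic}), and the anisotropic classification (Theorem \ref{k-aniso-bis}). The statement as written compares a set of $\bAut_I(\bG)$-parameters to $H^1_{loop}(R_n,\bG)$ and $H^1(F_n,\bG)$; since $\bG$ is adjoint, twisted forms of the $R_n$-group $\bG \times_k R_n$ are classified by $H^1(R_n,\bAut(\bG))$, and the role of $\bAut_I(\bG)$ on the left-hand side forces us to read the cohomologies on the right-hand side as the corresponding $\bAut(\bG)$-cohomologies (via the embedding $\bG \hookrightarrow \bAut(\bG)$ which induces an identification of the relevant loop-torsor sets, using that irreducible loop torsors are detected through their twisted Dynkin data). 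Under this identification, Theorem \ref{acyclic} applied to $\bAut(\bG)$ provides the second bijection
\[
H^1_{loop}(R_n,\bG) \;=\; H^1_{loop}\bigl(R_n,\bAut(\bG)\bigr) \;\simlgr\; H^1\bigl(F_n,\bAut(\bG)\bigr) \;=\; H^1(F_n,\bG).
\]

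Next, I would apply Corollary \ref{WT3} to $\bAut(\bG)$ to obtain the decomposition
\[
\bigsqcup_{[I]\subset \Delta/\Out(\bG)} H^1_{loop}\bigl(R_n,\bAut_I(\bG)\bigr)_{irr} \;\simlgr\; H^1_{loop}\bigl(R_n,\bAut(\bG)\bigr),
\]
the index set being $\Delta/\Out(\bG)$ because $\bAut(\bG)(\ol k)$-conjugacy classes of parabolic subgroups of $\bG\times_k \ol k$ correspond to $\Out(\bG)$-orbits on subsets of the basis $\Delta$. For each $I$, I then use the split central torus $\bT_I$ of $\bL_I=\bAut_I(\bG)^0$ and the exact sequence $1 \to \bT_I \to \bAut_I(\bG) \to \bAut_I(\bG)/\bT_I \to 1$. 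Since $\bT_I$ is $k$-split and $\Pic(R_n)=0,$ we have $H^1(R_n,\bT_I)=0$; combining this with the twisting-argument of Lemma \ref{sansuc}-type shows that the map $H^1_{loop}(R_n,\bAut_I(\bG)) \hookrightarrow H^1_{loop}(R_n,\bAut_I(\bG)/\bT_I)$ is injective, while Corollary \ref{flag}(3) matches the irreducibility condition upstairs with anisotropy downstairs, yielding
\[
H^1_{loop}\bigl(R_n,\bAut_I(\bG)\bigr)_{irr} \;\simlgr\; H^1_{loop}\bigl(R_n,\bAut_I(\bG)/\bT_I\bigr)_{an}.
\]

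Because $k$ is algebraically closed, Lemma \ref{Theta} shows every loop cocycle is purely geometric, and by Lemma \ref{salade2} it corresponds to a $k$-group homomorphism ${_\infty\bmu}^n \to \bAut_I(\bG)/\bT_I$. Theorem \ref{k-aniso-bis} applied to the reductive $k$-group $\bAut_I(\bG)/\bT_I$ classifies the anisotropic classes as
\[
H^1_{loop}\bigl(R_n,\bAut_I(\bG)/\bT_I\bigr)_{an} \;\simlgr\; \Hom_{k-gp}\bigl({_\infty\bmu}^n,\bAut_I(\bG)/\bT_I\bigr)_{an} \big/ \bigl(\bAut_I(\bG)/\bT_I\bigr)(k).
\]
Every homomorphism $\bar\phi: {_\infty\bmu}^n \to \bAut_I(\bG)/\bT_I$ admits a lift $\phi: {_\infty\bmu}^n \to \bAut_I(\bG)$ because ${_\infty\bmu}^n$ is of multiplicative type and the obstruction in $\Ext^1_{k-gp}({_\infty\bmu}^n,\bT_I)$ vanishes; the equivalence relation $\sim_I$ on such lifts is by construction exactly the pullback of $(\bAut_I(\bG)/\bT_I)(k)$-conjugacy along the quotient map.

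The main obstacle is the final bookkeeping of the equivalence relation. One must verify that two lifts $\phi,\phi': {_\infty\bmu}^n \to \bAut_I(\bG)$ define the same cohomology class in $H^1_{loop}(R_n,\bAut_I(\bG))$ if and only if their projections to $\bAut_I(\bG)/\bT_I$ are related by a common conjugation by some $g \in \bAut_I(\bG)(k)$. The nontrivial point is that $\bT_I$ is only central in $\bL_I$ and not in all of $\bAut_I(\bG),$ so the sequence $1 \to \bT_I \to \bAut_I(\bG) \to \bAut_I(\bG)/\bT_I \to 1$ is not central; one has to check that the ambiguity in lifting a conjugator from $(\bAut_I(\bG)/\bT_I)(k)$ to $\bAut_I(\bG)(k)$ lies in $\bT_I(k)$ and that the vanishing $H^1(R_n,\bT_I)=0$ together with the $\bOut_I(\bG)$-action on $\bT_I$ forces the conjugation by $\bT_I(k)$ to become trivial after projection, yielding exactly the relation $\sim_I$ of the theorem.
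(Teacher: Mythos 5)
Your skeleton coincides with the paper's (Corollary \ref{WT3} applied to $\bAut(\bG)$, the Acyclicity Theorem \ref{acyclic}, and Theorem \ref{k-aniso-bis} for the quotient $\bAut_I(\bG)/\bT_I$), but the step that actually carries the theorem is not proved. The paper's key input is the injectivity of $H^1_{loop}\big(R_n,\bAut_I(\bG)\big)_{irr}\hookrightarrow H^1_{loop}\big(R_n,\bAut_I(\bG)/\bT_I\big)_{an}$, which it quotes from \cite[cor. 3.5]{Gi4} over $F_n$ and transports by acyclicity. You replace this by the claim that the whole map $H^1_{loop}\big(R_n,\bAut_I(\bG)\big)\to H^1_{loop}\big(R_n,\bAut_I(\bG)/\bT_I\big)$ is injective because $\bT_I$ is split, $\Pic(R_n)=0$, plus a ``Lemma \ref{sansuc}-type'' twisting argument. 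This fails: $\bT_I$ is normal but \emph{not central} in $\bAut_I(\bG)$ (the conjugation action factors through $\bOut_I(\bG)$ and is nontrivial in general), so the twisting argument would require $H^1(R_n,{_\eta}\bT_I)=0$ for \emph{every} loop cocycle $\eta$, and for $\eta$ with nontrivial outer part the twist ${_\eta}\bT_I$ is a nonsplit $R_n$--torus whose $H^1$ (which equals $H^1(F_n,\cdot)$ by Lemma \ref{tori1}) is in general nonzero. Your argument therefore at best handles the classes acting trivially on $\bT_I$; the injectivity you need on the irreducible locus is a genuinely finer statement and must be cited or proved.

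Two further points. First, your parametrization passes through $\Hom_{k-gp}\big({_\infty\bmu}^n,\bAut_I(\bG)/\bT_I\big)_{an}$ and then lifts such homomorphisms to $\bAut_I(\bG)$, placing the obstruction in $\Ext^1_{k-gp}({_\infty\bmu}^n,\bT_I)$; since the extension $1\to\bT_I\to\bAut_I(\bG)\to\bAut_I(\bG)/\bT_I\to 1$ is not central, the obstruction does not live in that $\Ext^1$, and lifting along non-central extensions by tori can genuinely fail (already for $\bmu_2$ mapping to the quotient of a non-split extension of $\Z/2\Z$ by $\G_m$ with the inversion action). The paper avoids lifting altogether: the left-hand side consists of homomorphisms into $\bAut_I(\bG)$ itself, and surjectivity onto $H^1_{loop}\big(R_n,\bAut_I(\bG)\big)_{irr}$ is just the fact that over an algebraically closed field every loop class is purely geometric. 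Second, the identification of the induced equivalence relation with $\sim_I$ --- which you flag as ``the main obstacle'' but do not carry out --- is exactly where the paper concludes: injectivity of $\overline{\Hom}_{k-gp}\big({_\infty\bmu}^n,\bAut_I(\bG)/\bT_I\big)_{an}\to H^1_{loop}\big(R_n,\bAut_I(\bG)/\bT_I\big)_{an}$ is Theorem \ref{k-aniso-bis}, and a conjugator downstairs lifts upstairs because $\bAut_I(\bG)(k)\to\big(\bAut_I(\bG)/\bT_I\big)(k)$ is onto ($k$ algebraically closed, connected kernel). Finally, your reading of the right-hand side $H^1_{loop}(R_n,\bG)$ as the set classifying forms, i.e. as $\bAut(\bG)$--cohomology, is the intended one, but it is not ``induced by the embedding $\bG\hookrightarrow\bAut(\bG)$'' as you assert; that map identifies neither the loop sets nor their fibers, and the equality is simply the paper's abuse of notation.
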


\begin{remark}\label{Leftschetz} As an application of Margaux's rigidity theorem \cite{Mg2}, the right handside does not change by extension of algebraically closed fields. Hence
$H^1_{loop}(R_n, \bG)$ does not change by extension of algebraically closed fields.
This allows us in practice whenever useful to work over $\overline \Q$ or $\C$.
\end{remark}

\begin{proof}  The group $(\bAut_I(\bG)/\bT_I)(k)$  acts naturally on
 the set $\Hom_{k-gp}\big(  {_\infty\bmu}^n, \bAut_I(\bG)/\bT_I\big)_{an}$ by conjugation, and we denote the resulting quotient set by $\overline{\Hom}_{k-gp}\big(  {_\infty\bmu}^n,
\bAut_I(\bG)/\bT_I\big)_{an}.$ The commutative square
$$
\begin{CD}
\Hom_{k-gp}\big( {_\infty\bmu}^n, \bAut_I(\bG)\big)_{irr}  @>>> \overline{\Hom}_{k-gp}\big(  {_\infty\bmu}^n, \bAut_I(\bG)/\bT_I\big)_{an} \\
@VVV @VVV \\
 H^1_{loop}\big(R_n, \bAut_I(\bG)\big)_ {irr} @>>> H^1_{loop}\big(R_n, \bAut_I(\bG)/ \bT_I\big)_ {an} \\
\end{CD}
$$
is well defined as one can see by taking into account Corollary \ref{flag}.
Since $k$ is algebraically closed, loop torsors are purely geometric, hence
the two vertical maps are onto.
As we have seen, the bottom horizontal map is injective;
 this  defines an equivalence relation $\sim_I'$ on the set
$\Hom_{k-gp}\big(  {_\infty\bmu}^n, \bAut_I(\bG)\big)$ such that
$$ 
\Hom_{k-gp}\big(  {_\infty\bmu}^n, \bAut_I(\bG)\big)_{irr} / \sim_I'  \enskip
\simlgr
\enskip
H^1_{loop}(R_n, \bAut_I(\bG)).
$$
It remains to establish that  the equivalence relations $\sim_I'$ and $\sim_I$ coincide, and we do this by using
that the right vertical map in the above diagram is injective.
(Theorem \ref{k-aniso-bis}).
We are given
$\phi_1, \phi_2 \in \Hom_{k-gp}\big(  {_\infty\bmu}^n, \bAut_I(\bG)\big)_{irr}$.
Then $\phi_1 \sim_I' \phi_2$ if and only if the image of
$\phi_1$ and $\phi_2$ in $\Hom_{k-gp}\big(  {_\infty\bmu}^n, \bAut_I(\bG)/\bT_I\big)_{an}$ are conjugate by an element
of $\bigl(\bAut_I(\bG)/\bL_I\bigr)(k)$.
Since  the map $\bAut_I(\bG)(k) \to  \big(\bAut_I(\bG)/\bL_I\big)(k)$ is onto,
it follows that $\phi_1 \sim_I' \phi_2$ if and only if $\phi_1 \sim_I \phi_2$ as desired.
\end{proof}

\begin{corollary} Under the hypothesis of Theorem \ref{mainn},  the classification of
loop torsors on $R_n$ ``is  the same"  as the classification, for each subset $I \subset \Delta,$   
 of irreducible
commuting $n$--uples of elements of finite order  of $\bAut_I(\bG)(k)$ up to the equivalence relation $\sim_I$.
\end{corollary}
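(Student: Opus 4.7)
The plan is to simply unpack the bijection of Theorem \ref{mainn} by translating the set $\Hom_{k\text{-}gp}({_\infty\bmu}^n, \bAut_I(\bG))$ into the language of commuting $n$--tuples. First I would invoke Theorem \ref{mainn}, which gives
$$
\bigsqcup_{[I]\subset\Delta/\Out(\bG)} \Hom_{k\text{-}gp}\bigl({_\infty\bmu}^n,\bAut_I(\bG)\bigr)_{an}/\sim_I \;\simlgr\; H^1_{loop}(R_n,\bG),
$$
so it suffices to describe each term on the left as a set of $\bAut_I(\bG)(k)$--equivalence classes of $n$--tuples.

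Next I would use Lemma \ref{salade2} together with the hypothesis that $k$ is algebraically closed. For any linear algebraic $k$--group $\bH$ we have
$$
\Hom_{k\text{-}gp}({_\infty\bmu}^n,\bH) \;\simeq\; \Hom_{\Gal(k)}\bigl(\pi_1(\overline{\gX},\overline a),\bH(k_s)\bigr) \;=\; \Hom_{ct}(\widehat\Z^n,\bH(k)),
$$
where the last identification uses that $\Gal(k)=1$ and that $\pi_1(\overline{\gX},\overline a)\simeq\widehat\Z^n$ by the computation of \S\ref{laurent}. As recorded in Example \ref{kloopexamples}(b2), continuity of a homomorphism $\widehat\Z^n\to \bH(k)$ is equivalent to the image of each standard generator having finite order, so this set is in canonical bijection with the set of $n$--tuples $(g_1,\dots,g_n)$ of pairwise commuting finite order elements of $\bH(k).$ Applied to $\bH=\bAut_I(\bG)$ this gives the desired reformulation of the source.

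It then remains to translate the decorations ``anisotropic'' and ``$\sim_I$'' into properties of tuples. For the former I would appeal to Corollary \ref{flag}.3 applied to $\bAut_I(\bG)/\bT_I$: an element $\phi\in\Hom_{k\text{-}gp}({_\infty\bmu}^n,\bAut_I(\bG))$ is anisotropic in the sense of Theorem \ref{mainn} if and only if its image in $\Hom_{k\text{-}gp}({_\infty\bmu}^n,\bAut_I(\bG)/\bT_I)$ is anisotropic, which under the above dictionary amounts to saying that the tuple $(\overline g_1,\dots,\overline g_n)\subset(\bAut_I(\bG)/\bT_I)(k)$ is not contained in a proper parabolic subgroup; equivalently, in view of the definition of $\bAut_I(\bG)$ as the stabilizer of the pair $(\bP_I,\bL_I)$, the original tuple $(g_1,\dots,g_n)\subset\bAut_I(\bG)(k)$ is irreducible. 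For the equivalence relation, by construction $\phi\sim_I\phi'$ corresponds precisely to the existence of an element of $\bAut_I(\bG)(k)$ conjugating one tuple onto the other modulo $\bT_I.$

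No step here is really an obstacle: the content of the Corollary is essentially notational, and the only point requiring a little care is verifying that ``irreducible'' for a tuple (``not contained in a proper parabolic'') matches ``anisotropic'' for the associated morphism into $\bAut_I(\bG)/\bT_I$; this is exactly what Corollary \ref{flag} delivers. Combining the four steps yields the stated reformulation of $H^1_{loop}(R_n,\bG)$.
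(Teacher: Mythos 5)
Your proposal is correct, and since the paper prints this corollary with no explicit proof (it is offered as an immediate restatement of Theorem \ref{mainn}), your unwinding is essentially the argument the authors have in mind: invoke the theorem, use Lemma \ref{salade2} with $\Gal(k)=1$ and $\pi_1(\ol{\gX},\ol a)\simeq\widehat\Z^n$ to identify $\Hom_{k\text{-}gp}({_\infty\bmu}^n,\bAut_I(\bG))$ with continuous homomorphisms $\widehat\Z^n\to\bAut_I(\bG)(k)$, i.e. with commuting $n$--tuples of finite order (cf.\ Example \ref{kloopexamples}(b2)), and then match the decorations.

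One remark worth flagging. Your third step works, but it compensates for what is almost certainly a misprint in the statement of Theorem \ref{mainn}: the disjoint union there should read $\Hom_{k\text{-}gp}({_\infty\bmu}^n,\bAut_I(\bG))_{irr}$ rather than $(\cdots)_{an}$, since that is exactly what the commutative square in the proof of the theorem establishes, and it is what makes the corollary's mention of ``irreducible $n$--uples'' line up on the nose. With that correction the detour through anisotropy of the image in $\bAut_I(\bG)/\bT_I$ is unnecessary: irreducibility of the morphism (in the sense of \S\ref{sec-irr}) translates directly to irreducibility of the tuple. Your reconciliation is nonetheless sound, because $\bAut_I(\bG)/\bT_I$ has semisimple adjoint connected component $\bL_I/\bT_I$ with trivial radical, so anisotropy and irreducibility coincide there, and parabolics of $\bL_I$ correspond bijectively to parabolics of $\bL_I/\bT_I$ (the central torus $\bT_I$ lies in every parabolic). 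Two small citation quibbles: the equivalence ``anisotropic homomorphism $\Leftrightarrow$ tuple not in a proper parabolic'' is a matter of the definitions in \S\ref{sec-irr} rather than a consequence of Corollary \ref{flag}.3 (which compares isotropy over $R_n$, $K_n$, $F_n$ and over $k$), and the justification you give via ``the definition of $\bAut_I(\bG)$ as a stabilizer'' would be better replaced by the centrality of $\bT_I$ in $\bL_I$ just mentioned.
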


\subsection{Action of $\GL_n(\Z)$} 

The assumptions are as in the previous section. We fix a 
 pinning (\'epinglage)  $(\bG,\bB,\bT)$ [XXIV \S1]. This determines a section  $s : \bOut(\bG) \to \bAut(\bG).$

 The group $\GL_n(\Z)$ acts on the left as automorphisms of the $k$--algebra $R_n$ via
\begin{equation}\label{GLonRn}
 g =(a_{ij}) \in \GL_n(\Z) : t_i \mapsto t_1^{a_{1i}} t_2^{a_{2i}} \dots t_n^{a_{ni}}
 \end{equation}
We denote the resulting $k$--automorphism of $R_n$ corresponding to $g$ also by $g$ since no confusion will arise. By Yoneda considerations $g$ (anti)corresponds to an automorphism $g^*$ of the $k$--scheme $\Spec(R_n).$

Applying (\ref{GLonRn})  where we now replace $t_i$ by $t_1^{1/m}$ and $k$ by $\ol k$ gives a left action of $\GL_n(\Z)$ as automorphisms of $\overline{R}_{n,m} =  \overline{k}[t^{\pm{1/m}}, \dots, t_n^{\pm{1/m}}].$ If we denote by $g_m$ the automorphism corresponding to $g$ then the diagram

 $$
\begin{CD}\label{gaction}
 R_n@>{g}>>  R_n\\
@VVV @VVV\\
\overline{R}_{n,m}@>{g_m}>> \overline{R}_{n,m}\\
\end{CD}
$$ 
commutes. Passing to the direct limit on (\ref{gaction}) the element $g$ induces an automorphism $g_{\infty}$ of  $\overline{R}_{n,\infty} =  \limind\overline{k}[t^{\pm{1/m}}, \dots, t_n^{\pm{1/m}}].$ If no confusion is possible, we will denote $g_{\infty}$ and $g_m$ simply by $g.$

Recall that $\pi_1(R_n) = \widehat{\Z}(1)^n \rtimes \Gal(k).$ Our fixed choice of compatible roots of unity $(\xi_m)$ allows us to identify $\pi_1(R_n)$ with $\widehat{\Z}^n \rtimes \Gal(k)$ where the left action of $\Gal(k)$ on each component $\widehat{\Z} = \limproj \Z/m\Z$ is as follows:  If $a \in \Gal(k)$ and $m \geq 1$ there exists a unique $1 \leq a(m) \leq m-1$ such that $a (\xi_m) = \xi_m^{a(m)}.$ This defines an automorphism $a_m$ of the additive group $\Z/m\Z.$ Passing to the limit on each component yields the desired group automorphism $\widehat{a}$ of $\widehat{\Z}^n.$

View $\big(\Z/m\Z\big)^n$ as row vectors. Then $\GL_n(\Z)$ acts on the right on this group by right multiplication
$$
g : {\bf \rm e}_m \mapsto  {\bf \rm e}_m^g = {\bf \rm e}_mg
$$
where $\big(\Z/m\Z\big)^n$ is viewed as a $\Z$--module in the natural way. By passing to the inverse limit we get a right action of  $\GL_n(\Z)$ as automorphisms of $\widehat{\Z}^n$ that we denote by ${\bf \rm e} \mapsto {\bf \rm e}^g.$ We extend this to a right action on $\pi_1(R_n) = \widehat{\Z}^n \rtimes \Gal(k)$ by letting $\GL_n(\Z)$ act trivially on $\Gal(k).$ Thus if $\gamma = ({\bf \rm e}, a) \in \widehat{\Z}^n \rtimes \Gal(k)$ and $g \in \GL_n(\Z),$ then $\gamma^g = ({\bf \rm e}^g, a).$\footnote{After our identifications, this is nothing but the natural action of $g^*$ on $\pi_1\big(\Spec(R_n)\big).$}

By taking the foregoing discussion into consideration we can define the (right) semidirect product group $\GL_n(\Z) \ltimes \pi_1(R_n)$ with multiplication
\begin{equation}\label{semiproduct1}
\big(h,({\bf \rm e}, a)\big)\big(g , ({\bf \rm f}, b)\big) = \big( hg, ({\bf \rm e}^g, a)({\bf \rm f}, b)\big) = \big( hg, ({\bf \rm e}^g \widehat{a}({\bf \rm f}), ab)\big)
\end{equation}
for all $h,g \in \GL_n(\Z)$, $ {\bf \rm e}, {\bf \rm f} \in \widehat{\Z}^n$ and $a,b \in \pi_1(R_n).$ For future use we point out that under that under the natural identification of  $\GL_n(\Z)$ and $ \pi_1(R_n)$ with subgroups of $\GL_n(\Z) \ltimes \pi_1(R_n)$ we have
\begin{equation}\label{semiproduct}
\gamma g = g\gamma^g
\end{equation}
for all $g \in \GL_n(\Z)$ and $\gamma \in \pi_1(R_n).$

By definition $\pi_1(R_n)$ acts naturally on $\overline{R}_{n,\infty}.$ Under our identification  $\pi_1(R_n) = \widehat{\Z}^n \rtimes \Gal(k)$ the action is given by

\begin{equation}\label{semiproduct2}
({\bf \rm e}, a) : \lambda t_i^{1/m} \mapsto a(\lambda)\xi_m^{e_{m,i}}t_i^{1/m}
\end{equation}
where ${\bf \rm e} = ({ \bf \rm e}_1, \dots, {\bf \rm e}_n) \in \widehat{\Z}^n,$ ${\bf \rm e}_i = {({\rm e}_{m,i})}_{m \geq 1}$ with $0 \leq {\rm e}_{m,i}  < m,$ and $\lambda \in k.$
Using (\ref{semiproduct1}) and (\ref{semiproduct2}) it is tedious but straightforward to check that the group $\GL_n(\Z) \ltimes \pi_1(R_n)$ defined above acts on the left as automorphisms of the $k$--algebra $\overline{R}_{n,\infty}$ in a way which is compatible with the left actions of each of the groups, i.e.
\begin{equation}\label{compatible1}
(g\gamma). x = g . (\gamma . x)
\end{equation}  
for all $g \in  \GL_n(\Z),$  $\gamma \in \pi_1(R_n)$ and $x \in\overline{R}_{n,\infty}.$

In the reminder of this section we let $\bH$ denote  a linear algebraic group over $k$. Each element $g \in \GL_n(\Z)$  viewed as an automorphism $g^*$ of the $k$--scheme $\Spec(R_n)$ induces by functoriality  a bijection, also denoted by $g^*,$ of the pointed set $H^1(R_n, \bH)$ onto itself. This leads to a left action of $ \GL_n(\Z)$ on this pointed set which we called {\it base change.}  Our objective is to have a precise description of this action.\footnote{Our main interest is the case when $\bH = \bAut(\bG)$ with $\bG$ simple.  The reason behind 
the importance of this case lies in the applications to infinite-dimensional 
Lie theory.} 
The isotriviality theorem \cite[th. 2.9]{GP3} shows tha it will suffice to trace the base change action at the level of
$1$-cocycles in $Z^1\bigl(\pi_1(R_n), \bH({\ol R}_{n,\infty})\bigr).$
Following standard conventions for cocycles  we denote the action of an element $\gamma \in \pi_1(R_n)$ on an element $h \in \bH({\ol R}_{n,\infty})$ by ${^\gamma}h.$ Then (\ref{compatible1}) implies that
\begin{equation}\label{compatible2}
\gamma . h = {^\gamma}h.
\end{equation}

\begin{lemma}\label{cocycles-action} 
The base change action of $\GL_n(\Z)$ 
on $H^1(R_n, \bH)$ is induced by the  action $ \eta \mapsto {^g}\eta$ of  $\GL_n(\Z)$ on
 $Z^1\bigl(\pi_1(R_n), \bH({\ol R}_{n,\infty})\bigr)$  given by
 $$
({^g}\eta)(\gamma)= g. \eta(  \gamma ^g)$$
for all $\gamma \in \pi_1(R_n)$ and $  g \in \GL_n(\Z).$
\end{lemma}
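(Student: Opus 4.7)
The plan is to use the Isotriviality Theorem of \cite{GP3} together with Proposition \ref{discrete} to identify $H^1(R_n, \bH)$ with the Galois cohomology $H^1\bigl(\pi_1(R_n), \bH(\overline{R}_{n,\infty})\bigr)$, and then to compute the base change action explicitly at the cocycle level. Given a cocycle $\eta \in Z^1\bigl(\pi_1(R_n), \bH(\overline{R}_{n,\infty})\bigr)$, the associated torsor $\bE_\eta$ comes equipped with a canonical trivialization $\tau_\eta : (\bE_\eta)_{\overline{R}_{n,\infty}} \xrightarrow{\sim} \bH_{\overline{R}_{n,\infty}}$ which is characterized by the identity $\gamma(\tau_\eta) = \eta(\gamma)\cdot \tau_\eta$ for every $\gamma \in \pi_1(R_n)$, where $\gamma$ acts on $\tau_\eta$ via its action on the Galois pro-cover $\Spec(\overline{R}_{n,\infty})\to \Spec(R_n)$ and $\eta(\gamma)$ acts by left multiplication on $\bH_{\overline{R}_{n,\infty}}$.

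Next, I would construct a natural $\overline{R}_{n,\infty}$-trivialization $\tau_\eta^g$ of the pulled-back torsor $g^*\bE_\eta$ by using the canonical lift of $g\in \GL_n(\Z)$ to an automorphism $g_\infty$ of $\overline{R}_{n,\infty}$ produced in Remark~\ref{preGLn}. Concretely, the commutative square
$$
\begin{CD}
\Spec(\overline{R}_{n,\infty}) @>g_\infty^{\,*}>> \Spec(\overline{R}_{n,\infty}) \\
@VVV @VVV \\
\Spec(R_n) @>g^*>> \Spec(R_n)
\end{CD}
$$
yields an identification $\pi^*(g^*\bE_\eta) \simeq g_\infty^{\,*}\,\pi^*\bE_\eta$, and pulling back $\tau_\eta$ along $g_\infty^{\,*}$ gives the required trivialization $\tau_\eta^g$, where the target $g_\infty^{\,*}\bH_{\overline{R}_{n,\infty}}$ is canonically identified with $\bH_{\overline{R}_{n,\infty}}$ because $\bH$ is defined over $k$.

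The key computation is to read off the new cocycle $\eta' := {^g}\eta$ from this trivialization. Using the semidirect product relation $\gamma g_\infty = g_\infty \gamma^g$ of (\ref{semiproduct}) and the fact that $g_\infty$ acts on $\bH(\overline{R}_{n,\infty})$ precisely through its action on $\overline{R}_{n,\infty}$ (since $\bH$ is a $k$-group), we compute
$$
\gamma(\tau_\eta^g) \;=\; (\gamma g_\infty)(\tau_\eta) \;=\; (g_\infty\gamma^g)(\tau_\eta) \;=\; g_\infty\bigl(\eta(\gamma^g)\cdot\tau_\eta\bigr) \;=\; \bigl(g_\infty\cdot\eta(\gamma^g)\bigr)\cdot \tau_\eta^g,
$$
which by the defining property of $\eta'$ gives $\eta'(\gamma) = g\cdot\eta(\gamma^g) = ({^g}\eta)(\gamma)$, as asserted.

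The main technical obstacle is keeping the several compatible actions straight: the left action of $\GL_n(\Z)$ on $R_n$ (hence on $\overline{R}_{n,\infty}$), the left action of $\pi_1(R_n)$ on $\overline{R}_{n,\infty}$, the resulting right action of $\GL_n(\Z)$ on $\pi_1(R_n)$, and the interplay of these with the twisted $R_n$-algebras $R_n^g$ of Remark~\ref{preGLn}. Once the semidirect product formalism of (\ref{semiproduct1})--(\ref{semiproduct2}) is installed, the verification that ${^g}\eta$ is itself a cocycle and that cohomologous cocycles go to cohomologous cocycles reduces to a direct application of the relation $\gamma g = g\gamma^g$, which is already carried out in spirit by the consistency check above.
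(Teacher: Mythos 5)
Your proposal is correct, but it takes a genuinely different route from the paper's. The paper verifies that the abstract formula $({^g}\eta)(\gamma)= g.\eta(\gamma^g)$ agrees with base change by choosing a faithful representation $\bH \hookrightarrow \bGL_d$, invoking a Quillen--Suslin type vanishing $H^1(R_n,\bGL_d)=1$ to get an exact sequence $\bGL_d(R_n)\to \bY(R_n) \buildrel\varphi\over\to H^1(R_n,\bH)\to 1$ with $\bY=\bGL_d/\bH$, and then tracing the base change through the characteristic map on lifts $Y\in\bGL_d(\ol R_{n,\infty})$; the cocycle property and well-definedness on $H^1$ are checked separately by direct manipulation of cocycle identities. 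You instead work directly with the descent datum: the lift $g_\infty\in\Aut_{k}(\ol R_{n,\infty})$ from Remark \ref{preGLn} turns the base change square into an honest pullback of the trivialized torsor, and the relation $\gamma g=g\gamma^g$ of (\ref{semiproduct}), together with the fact that $g_\infty$ acts on $\bH(\ol R_{n,\infty})$ purely through $\ol R_{n,\infty}$ because $\bH$ is defined over $k$, produces the formula by a one-line computation on the trivialization. This is more conceptual: once ${^g}\eta$ is recognized as the descent cocycle of $g^*\bE_\eta$, it is automatically a cocycle and automatically well defined on $H^1$, so there is nothing left to check, whereas the paper's representation-theoretic detour gains a very concrete, ``everything is a matrix'' description of what is happening. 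Be a little careful with the contravariance of pullback when you pass from the equation $\gamma g_\infty = g_\infty\gamma^g$ in $\GL_n(\Z)\ltimes\pi_1(R_n)$ to the induced identities among scheme morphisms $\gamma^*, g_\infty^*$ on $\Spec(\ol R_{n,\infty})$; with the paper's conventions (left actions on rings, cocycles defined by $\gamma^*\tau=\eta(\gamma)\cdot\tau$) the signs come out as you assert, but it is worth making the covariant/contravariant bookkeeping explicit.
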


\begin{proof} For all $\alpha, \beta \in \pi_1(R_n)$ we have
\begin{eqnarray} \nonumber
{^g} \eta(\alpha \beta) &=&   g.\eta\big((\alpha \beta)^g)\big)   \quad\hbox{[definition]} \\ \nonumber
&=&   g.\eta(\alpha ^g \beta^g) \\ \nonumber 
&=&   g.\big(\eta(\alpha ^g) \, {^{\alpha^g}} \eta(\beta ^g)\big)   \quad\hbox{[$\eta$ a cocycle]}  \\ \nonumber
&=&   g.\Big(\eta(\alpha ^g) \, \big({\alpha^g} .\eta(\beta ^g)\big)\Big)   \quad\hbox{[by (\ref{compatible2})]}  \\ \nonumber
&=&   \big(g.(\eta(\alpha ^g)\big)\big(g. {\alpha^g} .\eta(\beta ^g)\big)   \quad\hbox{[by action axiom]}  \\ \nonumber
&=&   \big(g.(\eta(\alpha ^g)\big)\big(\alpha . g .\eta(\beta ^g)\big)   \quad\hbox{[by action axiom and (\ref{semiproduct})]}  \\ \nonumber
&=&   {^g}\eta(\alpha) \big(\alpha . {^g}\eta(\beta)\big)   \quad\hbox{[by definition ]}  \\ \nonumber
&=&   {^g}\eta(\alpha) {^ \alpha} {\big({^g}\eta(\beta)\big)}   \quad\hbox{[by (\ref{compatible2}) ].}  \\ \nonumber
\end{eqnarray}

This shows that ${^g}\eta$ is a cocycle (which is clearly continuous since $\eta$ is). That this defines a left action of 
$\GL_n(\Z)$  on $Z^1\bigl(\pi_1(R_n), \bH({\ol R}_{n,\infty})\bigr)$ is easy to verify using the definitions.

Next we verify that the action factors through $H^1.$ Assume $\mu$ is a cocycle cohomologous to $\eta,$ and let $h \in \bH({\ol R}_{n,\infty})$ be such that $\mu (\gamma) = h^{-1} \eta{\gamma} {^ \gamma}h$ for all $\gamma \in \pi_1(R_n).$ Then 

\begin{eqnarray} \nonumber
{^g} \mu(\gamma) &=&   g.\mu({^g}\gamma) \quad\hbox{[definition]} \\ \nonumber
&=&   g.\big(h^{-1} \eta(\gamma^g) {^ \gamma}h\big)     \\ \nonumber
&=&   g.h^{-1} \, g.\eta(\gamma^g) \,  g.{^ \gamma}h    \quad\hbox{[action axiom]}  \\ \nonumber
&=&   (g.h)^{-1} \, {^g}\eta(\gamma) \,  {^\gamma}g^.{^ \gamma}h    \quad\hbox{[action axiom, definition, and $g = {^\gamma}g$]}  \\ \nonumber
&=&   (g.h)^{-1} \, {^g}\eta(\gamma) \,  {^\gamma}(g.h). \\ \nonumber 
 \end{eqnarray}
Thus ${^g} \mu$ and ${^g} \eta$ are cohomologous.

It remains to verify that the action we have defined on $H^1\bigl(\pi_1(R_n), \bH({\ol R}_{n,\infty})\bigr) = H^1\bigl(R_n, \bH\bigr)$ coincides with the base change action. To see this we consider 
a faithful representation  $\bH \to \bGL_d$  
and  the corresponding quotient variety  $\bY= \bGL_d/ \bH$.
Since $H^1(R_n, \bGL_n)=1$ by a variation of a theorem of  Quillen and Suslin (\cite{Lam} V.4), we have a short exact sequence of pointed sets 
$$
1 \to \bH(R_n) \to \bGL_d(R_n) \to \bY(R_n) \buildrel \varphi \over
\to H^1(R_n, \bH) \to 1.
$$
Therefore it is enough to verify our assertion for the image of the characteristic map $\varphi.$  Given  $y \in \bY(R_n)$,
by definition $\varphi(y)$ is the class  of the  cocycle 
$$\gamma \to 
\eta(\gamma)= Y^{-1} \, {^\gamma}Y = Y^{-1} \, \gamma.Y
$$
where $Y \in \GL_d({\ol R}_{n,\infty})$ is a lift of $y$ [the last equality holds by (\ref{compatible2})].
If $g \in \GL_d(\Z)$ we have
$$
g_*(\varphi(y)) = \varphi( g.y) 
$$  
by the equivariance of the characteristic map relative to $k$--schemes.
Since $g.Y$ is a lift of $g.y$, we conclude that $\varphi( g.y)$ is the class of the cocycle
$(g.Y)^{-1} {^\gamma}(g.Y).$ Using identities and compatibility of actions that have already been mentioned, we have 
$$
(g.Y)^{-1} {^\gamma}(g.Y) = (g.Y)^{-1} \gamma.(g.Y)= g. Y^{-1} \, g \gamma^g.Y =
$$
$$ 
= g. Y^{-1} \, g. \gamma^g.Y
=  g. \big(Y^{-1}  \gamma^g.Y\big) = g.\eta(\gamma^g) = {^g}\eta(\gamma)$$
as desired.
\end{proof}

\begin{remark}\label{GLpreserves} The action of $\GL_n(\Z)$ on $Z^1\big(\pi_1(R_n), \bH({\ol R}_{n,\infty})\big)$
stabilizes $Z^1\big(\pi_1(R_n), \bH(\ol k)\big)$. In particular, it preserves loop cocycles.
\end{remark}

We pause to observe that the  decomposition \ref{WT3} is
equivariant under the action of $\GL_n(\Z)$ on $R_n$. Thus

\begin{corollary}\label{WT3equi}  With the assumptions and notation as above

$$
 \bigsqcup\limits_{j=1,...,l} \, \GL_n(\Z) \backslash H^1_{loop}\big(R_n, \bAut(\bG,\bP_{I_j})\big)_ {irr} 
\enskip \simeq \enskip  \GL_n(\Z) \backslash H^1_{loop}\big(R_n, \bAut(\bG)\big) 
$$
In particular, if $\bE$ is a loop $R_n$--torsor under $\bAut(\bG)$,  the 
 Witt-Tits index of the loop group scheme $_\bE\bG/R_n$ 
depends only of the class of $\bE$ in  $\GL_n(\Z) \backslash H^1_{loop}\big(R_n, \bAut(\bG)\big)$.

\end{corollary}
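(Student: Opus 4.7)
The plan is to show that the $\GL_n(\Z)$-action on $H^1_{loop}(R_n,\bAut(\bG))$ preserves each stratum of the Witt-Tits decomposition of Corollary \ref{WT3}, and then simply descend to orbit spaces. I would start by combining Corollary \ref{index}---which identifies the Witt-Tits index $I(\eta)$ of ${_\eta(\bG^0/R_u(\bG))}_{F_n}$ with the $_z\bG^0(k)$-conjugacy class of a minimal parabolic subgroup of $_z\bG^0$ normalized by $\eta^{geo}$---with the explicit cocycle formula of Lemma \ref{cocycles-action}, namely $({^g}\eta)(\gamma) = g\,.\,\eta(\gamma^g)$.

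The key simplification, which I would spell out first, is that loop cocycles take values in $\bG(\ol k)$, while each $g \in \GL_n(\Z)$ acts on $\ol R_{n,\infty}$ by permuting the elements $t_i^{1/m}$ while fixing $\ol k$ pointwise. Thus $g\,.\,\eta(\gamma^g) = \eta(\gamma^g)$ for a loop cocycle, so Lemma \ref{cocycles-action} reduces to $({^g}\eta)(\gamma) = \eta(\gamma^g)$. Under the arithmetic-geometric decomposition of Lemma \ref{Theta}, and using that $\GL_n(\Z)$ acts trivially on the $\Gal(k)$-factor of $\pi_1(R_n) = \widehat\Z^n \rtimes \Gal(k)$, this gives $({^g}\eta)^{ar} = \eta^{ar} = z$, while $({^g}\eta)^{geo}$ is the precomposition of $\eta^{geo}$ with the automorphism of $\widehat\Z^n$ induced by $g$.

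Since precomposition by an automorphism of the source does not affect the image, the image of $({^g}\eta)^{geo}$ inside $_z\bG(\ol k)$ coincides with that of $\eta^{geo}$. Hence the parabolic subgroups of $_z\bG^0$ normalized by $({^g}\eta)^{geo}$ are exactly those normalized by $\eta^{geo}$, and Corollary \ref{index} yields $I({^g}\eta) = I(\eta)$. This proves that the $\GL_n(\Z)$-action preserves each stratum $H^1_{loop}(R_n,\bAut(\bG,\bP_{I_j}))_{irr}$ of Corollary \ref{WT3}; passing to $\GL_n(\Z)$-orbits on both sides of the disjoint union then produces the desired equivariant decomposition. The second assertion---that the Witt-Tits index of $_\bE\bG$ depends only on the $\GL_n(\Z)$-orbit of $[\bE]$---is an immediate reformulation of the identity $I({^g}\eta) = I(\eta)$.

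No serious obstacle is anticipated beyond carefully recording that, for loop cocycles, the base-change action reduces to precomposition on $\widehat\Z^n$, and noting that the combinatorial datum (the image in $\bG(\ol k)$, up to $\bG(k)$-conjugacy) controlling the Witt-Tits index via Corollary \ref{index} is insensitive to this precomposition. Once this bookkeeping is in place, the corollary follows formally from Corollaries \ref{WT3} and \ref{index}.
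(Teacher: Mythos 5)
Your proof is correct and makes explicit the single observation the paper leaves implicit: the paper's ``proof'' of Corollary~\ref{WT3equi} is nothing more than the sentence preceding it asserting that the decomposition of Corollary~\ref{WT3} is $\GL_n(\Z)$-equivariant. Your route---via Remark~\ref{GLacts}, the arithmetic/geometric split of Lemma~\ref{Theta}, and the identification of the index in Corollary~\ref{index} to show that $g\in\GL_n(\Z)$ precomposes $\eta^{geo}$ with an automorphism of $\widehat\Z^n$ (hence preserves the conjugacy class of normalized minimal parabolics and thus the Witt--Tits index)---is exactly the intended argument; the only slight imprecision, that $g$ ``permutes'' the $t_i^{1/m}$ rather than sending them to monomials, is harmless since all that matters is that $g$ fixes $\ol k$ pointwise.
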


\begin{remark}\label{GLacts} Assume $\eta$ is a loop cocycle. Since $\GL_n(\Z)$ acts trivially on $
\bH(k)$  we have 
$({^g}\eta)(\gamma)= \eta( \gamma ^g)$ for all $\gamma \in \pi_1(R_n)$ and $g \in \GL_n(\Z)$.
\end{remark}

\begin{lemma}\label{cocycles} Assume that  $\bH$ acts
on a quasi-projective $k$--variety $\bM$.
Let $\eta \in Z^1\big(\pi_1(R_n), \bH(\ol k)\big)$  be a loop
cocycle. Let $\Lambda_\eta \subset \bGL_n(\Z)$ be the stabilizer of
$\eta$ for the (left) action of $\bGL_n(\Z)$ on $Z^1\big(\pi_1(R_n), \bH(\ol k)\big)$. 

\medskip

(1) $\Lambda_\eta= \Bigl\{  \, g \in \bGL_n(\Z) \, \mid \,  \eta(\gamma ^g)= 
\eta(\gamma ) \enskip \forall \gamma \in \pi_1(R_n) \,  \Bigr\}. $

\smallskip

(2) The  map  $$
\big( \bGL_n(\Z) \ltimes \pi_1(R_n) \big) \times  \bM({\ol R}_{n, \infty}) 
\to \bM({\ol R}_{n, \infty}) , \qquad  \big((g,\gamma), x\big) \mapsto 
g.\eta(\gamma).\gamma.x
$$
defines an  action of $\Lambda_{\eta} \ltimes \pi_1(R_n)$ on 
$({_\eta\bX})({\ol R}_{n, \infty})$.

\smallskip

(3)  Assume that $\bM$ is a linear algebraic $k$--group on which 
$\bH$ acts  as group automorphisms.
Let $g\in \Lambda_{\eta}$ and $\zeta \in Z^1\big(\pi_1(R_n), 
{_\eta\bM}({\ol R}_{n, \infty} )\big)$, and set
$$
{^g}\zeta(\gamma)= g. \zeta(\gamma^g). 
$$
This defines a (left) action of $\Lambda_{\eta}$  on 
$Z^1\big(\pi_1(R_n),  {_\eta\bM}({\ol R}_{n, \infty} )\big)$ which induces
an action of $\Lambda_{\eta}$ on $H^1(R_n, {_\eta \bM})$. The action is 
functorial in $\bM.$
If $\bH=\bM$, the  diagram
$$
\begin{CD}
H^1(R_n, {_\eta\bH}) @>{\tau_\eta}>{^\sim}> H^1(R_n, \bH)  \\
@V{g_*}V{\wr}V @V{g_*}V{\wr}V \\
 H^1(R_n, {_\eta\bH}) @>{\tau_\eta}>{^\sim}> H^1(R_n, \bH)  \\
\end{CD}
$$
commutes for all $g \in \GL_n(\Z),$ where $\tau_{\eta}$ is the twisting bijection. 

\smallskip

(4) Assume that in (3) $\bH$ is finite and that $\bM$ is 
of multiplicative type.
For  $g \in \Lambda_\eta$ and 
 an inhomogeneous (continuous) cochain  $y \in {\mathcal C}^i\big(\pi_1(R_n),  {_\eta \bM}\big)$
of degree  $i \geq 0$, set
$$
(^gy)( \gamma_1, \dots  , \gamma_i)=  
{^g\bigl(} y ( \gamma_1^g, \dots , \gamma_i^g )  \bigr) .
$$ 
This defines a left action of $\Lambda_\eta$
on the chain complex ${\mathcal C}^*\big(\pi_1(R_n),  {_\eta\bM}(\ol R_{n,\infty})  \big)$
 of (contnuous)  inhomogeneous  cochains and on 
$H^*\big(\pi_1(R_n), {_\eta \bM}({\ol R}_{n,\infty})\big)= 
H^*(R_n, {_\eta \bM})$ which is functorial
with respect to short  exact  sequences of 
$\bH$--equivariant $k$--groups of multiplicative type.

\smallskip

(5) Assume that $\bH$ is finite and 
let $1 \to \bM_1 \to \bM_2 \to \bM_3 \to 1$ be a an exact sequence
of linear algebraic $k$--groups equipped with an equivariant action of $\bH$ as group automorphism.
The action of $\Lambda_\eta$ commutes with the characteristic map
${_\eta\bM}_3(R_n) \to H^1(R_n, {_\eta\bM}_1)$.
If  $\bM_1$ is central in $\bM_2$, then the 
action of $\Lambda_\eta$ commutes with the boundary map
$\Delta: H^1(R_n, {_\eta\bM}_3 ) \to H^2(R_n,
 {_\eta\bM_1})$.

\smallskip

(6) Assume $k$ is algebraically closed and  let $d$ be a postive integer. 
The base change action of $\GL_n(\Z)$ on  $H^2(R_2, \bmu)$ and on  $\Br(R_2)$  is given by $g. \alpha= \det(g). \alpha$.

\end{lemma}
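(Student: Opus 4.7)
The plan is to reduce the computation to a purely group-cohomological statement about $H^2(\wh\Z^2, \bmu_d)$ with trivial coefficients, then evaluate the action of $\GL_2(\Z)$ on the canonical cup-product generator, and finally transport the result to $\Br(R_2)$.

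First I would use that, $k$ being algebraically closed of characteristic zero, Example~\ref{laurent} gives $\pi_1(R_2) = \wh\Z^2$, and Lemma~\ref{tori1}(1) applied to the finite multiplicative-type group $\bmu_d$ yields a natural isomorphism
$$
H^2(R_2, \bmu_d) \simlgr H^2\big(\wh\Z^2, \bmu_d(\ol R_{2,\infty})\big),
$$
with $\bmu_d(\ol R_{2,\infty}) = \bmu_d(\ol k)$ carrying the trivial $\wh\Z^2$--action. By Remark~\ref{preGLn} and the construction preceding (\ref{semiproduct1}), $\GL_2(\Z)$ acts on $\ol R_{2,\infty}$ by $\ol k$--algebra automorphisms, so it fixes $\bmu_d(\ol k)$ pointwise. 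The formula of part~(4) of the present lemma therefore reduces, for a continuous inhomogeneous $2$-cochain $y$, to $(^g y)(\gamma_1, \gamma_2) = y(\gamma_1^g, \gamma_2^g)$, so that the action of $\GL_2(\Z)$ on $H^2(\wh\Z^2, \bmu_d)$ comes exclusively from the right action of $\GL_2(\Z)$ on $\wh\Z^2$ by row-vector-times-matrix multiplication.

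Next I would identify $H^2(\wh\Z^2, \bmu_d) \simeq \bmu_d$ via cup product. Let $\epsilon_1, \epsilon_2 \in H^1(\wh\Z^2, \bmu_d) = \Hom_{\ct}(\wh\Z^2, \bmu_d)$ be the two coordinate projections (composed with the chosen identification $\wh\Z \twoheadrightarrow \Z/d\Z \simeq \bmu_d$ coming from $\xi_d$). Since $\wh\Z$ has cohomological dimension one, the Hochschild--Serre spectral sequence for $\wh\Z \to \wh\Z^2 \to \wh\Z$ degenerates at $E_2$ and gives $H^2(\wh\Z^2, \bmu_d) = \bmu_d\cdot(\epsilon_1 \cup \epsilon_2)$, subject to $\epsilon_i \cup \epsilon_i = 0$ (these pull back from $H^2(\wh\Z, \bmu_d) = 0$) and $\epsilon_2 \cup \epsilon_1 = -\epsilon_1 \cup \epsilon_2$. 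For $g = (g_{ij}) \in \GL_2(\Z)$ and $v \in \wh\Z^2$, the equality $(^g\epsilon_i)(v) = \epsilon_i(vg) = \sum_j g_{ji}\,\epsilon_j(v)$ yields $^g\epsilon_i = \sum_j g_{ji}\,\epsilon_j$ in $H^1$. By $\Z$--bilinearity of the cup product,
$$
{^g}(\epsilon_1 \cup \epsilon_2) = (g_{11}\epsilon_1 + g_{21}\epsilon_2) \cup (g_{12}\epsilon_1 + g_{22}\epsilon_2) = (g_{11}g_{22} - g_{12}g_{21})\,\epsilon_1 \cup \epsilon_2 = \det(g) \cdot (\epsilon_1 \cup \epsilon_2),
$$
which establishes $g \cdot \alpha = \det(g)\cdot\alpha$ on $H^2(R_2, \bmu_d)$.

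For the Brauer group, the Kummer sequence $1 \to \bmu_d \to \G_m \to \G_m \to 1$ together with $\Pic(R_2) = 0$ (Lemma~\ref{galois}) identifies $H^2(R_2, \bmu_d)$ with the $d$--torsion subgroup ${_d}\Br(R_2)$, and since $\Br(R_2)$ is torsion (the scheme being regular) one obtains $\Br(R_2) = \limind_d\, {_d}\Br(R_2)$; both identifications are $\GL_2(\Z)$--equivariant by functoriality of the Kummer sequence, so the formula $g \cdot \alpha = \det(g) \cdot \alpha$ descends to $\Br(R_2)$. The main point that will require careful verification is the initial reduction to the cochain-level formula: one must check both that the natural isomorphism of Lemma~\ref{tori1}(1) is $\GL_2(\Z)$--equivariant and that the induced right action of $\GL_2(\Z)$ on $\pi_1(R_2) = \wh\Z^2$ is indeed the one described preceding (\ref{semiproduct1}), which is where the convention ``row-vector-times-matrix'' (and hence the appearance of $\det(g)$ rather than $\det(g)^{-1}$) is pinned down.
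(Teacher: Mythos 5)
Your proof is correct but follows a genuinely different route from the paper's. The paper reduces to the Brauer group at once (using the injection $H^2(R_2,\bmu_d)\hookrightarrow\Br(R_2)\simeq\Q/\Z$), picks explicit generators of $\GL_2(\Z)$, and verifies the formula on the cyclic Azumaya algebra $A(1,d)$ with presentation $T_1^d=t_1,\, T_2^d=t_2,\, T_2T_1=\zeta_dT_1T_2$ (whose classes generate $\Br(R_2)$), obtaining $[A]$, $-[A]$, $[A]$, $-[A]$ for the four generators in turn. You instead transfer the question to the group cohomology $H^2(\widehat{\Z}^2,\bmu_d(\ol k))$ with trivial coefficients, identify $H^2$ with $\bmu_d$ via the cup product $\epsilon_1\cup\epsilon_2$, and observe that pullback along $v\mapsto vg$ sends $\epsilon_i\mapsto\sum_j g_{ji}\epsilon_j$, whence $\epsilon_1\cup\epsilon_2\mapsto\det(g)\,\epsilon_1\cup\epsilon_2$ by bilinearity and graded-commutativity. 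Your calculation of the $\epsilon_i\cup\epsilon_i=0$ relations via pullback from the $\wh\Z$-factors, and the conclusion, are correct; the ring-homomorphism compatibility $^g(a\cup b)={^g}a\cup{^g}b$ holds at the cochain level since the action is precomposition with a group automorphism of $\widehat{\Z}^2$.

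What your route buys is a uniform, case-free argument that manifestly explains the appearance of $\det$ as the action on $\Lambda^2$ of the rank-two module $H^1$; what the paper's route buys is avoiding any need to verify that the comparison isomorphism $H^2(\widehat{\Z}^2,\bmu_d(\ol k))\simeq H^2(R_2,\bmu_d)$ is $\GL_2(\Z)$-equivariant. You correctly flag this as the point needing care. In fact, the needed equivariance is supplied by parts (4)--(5) of this very lemma together with Remark~\ref{rem-cocycles}(a): those establish that the base-change action of $\GL_n(\Z)$ (here $\eta$ trivial, so $\Lambda_\eta=\GL_n(\Z)$) is computed at the level of inhomogeneous cochains for $\pi_1(R_n)$ in precisely the way you use, and is compatible with the identification $H^*(\pi_1(R_n),\bM(\ol R_{n,\infty}))\simeq H^*(R_n,\bM)$. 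So your proof is complete once you cite those earlier parts of the lemma instead of leaving the equivariance as a caveat.
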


\begin{proof} (1) is obvious by taking into account Remark \ref{GLacts}.

\smallskip

In what follows we take the ``Galois'' point of view and 
notation: ${_\eta\bM}({\ol R}_{n ,\infty})$ 
coincides with $\bM({\ol R}_{n ,\infty})$ as a set, but the action of $\pi_1(R_n)$ is the 
twisted action, which we denote by $\star$: 
$$\gamma \star x = \eta(\gamma).(\gamma.x)$$

\noindent (2) The groups $
 \bGL_n(\Z)$ and $\pi_1(R_n)$ act on $\bX({\ol R}_{n, \infty}) 
$ and $\bH({\ol R}_{n, \infty})$ via their natural action on ${\ol R}_{n, \infty}.$
We will denote 
these actions by $x \mapsto g.x$,  and $x \mapsto \gamma . x$  
for all $g \in 
 \bGL_n(\Z)$,  $\gamma \in  \pi_1(R_n)$ and $x \in  \bX({\ol R}_{n, \infty}).$ 
It follows from (\ref{compatible1}) and (\ref{compatible2}) that for all $\gamma \in  \pi_1(R_n)$ we have 
$$ \gamma.g.x = g. \gamma^g .x
$$
One also verifies using the axioms of action that 
$$ \gamma.(h.x) = (\gamma.h).(\gamma.x)$$
for all $h \in \bH({\ol R}_{n, \infty})$.
The content of (2) is that
\begin{equation}\label{actionofGamma}
 (g,\gamma) \star x = g. (\gamma \star x)
\end{equation}
defines an action of $\Lambda_{\eta} \ltimes \pi_1(R_n)$ on 
$({_\eta\bX})({\ol R}_{n, \infty}).$ Write for convenience $g. \gamma \star x$ instead of $g. (\gamma \star x)$ since no confusion is possible. Then

\begin{eqnarray} \nonumber
(f,\alpha)\star (g, \beta) \star x &=&  f.\eta(\alpha).\alpha.g.\eta(\beta).b.x
    \quad\hbox{[definition of the twisted action]} \\ \nonumber
&=&  f.\eta(\alpha).g.\alpha^g . \eta(\beta).\beta.x
    \quad\hbox{} \\ \nonumber 
&=&  f.g.\eta(\alpha).\alpha^g . \eta(\beta).\beta.x
     \quad\hbox{[$\eta $ is a loop cocycle]} \\ \nonumber
&=&  f.g.\eta(\alpha).(\alpha^g . \eta(\beta)).\alpha^g.(\beta.x)
     \quad\hbox{} \\ \nonumber
&=&  f.g.\eta(\alpha^g).(\alpha^g . \eta(\beta)).\alpha^g.(\beta.x)
     \quad\hbox{[$g \in \Lambda_{\eta}$]}\\ \nonumber
&=&  fg.\eta(\alpha^g \beta).\alpha^g . (\beta.x)
     \quad\hbox{[$\eta$ a cocycle]} \\ \nonumber
&=&  (fg,\alpha^g\beta) \star x
     \quad\hbox{} \\ \nonumber
&=&  \big((f,\alpha)(g,\beta)\big) \star x.
     \quad\hbox{}\\ \nonumber 
\end{eqnarray}

\smallskip

\noindent (3)  One checks that 
${^g}\eta$ is a cocycle and that two equivalent cocycles
remain equivalent under this action along the same lines as for the proof of Lemma \ref{cocycles-action}.

The commutativity of the diagram takes place already at the level of cocycles. Indeed.
Consider the square
$$
\begin{CD}
Z^1\big(\pi_1(R_n), {_\eta \bH}({\ol R}_{n,\infty})\big)  @>{\tau_\eta}>{^\sim}> 
Z^1\big(\pi_1(R_n), {\bH}({\ol R}_{n,\infty})\big)  \\
@V{g_*}V{\wr}V @V{g_*}V{\wr}V \\
Z^1\big(\pi_1(R_n), {_\eta \bH}({\ol R}_{n,\infty})\big) @>{\tau_\eta}>{^\sim}> Z^1\big(\pi_1(R_n), {\bH}({\ol R}_{n,\infty})\big)  \\
\end{CD}
$$
Given a cocycle $\phi \in Z^1\big(\pi_1(R_n), {_\eta \bH}({\ol R}_{n,\infty})\big)$ 
recall that $(\tau_\eta \phi)(\gamma)= \phi(\gamma) \,  \eta(\gamma)$, hence
$$
{^g}\big((\tau_\eta)(\phi)\big)(\gamma) = 
g.\big(\tau_{\eta}(\phi)(\gamma^g)\big)
  =  g.\big((\phi)(\gamma^g) \, \eta(\gamma^g)\big)  \quad, \gamma \in \Lambda_\eta.
$$ 
$$
g.(\phi)(\gamma^g)g \, . \, \eta(\gamma^g) = 
{^g}\phi(\gamma)g \, . \, \eta(\gamma^g) = 
{^g}\phi(\gamma) \, \eta(\gamma)
$$
since $g.\eta(\gamma^g) = \eta(\gamma^g)$ because $\eta$ is a loop cocycle, and 
$\eta(\gamma^g) = \eta(\gamma)$ because $g \in \Lambda_\eta.$  On the other hand by
 definition of the twisting map
$$
\tau_\eta ({^g}\phi)(\gamma)= {^g}\phi(\gamma)  \,  \eta( \gamma )
$$
so that the diagram above commutes.

\smallskip
   
\noindent (4) The continuous profinite  cohomology is the direct limit
of discrete group cohomology of finite quotients.
 Hence it is enough to establish the desired results  at the ``finite level", namely
for a group $\Gamma=\Gal( \tilde R_{n,m}/R_n)$ where 
$\tilde R_{n,m}= \tilde k \otimes_k R_{n,m}$ is a finite 
Galois covering of $R_n$ through wich $\eta$ factors, and such that $\bH(\tilde k)=\bH(\ol k)$.
Recall that the action of  $\bGL_n(\Z)$ 
 on ${\ol R_{n,\infty}}$  preserves  $\tilde k \otimes_k R_{n,m}$, so 
that $\bGL_n(\Z)$ acts on $\Gamma$.

We need to check that the given action of $\Lambda_{\eta}$
on  $C^*(\Gamma, {_\eta A})$ commutes with the differentials.
We are given  $g \in \Lambda_{\eta}$ and
 $y \in {\mathcal C}^{i}(\Gamma, {_\eta A})$.
Recall that the boundary map $\partial_i :  
{\mathcal C}^{i}(\Gamma, {_\eta A}) \to {\mathcal C}^{i+1}(\Gamma, {_\eta A})$ is given by

\begin{eqnarray} \nonumber
\bigl(\partial_i(y)\bigr) ( \gamma_1, \dots, \gamma_{i+1})
= \qquad \qquad \qquad \qquad \qquad \qquad \qquad \qquad 
\qquad \qquad \qquad \qquad \qquad \qquad  \\ \nonumber
  \gamma_1 \, . \, \eta(\gamma_1) .  y(\gamma_1, \dots, \gamma_{i+1})
+ \sum\limits_{j=1}^i \,  (-1)^j \,  y(\gamma_1, \dots, \gamma_{j-1},
\gamma_j \gamma_{j+1}, \gamma_{j+2} ,  \dots\gamma_{i+1}) + 
(-1)^{i+1} y(\gamma_1,\dots, \gamma_i) .
\end{eqnarray}
 
\noindent Thus

\begin{eqnarray*} \nonumber
\Bigl( ^g\bigl( \partial_i(y) \bigr)\Bigl)( \gamma_1, \dots, \gamma_{i+1})
 &=&g. \bigl( \partial_i(y)( \gamma_1^g, \dots, \gamma_{i+1}^g) \bigr) 
 \\ \nonumber
 &&  = g. \bigl(  \gamma_1^g \, \eta(\gamma_1^g) \, . \,  
y(\gamma_1^g, \dots, \gamma_{i+1}^g) \bigr) \
 \\ \nonumber
&& \qquad +  g. \bigl( \sum\limits_{j=1}^i \,  (-1)^j \, \,  y(\gamma_1^g, \dots, \gamma_{j-1}^g,
\gamma_j^g \gamma_{j+1}^g, \gamma_{j+2}^g ,  \dots\gamma_{i+1}^g)  \bigr)
 \\ \nonumber
&& \qquad + g. \bigl(  (-1)^{i+1}\,  y(\gamma_1^g,\dots, \gamma_i^g) \bigr) \\  
\nonumber
&& = \gamma_1 \, \eta(\gamma_1) \, . \,  {^gy}(\gamma_1, \dots, \gamma_{i+1})
\qquad [g \in \Lambda_\eta]
\\ \nonumber
&& \qquad + \sum\limits_{j=1}^i \,  (-1)^j \, \,  {^gy}(\gamma_1, \dots, \gamma_{j-1},
\gamma_j \gamma_{j+1}, \gamma_{j+2} ,  \dots\gamma_{i+1} )
\\ \nonumber
&& \qquad +  (-1)^{i+1}\,  {^gy}(\gamma_1,\dots, \gamma_i) \\ \nonumber
&&= \bigl(\partial_i(^gy)\bigr) ( \gamma_1, \dots, \gamma_{i+1}).
\end{eqnarray*}

This shows that the action of $\Lambda_\eta$ on 
${\mathcal C}^{i}(\Gamma, {_\eta A})$ commutes with the boundary maps as desired.

\smallskip
   
\noindent (5) We are given 
 an exact sequence
of linear algebraic groups  $1 \to \bM_1 \to \bM_2 \to \bM_3 \to 1$ equipped with an action of $\bH$.
We twist it by $\eta$ to obtain $1 \to {_\eta\bM_1} \to {_\eta\bM_2} \to {_\eta\bM_3}
 \to 1,$ and look at the characteristic map
$$
\psi: {_\eta\bM_3}(R_n) \to H^1(R_n, {_\eta\bM_1}).
$$
Let $x_3 \in  {_\eta\bM_3}(R_n) \subset   {\bM_3}(\ol R_{n, \infty})$.
Lift $x_3$ to an element   $x_2 \in  {\bM_2}(\ol R_{n, \infty}).$
Then $\psi(x_3)= [ z_\gamma]$ with 
$z_\gamma = x_2^{-1} \big( \eta(\gamma) \, {^\gamma x_2}\big)$.
Now if $g \in \Lambda_\eta$  the element $^gx_2$ lifts $^gx_3$, hence
$\psi(^gx_3)$ is represented by the $1$--cocycle
$$
 {(^gx_2)}^{-1} \, \big ( \eta(\gamma) \, {^\gamma({^gx_2})}\big)
=  {^gx_2}^{-1}\big( \eta(\gamma) \, {^\gamma {^gx_2}}\big)
=  g. \bigl( {x_2}^{-1} ( \eta(\gamma) \, {^{\gamma^g}{x_2}}) \bigr)
= (^gz)_\gamma
$$
by using again $\eta(\gamma^g)= \eta(\gamma)$ and the fact that $g$ acts trivially 
on $\bH(\ol k)$. This shows that $\psi( {^gx_2}) = {^g \psi}(x_2)$.

Assuming that $\bM_1$ is central and of multiplicative type,
we consider the boundary map $\Delta:  H^1(R_n, {_\eta\bM_3}) \to 
H^2(R_n, {_\eta\bM_1})$.
By isotriviality, the precise nature of this map can be computed at the ``finite level"  by means of Galois cocycles.
Let $(a_\gamma)$ be a cocycle with value   ${_\eta\bM_3}(\ol R_{n, \infty})
= {\bM_3}(\ol R_{n, \infty})$ and choose a  lifting 
$(b_\gamma) $  in ${\bM_2}(\ol R_{n, \infty})$ which is trivial on an open subgroup
of $\pi_1(R_n)$. Recall that
$\Delta( [a_\gamma]) \in H^2\big(\pi_1(R_n), {_\eta\bM_1}(\ol R_{n, \infty})\big) $ is the
 class of the $2$--cocycle \cite[I.5.6]{Se}
$$
c_{\gamma,\tau}=  b_\gamma  \, ( \eta(\gamma) . {^\gamma b}_\tau) \,  
b_{\gamma \tau}^{-1}.
$$ Similarly, 
the  $(^gb_{\gamma^g})$ lift the $({^ga_{\gamma^g}})$, so
$\Delta(g.[a_\gamma])$ is the class of the $2$--cocycle
$$
{^gb}_{\gamma^g}  \, \big( \eta(\gamma) . {^\gamma(^gb_{\tau^g}})\big) \,  
{^gb}_{\gamma^g \tau^g}^{-1}=
g. \Bigl( b_\gamma  \, ( \eta(\gamma) . {^{\gamma^g}b_\tau^g})
\,  {b}_{\gamma^g \tau^g}^{-1}\Bigr)=
g. \Delta([a_\gamma])
$$
as desired.

\smallskip

\noindent 6)  Since  $H^2(R_2,\bmu_d)$ injects in $\Br(R_2)= \Q/\Z$ \cite[2.1]{GP2}, it is
enough to check the formula on $\Br(R_2)$. 
Since $\GL_2(\Z)$ is generated by the matrices
$\left(\begin{matrix} 
 -1& 0  \\ 
 0& -1  
\end{matrix}\right)$, 
$\left(\begin{matrix} 
 1& 0  \\ 
 0& -1  
\end{matrix}\right)$
$\left(\begin{matrix} 
 1& 1  \\ 
 0& 1  
\end{matrix}\right)$ and 
$\left(\begin{matrix} 
 0& 1  \\ 
 1& 0  
\end{matrix}\right)$,
it is enough to show that the desired  compatibility holds when $g$ is one of these four elements.
Consider the cyclic Azumaya $R_2$-algebra  $A= A(1,d)$ 
with presentation $T_1^d= t_1, T_2^d= t_2, T_2T_1 = \zeta_d T_1 T_2$.
Then for $g$ in the above list we have $g. [A] =  [A]$ (resp. $-[A]$, $[A]$, $-[A]$) respectively, so that
$g. [A] =\det(g). [A].$ Since  the classes of these cyclic Azumaya algebras generate $\Br(R_2)$ the result follows. \end{proof}

\begin{remark} \label{rem-cocycles}
(a) In (4), we have $H^*\big(\pi_1(R_n), {_\eta \bM}({\ol R}_{n,\infty})\big)
\simlgr H^*(R_n, {_\eta \bM})$ \cite[prop 3.4]{GP3}, hence we have a natural action 
of $\Lambda_\eta$ on  $H^*(R_n, {_\eta \bM})$.
We have used an explicit description of this action in our proof, but the result can also be established in a more abstract setting.
For $g \in \Lambda_\eta$, we claim  that the map $g_* : A \to A, a \mapsto g.a$
applies $H^0(\Gamma, {_\eta A})$ into itself. Indeed
for  $ a\in  H^0(\Gamma, {_\eta A})$ and
$\gamma \in \Gamma$, we compute the twisted action just as we did  in (2) of the Lemma.

\begin{eqnarray} \nonumber
\gamma \star (g.a) &=& (\eta(\gamma) \, \gamma g) \, . a \\ \nonumber
&=& (\eta(\gamma) \, g \, \gamma^g ) \, . a \qquad \hbox{[definition of $\gamma^g$]}\\
\nonumber
&=& ( g \, \eta(\gamma) \,  \gamma^g ) \, . a \qquad 
\hbox{[$\bGL_n(\Z)$ commutes with  $\bH(\tilde k)]$} \\
\nonumber
&=& ( g \, \eta(\gamma^g)  \gamma^g ) \, . a \quad [g \in \Lambda_\eta]\\
\nonumber
&=& g.a \quad [a \in H^0(\Gamma, {_\eta A})].
\end{eqnarray}

\noindent We get then a  morphism of functors $g_*: F \to F$ which extends uniquely as a morphism
of $\delta$-functors \cite[\S 2.5]{W}. This then yields the desired  natural 
transformations $g_*: H^i(\Gamma, {_\eta A})
\to H^i(\Gamma, _\eta A)$ for each
 $\bGL_n(\Z) \ltimes \bigr(\bH(\tilde k) \rtimes \Gamma \bigl)$-module.

(b) There is an analogous statement to (5)  for
 homogeneous spaces.
 
\end{remark}

\bigskip

For each class $[\bE] \in  H^1\big(R_n,\bOut(\bG)\big)$, we denote by
$H^1\big(R_n,\bAut(\bG)\big)_{[\bE]}$ the fiber at $[\bE]$
of the map $H^1\big(R_n,\bAut(\bG)\big) \to H^1\big(R_n,\bOut(\bG)\big)$. We then have
the decomposition
 \begin{equation}\label{partition}
H^1\big(R_n,\bAut(\bG)\big) = \bigsqcup\limits_{[\bE] \in H^1(R_n,\bOut(\bG))} \,  H^1\big(R_n,\bAut(\bG)\big)_{[\bE]}
\end{equation}
The group $\bGL_n(\Z)$ acts on $H^1\big(R_n,\bOut(\bG)\big)$ and on  $H^1\big(R_n,\bAut(\bG)\big)$ by base change (see Lemma \ref{cocycles-action}).
It follows that $\bGL_n(\Z)$
permutes the subsets of the partition (\ref{partition}), and that 
for each class 
$[\bE] \in H^1\big(R_n,\bOut(\bG)\big)$, its stabilizer under the action of $\bGL_n(\Z)$
preserves $H^1\big(R_n,\bAut(\bG)\big)_{[\bE]}$.

Let $\Out(\bG) = \bOut(\bG)(k).$ The (abstract) group $\Out(\bG)$ acts naturally 
on the right 
on the set of (continuous) homomorphisms ${\Hom}\big(\pi_1(R_n), \bOut(\bG)\big).$ This action, 
which we denote  by ${\rm int},$ is given by 
${\rm int}(a)(\phi)(\gamma) = \phi^a(\gamma) = a^{-1} \, \phi(\gamma) \, a .$ 
We have $H^1\big(R_n,\bOut(\bG)\big) = \Hom \big(\pi_1(R_n), \Out(\bG) \big)/ {\rm int}\big(\Out(\bG)\big)$

We consider a system of representatives $([\phi_j])_{j \in J}$ of 
the set of double cosets $\bGL_n(\Z) \, \backslash \, \Hom\big(\pi_1(R_n), \Out(\bG)\big )/ {\rm int}\big(\Out(\bG)\big)$.
Consider a fixed element $j \in J$.
Denote by $\Lambda_j \subset \bGL_n(\Z)$ the stabilizer 
of $[\phi_j] \in  H^1\big(R_n,\bOut(\bG)\big)$ 
for the base change  action of $\bGL_n(\Z)$ on $\Spec(R_n)$.
An element $g\in \bGL_n(\Z)$ belongs to $\Lambda_j$ if and only  if there exists
$a_g \in \Out(\bG)$ such that the following diagram commutes
$$
\begin{CD}
 \phi_j : \pi_1(R_n) @>>> \bOut(\bG) \\
@A{g^*}A{\wr}A @A{{\rm int}(a_g)}A{\wr}A \\
 \phi_j : \pi_1(R_n) @>>> \bOut(\bG) \\
\end{CD}
$$
Note that $\Lambda_{\phi_j} \subset \Gamma_j$. We have

\begin{eqnarray}\label{action35}
\bGL_n(\Z) \backslash  H^1\big(R_n,\bAut(\bG)\big) = & \bigsqcup\limits_{j \in J} \,  \Lambda_j \backslash H^1\big(R_n,\bAut(\bG)\big)_{[\phi_j]}.
\end{eqnarray}

Recall that our section $s : \bOut(\bG) \to \bAut(\bG)$ is determined by our choice of pinning of $(\bG, \bB, \bT).$  This allows us to trace the action of $\Lambda_j$. Indeed 
 $[s_*(\phi_j)] \in H^1\big(R_n,\bAut(\bG)\big)_{[\phi_j]}$, so  that the classical twisting argument (see \cite[lemme 1.2]{Gi4})
shows that the map
$$
H^1(R_n, {_{s_*(\phi_j)}\bG}) \to H^1\big(R_n, {_{s_*(\phi_j)}\bAut(\bG)}\big) \buildrel
  \tau_{s_*(\phi_j)} \over \to H^1\big(R_n, \bAut(\bG)\big)
$$
 induces a bijection
\begin{eqnarray}\label{action4} 
  H^1(R_n, {_{s_*(\phi_j)}\bG}) / H^0\big(R_n, {_{\phi_j}\bOut(\bG)} \big ) &\simlgr & H^1\big(R_n, \bAut(\bG)\big)_{[\phi_j]}.
\end{eqnarray} 

Note that the action of an element $a \in H^0\big(R_n, {_{\phi_j}\bOut(\bG)} \big )$ on 
$ H^1\big(R_n, {_{s_*(\phi_j)}\bG}\big)$ is given
by $$
 H^1(R_n, {_{s_*(\phi)}\bG})  \buildrel  {(_{\phi_j}s_*)}(a)   \over \simlgr  H^1(R_n, {_{s_*(\phi_j)}\bG}).
$$ 
where ${(_{\phi_j}s_*)}$ is the twist of $s_*$ by the cocycle $\phi_j.$ Furthermore the map (\ref{action4})
 preserves toral or, what is equivalent,  loop classes. Feeding this information into the decomposition
(\ref{action35}), we get

\begin{eqnarray}\label{action3} 
\\ \nonumber
\bGL_n(\Z) \backslash  H^1\big(R_n,\bAut(\bG)\big)  \simlgr &
&\bigsqcup_{j \in J} \, 
\Lambda_j \backslash 
\Bigl( H^1(R_n, {_{s_*(\phi_j)}\bG}) / H^0\big(R_n, {_\phi{_j}}\bOut(\bG)  \big) \Bigr) .
\end{eqnarray}

At least in certain cases, the action of 
$\Lambda_j$ on $H^1(R_n, {_{s_*(\phi_j)}\bG}) / H^0\big(R_n, {_{\phi_j}\bAut(\bG)}\big)$
can be understood quite nicely (see Remark \ref{nicecase} below).

\begin{lemma}  

\smallskip

(1)  For each $g  \in \Lambda_{\phi_j}$,  the following diagrams 
$$ 
\begin{CD}
H^1(R_n, {_{s_*(\phi_j)}\bG}) @>>> H^1\big(R_n, {_{s_*(\phi_j)}\bAut(\bG)}\big) @>{\tau_{s_*(\phi_j)}}>> H^1\big(R_n, {\bAut(\bG)}\big)_{[\phi_j]} \\ 
 @V{g_*}VV @V{g_*}VV  @V{g_*}VV \\
H^1({R_n}, {{_{s_*(\phi_j)}\bG}}) @>>> H^1\big({R_n}, {{_{s_*(\phi_j)}\bAut(\bG)}}\big) @>{\tau_{{s}_*({\phi_j})}}>> H^1\big({R_n}, {{\bAut(\bG)}}\big)_{[\phi_j]} ,\\ 
\end{CD}
$$

$$
\begin{CD}
H^1(R_n, {_{s_*(\phi_j)}\bG})  
&\, \times \, & H^0\big(R_n, {_{\phi_j}\bOut(\bG)}\big) @>>>  H^1(R_n, {_{s_*(\phi_j)}\bG}) \\ 
@V{{g_*}}VV @V{id}VV  @V{{g_*}}VV \\
H^1(R_n, {_{{s_*}(\phi_j)}\bG})  &\, \times \, & H^0\big(R_n, {{_{\phi_j}\bOut(\bG)}}\big) @>>> 
H^1(R_n,  {{_{s_*(\phi_j)}\bG}}) \\ 
\end{CD}
$$
commute where the maps $g_*$ are the base change maps defined in Lemma \ref{cocycles}.

\smallskip

(2) The map  (\ref{action4})
$$
H^1(R_n, {_{s_*(\phi_j)}\bG})  \to  
H^1\big(R_n, \bAut(\bG)\big)_{[\phi_j]}
$$
is $\Lambda_{\phi_j} \times H^0(R_n, {_{\phi_j}\bOut(\bG))}^{\rm op}$--equivariant and 
$$
\Lambda_{\phi_j} \times H^0\big(R_n, {_{\phi_j}\bOut(\bG)\big)}^{\rm op} \, \,  \backslash \, \,   H^1\big(R_n, {_{s_*(\phi_j)}\bG}\big) \,  \simlgr  \, 
\Lambda_{\phi_j} \, \, \backslash \, \,  H^1\big(R_n, \bAut(\bG)\big)_{[\phi_j]}.
$$
\end{lemma}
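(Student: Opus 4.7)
The plan is to derive both parts directly from Lemma~\ref{cocycles}(3), applied to the loop cocycle $\eta := s_*(\phi_j)$, combined with the elementary functoriality of base change $g_*$ on cohomology with coefficients in $R_n$-groups and $R_n$-group morphisms.

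For part~(1), the first diagram decomposes into two squares. I will argue that the left square commutes by functoriality of $g_*$, since the horizontal maps are induced by the $R_n$-group inclusion ${_{s_*(\phi_j)}\bG} \hookrightarrow {_{s_*(\phi_j)}\bAut(\bG)}$ and naturality of base change applies. The right square is Lemma~\ref{cocycles}(3) applied to $\bH = \bAut(\bG)$ and $\eta = s_*(\phi_j)$. To invoke it one must first check that $\Lambda_{\phi_j}$ sits inside the cocycle-level stabilizer there, which follows from the characterization $\phi_j \circ g^* = {\rm int}(a_g) \circ \phi_j$ after replacing $s_*(\phi_j)$ by a cohomologous cocycle adapted to $g$, absorbing a coboundary given by $s(a_g)$. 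That $g_*$ preserves the fiber $H^1(R_n, \bAut(\bG))_{[\phi_j]}$ is immediate from the $\bGL_n(\Z)$-equivariance of the projection $H^1(R_n, \bAut(\bG)) \to H^1(R_n, \bOut(\bG))$ together with $g^*[\phi_j] = [\phi_j]$.

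For the second diagram, recall that the action of $a \in H^0(R_n, {_{\phi_j}\bOut(\bG)})$ on $H^1(R_n, {_{s_*(\phi_j)}\bG})$ is induced by the $R_n$-group automorphism ${(_{\phi_j}s_*)}(a)$ of ${_{s_*(\phi_j)}\bG}$. Commutativity then follows from functoriality of $g_*$ once I verify that $g^*$ fixes the global section $a$ of the torsor ${_{\phi_j}\bOut(\bG)}$; but $g \in \Lambda_{\phi_j}$ means exactly that $g^*$ stabilizes this torsor up to the automorphism ${\rm int}(a_g)$, and the corresponding identification preserves the subset of $R_n$-points, so $g^*(a) = a$.

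Finally, for part~(2), combining the two diagrams of~(1) yields that the map~(\ref{action4}) is equivariant for the actions of $\Lambda_{\phi_j}$ and of $H^0(R_n, {_{\phi_j}\bOut(\bG)})^{\rm op}$, while the first diagram further shows that these two actions commute; hence they assemble into a genuine action of the product group. Since~(\ref{action4}) is already a bijection, it descends to a bijection on the joint quotient, which is the claim. The main obstacle I anticipate is the reconciliation between $\Lambda_{\phi_j}$ (the stabilizer of the cohomology class $[\phi_j]$) and the cocycle-level stabilizer required by Lemma~\ref{cocycles}(3): this will require judicious choice of cocycle representatives and careful bookkeeping of the coboundaries involving $s(a_g)$, and is the technical heart of the argument.
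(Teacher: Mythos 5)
There is a genuine misreading here that leads you to anticipate, and then try to patch, a difficulty that does not exist. You interpret $\Lambda_{\phi_j}$ as the stabilizer of the cohomology class $[\phi_j]$ in $H^1(R_n, \bOut(\bG))$, and consequently build the ``technical heart'' of your argument around absorbing a coboundary $s(a_g)$ so as to descend to a cocycle-level stabilizer. But in the paper's notation $\Lambda_{\phi_j}$ is already the cocycle-level stabilizer: it is the group $\Lambda_\eta$ of Lemma~\ref{cocycles}(1) taken with $\eta = \phi_j$, namely $\{g \in \bGL_n(\Z) : \phi_j(\gamma^g) = \phi_j(\gamma)\ \forall \gamma\}$. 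The class-level stabilizer is the (possibly larger) group called $\Lambda_j$, and the text immediately after (\ref{action35}) records the inclusion $\Lambda_{\phi_j} \subset \Lambda_j$; Remark~\ref{nicecase} then explains exactly when the two coincide. Since $g \in \Lambda_{\phi_j}$ fixes the cocycle $\phi_j$ on the nose, it also fixes $s_*(\phi_j)$ on the nose (because $s$ is a fixed group-theoretic section), so $\Lambda_{\phi_j} \subset \Lambda_{s_*(\phi_j)}$ and Lemma~\ref{cocycles}(3) applies with no adjustment of representatives at all.

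Beyond being unnecessary, your proposed fix would actually create new problems. If you replace $s_*(\phi_j)$ by a $g$-dependent cohomologous cocycle, then different elements of the group act relative to different reference cocycles, the various twisting bijections $\tau$ no longer assemble compatibly, and you would have to verify a cocycle condition on the correction terms $s(a_g)$ to get a genuine group action on $H^1(R_n, {_{s_*(\phi_j)}\bG})$. That is precisely the bookkeeping the paper avoids by restricting from $\Lambda_j$ to $\Lambda_{\phi_j}$. Your commutativity claim for the second diagram has the same problem in miniature: the statement that ``$g^*$ stabilizes this torsor up to ${\rm int}(a_g)$'' is the wrong picture; for $g \in \Lambda_{\phi_j}$ the base change genuinely fixes the cocycle $\phi_j$, hence acts on ${_{\phi_j}\bOut(\bG)}(R_n)$ in the way made explicit in Lemma~\ref{cocycles-action}, and the compatibility of the $H^0$-action with $g_*$ is then a direct cocycle computation, which is what the paper invokes. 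Once the correct meaning of $\Lambda_{\phi_j}$ is in place, the rest of your outline (functoriality of $g_*$ for the left square, assembling the two commuting actions into an action of the product for part~(2)) matches the paper's argument.
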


\begin{remark} \label{nicecase} Of course (2) is useful provided that
$\Lambda_{\phi_j}=\Gamma_j$. This is the case for simple  groups which are not of type $D_4$
since $\bOut(\bG)=1$ or $\Z/2\Z$.
\end{remark}

\begin{proof} (1) We are given $g \in \Lambda_{\phi_j}$. 
The left square of the first diagram
commutes by the functoriality of the base change map $g_*$.
The commutativity of the right square  follows from Lemma \ref{cocycles}.(3) applied to the $k$--group $\bAut(\bG)$ and the cocycle $s_*(\phi_j)$.
The commutativity of the second diagram follows from the action on cocycles given in Lemma \ref{cocycles-action}.

\smallskip

\noindent (2) By (1), the left action of $\Lambda_{\phi_j}$ and the right action
of $H^0\big(R_n, {_{\phi_j}\bOut(\bG)}\big)$ on $H^1(R_n, {_{s_*(\phi_j)}\bG})$ commute.
Hence 
$$
\Lambda_{\phi_j} \backslash \Bigl( H^1(R_n, {_{s_*(\phi_j)}\bG}) / H^0\big(R_n, {_{s_*(\phi_j)}\bAut(\bG)} \big) \Bigr)  \simlgr 
$$

$$
\qquad \qquad 
 \Lambda_{\phi_j} \times H^0(R_n, {_{s_*(\phi_j)}\bOut(\bG)})^{\rm op} \, \,  \backslash \, \,   H^1(R_n, {_{s_*(\phi_j)}\bG}) 
$$
and this set maps bijectively onto 
$\Lambda_{\phi_j} \backslash H^1(R_n, \bAut(\bG))_{[\phi_j]}$.
\end{proof}

\section{Small dimensions}

\subsection{The one-dimensional case}

By combining Theorem \ref{acyclic}, Corollary \ref{corpunctural} and Lemma \ref{sansuc}
we get the following generalization (in characteristic $0$) of
theorem 2.4 of \cite{CGP}.

\begin{corollary}\label{dim-one}
Let $\bG$ be a linear algebraic $k$--group 
Then we have bijections
$$
H^1_{toral}(k[t^{\pm 1}], \bG) \simlgr H^1_{loop}(k[t^{\pm 1}], \bG) \simlgr  H^1(k[t^{\pm 1}], \bG) \simlgr H^1(k((t)), \bG).
$$
\end{corollary}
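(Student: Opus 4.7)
The plan is to combine the three cited ingredients: first specialize Corollary~\ref{corpunctural} and Theorem~\ref{acyclic} to $n=1$ to settle the case where $\bG^0$ is reductive, and then use Lemma~\ref{sansuc}(2) to pass from the general case to this one by quotienting out the unipotent radical of $\bG^0$.

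Assume first that $\bG^0$ is reductive, and write $R=k[t^{\pm 1}]$, $F=k((t))$. Corollary~\ref{corpunctural}(2) applied to the constant $R$--group $\bG\times_k R$ yields the chain of equalities $H^1_{toral}(R,\bG)=H^1_{loop}(R,\bG)=H^1(R,\bG)$, while Theorem~\ref{acyclic} specialized to $n=1$ yields the acyclicity bijection $H^1_{loop}(R,\bG)\simlgr H^1(F,\bG)$. Chaining these immediately gives the corollary in this case.

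To reduce the general case to this one, set $\bU = R_u(\bG^0)$: a normal unipotent $k$--subgroup of $\bG$ whose quotient $\bH=\bG/\bU$ satisfies the reductive-component hypothesis, since $\bH^0 = \bG^0/\bU$. By Lemma~\ref{sansuc}(2) the natural maps $H^1(R,\bG)\to H^1(R,\bH)$ and $H^1(F,\bG)\to H^1(F,\bH)$ are bijective, so it will suffice to verify that the first of them identifies the toral and loop subsets of source and target. For the toral subset this is direct: if $\bE$ maps to $\overline{\bE}$ then ${_{\overline{\bE}}\bH}=({_\bE\bG})/({_\bE\bU})$ is the reductive quotient of $({_\bE\bG})^0$, and a smooth affine group with reductive connected component admits a maximal torus iff its reductive quotient does.

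The main obstacle is the analogous compatibility for loop classes, which is not explicitly packaged in Lemma~\ref{sansuc}. I would establish it along the same lines as Lemma~\ref{sansuc}(1): since we are in characteristic zero, $\bU(\overline k)$ is uniquely divisible (being an iterated extension of copies of the divisible abelian group $\bG_a(\overline k)=(\overline k,+)$), so the continuous cohomology of the profinite group $\pi_1(R)$ with values in $\bU(\overline k)$ vanishes in all positive degrees. The non-abelian cohomology argument of \cite[\S I.5]{Se}, applied at each finite Galois level and passed to the limit, then gives bijectivity of $H^1\big(\pi_1(R),\bG(\overline k)\big)\to H^1\big(\pi_1(R),\bH(\overline k)\big)$ and identifies the loop subset of $H^1(R,\bG)$ with that of $H^1(R,\bH)$. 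Combined with the reductive-component case applied to $\bH$, this completes the chain of bijections for the general $\bG$.
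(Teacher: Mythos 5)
Your overall plan — reduce to the case $\bG^0$ reductive via Lemma~\ref{sansuc}(2), then apply Corollary~\ref{corpunctural} and Theorem~\ref{acyclic} — is exactly what the paper's one-line proof indicates, and your treatment of the reductive-component case is fine. Your argument for identifying the loop subsets under the Sansuc bijection (via unique divisibility of $\bU(\ol k)$ in characteristic zero and passage to the limit over finite quotients of $\pi_1(R)$) is correct, though in fact unnecessary: once one knows that $H^1(R,\bG)\to H^1(R,\bH)$ and $H^1(F,\bG)\to H^1(F,\bH)$ are bijective (Lemma~\ref{sansuc}) and that $H^1(R,\bH)\to H^1(F,\bH)$ is bijective (Corollary~\ref{corpunctural}(2) together with Theorem~\ref{acyclic}), the commutative square forces $H^1(R,\bG)\to H^1(F,\bG)$ to be bijective; combined with the bijectivity of $H^1_{loop}(R,\bG)\to H^1(F,\bG)$ and the inclusion $H^1_{loop}(R,\bG)\subseteq H^1(R,\bG)$, this already gives $H^1_{loop}(R,\bG)=H^1(R,\bG)\simlgr H^1(F,\bG)$ without any fresh cohomological input.

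Where you do have a genuine gap is the toral part, which you dismiss as ``direct.'' The sentence ``a smooth affine group with reductive connected component admits a maximal torus iff its reductive quotient does'' is, as written, tautological (such a group is its own reductive quotient), and the statement you actually need is far from tautological: you must show that if ${_{\ol{\bE}}\bH}$ admits a maximal torus over $R=k[t^{\pm 1}]$ then so does ${_{\bE}\bG}$. This is a Levi lifting problem over a ring: given a maximal torus $\ol{\gT}$ of $({_{\bE}\bG})^0/({_{\bE}\bU})$, one must produce a torus of $({_{\bE}\bG})^0$ mapping isomorphically onto it, i.e.\ a Levi subgroup of the preimage extension $1\to{_{\bE}\bU}\to\gG'\to\ol{\gT}\to 1$. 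Over a field this is Mostow's theorem; over $R$ it does hold, because the scheme of group-theoretic sections of $\gG'\to\ol{\gT}$ is a torsor under a unipotent $R$--group and $H^1(R,-)$ vanishes for unipotent groups (this is precisely the mechanism in the proof of Lemma~\ref{sansuc}(2), using $\Pic(R)=0$ and the vanishing of higher cohomology of quasi-coherent sheaves on an affine scheme). But this step is neither a consequence of the results you cite nor something you argue; without it the inclusion $H^1_{loop}(R,\bG)\subseteq H^1_{toral}(R,\bG)$ for general $\bG$ — and hence the first bijection in the statement — remains unproved. You should either supply this Levi lifting argument explicitly, or restrict the toral part of the conclusion to the case $\bG^0$ reductive.
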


In the case when $k$ is algebraically closed, we also recover the original results of
\cite{P1} and {\cite{P2} that began the ``cohomological approach" to classification problems 
in infinite-dimensional Lie theory.

\subsection{The two-dimensional case }

Throughout this section we assume that $k$ is algebraically closed of characteristic $0$ and $\bG$  a semisimple Chevalley $k$--group of 
adjoint type. We let $\bG^{sc} \to \bG$ be its
simply connected covering and  denote by $\bmu$ its kernel.

\subsubsection{Classification of  semisimple loop $R_2$-groups.}

 Serre's conjecture II holds for the field $F_2$ by Bruhat--Tits theory \cite[cor. 3.15]{BT3}, i.e.
$H^1(F_2, \bH)=1$ for every semisimple simply connected group $\bH$ over $F_2$.
Furthermore,  we know explicitly how to compute the  Galois cohomology of an arbitrary semisimple $F_2$ group \cite[th. 2.1]{CTGP} and \cite[th. 2.5]{GP2}.  We thus have.

\begin{corollary} \label{class-loop}
We have a decomposition 
$$
 H^1_{loop}\big(R_2, \bAut(\bG)\big)   \enskip \simlgr \enskip   
\bigsqcup\limits_{ [\bE ] \in H^1(R_2, \bOut(\bG))}   {_\bE\bOut(\bG)}(R_2) \, \backslash H^2( R_2, {_\bE\bmu}) 
$$
and the inner $R_2$--forms of $\bG$ are classified by the coset ${\bOut(\bG)}(R_2)\, \backslash H^2( R_2, {\bmu})$.
\end{corollary}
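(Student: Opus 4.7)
The plan is to combine three ingredients established in the paper: the Acyclicity Theorem \ref{acyclic}, the partition \eqref{partition} together with the twisting bijection \eqref{action4}, and the known computation of $H^1(F_2, \bH)$ for $\bH$ a semisimple adjoint $F_2$--group via Serre's conjecture II.

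First I would invoke Theorem \ref{acyclic} to reduce the classification of loop torsors over $R_2$ to Galois cohomology over $F_2$:
$$
H^1_{loop}\bigl(R_2, \bAut(\bG)\bigr) \simlgr H^1\bigl(F_2, \bAut(\bG)\bigr).
$$
On the right I apply the partition \eqref{partition} over the classes of $H^1(F_2,\bOut(\bG))$ and, on each fiber, the twisting bijection \eqref{action4}. Since $\bOut(\bG)$ is a finite constant $k$--group, every torsor under it is automatically loop, so $H^1(R_2,\bOut(\bG)) = H^1(F_2,\bOut(\bG))$ and the indexing set is unambiguous. Taking into account that \eqref{action4} preserves loop classes, this yields
$$
H^1_{loop}\bigl(R_2,\bAut(\bG)\bigr) \simlgr
\bigsqcup_{[\bE]\in H^1(R_2,\bOut(\bG))}
H^1\bigl(F_2,{_\bE\bG}\bigr)\,\big/\,{_\bE\bOut(\bG)}(R_2).
$$

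For each fixed $[\bE]$ the twisted form ${_\bE\bG}$ is semisimple of adjoint type with simply connected cover ${_\bE\bG^{sc}}$ and central kernel ${_\bE\bmu}$. Serre's conjecture II over $F_2$, established by Bruhat--Tits \cite[cor.~3.15]{BT3}, gives $H^1(F_2, {_\bE\bG^{sc}}) = 1$, so the connecting map
$$
\Delta \colon H^1\bigl(F_2,{_\bE\bG}\bigr)\hookrightarrow H^2\bigl(F_2, {_\bE\bmu}\bigr)
$$
is injective; its surjectivity is \cite[th.~2.1]{CTGP} and \cite[th.~2.5]{GP2}. Since ${_\bE\bmu}$ is an $R_2$--group of multiplicative type, Lemma \ref{tori1} identifies $H^2(F_2,{_\bE\bmu})$ with $H^2(R_2,{_\bE\bmu})$. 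The two identifications combine to give
$$
H^1\bigl(F_2,{_\bE\bG}\bigr)\simeq H^2\bigl(R_2, {_\bE\bmu}\bigr),
$$
which substituted into the decomposition above produces the first assertion. The statement on inner forms is the special case $[\bE] = 1$ of this decomposition.

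The point requiring most care is verifying that the action of ${_\bE\bOut(\bG)}(R_2)$ appearing in \eqref{action4} corresponds, via $\Delta$ and the isotriviality identification $H^2(F_2,{_\bE\bmu}) \simeq H^2(R_2,{_\bE\bmu})$, to the natural action on $H^2(R_2,{_\bE\bmu})$ induced by conjugation in ${_\bE\bG^{sc}}$. This is a compatibility of outer actions with the connecting map of a central extension; it is formal, but the only non-automatic bookkeeping in the proof.
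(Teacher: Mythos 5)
Your proof is correct and follows essentially the same route as the paper's: acyclicity to move to $F_2$, the $\bOut(\bG)$--indexed decomposition of $H^1(F_2,\bAut(\bG))$ together with the twisting bijection, bijectivity of the boundary map $H^1(F_2,{_\bE\bG})\to H^2(F_2,{_\bE\bmu})$ via Serre conjecture II over $F_2$ and \cite{CTGP}, \cite{GP2}, and finally the identifications $H^2(F_2,{_\bE\bmu})\simeq H^2(R_2,{_\bE\bmu})$ and ${_\bE\bOut(\bG)}(F_2)={_\bE\bOut(\bG)}(R_2)$. The paper performs the decomposition over $F_2$ first and then climbs back to $R_2$, whereas you invoke acyclicity first and transport the $R_2$--decomposition \eqref{action4}, but the ingredients and their roles are identical; your closing remark about compatibility of the ${_\bE\bOut(\bG)}$--action with the connecting map and the isotriviality identification corresponds to the paper's assertion that the transferred action is the opposite of the natural left action.
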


Note that the case when $\bOut(\bG)$ is trivial recovers theorem 3.17 of \cite{GP2}. We can thus view the last Corollary as an extension of this theorem to the case when the automorphism group of $\bG$ is not connected.

\begin{proof} Our choice of splitting $s : \bOut(\bG) \to \bAut(\bG)$ of the exact sequence  
$$
1 \to \bG \to \bAut(\bG) \to \bOut(\bG) \to 1
$$ 
easily leads to the decomposition (see \cite[lemme 1.2]{Gi4}) 
$$
H^1\big(F_2, \bAut(\bG)\big)  \simlgr
\bigsqcup\limits_{ [\bE ] \in H^1(F_2, \bOut(\bG))}   H^1(F_2, {_\bE\bG}) /  {_\bE\bOut(\bG)}(F_2)
$$
with respect to the Dynkin-Tits invariant. On the other hand, the boundary map 
$H^1(F_2, {_\bE\bG}) \to  H^2(F_2, {_\bE\bmu})$ is bijective by \cite[th. 2.1]{CTGP} and \cite[th. 2.5]{GP2}. 
The right action of ${_\bE\bOut(\bG)}(F_2)$ can then be transferred to 
$H^2(F_2, _\bE\bmu),$ and is the opposite  of the natural left action
of ${_\bE\bOut(\bG)}(F_2)$ on $H^2(F_2, _\bE\bmu)$.
Hence  
$$
H^1(F_2, \bAut(\bG))  \simlgr
\bigsqcup\limits_{ [\bE ] \in H^1(F_2, \bOut(\bG))}     {_\bE\bOut(\bG)}(F_2) \, \backslash
 H^2(F_2, {_\bE\bmu}).
$$
But   ${_\bE\bOut(\bG)}$ is finite \'etale over $R_n$ hence
${_\bE\bOut(\bG)}(R_2) = {_\bE\bOut(\bG)}(F_2)$ by 
Remark \ref{series}.(d).
On the other hand, we have 
 $H^2(R_2, {_\bE\bmu}) \simlgr   H^2(F_2, {_\bE\bmu})$ since $ {_\bE\bmu}$ is 
an $R_2$--group of multiplicative type \cite[prop. 3.4]{GP3}. 
Taking into account the acyclicity theorem for $\bAut(\bG)$ and $\bOut(\bG)$, we get the square of bijections
$$
\begin{CD}
H^1(F_2, \bAut(\bG))  @>>{^\sim}> 
\bigsqcup\limits_{ [\bE ] \in H^1(F_2, \bOut(\bG))}     {_\bE\bOut(\bG)}(F_2) \, \backslash
 H^2(F_2, {_\bE\bmu}). \\
@AA{\wr}A @AA{\wr}A \\
H^1_{loop}(R_2, \bAut(\bG))  @>>{^\sim}> 
\bigsqcup\limits_{ [\bE ] \in H^1(R_2, \bOut(\bG))}     {_\bE\bOut(\bG)}(R_2) \, \backslash
 H^2(R_2, {_\bE\bmu}),  \\
\end{CD}
$$ 
and this establishes the Corollary.
\end{proof}

Next we give a complete list of the isomorphism classes of loop $R_2$--forms of $\bG$ in the case 
when $\bG$ is simple of adjoint type. We have $\bOut(\bG)=1$ in type $A_1$ $B$, $C$, $E_7$, $E_8$, $F_4$ and $G_2$,
$\bOut(\bG)=\Z/2 \Z$ in type $A_n$ ($n \geq 2$), $D_n$ ($n \geq 5$) and $E_6,$ and $\bOut(\bG)=S_3$ in type $D_4$. \footnote{Of course here $1, \Z/2\Z$ and $S_3$ are here viewed as constant $R_2$ groups or finite (abstract) groups as the situation requires.}

\medskip

In the case $\bOut(\bG)=1$, then by theorem 3.17 of \cite{GP2} we have
\newline
 $ H^1_{loop}(R_2, \bAut(\bG))   \enskip \simlgr \enskip   H^2( R_2, {\bmu})$. 
But ${\bmu}= {\bmu_n}$ for  $n =1$ or $2$.
We have
$H^2( R_2, {\bmu}_2) \cong \Z/2 \Z$ \cite[\S 2.1]{GP2}.Thus

\begin{corollary}\label{inner}

(1)  If $\bG$ has type $A_1$, then $\ H^1_{loop}\big(R_2, \bAut(\bG)\big) \simeq  \Z/2 \Z$.

\smallskip

 (2) If $\bG$ has type  $B$, $C$ or $E_7$, then $ H^1_{loop}\big(R_2, \bAut(\bG)\big) \simeq  \Z/2 \Z$.

\smallskip

(3) If $\bG$ has type $E_8$, $F_4$ or $G_2$, then $H^1_{loop}\big(R_2, \bAut(\bG)\big) =1$.  \qed

\smallskip

\end{corollary}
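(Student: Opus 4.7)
The plan is to apply Corollary \ref{class-loop} and to identify the fundamental group $\bmu$ in each case. Since in all three parts the group $\bG$ is simple adjoint with $\bOut(\bG)=1$, the coproduct appearing in Corollary \ref{class-loop} reduces to its single term indexed by the trivial class, and the action of $\bOut(\bG)(R_2)$ is trivial. We therefore obtain canonically
\[
H^1_{loop}\bigl(R_2,\bAut(\bG)\bigr)\;\simeq\;H^2(R_2,\bmu).
\]

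The only remaining work is to read off $\bmu=\ker(\bG^{sc}\to\bG)$ from the type. In types $E_8$, $F_4$ and $G_2$ the group $\bG$ is already simply connected, so $\bmu=1$ and $H^2(R_2,\bmu)=0$. This gives part (3). In types $A_1$, $B_n$, $C_n$ and $E_7$ the fundamental group is $\bmu=\bmu_2$, so we are reduced to computing $H^2(R_2,\bmu_2)$.

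For this last computation I would simply cite \cite[\S 2.1]{GP2}, where it is shown that $H^2(R_2,\bmu_2)\simeq \Z/2\Z$ (this also follows painlessly from Kummer theory combined with $\Br(R_2)=\Q/\Z$). Combining, we obtain $H^1_{loop}\bigl(R_2,\bAut(\bG)\bigr)\simeq \Z/2\Z$ in cases (1) and (2) and the vanishing in case (3). There is no real obstacle here: once Corollary \ref{class-loop} is in hand, the statement is a table lookup for $\bmu$ plus a single known cohomology computation.
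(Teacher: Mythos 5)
Your proposal is correct and follows essentially the same route as the paper: the paper invokes theorem 3.17 of \cite{GP2}, which it has just noted is exactly the $\bOut(\bG)=1$ case of Corollary \ref{class-loop}, to get $H^1_{loop}(R_2,\bAut(\bG))\simeq H^2(R_2,\bmu)$, and then reads off $\bmu\in\{1,\bmu_2\}$ from the type and uses $H^2(R_2,\bmu_2)\cong\Z/2\Z$. Your derivation is the same argument with the intermediate citation unwound.
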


\begin{remark}\label{trivialauto} In Case (1) and Case (2) the non-trivial twisted groups are not quasisplit (because their ``Brauer invariant" in $H^2(R_2, \bmu)$ is not trivial.) In Case (1) the non-trivial twisted group is in fact anisotropic (see \cite{GP2} for details).
\end{remark}

In the case $\bOut(\bG)=\Z/2 \Z$, we have $H^1(R_2,\Z/2 \Z) \cong \Z/2 \Z \oplus \Z/2 \Z.$ The $\Z/2\Z$-Galois extensions 
of $R_2$ under consideration are $R_2 \times R_2$,  $R_2[\sqrt{t_1}],$  $R_2[\sqrt{t_2}]$  and  $R_2[\sqrt{t_1 t_2}]$ which 
correspond to the elements $(0,0), (1,0), (0,1)$ and $(1,1)$ respectively. These can also be thought 
as $\Z/2\Z$-torsors over $R_2$ that we will denote by $\bE_{0,0}, \bE_{1,0}, \bE_{0,1}$ and $\bE_{1,1}$ respectively. In the first 
case the generator of the Galois group acts by permuting the two factors, while in the other three is 
of the form $\sqrt{x} \mapsto -\sqrt{x}.$

 Since ${_{\bE}\bOut(\bG)} \cong {\bOut(\bG)}= \Z/2 \Z$, for any of our four torsors we have
 $$
 H^1_{loop}\big(R_2, \bAut(\bG)\big)   \enskip \simlgr \enskip  
 \Z/2 \Z \, \backslash H^2( R_2, {\bmu})  \enskip \bigsqcup_{\bE = \bE_{1,0},  \bE_{0,1},  \bE_{1,1} } \enskip  
\Z/2 \Z \, \backslash H^2( R_2, {_{\bE}{\bmu}})      .
$$ 

This leads to  a case by case discussion.

\begin{corollary}\label{quadratic}

(1) For $\bG$ of type $A_{2n}$ ($n \geq 1)$
$$
H^1_{loop}\big(R_2, \bAut(\bG)\big) \simeq  \{\pm 1 \}\,  \backslash   \Bigl( \Z  / (2n+1) \Z \Bigr)  
 \enskip \bigsqcup_{\bE = \bE_{1,0},  \bE_{0,1},  \bE_{1,1} } \enskip   \{ _{\bE}\bG  \}.
$$ 
There are $n+1$ inner  and three outer loop $R_2$--forms of $\bG.$ 
 All outer forms are quasisplit.
 
(2) For $\bG$ of type $A_{2n-1}$ ($n \geq 2$)
$$
H^1_{loop}\big(R_2, \bAut(\bG)\big) \simeq 
  \{\pm 1 \} \, \backslash  \Bigl( \Z/ 2n \Z \Bigr) 
\enskip \bigsqcup_{\bE = \bE_{1,0},  \bE_{0,1},  \bE_{1,1} } \enskip   \{ _{\bE}\bG^{\pm}  \}.
$$
 There are $n+1$ inner and six outer  loop $R_2$--forms of $\bG.$ 
 The outer forms come in three pairs. Each pair has one form which is quasisplit and one which is not.
 
(3) For $\bG$ of type $D_{2n-1}$ ($n \geq 3$)
$$
H^1_{loop}\big(R_2, \bAut(\bG)\big) \simeq \{\pm 1 \} \,   \backslash \Bigl( \Z/ 4 \Z  \Bigr) 
 \enskip \bigsqcup_{\bE = \bE_{1,0},  \bE_{0,1},  \bE_{1,1} } \enskip   \{ _{\bE}\bG^{\pm}  \}.
$$ 
There are three  inner and six outer  loop $R_2$--forms of $\bG.$ 
The outer forms come in three pairs. Each pair has one form which is quasisplit and one which is not.
 
(4) For $\bG$ of type $D_{2n}$ ($n \geq 3$) 
$$
H^1_{loop}\big(R_2, \bAut(\bG)\big) 
\simeq \hbox{\rm switch} \,  \backslash \Bigl( \Z/2 \Z \oplus  \Z/ 2\Z \Bigr)    
\enskip \bigsqcup_{\bE = \bE_{1,0},  \bE_{0,1},  \bE_{1,1} } \enskip   \{ _{\bE}\bG^{\pm}  \}. 
$$ 
There are three  inner and six outer  loop $R_2$--forms of $\bG.$ 
The outer forms come in three pairs. Each pair has one form which is quasisplit and one which is not.

(5) For $\bG$ of type $E_6$
$$
H^1_{loop}(R_2, \bAut(\bG)) \simeq    \{\pm 1 \} \,  \backslash \Bigl( \Z/ 3 \Z \Bigr) 
 \enskip \bigsqcup_{\bE = \bE_{1,0},  \bE_{0,1},  \bE_{1,1} } \enskip   \{ _{\bE}\bG  \}.
$$ 
There are two  inner and three outer  loop $R_2$--forms of $\bG.$ 
  All outer forms are quasisplit. \end{corollary}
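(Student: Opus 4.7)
The plan is to apply Corollary \ref{class-loop} to each of the five types listed. This reduces the computation of $H^1_{loop}\big(R_2, \bAut(\bG)\big)$ to the four quotient sets $\,{_\bE\bOut(\bG)}(R_2) \backslash H^2(R_2, {_\bE\bmu})\,$ indexed by $[\bE] \in H^1(R_2, \bOut(\bG)) = H^1(R_2, \Z/2\Z) \simeq (\Z/2\Z)^2$. Under our chosen compatible roots of unity, the four classes $\bE_{0,0}, \bE_{1,0}, \bE_{0,1}, \bE_{1,1}$ correspond respectively to the quadratic \'etale coverings $R_2 \times R_2$, $R_2[\sqrt{t_1}]$, $R_2[\sqrt{t_2}]$, $R_2[\sqrt{t_1 t_2}]$, and in each case the twisted outer group ${_\bE\bOut(\bG)}(R_2)$ is still the constant group $\Z/2\Z$.

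For the inner summand $[\bE]=[\bE_{0,0}]$, we have ${_\bE\bmu}=\bmu$. Using the computations $H^2(R_2, \bmu_m) \simeq \Z/m\Z$ and $H^2(R_2, \bmu_2 \oplus \bmu_2) \simeq (\Z/2\Z)^2$ from \cite[\S 2.1]{GP2}, the inner parts are read off case by case: the outer action is inversion on the cyclic center of order $2n+1, 2n, 4, 3$ in types $A_{2n}, A_{2n-1}, D_{2n-1}, E_6$, producing $n+1, n+1, 3, 2$ orbits respectively, while in type $D_{2n}$ it is the switch on $(\Z/2\Z)^2$, producing $3$ orbits. This matches the first summand in each of the five stated formulas.

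For each of the three non-trivial quadratic torsors $\bE$, I would run the Hochschild--Serre spectral sequence $H^p\big(\Gal(S/R_2), H^q(S, \bmu)\big) \Rightarrow H^{p+q}(R_2, {_\bE\bmu})$ for the corresponding cover $S$ of $R_2$. The crucial simplification is that each such $S$ is itself isomorphic to a Laurent polynomial ring in two variables over $k$, so its cohomology with values in $\bmu$ is again controlled by \cite[\S 2.1]{GP2}. In types $A_{2n}$ and $E_6$, the odd order of $\bmu$ kills $H^p(\Z/2\Z, -)$ for $p \geq 1$ and the inversion-invariants of $H^2(S, \bmu)$ vanish, so $H^2(R_2, {_\bE\bmu})=0$ and each non-trivial $\bE$ contributes one (automatically quasisplit) outer form. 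In types $A_{2n-1}, D_{2n-1}, D_{2n}$ the spectral sequence yields $H^2(R_2, {_\bE\bmu}) \simeq \Z/2\Z$, producing two outer forms per $\bE$ and six outer forms in total; quasisplitness is detected by the vanishing of the Tits class in $H^2(R_2, {_\bE\bmu})$, using the classical criterion over $F_2$ together with the acyclicity Theorem \ref{acyclic}. One finally checks that the residual ${_\bE\bOut(\bG)}(R_2)$-action on $H^2(R_2, {_\bE\bmu})$ is trivial in the outer cases, since the outer automorphism becomes inner after twisting by $\bE$. The main obstacle is the explicit Hochschild--Serre computation in the even-order cases, which requires identifying $H^1(S, \bmu_m)$ and $H^2(S, \bmu_m)$ with their explicit descriptions via $S^\times/(S^\times)^m$ and $\Pic(S)=0$, together with careful bookkeeping of the Galois module structure in each of the three non-trivial covers.
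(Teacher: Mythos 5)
Your reduction via Corollary \ref{class-loop} to the four cosets $\, {_\bE\bOut(\bG)}(R_2) \backslash H^2(R_2, {_\bE\bmu})\,$, the identification of the four quadratic covers, the verification that ${_\bE\bOut(\bG)}(R_2) \simeq \Z/2\Z$ throughout, and the enumeration of inner orbits all match the paper and are correct. The odd-order outer cases (types $A_{2n}$, $E_6$) also go through: since $|\bmu|$ is odd and the outer action is inversion, all the relevant $E_2$-terms of Hochschild--Serre for $S/R_2$ vanish and you recover $H^2(R_2, {_\bE\bmu}) = 0$.

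The gap is in the even-order cases ($A_{2n-1}$, $D_{2n-1}$, $D_{2n}$). You assert that ``the spectral sequence yields $H^2(R_2, {_\bE\bmu}) \simeq \Z/2\Z$,'' but the $E_2$-page for the degree-two diagonal is not small: with the twisted $\Gamma=\Z/2\Z$-action (Galois twisted by inversion on $\bmu$) one finds, for example in type $A_{2n-1}$ with $S = R_2[\sqrt{t_1}]$, that $E_2^{0,2} \simeq \Z/2\Z$, $E_2^{1,1} \simeq (\Z/2\Z)^2$, and $E_2^{2,0} \simeq \Z/2\Z$. To get the final answer $\Z/2\Z$ you must show that the $d_2$-differentials kill the extra terms, and nothing in your outline controls these differentials. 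This is precisely the point you flag as ``the main obstacle,'' and it is left unaddressed.

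The paper sidesteps this entirely: it uses the explicit short exact sequence of $R_2$-groups of multiplicative type
$1 \to {_\bE\bmu} \to \prod_{S/R_2}\bmu_{2n} \to \bmu_{2n} \to 1$ (resp.\ Shapiro's lemma in the odd case), and its associated long exact sequence, where the maps are concrete norm/corestriction maps. The $H^1$-norm is computed as $(\mathrm{id}, \times 2)$ in the coordinates $(\sqrt{t_1}, t_2)$ and $(t_1, t_2)$ (using $\Pic(S)=0$ and Kummer theory), and the $H^2$-norm is the identity $\Z/2n\Z \to \Z/2n\Z$. This immediately gives $H^2(R_2, {_\bE\bmu}) \simeq \operatorname{coker}(H^1\text{-norm}) \simeq \Z/2\Z$. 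If you wish to persist with Hochschild--Serre, you would effectively have to reconstruct the same norm-map information to evaluate the differentials; it is cleaner, as the paper does, to work directly with the exact sequence of group schemes. Similarly for $D_{2n}$ (where $\bmu = \bmu_2 \times \bmu_2$ with the switch action) the paper notes that ${_\bE\bmu} = \prod_{S/R_2}\bmu_2$ outright, so $H^2(R_2, {_\bE\bmu}) = H^2(S, \bmu_2)$ by Shapiro — much shorter than a spectral-sequence argument. Finally, a small correction: the residual $\Z/2\Z$-action on $H^2(R_2, {_\bE\bmu})$ is trivial in the even cases simply because $\Aut(\Z/2\Z)$ is trivial, not because the outer automorphism ``becomes inner'' after twisting (it does not).
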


\begin{proof} 
(1) We have $\bmu= \bmu_{2n+1}= \ker\bigl( \bmu_{2n+1}^2 \buildrel \prod \over \to \bmu_{2n+1} \bigr)$ and the
action of $\Z/2\Z$ switches the two factors.
We have $H^2(R_2, \bmu) \simeq \Z/(2n+1) \Z$ 
and  the outer action of $\Z/ 2 \Z$
is by signs.\footnote{Strictly speaking we are looking, here and in what follows, at the 
action of $\Out(\bG)$  on $R_2$--groups or cohomology of $R_2$--groups which are of multiplicative type. 
Since we have an equivalence of categories between $R_2$ and $F_2$ groups of multiplicative type 
\cite{GP3}. By Remark \ref{series}(d)) we can carry all relevant calculations at the level of fields, in which case the situation is well understood.
See for example the table in page 332 of \cite{PR}.}

Let $\bE = \bE_{(1,0)}.$ It follows that $_{\bE}\bmu=  \ker\bigl( \prod\limits_{R_2[\sqrt{t_1}]/R_2} \bmu_{2n+1} 
\buildrel norm \over \to \bmu_{2n+1} \bigr)$.
  Since $2n+1$ is odd, the norm is split and Shapiro lemma yields
$$
H^2(R_2, \, _{\bE}\bmu)= \ker\bigl( H^2(R_2[\sqrt{t_1}],  \bmu_{2n+1}) \buildrel {\rm Cores} \over
\to   H^2(R_2,  \bmu_{2n+1}) \bigr).
$$ This reads
$ \ker\bigl( \Z/(2n+1) \Z \buildrel id  \over \to \Z/(2n+1) \Z\big) =0$ 
by taking into account proposition 2.1 of \cite{GP3}. The same calculation holds for $E_{(0,1)}$ and $E_{(1,1)}$ and we obtain 
the desired decomposition. In particular there are $n + 1$ inner forms 
and three outer forms. The outer forms are all quasiplit.

\smallskip

\noindent (2) We have $\bmu= \bmu_{2n}= \ker\bigl( \bmu_{2n}^2 \buildrel \prod \over \to \bmu_{2n} \bigr)$
 and the action of $\Z/2\Z$ switches the two factors.
The coset $\Z/2\Z  \, \backslash H^2(R, \bmu_{2n})$ 
is as before   $\{ \pm 1\}  \, \backslash  ( \Z/ 2n \Z)$.
However, the  computation of $H^2(R_2, \, {_{\bE}\bmu})$ is different.
The exact sequence $$
1 \to {_{\bE}\bmu_{2n}} \to 
\prod\limits_{R_2[\sqrt{ t_1}]/R_2}\bmu_{2n}  \,  \buildrel norm  \over \to \bmu_{2n} \to 1  $$
gives rise to the long exact
sequence of \'etale cohomology 
$$
\dots \to H^1(R_2[\sqrt{t_1}], \bmu_{2n}) \buildrel norm \over \to
H^1(R_2, \bmu_{2n}) \buildrel \delta \over \to   H^2(R_2, {_{\bE}\bmu}) \to H^1(R_2[\sqrt{t_1}], \bmu_{2n}) \buildrel norm \over \to 
H^2(R_2, \bmu_{2n}). 
$$ 
The norm map appearing on the righthand side is the identity map $id : \Z/2n\Z \to \Z/2n\Z$, so $\delta$ is onto.
 By the choices
of coordinates $\sqrt{t_1}$ and $t_2$ on 
$R_2[\sqrt{t_1}]$ and $t_1,t_2$ on $R_2$, the beginning of the exact sequence decomposes as
$$
\Z/2n\Z \oplus \Z/2n\Z  \buildrel  (id, \times 2 ) \over \longrightarrow \Z/2n\Z \oplus \Z/2n\Z.   
$$ 
So $H^2(R_2, {_{\bE}\bmu}) \simeq \Z/2\Z$ and the action of 
$\Z/2\Z$ on $H^2(R_2, {_{\bE}\bmu})$ is therefore necessarily trivial. Thus $\bE$ leads to two distinct twisted 
forms ${_\bE}\bG^{\pm}.$ More precisely ${_\bE}\bG^{+} = {_\bE}\bG$ (which is quasiplit), while ${_\bE}\bG^{-}$ is not quasiplit (since its 
``Brauer invariant'' in $H^2(R_2, {_{\bE}\bmu})$ is not trivial). Similarly for 
$\bE_{(0,1)}$ and $\bE_{(1,1)}.$ This gives the desired decomposition. There are $n +1$ inner forms and six outer forms (three of which are quasisplit).  
  
\smallskip

\noindent (3) In this case $\bmu= \bmu_4$. The computation of the $H^2$ are exactly as
in case (2) for $n=2$. There are three inner forms and six outer forms (three of which are quasisplit).

\smallskip

\noindent (4) This case is rather different since 
$\bmu= \bmu_2 \times \bmu_2$ where $\Z/2\Z$ switches the two summands. 
We have $H^2(R_2, \bmu) \simeq \Z/2\Z \oplus \Z/2\Z$ where again $\Z/2\Z$  acts by switching the two summands
 
 Given that ${_{\bE}\bmu} =   
\prod\limits_{R_2[\sqrt{ t_1}]/R_2}\bmu_{2}$, we have 
$H^2(R_2, {_{\bE}\bmu}) \simlgr H^2(R_2[\sqrt{ t_1}], \bmu_2) = \Z/2\Z.$ Similarly  for $E_{(0,1)}$ and $E_{(1,1)},$ whence 
our decomposition. Again we have  three inner forms and six outer forms (three of which are quasisplit).

\smallskip

\noindent (5) This is exactly as in case (1) for $n=1$. There are two inner forms and three outer forms (all three of them quasisplit).
\end{proof}

It remains to look at the case when $\bG$ is of type  $D_4$.
The set $H^1(R_2, S_3)$ classifies all degree $3$ \'etale extensions $S$ of $R_2.$ Then $S$ is a 
direct product of connected extensions.
There are tree cases: $S=R_2 \times R_2 \times R_2$ (the split case), $S=S' \times R_2$ with
$S'/R_2$ of degree $2,$ and the connected case.

The case of $S' \times R_2$ is already understood: They correspond to a $1$-cocycle $\phi: \pi_1(R_n) \to
\Z/2\Z \subset S_3$, where
we view  $\Z/2\Z$ as a subgroup of  $S_3$ generated by a permutation. Note that up to conjugation by $S_3,$ there are exactly three such maps $\phi.$ These are three non-isomorphic quadratic extensions which were denoted by 
 $\bE_{(i,j)}$ above for $(i,j) \neq (0,0).$ We shall denote them by $\bE_2^{(i,j)}$ in the present 
situation to avoid confusion.

In the connected case there are four cubic extensions of $R_2.$ They correspond to adjoining to $R_2$ a cubic root  in $R_{2, \infty}$
of $t_1,$ $t_2,$ $t_1t_2$ and $t_1^2t_2$ respectively. We will denote the corresponding four $S_3$--torsors by 
$\bE_3^{(i,j)}$ with the obvious values for $(i,j).$ The cubic case, which a priori appears as the most complicated,  ends up being quite simple due to  cohomological vanishing reasons, as we shall momentarily see.

 According to Corollary \ref{class-loop} we have the decomposition
\begin{eqnarray}\label{looptriality}
H^1_{loop}(R_2, \bAut(\bG))& \simeq & \,\,\,\,
S_3 \backslash H^2(R_2, \bmu)  \\ \nonumber
&&\enskip \bigsqcup_{\bE_2^{(i,j)}}  \enskip (_{\bE_2^{(i,j)}}S_3)(R_2) \, \backslash H^2( R_2,
_{\bE_2^{(i,j)}}\bmu)  \\ \nonumber
&&  \enskip \bigsqcup_{\bE_3^{(i,j)}}  \enskip (_{\bE_3^{(i,j)}}S_3)(R_2) \, \backslash H^2(R_2,
_{\bE_3^{(i,j)}}\bmu).
\end{eqnarray}

The centre  is $\bmu= \bmu_2 \times \bmu_2 = \ker\bigl( \bmu^3_2 \buildrel
\prod \over \to \bmu_2 \bigr)$ and
$S_3$ acts by  permutation on $\bmu^3_2$.
Hence $H^2(R_2, \bmu)= \ker\bigl( H^2(R_2, \bmu_2)^3 \to H^2(R_2,
\bmu_2) \bigr)
\subset  H^2(R_2, \bmu_2)^3 \simeq (\Z/2\Z )^3$. There are two orbits
for the action of $ S_3$ on $H^2(R_2, \bmu)$, namely $(0,0,0)$ and $(1,1,0)$.

For simplicity we will denote $\bE_2^{(1,0)}$ by $\bE_2$ and $\bE_3^{(1,0)}$ by $\bE_3.$ 
Since the group $\bGL_2(\Z)$ acts transitively on the set of quadratic and cubic extensions 
of $R_2$ we may consider only the case 
 of $\bE_2:= \bE_2^{1,0}$ [resp.  $\bE_3:= \bE_3^{1,0}$] for the purpose of determining
the coset $(_{\bE_2^{(i,j)}}S_3)(R_2) \backslash H^2(R_2, {_{\bE_2^{(i,j)}}\bmu})$ [resp. $(_{\bE_3^{(i,j)}}S_3)(R_2) \backslash H^2(R_2, {_{\bE_3^{(i,j)}}\bmu})]$.   that all the twists of $\bmu$ and $S_3$ by quadratic or cubic torsors 
are of the form ${_{\bE_i}\bmu}$ and ${_{\bE_i}S_3}$  for $i = 2$ (resp. $i= 3$) in the quadratic (resp. cubic) case.

We have  ${_{\bE_2}\bmu} = \ker\bigl( \prod\limits_{R_2[\sqrt{
t_1}]/R_2}\bmu_2  \, \times \bmu_2 \buildrel norm \times id \over \to
\bmu_2 \bigr)$, hence
$$
H^2(R_2, {_{\bE_2}\bmu})= \ker\bigl( H^2(R_2[\sqrt{ t_1}],  \bmu_2) \oplus
H^2(R_2,  \bmu_2) \to H^2(R_2,  \bmu_2) \bigr) \cong H^2(R_2[\sqrt{
t_1}],  \bmu_2)
= \Z/2 \Z.  
$$
Since $\Z/2\Z$ has trivial automorphism group, we get three copies of $\Z/2\Z$ in the
second summand of the decomposition
(\ref{looptriality}).

In the cubic case  we have  ${_{\bE_3}\bmu }= \ker\bigl( \prod\limits_{R_2[\root
3 \of t_1]/R_2} \bmu_2 \buildrel norm \over \to \bmu_2 \bigr)$.
Since $2$ is prime  to $3$, the norm is split and
$$
H^2(R_2, {_{\bE_3}\bmu})= \ker\bigl( H^2(R_2[\root 3 \of t_1],  \bmu_2)
\buildrel {\rm Cores}
\over \longrightarrow H^2(R_2,  \bmu_2) \bigr)
= \ker\bigl( \Z/2 \Z \buildrel id  \over \to \Z/2 \Z) =0.  
$$
Finally we observe, with the aid of Remark \ref{series}(d), that $(_{\bE_2}S_3)(R_2) \cong \Z/2\Z$ and
$(_{\bE_3}S_3)(R_2) \cong \Z/3\Z$. 

Looking at (\ref{looptriality}) we obtain.

\begin{corollary}\label{cubic} 
For $\bG$ of type $D_4$ there are twelve  loop $R_2$--forms, two inner and ten outer. Six of the outer forms  are ``quadratic", and come  divided into three pairs, 
where each pair  contains exactly one quasiplit group. The remaining  
four outer forms are ``cubic"  and are all  quasiplit.  \qed
\end{corollary}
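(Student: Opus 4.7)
The proof will be essentially an accounting exercise, combining the decomposition (\ref{looptriality}) with the cohomological computations already carried out just before the statement, together with the $\bGL_2(\Z)$--action of Remark \ref{preGLn}. The plan is first to list the contributions from the three summands of (\ref{looptriality}), then count orbits, and finally identify the quasisplit forms via the vanishing of the ``Brauer invariant'' in the corresponding $H^2$.

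First I would handle the inner part. Since $\bmu = \ker(\bmu_2^3 \xrightarrow{\prod} \bmu_2)$ with $S_3$ acting by permutation, the standard computation yields $H^2(R_2, \bmu) \subset H^2(R_2, \bmu_2)^3 \simeq (\Z/2\Z)^3$ and equals the kernel of the sum map. The $S_3$--action has exactly two orbits: $\{(0,0,0)\}$ and the orbit of $(1,1,0)$. This produces the two inner forms. The first is split (trivial Brauer class), while the second is not quasisplit.

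Next I would deal with the quadratic outer contribution. The set $H^1(R_2,S_3)$ of classes valued in the subgroup $\Z/2\Z \subset S_3$ (the transpositions) is in bijection with the three non-trivial quadratic torsors $\bE_2^{(i,j)}$. Since $\bGL_2(\Z)$ acts transitively on these three quadratic extensions (by the formula (\ref{GLonRn})), it suffices, up to this action, to treat $\bE_2 := \bE_2^{(1,0)}$; the computation done in the text gives $H^2(R_2, {_{\bE_2}\bmu}) \simeq \Z/2\Z$ and $({_{\bE_2}S_3})(R_2) \simeq \Z/2\Z$ acting trivially (since $\Aut(\Z/2\Z) = 1$). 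This yields two classes per quadratic torsor, hence $6$ quadratic outer forms arranged into $3$ pairs. In each pair, exactly one class has trivial image in $H^2$, hence is quasisplit, and the other is not.

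Finally for the cubic part, the four cubic \'etale extensions obtained by adjoining $\sqrt[3]{t_1^{a}t_2^{b}}$ give four $S_3$--torsors $\bE_3^{(i,j)}$. Again I would use $\bGL_2(\Z)$--transitivity to reduce to $\bE_3 := \bE_3^{(1,0)}$; then ${_{\bE_3}\bmu} = \ker(\prod_{R_2[\sqrt[3]{t_1}]/R_2}\bmu_2 \xrightarrow{N} \bmu_2)$ and since $\gcd(2,3)=1$ the norm splits, giving $H^2(R_2, {_{\bE_3}\bmu}) = 0$. Thus each cubic torsor contributes a single class, namely the quasisplit form, for a total of $4$ cubic outer forms, all quasisplit. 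Adding $2 + 6 + 4 = 12$ completes the count.

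The only mild subtlety is verifying that ``quasisplit'' is indeed characterized by triviality of the class in $H^2(R_2, {_{\bE}\bmu})$; this however is exactly the principle used already in Remark \ref{trivialauto} and in cases (2)--(4) of Corollary \ref{quadratic}, and in our setting it follows from the fact that the isomorphism $H^1(F_2, {_{\bE}\bG}) \simlgr H^2(F_2, {_{\bE}\bmu})$ of \cite{CTGP, GP2} sends the split inner form of ${_{\bE}\bG}$ to the trivial class. So there is no real obstacle here, and the proof is essentially complete once the orbit counts and cohomology computations are assembled.
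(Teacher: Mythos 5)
Your proposal follows essentially the same route as the paper: read off the count from the decomposition (\ref{looptriality}), compute $H^2(R_2,\bmu)$ and its twists for the quadratic and cubic torsors exactly as the text does (using $\bGL_2(\Z)$--transitivity only to see that the computation is the same for each torsor in a given orbit), and identify the quasisplit form in each fiber as the one with trivial class in $H^2$. This matches the paper's own argument, so there is nothing further to add.
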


\subsubsection{Applications to the classification of EALAs in nullity 2.}

The Extended Affine Lie Algebras (EALAs), as their name suggests, are a class of Lie algebras 
which generalize the affine Kac-Moody Lie algebras. To an EALA $\cal{E}$ one can attached its so called centreless core,
which is usually denoted by $\cal{E}_{{\rm cc}}.$ This is a Lie algebra over $k$ (in general infinite-dimensional) which satisfies
the axioms of a Lie torus.\footnote{This terminology is due to Neher and Yoshii. It may seem strange to call a Lie algebra  a Lie torus (since tori have already a meaning in Lie theory). The terminology was 
motivated by the concept of Jordan tori, which are a class of Jordan algebras.}  Neher has shown that all Lie torus arise as centreless cores of EALAs, and 
conversely. He has also given an explicit procedure that constructs all EALAs having a given Lie torus $\cal{L}$ as their 
centreless cores. To some extent this reduces many central questions about EALAs (such as their classification) to that of Lie tori.

The centroid of a Lie tori $\cal{L}$ is always of the form $R_n.$ 
This gives a natural $R_n$--Lie algebra structure 
to $\cal{L}.$ If  $\cal{L}$ as an $R_n$--module is of finite type, 
then $\cal{L}$ is necessarily a multiloop algebra
 $L(\gg, {\bs })$ as explained  in the Introduction. 
Let $\bG$ be a Chevalley $k$--group of adjoint type with Lie 
algebra $\gg.$
Since $\bAut(\gg) \simeq \bAut(\bG)$ the $n$--loop algebras based on $\gg$ 
(as $R_n$--Lie algebras) are in bijective correspondence  with the loop $R_n$--forms 
of $\bG.$ Indeed,  they are precisely the Lie
 algebras of the loop $R_n$--groups. The subtlety comes from the fact 
that in infinite-dimensional Lie theory one is interested in these Lie algebras 
as Lie algebras over $k,$
 {\it and not} $R_n.$ In the present context the ``centroid trick'' (see \cite[\S 4.1]{GP2}) translates into the $\bGL_n(\Z)$ action  on $H^1\big(R_n, \bAut(\gg)\big)$ we have defined. This allows us to describe, in terms of orbits, all  the isomorphism classes of $R_n$--multiloop algebras $L(\gg, {\bs})$ that become isomorphic when viewed as Lie algebras over $k.$

 \medskip
 
In what follows ``loop algebras based on $\gg$" will be though as Lie algebras  {\it over} $k.$
\smallskip

In the case $\bOut(\bG)=1$ we have seen that
$H^1_{loop}\big(R_2, \bAut(\bG)\big) \simeq H^2( R_2, {\bmu}),$ and this latter $H^2$ is either trivial or $\Z/2 \Z .$  In both cases the 
action 
of $\bGL_2(\Z)$ on $H^1_{loop}\big(R_2, \bAut(\bG)\big)$ is necessarily trivial. In particular.

\medskip

\begin{corollary}\label{inner}

(1)  If $\gg$ has type $A_1$ $B$, $C$ or $E_7$, there exists two isomorphism 
classes of $2$--loop algebras based on $\gg$ denoted by $\gg_0$ (the split case) and $\gg_1.$\footnote{As pointed out 
in \cite{GP2}, the case of $E_7$ has an amusing story behind it. 
The existence of a $k$--Lie algebra 
$L(\gg, \sigma_1, \sigma_2) $ which is not isomorphic to $\gg\otimes_k R_2$
 was first established by van de Leur with the aid of 
a computer. In nullity $1$ inner automorphisms always lead to trivial 
loop algebras. van de Leur's example shows that this fails already 
in nullity two.}

\smallskip

(2) All 2--loop algebras based on $\gg$ of type $E_8$, $F_4$ or $G_2$ are trivial, i.e isomorphic 
as $k$--Lie algebras to $\gg_0 = \gg \otimes_k R_2.$  

\smallskip

\end{corollary}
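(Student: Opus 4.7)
The plan is to deduce this corollary from the previous Corollary \ref{inner} by applying the centroid trick, which we have encoded through the base change action of $\bGL_n(\Z)$ on $H^1_{loop}(R_n, \bAut(\bG))$ (Lemma \ref{cocycles-action} and Remark \ref{GLpreserves}). The point is that, via the natural identification $\bAut(\gg) \simeq \bAut(\bG)$, the set of $R_2$--isomorphism classes of $2$--loop algebras based on $\gg$ is the pointed set $H^1_{loop}\bigl(R_2, \bAut(\bG)\bigr)$, whereas the set of $k$--isomorphism classes of such algebras is the quotient of this set by the $\bGL_2(\Z)$--action, because the centroid of any such multiloop algebra equals $R_2$ and hence any $k$--isomorphism between two such algebras induces an automorphism of the centroid (see \cite[\S 4.1]{GP2}).

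First I would dispose of case (2). For types $E_8$, $F_4$ and $G_2$ the group $\bOut(\bG)$ is trivial and Corollary \ref{inner}(3) gives $H^1_{loop}\bigl(R_2, \bAut(\bG)\bigr) = 1$. Hence there is a unique $R_2$--isomorphism class, and a fortiori a unique $k$--isomorphism class, namely the trivial one $\gg_0 = \gg \otimes_k R_2$.

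For case (1), types $A_1$, $B$, $C$ and $E_7$ also satisfy $\bOut(\bG) = 1$, and Corollary \ref{inner}(1)--(2) gives $H^1_{loop}\bigl(R_2, \bAut(\bG)\bigr) \simeq \Z/2\Z$. The base change action of $\bGL_2(\Z)$ on this pointed set fixes the distinguished element corresponding to the trivial torsor (that is, to $\gg_0$). Since any permutation of a two--element set fixing one element must be the identity, the $\bGL_2(\Z)$--action is trivial on the whole set. Consequently the two $R_2$--isomorphism classes remain distinct as $k$--isomorphism classes; we denote them by $\gg_0$ (the split case) and $\gg_1$.

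There is essentially no obstacle here: once Corollary \ref{inner} is in hand, the passage from $R_2$--classification to $k$--classification is completely formal because the cardinality of the classifying set is at most two and the base point is preserved. The only genuine content lies in having set up the centroid trick cohomologically, which has already been done.
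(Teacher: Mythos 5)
Your proof is correct and follows essentially the same route as the paper: identify $k$--isomorphism classes with $\bGL_2(\Z)$--orbits on $H^1_{loop}\bigl(R_2, \bAut(\bG)\bigr)$, invoke the computation of this set from the preceding corollary, and observe the action is necessarily trivial on a pointed set of cardinality at most two. Your explicit remark that the action fixes the base point and therefore must be the identity on a two-element set is a slightly more elementary phrasing than the paper's (which quietly appeals to the $\det(g)$-description of the action from Lemma \ref{cocycles}.6), but the substance is the same.
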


In the case $\bOut(\bG)=\Z/2 \Z$, we have $H^1(R_2,\Z/2 \Z) \cong \Z/2 \Z \oplus \Z/2 \Z$ and
the action of $\bGL_2(\Z)$ on  $H^1(R_2,\Z/2 \Z)$
is given by the linear action  mod $2$. Since $\SL_2(\Z/2\Z)= \GL_2(\Z/2\Z)$ and 
$\SL_2(\Z/2\Z)$ is generated by elementary matrices, the reduction map 
$\bGL_2(\Z) \to \GL_2(\Z/2 \Z)$ is onto.
Hence there are two orbits for the action of $\bGL_2(\Z)$ on 
$H^1(R_2,\Z/2 \Z)$, namely the trivial one   and $H^1(R_2,\Z/2 \Z) \setminus \{0\}$. The last one
is represented by the quadratic Galois  extension $R_2[\sqrt{t_1}]/R_2,$ denoted by  
 $\bE_{(1,0)}$ in the previous section, which we will again denote simply by $\bE$ in what follows. 
The action of $\bGL_2(\Z)$ we have just 
described shows that in all cases the outer forms, which came in three families (each with one or two classes)
in the case of loop $R_2$--groups, collapse into 
a single family.  This single family consists of either a single class, namely 
the quasi-split algebra
${_{\bE}}\gg = {_{\bE}}\gg^+$, or two classes  ${_{\bE}}\gg^{+}$ and ${_{\bE}}\gg^{-}$. The algebra ${_{\bE}}\gg^{-}$ is not quasisplit. 

\begin{corollary}\label{quadratic} If  $\bOut(\bG)=\Z/2 \Z$ the classification of isomorphism classes of $2$--loop algebras based on $\gg$ is as follows:

(1) In  type $A_{2n}$ ($n \geq 1)$  
$$
\bGL_2(\Z) \, \backslash H^1_{loop}(R_2, \bAut(\gg)) \simeq  \{\pm 1 \}\,  \backslash   \Bigl( \Z  / (2n+1) \Z \Bigr)  
 \enskip \bigsqcup  \enskip   \{ _{\bE}\gg  \}.
$$ 
 There are $n + 1$ inner forms, denoted by $\gg_q$ with $0 \leq q \leq n$, and one outer form (which is quasisplit).
 \medskip
 
(2) In type  $A_{2n-1}$ ($n \geq 2$)  
$$
\bGL_2(\Z) \, \backslash H^1_{loop}(R_2, \bAut(\gg)) \simeq 
  \{\pm 1 \} \, \backslash  \Bigl( \Z/ 2n \Z \Bigr)
   \enskip  \bigsqcup  \enskip   \{ _{\bE}\gg^+  \}
\enskip \bigsqcup  \enskip   \{ _{\bE}\gg^-  \}.
$$
There are $n +1$ inner forms,  denoted by $\gg_q$ with $0 \leq q \leq n,$ and two outer forms  (one of them quasiplit, the other one not).
 \medskip
 
(3) In type $D_{2n-1}$ ($n \geq 3$)
$$
\bGL_2(\Z) \, \backslash H^1_{loop}(R_2, \bAut(\gg)) \simeq  \{\pm 1 \} \,   \backslash \Bigl( \Z/ 4 \Z  \Bigr) 
 \enskip  \bigsqcup  \enskip   \{ _{\bE}\gg^+  \}
\enskip \bigsqcup  \enskip   \{ _{\bE}\gg^-  \}.
$$ 
There are $3$ inner forms, denoted by $\gg_{0,1,2}$,  and two outer forms  (one of them quasiplit, the other one not).
 \medskip
 
(4) In type $D_{2n}$ ($n \geq 3)$, we have 
$$
\bGL_2(\Z) \, \backslash H^1_{loop}(R_2, \bAut(\gg)) 
\simeq \hbox{\rm switch} \,  \backslash \Bigl( \Z/2 \Z \oplus  \Z/ 2\Z \Bigr)   
 \enskip  \bigsqcup  \enskip   \{ _{\bE}\gg^+  \}
\enskip \bigsqcup  \enskip   \{ _{\bE}\gg^-  \}.
$$ 
There are $3$ inner forms, denoted by $\gg_{0,1,2}$, and two outer forms  (one of them quasiplit, the other one not).

\medskip

(5) In  type $E_6$ 
$$
\bGL_2(\Z) \,  \backslash H^1_{loop}(R_2, \bAut(\gg)) \simeq  \{\pm 1 \} \, 
 \backslash \Bigl( \Z/ 3 \Z \Bigr) 
 \enskip \bigsqcup  \enskip  \{ _{\bE}\gg  \}.
 $$
 There are $2$ inner forms, $\gg_0$ and $\gg_1$, and one outer form (which is quasisplit).
 
 \end{corollary}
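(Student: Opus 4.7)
The plan is to run the analysis of the previous Corollary \ref{quadratic} (for loop $R_2$--groups) through the $\bGL_2(\Z)$-action, using the machinery of Lemma \ref{cocycles}. First I would record that the Corollary \ref{WT3equi} decomposition
$$
H^1_{loop}(R_2,\bAut(\bG)) \;=\; \bigsqcup_{[\bE]\in H^1(R_2,\bOut(\bG))} (_\bE\bOut(\bG))(R_2)\,\backslash\, H^2(R_2,\,_\bE\bmu)
$$
is $\bGL_2(\Z)$-equivariant over the base $H^1(R_2,\bOut(\bG))=H^1(R_2,\Z/2\Z)\cong(\Z/2\Z)^2$. The base change action of $\bGL_2(\Z)$ on $(\Z/2\Z)^2$ is the canonical linear action coming from the surjection $\bGL_2(\Z)\twoheadrightarrow \GL_2(\Z/2\Z)$, so it has exactly two orbits: $\{(0,0)\}$ and the nonzero vectors. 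This immediately collapses the three ``quadratic'' families $\bE_{(1,0)},\bE_{(0,1)},\bE_{(1,1)}$ of the previous corollary into a single family, represented by $\bE:=\bE_{(1,0)}$.

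Next I would handle the inner (trivial outer invariant) part. Lemma \ref{cocycles}(6), or rather its straightforward extension from $\bmu$ to the relevant center (the proof is identical, using cyclic Azumaya generators), says that $\bGL_2(\Z)$ acts on $H^2(R_2,\bmu)$ via $g\cdot\alpha=\det(g)\cdot\alpha$; since $\det\bGL_2(\Z)=\{\pm 1\}$, this is an action by sign. For the types $A_{2n}$, $A_{2n-1}$, $D_{2n-1}$, $E_6$ the outer involution already acts on the cyclic $\bmu$ as $-1$, so this sign action is absorbed in the quotient $\bOut(\bG)(R_2)\backslash H^2(R_2,\bmu)$ and the inner count is unchanged from the previous corollary: $n+1$, $n+1$, $3$, $2$ classes respectively. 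For $D_{2n}$ the center is $\bmu_2\times\bmu_2$ so $H^2$ is $2$-torsion and $\det=\pm 1$ acts trivially, again giving $3$ inner classes ($\text{switch}\backslash((\Z/2\Z)^2)$).

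For the outer part I fix $\bE=\bE_{(1,0)}$ and determine the stabilizer $\Lambda_\bE\subset\bGL_2(\Z)$ and its action on $(_\bE\bOut(\bG))(R_2)\backslash H^2(R_2,\,_\bE\bmu)$. The stabilizer consists of matrices $\begin{pmatrix}a&b\\c&d\end{pmatrix}$ with $a$ odd and $c$ even, and already $\begin{pmatrix}1&0\\0&-1\end{pmatrix}\in\Lambda_\bE$ has determinant $-1$, so $\det:\Lambda_\bE\twoheadrightarrow\{\pm 1\}$. In the computations carried out in the proof of the previous corollary, $H^2(R_2,\,_\bE\bmu)$ was found to be either $0$ (types $A_{2n}$, $E_6$) or $\Z/2\Z$ (types $A_{2n-1}$, $D_{2n-1}$, $D_{2n}$). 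In the first case there is a single outer class $\{{_\bE\gg}\}$, automatically fixed. In the second case the group $\Z/2\Z$ has no nontrivial automorphisms, so the $\Lambda_\bE$-action on $\Z/2\Z$ is forced to be trivial, and the two classes $\{{_\bE\gg}^+,{_\bE\gg}^-\}$ are preserved (one can also note that the Witt-Tits index is a $\bGL_2(\Z)$-invariant by Corollary \ref{WT3equi}, which independently distinguishes quasi-split from non quasi-split within the fiber).

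Collecting these, the proof will match the five cases of the statement on the nose. The main technical point worth double-checking is the verification that Lemma \ref{cocycles}(6) applies verbatim to the various relevant centers $\bmu$ (not just to $\bmu_d$ in $H^2(R_2,\bmu_d)$), but this follows from the explicit decomposition of $H^2(R_2,\bmu)$ as a product of cyclic pieces combined with the $\det$-formula on each factor; the only mild subtlety is the Shapiro/norm calculations of $H^2(R_2,\,_\bE\bmu)$, which were already established in the previous corollary and carry over without change.
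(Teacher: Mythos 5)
Your proof is correct and follows the same strategy as the paper's: collapse the three quadratic Dynkin--Tits families into one via the $\GL_2(\Z)$-action on $H^1(R_2,\Z/2\Z)$, then use the determinant formula of Lemma \ref{cocycles}(6) to show that the $\GL_2(\Z)$-action on the inner part $H^2(R_2,\bmu)$ is absorbed by the existing $\{\pm 1\}$ action of $\Out(\bG)$ (or is trivial when $\bmu$ is $2$-torsion in type $D_{2n}$). You also explicitly verify, via $\Aut(\Z/2\Z)=1$ (or alternatively via the $\GL_2(\Z)$-invariance of the Witt--Tits index from Corollary \ref{WT3equi}), that the stabilizer $\Lambda_\bE$ does not merge the two outer classes when $H^2(R_2,\,{_\bE}\bmu)\cong\Z/2\Z$ --- a step the paper leaves implicit.
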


\begin{proof}
The nature of the collapse of outer forms when passing from $R_2$ to $k$ was explained before the statement of the Corollary. It remains to understand the inner cases.
According to Corollary \ref{class-loop} and 
(\ref{action35}), we need to trace the action of $\bGL_2(\Z)$ on $\Z/2\Z \backslash H^2(R_2, \bmu) $
and use the fact that this action lifts to an action of  $\bGL_2(\Z)$ on $H^2(R_2, \bmu)$
which commutes with that of $\Z/2\Z$.

 \smallskip

\noindent (1) We have $\bmu= \bmu_{2n+1}= \ker\bigl( \bmu_{2n+1}^2 \buildrel \prod \over \to \bmu_{2n+1} \bigr)$ and the
action of $\Z/2\Z$ switches the two factors.
We have $H^2(R_2, \bmu) \cong \Z/(2n+1) \Z$ and
the action of $\bGL_2(\Z)$ on  $H^2(R_2, \bmu)$ is  given by the determinant (Lemma \ref{cocycles}.6),
that  of $\Z/ 2 \Z$ is given by signs. Thus  $\bGL_2(\Z)$ acts trivially on  $\Z/2\Z \backslash H^2(R_2, \bmu) $
and the result follows.

\smallskip

\noindent (2) We have
$\bmu= \bmu_{2n}= \ker\bigl( \bmu_{2n}^2 \buildrel \prod \over \to \bmu_{2n} \bigr)$
 and the action of $\Z/2\Z$ switches the two factors.
The action of $\bGL_2(\Z)$ on $H^2(R, \bmu)$ is given by the determinant,  
hence the set of cosets $(\bGL_2(\Z)\times \Z/2\Z))  \, \backslash H^2(R, \bmu_{2n})$ 
can still be identified with  $\{ \pm 1\}  \, \backslash  ( \Z/ 2n \Z)$.

\smallskip

\noindent (3) In this case $\bmu= \bmu_4$. The computation of  $H^2$ and reasoning are exactly as
in  (2)  above for $n=2$. 

\smallskip

\noindent (4) This case is rather different since 
$\bmu= \bmu_2 \times \bmu_2$ where $\Z/2\Z$ switches the two summands. 
We have $H^2(R_2, \bmu) \cong \Z/2\Z \oplus \Z/2\Z$ with respect to the  switch action.
 Again $\bGL_2(\Z)$ acts by  $g . \alpha= \det(g). \alpha$, hence trivially.

\smallskip

\noindent (5) This is exactly as in case (1) for $n=1$.
\end{proof}

It remains to look at the case when $\bG$ is of type  $D_4$.

\begin{lemma} \label{cubic2} There are three orbits  for the action of  $\bGL_2(\Z)$
on $H^1(R_2, S_3)$ :

- the trivial class;

-  $\Bigl\{ \bE_2^{0,1}, \bE_2^{1,0}, \bE_2^{1,1} \Bigr\}$;

- $\Bigl\{ \bE_3^{1,0}, \bE_3^{0,1}, \bE_3^{1,1}, \bE_3^{2,1} \Bigr\}$

\medskip

\noindent where the notations is as in Corollary \ref{cubic} supra.

\end{lemma}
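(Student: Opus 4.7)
The plan is to compute $H^1(R_2, S_3)$ explicitly as a set with $\GL_2(\Z)$-action, using that $k$ is algebraically closed. Since $S_3$ is a constant finite $k$-group, we have $H^1(R_2, S_3)= \Hom_{\rm ct}\big(\pi_1(R_2), S_3\big)/\mathrm{conj}_{S_3}$, and $\pi_1(R_2)\simeq \widehat\Z^2$. A continuous homomorphism $\phi:\widehat\Z^2\to S_3$ is determined by the pair $\big(\phi(e_1),\phi(e_2)\big)$ of commuting elements of finite order in $S_3,$ so the classification reduces to enumerating $S_3$-conjugacy classes of commuting pairs in $S_3$. First I would organize these by the subgroup $\langle \phi(e_1),\phi(e_2)\rangle$: the trivial subgroup (one class, the trivial torsor $R_2\times R_2\times R_2$), an order-two subgroup (three classes, matched with $\bE_2^{(1,0)}, \bE_2^{(0,1)}, \bE_2^{(1,1)}$ via the three quadratic characters $\pi_1(R_2)\twoheadrightarrow \Z/2\Z$), and the unique order-three subgroup $\langle\sigma\rangle$ (giving four classes matched with $\bE_3^{(1,0)}, \bE_3^{(0,1)}, \bE_3^{(1,1)}, \bE_3^{(2,1)}$, after taking into account that conjugation by a transposition inverts $\sigma$). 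This recovers all eight elements of $H^1(R_2,S_3)$ and identifies them with the extensions in the statement.

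Next I would use Remark \ref{GLacts}: since the coefficient group is constant and untwisted by Galois, the action of $g\in\GL_2(\Z)$ on a loop cocycle $\eta$ is simply precomposition $\eta\mapsto \eta(\,\cdot^g)$, corresponding on pairs to the natural right action of $\GL_2(\Z)$ on $\Hom(\widehat\Z^2, S_3)$. In particular the action preserves the image subgroup (up to $S_3$-conjugation), so each of the three ``types'' above is stable under $\GL_2(\Z)$, which gives at least three orbits. It remains to see that each type forms a single orbit.

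For the quadratic type, restricting to maps with image in a chosen $\Z/2\Z\subset S_3$, the $\GL_2(\Z)$-action factors through $\GL_2(\F_2)$, and $\GL_2(\F_2)$ acts transitively on the three nonzero vectors of $\F_2^2$; hence $\bE_2^{(1,0)}$, $\bE_2^{(0,1)}$, $\bE_2^{(1,1)}$ lie in a single orbit. For the cubic type, the maps into $\langle\sigma\rangle\simeq\Z/3\Z$ are classified, after quotienting by the normalizer action that inverts $\sigma$, by $\big(\F_3^2\setminus\{0\}\big)/\{\pm1\}= \mathbb P^1(\F_3)$, a set of four elements corresponding to the four cubic extensions. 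The $\GL_2(\Z)$-action on this set factors through $\PGL_2(\F_3)$, and I claim $\GL_2(\Z)\to \PGL_2(\F_3)$ is surjective: indeed the elementary matrices $E_{12}(1)$ and $E_{21}(1)$ map onto generators of $\SL_2(\F_3)$, and $\det:\GL_2(\Z)\to\{\pm1\}$ surjects onto $\F_3^\times$, so $\GL_2(\Z)\to\GL_2(\F_3)$ is onto. Since $\PGL_2(\F_3)$ acts transitively (in fact as $S_4$) on the four points of $\mathbb P^1(\F_3)$, the four cubic classes form a single $\GL_2(\Z)$-orbit, completing the proof.

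The main obstacle is essentially bookkeeping: one must carefully verify that the identifications of the quadratic and cubic extensions with specific pairs $(a_1,a_2)$ are compatible with the conventions fixed in Example \ref{laurent} for the isomorphism $\pi_1(R_2)\simeq\widehat\Z^2$ (so that, e.g., a cube root of $t_1$ genuinely corresponds to the pair $(\sigma,e)$ up to inversion). Once this identification is made, the orbit computation itself is elementary linear algebra over $\F_2$ and $\F_3$.
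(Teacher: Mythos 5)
Your proof is correct, and you reach the same conclusion by a genuinely different route in the cubic part. The paper establishes transitivity on the cubic classes by exhibiting explicit matrices in $\GL_2(\Z)$ that send $R_2[\root 3\of t_1]$ to each of the other three cubic extensions; you instead observe that the action factors through the finite quotient $\GL_2(\F_3)$, reduce further to $\PGL_2(\F_3)\simeq S_4$ acting on $\mathbb P^1(\F_3)$ (using that $\pm\eta$ give the same class because conjugation by a transposition inverts the $3$-cycle), and invoke transitivity of $\PGL_2(\F_3)$ on the four points, together with the surjectivity of $\GL_2(\Z)\to\GL_2(\F_3)$. For the quadratic part the two arguments coincide: both amount to the surjectivity of $\GL_2(\Z)\to\GL_2(\F_2)$ and transitivity on $\F_2^2\setminus\{0\}$ (this is the content of the paper's earlier Corollary on the quadratic case, which it simply cites). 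What your version buys is a uniform, self-contained, ``no numbers'' argument that avoids any need to trace the base-change action through explicit monomials, and hence is more robust to the convention-matching issue you correctly flag at the end; what it costs is that one must justify the transition from the $\GL_2(\Z)$-action on cocycles to the linear action mod $2$ and mod $3$, which you handle via Remark~\ref{GLacts} and the observation that the image subgroup of a homomorphism $\widehat\Z^2\to S_3$ is preserved under precomposition by automorphisms of $\widehat\Z^2$.
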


\begin{proof}
  The three classes above correspond to case of the split \'etale cubic $R_2$-algebra, the 
case $S' \times R_2$ where $S'/R_2$ is quadratic and the cubic case.
Obviously each of the above sets is $\GL_2(\Z)$-stable, so we need to check 
that there is a single orbit. The quadratic case was dealt with in
Corollary \ref{quadratic}. In the cubic case, we have  $\bE_3^{1,0}= R_2[\root 3 \of t_1]$. 
By applying the base change corresponding to 
$\left(\begin{matrix} 
 1& 0  \\ 
 0& 1  
\end{matrix}\right)$,
$\left(\begin{matrix}
 1& 0\\ 
 1& 1  
\end{matrix}\right)$,  
 $\left(\begin{matrix} 
 2&  5\\ 
 1& 3 
 \end{matrix}\right)$   
 we obtain $\bE_3^{0,1}$, $\bE_3^{1,1}$ and $\bE_3^{2,1}$ respectively.
\end{proof}

\begin{corollary}\label{cubic3} 
Up to $k$-isomorphism there are five 2-loop algebras  based on $\gg$ of type $D_4$: two inner forms, denoted by $\gg_0$ and $\gg_1$;  two  ``quadratic"  algebras, $_{\bE_2^{1,0}}\gg^+$ (which is quasisplit)
and $_{\bE_2^{1,0}}\gg^-$ (which is not quasiplit);   and   one  ``cubic"   algebra $_{\bE_3^{1,0}}\gg$  ( which is quasisplit).  
\end{corollary}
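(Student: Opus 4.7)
The plan is to assemble the result from the $R_2$-classification in Corollary~\ref{cubic}, the list of $\bGL_2(\Z)$-orbits on the Dynkin-Tits invariant given by Lemma~\ref{cubic2}, and the decomposition (\ref{action35}) of $\bGL_2(\Z)\backslash H^1_{loop}(R_2, \bAut(\gg))$ as a disjoint union, indexed by a set of representatives $[\phi_j]$ of $\bGL_2(\Z)\backslash H^1(R_2,S_3)$, of the pieces $\Lambda_{\phi_j}\backslash H^1(R_2,\bAut(\gg))_{[\phi_j]}$. By Lemma~\ref{cubic2} there are exactly three representatives to consider: the trivial class $[\phi_0]$, the quadratic class $[\bE_2^{1,0}]$ and the cubic class $[\bE_3^{1,0}]$.

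For the inner piece $[\phi_0]$, combining Corollary~\ref{class-loop} with the description $\bmu=\ker(\bmu_2^3\to\bmu_2)$ identifies $H^2(R_2,\bmu)$ with the hyperplane of sum-zero vectors in $(\Z/2\Z)^3$, on which $S_3=\bOut(\gg)(R_2)$ acts by permutation with exactly two orbits, namely $\{(0,0,0)\}$ and $\{(1,1,0),(1,0,1),(0,1,1)\}$. By Lemma~\ref{cocycles}(6) the $\bGL_2(\Z)$-action is by multiplication by $\det\in\{\pm 1\}$, which is trivial on $2$-torsion; hence the two orbits persist, giving the two inner classes $\gg_0$ and $\gg_1$.

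For the quadratic piece, by Lemma~\ref{cubic2} we may take $\phi_j$ to be the cocycle corresponding to $\bE_2^{1,0}$. The computation reproduced from Corollary~\ref{cubic} yields ${_{\bE_2^{1,0}}\bmu}=\ker\!\bigl(\prod_{R_2[\sqrt{t_1}]/R_2}\bmu_2\times\bmu_2\to\bmu_2\bigr)$ and $H^2(R_2,{_{\bE_2^{1,0}}\bmu})\simeq H^2(R_2[\sqrt{t_1}],\bmu_2)\simeq\Z/2\Z$; moreover ${(_{\bE_2^{1,0}}S_3)}(R_2)\simeq\Z/2\Z$ acts trivially on this $\Z/2\Z$ since $\Z/2\Z$ has no non-trivial automorphism. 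The $\Lambda_{\phi_j}$-action on $H^2(R_2,{_{\bE_2^{1,0}}\bmu})\simeq\Z/2\Z$ must likewise be trivial (its target group has trivial automorphism group), so Corollary~\ref{class-loop} produces exactly two classes in this piece: $_{\bE_2^{1,0}}\gg^+$, which is quasisplit (trivial Brauer invariant), and $_{\bE_2^{1,0}}\gg^-$, which is not. For the cubic piece, taking $\phi_j$ to correspond to $\bE_3^{1,0}$ one has $H^2(R_2,{_{\bE_3^{1,0}}\bmu})=0$ by the Corestriction argument of Corollary~\ref{cubic} (since $2$ and $3$ are coprime), so this piece contributes only the single quasisplit class $_{\bE_3^{1,0}}\gg$.

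Summing the three contributions gives $2+2+1=5$ $k$-isomorphism classes, which matches the listed algebras. The main technical point to verify carefully is the triviality of the $\Lambda_{\phi_j}$-action on the twisted $H^2$ in the quadratic case; once one observes that the target is $\Z/2\Z$ the action has to be trivial, but to check that the two cocycles representing $\pm$ really give non-isomorphic $k$-algebras one uses that quasisplitness is an intrinsic invariant preserved by all base changes, in particular under $\bGL_2(\Z)$, so the two classes cannot be fused.
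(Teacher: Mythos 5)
Your proof is essentially correct and follows the same outline as the paper's (Lemma~\ref{cubic2} for the $\GL_2(\Z)$-orbits on the Dynkin--Tits invariant, then a piece-by-piece analysis using Corollary~\ref{class-loop} and the $H^2$-computations of Corollary~\ref{cubic}). The inner and cubic pieces are handled exactly as the paper does. But there is one point worth flagging.

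In the quadratic piece you argue twice: once cohomologically, claiming that $\Lambda_{\phi_j}$ acts trivially on $H^2(R_2,{_{\bE_2^{1,0}}}\bmu)\simeq\Z/2\Z$ because $\mathrm{Aut}(\Z/2\Z)=1$, and then again via the quasisplitness invariant. The second argument (which is what the paper uses, invoking Corollary~\ref{WT3equi}) is the safer one. The first argument relies implicitly on the identification $\Lambda_j = \Lambda_{\phi_j}$ (the paper writes $\Gamma_j$ for what you call $\Lambda_j$): the decomposition (\ref{action35}) is taken modulo the stabilizer $\Lambda_j$ of the \emph{cohomology class} $[\phi_j]\in H^1(R_2,\bOut(\bG))$, whereas Lemma~\ref{cocycles} gives an action only of $\Lambda_{\phi_j}$, the stabilizer of the \emph{cocycle} $\phi_j$, and Remark~\ref{nicecase} specifically warns that for type $D_4$ these two groups need not coincide since $\bOut(\bG)=S_3$. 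Here they do coincide --- in the quadratic case the image of $\phi_j$ is a transposition subgroup of $S_3$, which is self-normalizing, so any $a\in S_3$ conjugating $\phi_j\circ g^*$ back to $\phi_j$ must centralize the image and hence fix $\phi_j$ --- but you never verify this, and without it the cohomological argument is incomplete. The paper avoids this issue entirely: Corollary~\ref{WT3equi} says that the Witt--Tits index is invariant under the full $\GL_2(\Z)$-action, so the quasisplit and non-quasisplit $R_2$-classes cannot merge, full stop, with no need to descend to $\Lambda_{\phi_j}$. If you keep your cohomological route, add the check that $\Lambda_j=\Lambda_{\phi_j}$ in the quadratic case; otherwise drop it in favor of the Witt--Tits argument, which already suffices.
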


\begin{proof} By Lemma \ref{cubic2}, the quadratic (resp. cubic) classes
of Corollary \ref{cubic} are in the $\GL_2( \Z)$--orbit of 
those having Dynkin-Tits invariant ${\bE_2^{1,0}}$ (resp. ${\bE_3^{1,0}}$).
So the cubic case is done. In the quadratic case, 
there are then one or two non-isomorphic ``quadratic'' 2-loop algebras. 
Since one of these $R_2$--algebras is quasisplit and the other one is not, Corollary \ref{WT3equi} shows that they remain non-isomorphic as $k$--algebras.
Finally, there are two orbits
for the action of $ S_3$ on $H^2(R_2, \bmu)$, namely $(0,0,0)$ and $(1,1,0)$, and these correspond to the two inner $R_2$-forms. The action of $\GL_2(\Z)$ is trivial in this set, so the algebras remain non-isomorphic over $k.$\end{proof}

\subsubsection{Rigidity in nullity 2 apart from type $A.$} 
The following theorem extends results of Steinmetz from  classical types \cite[th. 6.4]{SZ} (which involves certain small rank restrictions) to all types. This establishes Conjecture 6.4 of \cite{GP2}.\footnote{An even stronger version of this Conjecture will be established in the next section.}

\begin{theorem}\label{conjecture} Let $\gg$ be a finite dimensional simple Lie algebra  over $k$ which is not of type $A$.
Let $\mathcal L$ and $\mathcal L'$ be two $2$-loop algebras based on $\gg.$
The following are equivalent: 

\smallskip

(1) $\mathcal L$ and $\mathcal L'$ are isomorphic (as Lie algebras over $k$);

\smallskip
 
(2)  $\mathcal L$ and $\mathcal L'$  have the same Witt-Tits index.   

\end{theorem}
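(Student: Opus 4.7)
The implication $(1) \Rightarrow (2)$ follows from Corollary \ref{WT3equi}: the Witt--Tits index of the $F_2$--group attached to a 2-loop algebra depends only on the $\GL_2(\Z)$--orbit of its class in $H^1_{loop}\big(R_2,\bAut(\gg)\big)$, and by the centroid trick such orbits are in bijection with $k$--isomorphism classes of 2-loop algebras based on $\gg$. The substance of the theorem is the converse $(2) \Rightarrow (1)$, which I would prove by a case-by-case check that the Witt--Tits index separates the finitely many $k$--isomorphism classes of 2-loop algebras enumerated type by type in the preceding corollaries.

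The easy types dispose of themselves quickly. When $\gg$ has type $E_8$, $F_4$, or $G_2$ there is a single 2-loop algebra and the claim is vacuous. When $\gg$ has type $B$, $C$, or $E_7$ there are exactly two classes; the non-split one has non-trivial Brauer invariant in $H^2(R_2,\bmu_2)=\Z/2\Z$, so by Remark \ref{trivialauto} it is non-quasisplit and hence has a Witt--Tits index different from that of the split class. For outer 2-loop algebras in types $D_n$ ($n\geq 5$) and $E_6$, the tabulation used in the proof of Corollary \ref{quadratic} shows that within each $\GL_2(\Z)$--orbit of $H^1\big(R_2,\bOut(\gg)\big)$ the outer classes are either unique, or come in a pair consisting of one quasisplit and one non-quasisplit form, so they are separated by their Witt--Tits indices. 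The cubic outer form in type $D_4$ has absolute type distinct from that of every other 2-loop algebra on $\gg$, and this absolute type is part of the Witt--Tits index, so that class stands alone.

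The main work lies with the inner forms of types $D_n$ and $E_6$. For $E_6$ one of the two inner forms is split and the other has non-trivial Brauer invariant in $\Br(R_2)$ of order $3$, so once more quasisplit versus non-quasisplit separates them. For $D_{2n-1}$, $n\geq 3$, the three inner classes correspond to the three $\{\pm 1\}$--orbits $\{0\},\{\pm 1\},\{2\}$ of $H^2(R_2,\bmu_4)=\Z/4\Z$, whose associated Tits algebras have Brauer class in $\Br(R_2)\simeq\Q/\Z$ of order $1$, $4$, and $2$ respectively. For $D_{2n}$ ($n\geq 3$) and for the inner forms of $D_4$, the three orbits of $(\Z/2\Z)^2$ under the switch produce three Clifford-type invariants with pairwise distinct Brauer classes in $\Br(R_2[\sqrt{t_1}])$. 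In each of these situations Tits' classification tables for groups of type $D$ over the complete field $F_2$ (available via the Acyclicity Theorem and Bruhat--Tits theory) show that the Schur index of the Tits algebra of the simply connected cover is read off from the Witt--Tits index, and hence the three inner classes are pairwise distinguished. This arithmetic matching against Tits' tables, rather than any conceptual difficulty, is the principal obstacle of the proof; the hypothesis excluding type $A$ is precisely what ensures that the Brauer invariant does not have ``room'' to produce distinct cohomology classes with the same Schur index, as happens for $A_{2n}$ and $A_{2n-1}$.
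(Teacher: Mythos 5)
Your implication $(1)\Rightarrow(2)$ is exactly the paper's, and the strategy of disposing of types $E_8, F_4, G_2, B, C, E_7, E_6$ and the outer/cubic cases by the counting and quasi-split/non-quasi-split dichotomy is also essentially what the paper does. The genuinely different part is your treatment of the three inner classes in types $D_{2n-1}$ and $D_{2n}$: you propose to distinguish them by the Schur indices of their Tits algebras, arguing that these are determined by the Witt--Tits index. The paper proceeds more concretely: for each of the two non-split inner classes it explicitly names a representative $R_2$--group, namely $\PSO(q)$ with $q = \langle 1,t_1,t_2,t_1t_2\rangle \perp (\text{hyperbolic})$ and $\PSU(A,h)$ with $A$ the quaternion algebra $(t_1,t_2)$ and $h$ the hyperbolic hermitian form (suitably normalized in the odd case), computes their $K_2$--Witt--Tits diagrams by iterated Springer's theorem, and observes these diagrams are visibly distinct. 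The explicit-construction route avoids having to justify the converse direction of the Tits-algebra argument.

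There are two points in your sketch that need repair or further work. First, for inner forms of $D_{2n}$ the discriminant is trivial, so the two Clifford (half-spin) Tits algebras are central simple over $F_2$ itself; your placement of their Brauer classes in $\Br(R_2[\sqrt{t_1}])$ is wrong (that would be the receptacle for an outer form with Dynkin--Tits invariant $\bE_{(1,0)}$). Second, and more substantively, the inference ``the Witt--Tits index determines the Schur index of the Tits algebra'' is precisely the converse of what you are trying to prove and is asserted rather than established: what is immediate is that a group and its Tits algebra together determine the index, but to run your argument you need that two inner $R_2$--forms of $\PSO_{2m}$ (or $\PSO_{2m-1}$) with the \emph{same} Tits index over $F_2$ have the \emph{same} Tits algebra. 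This is true over iterated Laurent series fields, but it requires invoking Tits' tables together with the fact that over $F_2$ the period equals the index and that the anisotropic kernel is classified by its cohomological invariants; this verification is the actual content of the hard cases and cannot be waved away. The paper's explicit construction of $\PSO(q)$ and $\PSU(A,h)$ does exactly this work in a hands-on way by producing two distinct diagrams, and you would need an argument of comparable precision.
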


\begin{proof}
Of course, we use that  $\mathcal L$  (resp. $\mathcal L'$) arise
as the Lie algebra of $R_2$--loop adjoint groups $\bH$ (resp. $\bH'$)
which are forms of $\bG= \Aut(\gg)^0$.\footnote{Strictly speaking ``...the Lie algebra..." is an $R_2$-Lie algebra, but we view this in a natural away as a $k$-Lie algebra.}
Then condition (1) reads that $[\bH]= [\bH']$ in 
$\bGL_2(\Z) \,  \backslash H^1_{loop}\big(R_2, \bAut(\gg)\big)$
and condition (2) reads that $\bH \times_{R_2}K_2$ and
$\bH' \times_{R_2}K_2$ have the same Witt-Tits index.

\smallskip

\noindent $(1) \Longrightarrow (2):$  This is is the simple part of the
the equivalence (and it is not necessary to exclude type $A$).  Let $\bG$ be the corresponding adjoint group.
If  $[\bH]$ and $[\bH']$  are equal in $\bGL_2(\Z) \,  \backslash H^1_{loop}\big(R_2, \bAut(\bG)\big)$,
it is obvious that their Dynkin-Tits invariant coincide 
in  $\bGL_2(\Z) \backslash H^1_{loop}\big(R_2, \bOut(\bG)\big)$, 
and also  that their Tits index over $K_2$ coincide by Corollary \ref{WT3equi}.

\smallskip

\noindent $(2) \Longrightarrow (1):$  Without loss of generality we can assume that $\bH$
and $\bH'$ have same Dynkin-Tits invariant in $H^1\big(R_2, \bOut(\gg)\big)$.
 The proof is given by a case-by-case discussion. 
The cases of type $E_8$, $F_4$ and  $G_2$  follow directly from Corollary \ref{inner}.2.
Types $B$, $C$ and $E_7$ are also straightforward since (over $R_2$)
there is  only one class of non-split  $2$--loop algebras. For obvious reasons, this non-split Lie algebras necessarily remain non-isomorphic to the split Lie algebra $\gg \otimes_k R_2$ when viewed as Lie algebras over $k.$

\smallskip

\noindent{\it Type $D_{2n}$, $n\geq 3:$}
If the Dynkin-Tits invariant is non-trivial, 
then the summand of  $\bGL_2(\Z) \, \backslash H^1_{loop}(R_2, \bAut(\gg))$ corresponding to  
$[\bE_1]$ has  only one non quasi-split class, so $[\bH]$ and $[\bH']$ are equal
in   $\bGL_2(\Z) \, \backslash H^1_{loop}\big(R_2, \bAut(\gg)\big)$. 
Corollary \ref{quadratic} states that  
the  inner part of $\bGL_2(\Z) \, \backslash H^1_{loop}(R_2, \Aut(\gg))$
is $ \Z/2 \Z \oplus  \Z/ 2\Z$ modulo the switch action, so is represented
by $(0,0)$, $(1,0)$ and $(1,1)$.  
It has then three elements, the split one and two others.
It is then enough to explicitly describe these two other elements and distinguish them by their
Witt-Tits index by means of Tits tables \cite{T1}. The first one is the $R_2$-loop group
 $\PSO(q)$ with $q= \langle 1, t_1, t_2, t_1 t_2 \rangle \perp (2n-1) \langle 1,-1 \rangle$.
Its Witt-Tits $K_2$-index is 

\vskip-8mm

\begin{equation}
\begin{picture}(250,20)
\put(00,00){\line(1,0){80}}
\put(100,00){\dots}
\put(120,00){\line(1,0){40}}
\put(170,00){\dots}
\put(190,00){\line(1,0){20}}
\put(210,0){\line(2,-1){20}}
\put(210,0){\line(2,1){20}}
\put(0,0){\circle*{3}}
\put(20,0){\circle*{3}}
\put(40,0){\circle{10}}
\put(40,0){\circle*{3}}
\put(60,0){\circle{10}}
\put(60,0){\circle*{3}}
\put(160,0){\circle{10}}
\put(160,0){\circle*{3}}
\put(190,0){\circle{10}}
\put(190,0){\circle*{3}}
\put(210,0){\circle{10}}
\put(210,0){\circle*{3}}
\put(230,10){\circle{10}}
\put(230,10){\circle*{3}}
\put(230,-10){\circle{10}}
\put(230,-10){\circle*{3}}
\put(140,0){\circle{10}}
\put(140,0){\circle*{3}}
\put(-5,8){$\alpha_1$}
\put(15,8){$\alpha_2$}
\put(65,8){}
\put(130,8){}
\put(240,10){$\alpha_{2n-1}$}
\put(240,-13){$\alpha_{2n}$}
\end{picture}       
\end{equation}

\vskip2mm 

\noindent The other one is ${\rm PSU}(A, h)$ where $A=A(2,1)$ is the $R_2$-quaternion algebra $T_1^2=t_1$,
$T_2^2=t_2$, $T_1 T_2+ T_2 T_1 =0$ and $h$ is the hyperbolic  hermitian form over $A^{2n}$
with respect to the quaternionic involution $q \mapsto \overline q$. 
 Indeed ${\rm PSU}(A, h)$ is an adjoint  inner loop $R_2$-group of type $D_{2n}$
 and its Witt-Tits $K_2$-index is

\vskip-8mm

\begin{equation}
\begin{picture}(250,20)
\put(00,00){\line(1,0){20}}
\put(20,00){\line(1,0){20}}
\put(40,00){\line(1,0){20}}
\put(60,00){\line(1,0){20}}
\put(120,00){\line(1,0){40}}
\put(170,00){\dots}
\put(190,00){\line(1,0){20}}
\put(210,0){\line(2,-1){20}}
\put(210,0){\line(2,1){20}}
\put(0,0){\circle*{3}}
\put(20,0){\circle{10}}
\put(20,0){\circle*{3}}
\put(40,0){\circle*{3}}
\put(60,0){\circle*{3}}
\put(60,0){\circle{10}}
\put(80,0){\circle*{3}}
\put(190,0){\circle*{3}}
\put(210,0){\circle{10}}
\put(210,0){\circle*{3}}
\put(230,10){\circle*{3}}
\put(230,-10){\circle*{3}}
\put(140,0){\circle{10}}
\put(140,0){\circle*{3}}
\put(-5,8){$\alpha_1$}
\put(15,8){$\alpha_2$}
\put(65,8){}
\put(130,8){}
\put(240,10){$\alpha_{2n-1}$}
\put(240,-13){$\alpha_{2n}$}
\end{picture}       
\end{equation}

\vskip2mm

\noindent So $[\bH]$ and $[\bH']$ are equal in   
 $\bGL_2(\Z) \, \backslash H^1_{loop}(R_2, \Aut(\gg))$ to the split form
or one of these two forms.

\smallskip

\noindent{\it Type $D_{2n-1}$, $n\geq 3:$}
As in the preceding case, we need to discuss only the inner case and it is enough
to provide two non-split $R_2$-loop groups with distinct $K_2$-Witt-Tits index.
The first one is the $R_2$-loop group
 $\PSO(q)$ with $q= \langle 1, t_1, t_2, t_1 t_2 \rangle \perp (2n-2) \langle 1,-1 \rangle$.
Its Witt-Tits $K_2$-index is 

\vskip-8mm

\begin{equation}
\begin{picture}(250,20)
\put(00,00){\line(1,0){80}}
\put(100,00){\dots}
\put(120,00){\line(1,0){40}}
\put(170,00){\dots}
\put(190,00){\line(1,0){20}}
\put(210,0){\line(2,-1){20}}
\put(210,0){\line(2,1){20}}
\put(0,0){\circle*{3}}
\put(20,0){\circle*{3}}
\put(40,0){\circle{10}}
\put(40,0){\circle*{3}}
\put(60,0){\circle{10}}
\put(60,0){\circle*{3}}
\put(160,0){\circle{10}}
\put(160,0){\circle*{3}}
\put(190,0){\circle{10}}
\put(190,0){\circle*{3}}
\put(210,0){\circle{10}}
\put(210,0){\circle*{3}}
\put(230,10){\circle{10}}
\put(230,10){\circle*{3}}
\put(230,-10){\circle{10}}
\put(230,-10){\circle*{3}}
\put(140,0){\circle{10}}
\put(140,0){\circle*{3}}
\put(-5,8){$\alpha_1$}
\put(15,8){$\alpha_2$}
\put(65,8){}
\put(130,8){}
\put(240,10){$\alpha_{2n-2}$}
\put(240,-13){$\alpha_{2n-1}$}
\end{picture}       
\end{equation}

\vskip2mm
 
\noindent The other one is ${\rm PSU}(A, h)$ where $h$ is the hyperbolic  hermitian form over $A^{2n-1}$
which is the orthogonal sum of $\langle 1 \rangle$ and the hyperbolic form over $A^{2n-2}$.  
 Its Witt-Tits $K_2$-index is

\begin{equation}
\begin{picture}(250,20)
\put(00,00){\line(1,0){20}}
\put(20,00){\line(1,0){20}}
\put(40,00){\line(1,0){20}}
\put(60,00){\line(1,0){20}}
\put(120,00){\line(1,0){40}}
\put(170,00){\dots}
\put(190,00){\line(1,0){20}}
\put(210,0){\line(2,-1){20}}
\put(210,0){\line(2,1){20}}
\put(0,0){\circle*{3}}
\put(20,0){\circle{10}}
\put(20,0){\circle*{3}}
\put(40,0){\circle*{3}}
\put(60,0){\circle*{3}}
\put(60,0){\circle{10}}
\put(80,0){\circle*{3}}
\put(190,0){\circle*{3}}
\put(190,0){\circle{10}}
\put(210,0){\circle*{3}}
\put(230,10){\circle*{3}}
\put(230,-10){\circle*{3}}
\put(140,0){\circle{10}}
\put(140,0){\circle*{3}}
\put(-5,8){$\alpha_1$}
\put(15,8){$\alpha_2$}
\put(65,8){}
\put(130,8){}
\put(240,10){$\alpha_{2n-2}$}
\put(240,-13){$\alpha_{2n-1}$}
\end{picture}       
\end{equation}

\smallskip

\noindent{\it Type $D_4$:} Follows from Corollary 
\ref{cubic3}.

\smallskip

\noindent{\it Type $E_6$:}
This case is straightforward because 
there is only  one class of $2$--loop algebras which  is not quasi-split. 
\end{proof}

\begin{remark}\label{redundant} There is some redundancy in the statement of the Theorem. It is well known, by descent considerations, that if $\mathcal L$ and $\mathcal L'$ are isomorphic as Lie algebras over $k,$ then their absolute type coincide, i.e. they are both 2--loop algebras based on the same $\gg$ (see \cite{ABP2.5} for further details). It will thus suffice  to assume in the Theorem that neither  $\mathcal L$ nor $\mathcal L'$ are of absolute type $A.$
\end{remark}

\subsubsection{Tables}

The following table summarizes   the classification on  2-loop algebras. The table includes
the Cartan-Killing (absolute) type $\gg$, its name, the
Witt-Tits index  (with Tits' notations) of an $R_n$--representative of the $k$--Lie algebra in question, and
the type of the relative root system.
 For example, van de Leur's algebra has absolute type $E_7$, 
Tits index $E_{7,4}^9$ and relative type $F_4$. The way in which the Witt-Tits index are determined was illustrated in the previous section. The procedure of how to obtain the relative type from the index is described by Tits.

In all cases the ``trivial" loop algebra $\gg \otimes k[t_1^{\pm},t_2^{\pm}]$ is denoted by $\gg_0.$ When the relative type is $A_0,$ the loop algebra in question is anisotropic. For example, in absolute type $A_1$ the Lie algebra $\gg_0$ is  $\mathfrak{sl}_2([t_1^{\pm},t_2^{\pm}]).$ The Lie algebra $\gg_1$ is the derived algebra of the Lie algebra that corresponds to the quaternion algebra over $k[t_1^{\pm},t_2^{\pm}]$ with relations $T_1T_2 = -T_2T_1$ and  $T_i^2 = t_i.$ This rank 3 free Lie algebra over $k[t_1^{\pm},t_2^{\pm}]$ is anisotropic, and is a twisted form of $\mathfrak{sl}_2  \otimes k[t_1^{\pm},t_2^{\pm}]$ split by the quadratic extension $k[t_1^{\pm},t_2^{\pm}](T_1).$

\bigskip

\begin{tabular}{|c|c|c|c|} \hline
Cartan-Killing type $\gg$& Name & Tits index & Relative root system  \\ \hline
$A_1$   & $\gg_0$ & $^1A^{(1)}_{1,1}$  & $A_1$ \\ \hline
$A_1$   & $\gg_1$ & $^1A^{(2)}_{1,0}$  & $A_0$ \\ \hline
$A_{2n}$ ( $n \geq 1$)  & $\gg_q$ & $^1A^{(\frac{2n +1}{r})}_{2n,{r}-1}\, \,\,\, r = {\text{gcd}(q, 2n +1)}$  & $A_{r-1}$ \\ \hline
$A_{2n}$ ( $n \geq 1$)  & $_{\bE}\gg$ & $^2A^{(1)}_{2n,n}$  & $BC_n$ \\ \hline
$A_{2n -1}$ ( $n \geq 2$)  & $\gg_q$ & $^1A^{(\frac{2n }{r})}_{2n-1,{r}-1}\, \,\,\, r = {\text{gcd}(q, 2n)}$  & $A_{r-1}$ \\ \hline
$A_{2n-1}$ ( $n \geq 2$)  & $_{\bE}\gg^+$ & $^2A^{(1)}_{2n-1,n}$  & $C_n$ \\ \hline
$A_{2n-1}$ ( $n \geq 2$)  & $_{\bE}\gg^-$ & $^2A^{(1)}_{2n-1,n-1}$  & $BC_{n-1}$ \\ \hline

$B_{n}$ ($n\geq 2)$  & $\gg_0$ & $B_{n,n}$  & $B_n$ \\ \hline
$B_n$ ($n\geq 2)$  & $\gg_1$ & $B_{n,n-1}$  & $B_{n-1}$  \\ \hline
$C_n$ ($n\geq 3)$   & $\gg_0$ & $C^{(1)}_{n,n}$  & $C_n$  \\ \hline
$C_{2n+1}$ ($n\geq 1)$   & $\gg_1$ & $C^{(2)}_{2n+1,n}$  &  $BC_n$ \\ \hline
$C_{2n}$ ($n\geq 2)$   & $\gg_1$ & $C^{(2)}_{2n,n}$  & $ C_n$ \\ \hline
$D_4$  & $\gg_0$ & $^1D_{4,4}^{(1)}$  & $D_4$ \\ \hline
$D_{4}$   &  $\gg_1$ & $^1D_{4,2}^{(1)}$ & $B_2$  \\ \hline
$D_{4}$   &  $_{\bE_2}\gg^{+}$ & $^2D_{4, 3}^{(1)}$ & $B_3$ \\ \hline
$D_{4}$   &  $_{\bE_2}\gg^{-}$ & $^2D_{4, 1}^{(2)}$ & $BC_1$ \\ \hline
$D_{4}$   &  $_{\bE_3}\gg$ & $^3D_{4, 2}^{2}$ & $G_2$ \\ \hline
$D_{2n-1}$ ($n \geq 3)$  &  $\gg_0$ & $^1D_{2n-1, 2n-1}^{(1)}$ & $D_{2n-1}$ \\ \hline
$D_{2n-1}$ ($n \geq 3)$  &  $\gg_1$ & $^1D_{2n-1,2n-3}^{(1)}$ & $B_{2n-3}$ \\ \hline
$D_{2n-1}$ ($n \geq 3)$  &  $\gg_2$ & $^1D_{2n-1, n-2}^{(2)}$ & $BC_{n-2}$  \\ \hline
$D_{2n-1}$ ($n \geq 3)$  &  $_\bE\gg^{+}$ & $^2D_{2n-1, 2n-2}^{(1)}$ & $B_{2n-2}$ \\ \hline
$D_{2n-1}$ ($n \geq 3)$  &  $_\bE\gg^{-}$ & $^2D_{2n-1, n-2}^{(2)}$ & $BC_{n-2}$ \\ \hline
$D_{2n}$ ($n \geq 3)$ &  $\gg_0$ & $^1D_{2n, 2n}^{(1)}$ & $D_{2n}$ \\ \hline
$D_{2n}$ ($n \geq 3)$  &  $\gg_1$ & $^1D_{2n,2n-2}^{(1)}$ & $B_{2n-2}$ \\ \hline
$D_{2n}$ ($n \geq 3)$  &  $\gg_2$ & $^1D_{2n,n}^{(2)}$ &  $C_n$\\ \hline
$D_{2n}$ ($n \geq 3)$  &  $_\bE\gg^{+}$ & $^2D_{2n, 2n-1}^{(1)}$ &  $B_{2n-1}$ \\ \hline
$D_{2n}$ ($n \geq 3)$  &  $_\bE\gg^{-}$ & $^2D_{2n, n-1}^{(2)}$ & $BC_{n-1}$\\ \hline
$E_6$ & $\gg_0$ & $^1E_{6,6}^0$  &  $E_6$ \\ \hline
$E_6$ & $\gg_1$ & $^1E_{6,2}^{16}$  &  $G_2$ \\ \hline
$E_6$  & $_\bE\gg$ & $^2E_{6,4}^2$  & $F_4$ \\ \hline
$E_7$ & $\gg_0$ & $E_{7,7}^0$  &  $E_7$ \\ \hline
$E_7$  & $\gg_1$ & $E_{7,4}^9$  &  $F_4$ \\ \hline
$E_8$ & $\gg_0$ & $E_{8,8}^0$  & $E_8$  \\ \hline
$F_4$ & $\gg_0$ & $F_{4,4}^0$  & $F_4$  \\ \hline
$G_2$ & $\gg_0$ & $G_{2,2}^0$  & $G_2$  \\ \hline
\end{tabular} 
\bigskip

By taking Remark \ref{redundant} into consideration, an inspection of the Table shows that a stronger version of Theorem \ref{conjecture} holds.

\begin{theorem}\label{stongconjecture} Let $\mathcal L$ and $\mathcal L'$ be two $2$-loop algebras neither of which is of absolute type $A.$ The following are equivalent: 

\smallskip

(1) $\mathcal L$ and $\mathcal L'$ are isomorphic (as Lie algebras over $k$);

\smallskip
 
(2)  $\mathcal L$ and $\mathcal L'$  have the same absolute and relative type.   

\end{theorem}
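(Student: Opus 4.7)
First I would address the easy implication $(1) \Rightarrow (2)$. By Remark \ref{redundant} (itself a descent argument) a $k$-isomorphism $\mathcal L \simlgr \mathcal L'$ forces the absolute type to coincide, so we may assume both algebras are $2$-loop algebras based on the same simple Lie algebra $\gg$. Writing $\mathcal L$ and $\mathcal L'$ as the Lie algebras of loop $R_2$-adjoint groups $\bH$ and $\bH'$, a $k$-isomorphism translates (via the centroid trick) into the equality $[\bH]=[\bH'] $ in $\GL_2(\Z)\backslash H^1_{loop}(R_2,\bAut(\gg))$. Then Corollary \ref{WT3equi} says the Witt--Tits index of $\bH\times_{R_2} K_2$ (and hence the relative type of $\mathcal L$) depends only on this $\GL_2(\Z)$-orbit. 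Thus $\mathcal L$ and $\mathcal L'$ have the same Witt--Tits index, in particular the same relative root system.

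For the converse $(2)\Rightarrow (1)$ the plan is to invoke Theorem \ref{conjecture}: since neither $\mathcal L$ nor $\mathcal L'$ has absolute type $A$, it suffices to verify that they have the same Witt--Tits index. Equivalently, writing $\Xi(\gg)$ for the set of Witt--Tits indices arising from $k$-isomorphism classes of $2$-loop algebras based on $\gg$, one has to show that the ``forget the superscript" map
\[
\Xi(\gg)\;\longrightarrow\;\{\text{relative root system types}\},
\]
is injective for each admissible $\gg\neq A_n$. Combined with the first implication this gives the desired equivalence.

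The heart of the argument is therefore the verification of this injectivity, which is the main (and essentially only) obstacle. Fortunately we have already compiled, in Corollaries \ref{inner}, \ref{quadratic}, and \ref{cubic3} and the table that follows, the complete list of $k$-isomorphism classes of $2$-loop algebras together with their Witt--Tits indices and the associated relative root systems. The verification reduces to a finite inspection performed type by type: for $B_n$, $C_n$, $E_7$ the list contains two elements whose relative ranks differ by one; for $E_8$, $F_4$, $G_2$ the list is a singleton; and for $D_4$, $D_{2n-1}$ $(n\ge 3)$, $D_{2n}$ $(n\ge 3)$, $E_6$ one checks that the tabulated relative types $D_r$, $B_r$, $BC_r$, $C_r$, $G_2$, $F_4$ are pairwise distinct across the different classes. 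The only delicate entries are those of type $D$, where both inner and outer forms can a priori carry $B$- or $BC$-type relative systems; here one reads off the relative rank and the $B$ vs.\ $BC$ distinction from the Tits symbols $^\varepsilon D_{n,r}^{(d)}$ listed in the table, and checks directly that no two symbols within a fixed absolute type yield the same relative system. Once this case-by-case check is complete the theorem follows from Theorem \ref{conjecture}.
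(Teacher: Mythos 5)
Your proposal follows exactly the paper's argument: the paper dispatches the theorem in one sentence by invoking Remark \ref{redundant} for the absolute-type reduction and then reading off from the Table that, for each fixed absolute type other than $A$, the relative root system determines the Witt--Tits index, so that Theorem \ref{conjecture} applies. Your write-up makes both directions explicit (using Corollary \ref{WT3equi} for $(1)\Rightarrow(2)$ and the table-inspection plus Theorem \ref{conjecture} for $(2)\Rightarrow(1)$), but it is the same route, not a different one.
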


\begin{remark}This result was established, also by inspection, in Cor.13.3.3 of \cite{ABP3}. In this paper the classification of nullity 2 multiloop algebras over $k$ is achieved by considering loop algebras of the affine algebras. More precisely, it is shown that every multiloop algebra of nulllity2 is isomorphic as a Lie algebra over $k$ to a Lie algebra of the form  $L(\gg \otimes k[t_1^{\pm1}], \pi)$ where $\pi$ is a diagram automorphism of the untwisted affine Lie algebra $\gg \otimes k[t_1^{\pm1}].$ For example, van de Leur's algebra appears by taking $\gg$ of type $E_7$ and considering the diagram automorphism of order two of the corresponding extended Coxeter-Dynkin.

Note that in the present work we have outlined a general procedure to classify loop adjoint groups and algebras {\it over $R_n$}, and that the classification of multiloop algebras  {\it over $k$} follows by $\GL_n$-considerations from that over $R_n.$ This is not the case in \cite{ABP3}. The nullity 2 classification relies on the structure of the affine algebras and only yields results over $k.$ 

\end{remark}

\section{The case of  orthogonal groups}

These groups are related to quadratic forms, which allows for a very precise understanding of their nature based on our  results.

We consider the example of  the split orthogonal group $\bO(d)$ for $d
\geq 1$. If $d=2m$ (resp. $d=2m+1$),  this is the orthogonal group corresponding to
the quadratic form $\sum\limits_{i=1}^m X_i X_{2m+1-i}$ (resp.  $\sum\limits_{i=1}^m X_i X_{2m+1-i} + X_{2m+1}^2$).
Since $R_n$-projective modules of finite type are free, 
we know that  $H^1\big(R_n, \bO(d)\big)$  classifies regular quadratic forms over
$R_n^{d}$ \cite[\S 4.6]{K2}. We have $H^1_{loop}\big(R_n, \bO(d)\big) \simlgr 
H^1\big(F_n, \bO(d)\big)$. By iterating Springer's theorem for quadratic forms over
$k((t)))$ \cite[\S 6.2]{Sc},  the classification of $F_n$-quadratic forms
reads as follows:
For each subset  $I \subset \{1, ...,n\}$, we put $t_I= \prod_{i\in I} t_i$
with the convention $1=t_\emptyset$;
we denote by $\HH$ the hyperbolic plane, that is the rank two split form.
The isometry classes of $d$-dimensional $R_n$--forms are then of the form
$$
\perp_{I \subset \{1, ...,n\}} \, t_I \, q_I \enskip \perp \enskip    \HH^v
$$
where the $q_I$'s are
anisotropic quadratic $k$-forms and $v$ a non negative integer such that
$\sum\limits_{I \subset \{1, ...,n\}} \dim_k(q_I) \, + \, 2 v = d$.

\begin{corollary} The set $H^1_{loop}\big(R_n, \bO(d)\big)$ is parametrized  by the quadratic forms
$$
\perp_{I \subset \{1, ...,n\}} \, t_I \, q_I \enskip \perp \enskip    \HH^v
$$
where the $q_I$ are
anisotropic quadratic $k$-forms and $v$ a non-negative integer such that
$\sum\limits_{I  \subset \{1, ...,n\}} \dim_k(q_I) \, + \, 2 v = 2d$. \qed
\end{corollary}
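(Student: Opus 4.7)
The plan is to reduce the problem to the classification of quadratic forms over the iterated Laurent series field $F_n$, using the Acyclicity Theorem as the main bridge.

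First, I would apply Theorem \ref{acyclic} to the smooth affine $k$-group $\bO(d)$ to obtain a bijection
\[
H^1_{loop}(R_n, \bO(d)) \simlgr H^1(F_n, \bO(d)).
\]
Since $\bO(d)$ is the automorphism group of the split $d$-dimensional quadratic form, standard descent identifies the right-hand side with the set of isometry classes of non-degenerate quadratic forms of rank $d$ over $F_n$ (see \cite[\S 4.6]{K2} applied to the field $F_n$, where every finitely generated projective module is free).

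Next, I would iterate Springer's theorem \cite[\S 6.2]{Sc}. Writing $F_n = F_{n-1}((t_n))$ as a complete discretely valued field with uniformizer $t_n$ and residue field $F_{n-1}$, any non-degenerate quadratic form $q$ over $F_n$ admits a unique decomposition $q \simeq q^{(0)} \perp t_n \cdot q^{(1)}$, where $q^{(0)}, q^{(1)}$ are non-degenerate quadratic forms over $F_{n-1}$ (lifted via the canonical Teichmüller-type section, which exists here because $\mathrm{char}(k)=0$). Iterating this $n$ times produces
\[
q \simeq \perp_{I \subset \{1,\dots,n\}} t_I \cdot \tilde q_I,
\]
where each $\tilde q_I$ is a non-degenerate quadratic form over $k$. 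The point is that the symbols $\{t_I\}_{I \subset \{1,\dots,n\}}$ form a $\Z/2\Z$-basis of $F_n^\times / (F_n^\times)^2$ modulo the units of $k$, so the decomposition is canonical up to hyperbolic summands.

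Finally, I would use Witt cancellation over $k$: each $\tilde q_I$ can be written as $q_I \perp \HH^{v_I}$ with $q_I$ anisotropic over $k$, and the hyperbolic planes $t_I \cdot \HH \simeq \HH$ (valid because $\HH$ represents every non-zero scalar) can be collected into a single hyperbolic summand $\HH^v$ with $v = \sum_I v_I$. This yields the parametrization in the statement, with dimension constraint $\sum_I \dim_k(q_I) + 2v = d$. The main issue to verify carefully is the uniqueness part (that different choices of $(q_I)_I$ and $v$ give non-isometric forms), which follows from comparing the residue forms under the iterated Springer decomposition and from the Witt cancellation theorem over $k$; the existence part is the iterated Springer decomposition described above. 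No genuine obstacle arises, since the Acyclicity Theorem does all the heavy lifting of passing from $R_n$ to $F_n$.
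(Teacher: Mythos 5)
Your proposal follows essentially the same route as the paper: apply the Acyclicity Theorem to reduce to $H^1(F_n, \bO(d))$, identify this with isometry classes of $d$-dimensional quadratic forms over the iterated Laurent series field $F_n$, and iterate Springer's decomposition theorem down to the residue field $k$. Your dimension constraint $\sum_I \dim_k(q_I) + 2v = d$ is the correct one; the $2d$ appearing in the Corollary as printed is a typo, as the paragraph in the paper immediately preceding it states the constraint with $d$.
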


We denote by   ${\cal P}(n)$ the set of subsets of $\{1, ...,n\}$
and by  ${\cal P}^{even}_{\leq d}(n) \subset {\cal P}(n)$  the set
of subsets of $\{1, ...,n\}$ of even cardinal $\leq d.$ In a similar fashion we define ${\cal P}^{odd}_{\leq d}(n).$

\begin{corollary} Assume that  $k$ is quadratically  closed.

\smallskip

(1) If $d=2m$, then   the map
$$
\begin{CD}
{\cal P}^{even}_{\leq d}(n) @>>> H^1_{loop}(R_n, \bO(d)) \\ 
S & \mapsto &  \perp_{I \subset S} \, \langle t_I \rangle \enskip \perp
\enskip
   \HH^{m - \frac{\mid S \mid}{2}} \\
\end{CD}
$$
is a bijection.

\smallskip

(2) If $d=2m+1$, then   the map
$$
\begin{CD}
{\cal P}^{odd}_{\leq d}(n) @>>> H^1_{loop}(R_n, \bO(d)) \\ 
S & \mapsto &  \perp_{I \subset S} \, \langle t_I \rangle \enskip \perp
\enskip
   \HH^{m + \frac{1 - \, \mid S \mid}{2}} \\
\end{CD}
$$
is a bijection.
\end{corollary}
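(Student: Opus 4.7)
The plan is to deduce both bijections as a direct specialization of the preceding corollary, using only the classification of anisotropic quadratic forms over a quadratically closed field of characteristic zero.

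First I would invoke the preceding corollary: every class in $H^1_{loop}(R_n,\bO(d))$ admits a unique representative
$$ \perp_{I \subset \{1,\dots,n\}} t_I\, q_I \enskip \perp \enskip \HH^v, $$
where each $q_I$ is an anisotropic quadratic $k$-form and $\sum_I \dim_k(q_I) + 2v = d$. The parametrization statement there asserts that two such data yield isometric forms only when the data coincide, so the classification is reduced to enumerating the admissible tuples $\big((q_I)_I, v\big)$.

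Next I would use that $k$ is quadratically closed of characteristic zero: $k^\times = (k^\times)^2$, so in particular $-1$ is a square. Every binary regular form over $k$ is then hyperbolic, so by the Witt decomposition the only anisotropic $k$-forms are $0$ and $\langle 1 \rangle$. Hence each $q_I$ is either $0$ or $\langle 1 \rangle$, and $t_I\, q_I$ is either $0$ or $\langle t_I \rangle$. Recording the set $S := \{\,I : q_I = \langle 1 \rangle\,\}$ of subsets where the contribution is nonzero, the representative reads
$$ \perp_{I \in S} \langle t_I \rangle \enskip \perp \enskip \HH^v, \qquad |S| + 2v = d. $$

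From the relation $|S|+2v = d$ and $v \geq 0$ one reads off the two cases: for $d = 2m$ the parity forces $|S|$ even and $|S| \leq 2m$, with $v = m - |S|/2$; for $d = 2m+1$ the parity forces $|S|$ odd and $|S| \leq 2m+1$, with $v = m + (1-|S|)/2$. The map from the indexing set to $H^1_{loop}(R_n, \bO(d))$ is then exactly the one displayed in the statement, and its bijectivity follows from the uniqueness part of the preceding corollary: two distinct choices of $S$ yield distinct tuples $(q_I)_I$, hence non-isometric forms.

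There is essentially no obstacle here: the only input beyond the previous corollary is the elementary computation $W(k) = \mathbb{F}_2$ for quadratically closed $k$ in characteristic zero. The statement is really just the instantiation of the preceding parametrization once the anisotropic building blocks over $k$ have been made explicit.
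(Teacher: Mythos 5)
Your proof is correct and is essentially the argument the paper intends (the paper states this corollary without proof, as an immediate consequence of the preceding one). The only ingredient beyond the preceding parametrization is the observation that over a quadratically closed field of characteristic $\neq 2$ the anisotropic forms are exactly $0$ and $\langle 1 \rangle$, so the tuple $(q_I)_I$ is encoded by the subset $S \subset \mathcal{P}(n)$ of indices with $q_I \neq 0$; the dimension constraint then pins down $v$ and forces the parity and size bound on $|S|$. You correctly locate the uniqueness input in the iterated Springer decomposition that underlies the preceding corollary.
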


\begin{corollary} Assume that  $k$ is quadratically  closed.
Inside $\bO'_d = \bO( \langle 1, \dots , 1\rangle) \simeq \bO_d$. 
there is a single   $\bO'_d(k)$-conjugacy class  of maximal
anisotropic abelian constant subgroup of $\bO'(d)$, that  of the diagonal subgroup $\bmu_2^{d}$. 
In particular anisotropic abelian subgroups of $\bO'(d)$ are $2$-elementary.
\end{corollary}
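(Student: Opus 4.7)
My plan is to analyze an anisotropic abelian constant subgroup $A \subset \bO'_d(k)$ through its action on the underlying space $V = k^d$ equipped with its standard quadratic form $q = \sum_i x_i^2$. Since $\mathrm{char}(k) = 0$ and $A$ is finite abelian, the group $A$ acts semisimply on $V$, giving an eigenspace decomposition $V = \bigoplus_{\chi \in \hat A} V_\chi$. Because $A$ preserves $q$, two eigenspaces $V_\chi, V_{\chi'}$ pair non-trivially only when $\chi \chi' = 1$; hence $V_\chi$ is totally isotropic when $\chi^2 \neq 1$, the pairing $V_\chi \times V_{-\chi} \to k$ is non-degenerate in that case, while the restriction $q|_{V_\chi}$ is non-degenerate when $\chi^2 = 1$. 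A direct computation then identifies the connected centralizer
\[
\bZ_{\bO'_d}(A)^\circ \;=\; \prod_{\chi^2 = 1} \bSO(V_\chi) \;\times\; \prod_{\{\chi,\chi^{-1}\},\, \chi^2 \neq 1} \bGL(V_\chi).
\]

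Next, I will invoke Corollary \ref{flag}(3) applied to the purely geometric loop cocycle defined by $A$ (regarding $A$ as the image of some $\bmu_m^n \to \bO'_d$): this translates the anisotropy of $A$ into the requirement that $\bZ_{\bO'_d}(A)$ contain no nontrivial split subtorus. Since $k$ is algebraically closed, any $\bGL(V_\chi)$ with $V_\chi \neq 0$ contains a split $\bG_m$, and so does any $\bSO(V_\chi)$ with $\dim V_\chi \geq 2$. Consequently anisotropy forces $V_\chi = 0$ whenever $\chi^2 \neq 1$, and $\dim V_\chi \leq 1$ whenever $\chi^2 = 1$. The faithful embedding $A \hookrightarrow \bO'_d(k)$ then shows that the characters appearing on $V$ separate elements of $A$ and take values in $\{\pm 1\}$, so $A$ embeds into a product of copies of $\bmu_2$; this already yields the ``in particular'' clause.

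For the maximality statement I observe that the constraints above force a decomposition $V = \bigoplus_{i=1}^d V_{\chi_i}$ into $d$ one-dimensional summands, whose joint stabilizer in $\bO'_d$ is $\prod_{i=1}^d \bO(V_{\chi_i}) \cong \bmu_2^d$. Applying the same centralizer analysis to this larger $\bmu_2^d$ shows it is itself anisotropic (its eigenspace decomposition is still the given one-dimensional refinement), so by maximality one must have $A = \bmu_2^d$ inside this decomposition. Finally, using that $k$ is quadratically closed, I pick $v_i \in V_{\chi_i}$ with $q(v_i) = 1$; then $(v_1, \dots, v_d)$ is an orthonormal basis of $(V,q)$, and the change-of-basis matrix to the standard orthonormal basis lies in $\bO'_d(k)$ and conjugates this copy of $\bmu_2^d$ onto the standard diagonal subgroup of $\bO'_d$.

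The main technical hurdle will be the first step: correctly identifying the connected centralizer $\bZ_{\bO'_d}(A)^\circ$ and invoking the equivalence in Corollary \ref{flag} to translate anisotropy of the loop torsor attached to $A$ into the vanishing of split subtori of this centralizer. Once this translation is in place, the rest of the argument is elementary linear algebra with quadratic forms, with the quadratically closed hypothesis entering only to rescale the chosen eigenvectors to unit length.
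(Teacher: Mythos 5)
Your proof is correct, and it takes a genuinely different route from the paper. The paper's own argument is shorter and relies on two pieces of machinery established earlier in \S 10: first, the parametrization of $H^1_{loop}(R_n,\bO'_d)$ by quadratic forms over $R_n$ (Springer's theorem iterated over $F_n$), which over a quadratically closed field $k$ shows that every loop class is a sum of one-dimensional forms $\langle t_I \rangle$ and hence lies in the image of $H^1(R_n,\bmu_2^d)\to H^1(R_n,\bO'_d)$; and second, Theorem \ref{k-aniso-bis} (rigidity of anisotropic purely geometric loop cocycles) to upgrade the cohomological coincidence $[\phi]=[\psi]$ to actual $\bO'_d(k)$-conjugacy of $\phi$ with a homomorphism $\psi$ landing in $\bmu_2^d$. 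Your proof instead works entirely inside the finite-dimensional quadratic space: you decompose $V$ into $A$-eigenspaces, compute $\bZ_{\bO'_d}(A)^\circ$ explicitly, and use only the translation in Corollary \ref{flag}(3) between anisotropy of the loop torsor and absence of split subtori in the centralizer. What your approach buys is a self-contained argument that makes visible \emph{why} $\bmu_2^d$ is forced: the $\bGL$-factors from non-quadratic characters and the $\bSO$-factors from eigenspaces of dimension $\geq 2$ each contribute a split $\G_m$. What the paper's approach buys is brevity and uniformity with the other anisotropy computations in \S\S 12--13, which use the same $H^1$-surjectivity plus rigidity template for $\bG_2$, $\bF_4$, $\bbE_7$, $\bbE_8$.

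One small slip: you write ``Since $k$ is algebraically closed'' when only quadratic closure is assumed. This is harmless here — $\bGL(V_\chi)$ always contains the central $\G_m$ regardless of the field, and $\bSO(V_\chi)$ with $\dim V_\chi\geq 2$ contains a split $\G_m$ precisely because a nondegenerate form of rank $\geq 2$ over a quadratically closed field is isotropic — but you should state the hypothesis you actually use.
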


\begin{proof} Let $\bA$ be a finite  abelian constant group of $\bO'_d$.
There exist an even integer $m \geq 1$ and a surjective homomorphism
$\phi : (\Z/m\Z)^n \to \bA(k).$  Then the corresponding loop torsor
$[\phi] \in H^1(R_n, \bO'_d)$ is anisotropic.
Indeed the map $H^1(R_n, \bmu_2^{d}) \to H^1(R_n, \bO'_d)$ 
is surjective. Hence there exists $\psi: (\Z/m\Z)^n \to  \bmu_2^{d}$ such that
$[\phi]= [\psi] \in H^1(R_n, \bO'_d)$. 
Theorem \ref{k-aniso-bis} shows that
$\phi$ and $\psi$   are $\bO'_d(k)$-conjugate.
By considering  their images, we conclude that $\bA(k)$ is  $\bO'_d(k)$-conjugate
to a subgroup of  $\bmu_2^{d}(k)$.
\end{proof}

\begin{remark} (1) All anisotropic abelian constant subgroups of $\bO'_d$ are related
to codes, and these are not explicitly enumerated (see \cite{Gs} for details).

\smallskip

(2) Under the hypothesis of the Corollary, let
$f: \Spin'_d \to \SO'_d$ be the universal covering of $\bO'_d$.  Since the
image of
a finite  abelian constant anisotropic subgroup of $\Spin'_d$ in $\bO'_d$ is still
anisotropic,
it follows that an anisotropic finite constant abelian subgroup of  $\Spin'_d$
is of rank $\leq d$ and has $4$-torsion.
\end{remark}

\section{Groups of type $G_2$}

We denote by $\bG_2$ the split Chevalley group of type $G_2$ over $k.$
If $F$ is a field of characteristic zero containing $k$,
we know that $H^1(F_n, \bG_2)$ classifies octonion $F$-algebras or
alternatively
$3$-Pfister forms \cite[\S 8.1]{Se2}. This follows from the fact that the Rost invariant
\cite{GMS}
$$
r_F: H^1(F, \bG_2) \to H^3(F, \Z/ 2 \Z)
$$
 is injective and sends the class of an octonion algebra
to the Arason invariant of its norm form.

Consider the standard non-toral constant abelian subgroup $f: (\Z/2\Z)^3 \subset \bG_2$.
Then the composite map
$$
(F^\times/ {F^\times}^3) \cong H^1(F, (\Z/2\Z)^3)
\buildrel f_* \over \longrightarrow
H^1(F, \bG_2) \buildrel r_F \over \longrightarrow \,  H^3(F, \Z/ 2 \Z).
$$
sends an element  $\bigl( (a) , (b) , (c) \bigr)$ to the cup product
$(a) . (b) . (c) \in  H^3(F, \Z/ 2 \Z)$ \cite[\S 6]{GiQ}.
For $n \geq 0$, we consider the mapping
$$
(R_n^\times/ {R_n^\times}^2)^3 \simeq H^1\big(R_n, (\Z/2\Z)^3\big)
\buildrel f_* \over \longrightarrow
H^1(R_n, \bG_2).
$$
For a class  $\bigl( (x), (y), (z) \bigr) \in \bigl(R_3^\times /
(R_3^\times)^2\bigr)^3$
we write only  $(x,y,z)$.

\begin{corollary}\label{octonion1}  The map above surjects onto
$H^1_{loop}(R_n, \bG_2)$.

\end{corollary}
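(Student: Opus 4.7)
My plan is to deduce the corollary from three ingredients: the acyclicity theorem applied to $\bG_2$, the classification of octonion algebras over $F_n$ via $3$-Pfister norm forms, and a square-class computation comparing $R_n^\times$ and $F_n^\times$. First, by Theorem \ref{acyclic} the restriction map
\[
H^1_{loop}(R_n,\bG_2) \;\simlgr\; H^1(F_n,\bG_2)
\]
is a bijection. Furthermore, $(\Z/2\Z)^3$ is a finite constant $k$--subgroup of $\bG_2$, so by Lemma \ref{lem1} the image of $f_*: H^1(R_n,(\Z/2\Z)^3)\to H^1(R_n,\bG_2)$ lies automatically in $H^1_{loop}(R_n,\bG_2)$. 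It is therefore enough to show that the composite
\[
H^1\big(R_n,(\Z/2\Z)^3\big) \;\buildrel f_* \over \lgr\; H^1(R_n,\bG_2)\; \lgr\; H^1(F_n,\bG_2)
\]
is surjective.

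Next I would compare square classes on both sides. On the one hand $R_n^\times = k^\times \times \Z^n$, so $R_n^\times/(R_n^\times)^2 \simeq k^\times/(k^\times)^2 \oplus (\Z/2\Z)^n$. On the other hand, since $k$ is of characteristic zero, iterated application of Hensel's lemma to the valuation rings of $F_n = k((t_1))\cdots((t_n))$ yields, at each step, a splitting of the units into a divisible factor $1 + \mathfrak{m}$ and the residual units, giving
\[
F_n^\times/(F_n^\times)^2 \;\simeq\; k^\times/(k^\times)^2 \oplus (\Z/2\Z)^n,
\]
with generators $a\in k^\times$ and the parities of the $t_i$. The natural map $R_n^\times/(R_n^\times)^2 \to F_n^\times/(F_n^\times)^2$ matches these presentations and is therefore an isomorphism; hence Kummer theory gives $H^1\big(R_n,(\Z/2\Z)^3\big)\simlgr H^1\big(F_n,(\Z/2\Z)^3\big)$.

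The remaining, and truly substantive, point is the surjectivity of the map $H^1\big(F_n,(\Z/2\Z)^3\big)\to H^1(F_n,\bG_2)$. As recalled at the beginning of Section~14, the Rost invariant $r_{F_n}$ is injective on $H^1(F_n,\bG_2)$, and identifies the image of $((a),(b),(c))$ with the symbol $(a)\cdot(b)\cdot(c)\in H^3(F_n,\Z/2\Z)$. Thus surjectivity onto $H^1(F_n,\bG_2)$ is equivalent to the statement that every class in the image of $r_{F_n}$ is a pure symbol, which in turn is equivalent to the assertion that every $3$-Pfister form over $F_n$ is isometric to some $\langle\langle a,b,c\rangle\rangle$. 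Since this is tautologically true (every $3$-Pfister form is by definition of the form $\langle\langle a_1,a_2,a_3\rangle\rangle$ for some $a_i\in F_n^\times$), the surjectivity follows immediately.

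Putting the three ingredients together, the composite $H^1(R_n,(\Z/2\Z)^3)\to H^1(F_n,\bG_2)$ is surjective, and since it factors through $H^1_{loop}(R_n,\bG_2)$ via the bijection of Theorem \ref{acyclic}, the map $f_*$ surjects onto $H^1_{loop}(R_n,\bG_2)$, as desired. The only step with any real content is the square-class comparison; the rest is a direct assembly of results already in hand.
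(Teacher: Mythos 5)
Your proof is correct and follows the same route as the paper: reduce via the Acyclicity Theorem to surjectivity over $F_n$, which holds because every $3$-Pfister norm form is a pure symbol. The paper's one-line proof leaves implicit both the square-class comparison $R_n^\times/(R_n^\times)^2\simeq F_n^\times/(F_n^\times)^2$ and the observation (via Lemma \ref{lem1}) that $f_*$ lands in $H^1_{loop}$; you spell these out explicitly, which is a tidy expansion of the same argument rather than a different one.
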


\begin{proof}  By the Acyclicity Theorem, it suffices to observe that the analogous
statement holds for  $H^1_{loop}(F_n, \bG_2)$.
\end{proof}

By using the Rost invariant, we  get  a full classification 
of the multiloop algebras based on the split Lie algebra of type $G_2$.

\begin{corollary}\label{octonion2}  Assume that $k$ is quadratically
closed. Assume that $n \geq 3$.

\smallskip

1) $H^1_{loop}(R_n, \bG_2) \setminus \{1 \}$
consists in  the images by $f_*$ of the 
$\Bigl( t_{I_1},  t_{I_2}, t_{I_3}\Bigr)$
where   $I_1$, $I_2$, $I_3$ are non-empty subsets 
of $\{1,..,n\}$ such that $i_1 < i_2 < i_3$ for all $(i_1,i_2,i_3) \in I_1 \times I_2 \times I_3$.

\smallskip

2)  $\bGL_n(\Z) \backslash \bigl( H^1_{loop}(R_n, \bG_2) \setminus \{1 \}
\bigr)$ consists of the image  by $f_*$ of $( t_1, t_2, t_3 )$.

\end{corollary}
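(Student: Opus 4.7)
The plan is to combine Corollary~\ref{octonion1} with the injectivity of the Rost invariant for $\bG_2$, translating the question into a linear-algebra calculation in mod~$2$ Galois cohomology. Since $k$ is quadratically closed, $R_n^\times/(R_n^\times)^2$ is a $\Z/2\Z$-vector space of dimension $n$ with basis $(t_1),\dots,(t_n)$, so every element is uniquely of the form $(t_I)$ for some subset $I\subseteq\{1,\dots,n\}$; Corollary~\ref{octonion1} then states that each class in $H^1_{loop}(R_n,\bG_2)$ arises as $f_*(t_{I_1},t_{I_2},t_{I_3})$ for some triple of subsets.

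For Part~(1), I would next determine when such a triple yields the trivial class. By Acyclicity (Theorem~\ref{acyclic}) the restriction map identifies $H^1_{loop}(R_n,\bG_2)$ with $H^1(F_n,\bG_2)$, and the Rost invariant injects the latter into $H^3(F_n,\Z/2\Z)$ via $r_{F_n}\circ f_*(a,b,c)=(a)\cup(b)\cup(c)$. Iterating the Milnor residue exact sequence at each $t_i$ identifies $H^*(F_n,\Z/2\Z)$ with the exterior $\Z/2\Z$-algebra $\bigwedge^* V$, where $V=\bigoplus_{i=1}^n \Z/2\Z\cdot(t_i)$. Consequently $f_*(t_{I_1},t_{I_2},t_{I_3})$ is trivial exactly when $(t_{I_1}),(t_{I_2}),(t_{I_3})$ are linearly dependent in $V$, and otherwise the decomposable element they span in $\bigwedge^3 V$ uniquely determines the $3$-dimensional subspace $\langle(t_{I_1}),(t_{I_2}),(t_{I_3})\rangle$. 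The combinatorial heart of Part~(1) is then to verify that every such $3$-dimensional subspace admits a basis of the form $((t_{J_1}),(t_{J_2}),(t_{J_3}))$ with the $J_i$ nonempty and block-ordered in the sense of the statement; Rost injectivity then delivers the claim.

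For Part~(2), I would exploit the $\bGL_n(\Z)$-action on $H^1(R_n,-)$ supplied by Lemma~\ref{cocycles-action}. Its induced action on $V=R_n^\times/(R_n^\times)^2\cong(\Z/2\Z)^n$ is the natural one, factoring through the surjection $\bGL_n(\Z)\twoheadrightarrow\bGL_n(\Z/2\Z)$. By functoriality the action commutes with $f_*$ and with the Rost invariant, so it descends to the standard action of $\bGL_n(\Z/2\Z)$ on $\bigwedge^3 V$. Since $\bGL_n(\Z/2\Z)$ acts transitively on $3$-dimensional subspaces of $V$, it acts transitively on the nonzero decomposable elements of $\bigwedge^3 V$, and Rost injectivity transports this to transitivity of $\bGL_n(\Z)$ on $H^1_{loop}(R_n,\bG_2)\setminus\{1\}$ with $f_*(t_1,t_2,t_3)$ as a distinguished representative.

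The main obstacle will be the combinatorial reduction underlying Part~(1): producing a block-ordered, disjoint-support basis for an arbitrary $3$-dimensional subspace of $V$. A cleaner alternative is to establish Part~(2) first via the transitivity argument above and then deduce Part~(1) by tracking the $\bGL_n(\Z)$-orbit of the pinned triple $(t_1,t_2,t_3)$ through explicit elementary matrices, verifying that each orbit member is the image under $f_*$ of a block-ordered triple.
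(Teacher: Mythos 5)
Your plan faithfully tracks the paper's own strategy: reduce to $F_n$ via Acyclicity, identify $H^*(F_n,\Z/2\Z)$ with $\bigwedge^* V$ for $V = F_n^\times/(F_n^\times)^2 \cong (\Z/2\Z)^n$, and exploit injectivity of the Rost invariant together with $r_{F_n}\circ f_*(a,b,c)=(a)\cup(b)\cup(c)$. Your argument for Part~(2) is sound: $\GL_n(\Z)$ surjects onto $\GL_n(\Z/2\Z)$, which acts transitively on the $3$-dimensional subspaces of $V$ and hence on nonzero decomposable $3$-forms, with $(t_1,t_2,t_3)$ spanning a representative.

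The step you flag as ``the combinatorial heart'' of Part~(1), however, is false as soon as $n\geq 4$, so neither your direct plan nor your ``cleaner alternative'' can close the gap. For $n=4$, the Rost image of $f_*(t_1t_4,t_2,t_3)$ is $(t_1t_4)\wedge(t_2)\wedge(t_3)=e_{123}+e_{234}$, a nonzero decomposable $3$-form; but for any block-ordered triple the support of $(t_{I_1})\wedge(t_{I_2})\wedge(t_{I_3})$ in the basis $\{e_{abc}:a<b<c\}$ is precisely the product set $I_1\times I_2\times I_3$, and no product set equals $\{(1,2,3),(2,3,4)\}$ (it would force $2\in I_1\cap I_2$, contradicting $\max I_1<\min I_2$). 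Counting makes the discrepancy visible: there are $\sum_{k=3}^n\binom{n}{k}\binom{k-1}{2}$ block-ordered triples ($7$ for $n=4$), versus $(2^n-1)(2^{n-1}-1)(2^{n-2}-1)/21$ nonzero decomposable $3$-forms ($15$ for $n=4$). The paper's own argument suffers from the same lacuna: the relations $(t_i)(t_i)=0$ and graded commutativity simplify a given $(t_{I_1})(t_{I_2})(t_{I_3})$, but they do not produce a block-ordered representative of an arbitrary decomposable $3$-form. Part~(1) thus requires a corrected statement --- for instance, parametrizing $H^1_{loop}(R_n,\bG_2)\setminus\{1\}$ by $3$-dimensional $\Z/2\Z$-subspaces of $R_n^\times/(R_n^\times)^2$, or restricting to $n=3$ where the counts agree --- whereas Part~(2), being an assertion about $\GL_n(\Z)$-orbits, stands.
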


\begin{proof} (1) Again by aciclicity it suffices to establish the analogous result over $F_n$.
Since $k$ is quadratically closed, we have
$R_n^\times/ (R_n^\times)^{\times 2} \cong F_n^\times/ (F_n^\times)^{\times 2} \cong (\Z/2\Z)^n$.
Hence  $H^1(F_n, \bG_2)$ consists of the image of $f_*( t_{I_1}, t_{I_2}, t_{I_3} )$
for $I_1,I_2,I_3$ running over the subsets of  $\{1,..,n\}$. The Rost invariant of such a class is  $(t_{I_1}) .(t_{I_2}). (t_{I_3}) \in H^3(F_n, \Z/2\Z)$. 
Since $(t_i).(t_i)=0$ and $(t_i).(t_j)=(t_j)(t_i) \in H^3(F_n, \Z/2\Z)$, 
it follows that $H^1(F_n, \bG_2)$ consists of the trivial class
and  the images by $f_*$ of the 
$\Bigl( t_{I_1},  t_{I_2}, t_{I_3}\Bigr)$
where   $I_1$, $I_2$, $I_3$ are non-empty subsets 
of $\{1,..,n\}$ such that $i_1 < i_2 < i_3$ for each $(i_1,i_2,i_3) \in I_1 \times I_2 \times I_3$.
The last classes are non-trivial pairwise distinct elements since 
the $(t_{I_1}) .(t_{I_2}). (t_{I_3}) \in H^3(F_n, \Z/2\Z)$ are
distinct pairwise elements by residue considerations (see for example  prop. 3.1.1 of \cite{GP3}).

\smallskip

\noindent (2) Follows easily from (1).
\end{proof}

The following corollary refines Griess' classification in the
$G_2$-case \cite{Gs}.

\begin{corollary}\label{octonion3} Assume that $k$ is algebraically closed.
Let $\bA$ be an anisotropic constant abelian subgroup of $\bG_2$. Then $\bA$ is 
$\bG_2(k)$-conjugate to the standard non-toral subgroup $(\Z/2\Z)^3$. 
\end{corollary}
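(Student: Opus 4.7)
My plan is to realize $\bA$ as the image of a loop cocycle for $\bG_2$ over $R_n$, invoke the classification in Corollary~\ref{octonion2}, and extract the conjugacy statement from the anisotropic rigidity Theorem~\ref{k-aniso-bis}.

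First I would pick an integer $n \geq 3$ large enough so that $\bA(k)$ admits a surjection from $(\Z/m\Z)^n$ for some $m \geq 1$ (taking $m$ to be the exponent of $\bA$ and padding with zero homomorphisms if necessary), and promote this surjection to a continuous homomorphism $\phi \colon \pi_1(R_n) = \widehat{\Z}^n \twoheadrightarrow \bA(k) \hookrightarrow \bG_2(k)$. Since $k$ is algebraically closed, $\phi$ is a purely geometric loop cocycle in the sense of Section~\ref{basepoint}, and the anisotropy of $\bA$ together with Corollary~\ref{flag}.3 shows that $\phi$ itself is anisotropic as a $k$-group homomorphism; in particular $({_\phi}\bG_2)_{F_n}$ is anisotropic and $[\phi] \neq 1$ in $H^1(R_n,\bG_2)$.

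Next I would invoke Corollary~\ref{octonion2}(2): for $n \geq 3$ the only non-trivial $\bGL_n(\Z)$-orbit in $H^1_{loop}(R_n,\bG_2)$ is the orbit of $f_*(t_1,t_2,t_3)$, which is represented by the homomorphism $f \circ \iota$, where $\iota \colon \widehat{\Z}^n \twoheadrightarrow (\Z/2\Z)^3$ is the reduction modulo $2$ of the projection onto the first three coordinates and $f \colon (\Z/2\Z)^3 \hookrightarrow \bG_2$ is the standard embedding. By Lemma~\ref{cocycles-action}, since $k$ is algebraically closed, the $\bGL_n(\Z)$-action on purely geometric loop cocycles is given by $({}^g\phi)(\gamma) = \phi(\gamma^g)$, i.e.\ by precomposition with an automorphism of $\widehat{\Z}^n$; in particular it preserves the image of $\phi$ as a subgroup of $\bG_2(k)$. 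Choosing $g \in \bGL_n(\Z)$ with $g \cdot [\phi] = [f \circ \iota]$ and replacing $\phi$ by $^{g}\phi$ (which still surjects onto $\bA(k)$), I may therefore assume $[\phi] = [f \circ \iota]$ in $H^1(R_n,\bG_2)$. Applying Theorem~\ref{k-aniso-bis} to the purely geometric loop cocycles $\phi$ (anisotropic by construction) and $f \circ \iota$ then yields an element $h \in \bG_2(k)$ with $\phi = \mathrm{Int}(h^{-1}) \circ (f \circ \iota)$; taking images and using the surjectivity of $\iota$ gives $\bA(k) = h^{-1} f\bigl((\Z/2\Z)^3\bigr) h$, which is the conjugacy conclusion.

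The main difficulty I anticipate lies in the middle paragraph: converting the $\bGL_n(\Z)$-orbit equality produced by Corollary~\ref{octonion2} into a genuine cohomological equality in $H^1(R_n,\bG_2)$, while simultaneously maintaining control over the image of the cocycle as a subgroup of $\bG_2(k)$. Once this bookkeeping is in place, no case-by-case group-theoretic computation inside $\bG_2$ is needed: Theorem~\ref{k-aniso-bis} supplies the conjugating element of $\bG_2(k)$ directly, and because the $\iota$ appearing in the standard representative is surjective, the image of $f \circ \iota$ already coincides with the full standard non-toral $(\Z/2\Z)^3$.
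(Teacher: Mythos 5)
Your proof is correct and follows essentially the same strategy as the paper's: realize $\bA$ as the image of a purely geometric loop cocycle $\phi$ over $R_n$, observe that anisotropy of $\bA$ makes $\phi$ anisotropic, find a cohomologous cocycle factoring through the standard $f : (\Z/2\Z)^3 \hookrightarrow \bG_2$, and invoke Theorem~\ref{k-aniso-bis} to upgrade cohomological equivalence to $\bG_2(k)$-conjugacy of homomorphisms, then take images.

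The one genuine (if small) difference is in how the cohomologous comparison cocycle is produced. The paper cites the surjectivity result of Corollary~\ref{octonion1} directly to obtain $\psi : (\Z/m\Z)^n \to (\Z/2\Z)^3$ with $[\phi]=[\psi]$, then applies Theorem~\ref{k-aniso-bis} and takes images; for the conclusion to give the \emph{full} $(\Z/2\Z)^3$ rather than a subgroup, one must additionally observe that the image of $\psi$, being anisotropic, cannot be a proper subgroup of $(\Z/2\Z)^3$ (proper subgroups are toral). You instead pass through Corollary~\ref{octonion2}(2) and the $\bGL_n(\Z)$-equivariance of Lemma~\ref{cocycles-action}, which replaces $\phi$ by $\phi\circ g$ (same image, since $g$ is an automorphism of $\widehat{\Z}^n$) and lands exactly on the representative $f\circ\iota$ with $\iota$ surjective by construction; this makes the fullness of the image automatic. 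Both routes are valid; yours is a hair more explicit on that final point at the cost of invoking the $\bGL_n(\Z)$-orbit machinery. One cosmetic remark: you phrase ``the anisotropy of $\bA$ together with Corollary~\ref{flag}.3 shows that $\phi$ is anisotropic,'' but the logic runs the other way — anisotropy of $\bA$ directly gives that $\phi$ is anisotropic as a $k$-group homomorphism (the image centralizes no split torus), and Corollary~\ref{flag}.3 is then used to transport this to the $F_n$-anisotropy needed as the hypothesis of Theorem~\ref{k-aniso-bis}. The substance is right, only the attribution of which implication each tool supplies is slightly misstated.
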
 

\begin{proof} Let $\bA$ be a finite  abelian constant anisotropic  subgroup of $\bG_2$. We reason as before.
There exist an even integer $m \geq 1$ and a surjective homomorphism
$\phi : (\Z/m\Z)^n \to \bA(k)$ so that the corresponding loop torsor
$[\phi] \in H^1(R_n, \bG_2)$ is anisotropic.
By part (1) of Corollary \ref{octonion1} there exists $\psi: (\Z/m\Z)^n
\to  (\Z/2\Z)^3$
such that $[\phi]= [\psi] \in H^1(R_n, \bG_2)$.
Theorem \ref{k-aniso-bis} shows that
$\phi$ and $\psi$   are $\bG_2(k)$-conjugate.
By taking  the images, we conclude that $\bA(k)$ is  $\bG_2(k)$--conjugate
to  the standard $(\Z/2\Z)^3$.
\end{proof}

\section{Case of  groups of type $F_4,$ $E_8$ and simply connected $E_7$ in  nullity $3$}

In this section, we assume that $k$ is algebraically closed.
We denote by $\bF_4$, and $\bbE_8$ the split algebraic $k$--group
of type  $F_4$ and $E_8$ respectively, and by $\bbE_7$ the split simply connected $k$--group of type $E_7.$
For either of these three groups we know that
$\bG= \bAut(\bG)$ and that  $H^1(R_2, \bG)= 1$  \cite[th. 2.7]{GP2}. The
goal is then to compute
$H^1_{loop}(R_3, \bG),$ or at least the anisotropic classes.

Since we  want to use  Borel-Friedman-Morgan's classification of
rank zero (i.e.  with finite  centralizer)
abelian subgroups and  triples  of
the corresponding compact Lie  group  \cite[\S 5.2]{BFM}, 
we will assume that $k= \C$. Note that there is no loss of generality in doing this as explained in Remark
\ref{Leftschetz}.\footnote{All the results that we need about rank zero abelian groups and triples can also be found in \cite{KS}.}

Denote by $\bG_0$ the anisotropic real form of $\bG$ (viewed as algebraic group over $\R$) and
let $K= \bG_0(\R).$ This is a compact  Lie group.

In the $F_4$  and  $E_7$ case $K$  has a single conjugacy class of
rank zero abelian subgroup  of rank $3$.
In the $E_8$ case, $K$ has two conjugacy classes of  rank zero
abelian subgroup  of rank $3$,  $(\Z/5\Z)^3$ and $(\Z/6\Z)^3$.
To translate this to the complex case we establish the following
 fact.

\begin{lemma}\label{ernest} Let $\bH$ be a complex affine algebraic group
whose connected component of the identity is reductive. Denote by 
$\bH_0$ its anisotropic real form, viewed as algebraic group over $\R$ (see \cite[\S 5.2, th. 12]{OV}).
 Set  $K_H= \bH_0(\R)$.

\smallskip

(1) Let  $A$ is a finite abelian subgroup of $K_H$ and denote by $\bA $ the  underlying
constant  subgroup of the algebraic $\R$-group  $\bH_0$.
Then $A$ is a rank zero subgroup of $K_H$ if and only if
   $\bA \times_\R \C$ is an anisotropic subgroup of $\bH$.

\smallskip

(2) Let $\bA$ be  an anisotropic  abelian constant  subgroup of $\bH$ and put $A=\bA(\C)$. 
Then there exists $h \in \bH(\C)$ such that
${^hA} \subset K_H$ and $N_{K_H}({^hA})= \bN_\bH({^h\bA})(\C)$, both groups being finite.
Furthermore $Z_{K_H}({^hA})= \bZ_\bH(^h\bA)(\C)$.

\end{lemma}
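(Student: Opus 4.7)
My strategy is to exploit the polar (Cartan) decomposition of $\bH(\C)$ attached to the compact real form $\bH_0$. Let $\theta$ denote the Cartan involution associated to $\bH_0$, a group automorphism of the real Lie group $\bH(\C)$ whose fixed-point set is $K_H$, and set $P := \exp\bigl(i\,\Lie(K_H)\bigr) \subset \bH(\C)$. The multiplication map $K_H \times P \to \bH(\C)$ is a diffeomorphism --- surjectivity uses the classical fact that for an anisotropic real form of a reductive group, $K_H$ meets every connected component of $\bH(\C)$. In particular $\Lie(K_H) \otimes_\R \C = \Lie(\bH)$, so $\dim_\R K_H = \dim_\C \bH$; and $P$, being diffeomorphic to a Euclidean space under $\exp$, contains no nontrivial torsion element.

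For (1), I would compare dimensions of centralizers at the level of Lie algebras. Since $A$ is finite it acts semisimply on $\Lie(K_H)$, and complexification commutes with taking $A$-invariants, so
$\Lie(Z_{K_H}(A)) \otimes_\R \C = \Lie(K_H)^A \otimes_\R \C = \Lie(\bH)^{\bA} = \Lie(\bZ_\bH(\bA))$,
whence $\dim_\R Z_{K_H}(A) = \dim_\C \bZ_\bH(\bA)$. Thus $A$ has rank zero in $K_H$ if and only if $\bZ_\bH(\bA)$ is zero-dimensional. By Steinberg's theorem, $\bZ_\bH(\bA)^0$ is reductive; since every positive-dimensional reductive group over $\C$ contains a nontrivial (and automatically split) torus, $\bZ_\bH(\bA)$ is zero-dimensional exactly when it contains no nontrivial split subtorus, which is precisely the condition that the conjugation action of $\bA$ on $\bH$ be anisotropic in the sense of \S\ref{sec-irr}.

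For (2), the first step is to conjugate $A$ into $K_H$: I apply Cartan's fixed-point theorem to the action of the finite group $A$ on the nonpositively curved Riemannian symmetric space $\bH(\C)/K_H$, obtaining an $A$-fixed coset $hK_H$, hence $^hA \subset K_H$. Replacing $\bA$ by $^h\bA$, I may henceforth assume $A \subset K_H$, so that $\theta$ and $\sigma$ fix $A$ pointwise. Part (1) gives $\bZ_\bH(\bA)$ finite, and then $\bN_\bH(\bA)$ is finite too, since $\bN_\bH(\bA)/\bZ_\bH(\bA)$ embeds into $\Aut(\bA)$.

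It remains to show $\bN_\bH(\bA)(\C) \subset K_H$, the reverse inclusions being trivial; the centralizer case is parallel. Given $g \in \bN_\bH(\bA)(\C)$, write $g = kp$ with $k \in K_H$ and $p \in P$. Because $\theta$ fixes $A \subset K_H$ pointwise, $\theta$ preserves the subgroup $\bN_\bH(\bA)$, so $\theta(g) = kp^{-1}$ also lies in $\bN_\bH(\bA)(\C)$; therefore $p^2 = \theta(g)^{-1}\, g$ is a torsion element of the torsion-free group $P$, which forces $p = 1$ and $g = k \in K_H$. For $g \in \bZ_\bH(\bA)(\C)$, conjugation by an arbitrary $a \in A \subset K_H$ combined with uniqueness of the polar decomposition gives $k, p \in \bZ_\bH(\bA)(\C)$, so $p = \exp(x)$ with $x \in i\,\Lie(K_H)^A = i\,\Lie(Z_{K_H}(A))$, and this last space is zero by part (1); again $g = k \in K_H$. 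The main obstacle is the initial conjugation step, where one must handle the possibly disconnected $\bH$ by checking that the anisotropic real form meets every component of $\bH(\C)$, ensuring that $\bH(\C)/K_H$ really is a nonpositively curved symmetric space on which Cartan's fixed-point theorem applies.
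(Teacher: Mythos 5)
Your argument is correct, but it departs from the paper's in both parts, most substantially in~(2). For~(1), where the paper works with the connected real centralizer $\bC = \bZ_{\bH_0}(\bA)^0$, uses Richardson's theorem to see $\bC$ is reductive, and deduces $\bC=1$ from finiteness of $\bC(\R)$ via Zariski density of real points, you instead compare Lie algebras directly through the identity $\Lie(Z_{K_H}(A))\otimes_\R\C \cong \Lie(\bZ_\bH(\bA))$; the two routes are closely parallel and of comparable length. The real divergence is in~(2): the paper observes that $\bN_\bH(\bA)(\C)$ is a \emph{finite} group, hence sits inside some maximal compact subgroup, and since maximal compacts of $\bH(\C)$ are conjugate the whole normalizer can be pushed into $K_H$ in a single stroke, giving both the conjugation of $A$ and the containment of the normalizer at once. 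You separate the two steps: first conjugating $A$ into $K_H$ via Cartan's fixed-point theorem on the symmetric space $\bH(\C)/K_H$, and then using the Cartan involution together with the polar decomposition $\bH(\C)=K_H\cdot P$ to show $\bN_\bH(\bA)(\C)\subset K_H$ directly --- the point being that $\theta(g)^{-1}g = p^2$ lies in the finite normalizer yet also in the torsion-free set $P$. This is a perfectly valid alternative; it replaces the black-box conjugacy of maximal compacts by the (roughly equally deep) Cartan fixed-point theorem, and makes the mechanism more explicit, at the cost of some extra bookkeeping. One small slip: $P$ is not a subgroup of $\bH(\C)$, only a torsion-free subset diffeomorphic under $\exp$ to $i\,\Lie(K_H)$; your argument survives this, since $p^{2m}=1$ still forces $p=1$, but the phrase ``torsion-free group $P$'' should be corrected. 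Your final paragraph treating the centralizer separately is unnecessary once you know $\bN_\bH(\bA)(\C)\subset K_H$, since $\bZ_\bH(\bA)\subset\bN_\bH(\bA)$ gives the containment for free.
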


Recall that $K$ is a maximal subgroup of $\bH(\C)$ and that
maximal compact subgroups are conjugate under $\bH^0(\C)$.

\begin{proof} (1) Let  $\bC$ denote the connected component of the identity of the centralizer $\bZ_{\bH_0}(\bA)$. 
It is a  real reductive group \cite[10.1.5]{BMR}.   If  $\bA$ is an anisotropic
 subgroup of $\bH$, then the maximal tori of $\bZ_\bH(A)$ are trivial
and $\bC=1$. Hence $\bC(\C)$ is finite and $Z_{K_H}(A)$ is finite, i.e. $A$ is a rank zero subgroup of $K_H$.
Conversely, if $A$ is a rank zero subgroup of $K_H$ then
$\bC(\R)$ is finite. Since $\bC(\R)$ is Zariski dense in the connected group $\bC$, 
we see that $\bC=1$, and $\bA \times_\R \C$ is an anisotropic constant abelian subgroup of $\bH$.

\smallskip

\noindent (2) We are given a finite anisotropic constant subgroup $\bA$ of $\bH$.
  Since $1=\bZ_\bH(\bA)^0= \bN_\bH(\bA)^0$,  $\bN_\bH(\bA)$ is a finite algebraic group
 and $\bN_\bH(A)(\C)$ is finite. 
Since $\bN_\bH(\bA)(\C)$ is included in a maximal compact group of $\bH(\C)$,
we know that there exists $h \in \bH(\C)$ such that 
$A \subset \bN_\bH(\bA)(\C) \subset {^{h^{-1}}K_H}$.
We have then ${^hA}\subset \bN_\bH({^h\bA})(\C) \subset {K_H}$, hence
$N_{K_H}({^hA})= \bN_\bH({^h\bA})(\C)$.
It follows that $Z_{K_H}({^hA})= \bZ_\bH({^h\bA})(\C)$.

\end{proof}

\begin{lemma} \label{botanical} (1) The group $\bF_4$  has a single
conjugacy class of anisotropic finite abelian (constant) subgroups  of rank $3,$ 
 denoted by $f_3: (\Z/3\Z)^3 \subset \bF_4$.
Furthermore 
$$ \bN_{\bF_4}\big((\Z/3\Z)^3\big) /  \bZ_{\bF_4}\big((\Z/3\Z)^3\big) \simeq  \SL_3(\Z/3\Z). $$

\smallskip

(2) The group $\bbE_7$ has a single
conjugacy class of anisotropic finite abelian (constant) subgroups  of rank $3,$ 
 denoted by $f_4: (\Z/4\Z)^3 \subset \bbE_7.$ The finite group $f_4$
is a subgroup
the maximal subgroup $\SL_8 / \bmu_2$.
Furthermore $\bN_{\bbE_7}\big((\Z/4\Z)^3\big)/  \bZ_{\bbE_7}\big((\Z/4\Z)^3 \big) \simeq \SL_3(\Z/4\Z)$.

\smallskip

(3) The  group $\bbE_8$ has two conjugacy classes  of
anisotropic finite abelian (constant) subgroups  of rank $3,$  denoted by $f_5$ and $f_6.$ We have:

\smallskip

(a) $f_5: (\Z/5\Z)^3 \subset \bbE_8 $ and  $\bN_{\bbE_8}\big((\Z/5\Z)^3\big) /   \bZ_{\bbE_8}\big((\Z/5\Z)^3 \big) \simeq\SL_3(\Z/5\Z)$.

\smallskip

(b) $f_6: (\Z/6\Z)^3 \subset \bbE_8$ is a subgroup of
the subgroup  \break $(\SL_2 \times \SL_3 \times \SL_6) / \bmu_6$.
Furthermore $\bN_{\bbE_8}\big(\Z/6\Z)^3\big)/ \bZ_{\bbE_8}\big((\Z/6\Z)^3\big) \simeq \SL_3(\Z/6\Z)$.

\end{lemma}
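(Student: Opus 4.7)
The plan is to reduce the entire statement to the classification of rank zero abelian subgroups of the compact real form, which is provided by Borel--Friedman--Morgan \cite{BFM}, and then compute the relevant normalizer quotients using the explicit Jordan-type descriptions of these subgroups. Concretely, for $\bG \in \{\bF_4, \bbE_7, \bbE_8\}$, denote by $\bG_0$ the anisotropic $\R$-form and by $K = \bG_0(\R)$ its compact real points. By Lemma \ref{ernest}(1), a finite abelian constant subgroup $\bA \subset \bG$ is anisotropic if and only if (after $\bG(\C)$-conjugation, which we are free to perform) $A = \bA(\C)$ sits inside $K$ as a rank zero abelian subgroup, and moreover Lemma \ref{ernest}(2) tells us that in this situation $\bN_{\bG}(\bA)(\C) = N_K(A)$ and $\bZ_{\bG}(\bA)(\C) = Z_K(A)$, both of which are finite. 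Thus conjugacy classes of anisotropic rank-$3$ constant abelian subgroups of $\bG$ correspond bijectively to $K$-conjugacy classes of rank zero abelian subgroups of $K$ of rank $3$, and the Weyl-type quotient $\bN_{\bG}(\bA)/\bZ_{\bG}(\bA)$ is identified on $\C$-points with $N_K(A)/Z_K(A)$.

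Next I would invoke the Borel--Friedman--Morgan classification \cite[\S 5.2]{BFM} (with the parallel account in \cite{KS} as a cross-check) which states that the compact $F_4$ has a unique conjugacy class of rank zero abelian subgroups of rank $3$, isomorphic to $(\Z/3\Z)^3$; the compact $E_7$ has a unique such class, isomorphic to $(\Z/4\Z)^3$; and the compact $E_8$ has exactly two such classes, isomorphic to $(\Z/5\Z)^3$ and $(\Z/6\Z)^3$. Combining with the translation above yields the existence and uniqueness assertions in (1), (2), (3a) and (3b). The embeddings asserted in (2) and (3b), namely $(\Z/4\Z)^3 \subset \SL_8/\bmu_2 \subset \bbE_7$ and $(\Z/6\Z)^3 \subset (\SL_2 \times \SL_3 \times \SL_6)/\bmu_6 \subset \bbE_8$, are also part of the BFM description: the subgroup $(\Z/n\Z)^3$ is realized as a Jordan subgroup inside a maximal-rank semisimple subgroup whose connected component admits an explicit classical description, and these are precisely the classical subgroups listed.

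For the normalizer quotients, the idea is as follows: the commutator pairing on a Jordan subgroup $\bA \simeq (\Z/n\Z)^3$ defines a non-degenerate alternating bilinear form $\bA \times \bA \to \bmu_n$, and any element of $\bN_{\bG}(\bA)/\bZ_{\bG}(\bA)$ must preserve this form. A direct check (using the realization of $\bA$ inside the classical subgroup in each case) shows that the induced map $\bN_{\bG}(\bA)/\bZ_{\bG}(\bA) \hookrightarrow \Aut(\bA) \cong \GL_3(\Z/n\Z)$ lands inside the symplectic-similitude subgroup preserving the pairing up to the full $(\Z/n\Z)^\times$, and after quotienting by the scalar action of $\bA$ itself one gets an image in $\SL_3(\Z/n\Z)$. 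Surjectivity onto $\SL_3(\Z/n\Z)$ is then verified by exhibiting enough explicit elements (this is where the classical description of the ambient subgroup pays off: triality-type elements for $F_4$, the $\SL_8$-automorphisms for $E_7$, and the explicit automorphisms coming from $(\SL_2 \times \SL_3 \times \SL_6)/\bmu_6$ together with the outer symmetries in $E_8$), a calculation which is also carried out in \cite{Gs}.

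The main obstacle, in my view, is not the existence/uniqueness part, which is essentially a direct appeal to Lemma \ref{ernest} plus \cite{BFM}, but rather the identification of the normalizer quotients with $\SL_3(\Z/n\Z)$: the injectivity into $\GL_3(\Z/n\Z)$ and the preservation of the commutator pairing are formal, but the surjectivity statement requires exhibiting the full $\SL_3(\Z/n\Z)$-action, which needs the explicit Jordan-subgroup models and a careful bookkeeping of the outer automorphisms available in each of the ambient classical subgroups. This is the technical heart of the argument, and is precisely where we would follow Griess \cite{Gs} most closely.
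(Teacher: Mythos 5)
Your reduction to the compact real form via Lemma \ref{ernest} and the appeal to the Borel--Friedman--Morgan classification for the existence, uniqueness, and the explicit realizations of the Jordan subgroups inside $\SL_8/\bmu_2$ and $(\SL_2\times\SL_3\times\SL_6)/\bmu_6$ is exactly the path the paper takes; that part is fine.

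The gap is in the computation of the normalizer quotient, and the specific step that fails is your claim that ``the commutator pairing on a Jordan subgroup $\bA\simeq(\Z/n\Z)^3$ defines a non-degenerate alternating bilinear form.'' There is no non-degenerate alternating form on a free $(\Z/n\Z)$-module of odd rank: the Gram matrix is a $3\times 3$ antisymmetric matrix with zero diagonal, and $\det A = \det(-A^t) = -\det A$ forces $2\det A=0$, so the form is degenerate whenever $n$ is odd (your cases $d=3,5$) and has no hope of being nondegenerate in general. This kind of commutator-pairing argument is what works in the rank-$2$ situation $(\Z/n\Z)^2 \subset \bPGL_n$ (and indeed this is how \S13 of the paper proceeds for $\PGL_d$), but it does not transport to the rank-$3$ Jordan subgroups, and the ``symplectic-similitude subgroup of $\GL_3(\Z/n\Z)$'' you then want to land in is not a well-defined object. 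The subsequent ``quotient by the scalar action of $\bA$ itself'' also does not parse, since $\bA$ is abelian and acts trivially on itself by conjugation, so it contributes nothing to $\bN/\bZ$.

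The paper avoids all of this with a counting argument that you should adopt. Having translated to $K$ via the Claim in the paper's proof, one considers the map $\theta : N_K\big((\Z/d\Z)^3\big)\to\GL_3(\Z/d\Z)$ with kernel $Z_K$. Any rank-zero triple generating $(\Z/d\Z)^3$ is a $\GL_3(\Z/d\Z)$-transform of the standard triple $(1,1,1)$, so the set $\mathcal S_d$ of $K$-conjugacy classes of rank-zero triples of order $d$ is in bijection with $\GL_3(\Z/d\Z)/\mathrm{Im}(\theta)$. Proposition 5.1.5 of \cite{BFM} lists the elements of $\mathcal S_d$: they are precisely the $f_d(1,1,i)$ with $i$ prime to $d$, and the $\GL_3(\Z/d\Z)$-element taking $(1,1,1)$ to $(1,1,i)$ has determinant $i$. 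Hence $\GL_3(\Z/d\Z)/\mathrm{Im}(\theta)$ is exactly the determinant quotient, which gives $\mathrm{Im}(\theta)=\SL_3(\Z/d\Z)$ directly, with no need to exhibit explicit generators or to invoke an alternating form.
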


\begin{remark} The finite subgroups (1) and 3 (a) are described precisely in  \cite[\S 6]{GiQ}. That the third one, namely that $f_6: (\Z/6\Z)^3 \subset \bbE_8$
sits inside the  subgroup   $(\SL_2 \times \SL_3 \times \SL_6) / \bmu_6,$
follows from its very construction (see the proof of lemma 5.1.1   \cite{BFM} for details).
\end{remark}

\begin{proof} As explained above we may assume that $k = \C.$ The previous Lemma \ref{ernest} shows that any rank $0$ finite abelian constant
 subgroup $\bA$ of $\bG$ arises from rank $0$ abelian subgroup $A$
of $K$, so  the list of Borel-Friedman-Morgan  \cite[\S 5.2]{BFM}
provides all relevant conjugacy classes, and this yields the inclusions
$f_3$, $f_4$, $f_5$ and $f_6$ described above.
Given two rank $0$ finite abelian constant  subgroups $\bA$ and $\bA'$ of $\bG$ arising respectively  from rank $0$ abelian subgroups $A, A'$
of $K$, it remains to check that $\bA(\C)$ and $\bA'(\C)$ are $\bG(\C)$--conjugate if and only if
 $A$ and $A'$ are $K$-conjugate.
But this is obvious since  the subgroups from the list are distinct as groups.
We investigate now the normalizers and centralizers.

\begin{claim} Let $A \subset K$ be a rank zero subgroup. Then $N_{K}({A})= \bN_\bG({\bA})(\C), \enskip Z_{K}({A})= \bZ_\bG(\bA)(\C).$ 

\end{claim}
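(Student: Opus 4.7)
The plan is to reduce the claim to the conjugated version already established in Lemma~\ref{ernest}(2), by invoking the uniqueness of $K$--conjugacy classes of rank zero abelian subgroups of a given isomorphism type coming from the Borel--Friedman--Morgan list.

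First I would use Lemma~\ref{ernest}(1) to note that, since $A$ is rank zero in $K$, the constant subgroup $\bA$ of $\bG$ is anisotropic; the argument given in the proof of Lemma~\ref{ernest}(2) then shows that both $\bN_\bG(\bA)$ and $\bZ_\bG(\bA)$ are finite algebraic groups, so that $N:=\bN_\bG(\bA)(\C)$ and $Z:=\bZ_\bG(\bA)(\C)$ are finite. The inclusions $N_K(A)\subset N$ and $Z_K(A)\subset Z$ are immediate: $\bA$ is the constant subgroup with $\bA(\C)=A$, so every $g\in \bG(\C)$ normalizing (respectively centralizing) the set $A$ normalizes (respectively centralizes) the algebraic subgroup $\bA$.

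For the reverse inclusions I would invoke Lemma~\ref{ernest}(2) to produce $h\in \bG(\C)$ with $hAh^{-1}\subset K$, $N_K(hAh^{-1})=hNh^{-1}$ and $Z_K(hAh^{-1})=hZh^{-1}$. In each of the cases $\bG=\bF_4,\bbE_7,\bbE_8$, the Borel--Friedman--Morgan classification already cited in the proof of Lemma~\ref{botanical} tells us that rank zero abelian subgroups of $K$ of a given isomorphism type form a single $K$--conjugacy class, so there exists $k\in K$ with $kAk^{-1}=hAh^{-1}$. Setting $\kappa=k^{-1}h$, one has $\kappa\bA\kappa^{-1}=\bA$, so $\kappa\in N$, and conjugation by $\kappa$ fixes both $N$ and $Z$ setwise; conjugating the equalities from Lemma~\ref{ernest}(2) by $k^{-1}$ then yields $N_K(A)=\kappa N\kappa^{-1}=N$ and $Z_K(A)=\kappa Z\kappa^{-1}=Z$.

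The main obstacle in this argument is passing from the ``some conjugate of $A$ lies in $K$ with the expected normalizer/centralizer'' statement of Lemma~\ref{ernest}(2) to the same statement for $A$ itself. This is why the uniqueness of $K$--conjugacy classes of anisotropic rank zero abelian subgroups is crucial; in the general Lemma~\ref{ernest} context no such uniqueness is available, but in the present situation of Lemma~\ref{botanical} it is precisely what the Borel--Friedman--Morgan list furnishes.
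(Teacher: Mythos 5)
Your proof is correct and follows essentially the same route as the paper: both proofs invoke Lemma \ref{ernest}(2) to produce a $\bG(\C)$-conjugate $hAh^{-1}$ of $A$ inside $K$ with the desired normalizer/centralizer equalities, and both then transfer the result back to $A$ itself via the fact (from the Borel--Friedman--Morgan classification) that $A$ and $hAh^{-1}$ are already $K$-conjugate. The paper compresses the last step to a single sentence ("so the same fact holds for $g=1$"); you have usefully unpacked it, in particular observing that $\kappa = k^{-1}h$ normalizes $\bA$ and hence conjugation by $\kappa$ fixes $N$ (and its normal subgroup $Z$) setwise.
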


Indeed   Lemma \ref{ernest}.(2) shows the existence of an element $g \in \bG(\C)$
 such that  ${^gA} \subset K$ and 
$$
N_{K}({^gA})= \bN_\bG({^g\bA})(\C), \enskip Z_{K}({A})= \bZ_\bG({^g\bA})(\C).
$$
But $A$ and $^gA$ are $K$--conjugate by Borel-Friedman-Morgan's theorem, 
so the same fact holds for $g=1$.

It is then enough to know the quotient ``normalizer/centralizer" 
in the compact group case.  
For each relevant  $d$, 
we have an exact sequence of groups 
$$
1 \to  Z_K\big((\Z/d\Z)^3\big) \to  N_K\big(\Z/d\Z)^3\big) \buildrel \theta \over \to \GL_3(\Z/d\Z)
$$
and we want to determine the image of $\theta$.
Denote by  ${\cal S}_d$ the set of $K$-conjugacy classes of rank zero  triples of $K$ of order $d$.
Since such a triple generates a rank zero abelian subgroup of order $d^3$ of $K$, 
the set  ${\cal S}_d$ is covered by rank zero triples inside $(\Z/d\Z)^3$, namely
$\GL_3(\Z/d\Z)$-conjugates of the standard triple $(1,1,1)$.
 So   we have $\GL_3(\Z/d\Z) / {\rm Im}(\theta) \cong {\cal S}_d$.
 Proposition 5.1.5 of \cite{BFM} states that the
$K$-conjugacy classes of rank zero  triples of $K$ of order $d$
consists of   the classes 
$f_d(1,1,i)$ for $i=1,..,d-1$ with $i$ prime to $d$.
Hence the image of $\theta$ in $\GL_3(\Z/d\Z)$ is exactly $\SL_3(\Z/d\Z)$
as desired.
\end{proof}

Given  $f_d: (\Z/d\Z)^3 \to \bG$ as above consider the map
$$
f_{d,*} : \bigl(R_3^\times / (R_3^\times)^d\bigr)^3 \simeq
  H^1\big( R_3, (\Z/d\Z)^3\big) \to H^1( R_3, \bG).
$$
A class  $\bigl( (x), (y), (z) \bigr) \in \bigl(R_3^\times /
(R_3^\times)^d\bigr)^3$
will for convenience simply be written as  $(x,y,z)$.

\begin{corollary}

(1) The set $H^1_{loop}(R_3,\bF_4)_{an}$ consists of
the classes of  $f_{3,*}(t_1,t_2,t_3)$ and $f_{3,*}(t_1,t_2,t_3^2)$.

\smallskip

(2) The set $H^1_{loop}(R_3,\bbE_7)_{an}$ consists of
the classes of   $f_{4,*}(t_1,t_2,t_3)$, $f_{4,*}(t_1,t_2,t_3^3)$.

\smallskip

(3) The set $H^1_{loop}(R_3,\bbE_8)_{an}$ consists in
the classes of   $f_{5,*}(t_1,t_2,t_3^i)$ for $i=1,2,3,4$, 
$f_{6,*}(t_1,t_2,t_3)$ and $f_{6,*}(t_1,t_2,t_3^5)$.

\end{corollary}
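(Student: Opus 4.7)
\textit{Proof plan.} The plan is to invoke Corollary \ref{WT4}(2) to reduce the classification of anisotropic loop torsors to $\bG(k)$-conjugacy classes of anisotropic $k$-group homomorphisms ${_\infty\bmu}^3 \to \bG$, then use Lemma \ref{botanical} to identify the image of such a homomorphism, and finally parameterize the remaining freedom via the determinant map.

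Since each of $\bF_4$, $\bbE_7$, $\bbE_8$ is a split semisimple $k$-group, irreducible loop classes coincide with anisotropic ones, and Corollary \ref{WT4}(2) yields the bijection
\[
H^1_{loop}(R_3, \bG)_{an} \;\simeq\; \Hom_{k-gp}({_\infty\bmu}^3, \bG)_{an}/\bG(k),
\]
with $\bG(k)$ acting by conjugation. Factoring a homomorphism $\varphi$ through its image produces a constant anisotropic abelian subgroup $\bA := \varphi({_\infty\bmu}^3) \subseteq \bG$ generated by at most three elements. By Lemma \ref{botanical} (encoding the relevant portion of the Borel--Friedman--Morgan classification), $\bA$ is $\bG(k)$-conjugate to exactly one of the standard subgroups $(\Z/d\Z)^3$ listed there: $d = 3$ for $\bF_4$, $d = 4$ for $\bbE_7$, and $d \in \{5, 6\}$ for $\bbE_8$. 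After replacing $\varphi$ by a $\bG(k)$-conjugate we may assume $\bA$ is this fixed standard subgroup.

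Fixing the identification ${_\infty\bmu} \twoheadrightarrow \bmu_d = \Z/d\Z$ coming from our compatible choice of roots of unity, the surjective homomorphisms ${_\infty\bmu}^3 \twoheadrightarrow (\Z/d\Z)^3$ correspond bijectively to matrices $M \in \GL_3(\Z/d\Z)$. Two such $\varphi_M, \varphi_{M'}$ are $\bG(k)$-conjugate iff they differ by $\Int(g)$ for some $g \in \bN_\bG(\bA)(k)$ (as $\bA$ is abelian, any conjugator must normalize $\bA$); by Lemma \ref{botanical} the induced action of $\bN_\bG(\bA)(k)/\bZ_\bG(\bA)(k)$ on $\bA$ is exactly that of $\SL_3(\Z/d\Z)$. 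Hence the $\bG(k)$-conjugacy classes of such $\varphi$ are in bijection with $\SL_3(\Z/d\Z) \backslash \GL_3(\Z/d\Z)$, which is canonically identified with $(\Z/d\Z)^\times$ via the determinant map.

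It remains to verify that $f_{d,*}(t_1, t_2, t_3^i)$ realizes the class of determinant $i$. Under our choice of $\xi_d$, the Kummer class of $t_j \in R_3^\times/(R_3^\times)^d$ corresponds to the cocycle $\widehat{\Z}^3 \to \Z/d\Z$ sending the $j$-th generator to $1$ and the others to $0$; consequently $(t_1, t_2, t_3^i) \in H^1(R_3, (\Z/d\Z)^3)$ corresponds to the diagonal matrix $\mathrm{diag}(1, 1, i)$, of determinant $i$. Letting $i$ range over the stated systems of representatives of $(\Z/d\Z)^\times$ --- $\{1,2\}$, $\{1,3\}$, $\{1,2,3,4\}$, and $\{1,5\}$ for $d = 3, 4, 5, 6$ respectively --- yields exactly the stated lists. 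The main obstacle I foresee is in the second step: ensuring that Lemma \ref{botanical}'s list truly exhausts the anisotropic abelian subgroups reachable from ${_\infty\bmu}^3$, i.e.\ that no anisotropic abelian subgroup of rank $<3$ exists in $\bF_4$, $\bbE_7$, or $\bbE_8$. This is implicit in the Borel--Friedman--Morgan classification that underlies Lemma \ref{botanical}.
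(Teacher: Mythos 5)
Your proposal is correct and follows essentially the same route as the paper: both reduce to $\bG(k)$-conjugacy classes of anisotropic group homomorphisms ${_\infty\bmu}^3 \to \bG$ (the paper does this via Theorem \ref{k-aniso-bis}, you via Corollary \ref{WT4}(2), which packages the same underlying theorem), both feed this into Lemma \ref{botanical} to fix the image and the identification $\bN_\bG(\bA)/\bZ_\bG(\bA) \simeq \SL_3(\Z/d\Z)$, and both read off the list by enumerating the quotient $\SL_3(\Z/d\Z)\backslash\GL_3(\Z/d\Z)\simeq (\Z/d\Z)^\times$. Your closing worry about lower-rank anisotropic subgroups is likewise implicitly disposed of in the paper (e.g. $H^1(R_2,\bG)=1$ for these groups rules out rank-$\le 2$ anisotropic abelian subgroups), so there is no gap.
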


\begin{proof} We do in detail the case of $F_4,$ the other cases being similar.
The set $H^1_{loop}(R_3,\bF_4)_{an}$ is  covered
by the image of the anisotropic loop cocycles $\phi:\pi_1(R_3) \to \bF_4(\C)$.
The image  of such a $\phi$ is an anisotropic finite abelian subgroup
of $\bF_4$, so Lemma \ref{botanical}.1 allows us to assume that 
 its image is the subgroup
$(\Z/3\Z)^3$. Furthermore, we know that two such homomorphisms 
$\phi$ and $\phi'$ have the same image in $H^1_{loop}(R_3,\bF_4)_{an}$ 
if and only if there exists $g \in \bF_4(\C)$ such that $g \phi g^{-1}= \phi'$,
or equivalently if there exists $g \in \bN_{\bF_4}\big( (\Z/3\Z)^3 \big)(\C)$ such that $g \phi g^{-1}= \phi'$.
Note the importance of the  isomorphism
 $\bN_{\bF_4}( (\Z/3\Z)^3 ) / \bZ_{\bF_4}( (\Z/3\Z)^3 ) \simeq \SL_3(\Z/3\Z)$.

Rephrasing what has been said in terms of the mapping $f_{3,*}$, we see that 
 $H^1_{loop}(R_3,\bF_4)_{an}$ is the image under $f_{3,*}$
of the classes $(x,y,z)$ where $x,y,z \in R_3^\times$ 
are such that $(x,y,z)$ generates $R_3^\times/(R_3^\times)^3$;
furthermore,  two such classes 
$(x,y,z)$ and $(x',y',z')$ have the same image in $H^1_{loop}(R_3,\bF_4)_{an}$ 
if and only if there exists $\tau \in \SL_3(\Z/3\Z)$ such that
 $(x',y',z')=   \tau_*\big((x,y,z)\big)$.
We conclude  that $H^1_{loop}(R_3,\bF_4)_{an}$ consists of
the classes of  $f_{3,*}(t_1,t_2,t_3)$ and $f_{3,*}(t_1,t_2,t_3^2)$.
\end{proof}

\begin{corollary}

(1) The set $\GL_3(\Z) \backslash H^1_{loop}(R_3,\bF_4)_{an}$ consists of
the class of   $f_{3,*}(t_1,t_2,t_3)$.

\smallskip

(2) The set $\GL_3(\Z) \backslash H^1_{loop}(R_3,\bbE_7)_{an}$ consists of
the class $f_{4,*}(t_1,t_2,t_3)$.

\smallskip

(3) The set $\GL_3(\Z) \backslash H^1_{loop}(R_3,\bbE_8)_{an}$ consists of
the classes of $f_{5,*}(t_1,t_2,t_3)$ and 
$f_{6,*}(t_1,t_2,t_3)$.

\end{corollary}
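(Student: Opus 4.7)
The plan is to combine the previous Corollary's explicit classification of $H^1_{loop}(R_3, \bG)_{an}$ with the formula for the $\GL_3(\Z)$-base-change action supplied by Lemma \ref{cocycles-action}. The crucial simplification is that each representative $f_{d,*}(t_1,t_2,t_3^i)$ takes values in $\bG(\overline k)$, on which $\GL_3(\Z)$ acts trivially; the formula $(^g\eta)(\gamma)=g\cdot\eta(\gamma^g)$ thus reduces to $(^g\eta)(\gamma)=\eta(\gamma^g)$ and factors through the reduction map $\GL_3(\Z)\to\GL_3(\Z/d\Z)$ via right multiplication on $\pi_1(R_3)^{\mathrm{ab}}/d\simeq(\Z/d\Z)^3$.

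Next, I would identify $f_{d,*}(t_1,t_2,t_3^i)$ with the automorphism $\operatorname{diag}(1,1,i)\in\GL_3(\Z/d\Z)$ of $(\Z/d\Z)^3\subset\bG$. The $\SL_3(\Z/d\Z)$-conjugation from $\bN_\bG((\Z/d\Z)^3)/\bZ_\bG((\Z/d\Z)^3)$, which by Lemma \ref{botanical} is exactly $\SL_3(\Z/d\Z)$, is already built into the previous Corollary and separates classes by the determinant invariant in $\GL_3(\Z/d\Z)/\SL_3(\Z/d\Z)=(\Z/d\Z)^\times$. Adding the $\GL_3(\Z)$-action modifies this determinant by $\det(g)=\pm 1\pmod d$, and—using the surjectivity of $\SL_3(\Z)\to\SL_3(\Z/d\Z)$—the orbit set attached to each anisotropic subgroup of order $d^3$ becomes the double coset $\SL_3(\Z/d\Z)\backslash\GL_3(\Z/d\Z)/\mathrm{Im}(\GL_3(\Z)\to\GL_3(\Z/d\Z))\simeq(\Z/d\Z)^\times/\{\pm 1\}$.

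For $\bF_4$ ($d=3$), $\bbE_7$ ($d=4$), and the $f_6$ contribution to $\bbE_8$ ($d=6$) one has $(\Z/d\Z)^\times=\{\pm 1\}$, so each yields a single orbit, represented by $f_{d,*}(t_1,t_2,t_3)$. The delicate case is the $f_5$ contribution to $\bbE_8$ ($d=5$), where the naive double-coset count $(\Z/5\Z)^\times/\{\pm 1\}$ has two elements, whereas the statement asserts a single orbit. The main obstacle will be establishing this extra collapse: it cannot come from the generic base-change/normalizer mechanism and must use a specific feature of the embedding $(\Z/5\Z)^3\subset\bbE_8$ provided by Lemma \ref{botanical}—most plausibly an exceptional $\bbE_8(\C)$-conjugation identifying the two residual BFM classes $(a_1,a_2,a_3)$ and $(a_1,a_2,a_3^2)$, or equivalently an identification at the level of loop torsors coming from Theorem \ref{k-aniso} that is not detectable purely combinatorially.
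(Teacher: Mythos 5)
Your double-coset reduction is exactly the mechanism the paper relies on: Lemma \ref{cocycles-action} (with Remark \ref{GLacts}) shows that the base-change action of $\GL_3(\Z)$ on a loop cocycle with values in the constant subgroup $(\Z/d\Z)^3$ factors through the reduction $\GL_3(\Z)\to\GL_3(\Z/d\Z)$, whose image is the subgroup $\{\det=\pm1\}$; and the proof of the preceding Corollary combines Theorem \ref{k-aniso-bis} with Lemma \ref{botanical} to identify $\bG(\ol k)$-conjugacy with the $\SL_3(\Z/d\Z)$-action coming from $\bN_\bG\big((\Z/d\Z)^3\big)/\bZ_\bG\big((\Z/d\Z)^3\big)$. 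Since the two actions sit on opposite sides of $\GL_3(\Z/d\Z)$ (precomposition by the $\GL_3(\Z)$-image, postcomposition by $\SL_3(\Z/d\Z)$), the orbit set attached to each $d$ is indeed the double coset space, isomorphic to $(\Z/d\Z)^\times/\{\pm1\}$ via the determinant. For $d=3,4,6$ this is trivial, which disposes of parts (1), (2), and the $f_6$ contribution to (3) exactly as you say.

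Your worry about the $f_5$ contribution to $\bbE_8$ is justified, and in fact you can stop looking for an extra collapse: there is none available. Theorem \ref{k-aniso-bis} says precisely that two anisotropic purely geometric loop cocycles give the same class in $H^1(R_3,\bbE_8)$ if and only if they are $\bbE_8(\C)$-conjugate, and since they generate the same finite subgroup $(\Z/5\Z)^3$, any such conjugating element lies in $\bN_{\bbE_8}\big((\Z/5\Z)^3\big)$ and hence acts through $\SL_3(\F_5)$ by Lemma \ref{botanical}(3a). So the preceding Corollary already exhausts $\bbE_8(\C)$-conjugacy; the only further identifications available at the $\GL_3(\Z)$-stage come from $\GL_3(\Z)\to\GL_3(\Z/5\Z)$, whose image has determinant $\pm1$. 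Concretely, $f_{5,*}(t_1,t_2,t_3)\sim f_{5,*}(t_1,t_2,t_3^4)$ and $f_{5,*}(t_1,t_2,t_3^2)\sim f_{5,*}(t_1,t_2,t_3^3)$, but the classes $[1]$ and $[2]$ in $(\Z/5\Z)^\times/\{\pm1\}$ are distinct. Your count of two $\GL_3(\Z)$-orbits from $f_5$ is correct, and the statement of part (3) as printed is too short by one class: it should read $f_{5,*}(t_1,t_2,t_3)$, $f_{5,*}(t_1,t_2,t_3^2)$ and $f_{6,*}(t_1,t_2,t_3)$, for a total of three. So the gap you flag is not in your argument but in the assertion you were asked to prove.
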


\begin{remark} The above Corollary gives the full classification of nullity $3$
anisotropic multiloop algebras of absolute type $F_4$  or $E_8$. 
\end{remark}

\section{The case of $\PGL_d$}

\subsection{Loop Azumaya algebras}

For any base scheme $\gX$, 
the set  $H^1(\gX,\PGL_d)$ classifies the isomorphism classes of Azumaya ${\mathcal O}_{\gX}$-algebras $A$
of degree $d$, i.e. ${\mathcal O}_{\gX}$-algebras which are  locally  isomorphic for the \'etale topology
to the matrix algebra $M_d({\mathcal O}_{\gX})$ \cite{Gr2} and \cite[\S III]{K2}.

The exact sequence $1 \to {\bf G}_m \to \GL_d \buildrel p \over \lgr
\PGL_d \to 1$ induces the sequence of pointed sets
$$
\Pic(\gX) \to H^1(\gX, \GL_d) \to H^1(\gX, \PGL_d) \buildrel
\delta \over  \lgr H^2(\gX, {\bf G}_m)=\Br(\gX).
$$
We denote again by $[A] \in \Br(\gX)$ the class of $\delta([A])$ in
the cohomological Brauer group. 

By \cite[3.1]{GP3}, we have an isomorphism $\Br(R_n) \cong \Br(F_n)$.
We look now at the diagram
$$
\begin{CD}
 H^1_{loop}(R_n, \PGL_d) @>{\delta}>>  \Br(R_n)  \\
@V{\cong}VV @V{\cong}VV \\
 H^1(F_n, \PGL_d)@>{\delta}>>  \Br(F_n)
\end{CD}
$$
where the bottom map is injective \cite[\S 4.4]{GS} and the left map is bijective because of 
Theorem \ref{acyclic}. We thus have

\begin{corollary}\label{brauer} The boundary map  $H^1_{loop}(R_n, \PGL_d)\to  \Br(R_n)$ 
is injective. \qed
\end{corollary}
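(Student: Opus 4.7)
The statement is essentially set up by the diagram immediately preceding it, so the proof will be a short diagram chase. The plan is to verify that the square
\[
\begin{CD}
H^1_{loop}(R_n,\PGL_d) @>{\delta}>> \Br(R_n) \\
@VVV @VVV \\
H^1(F_n,\PGL_d) @>{\delta}>> \Br(F_n)
\end{CD}
\]
commutes (which is immediate from the functoriality of the boundary map attached to the central extension $1 \to \mathbf{G}_m \to \GL_d \to \PGL_d \to 1$ under the base change $R_n \to F_n$), and then to observe that three of the four maps are injective: the left vertical arrow is bijective by the Acyclicity Theorem \ref{acyclic} applied to $\bG = \PGL_d$; the right vertical arrow is the isomorphism $\Br(R_n) \simlgr \Br(F_n)$ of \cite[3.1]{GP3}; and the bottom horizontal arrow is injective by the standard fact that over a field the class of an Azumaya (i.e., central simple) algebra in the Brauer group determines the corresponding $\PGL_d$-torsor up to isomorphism \cite[\S 4.4]{GS}.

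Given the commutativity and these three injectivities, injectivity of the top map follows formally: if $[\bE], [\bE'] \in H^1_{loop}(R_n,\PGL_d)$ have the same image in $\Br(R_n)$, then their images in $\Br(F_n)$ coincide, hence so do their images in $H^1(F_n,\PGL_d)$, and by the Acyclicity bijection this forces $[\bE] = [\bE']$.

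There is no genuine obstacle here; all the substantive work has been done earlier. The only point to be checked carefully is that the boundary map $\delta$ really is compatible with base change from $R_n$ to $F_n$ (so that the square commutes), and that the left vertical map is indeed given by the restriction $H^1(R_n,\PGL_d) \to H^1(F_n,\PGL_d)$ under which loop classes embed — both facts are built into the statement and proof of Theorem \ref{acyclic}, so this is a formal verification. Consequently, the proof will occupy only a few lines.
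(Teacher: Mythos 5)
Your proof is correct and follows exactly the paper's own argument: the paper sets up the very same commutative square (with the left vertical arrow bijective by Theorem \ref{acyclic}, the right vertical arrow the isomorphism $\Br(R_n)\cong\Br(F_n)$ of \cite[3.1]{GP3}, and the bottom arrow injective by \cite[\S 4.4]{GS}), and the corollary is the immediate diagram chase you describe.
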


Azumaya $R_n$--algebras whose classes are in $H^1_{loop}(R_n, \PGL_d)$ are
called {\it loop Azumaya algebras.} They are isomorphic to twisted form of
$M_d$ by a loop cocycle. One can rephrase the last Corollary by saying that 
loop Azumaya algebras of degree $d$ are classified by their ``Brauer invariant".

Similarly, Wedderburn's theorem \cite[2.1]{GS} for $F_n$--central simple algebras has
its counterpart.

\begin{corollary}\label{wedderburn} Let $A$ be a loop Azumaya $R_n$--algebra of degree $d$.
Then there exists a unique positive integer $r$ dividing $d$
and a loop Azumaya $R_n$--algebra $B$ (unique up  to  $R_n$--algebras isomorphism) of degree $d/r$ such that 
$A \simeq M_r(B)$ and $B \otimes_{R_n} F_n$ is a division algebra. \qed
\end{corollary}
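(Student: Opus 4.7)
My plan is to reduce everything to the known Wedderburn theorem over the complete field $F_n$ and then transport the decomposition back to $R_n$ via acyclicity, using Corollary \ref{brauer} as a rigidity tool.

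First I would apply the classical Wedderburn theorem to the central simple $F_n$--algebra $A \otimes_{R_n} F_n$ of degree $d$. This produces a unique integer $r \mid d$ and a unique (up to $F_n$--algebra isomorphism) central division $F_n$--algebra $D$ of degree $d/r$ such that $A \otimes_{R_n} F_n \simeq M_r(D)$. In particular $[D] = [A \otimes_{R_n} F_n] \in \Br(F_n)$, and via the isomorphism $\Br(R_n) \simeq \Br(F_n)$ recalled before Corollary \ref{brauer}, this coincides with $[A] \in \Br(R_n)$.

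Next I would produce the loop Azumaya algebra $B$. The class $[D] \in H^1(F_n, \PGL_{d/r})$ is a loop class by the Acyclicity Theorem \ref{acyclic} applied to $\bG = \PGL_{d/r}$, so it lifts (uniquely) to a class in $H^1_{loop}(R_n, \PGL_{d/r})$; let $B$ be a representative loop Azumaya $R_n$--algebra of degree $d/r$. Then $B \otimes_{R_n} F_n \simeq D$, which is a division $F_n$--algebra. Now consider the $R_n$--algebra $M_r(B)$ of degree $d$. Its class in $H^1(R_n, \PGL_d)$ is the image of $[B]$ under the natural map $H^1(R_n, \PGL_{d/r}) \to H^1(R_n, \PGL_d)$ induced by the $k$--group homomorphism $\PGL_{d/r} \to \PGL_d,\, M \mapsto M \otimes I_r$. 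Since this map is defined over $k$, functoriality of the loop cocycle construction (Definition \ref{looptorsor}) shows that it carries loop classes to loop classes; hence $M_r(B)$ is a loop Azumaya $R_n$--algebra. Moreover, the boundary map $\delta$ is compatible with this change of group, so
$$
\delta([M_r(B)]) = \delta([B]) = [D] = [A] = \delta([A]) \in \Br(R_n).
$$
By Corollary \ref{brauer} the restriction of $\delta$ to $H^1_{loop}(R_n, \PGL_d)$ is injective, whence $M_r(B) \simeq A$ as $R_n$--algebras.

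Finally I would address uniqueness. The integer $r$ is forced by the classical Wedderburn decomposition of $A \otimes_{R_n} F_n$. Given $r$, any loop Azumaya $R_n$--algebra $B'$ of degree $d/r$ with $A \simeq M_r(B')$ must satisfy $\delta([B']) = \delta([A]) = \delta([B])$ in $\Br(R_n)$; applying Corollary \ref{brauer} at the degree $d/r$ we conclude $B' \simeq B$. The only point requiring care is the verification that the natural map $\PGL_{d/r} \to \PGL_d$ preserves loop classes and is compatible with $\delta$; both are immediate from functoriality and the explicit cocycle description of loop torsors, so I do not expect a genuine obstacle here.
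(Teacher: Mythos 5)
Your proof is correct and follows essentially the same route the paper has in mind: apply the classical Wedderburn theorem over $F_n$ and transport the decomposition back to $R_n$ via the Acyclicity Theorem. The only cosmetic difference is that you identify $M_r(B)$ with $A$ (and similarly settle uniqueness of $B$) by invoking Corollary \ref{brauer} and the injectivity of $\delta$ on loop classes, whereas one can conclude just as quickly by applying the Acyclicity bijection $H^1_{loop}(R_n,\PGL_m)\simeq H^1(F_n,\PGL_m)$ directly at degrees $m=d$ and $m=d/r$; since Corollary \ref{brauer} is itself an immediate consequence of that bijection plus the injectivity of $\delta$ over the field $F_n$, the two arguments are logically equivalent. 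Your verification that the push-forward along the $k$--group homomorphism $\PGL_{d/r}\to\PGL_d$, $M\mapsto M\otimes I_r$, sends loop cocycles to loop cocycles is exactly the right point to flag, and it is indeed immediate from the definition of loop torsors.
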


This reduces the classification  of loop Azumaya $R_n$-algebras to the
``anisotropic''  case, namely to the case of  loop Azumaya $R_n$--algebras $A$ 
such  that $A \otimes_{R_n} F_n$ is a division algebra.

In the same spirit, the Brauer decomposition \cite[4.5.16]{GS} for central $F_n$--division algebras
yields the following.

\begin{corollary}\label{brauer2} 
Write $d= p_1^{m_1} \cdots p_l^{m_l}$.
Let $A$ be an anisotropic loop Azumaya $R_n$-algebra of degree $d$.
Then there exists a unique decomposition 
$$ 
A \simeq A_1 \otimes_{R_n}   \cdots  \otimes_{R_n} A_l
$$
where $A_i$ is an anisotropic $k$--loop Azumaya $R_n$-algebra of 
degree $p_i^{m_i}$ for $i=1,..,l$. \qed
\end{corollary}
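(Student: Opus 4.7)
The plan is to transfer the classical Brauer primary decomposition over $F_n$ back to $R_n$ using the Acyclicity Theorem \ref{acyclic} together with the injectivity of the Brauer invariant on loop Azumaya algebras (Corollary \ref{brauer}).

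First I would pass to $F_n$. Since $A$ is anisotropic, $A \otimes_{R_n} F_n$ is a central division algebra over $F_n$ of degree $d$, and the classical Brauer primary decomposition \cite[4.5.16]{GS} yields an isomorphism
$$ A \otimes_{R_n} F_n \; \simeq \; D_1 \otimes_{F_n} \cdots \otimes_{F_n} D_l $$
with $D_i$ a central division $F_n$--algebra of degree $p_i^{m_i}$, unique up to isomorphism.

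Next I would lift each $D_i$ back to $R_n$. The class $[D_i] \in H^1(F_n, \PGL_{p_i^{m_i}})$ lifts, by Theorem \ref{acyclic} applied to $\bG = \PGL_{p_i^{m_i}}$, to a unique class in $H^1_{loop}(R_n, \PGL_{p_i^{m_i}})$; let $A_i$ be a corresponding loop Azumaya $R_n$--algebra of degree $p_i^{m_i}$. Since $A_i \otimes_{R_n} F_n \simeq D_i$ is a division algebra, $A_i$ is anisotropic.

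It then remains to identify $A$ with $B := A_1 \otimes_{R_n} \cdots \otimes_{R_n} A_l$. The Kronecker embedding $\prod_i \PGL_{p_i^{m_i}} \to \PGL_d$ respects loop classes (it is defined over $\Z$ and thus sends cocycles valued in the $\overline k$--points to cocycles valued in the $\overline k$--points), so $[B]$ lies in $H^1_{loop}(R_n, \PGL_d)$. Both $[A]$ and $[B]$ are then loop classes of the same degree $d$ whose Brauer invariants in $\Br(R_n) \simeq \Br(F_n)$ both equal $\sum_i [D_i] = [A \otimes_{R_n} F_n]$, so Corollary \ref{brauer} forces $A \simeq B$ as $R_n$--algebras. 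Uniqueness will follow by running the argument in reverse: any competing decomposition $A \simeq A'_1 \otimes \cdots \otimes A'_l$ yields, after base change to $F_n$, a Brauer primary decomposition of $A \otimes_{R_n} F_n$, which must coincide with the classical one, so $A'_i \otimes F_n \simeq D_i$; the bijection supplied by the acyclicity theorem then forces $A'_i \simeq A_i$.

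The main obstacle I expect is the verification that the Kronecker product of loop Azumaya algebras is itself loop, i.e.\ that $\prod_i H^1_{loop}(R_n, \PGL_{p_i^{m_i}}) \to H^1_{loop}(R_n, \PGL_d)$ is well defined. This is morally obvious at the level of cocycles with values in $\PGL_{p_i^{m_i}}(\overline k)$, but it is the point at which the loop hypothesis is actually being propagated through the tensor construction, so it should be recorded explicitly to keep the Brauer-invariant comparison airtight.
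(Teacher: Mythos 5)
Your proof is correct and follows the same route the paper implicitly takes: the paper states the corollary with no proof at all (just a \qed), expecting the reader to combine the primary decomposition over $F_n$, the bijection from the Acyclicity Theorem, and the injectivity of the Brauer invariant (Corollary~\ref{brauer}) exactly as you have done. The one point you flag — that the Kronecker product map $\prod_i H^1_{loop}(R_n,\PGL_{p_i^{m_i}}) \to H^1_{loop}(R_n,\PGL_d)$ preserves loop classes — is indeed the detail worth making explicit, and your resolution (the embedding is defined over $k$, hence pushes $\bG(\overline k)$-valued cocycles to $\bG(\overline k)$-valued cocycles) is the correct one and matches how the paper handles pushforwards of loop cocycles elsewhere.
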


The two previous Corollaries show that the classification of  loop Azumaya $R_n$--algebra reduces the classification of anisotropic loop Azumaya $R_n$-algebras of degree $p^m.$ Though the  Brauer group of $R_n$ and $F_n$ are well understood,
the understanding of  $H^1_{loop}(R_n, \PGL_d)_{an}$ is much more delicate.

\medskip

We are given a loop cocycle $\phi=(\phi^{geo}, z)$ with values
in $\PGL_d(\ol k)$. Set $A= {_z(M_d)}.$  This is  a central  simple $k$--algebra
such that ${_z\PGL}_d = \PGL_1(A)$. Recall that  $\phi^{geo}$ is given by a 
$k$--group homomorphism $\phi^{geo}: \bmu_m^n \to \PGL_1(A)$.
To say that $\phi$ is anisotropic is 
to say that  $\phi^{geo}: \bmu_m^n \to \PGL_1(A)$ is anisotropic.

\medskip

We discuss in detail the following two special cases : the  one-dimensional case, and 
the geometric case  (i.e. $k$ is algebraically closed).

\subsection{The one-dimensional case}

If  $k$ is algebraically closed $H^1(R_1, \PGL_d)$  is trivial.
The interesting new case is when $k$ is not algebraically closed, e.g. the case of real numbers. Since the map $H^1(F_1, \PGL_d) \to  \Br(F_1)$ is injective,
as a consequence of Corollary  \ref{dim-one}, we have 
$H^1(R_1, \PGL_d) \simeq H^1(F_1, \PGL_d)$ 
and
the map 
\begin{equation}\label{PGLmap}
H^1(R_1, \PGL_d) \to  \Br(R_1)= \Br(k) \oplus H^1(k, \Q/\Z)
\end{equation}
is injective.

\begin{theorem}\label{local} The image of the map \ref{PGLmap} consists  of  all pairs $[A_0] \oplus \chi$ where
$A_0$ is a central simple algebra of degree $d$ and 
$\chi: \Gal(k_s/k) \to \Q/\Z$ a character for which  that there exists an \'etale 
algebra   $K/k$  of degree $d$ inside $A_0$ such that $\chi_K=0$.
\end{theorem}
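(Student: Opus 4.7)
Since the boundary map $\delta : H^1(R_1,\PGL_d) \to \Br(R_1)$ is injective by Corollary \ref{brauer}, it suffices to identify its image inside $\Br(R_1)=\Br(k)\oplus H^1(k,\Q/\Z)$. The proof splits into necessity and sufficiency.

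For necessity, I would start with an Azumaya $R_1$-algebra $A$ of degree $d$ and write $\delta([A])=[A_0]+\chi$, where $A_0$ is the specialization at $t=1$. By Corollary \ref{dim-one} the class $[A]$ is a loop class, hence $A={_\phi}(M_d\otimes R_1)$ for a loop cocycle $\phi=(z,\phi^{geo})$ with $z\in Z^1(k,\PGL_d)$ giving $A_0$ and $\phi^{geo}:\widehat\Z\to \PGL_1(A_0)(\overline k)$. The image of $\phi^{geo}$ is a finite cyclic subgroup of $\PGL_1(A_0)(\overline k)$ generated by a semisimple element, and by the semilinear Borel--Mostow argument of Proposition \ref{existenceoftori} we may assume $\phi^{geo}$ factors through a maximal $k$-torus $\bar T\subset \PGL_1(A_0)$. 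Since $\bar T$ is abelian, the twist ${_\phi}\PGL_d$ contains $\bar T\times_k R_1$ as a maximal $R_1$-torus. Now any maximal $k$-torus of $\PGL_1(A_0)$ is of the form $\bar T=\Pi_{K/k}(\GG_m)/\GG_m$ for a unique maximal étale $k$-subalgebra $K\subset A_0$ of dimension $d$, and $S:=K\otimes_k R_1$ is the corresponding maximal étale commutative $R_1$-subalgebra of $A$. The standard splitting property of Azumaya algebras gives $A\otimes_{R_1}S\simeq M_d(S)$; reading this in $\Br(S)=\Br(K)\oplus H^1(K,\Q/\Z)$ yields $\chi_K=0$.

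For sufficiency, given $(A_0,K,\chi)$ with $K\subset A_0$ étale of degree $d$ and $\chi_K=0$, I would write $A_0={_z}M_d$ for $z\in Z^1(k,\PGL_d)$; the inclusion $K\hookrightarrow A_0$ then supplies a maximal $k$-torus $\bar T:=\Pi_{K/k}(\GG_m)/\GG_m\subset {_z}\PGL_d$. From the short exact sequence $1\to\GG_m\to\Pi_{K/k}(\GG_m)\to\bar T\to 1$, base-changed to $R_1$, the long cohomology sequence produces a boundary $\partial:H^1(R_1,\bar T)\to\Br(R_1)$, and Shapiro's lemma identifies $H^2(R_1,\Pi_{K/k}(\GG_m)_{R_1})\simeq \Br(K\otimes_k R_1)$, so $\Im(\partial)=\ker\bigl(\Br(R_1)\to \Br(K\otimes_k R_1)\bigr)$. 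Since $\chi\in H^1(k,\Q/\Z)\subset \Br(R_1)$ has trivial image in $\Br(K)\oplus H^1(K,\Q/\Z)$ (the first coordinate is $0$ because $\chi$ has no $\Br(k)$-part, the second is $\chi_K=0$ by hypothesis), we get $\xi\in H^1(R_1,\bar T)$ with $\partial(\xi)=\chi$. Pushing $\xi$ forward along $\bar T\hookrightarrow {_z}\PGL_d$ gives $\eta\in H^1(R_1,{_z}\PGL_d)$; applying the torsion bijection $\tau_z:H^1(R_1,{_z}\PGL_d)\simlgr H^1(R_1,\PGL_d)$ produces $[A]\in H^1(R_1,\PGL_d)$. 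The additivity formula $\delta(\tau_z\eta)=\delta(z)_{R_1}+\delta_z(\eta)$ for the Brauer boundary then gives $\delta([A])=[A_0]+\chi$, as required.

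The main obstacle lies in the sufficiency direction: one must carefully verify both that Shapiro's lemma identifies the lifting obstruction precisely in terms of $\chi_K$, and that the non-abelian torsion bijection $\tau_z$ is compatible with the Brauer class map in the additive way claimed. Necessity, by contrast, is comparatively routine once the loop-reductive theory of Corollary \ref{corjo} is in hand, with the key input being that in nullity one the maximal torus produced by Proposition \ref{existenceoftori} can be taken of the constant form $\bar T\times_k R_1$.
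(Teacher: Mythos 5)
Your sufficiency argument is correct and takes a genuinely different route from the paper's: instead of using Cartier duality (Lemma \ref{tate}) to build an explicit $k$-homomorphism $\bmu_m\to\PGL_1(A_0)$ realizing the character, you lift $\chi$ directly to $H^1(R_1,\bar T)$ by Shapiro's lemma and the long exact sequence for $1\to\GG_m\to R_{K/k}(\GG_m)\to\bar T\to 1$ over $R_1$, then push forward and untwist, closing with additivity of the Brauer boundary under the torsion bijection. That is a clean alternative; the paper's route via Lemma \ref{tate} has the small advantage of producing the class as an explicit loop cocycle $(\psi^{geo},z)$, whereas you produce a class in $H^1(R_1,\PGL_d)$ and must invoke Corollary \ref{dim-one} to know it is a loop class (which of course it is).

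Your necessity argument, however, has a gap at its crucial step. You claim that ``by the semilinear Borel--Mostow argument of Proposition \ref{existenceoftori} we may assume $\phi^{geo}$ factors through a maximal $k$-torus $\bar T\subset\PGL_1(A_0)$.'' Proposition \ref{existenceoftori}, and the semilinear Borel--Mostow theorem behind it, produce a maximal torus that is \emph{normalized} by the relevant action, and hence a maximal $R_1$-torus of the twisted group obtained by descent; they do \emph{not} produce a maximal $k$-torus of $\PGL_1(A_0)$ that \emph{contains} $\phi^{geo}(\bmu_m)$, and the resulting $R_1$-torus need not be of the constant form $\bar T\times_k R_1$ that your splitting argument requires. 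The containment is a strictly stronger statement (a finite cyclic group normalizing a torus need not centralize it), and getting it is exactly where the paper uses the structure of $\GL_1(A_0)\to\PGL_1(A_0)$: pulling back this central extension along $\phi^{geo}$ yields a central extension $1\to\GG_m\to\bbE\to\bmu_m\to 1$ inside $\GL_1(A_0)$, which is automatically abelian, hence of multiplicative type, hence contained in a maximal torus $R_{K/k}(\GG_m)$ of $\GL_1(A_0)$; its image in $\PGL_1(A_0)$ is the sought $\bar T\supset\phi^{geo}(\bmu_m)$. Once this step is filled in, the rest of your necessity argument --- that the maximal \'etale commutative $R_1$-subalgebra $K\otimes_k R_1$ splits $A$, so $[A]$ dies in $\Br(K\otimes_k R_1)=\Br(K)\oplus H^1(K,\Q/\Z)$ and in particular $\chi_K=0$ --- is a valid and rather pleasant alternative to the paper's direct cup-product computation via Lemma \ref{tate}.1.
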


\begin{remark}\label{JP}
The indices of such algebras over $F_1$ are known (\cite[prop. 2.4]{Ti} in the 
prime exponent case, and \cite[Lemma 4.6]{FSS} in the general case).
The index of a $F_1$--algebra  of invariant $[A_0] \oplus \chi$
is $\deg(\chi) \times {\rm ind}_{k_\chi}(A \otimes_k k_\chi)$
where  $k_\chi/k$ stands for the cyclic extension associated to $\chi$.
\end{remark}

The proof needs  some  preparatory material from homological algebra based
on Cartier duality for groups of multiplicative type.
More precisely, the dual of an extension of $k$--groups of 
multiplicative type
$$
1 \to {\bf G}_m \to \bbE \to \bmu_m \to 1 
$$ 
is the exact sequence
$$
0 \to \Z/m\Z \to \widehat E \to \Z \to 1.
$$
 We have then an isomorphism
$$
 Ext^1_{k-gr}(\bmu_m, {\bf G}_m ) \simeq
 Ext^1_{\Gal(k)}(\Z,\Z/m\Z)=H^1(k,\Z/m\Z)
$$
which permits to attach to the first extension a character.
Up to isomorphism, there exists a unique extension 
$\bbE_\chi$ of $\bmu_m$ by ${\bf G}_m$ of class $[\chi]$.

\begin{lemma}\label{tate} Let 
$\chi: \Gal(k) \to \Z/m\Z$ be a character for some $m \geq 1$.

\begin{enumerate}
 \item The boundary map 
$$
 k^\times/ (k^\times)^m \simlgr  H^1(k,\bmu_m) \to H^2(k, {\bf G}_m)=\Br(k)
$$
is given by $(x) \mapsto  \chi \cup (x)$.

 \item  Let $K/k$ be an \'etale algebra. 
The following are equivalent:
\begin{enumerate}
\item There exists a   morphism of extensions  $\bbE_\chi \to 
 R_{K/k}( {\bf G}_m)$ rendering the diagram
$$
\begin{CD}
1 @>>> {\bf G}_m @>>> \bbE_\chi @>>> \bmu_m @>>> 1 \\ 
&& @VV{\cong}V @VVV @VVV \\
1 @>>> {\bf G}_m @>>> R_{K/k}( {\bf G}_m) @>>> R_{K/k}( {\bf G}_m)/ {\bf G}_m  @>>> 1 ;\\ 
\end{CD}
$$
commutative.

\item $\chi_K=0$.
\end{enumerate}

\end{enumerate}

\end{lemma}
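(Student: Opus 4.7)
For part (1), I would use the standard fact that a central extension of group schemes
\[
1 \to \G_m \to \bbE_\chi \to \bmu_m \to 1
\]
has a class in $\Ext^1_{k\text{-gr}}(\bmu_m, \G_m)$ which, under Cartier duality, corresponds to $\chi \in \Ext^1_{\Gal(k)}(\Z, \Z/m\Z) = H^1(k,\Z/m\Z)$. The connecting map
$\delta : H^1(k,\bmu_m) \to H^2(k,\G_m)$
arising from this short exact sequence then coincides, up to sign conventions, with cup product by $\chi$ with respect to the natural $\Gal(k)$-equivariant pairing $\Z/m\Z \otimes \bmu_m \to \G_m$ given by $(n,\zeta)\mapsto \zeta^n$. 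I would cite this identification (see e.g.\ \cite{GMS} or \cite{GS}) and note that under the Kummer isomorphism $k^\times/(k^\times)^m \simeq H^1(k,\bmu_m)$, a class $(x)$ cups with $\chi$ to give $\chi\cup(x)\in\Br(k)$ as asserted.

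For part (2), the key tool is again Cartier duality. I would reduce to the case $K$ is a field (the general \'etale case splits as a product over the factors), and write $H=\Gal(k_s/K)\subset G=\Gal(k_s/k)$. The character groups of the sequence
\[
1 \to \G_m \to R_{K/k}(\G_m) \to R_{K/k}(\G_m)/\G_m \to 1
\]
fit into the $G$-module sequence
\[
0 \to I \to \Z[G/H] \buildrel \mathrm{aug}\over\to \Z \to 0,
\]
so a morphism of extensions as in (a) is equivalent, by Cartier duality, to a morphism of $G$-modules $\Z[G/H] \to \widehat{\bbE}_\chi$ extending the identity on $\Z$, which amounts to the class $[\bbE_\chi]\in\Ext^1_G(\Z,\Z/m\Z)=H^1(G,\Z/m\Z)$ lying in the image of the connecting homomorphism
\[
\partial :  \Hom_G(I,\Z/m\Z) \lra \Ext^1_G(\Z,\Z/m\Z)
\]
associated with the above module sequence.

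The final step is to identify that image. The long exact $\Ext$-sequence reads
\[
\Hom_G(\Z[G/H],\Z/m\Z)  \to  \Hom_G(I,\Z/m\Z) \buildrel \partial \over \to H^1(G,\Z/m\Z) \to \Ext^1_G(\Z[G/H],\Z/m\Z),
\]
and by Shapiro's lemma the last term is $H^1(H,\Z/m\Z)$ with the map being the restriction $\chi\mapsto \chi|_H$. Hence $\mathrm{Im}(\partial) = \ker\bigl(H^1(G,\Z/m\Z)\to H^1(H,\Z/m\Z)\bigr)$, which is exactly the set of characters $\chi$ satisfying $\chi_K=0$. This establishes (a)$\Leftrightarrow$(b). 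The main subtlety is keeping the direction of Cartier duality straight (pullback of extensions on the group side corresponds to pushforward on the character side), but no real obstacle arises beyond bookkeeping.
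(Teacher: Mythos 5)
Your argument is correct, and for part (2) it is essentially the paper's own proof: dualize to the extension $0 \to I \to M \to \Z \to 0$ of $\Gal(k)$-modules, observe that a morphism of extensions is equivalent to the class of $\widehat{E}_\chi$ dying in $\Ext^1_G(M,\Z/m\Z)$, and then use Shapiro's lemma to identify this with the vanishing of $\chi_K$. Your reduction to $K$ a field and the paper's direct decomposition $M = \oplus M_j$ are the same bookkeeping.

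For part (1) you take a shortcut the paper declines. The paper computes the $2$-cocycle explicitly: it writes down the twisted Galois action on $\bbE_\chi(\ol k) \simeq \ol k^\times \times \bmu_m(\ol k)$, lifts the Kummer cocycle $c_\gamma = \gamma(\sqrt[m]{x})/\sqrt[m]{x}$ to $b_\gamma = (1, c_\gamma)$, and evaluates $b_\gamma\,\gamma(b_\tau)\,b_{\gamma\tau}^{-1}$ directly to see the cup product appear. You instead invoke the general compatibility between connecting homomorphisms for central extensions and cup products along the pairing $\Z/m\Z \otimes \bmu_m \to \G_m$. Both are valid; the paper's hands-on computation has the virtue of being self-contained and of fixing the sign/convention unambiguously (which matters precisely because you flag "up to sign conventions" — the explicit cocycle calculation is how that ambiguity gets resolved), while your citation is shorter and makes the structural reason visible.
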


\begin{proof}
(1)  The cocharacter group $\widehat E_\chi$ is 
$\Z/m\Z \oplus \Z$ together with the Galois action
$\gamma( \alpha, \beta)= (  \alpha + \chi(\gamma) , \beta )$.  
The Galois action on  $\bbE_\chi(\ol k) \simeq {\ol k}^\times \times \bmu_m(\ol k)$
is then  given by $$
\gamma( y, \zeta)= \Bigl( \gamma(y) \,  \zeta^{\chi(\gamma)}, 
\gamma(\zeta) \Bigr)
$$  
for every $\gamma \in \Gal(k)$. 
The class $(x) \in H^1(k,\bmu_m)$ is represented by the cocycle
$c_\gamma= \gamma( \sqrt[m]{x})/ \sqrt[m]{x}$.
The element  $b_\gamma= (1, c_\gamma) \in \bbE_\chi(\ol k)$ lifts
$c_\gamma$. The boundary $\partial\bigl( (x) \bigr) \in  H^2(k, {\ol k}^\times)$
is then represented by the $2$--cocycle 
$$
a_{\gamma, \tau}= b_\gamma \times  \gamma(b_\tau)  \, b_{\gamma \tau}^{-1} 
=c_\tau^{\chi(\gamma)} \chi(\gamma) \, . \,  c_\tau \, \, .
$$

\smallskip

\noindent (2) 
We decompose $K=k_1 \times \cdots \times k_l$ as a product of field extensions 
and denote by $M_j$ the cocharacter module  of $R_{k_j/k}( {\bf G}_m)$.
Then the character module of $R_{K/k}({\bf G}_m)$
is ${M= \oplus M_j}$. By dualizing we are interested in  morphism of extensions  
$$
\begin{CD}
 0 @>>> I @>>> M @>>> \Z  @>>> 0 \\
&&@VVV @VVV @VV{\simeq}V \\
 0 @>>> \Z/m\Z @>>> \hat E_\chi @>>> \Z @>>> 0. \\
\end{CD}
$$
 By Shapiro's lemma $Ext^1( M_j, \Z/m\Z)= H^1(k_j, \Z/m\Z)$ 
and the map $Ext^1( \Z,  \Z/m\Z) \to Ext^1( M_j, \Z/m\Z)$ yields
 the restriction map $H^1(k, \Z/m\Z) \to  H^1(k_j, \Z/m\Z)$.
It follows that the bottom extension above is killed by the pull-back $M_j \to \Z$, and therefore that
$\chi_{k_j}=0$ for $j=1,..,l$. This shows that $(a) \Longrightarrow (b).$

\noindent  $(b) \Longrightarrow (a)$: We assume that $\chi_K=0$, namely $\chi_{k_j}=0$ for $j=1,..,l$.
Hence $\widehat E_\chi$ belongs  to the kernel of
  $Ext^1( \Z,  \Z/m\Z) \to Ext^1( M_j, \Z/m\Z)$  for for $j=1,..,l$
so $\widehat E_\chi$  belongs to the kernel of  $Ext^1( \Z,  \Z/m\Z) \to Ext^1( M, \Z/m\Z)$.
This means that the map $M \to \Z$ of Galois modules lifts  to   $\widehat E_\chi \to \Z$ as desired.
\end{proof}

We can proceed now with the proof of Theorem \ref{local}.

\begin{proof} 
We show first that the image of $\partial$ consists of
pairs with the desired properties.
Again by Corollary \ref{dim-one},
we have $H^1_{loop}(R_1, \PGL_d) = H^1(R_1, \PGL_d)$ and 
 we are reduced to twisted algebras given by
loop cocycles $\phi=(\phi^{geo}, z)$ with value in $\PGL_d(\ol k)$. 
Recall that $A_0= {_z(M_d)}$ and that we have then a  $k$--group homomorphism 
$\phi^{geo}: \bmu_m \to \PGL_1(A_0)$.  We may assume that
$\phi^{geo}$ is injective.
We pull back the central extension  
${1 \to {\bf G}_m \to \GL_1(A_0) \buildrel p \over \lgr
\PGL(A_0) \to 1}$ by $\phi^{geo}$ and get a central extension 
of algebraic $k$--groups
$$
1 \to {\bf G}_m \to \bbE \buildrel p' \over \lgr \bmu_m \to 1 
$$
such that $\bbE$ is a $k$--subgroup of $\GL_1(A_0)$.
By extending the scalars to $\ol k$, we see that $\bbE$ is commutative, 
hence is a $k$--group of multiplicative type.
Then $\bbE$  is contained in a maximal torus of the $k$--group 
$\GL_1(A_0)$   and is of the form $R_{K/k}( {\bf G}_m)$ where
$K \subset A_0$ is an \'etale algebra of degree $d$.
We have then the commutative diagram 
$$
\begin{CD}
 1 @>>> {\bf G}_m @>>> \bbE  @>>>  \bmu_m @>>> 1 \\
&& @VV{\simeq}V @VVV @VVV \\
1 @>>> {\bf G}_m @>>> R_{K/k}({\bf G}_m) @>>> R_{K/k}({\bf G}_m)/ {\bf G}_m  @>>> 1.
\end{CD}
$$
Lemma \ref{tate}.2 tells us that  $\chi_K=0$.
We compute the Brauer class of this loop algebra
by  taking into account the commutative diagram
$$
\begin{CD}
H^1(F_1, \PGL_d) @>>> \Br(F_1) \\
@A{\tau_z}A{\wr}A @A{\wr}A{? + [A_0]}A \\
H^1\big(F_1, \PGL(A_0)\big) @>{\partial}>> \Br(F_1) \\
@AAA @AAA \\
H^1(F_1, \bmu_m) @>{\partial}>> \Br(F_1) .\\
\end{CD} 
$$
The commutativity of the upper square is that of  the torsion map \cite[\S I.5.7]{Se}, while that of the bottom square is trivial.
The image of $(t_1) \in F_1^\times/ (F_1^\times)^\times$ is 
$\chi \cup (t_1)$ by Lemma \ref{tate}.1. The diagram yields the formula
$\partial( [\phi])= [A_0] \oplus \chi$ which has the required  properties.

Conversely, let $K/k$ be an  \'etale 
algebra    of degree $d$ inside $A_0$ and let $\chi$ be a character
 such that $\chi_K=0$. Let $m$ be the order of $\chi$; by restriction-corestriction considerations
$m$ divides $d$. Lemma \ref{tate}.2 shows that there exists a morphism
of  extensions $E_\chi \to R_{K/k}( {\bf G}_m).$ This yields a morphism
$\psi^{geo}: \bmu_m \to  R_{K/k}({ \bf G}_m)/ {\bf G}_m \to \PGL_1(A_0)$.
The previous computation shows that the loop torsor $(\psi^{geo}, z )$
has Brauer invariant $[A_0] \oplus \chi$.

\end{proof}

As an example, we consider the real case.

\begin{corollary} Assume that   $k= \R$. Then the image of the injective map
$$
H^1(R_1, \PGL_d) \to  \Br(R_1)= \Br(\R) \oplus H^1(\R, \Q/\Z)
$$
is as follows:

\begin{enumerate}
 \item $0 \oplus 0$    if  $d$ is odd;

\item $0 \oplus 0,$ $0 \oplus \chi_{\C/\R}$, $[(-1,-1)] \oplus 0$ and
$[(-1,-1)] \oplus \chi_{\C/\R}$
if $d$ is even. \qed

\end{enumerate}
 
\end{corollary}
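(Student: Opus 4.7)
The strategy is to apply Theorem \ref{local} directly and then enumerate the finitely many possibilities over $k=\R$. Recall that $\Br(\R) = \Z/2\Z$, generated by the class of the Hamilton quaternions $(-1,-1)$, and $H^1(\R,\Q/\Z) = \Z/2\Z$, generated by the quadratic character $\chi_{\C/\R}$ (any character takes values in the $2$-torsion $\tfrac{1}{2}\Z/\Z$). The image of $\partial$ must therefore lie in a set of cardinality at most four, and Theorem \ref{local} identifies it with the set of pairs $[A_0]\oplus\chi$ such that $A_0$ is a central simple $\R$-algebra of degree $d$ containing an étale $\R$-subalgebra $K$ of degree $d$ with $\chi_K=0$.

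The first step is to list the étale $\R$-algebras: every such $K$ of degree $d$ is isomorphic to $\R^a\times\C^b$ with $a+2b=d$. For the nontrivial character $\chi_{\C/\R}$ the condition $\chi_K=0$ forces the restriction to each $\R$-factor to vanish, which is impossible; so we need $a=0$, i.e.\ $K\simeq\C^{d/2}$, which exists only when $d$ is even. For the trivial character, any $K$ works.

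The second step is to list the admissible $A_0$. Over $\R$, a central simple algebra of degree $d$ is either $M_d(\R)$ or (when $d$ is even) $M_{d/2}(\HH)$, with Brauer classes $0$ and $[(-1,-1)]$ respectively; in particular if $d$ is odd only $A_0=M_d(\R)$ occurs. When $d$ is even both algebras contain $\C^{d/2}$ as an étale subalgebra of degree $d$: inside $M_d(\R)$ via the standard embedding $\C\hookrightarrow M_2(\R)$ applied $d/2$ times, and inside $M_{d/2}(\HH)$ via the inclusion $\C\subset\HH$ applied componentwise along the diagonal.

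Putting these ingredients together, for $d$ odd the only admissible pair is $(0,0)$, giving part~(1). For $d$ even, the pair $(0,0)$ is realized by $A_0=M_d(\R)$ with trivial $\chi$; the pair $[(-1,-1)]\oplus 0$ by $A_0=M_{d/2}(\HH)$ with trivial $\chi$; and by the previous paragraph both $M_d(\R)$ and $M_{d/2}(\HH)$ contain an étale subalgebra isomorphic to $\C^{d/2}$, on which $\chi_{\C/\R}$ restricts to zero, so the pairs $0\oplus\chi_{\C/\R}$ and $[(-1,-1)]\oplus\chi_{\C/\R}$ are also realized. This exhausts the four possibilities and yields part~(2). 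No step is truly difficult; the only thing to be careful about is the embedding $\C^{d/2}\hookrightarrow M_{d/2}(\HH)$, which guarantees that the nonsplit real form also contributes the pair with nontrivial character.
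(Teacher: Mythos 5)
Your proposal is correct and follows exactly the route the paper intends: the corollary is stated with no proof beyond the \qed because it is a direct enumeration from Theorem \ref{local}, which is precisely what you carry out, correctly identifying the possible étale subalgebras $\R^a\times\C^b$, observing that a nontrivial character forces $a=0$ (hence $d$ even), and exhibiting the embeddings $\C^{d/2}\hookrightarrow M_d(\R)$ and $\C^{d/2}\hookrightarrow M_{d/2}(\HH)$ to realize all four pairs when $d$ is even.
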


\begin{remark} In the case $d=2$, the four classes under consideration corresponds to the quaternion algebras $(1,1)$,
$(1,t)$,  $(-1,-1)$, $(-1,t)$.
\end{remark}

\subsection{The geometric case}
We assume that $k$ is algebraically closed. According to Corollary \ref{WT4}.2,
our goal is to extract information from the bijections
$$
\Hom_{gp}\big({\widehat \Z}^n, \PGL_d(k)\big)_{irr} / \PGL_d(k) \simlgr
H^1_{loop}(R_n, \PGL_d)_{irr} \simlgr  H^1(F_n, \PGL_d)_{irr}. 
$$
The right hand set is known from the work of Amitsur \cite{Am}, 
Tignol-Wadsworth \cite{TiW}  and Brussel \cite{Br},\footnote{See also \cite{L}, \cite{Ne}, \cite[\S 8]{RY1} and  \cite{RY2}. These last two references relate to 
finite abelian constant subgroups of $\PGL_d$ which  have been investigated
by Reichstein-Youssin in their construction of 
division algebras with large essential dimension. \cite{Ne} is more interested in the ``quantum tori" point of view and its relation to EALAs of absolute type $A.$}
the left hand-side is known by a classification due to Mumford \cite[Prop. 3]{Mu}

As a byproduct  of our main result, we can provide then a proof of Mumford's classification 
of irreducible finite abelian constant subgroups of $\PGL_d$ from the knowledge of the Brauer group of the field $F_n$. 
 Let us state first Mumford's classification.
If $d= s_1....s_l \hat s$  with  $s_1 \mid s_2 \cdots  \mid s_l$ and $s_1 \geq 2$,
  we consider the embedding
$$
\PGL_{s_1} \times \cdots \times \PGL_{s_l}  \to \PGL_d
$$
and  define the subgroup $\bH(s_1,...,s_l)$ to be the image 
of the product of the standard anisotropic  
subgroups $\bH(s_j)=(\Z/s_j\Z)^2$ of $\PGL_{s_j}$ for $j=1,..,l$ defined by the generators

\begin{eqnarray} 
a_j=\left(\begin{matrix} 0 & 0 & \cdots&0&1 \\ 
1 &0 & 0 & \cdots & 0  \\
0 &1& \cdots  &&0  \\
& \cdots& &1&0 \end{matrix}\right) ,\quad &
b_j=\left(\begin{matrix} 1 & 0 & \cdots \\
0 &\zeta_{s_j} & 0 & \cdots & 0  \\
0 & \cdots & &&0  \\
0& \cdots& &0&\zeta_{s_j}^{{s_j-1}} \end{matrix}\right).
\end{eqnarray}

\begin{remark} The way of expressing the group  $\bH(s_1,s_2)$ in the form $\bH(s_1) \times \bH(s_2)$ is not unique when 
 $s_1$ and $s_2$ are coprime. There is then a unique way to  arrange 
such a group $\bH$ as $\bH(s'_1,...,s'_{l'})$ with 
$s'_1 \mid s'_2 \cdots  \mid s'_l$ and $s'_1 \geq 2$. Note that  $\rank\big(\bH(s'_1,...,s'_l)(k)\big)= 2l'$.
\end{remark}

We can now state and establish the  classification of 
irreducible finite abelian groups of the projective linear
group.

\begin{theorem}  \label{plinear} {\rm \cite[Prop. 3]{Mu} 
(see also\  \cite[\S 6]{BL}, \cite[Th. 8.28]{GM}).}
\begin{enumerate}
\item $d=s_1 \times ...\times s_l$ if and only  $\bH(s_1,...,s_l)$  is irreducible in $\PGL_d$.

\item If $\bH$ is an  irreducible finite abelian constant subgroup of $\PGL_d$, then 
$\bH$ is $\PGL_d(k)$--conjugate  to a unique  
 $\bH(s_1,...,s_l)$ 
with $d=s_1...s_l$,   $s_1 \mid s_2 \cdots  \mid s_l$ and $s_1 \geq 2$.
\end{enumerate}
\end{theorem}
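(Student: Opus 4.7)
The plan is to invoke the bijection
$$\Hom_{gp}(\widehat{\Z}^n, \PGL_d(k))_{irr}/\PGL_d(k) \; \simlgr \; H^1_{loop}(R_n, \PGL_d)_{irr} \; \simlgr \; H^1(F_n, \PGL_d)_{irr}$$
from Corollary \ref{WT4}(2), specialized to $\bG = \PGL_d$, in order to translate the classification of (conjugacy classes of) irreducible finite abelian constant subgroups $\bH \subset \PGL_d$ --- each viewed via a minimal surjection $\widehat{\Z}^n \twoheadrightarrow \bH$ with $n = \rank(\bH)$ --- into the classification, up to $F_n$-algebra isomorphism, of central division $F_n$-algebras $D$ of degree $d$ (division by irreducibility together with Corollary \ref{wedderburn}). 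By Corollary \ref{brauer} such $D$ are determined by their Brauer class, and by Corollary \ref{brauer2} they factor uniquely as tensor products of anisotropic pieces of prime-power degree.

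The first step is to compute explicitly the image of $\bH(s_j)$ under the bijection. Lifting the generators $\bar a_j, \bar b_j \in \PGL_{s_j}(k)$ to matrices $a_j, b_j \in \GL_{s_j}(k)$ satisfying $b_j a_j b_j^{-1} = \zeta_{s_j} a_j$ and descending the trivialized cocycle along the Kummer covering $R_{2,s_j}/R_2$ produces precisely the cyclic symbol algebra $(t_1, t_2)_{s_j}$ of degree $s_j$ over $R_2$. Via the block-diagonal embedding $\PGL_{s_1} \times \cdots \times \PGL_{s_l} \hookrightarrow \PGL_d$, the product subgroup $\bH(s_1,\ldots,s_l)$ (realized as a homomorphism $\widehat{\Z}^{2l} \twoheadrightarrow \bH(s_1,\ldots,s_l)$) then corresponds to the tensor product
$$D(s_1,\ldots,s_l) \; := \; (t_1, t_2)_{s_1} \otimes_{F_{2l}} (t_3, t_4)_{s_2} \otimes_{F_{2l}} \cdots \otimes_{F_{2l}} (t_{2l-1}, t_{2l})_{s_l}$$
over $F_{2l}$. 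A calculation with the iterated residue maps $\Br(F_{2l}) \to \Br(F_{2l-1}) \to \cdots$ (which single out the totally ramified symbol contributions at each $t_i$) shows that the index of $D(s_1,\ldots,s_l)$ equals $s_1 \cdots s_l$ precisely when $s_1 \mid s_2 \mid \cdots \mid s_l$, and is strictly smaller otherwise. Since an irreducible class in $H^1(F_n, \PGL_d)$ corresponds exactly to a division algebra of degree $d$ over $F_n$, this yields (1) in both directions.

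For (2), let $\bH \subset \PGL_d$ be an irreducible finite abelian constant subgroup of rank $n$, and let $D$ be the associated central division $F_n$-algebra of degree $d$. By Corollary \ref{brauer2}, write $D \simeq D_1 \otimes_{F_n} \cdots \otimes_{F_n} D_r$ with $D_i$ division of $p_i$-primary degree. The structure theorem for division algebras over iterated Laurent series fields due to Amitsur, Tignol--Wadsworth and Brussel (cited before the theorem) then guarantees that each $D_i$ is, after acting by a suitable element of $\GL_n(\Z)$ on the variables $t_1,\ldots,t_n$, a tensor product of symbol algebras in disjoint pairs $(t_{2j-1},t_{2j})$, and collating the prime factors yields a single divisibility chain $s_1 \mid s_2 \mid \cdots \mid s_l$ with $s_1 \cdots s_l = d$. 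Inverting the bijection exhibits a $\PGL_d(k)$-conjugation carrying $\bH$ onto $\bH(s_1,\ldots,s_l)$, and the uniqueness of the divisibility chain follows from the uniqueness of the invariant-factor decomposition of $\bH$ equipped with the non-degenerate commutator pairing $\bH \times \bH \to \bmu_d$ arising from $\PGL_d$.

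The main obstacle will be the input from Tignol--Wadsworth--Brussel on the structure of division algebras of prime-power degree over $F_n$: one needs to recognize that, up to a $\GL_n(\Z)$-change of coordinates on the $t_i$, every such algebra is a tensor product of symbol algebras in disjoint pairs of variables with a divisibility chain of exponents. Once this structural fact is granted, the remainder is a formal translation through the bijection of Corollary \ref{WT4}(2), the Acyclicity Theorem (Theorem \ref{acyclic}), and the Wedderburn/Brauer decompositions of Corollaries \ref{wedderburn} and \ref{brauer2}.
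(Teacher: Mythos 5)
Your proposal follows essentially the same route as the paper: both transport the problem through the Acyclicity Theorem (Corollary \ref{WT4}(2)) to the classification of degree-$d$ division algebras over $F_n$, match $\bH(s_1,\ldots,s_l)$ to a tensor product of cyclic symbol algebras in disjoint pairs of variables (this is Lemma \ref{jojo}), invoke Amitsur for (1), appeal to the Tignol--Wadsworth--Brussel structure theory (packaged in the paper as \cite[th.~4.7]{GP3}) for the $\GL_n(\Z)$-normalisation, and use the injectivity on anisotropic classes (Theorem \ref{k-aniso-bis}) to convert agreement of Brauer classes into $\PGL_d(k)$-conjugacy of the subgroups. Your uniqueness argument via the commutator pairing is correct but slightly more elaborate than needed; the paper simply observes that $\bH(s_1,\ldots,s_l)\cong(\Z/s_1\Z)^2\times\cdots\times(\Z/s_l\Z)^2$ as an abstract group and reads off the chain from the invariant factors.

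Two local inaccuracies are worth flagging. First, the embedding $\PGL_{s_1}\times\cdots\times\PGL_{s_l}\to\PGL_d$ underlying the definition of $\bH(s_1,\ldots,s_l)$ is the Kronecker (tensor-product) embedding, not the block-diagonal one you wrote; a block-diagonal image lies in a proper parabolic and could never be irreducible, which would contradict part (1). Second, the claim that the index of $D(s_1,\ldots,s_l)$ equals $s_1\cdots s_l$ \emph{precisely when} $s_1\mid\cdots\mid s_l$ and is strictly smaller otherwise is not correct: for symbol algebras in disjoint pairs of iterated-Laurent variables, the index of the tensor product is always the product of the individual indices, regardless of divisibility. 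The chain condition is present only to fix a canonical parametrisation of the subgroups (cf. the paper's remark that $\bH(s_1,s_2)$ is ambiguous when $s_1,s_2$ are coprime), not to control the index; what actually gives (1) is simply that the resulting division algebra has degree $s_1\cdots s_l$, so the degree-$d$ Azumaya algebra carrying the same Brauer class is division if and only if $d=s_1\cdots s_l$.
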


As mentioned above, our  proof makes use of
 Galois cohomology results over $R_n$
for $n \geq 1$ (or equivalently $F_n$) 
collected from our previous  paper \cite{GP3}.

Our  convention on the cyclic algebra $(t_i,t_j)^{q}_p$
 is that of Tate\footnote{This is the opposite convention than that of \cite{Br} and
\cite{GP3}, but consistent with that of \cite{GS} which we use in the proof.}
 for  the Azumaya $R_n$--algebra with presentation
$$
X^q=t_i,\,  Y^q=t_j^p, \, YX =  \zeta_q \,  XY.
$$
Let us consider now Brussel $R_n$--algebras. 
Given  sequences of length $l \leq [\frac{n}{2}]$
$$
2 \leq s_1 \cdots \leq s_l , \qquad r_1, \cdots ,r_l
$$
 we define
$$
A(r_1,s_1,...,r_l,s_l):(t_1,t_2)_{s_1}^{r_1} \otimes_{R_n} \cdots  
\otimes_{R_n} (t_{2l-1},t_{2l})_{s_l}^{r_l} .
$$

\begin{lemma}\label{jojo} With the notation as above, 
set  $d=s_1...s_l$ and define 
$$
\phi: {\widehat \Z}^n \to \bH(s_1)(k) \times \cdots    \times \bH(s_l)(k)
= \bH(s_1,...,s_l)(k)  \, \, \subset  \, \, \PGL_d(k),
$$
$$
 (e_1,e_2,...,e_{2l-1}, e_{2l}) \mapsto 
(- a_1, -r_1 b_1,..., - a_{l}, -r_l \ol  b_{l}) 
$$

\begin{enumerate}
 \item Then $_\phi(M_d) \simlgr A(r_1,s_1,...,r_l,s_l)$ as $R_n$--algebras.
\item  The following are equivalent:
\begin{enumerate}

\item $A(r_1,s_1,...,r_l,s_l) \otimes_{R_n} F_n$ is division $F_n$--algebra;

\item $\phi$ is irreducible;

\item $\bH(s_1,...,s_l)$ is irreducible in $\PGL_d$
and $(r_j,s_j)=1$ for $j=1,...,l$.
\end{enumerate}
\end{enumerate}
\end{lemma}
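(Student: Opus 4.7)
The plan is to reduce Part (1) to the rank-two case and then use a direct crossed-product computation, and to derive Part (2) by combining Corollary \ref{flag}.3 with an explicit analysis of the image of $\phi$ inside each factor $\bH(s_j)$.

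For Part (1), I would first observe that $\phi$ decomposes as an ``external product'' of cocycles $\phi_j:\widehat \Z^2 \to \bH(s_j)(k)\subset \PGL_{s_j}(k)$ (using coordinates $(e_{2j-1},e_{2j})$), and that the embedding $\PGL_{s_1}\times\cdots\times \PGL_{s_l}\hookrightarrow \PGL_d$ comes from the Kronecker product $M_{s_1}\otimes\cdots\otimes M_{s_l}\simeq M_d$. By the functoriality of the twisting construction with respect to this product structure, there is a natural $R_n$-algebra isomorphism
$$
{}_\phi(M_d) \;\simeq\; {}_{\phi_1}(M_{s_1})\otimes_{R_n}\cdots\otimes_{R_n}{}_{\phi_l}(M_{s_l}).
$$
It then suffices to prove, for a single pair $(r,s)$, that the twist of $M_s$ by the cocycle sending $e_1\mapsto -a$ and $e_2\mapsto -rb$ is isomorphic as an $R_n$-algebra to the cyclic algebra $(t_{2j-1},t_{2j})_{s}^{r}$. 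I would carry out this identification by realizing the cyclic algebra as a crossed product for the $\bmu_s\times\bmu_s$-Galois extension $R_n[t_{2j-1}^{1/s},t_{2j}^{1/s}]/R_n$ and exhibiting explicit generators $X$, $Y$ satisfying $X^s=t_{2j-1}$, $Y^s=t_{2j}^{r}$, $YX=\zeta_s XY$ inside ${}_{\phi_j}(M_s)$ using the standard matrices $a$, $b$ as descent data; the signs in the formula $(-a,-rb)$ simply track the conventions needed to match $YX = \zeta_s XY$ rather than the opposite commutation relation.

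For Part (2), the equivalence (a) $\Leftrightarrow$ (b) is essentially formal. By the Acyclicity Theorem \ref{acyclic} the class $[{}_\phi(M_d)]$ in $H^1(R_n,\PGL_d)$ is determined by its image in $H^1(F_n,\PGL_d)$, and the algebra ${}_\phi(M_d)\otimes_{R_n}F_n$ is a division algebra if and only if the twisted $F_n$-group ${}_\phi\PGL_d$ is anisotropic. Since $\PGL_d$ is semisimple (its coradical is trivial), anisotropy is equivalent to irreducibility, and Corollary \ref{flag}.3 translates this into the irreducibility of the purely geometric loop cocycle $\phi$. For (b) $\Leftrightarrow$ (c) I would proceed in two steps: first, if $\gcd(r_j,s_j)=d_j>1$ for some $j$, the image of $\phi_j$ inside $\bH(s_j)\cong (\Z/s_j)^2$ is the proper subgroup $\langle a_j\rangle\times\langle b_j^{r_j}\rangle$, and a direct eigenspace computation shows that $b_j^{d_j}$ has $d_j$ eigenspaces of dimension $s_j/d_j$ permuted transitively by $a_j$; the sum of the vectors in one such orbit spans a proper $\phi_j$-invariant subspace, so $\phi_j$ (hence $\phi$) normalizes a proper parabolic of $\PGL_{s_j}\subset\PGL_d$, contradicting (b). Second, if every $\gcd(r_j,s_j)=1$, the image of $\phi$ is exactly $\bH(s_1,\ldots,s_l)$, so $\phi$ is irreducible precisely when $\bH(s_1,\ldots,s_l)$ is irreducible in $\PGL_d$; this gives both implications.

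The main obstacle I anticipate is the explicit cyclic-algebra identification in Part (1): one must pin down the descent data carefully (including the sign convention appearing in $-a_j$ and $-r_jb_j$) to land on the precise cyclic algebra $(t_{2j-1},t_{2j})_{s_j}^{r_j}$ rather than, say, its opposite or a Brauer-equivalent algebra of different degree. Once this bookkeeping is fixed, the remainder is routine: the tensor product decomposition in (1) is a formal consequence of the $\PGL$-product structure, and the two implications in (2)(b)$\Leftrightarrow$(c) reduce to the elementary representation-theoretic calculation in $\bH(s_j)$ sketched above.
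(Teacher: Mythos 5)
Your overall structure coincides with the paper's: Part (1) is reduced to the rank-two case via the Kronecker/tensor decomposition, and Part (2) is handled by the cycle (a)$\Leftrightarrow$(b), (b)$\Rightarrow$(c), and then back. Where you differ is that the paper dispatches several steps by citation --- \cite[proof of Th. 3.17]{GP2} for the rank-two cyclic-algebra identification, \cite[3.1]{GP3} for (a)$\Leftrightarrow$(b), \cite[3.13]{GP3} for the $\gcd$ condition in (b)$\Rightarrow$(c), and Amitsur \cite[th. 3]{Am} for (c)$\Rightarrow$(a) --- while you replace these with direct arguments (the Acyclicity Theorem plus Corollary \ref{flag}.3 for (a)$\Leftrightarrow$(b), an explicit eigenspace computation for (b)$\Rightarrow$(c), and a direct $(c)\Rightarrow(b)$ via $\operatorname{Im}(\phi)=\bH(s_1,\dots,s_l)$). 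That trade-off is reasonable: the paper's version is shorter and leans on machinery already developed elsewhere, while yours is more self-contained.

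One factual slip worth fixing in your (b)$\Rightarrow$(c) argument: the matrix $b_j^{d_j}$ (with $d_j=\gcd(r_j,s_j)$) has $s_j/d_j$ distinct eigenvalues, each with multiplicity $d_j$ --- so $s_j/d_j$ eigenspaces of dimension $d_j$, not ``$d_j$ eigenspaces of dimension $s_j/d_j$'' as you wrote. Moreover ``the sum of the vectors in one such orbit'' does not describe an invariant subspace: the $a_j$-orbit of a single eigenvector is the entire standard basis, whose sum is not a $b_j^{d_j}$-eigenvector. The correct construction is to take the span $W$ of the vectors $f_i=\sum_{k=0}^{d_j-1}e_{i+k(s_j/d_j)}$ for $i=0,\dots,s_j/d_j-1$: each $f_i$ is a $b_j^{d_j}$-eigenvector, $a_j$ cyclically permutes the $f_i$, and $\dim W=s_j/d_j<s_j$. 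Alternatively, Burnside's theorem gives it at once: the subalgebra of $M_{s_j}(k)$ generated by $a_j$, $b_j^{d_j}$ and scalars has dimension at most $s_j\cdot(s_j/d_j)<s_j^2$, so the standard module $k^{s_j}$ cannot be irreducible over it. Either fix makes your direct argument watertight.
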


\begin{proof} 
(1) This is done for $R_2$ and each $\bH(s_i)$ in \cite[proof of  Th. 3.17]{GP2}. This  ``extends" in an additive way to yield the general case.

\smallskip

\noindent(2) The equivalence $(a) \Longleftrightarrow (b)$ is
a special case of \cite[3.1]{GP3}. 

\smallskip

\noindent $(b) \Longrightarrow (c)$: 
Since $\phi$ is irreducible, its image  ${\rm Im}(\phi)$
is an irreducible subgroup of $\PGL_d$.
 This image is a product of the $\Im(\phi_i)$
which are then irreducible in $\PGL_{s_i}$.
According to  \cite[3.13]{GP3}, we have then 
$(r_j,s_j)=1$ for $j=1,...,l$. Hence 
${\rm Im}(\phi)= \bH(s_1,...,s_l)$ is irreducible in $\PGL_d$.

\smallskip

\noindent $(c) \Longrightarrow (a)$: Since $\bH(s_1,...,s_l)$ is irreducible in $\PGL_d$, we 
have $d=s_1....s_l$.  The condition $(r_j,s_j)=1$ for $j=1,...,l$ implies that
 the algebra  $A(r_1,s_1,...,r_l,s_l) \otimes_{R_n} F_n$ is division \cite[th. 3]{Am}.
\end{proof}


We can now proceed with the proof of  Theorem \ref{plinear}.

\begin{proof} 

(1) If $d= s_1...s_l$ then $A(1,s_1,...,1,s_l) \otimes_{R_n} F_n$ is a division $F_n$--algebra \cite[th. 3]{Am}, 
so Lemma \ref{jojo} shows that $\bH(s_1,...,s_n)$ is irreducible in $\PGL_d$.
If $d \not = s_1...s_l$, then this algebra is not division and  
$\bH(s_1,...,s_n)$ is reducible.

\noindent (2) 
If $\bH(s_1,...,s_l)$ is $\PGL_d(k)$--conjugate
to  some $\bH(s'_1,...,s'_{l'})$, then $\bH(s_1,...,s_l)$
is isomorphic to  $\bH(s'_1,...,s'_{l'})$ as finite abelian group.
So the divisibility conditions yield $l=l'$ and $s_j=s'_j$ for $j=1,...,l$.

\medskip

The delicate points are  existence and conjugacy. Let $\bH$ be  
a finite abelian constant  irreducible  subgroup of $\PGL_d$.
Denote by  $n$ the rank of  $\bH(k)$ and by $m$ its exponent.

Let us prove first that $\bH$ is $\PGL_d(k)$--conjugate
to some $\bH(s_1,...,s_l)$.
We view $\bH(k)$  as the image of an irreducible group homomorphism
 $\psi: (\Z/m\Z)^n \to  \PGL_d(k)$. Since $k$ is algebraically closed $\psi$ is a cocycle.
The loop construction then defines an Azumaya $R_n$--algebra of degree $d$
such that $A\otimes_{R_n} F_n$ is division (i.e. the group 
$\PGL_1(A)_{F_n}$ is anisotropic).
Up to base change by a suitable element of 
$\bGL_n(\Z)$,  Theorem 4.7 of \cite{GP3} provides an element
$g \in \bGL_n(\Z)$ such that $$
g^*(A) \cong A(r_1,s_1,...,r_l,s_l)
$$
with  $(r_j,s_j)=1$ for $j=1,..,l$.

 By Lemma \ref{jo}.1,
$A(r_1,s_1,...,r_l,s_l)$ is the loop Azumaya algebra defined
by the morphism
$$
\phi: {\hat \Z}^n \to \bH(s_1)(k) \times \cdots    \times \bH(s_l)(k)
= \bH(s_1,...,s_l)(k)  \subset \PGL_d(k),$$
$$
 (e_1,e_2,...,e_{2l-1}, e_{2l}) \mapsto 
(- a_1, -b_1,..., - a_{l}, -r_l b_{l}) .
$$
which is then irreducible by the second statement of the same lemma.
Theorem \ref{k-aniso-bis} tells us that $\phi$ and 
$\psi$ are $\PGL_d(k)$--conjugate, hence  
 $\bH(k)$ is $\PGL_d(k)$--conjugate to $\bH(s_1,...,s_l)(k)={\rm Im}(\psi)$.
Since $n=\rank( \bH(s_1,...,s_l)(k))$, we have $s_1 \mid s_2... \mid s_l$.
\end{proof}

We can now go back to Azumaya algebras.

\begin{theorem}  \label{brauer3}
 Let $A$ be an anisotropic  loop Azumaya $R_n$--algebra of degree $d.$
\begin{enumerate}
 \item There exists a sequence
$s_1,...,s_l$ and an integer $r_1$ prime to $s_1$
satisfying
$$
s_1 \mid \cdots \mid s_l, \quad 2 \geq s_1, \quad
d= s_1 \cdots s_l , \quad (r_1,s_1)=1
$$
and an element $g \in \bGL_n(\Z)$ such that
$$
g^*(A) \cong A(  r_1,s_1,1, s_2, 1, s_3, \cdots , 1, s_l)
\cong A(-r_1,s_1,1, s_2, 1, s_3, \cdots , 1, s_l).
$$
Such a sequence $s_1,...,s_l$ is unique.

\item If $n=2l$,  $\pm r_1$ is unique modulo $s_1$.

\item If $n > 2l$, $g^*(A) \cong A(  1,s_1,1, s_2, 1, s_3, \cdots , 1, s_l)$.

\end{enumerate}
 
\end{theorem}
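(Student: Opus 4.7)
The plan is to translate the statement into a classification of irreducible homomorphisms $\phi \colon \widehat{\Z}^n \to \PGL_d(k)$ modulo $\bGL_n(\Z)$-base change and $\PGL_d(k)$-conjugation, and then to apply Mumford's theorem together with Lemma~\ref{jojo}. First I would use Corollary~\ref{WT4}(2) (and the fact that $A$ anisotropic forces the corresponding loop cocycle to be irreducible, by Lemma~\ref{jojo}(2) together with Corollary~\ref{flag}(3)) to represent the class of $A$ by an irreducible homomorphism $\phi \colon \widehat{\Z}^n \to \PGL_d(k)$, well defined up to $\PGL_d(k)$-conjugation. Then I would invoke Theorem~\ref{plinear}(2) to conjugate the image of $\phi$ to a unique $\bH(s_1,\ldots,s_l) \subset \PGL_d$ with $s_1 \mid s_2 \mid \cdots \mid s_l$, $s_1 \geq 2$, and $d = s_1 \cdots s_l$. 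This gives both the existence of the sequence and its uniqueness, since $(s_1,\ldots,s_l)$ is read off the finite abelian group $\bH(s_1,\ldots,s_l)(k) \simeq \prod_j (\Z/s_j\Z)^2$ via the structure theorem.

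Next I would normalize $\phi$ itself. Writing $\phi$ through the surjection $\widehat{\Z}^n \twoheadrightarrow \bH(s_1,\ldots,s_l)(k) \simeq \prod_j (\Z/s_j\Z)^2$, $\phi$ is encoded by a $2l \times n$ integer matrix whose $j$-th pair of rows is interpreted modulo $s_j$. The base change action of $\bGL_n(\Z)$ is right multiplication on this matrix, while conjugation by $\bN_{\PGL_d}\bigl(\bH(s_1,\ldots,s_l)\bigr)(k)$ gives an outer action (via the quotient $\bN/\bH$) by the symplectic-type group preserving the commutator pairing on $\bH(s_1,\ldots,s_l)$. I would reduce $\phi$ to a canonical form by the Smith-type argument already appearing in Theorem~4.7 of \cite{GP3}: the right $\bGL_n(\Z)$-action alone suffices (after matching the corresponding block-by-block) to put $\phi$ into the shape of Lemma~\ref{jojo}(1), namely $(e_1,\ldots,e_{2l}, e_{2l+1}, \ldots, e_n) \mapsto (-a_1, -r_1 b_1, -a_2, -b_2, \ldots, -a_l, -b_l, 1, \ldots, 1)$ with $(r_1,s_1)=1$ (the other $r_j$'s being reducible to $1$ because multiplication by any unit modulo $s_j \geq s_1$ is available through column operations in the $(2j{-}1, 2j)$-block combined with divisibility $s_1 \mid s_j$). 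Applying Lemma~\ref{jojo}(1) yields the isomorphism $g^*(A) \simeq A(r_1, s_1, 1, s_2, \ldots, 1, s_l)$, and the relation $A(r_1, s_1, \ldots) \simeq A(-r_1, s_1, \ldots)$ comes from the involution of $\bH(s_1)$ swapping $a_1 \leftrightarrow a_1^{-1}$ (equivalently, passage to the opposite algebra for the first cyclic factor, which is harmless as it is Brauer-trivialised by matrix units).

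For the uniqueness statements in (2) and (3), the bijection in Corollary~\ref{WT4}(2) together with Theorem~\ref{k-aniso-bis} identifies the class of $g^*(A)$ with the $\PGL_d(k)$-conjugacy class of the normalized $\phi$. Thus two normal forms yield isomorphic $R_n$-algebras if and only if the corresponding $\phi$'s differ by an element of $\bN_{\PGL_d}\bigl(\bH(s_1,\ldots,s_l)\bigr)(k)$. In the case $n = 2l$ the matrix of $\phi$ is square of determinant $\pm r_1 \mod s_1$ (all other diagonal entries being $1$), so the only residual freedom after $\bGL_{2l}(\Z)$ adjustment is $r_1 \mapsto \pm r_1 \bmod s_1$, yielding the uniqueness of $\pm r_1$. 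In the case $n > 2l$ an extra column is available, and an elementary column operation adding a suitable multiple to one of the first $2l$ columns lets us absorb $r_1$ into $1$; I would verify this amounts to a legitimate $\bGL_n(\Z)$-move that does not alter the cyclic algebra structure beyond renormalization.

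The main obstacle I anticipate is in the second paragraph: carefully showing that the $\bGL_n(\Z)$-reduction really does bring $\phi$ into the shape $(r_1, s_1, 1, s_2, \ldots, 1, s_l)$ rather than the a priori more general $(r_1, s_1, \ldots, r_l, s_l)$. This requires exploiting the divisibility $s_1 \mid s_j$ to clear each $r_j$ for $j \geq 2$ by a row/column manipulation that respects the $s_j$-reduction, and then combining it with the symplectic-type automorphisms of $\bH(s_1,\ldots,s_l)$ available through $\bN_{\PGL_d}(\bH)/\bH$. Once this normalization lemma is established, the remainder of the argument is bookkeeping on Smith normal forms and the two cases $n = 2l$ versus $n > 2l$.
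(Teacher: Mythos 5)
The overall architecture of your proposal matches the paper's: (i) represent the anisotropic class by an irreducible loop cocycle $\phi \colon \widehat{\Z}^n \to \PGL_d(k)$, (ii) conjugate its image to a standard $\bH(s_1,\ldots,s_l)$ via Theorem~\ref{plinear}, (iii) normalize $\phi$ by the right $\bGL_n(\Z)$-action, (iv) use Theorem~\ref{k-aniso-bis} and the normalizer structure to get uniqueness, (v) translate back to algebras via Lemma~\ref{jojo}. Steps (i), (ii), (iv), (v) are the same as the paper. The genuine gap is in step (iii), which you yourself flag as ``the main obstacle.''

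Your reduction ``the right $\bGL_n(\Z)$-action alone suffices \ldots to put $\phi$ into the shape $(r_1,s_1,1,s_2,\ldots,1,s_l)$'' is asserted but not proved, and the supporting remark — ``multiplication by any unit modulo $s_j$ is available through column operations in the $(2j{-}1,2j)$-block'' — is in fact false: the image of $\GL_2(\Z) \to \GL_2(\Z/s_j\Z)$ consists only of matrices of determinant $\pm 1 \bmod s_j$, so a single $2\times 2$ block cannot rescale $a_j$ by an arbitrary unit. Clearing the $r_j$ for $j\geq 2$ requires cross-block column operations and a genuine argument, which the paper gets for free by citing Lemma 2.5 of \cite{RY2}: the class $\delta(\phi) = \phi(e_1)\wedge\cdots\wedge\phi(e_n) \in \Lambda^n\bigl(\bH(s_1,\ldots,s_l)(k)\bigr) \cong \Z/s_1\Z$ is a complete invariant for the $\bGL_n(\Z)$-orbit up to sign. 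That invariant, not the Smith-normal-form bookkeeping, is what controls both the normal form and why only $r_1 \bmod s_1$ survives. Your $n > 2l$ argument (``adding a suitable multiple to one of the first $2l$ columns \ldots absorbs $r_1$ into $1$'') is also unpersuasive as stated: after normalization the extra columns map to zero, and adding a multiple of a zero image changes nothing. The paper instead observes that $\Lambda^n(\bH(s_1,\ldots,s_l)) = 0$ when $n > 2l$, so the $\delta$-invariant is automatically trivial and the same normal-form lemma from \cite{RY2} gives $r_1 = 1$. So the missing idea is precisely the Reichstein--Youssin exterior-power invariant; with it, your sketch essentially becomes the paper's proof, and without it the normal-form step does not go through as written.
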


\begin{proof} (1)
 By definition, $A$ is the twist of
$M_d(k)$ by a morphism $\phi: (\Z / m \Z)^n \to \PGL_d(k)$.
Since $A \otimes_{R_n} F_n$ is division, $\phi$ is irreducible \cite[th. 3.1]{GP2}. 
Theorem \ref{plinear} shows that  there exists a unique  sequence
$s_1,...,s_l$ such that
$s_1 \mid \cdots \mid s_l, \quad 2 \geq s_1$ and ${\rm Im}(\phi)$
is $\PGL_d(k)$--conjugate to $H(s_1,...,s_l) := \bH(s_1,...,s_l)(k)$.
Without lost of generality, we can assume that
${\rm Im}(\phi) = H(s_1,...,s_l)$.

Recall that  $a_1,b_1,...,a_l,b_l$  stand for the standard generators
of $\bH(s_1,...,s_l)$.
We shall use that  $\Lambda^l\big( H(s_1,...,s_l)\big) \simeq \Z / s_1 \Z$ 
generated by $a_1 \wedge b_1 \cdots  a_l \wedge b_l$ \cite[Lemma 2.1]{RY2}, as well as  the following invariant of $\phi$ ({\it ibid}, 2.5)
$$
\delta(\phi)= \phi(e_1) \wedge \phi(e_2) \wedge \dots \wedge   \phi(e_n) 
\, \in \,  \Lambda^n\big( \bH(s_1,...,s_l)(k)\big)
$$
This invariant has the remarkable property that a homomorphism
$\phi': (\Z/m\Z)^n \to \bH(s_1,...,s_l)(k)$ is  $\GL_n(\Z)$--conjugate 
to $\phi$ if and only if $\delta(\phi)= \pm \delta(\phi')$. 

We shall prove (1) together with (2) [resp. (3)]  in the case
$n=2l$ (resp. $n>2l$).

\smallskip

\noindent{\it First case. $n=2l$:} 
The family $(\phi(e_1),..., \phi(e_n))$ generates $H(s_1,...,s_l)$,  and we consider the class
$$
[r^\sharp_1] := \phi(e_1) \wedge \phi(e_2) \wedge \cdots  \wedge \phi(e_n) \in (\Z/s_1\Z)^\times.
$$
Let $r_1$ be an inverse of $r^\sharp_1$  modulo  $s_1$.
We have $$
\phi(r_1 e_1) \wedge \phi(e_2) \cdots  \wedge \phi(e_n)= 
a_1 \wedge b_1 \cdots  a_l \wedge b_l
$$
so
 there exists $g \in \bGL_n(\Z)$ such that ({\it ibid},  2.5)
$$
\psi( r_1 e_1)= a_1, \psi(e_2)=b_2,  \cdots , \, \phi(e_{n-1})=  a_l, \,  \phi(e_n)=  b_l
$$
where $\psi=\phi \circ g$.
In terms of algebras, this means that
$$
g^*(A) \simeq A(r_1,s_1,1, s_2, 1, s_3, \cdots , 1, s_l).
$$
Let us first prove the uniqueness assertions 
The uniqueness of $(s_1,..,s_l)$ follows from Theorem \ref{plinear}, 
hence (1) is proven. For (2),   
we are given then $r_1' \in \Z$ coprime to $s_1,$ and an element
$h  \in \bGL_n(\Z)$ such that
$$
h^*\big(A(r_1,s_1,1, s_2, 1, s_3, \cdots , 1, s_l)\big)
 \simeq A(r'_1,s_1,1, s_2, 1, s_3, \cdots , 1, s_l).
$$
Denote by $\psi': (\Z/m\Z)^n \to H(s_1,..,s_n)$ the group homomorphism
defined by $\psi'(e_1)=r'_1 a_1$, $\psi'(e_2)= b_1, \cdots,
\psi(e_n)=b_{2l}$. Since the ($F_n$--anisotropic) loop algebras  attached to
$h^*\psi$ and $\psi'$ are isomorphic,  
 Theorem \ref{k-aniso-bis} provides an element $u \in \PGL_d(k)$ such that 
$$
\psi \circ h = u \circ \psi'. 
$$
Since $H(s_1,...,s_l) = {\rm Im}(\psi)=  {\rm Im}(\psi')$, it follows that
$u \in \bN_{\PGL_d(k)}\big(H(s_1,...s_l)\big)$. 
But the map $u: \bH(s_1,...,s_l)(k) \to \bH(s_1,...s_l)(k)$ 
preserves  the symplectic pairing $\bH(s_1,...,s_l)(k) \times \bH(s_1,...,s_l)(k) 
\to k^\times$ arising by taking the commutator  of lifts in $\GL_d(k)$. 
It follows that $\Lambda^n(u)=id$ ({\it ibid}, 2.3.a) hence
$$
\delta( \psi)= \pm \delta( \psi \circ h) = \pm \delta( u \circ \psi') 
=\pm \delta( u \circ \psi').
$$Thus $r_1= \pm r'_1 \in \Z/s_1\Z$ as prescribed in (2) .

\smallskip

\noindent{\it Second case. $n > 2l$:} 
For $i=2l+1,...,n$ we set $c_i= 0 \in H(s_1,..,s_l).$ 
Both families $\big( \phi(e_1),  \dots \phi(e_n)\big)$
and $(r_1 \, a_1 , b_1 \cdots  , a_l,  b_l ,  c_{2l+1},  \cdots , c_{n})$
generate  $H(s_1,...,s_l)$ and satisfy
$$
\phi(e_1) \wedge \phi(e_2) \cdots  \wedge \phi(e_n) 
= (r_1 \, a_1) \wedge b_1 \cdots  a_l \wedge b_l \wedge c_{2l+1} \wedge  \cdots \wedge c_{n} \in  \Lambda^{n}\big(H(s_1,...,s_l)\big)=0.
$$
The same fact \cite[Lemma 2.5]{RY2} shows that
 there exists $g \in \bGL_n(\Z)$ such that 
$(g^*\phi)(e_1)= r_1 a_1$, $(g^*\phi)(e_2)= b_1$, 
$(g^* \phi)(e_{2i-1})= a_i$ 
and $(g^* \phi)(e_{2i})= b_i$  for
$i = 2,..,l$ and $(g^* \phi)(e_i)= c_{i}$ for $i=2l+1,...,n$. Therefore  the preceding case with  $2l$ variables
yields  the existence and the uniqueness of the $s_i$'s. 
It remains to prove (3), namely that we can assume that $r_1=1$.
But this follows along the same lines of the argument given above since 
$(r_1 \, a_1) \wedge b_1 \cdots  a_l \wedge b_l \wedge c_{2l+1}  \cdots \wedge c_{n}= ( a_1) \wedge b_1 \cdots  a_l \wedge b_l \wedge c_{2l+1}  \cdots \wedge c_{n}\in  \Lambda^{n}\big(H(s_1,...,s_l)\big)$.
\end{proof}

\subsection{Loop algebras of inner type $A$}

To the Azumaya $R_n$--algebra $A( r_1, s_1,...,  r_l,  s_{l})$ we can attach (using the commutator $[x,y] = xy -yx$) a Lie algebra over $R_n.$ We denote by  
$L( r_1, s_1,...,  r_l,  s_{l})$ the derived algebra
of this Lie algebra. It is a twisted form of $\sl_d(R_n)$ where $d = s_1\dots s_l.$

\begin{corollary} Let $d$ be a positive integer.
Let $L$ be a nullity $n$ loop algebra of inner (absolute) type $A_{d-1}$. 

\begin{enumerate}
 \item If $L$ is not split, it  is $k$--isomorphic to $L( r_1,s_1,1,s_2, \cdots , 1, s_l)$ 
where
$$
s_1 \mid \cdots \mid s_l, \quad 2 \geq s_1, \quad
d= s_1 \cdots s_l , \quad (r_1,s_1)=1 \hbox{\quad and  \quad}
l \leq \Bigl[\frac{n}{2}\Bigr]
$$
and such a sequence $s_1,...,s_l$ is unique.

\item  If $n=2l$,  $r_1$ is unique modulo $s_1$
and up to the sign.
\item If $n>2l$, $L$ is $k$--isomorphic to $L( 1,s_1,1,s_2, \cdots , 1, s_l)$ 
\end{enumerate}
 
\end{corollary}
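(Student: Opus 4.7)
The plan is to transport Theorem~\ref{brauer3} from loop Azumaya algebras to loop Lie algebras via the dictionary $A \leftrightarrow [A,A]$ combined with the centroid trick.

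First I would set up the correspondence at the level of $R_n$-isomorphism classes. Since $\bAut(\mathfrak{sl}_d)^0 = \PGL_d = \bAut(M_d)$, a nullity $n$ loop algebra $L$ of inner absolute type $A_{d-1}$ is canonically of the form $L = [A,A]$ for a loop Azumaya $R_n$-algebra $A$ of degree $d$, uniquely determined up to $R_n$-isomorphism; moreover $L$ is split iff $A \simeq M_d(R_n)$, and $L \otimes_{R_n} F_n$ is anisotropic iff $A \otimes_{R_n} F_n$ is a division algebra. In particular $L(r_1,s_1,\ldots,r_l,s_l)$ corresponds to $A(r_1,s_1,\ldots,r_l,s_l)$, and the correspondence is manifestly $\bGL_n(\Z)$-equivariant since the action is by base change of $R_n$, common to both sides.

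Second, by the centroid trick of \cite{ABP2,GP2}, the centroid of any such loop Lie algebra $L$ is canonically the Laurent polynomial ring $R_n$, so any $k$-algebra isomorphism between two such loop algebras induces a $k$-algebra automorphism of $R_n$. Since the group of $k$-algebra automorphisms of $R_n$ is $\bGL_n(\Z) \ltimes (k^\times)^n$ and the $(k^\times)^n$-factor acts trivially on the isomorphism classes in question, the $k$-isomorphism classes are parametrized by the $\bGL_n(\Z)$-orbits on $R_n$-isomorphism classes. The analogous bijection on the Azumaya side is built into the statement of Theorem~\ref{brauer3}, and the first step shows the two are compatible.

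Finally, Theorem~\ref{brauer3} asserts that in the anisotropic case every $\bGL_n(\Z)$-orbit of loop Azumaya algebras of degree $d$ admits a representative $A(r_1,s_1,1,s_2,\ldots,1,s_l)$ with the stated divisibility, coprimality and range conditions; that the sequence $(s_1,\ldots,s_l)$ is unique; that $r_1$ is unique modulo $s_1$ and up to sign when $n = 2l$; and that one may take $r_1 = 1$ when $n > 2l$. Via Lemma~\ref{jojo}, the coprimality $(r_1,s_1) = 1$ precisely encodes anisotropy of the associated Lie algebra, so the hypothesis that $L$ is non-split (and anisotropic, as is implicit in the conclusion) matches the anisotropic hypothesis on the Azumaya side. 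Transporting these three assertions through the dictionary yields (1), (2) and (3) of the corollary. The main technical point is the $\bGL_n(\Z)$-equivariant matching of Brussel Azumaya algebras with Brussel Lie algebras in the first step, after which the result is a formal consequence of Theorem~\ref{brauer3}.
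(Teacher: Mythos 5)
Your overall strategy --- transport Theorem~\ref{brauer3} from loop Azumaya algebras to inner loop Lie algebras via $A \mapsto [A,A]$, then mod out by the $\bGL_n(\Z)$--action coming from the centroid trick --- is the approach the paper takes as well. But your first step contains a genuine gap that the paper's proof is specifically organised to handle.

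You assert that a nullity $n$ loop algebra $L$ of inner absolute type $A_{d-1}$ determines the loop Azumaya $R_n$--algebra $A$ with $L = [A,A]$ \emph{uniquely up to $R_n$--isomorphism}. This is false. The map $H^1(R_n, \PGL_d) \to H^1\big(R_n, \bAut(\mathfrak{sl}_d)\big)$ has fibers that are orbits under $\bOut(\mathfrak{sl}_d)(R_n) \cong \Z/2\Z$, and this $\Z/2\Z$ acts on Azumaya classes by $A \mapsto A^{\rm op}$. The map $x \mapsto -x$ is an $R_n$--Lie algebra isomorphism $[A,A] \to [A^{\rm op},A^{\rm op}]$, yet $A$ and $A^{\rm op}$ are typically not $R_n$--isomorphic (e.g.\ $A(1,3)$ and $A(2,3) = A(1,3)^{\rm op}$ over $R_2$ have distinct Brauer classes). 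So $L$ determines $A$ only up to $R_n$--isomorphism \emph{and} up to opposite. This is exactly why the paper identifies the set of inner loop Lie algebra classes with $(\Z/2\Z \times \bGL_n(\Z)) \backslash H^1_{loop}(R_n, \PGL_d)$ rather than $\bGL_n(\Z) \backslash H^1_{loop}(R_n, \PGL_d)$, and then shows the extra $\Z/2\Z$ acts trivially because Theorem~\ref{brauer3} gives $A(r_1,s_1,\ldots) \cong_{\bGL_n(\Z)} A(-r_1,s_1,\ldots)$. Without this step your argument cannot justify the uniqueness claims in (1), (2), (3): a priori two non-opposite-symmetric Azumaya orbits could collapse to one Lie orbit, and conversely the normal form $(r_1 \bmod s_1, \pm)$ from Theorem~\ref{brauer3} might not be a complete invariant once one also passes to opposites. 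It is only because the $\Z/2\Z$--action happens to be absorbed into the $\bGL_n(\Z)$--action that the answer is unchanged --- but this must be verified, not assumed. You should also invoke Corollary~\ref{WT4} to reduce to the anisotropic case rather than slipping the anisotropy hypothesis in as ``implicit in the conclusion''.
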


\begin{proof} The classification in question is given by considering the image of the natural map
$$
H^1_{loop}(R_n, \PGL_d) \to \bGL_n(\Z) \backslash H^1(R_n, \bAut(\PGL_d))  
$$
The image can be identified with $(\Z/2\Z \times \bGL_n(\Z))\backslash H^1_{loop}(R_n, \PGL_d)$
where $\Z/2\Z$ acts by the opposite Azumaya algebra construction.
Corollary \ref{WT4} reduces the problem to  the ``anisotropic case''.
Theorem \ref{brauer3} determines the set 
$\bGL_n(\Z) \backslash H^1_{loop}(R_n, \PGL_d),$ and as it turns out the action of
$\Z/2\Z$ is trivial. Therefore the desired classification is
also provided by $\bGL_n(\Z) \backslash H^1_{loop}(R_n, \PGL_d) $
and we can now appeal to Theorem \ref{brauer3} to obtain the Corollary.
\end{proof}
\section{Invariants attached to EALAs and multiloop algebras}

Both the finite dimensional simple Lie algebras over $\ol{k}$  (nullity $0$) and their affine counterparts (nullity $1$) have Coxeter-Dynkin diagrams attached to them that contain a considerable amount of information about the algebras themselves. It  has been a long dream to find a meaningful way of attaching some kind of diagram to multiloop, or at least EALAs of arbitrary nullity (perhaps with as many nodes as the nullity). Our work shows that this can indeed be done and in a very natural way.

Let us recall (see the Introduction for more details) the multiloop algebras  based on a finite dimensional simple Lie algebra $\gg$ over an algebraically closed field $k$ of characteristic $0.$   Consider an
$n$--tuple $\bs = (\sigma  _1,\dots,\sigma  _n)$ of commuting
elements of ${\rm Aut}(\gg)$ satisfying $\sigma  ^{m}_i = 1.$  For
each $n$--tuple $(i_1,\dots ,i_n)\in \Z^n$ we consider the
simultaneous eigenspace
$
\gg_{i_1 \dots i_n} =\{x\in \gg:\sigma  _j(x) = \xi  ^{i_j}_{m} x \,
\, \text{\rm for all} \,\, 1\le j\le n\}.
$
The  multiloop algebra $L(\gg,\bs)$ corresponding to $\bs$ is defined by
$$
L(\gg,\bs) = \us{(i_1,\dots ,i_n)\in \Z^n}\oplus\,
\gg_{i_1\dots i_n} \otimes t^{\frac{i_1}{m}} \dots
t^{\frac{i_n}{m}}_n \subset \gg\otimes _k R_{n,m}\subset \gg \otimes_k R_{\infty}
$$
Recall that $L(A,\bs),$ which does not depend on the choice of common period $m,$ is not only a $k$--Lie algebra (in general infinite-dimensional), but also naturally an $R$--algebra. It is when $L(\gg,\bs)$ is viewed as an $R$--algebra that Galois cohomology and the theory of torsors enter into the picture. Indeed a rather simple calculation shows that 
$$ L(\gg,\bs) \otimes _{R_n} R_{n.m} \simeq \gg \otimes _k R_{n,m} \simeq (\gg \otimes _k
R_n)\otimes _{R_n} R_{n,m}.$$
 Thus $L(\gg,\bs)$ corresponds to a torsor $\bE_{\bs}$ over $\Spec(R)$ under $\bAut(\gg).$   It is, however, the $k$--Lie algebra structure that is of interest in infinite-dimensional Lie theory and Physics.
 
 Let $\bG$ be the $k$--Chevalley group of adjoint type corresponding to $\gg.$ Since $\bAut(\bG)$ and $\bAut({\gg})$ coincide we can also consider the twisted $R_n$--group ${_\bE}\bG_{R_n}.$ By functoriality and the definition of Lie algebra of a group functor in terms of dual numbers we see that  
 ${\mathfrak Lie}({_\bE}\bG_{R_n}) = L(\gg,\bs).$ By the aciclicity Theorem to ${_\bE}\bG_{R_n}$ we can attach a Witt-Tits index, and this is the ``diagram" that we attach to $L(\gg,\bs)$ as {\it as Lie algebra over} $k.$ Note that by Corollary \ref{WT3equi} this is well defined. The diagram carries the information about the absolute and relative type of $L(\gg,\bs).$\footnote{The relative type as an invariant of $L(\gg,\bs)$ is defined in in \S3 of \cite{ABP3} by means of the central closure.  If $C$ is the centroid of $L(\gg,\bs)$ and $\widetilde C$ denotes its field of quotients, then $L(\gg,\bs) \otimes_C \widetilde{C}$ is a finite dimensional central simple algebra over $\widetilde{C}$. As such it has an absolute and relative type. This construction applies to an arbitrary prime perfect Lie algebra which is finitely generated over its centroid.}  
 
  To reassure ourselves that this is the correct point of view we can look at the nullity one case. EALAs of nullity one are the same than the affine Kac-Moody Lie algebras. If one uses Tits methods to compute simple adjoint algebraic groups over the field $k((t))$ one obtains precisely the diagrams of the affine algebras.

\section{Appendix 1: Pseudo-parabolic subgroup schemes}

We extend the definition of pseudo-parabolic subgroups\footnote{In \cite{CoGP}
the groups that we are about to define are called {\it limit subgroups}. We have decided, since we are only dealing with analogues of pseudo-parabolic subgroups over fields, to abide by this terminology. This material can in part be recovered from their work, but we have decided to include it in the form that we needed for the sake of completeness. } 
of affine algebraic groups  (Borel-Tits
\cite{BoT2}, see also \cite[\S 13.4]{Sp}) to the case of
 a   group  scheme $\gG$ which is of finite type and affine over a fixed base scheme $\gX$.
 We begin by establishing some notation.
 
 \medskip

 We will denote by $\GG_{m, \gX}$ and $\GG_{a, \gX}$ the multiplicative and additive $\gX$-groups. The underlying 
schemes of these groups  will be denoted by $\Bbb{A}^\times_{\gX}$ and $\Bbb{A}_{\gX}$ respectively.
 After applying a base change $\gX \to \gX'$ we obtain corresponding $\gX'$-groups and schemes that we denote by
 $\GG_{m, \gX'}$, $\GG_{a, \gX'}$, $\Bbb{A}^\times_{\gX}$ and $\Bbb{A}_{\gX'}.$ 
 
 The structure morphism of the $\gX'$--scheme $\Bbb{A}^\times_{\gX'}$ gives by functoriality a group homomorphism
 \begin{equation}\label{nat}
\eta_{\gX'} : \gG(\gX') \to \gG(\Bbb{A}^\times_{\gX'})
  \end{equation}

Let  $\lambda: \GG_{m,\gX} \to \gG$ be a cocharacter. By applying $\lambda_{\gX'}$ to 
the identity map $\id_{\Bbb{A}^\times_{\gX'}} \in \GG_{m, \gX'}(\Bbb{A}^\times_{\gX'})$ we obtain 
an element of $\lambda_{\gX'}(\id_{\Bbb{A}^\times_{\gX'}}) \in \gG(\Bbb{A}^\times_{\gX'}).$ 

We have a natural group homomorphism $\gG(\Bbb{A}_{\gX'}) \to \gG(\Bbb{A}^\times_{\gX'}).$  
Given an element $x' \in \gG(\Bbb{A}^\times_{\gX'})$ we will write $x' \in \gG(\Bbb{A}_{\gX'})$ if $x'$ is in the image of this map.
\medskip

After these preliminary definitions we are ready to define the three group functors that are relevant 
to the definition of pseudo-parabolic subgroups.

Let $\bZ_\gG(\lambda)$ denote the centralizer of $\lambda$. Recall that this is the $\gX$-group functor that to
a scheme $\gX'$ over $\gX$ attaches the group
\begin{equation}\label{centralizer}
\bZ_\gG(\lambda)(\gX') = \{x' \in \gG(\gX') : x''  \, \text{\rm commutes with} \; \lambda\big(\GG_{m, \gX}(\gX'')\big) \subset \gG(\gX'')\}
  \end{equation}  
where $\gX''$ is a scheme over $\gX'$ and $x''$ denotes the image of $x'$ under the natural group homomorphism 
$\gG(\gX') \to \gG(\gX'').$

We consider the two following $\gX$--functors
$$
\bP(\lambda)(\gX')= \Bigl\{   g \in \gG(\gX') \, \mid \,
  \lambda_{\gX'}(\id_{\Bbb{A}^\times_{\gX'}})  \, \eta_{\gX'}(g) \,\big( {\lambda_{\gX'}(id_{\Bbb{A}^\times_{\gX'}}) }\big)^{-1}
\in \gG(\Bbb{A}_{\gX'})  \Bigr\}
$$
and
$$
\bU(\lambda)(\gX')=  \Bigl\{   g \in \gG(\gX')
\, \mid \,  \lambda_{\gX'}(\id_{\Bbb{A}^\times_{\gX'}})  \, \eta_{\gX'}(g) \,\big( {\lambda_{\gX'}(id_{\Bbb{A}^\times_{\gX'}}) }\big)^{-1}
\in \ker\bigl(  \gG(\Bbb {A}_{\gX'}) \to \gG(\gX') \bigr) \,  \Bigr\}
$$
for every $\gX$-scheme $\gX'.$  The centralizer
$\bZ_\gG(\lambda)$ is an $\gX$--subgroup functor of
$\bP(\lambda)$ which normalizes $\bU(\lambda)$.

\medskip

We look at the previous definitions in the case when $\gX = \Spec(R)$ and $\gX' = \Spec(R')$ are both affine.\footnote{As customary we write $ \GG_{m, R}$ instead of $ \GG_{m, \gX}$...}  We have $\Bbb{A}_{R'} = \Spec(R'[x])$ and $\Bbb{A}^\times_{R'} = \Spec(R'[x^{\pm 1}]).$ Then $x \in R'[x^{\pm 1}]^\times = \GG_{m, R'} (R'[x^{\pm 1}]) = \GG_{m,R'} (\Bbb{A}^\times_{R'}),$ and by applying our cocharacter we obtain an element $\lambda_{R'}(x) \in \gG(R').$ Under Yoneda's correspondence $\GG_{m, R'} (R'[x^{\pm 1}]) \simeq \Hom_{R'}(R'[x^{\pm 1}], R'[x^{\pm 1}])$ our element $x$ corresponds to the identity map, namely to the element $\id_{\Bbb{A}^\times_{R'}} \in \GG_{m, R'}(\Bbb{A}^\times_{R'})$ if we rewrite our ring theoretical objects in terms of schemes.  We thus have
$$
\bP(\lambda)(R')= \Bigl\{   g \in \gG(R') \, \mid \,
  \lambda_{R'}(x)  \, \eta_{R'}(g) \,\big( \lambda_{R'}(x)\big)^{-1}
\in \gG(R'[x])  \Bigr\}
$$ 
and
$$
\bU(\lambda)(R')=  \Bigl\{   g \in \gG(R')
\, \mid \,  \lambda_{R'}(x)  \, \eta_{R'}(g) \,\big( \lambda_{R'}(x)\big)^{-1}
\in \ker\bigl(  \gG(R'[x]) \to \gG(R') \bigr) \,  \Bigr\}
$$
where $\eta_{R'}(g)$ is the natural image of $g \in \gG(R')$ in $\gG(R'[x^{\pm 1}]),$ and the group homomorphism $\gG(R'[x]) \to \gG(R')$ comes from the ring homomorphism $R'[x] \to R'$ that maps $x$ to  $0.$

\subsection{The case of $\bGL_{n,\Z}.$}\label{linear}

Assume $S=\;\Spec (\Z)$ and let $\bG$ denote the general linear
group $\bGL_{n,\Z}$ over $\Z.$  We let $\bT$ denote the standard
maximal torus of $\bG.$  Let $\lambda  :\bG_{m,\Z} \to \bT
\hookrightarrow \bG$ be a cocharacter of $\bG$ that factors through $\bT.$
We review the structure of the groups $\bZ_{\gG}(\lambda  ),\,
\bP(\lambda  )$ and $\bU(\lambda  ).$

After replacing $\lambda  $ by $\text{int} (\theta  )\circ \lambda  $ for
some suitable $\theta  \in \bG(\Z)$ we may assume that there exists
(unique) integers $1\le \ell_1<\ell_2<\dots <\ell_j\le n$ and
$e_1,\dots,e_n$ such that
$$
\begin{aligned}
&e_i = e_j \q \text{\rm if} \q \ell_k\le i,\; j<\ell_{k+1}
\q\text{\rm for some}\q k\\
&e_{\ell_k + 1} > e_{\ell_k} \q \text{\rm for all} \q 1\le k\le j
\end{aligned}
$$
so that the functor of
points of our map $\lambda  : \bG_{m,\Z} \to \bT$ is  given by
$$
\lambda  _R: \bG_{m,\Z} (R) \lra \bT(R)
$$
$$
r\mapsto \begin{pmatrix}
r^{e_1} &&&0\\
&&\ddots\\
&0\\
&&&r^{e_n}\end{pmatrix} \leqno{(*)}
$$
for any (commutative) ring $R$ and for all $r\in \bG_{m,\Z}(R) = R^\times.$

At the level of coordinate rings if $\bG_{m,\Z} = \,\Spec\big(\Z[x^{\pm
1}]\big)$ and $\bT=\,\Spec\big(\Z[t^{\pm 1}_1,\dots,t^{\pm 1}_n]\big),$ then
$\lambda  $ corresponds (under Yoneda) to the ring homomorphism
$$
\lambda  ^* : \Z [t^{\pm 1}_1,\dots,t^{\pm 1}_n] \lra \Z
[x^{\pm1}]
$$
given by
$$
\lambda  ^* :t_i\mapsto x^{e_i}.
$$

From this it follows that $\bZ_{\bG}(\lambda  )(R)$ consists of block
diagonal matrices inside $\bGL_n(R)$ of size
$\ell_1,\dots,\ell_j.$  Note that one ``cannot see'' this by
looking at the centralizer of $\lambda  \big(\bG_{m,\Z}(R)\big)$
inside $\bG(R).$  This is clear, for example, if $n=2, \, R=\Z,$ $j = 1,$
$\ell_1 = 1$ and $1= e_1<e_2=3.$  The easiest way to eliminate
``naive'' contralizers in $\bZ_{\bG}(\lambda  )(R)$ is to look at
their image in $\bG\big(R[x^{\pm 1}]\big).$  In fact

\begin{lemma}\label{centralizer2} With the above notation we have 
$$\bZ_{\bG}(\lambda  )(R) =\bigl\{ \, A\in \bG(R)\subset \bG(R[x^{\pm 1}]): A\;
\text{\it commutes with} \; \lambda_R  \big(\bG_{m, \Z}(R[x^{\pm 1}])\big) \, \bigr\}.$$
\end{lemma}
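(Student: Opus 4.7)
The plan is to unwind both sides of the claimed equality in terms of the functorial definition of $\bZ_{\bG}(\lambda)$ and then invoke Yoneda's lemma for the representability of $\bG_{m,\Z}$ by $\Z[x^{\pm 1}]$. No use is made of the specific block-diagonal shape of $\lambda$; only that $\lambda$ is a morphism of $\Z$--group functors.

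First I would establish the inclusion ``$\subseteq$''. Let $A\in \bZ_{\bG}(\lambda)(R)$. By the defining formula (\ref{centralizer}) applied to the $R$--algebra $R'':=R[x^{\pm 1}]$, the image of $A$ in $\bG(R[x^{\pm 1}])$ commutes with every element of $\lambda_{R[x^{\pm 1}]}\bigl(\bG_{m,\Z}(R[x^{\pm 1}])\bigr)$ inside $\bG(R[x^{\pm 1}])$, which is exactly the condition defining the right-hand side.

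The main (but still formal) step is the reverse inclusion ``$\supseteq$''. Suppose $A\in \bG(R)$ commutes, after base change to $R[x^{\pm 1}]$, with every $\lambda(u)$ for $u\in R[x^{\pm 1}]^\times$; it suffices in fact to know this for the universal unit $u=x$. Given any $R$--algebra $R''$ and any $u\in R''^\times=\bG_{m,\Z}(R'')$, Yoneda's lemma provides a unique $R$--algebra homomorphism $\phi_u\colon R[x^{\pm 1}]\to R''$ with $\phi_u(x)=u$. Applying the functor $\bG$ to $\phi_u$ yields a group homomorphism $\bG(\phi_u)\colon \bG(R[x^{\pm 1}])\to \bG(R'')$ which sends the image of $A$ to the image of $A$ in $\bG(R'')$ (since $A\in \bG(R)$) and, by naturality of $\lambda$, sends $\lambda(x)$ to $\lambda(u)$. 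The commutation relation $A\,\lambda(x)=\lambda(x)\,A$ in $\bG(R[x^{\pm 1}])$ therefore transports to $A\,\lambda(u)=\lambda(u)\,A$ in $\bG(R'')$. Since $R''$ and $u$ were arbitrary, this says precisely that $A$ belongs to $\bZ_{\bG}(\lambda)(R)$ in the sense of (\ref{centralizer}).

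I do not anticipate any real obstacle: the content of the lemma is that the universal multiplicative-group-valued test element, namely $x\in R[x^{\pm 1}]^\times$, is enough to detect centralization, and this is a direct consequence of the representability of $\bG_{m,\Z}$. The only thing worth flagging is that one must work inside $\bG(R[x^{\pm 1}])$ rather than $\bG(R)$ — as the example with $n=2$, $R=\Z$, $e_1=1$, $e_2=3$ illustrates, the naive centralizer inside $\bG(R)$ is strictly larger, because $\lambda(\bG_{m,\Z}(R))=\lambda(\{\pm 1\})$ is too coarse to see the weight decomposition.
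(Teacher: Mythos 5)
Your proof is correct and takes essentially the same route as the paper: both hinge on applying the functor $\bG$ to the specialization homomorphism $R[x^{\pm 1}] \to R''$, $x \mapsto u$, to transport the commutation with the universal unit $x$ to commutation with an arbitrary unit $u$. The paper phrases the nontrivial inclusion contrapositively while you argue directly, but the content is identical; your explicit observation that the block-diagonal shape of $\lambda$ plays no role is a small clarifying improvement over the paper's exposition, which invokes diagonal matrices only for concreteness.
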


\begin{proof} The inclusion $\subset$ follows from the definition of
$\bZ_{\bG}(\lambda  ).$  Conversely suppose that $A\in \bG(R)$ is not
an element of $\bZ_{\bG}(\lambda  )(R).$  Then there exists a ring
homomorphism $R\to S$ and an element $s\in S^\times$ such that the
image of $A$ in $\bG(S)$ does not commute with the diagonal matrix
$$
\lambda  _S(s) = \begin{pmatrix} s^{e_1}\\ &\ddots\\
&&s^{e_n}\end{pmatrix}\,.
$$
But then $A,$ viewed now as an element of $\gG\big(R[x^{\pm 1}]\big)$ cannot
commute with
$$
\lambda  _{R[x^{\pm 1}]}(x) = \begin{pmatrix} x^{e_1}\\ &\ddots\\
&&x^{e_n}\end{pmatrix}\,.
$$
For if it did, we would reach a contradiction by functoriality
considerations applied to the (natural) ring homomorphism $R[x^{\pm
1}] \to S$ that maps $x$ to $s.$
\end{proof}

Returning to our example we see that there are two extreme cases for
$\bZ_{\bG}(\lambda  ).$ If $j=1$ and $\ell_1=n$ then $\bZ_{\bG}(\lambda  )=  \bG.$  At the other extreme if $j=n$ then $\lambda  $ is regular and
$\bZ_{\bG}(\lambda  ) = \bT.$
In all cases we see from the diagonal block description that
$\bZ_{\bG}(\lambda  )$ is a closed subgroup of $\bG$ (in particular
affine).

We now turn our attention to $\bP(\lambda  )$ and $\bU(\lambda  ).$  By
using $(*)$ one immediately sees that
$$
\bP(\lambda  )(R) =\{A= (a_{ij})\in \gG(R): a_{ji} = 0\;\text{if} \;
e_i>e_j\}.
$$
Thus the $\bP(\lambda  )$ are the standard parabolic subgroups of
$\bG.$

\begin{example}\label{GL5} We illustrate with the case $n=5$ with $j=2$ and
$\ell_1=2,\; \ell_2 = 5, \; e_1=1,\; e_2=3.$  Then $A= (a_{ij})
\in \GL_5(R)$
 is of the form
$$
 A= \begin{pmatrix}
\begin{tabular}{c|c}
$\times$ &$+$\\ \hline
$-$ &$\times$\end{tabular}
\end{pmatrix}\,.
$$
We have two blocks, the top left of size $2$ and the bottom right of size $3.$
Given $A= (a_{ij}) \in \GL_5(R) \subset \GL_5(R[x^{\pm 1}])$ define $P$ by
$$
\begin{pmatrix}
x^1 &&&0\\
&x\\
&&x^3\\
&0 &&x^3\\
&&&&x^3\end{pmatrix} \q A\q\begin{pmatrix} x^{-1} &&&0\\
&x^{-1}\\
&&x^{-3}\\
&0&&x^{-3}\\
&&&&x^{-3}\end{pmatrix} = P
$$
that is
$$
\lambda  (x) A\lambda  (x)^{-1}=P
$$
where $P = (p_{ij})$ and $p_{ij} = \sum p_{ijk}x^k \in R[x^{\pm 1}].$
 To belong to $\bP(\lambda  )$ the element $A$ must be such that $p
_{ijk}=0$ for $k<0.$  This forces all entries in the $3\times 2$
block marked with a $-$ to vanish.  For the elements in
$\bZ_{\bG}(\lambda  )(R)$ both blocks $-$ and $+$ must vanish.

It is easy to determine that if $A\in \bP(\lambda  )$ the matrix $P$
is such that the $p_{ij} = a_{ij}\in R$ whenever $1\le i,\; j\le 2$
or $2\le i,\; j\le 5.$  If, on the other hand, $i\le 2< j$ then
$p_{ij} = a_{ij} x^2.$
\end{example}

This makes the meaning of $\bU(\lambda  )$ quite clear in general.  If
$\lambda  (x)A\lambda  (x)^{-1}\in \bG(R[x])$ is mapped to the
identity element of $\bG(R)$ under the map $R[x] \to R$ which sends
$x\mapsto 0$ then $a_{ii} =1$ and $a_{ij}=0$ if $e_i>e_j.$  That is
$$
\bU(\lambda  )(R) =\{A=(a_{ij})\in \bP(\lambda  ): a_{ii} = 1\;\text{and}\; a_{ij} =0\;\text{if} \; e_i>e_j\}.
$$
In particular $\bU(\lambda  )$ is an unipotent subgroup  of $\bP(\lambda).$

\subsection{The general case}

\begin{lemma}\label{pseudo} Assume that there exists  locally for the
$fpqc$-topology
 a closed embedding of $\gG$ in a linear  group scheme.\footnote{This condition is satisfied  if $\gX$ is locally noetherian of dimension $\leq 1$
\cite[\S 1.4]{BT2}, and also for reductive $\gX$--group schemes.} Then

\begin{enumerate}
\item the  $\gX$--functor $\bU(\lambda)$ (resp. $\bP(\lambda)$, resp.
 $\bZ_\gG(\lambda)$)
 is representable by a closed  subgroup scheme of  $\gG$ which is affine over $\gX.$
\item The geometric fibers  of $\bU(\lambda)$ are unipotent.
\item $\bP(\lambda)= \bU(\lambda) \rtimes \bZ_\gG(\lambda)$.
\item  $\bZ_\gG(\lambda)= \bP(\lambda) \times_{\gG} \bP(-\lambda)$.

\item  $\bP(\lambda) = \bN_\gG\bigl( \bP(\lambda) \bigr)$.
\end{enumerate}
\end{lemma}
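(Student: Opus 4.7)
The strategy is to reduce the five assertions to the explicit case $\gG = \bGL_{n,\Z}$ analyzed in \S 13.1, exploiting the hypothesis that locally for the $fpqc$ topology on $\gX$ the group $\gG$ admits a closed embedding into some $\bGL_{n,\gX'}$. The basic glue is the fiber-product compatibility, valid for any closed embedding $\iota : \gG \hookrightarrow \gH$ of affine $\gX$--group schemes and any cocharacter $\lambda$ of $\gG$:
\begin{equation*}
\bP_\gG(\lambda) = \gG \times_\gH \bP_\gH(\iota\lambda), \quad \bU_\gG(\lambda) = \gG \times_\gH \bU_\gH(\iota\lambda), \quad \bZ_\gG(\lambda) = \gG \times_\gH \bZ_\gH(\iota\lambda),
\end{equation*}
which rests on the equality $\gG(R[x]) = \gG(R[x^{\pm 1}]) \cap \gH(R[x])$ inside $\gH(R[x^{\pm 1}])$ (using that $\gG \hookrightarrow \gH$ is closed and $R[x] \hookrightarrow R[x^{\pm 1}]$ is injective). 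After a further refinement of the $fpqc$ cover, one may conjugate $\lambda$ inside $\bGL_n$ so that it factors through the diagonal torus, placing us exactly in the framework of \S 13.1.

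Assertions (1) and (2) follow at once: the $\bGL_n$--functors are closed subgroup schemes by the explicit block description of \S 13.1, so the fiber-product formulas present $\bP_\gG(\lambda)$, $\bU_\gG(\lambda)$, $\bZ_\gG(\lambda)$ as closed subgroup schemes of $\gG_{\gX'}$, which $fpqc$ descend to closed subgroup schemes of $\gG$ over $\gX$, automatically affine; moreover $\bU_\gG(\lambda)$ inherits unipotent fibers from the block-unitriangular $\bU(\lambda_{\bGL_n})$.

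For (3), I would introduce a \emph{contraction morphism} $\pi : \bP(\lambda) \to \bZ_\gG(\lambda)$ that sends $g \in \bP(\lambda)(R)$ to the value at $x = 0$ of $\lambda(x) g \lambda(x)^{-1} \in \gG(R[x])$. Specializing the identity $\lambda(tx) g \lambda(tx)^{-1} = \lambda(t)\bigl(\lambda(x) g \lambda(x)^{-1}\bigr)\lambda(t)^{-1}$ at $x = 0$ shows at once that $\pi(g)$ is centralized by $\lambda$; a direct check reveals that $\pi$ is a group homomorphism, restricts to the identity on $\bZ_\gG(\lambda)$, and has kernel exactly $\bU(\lambda)$, yielding the semidirect product decomposition. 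For (4), the inclusion $\bZ_\gG(\lambda) \subset \bP(\lambda) \cap \bP(-\lambda)$ is tautological; conversely, $g \in \bP(\lambda)(R) \cap \bP(-\lambda)(R)$ forces $\lambda(x) g \lambda(x)^{-1}$ to lie in $\gG(R[x]) \cap \gG(R[x^{-1}]) = \gG(R)$ inside $\gG(R[x^{\pm 1}])$, and this constant value must coincide with $g$ by specializing at $x = 1$.

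The main obstacle is (5). Here the fiber-product formalism fails for normalizers: an element of $\gG$ may normalize $\bP_\gG(\lambda)$ without normalizing the ambient $\bP_\gH(\iota\lambda)$, so no purely formal reduction to $\bGL_n$ is available. The plan would be to proceed fiberwise, invoking the classical pseudo-parabolic self-normalization over a field (originally Borel-Tits, extended to the pseudo-reductive setting in \cite[Prop.~2.1.8]{CoGP}) to conclude that the closed immersion $\bP(\lambda) \hookrightarrow \bN_\gG(\bP(\lambda))$ is an isomorphism on all geometric fibers. The structural identification from (3), combined with the smoothness of the quotient $\bN_\gG(\bP(\lambda))/\bP(\lambda)$ over the closed subscheme locus where it is trivial, would then be used to promote the fibral equality to a scheme-theoretic one by a descent argument on the $fpqc$ cover.
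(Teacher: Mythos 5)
Your handling of parts (1), (2) and the fiber-product formalism matches the paper's approach. For (3) and (4) you take a genuinely different, and arguably cleaner, route. The paper establishes (3) by decomposing $g\in\bP(\lambda)(R')$ inside the ambient $\bGL_n$ as $g = u z$ with $u\in\bU'(\lambda)$, $z\in\bZ_{\gG'}(\lambda)$, and then showing via specialization at $x=0$ that $z$ (and hence $u$) lies in $\gG(R')$. Your contraction morphism $\pi(g)=\bigl(\lambda(x)g\lambda(x)^{-1}\bigr)\big|_{x=0}$ accomplishes the same thing without ever leaving $\gG$: the check that $\pi$ is a homomorphism, restricts to the identity on $\bZ_\gG(\lambda)$, and has kernel exactly $\bU(\lambda)$ is direct. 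For (4), your observation that $\gG(R[x])\cap\gG(R[x^{-1}])=\gG(R)$ for $\gG$ affine, together with specialization at $x=1$, is an elementary argument that bypasses the ambient group entirely; the paper instead derives (4) from the fiber-product compatibility $\bZ_\gG(\lambda)=\gG\times_{\gG'}\bZ_{\gG'}(\lambda)$ and the $\bGL_n$ case, which is also formally correct.

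For (5) you put your finger on a genuine subtlety: $\bN_\gG(\bP(\lambda))$ is not $\gG\times_{\gG'}\bN_{\gG'}(\bP'(\lambda))$, because an element of $\gG$ may normalize $\gG\cap\bP'(\lambda)$ without normalizing $\bP'(\lambda)$ in $\gG'$. The paper's own proof says only that ``(4) and (5) follows from the $\bGL_{n,R}$ case,'' which is precisely the reduction you correctly flag as unavailable for normalizers. Your proposed fix — fibral self-normalization plus an $fpqc$-descent/smoothness argument — is the right kind of idea but is not a proof: the lemma does not assert smoothness or flatness of $\bP(\lambda)$ for a general affine $\gG$ satisfying the hypothesis, the normalizer functor $\bN_\gG(\bP(\lambda))$ is not a priori representable, and passing from a fibrewise isomorphism of the closed immersion $\bP(\lambda)\hookrightarrow\bN_\gG(\bP(\lambda))$ to a scheme-theoretic equality over the base needs a flatness input you have not supplied. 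So you have identified the weak point of the argument (one the paper itself glosses over), but you would still need to produce a complete argument for (5), either by a direct functorial computation analogous to your (4), or by establishing the flatness/representability facts that your descent sketch assumes.
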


\begin{proof}
{\it The case of $\bGL_{n,S}$:} The question is local with respect to
 the {\it fpqc} topology, so we can assume then that $\gX$ is the spectrum of a local ring $R.$  Since all
maximal split\footnote{Trivial, in the terminology of \cite{SGA3}.}  tori of the $R$--group $\bGL_{n,R}$ are conjugate under
$\bGL_n(R)$ \cite[XXVI.6.16]{SGA3}, we can assume
 that $\lambda :  \GG_{m,R} \to \bT_R  <  \bGL_{n,R} $ where $\bT$ is
the standard maximal torus of $\bGL_{n, \Z}$.
Since $\Hom_\Z(\GG_m, \bT ) \simeq \Hom_R(\GG_{m,R} \bT_R )$,
we can reduce our problem to the case when $R = \Z,$  which has been already done in Example \ref{linear}.

\smallskip

\noindent {\it General case:}

By $fpqc$-descent, we can assume that  $\gX$ is the spectrum of a ring $R,$ and that we are given a
 $R$--group scheme homomorphism
$\rho: \gG \to \gG'=\GL_{n,R}$ which is a closed immersion.

\smallskip

\noindent
(1) Denote by  $\bP'(\lambda)$ and
  $\bU'(\lambda)$ the $R$--subfunctors
of $\gG'$ attached to the cocharacter $\rho \circ \lambda$.
The identities $\bP(\lambda) = \bP'(\lambda) \times_{\gG'} \gG$ and
 $\bU(\lambda) = \bU'(\lambda) \times_{\gG'} \gG$ can be established by reducing  to the case
of $\gG'= \bGL_{n,R}.$ This reduces the representability questions to the case when  $\gG= \bGL_{n,R}$ considered above.

\smallskip

\noindent (2) This follows as well from the $\bGL_{n,R}$ case.

\smallskip

\noindent (3) We know that the result  holds for $\gG'$. Let  $R'$ be a ring extension of $R$
and let $g \in \gG(R')$. Then $g = u z$ with
$u \in \bU'(\lambda)(R')$ and $z \in \bP'(\lambda)(R')$.
We have
$$
  \lambda(x) \, g \,\lambda(x)^{-1}   \lambda(x) \, u  \,\lambda(x)^{-1} \, z \in \gG(\Bbb {A}_{R'}).
$$
By specializing at $0$, we get that $z \in \gG(R')$.
Thus $g \in \bZ_\gG(\lambda)(R')$ and $u \in   \bU(\lambda)(R')$.
We conclude that
 $\bP(\lambda)= \bU(\lambda) \rtimes \bZ_\gG(\lambda)$.

\smallskip

\noindent (4) and (5)  follows from the $\bGL_{n,R}$ case.
\end{proof}

\begin{definition} An $\gX$--subgroup of $\gG$ is {\it pseudo-parabolic} if it is of the form
$\bP(\lambda)$ for some  $\gX$--group homomorphism  $\GG_{m,X} \to \gG$.
\end{definition}

\begin{proposition}\label{pseudopseudo} Let $\gG$ be a reductive group scheme over $\gX$.

\begin{enumerate}

\item  Let $\lambda: \GG_{m, \gX} \to \gG$ be a cocharacter.
Then $\bP(\lambda)$ is a parabolic subgroup scheme of $\gG$ and
 $\bZ_\gG(\lambda)$ is a Levi subgroup of the $\gX$--group scheme  $\bP(\lambda)$.

\item Assume that $\gX$ is semi-local,  connected and non-empty. Then the
pseudo parabolic subgroup schemes of $\gG$  coincide with the
parabolic subgroup schemes of $\gG$.

\end{enumerate}
\end{proposition}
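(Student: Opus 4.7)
For part (1) the plan is to reduce to the split case by faithfully flat descent and then read off the conclusion from the classical root-theoretic description, which is essentially the calculation already carried out in \S\ref{linear} for $\bGL_{n,\Z}$. Since $\gG$ is reductive, there exists an \'etale cover $\gX' \to \gX$ over which $\gG$ becomes split with a split maximal torus $\bT'$ containing the image of $\lambda_{\gX'}$; this is [XXII.2.3] combined with the conjugacy of maximal tori étale-locally [XII.1.10]. Over $\gX'$, the root datum of $(\gG_{\gX'}, \bT')$ decomposes the roots into those with $\langle \alpha, \lambda_{\gX'} \rangle > 0$, $= 0$, or $< 0$. Exactly as in the $\GL_n$ calculation of \S\ref{linear}, $\bZ_{\gG}(\lambda)_{\gX'}$ is generated by $\bT'$ and the root subgroups $\bU_\alpha$ with $\langle \alpha, \lambda\rangle = 0$, while $\bP(\lambda)_{\gX'}$ is generated by $\bZ_{\gG}(\lambda)_{\gX'}$ and the $\bU_\alpha$ with $\langle \alpha, \lambda\rangle > 0$, and $\bU(\lambda)_{\gX'}$ is generated by the latter alone. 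These are precisely the standard parabolic and its standard Levi decomposition attached to the cocharacter, in the sense of [XXVI.1].

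The representability and closed-immersion statements of Lemma \ref{pseudo} then let us descend: $\bP(\lambda)$ is a closed smooth subgroup scheme of $\gG$ whose formation commutes with base change, and its geometric fibers are parabolic subgroups by the split analysis above; hence $\bP(\lambda)$ is a parabolic subgroup scheme of $\gG$ by the fiberwise criterion [XXVI.1.4]. The semidirect product decomposition $\bP(\lambda) = \bU(\lambda)\rtimes \bZ_{\gG}(\lambda)$ of Lemma \ref{pseudo}(3) together with the unipotence of the geometric fibers of $\bU(\lambda)$ (Lemma \ref{pseudo}(2)) identifies $\bU(\lambda)$ with the unipotent radical of $\bP(\lambda)$ and $\bZ_{\gG}(\lambda)$ with a Levi subgroup [XXVI.1.6].

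For part (2) the strategy is to take a parabolic subgroup scheme $\bP \subset \gG$ and reverse-engineer a cocharacter $\lambda$ with $\bP(\lambda) = \bP$. The semi-local, connected hypothesis is used twice. First, by [XXVI.2.3] (or Lemma \ref{toral}/the discussion in \S\ref{sec-irr}), $\bP$ admits a Levi subgroup $\bL$, which is a reductive $\gX$--group scheme containing a maximal torus $\bT \subset \bL \subset \gG$. Second, after replacing $\gX$ by a connected \'etale cover (which is again semi-local), we may assume $\bT$ is split. Now in the split situation, the cocharacter lattice $\Hom_{\gX\text{-gr}}(\GG_{m,\gX},\bT)$ coincides with the absolute cocharacter lattice of the fibers, and we may choose $\lambda : \GG_{m,\gX}\to \bT$ lying in the cocharacter lattice of the central torus of $\bL$ and satisfying $\langle \alpha,\lambda\rangle > 0$ for every root $\alpha$ of $\bU(\bP)$ relative to $\bT$ and $\langle \alpha,\lambda \rangle = 0$ for every root of $\bL$. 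Then the construction of part (1), applied to this $\lambda$, yields $\bP(\lambda) = \bP$ over the cover. Finally, to descend $\lambda$ back to the original $\gX$ we invoke Lemma \ref{pseudo}(5): two cocharacters giving the same pseudo-parabolic differ by a conjugation inside $\bP(\lambda)$, and one checks that the descent datum for $\bP$ carries through to a descent datum for a suitable conjugate of $\lambda$. The main obstacle is exactly this last descent step; it is controlled by the isotriviality of tori over semi-local normal bases and by the fact that Levi subgroups of a given parabolic are conjugate by $\bU(\bP)(\gX)$ in the semi-local setting [XXVI.1.8].
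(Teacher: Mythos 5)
Your overall strategy for part (1) — reduce to the split case and compare with the root-theoretic standard parabolic — is close in spirit to the paper's, but there is a genuine gap at the crucial step: establishing that $\bP(\lambda)$ is \emph{smooth}. Lemma \ref{pseudo} only gives you that $\bP(\lambda)$ is representable by a closed subscheme; it says nothing about smoothness, and there is no ``fiberwise criterion'' in XXVI.1 that upgrades a closed subscheme with parabolic geometric fibers to a parabolic subgroup scheme for free. You also assert, without argument, that over the \'etale cover $\bP(\lambda)$ is ``generated by'' the torus and the relevant root subgroups — that is, that the functor $\bP(\lambda)$ coincides with the (smooth) standard parabolic $\bP_{\geq 0}$ attached to the sign condition on $\langle\alpha,\lambda\rangle$. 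The easy containment is $\bP_{\geq 0}\subset\bP(\lambda)$; the reverse inclusion is exactly where the work is. (One way to finish your route: after passing to a local base, the ideal quotient $J/I$ of the two closed subschemes is finitely generated, $A/J$ is $R$-flat, and the fibers agree by the field case; then $\mathrm{Tor}_1$-vanishing plus Nakayama forces $J=I$. But this is not in your text.) The paper instead sidesteps the functor comparison entirely: it reduces to the Chevalley group over $\Z$, notes by the field case that $\bP(\lambda)\to\Spec(\Z)$ is equidimensional with smooth fibers, and concludes smoothness from normality of $\Z$ via SGA1, II.2.3. Then $\bZ_\gG(\lambda)=\bP(\lambda)\times_\gG\bP(-\lambda)$ is a Levi by XXVI.4.3.2, without any appeal to $\bU(\lambda)$ being the unipotent radical.

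For part (2) the gap is sharper and you flag it yourself. You split the torus over a finite \'etale cover $\gX'$, choose $\lambda'$ there with $\bP(\lambda')=\bP_{\gX'}$, and then try to descend $\lambda'$. But two cocharacters producing the same pseudo-parabolic need not be conjugate (nor related in any controllable way beyond sharing a Levi), so Lemma \ref{pseudo}(5) does not furnish the descent datum; ``differing by a conjugation inside $\bP(\lambda)$'' is not something you have established, and in general it is false. The paper's route via the relative root system machinery of XXVI.7 is structurally different precisely because it works with the maximal split subtorus of (the radical of) a Levi subgroup \emph{already defined over $\gX$} (using the semi-local hypothesis to get a Levi and to split the relevant torus), and then chooses $\lambda$ over $\gX$ directly; there is nothing to descend. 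If you want to keep your approach you would need to produce, over $\gX$ itself, a split subtorus of $\bZ(\bL)$ against which the relative roots of $\bP$ have the required signs — which is exactly what XXVI.7 supplies.
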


We shall use that this fact is known for reductive groups
over fields \cite[\S 15.1]{Sp}.

\begin{proof} We can assume that $\gX = \Spec(R)$ is affine.

\smallskip

\noindent (1) The geometric fibers of $\bP(\lambda)$ are parabolic subgroups.
By definition \cite[\S XXVI.1]{SGA3},
it remains to show that $\bP(\lambda)$ is smooth.
The question is then local with respect to
 the {\it fpqc} topology, so that we can assume that  $R$ is local and
that $\gG$ is split.
 By Demazure's  theorem \cite[XXIII.4]{SGA3}, we can assume that
 $\gG$ arises by base change from a (unique) split Chevalley group $\gG_0$ over $\Z$.
 
 We now reason along similar lines than the ones used in studying the $\bGL_{n,\Z}$ case above.
Let $\gT  \subset \gG_0$ be a maximal split torus.
Since all maximal split tori of $\gG$ are conjugate under
$\gG(R)$, we can assume that our cocharacter $\lambda$ factors  through $ \gT_R$.
Since $\Hom_\Z(\GG_{m,\Z}, \gT ) \cong \Hom_R(\GG_{m, R}, \gT_R )$,
the problem again reduces to the case  when $R = \Z$ and of $\gG = \gG_0,$ and 
$\lambda : \gG_{m,\Z} \to \gT$.  By the field case,
 the morphism  $\bP(\lambda) \to \Spec( \Z)$ is  equidimensional.
Since $\Z$ is a normal ring and the geometric fibers are smooth,
 we can conclude by   \cite[prop. II.2.3]{SGA1}
that $\bP(\lambda)$ is smooth and is a parabolic
subgroup scheme of the $\Z$--group $\gG_0$.

The geometric fibers of
 $\bP(\lambda) \times_S \bP(-\lambda)$ are Levi subgroups.
By applying \cite[th. XXVI.4.3.2]{SGA3}, we get that
  $\bZ_\gG(\lambda)= {\bP(\lambda) \times_\gG \bP(-\lambda)}$
is a Levi $S$-subgroup scheme of $\bP(\lambda)$.

\smallskip

\noindent (2) Using the theory of relative root systems \cite[\S XXVI.7]{SGA3}, 
the proof is the same as in the field case.
\end{proof}

\section{Appendix 2: Global automorphisms of $\bG$--torsors  over
 the projective line}

In this appendix there is no assumption on the characteristic of the  base field $k$.
Let $\bG$ be a linear algebraic $k$--group such that $\bG^0$ is reductive.
One  way to state Grothendieck-Harder's theorem is
to say that the natural map 
$$
\Hom_{gp} (\GG_m, \bG) / \bG(k) \to H^1_{Zar}( {\IP}^1, \bG)
$$
which maps a cocharacter $\lambda : \GG_m \to \bG$ to the $\bG$-torsor
$\bE_\lambda:=(-\lambda)_*\big( {\cal O}(-1)\big)$ over $\IP^1_{k}$
where  ${\cal O}(-1)$ stands for the Hopf
bundle ${\bf A}^2_{k} \setminus \{ 0 \} \to {\IP}^1_{k},$ is bijective.\footnote{This is not the usual  way to state the theorem
(see \cite[II.2.2.1]{Gi0}),
but it is easy to derive the formulation that we are using.}

We fix now a cocharacter $\lambda : \GG_m \to \bG$.
We are interested in the twisted ${\IP}^1_k$--group scheme $\bE_\lambda(\bG) = {\underline {\rm Isom}}_{\bG}(\bE_\lambda, \bE_\lambda),$ as well as the abstract
 group $\bE_\lambda(\bG)( {\IP}^1_k)$.
This group is the group of global automorphisms of the $\bG$--torsor $\bE_\lambda$ over $ {\IP}^1_k$.
It has a concrete description.

\begin{lemma}\label{hopf}
$\bE_\lambda(\bG)( {\IP}^1_k) =  \bG(k[t]) \cap  \lambda(t)  \bG(k[t^{-1}]) \lambda(t^{-1})$.
\end{lemma}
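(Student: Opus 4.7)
The idea is to realize both sides as descriptions of the same object from two perspectives: the left side via global sections of the automorphism sheaf, and the right side via gluing data. The key is to use the standard Zariski trivialization of the Hopf bundle to write down an explicit Čech $1$-cocycle for $\bE_\lambda$, and then compute global automorphisms by the usual pair-of-sections-that-agree-on-overlap description.

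First, I would cover $\IP^1_k$ by the two standard affine opens $U_0 = \Spec(k[t])$ and $U_\infty = \Spec(k[t^{-1}])$, so that $U_0 \cap U_\infty = \Spec(k[t^{\pm 1}])$. The Hopf bundle $\cal O(-1) = \bA^2_k \setminus \{0\} \to \IP^1_k$, viewed as a $\GG_m$-torsor, is Zariski-locally trivial on this cover, and its transition function (with a consistent choice of trivializations over $U_0$ and $U_\infty$) is given by multiplication by $t \in \GG_m(k[t^{\pm 1}])$. Applying the pushforward by the cocharacter $-\lambda$, the torsor $\bE_\lambda = (-\lambda)_* \cal O(-1)$ is a $\bG$-torsor trivialized on the same cover with transition cocycle $c = (-\lambda)(t) = \lambda(t)^{-1} \in \bG(k[t^{\pm 1}])$.

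Second, I would describe global automorphisms of a $\bG$-torsor $\bE$ given by a Čech cocycle. A global section of the (right-equivariant) automorphism sheaf $\bE_\lambda(\bG) = \underline{\Isom}_\bG(\bE_\lambda, \bE_\lambda)$ is given by a pair $(g_0, g_\infty)$ with $g_0 \in \bG(k[t])$ and $g_\infty \in \bG(k[t^{-1}])$, acting on the local trivializations by left multiplication, such that the two agree on the overlap after twisting by the cocycle, i.e. $g_\infty = c\, g_0\, c^{-1} = \lambda(t)^{-1} g_0 \lambda(t)$ in $\bG(k[t^{\pm 1}])$. The projection $(g_0, g_\infty) \mapsto g_0$ then identifies $\bE_\lambda(\bG)(\IP^1_k)$ with
\[
\bigl\{\, g_0 \in \bG(k[t]) \ :\ \lambda(t)^{-1}\, g_0\, \lambda(t) \in \bG(k[t^{-1}]) \,\bigr\} \ =\ \bG(k[t]) \cap \lambda(t)\, \bG(k[t^{-1}])\, \lambda(t^{-1}),
\]
which is exactly the right-hand side of the claim.

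\noindent\textbf{Main obstacle.} The conceptual content is almost entirely tracking conventions: which sign appears in ``$(-\lambda)_*$'', whether the automorphism sheaf acts on the left or the right, and which of the two possible gluing conventions ($g_\infty = c g_0 c^{-1}$ versus $g_0 = c g_\infty c^{-1}$) one uses, all conspire to produce the specific form $\lambda(t)\, \bG(k[t^{-1}])\, \lambda(t^{-1})$ rather than its inverse-conjugate variant. Once the sign $-\lambda$ in the definition of $\bE_\lambda$ is matched with the transition cocycle $\lambda(t)^{-1}$, the rest of the computation is a formal Čech calculation and there is no further difficulty; in particular no hypothesis on $\bG$ beyond being an affine $k$-group scheme is needed for this identification.
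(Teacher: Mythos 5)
Your proof is correct and takes essentially the same route as the paper: cover $\IP^1_k$ by the two standard affine charts, write the Hopf bundle as a Čech $\GG_m$-cocycle with transition function $t^{\pm 1}$, push forward by $-\lambda$ to obtain the $\bG$-cocycle of $\bE_\lambda$, and then read off the global automorphisms as pairs $(g_0,g_\infty)$ agreeing on the overlap after conjugation by that cocycle. The only difference is cosmetic (you orient the transition cocycle from $U_0$ to $U_\infty$ while the paper uses the opposite direction), and your closing remark about tracking sign and side conventions is an accurate diagnosis of where such a computation could go wrong, though it introduces no new content.
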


\begin{proof} We recover ${\IP}^1_k$ by two
affine lines $\bU_0= \Spec(k[t])$
and $\bU_1= \Spec(k[t^{-1}])$.  The Hopf bundle is isomorphic
to the twist of $\GG_m$ by the cocycle $z \in  Z^1( \bU_0 \sqcup \bU_1 / {\IP}^1_k, \GG_m)$
where $z_{0,0}=1$, $z_{0,1}=t^{-1}$, $z_{1,0}=t$,  $z_{1,1}=1$.
Then $\lambda(z) \in  Z^1( \bU_0 \sqcup \bU_1 / {\IP}^1_k, \GG_m)$ is the cocycle of $\bE_\lambda$.
Hence

\begin{eqnarray} \nonumber
\bE_\lambda(\bG)( {\IP}^1_k) &=& \Bigl\{ (g_0, g_1) \in \bG(\bU_0) \times \bG(\bU_1)
\, \mid \, \lambda^{-1}( z_{0,1}) . g_1 = g_0   \, \Bigr\} \\ \nonumber
&=&      \bG(k[t]) \cap  \lambda(t)  \bG(k[t^{-1}]) \lambda(t^{-1}) .
\end{eqnarray}

\end{proof}

In the split connected  case, this group has been computed by
Ramanathan  \cite[prop. 5.2]{Ra} and by the first author in the split case
(see proposition II.2.2.2 of \cite{Gi0}). We provide here the general
 case by computing the Weil restriction
$$
\bH_\lambda = \prod\limits_{ {\IP}^1_k/k}  \bE_\lambda(\bG),
$$
which is known to be a representable by an algebraic  affine $k$--group.
Let $\bP(\lambda)=\bU(\lambda) \rtimes \bZ_\bG(\lambda) \subset \bG$ be
 the parabolic subgroup attached to $\lambda$ (lemma \ref{pseudo}).

Denote by $\bZ(\lambda)$ the center of $\bZ_{\bG}(\lambda)$. Then
$\lambda$ factors through  $\bZ(\lambda)$ and this allows us to define the  $\bZ(\lambda)$--torsor
$\bS_\lambda:=(-\lambda)_*\big( {\cal O}(-1)\big)$ over ${\P}^1_{k}$.
We can twist the morphism $ \bZ_{\bG}(\lambda) \to \bG$ by $\bS_\lambda$,
 so we get  a morphism $\bZ_{\bG}(\lambda) \times_k \, {\IP}^1_k \to \bE_\lambda(\bG)$ and then a morphism
$\bZ_{\bG}(\lambda) \to   \bH_\lambda$.

\begin{proposition} \label{global}
 The homomorphisms of $k$--groups
$$
 \Bigl( \prod\limits_{ {\IP}^1_k/k} \,  \bS_\lambda(\bU(\lambda)) \Bigr)
\rtimes \bZ_{\bG}(\lambda) \enskip  \to \enskip
 \prod\limits_{ {\IP}^1_k/k} \,  \bS_\lambda \bigl(\bP(\lambda) \bigr)
\enskip \to \enskip  \bH_\lambda.
$$
are isomorphisms. Furthermore,
 $\prod\limits_{ {\IP}^1_k/k} \,  \bS_\lambda(\bU(\lambda))$
is a unipotent $k$--group.
\end{proposition}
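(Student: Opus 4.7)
My plan is to pass through the semidirect decomposition from Lemma~\ref{pseudo}(3), $\bP(\lambda) = \bU(\lambda) \rtimes \bZ_\bG(\lambda)$, and then compute the Weil restrictions via the standard gluing $\IP^1_R = \Spec(R[t]) \cup \Spec(R[t^{-1}])$. For the first arrow, since $\bZ(\lambda)$ is central in $\bZ_\bG(\lambda)$ by definition, its conjugation action on $\bZ_\bG(\lambda)$ is trivial; twisting the above semidirect product by the $\bZ(\lambda)$-torsor $\bS_\lambda$ therefore yields $\bS_\lambda(\bP(\lambda)) = \bS_\lambda(\bU(\lambda)) \rtimes (\bZ_\bG(\lambda) \times_k \IP^1_k)$. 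Weil restriction preserves semidirect products, and since $\IP^1_k \to \Spec(k)$ has geometrically connected fibers, $\prod_{\IP^1_k/k}(\bZ_\bG(\lambda) \times_k \IP^1_k) = \bZ_\bG(\lambda)$, which delivers precisely the $k$-group on the left.

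For the second arrow, I would combine Lemma~\ref{hopf} with the parallel description obtained from the same open cover, namely
\[
\textstyle \prod_{\IP^1_k/k}\bS_\lambda(\bP(\lambda))(R) = \bigl\{\, p \in \bP(\lambda)(R[t]) \,\bigm|\, \lambda(t)^{-1} p \lambda(t) \in \bP(\lambda)(R[t^{-1}]) \,\bigr\}.
\]
The arrow is induced by $\bP(\lambda) \hookrightarrow \bG$, and injectivity is immediate. For surjectivity, given $g \in \bG(R[t])$ with $\lambda(t)^{-1} g \lambda(t) \in \bG(R[t^{-1}])$, the task is to show $g \in \bP(\lambda)(R[t])$. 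I would reduce to $\bG = \bGL_{n}$ via a closed embedding as in Lemma~\ref{pseudo}, using the intersection property $\bP(\lambda)^{\bG} = \bP(\lambda)^{\bGL_n} \cap \bG$ established in that proof. For $\bGL_n$ with $\lambda = \mathrm{diag}(t^{e_1},\ldots,t^{e_n})$, $e_1 \leq \dots \leq e_n$, the condition on $g=(g_{ij})$ forces $g_{ij}$ to be a polynomial in $t$ of degree at most $e_i-e_j$; when $e_i<e_j$ this entry must vanish, which is exactly the defining condition for membership in $\bP(\lambda)(R[t])$, while the bounded-degree entries with $e_i>e_j$ fit inside $\bU(\lambda)$ twisted by $\bS_\lambda$.

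For the unipotence of $\prod_{\IP^1_k/k}\bS_\lambda(\bU(\lambda))$, the same $\bGL_n$ analysis exhibits this $k$-group as an affine space of strictly block-triangular matrices whose $(i,j)$-entry is a polynomial in $t$ of degree at most $e_i - e_j$ (with $e_i > e_j$), which is manifestly a unipotent algebraic $k$-group. The general case follows from the closed embedding $\bG \hookrightarrow \bGL_n$, since a closed subgroup of a unipotent affine group is unipotent.

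The main obstacle will be the surjectivity step of the second arrow: it is not formal that $g \in \bH_\lambda(R)$ must land in $\bP(\lambda)(R[t])$. The cleanest route is the $\bGL_n$ reduction, which converts the problem into an elementary polynomial-degree estimate on matrix entries; the delicate point is verifying that the intersection identity $\bP(\lambda)^{\bG} = \bP(\lambda)^{\bGL_n} \cap \bG$ genuinely transfers to sections over $\IP^1_R$, i.e.\ that the parabolic condition can be tested after composing with the faithful representation. This is implicit in the fpqc-local reduction in the proof of Lemma~\ref{pseudo}, and should go through provided one keeps careful track of how $\bS_\lambda$ and $\bE_\lambda$ pull back along $\bG \hookrightarrow \bGL_n$.
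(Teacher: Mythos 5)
Your argument is correct and follows essentially the same route as the paper: decompose $\bP(\lambda) = \bU(\lambda)\rtimes \bZ_{\bG}(\lambda)$ and use the centrality of $\bZ(\lambda)$ in $\bZ_{\bG}(\lambda)$ for the first arrow, then reduce the second arrow to the $\bGL_n$ case via a closed embedding $\bG \hookrightarrow \bGL_n$ and the identity $\bP(\lambda) = \bG\times_{\bGL_n}\bP(\lambda)^{\bGL_n}$ from Lemma~\ref{pseudo}. The only stylistic difference is that the paper packages the reduction as an equality of fiber products preserved by Weil restriction, whereas you unwind the same reduction on $R$-points via Lemma~\ref{hopf}; both come down to the same $\bGL_n$ degree estimate.
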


\begin{proof}  Write $\bP$, $\bU$ for $\bP(\lambda)$,  $\bU(\lambda)$.
$$
 \prod\limits_{ {\IP}^1_k/k} \,  \bS_\lambda(\bP)
=  \prod\limits_{ {\IP}^1_k/k} \,  \bS_\lambda( \bU)   \, \rtimes
  \prod\limits_{ {\IP}^1_k/k} \,  \bZ_{\bG}(\lambda)
=   \prod\limits_{ {\IP}^1_k/k} \, \bS_\lambda (\bU) \,  \,  \rtimes \, \bZ_{\bG}(\lambda)
$$
so it remains to show that  $  \prod\limits_{ {\IP}^1_k/k} \,   \bS_\lambda(\bP)   \simlgr   \bH_\lambda$.
Consider a faithful representation $\rho:  \bG \to \bG'=\bGL_n$.
Denote by $\bP'$ the parabolic subgroup of $\bG'$  attached to $\lambda$.
We have $\bP =  \bG \times_{\bG'} \bP'$,
hence  $\bS_\lambda(\bP) =  \bE_\lambda(\bG) \times_{ \bE_\lambda(\bG')} \bS_\lambda(\bP')$.
It follows that
$$
\prod\limits_{ {\IP}^1_k/k}\bS_\lambda(\bP) \,
= \prod\limits_{ {\IP}^1_k/k} \bE_\lambda(\bG) \,
 \times_{\prod\limits_{ {\IP}^1_k/k} \bE_\lambda(\bG')  }
\prod\limits_{ {\IP}^1_k/k}\bS_\lambda(\bP')
$$
as can be seen by reducing to the case of $\bGL_n$ already done in Example \ref{linear}.
This case also shows that
 $\prod\limits_{ {\IP}^1_k/k} \,  \bS_\lambda(\bU(\lambda))$
is a unipotent $k$--group.
\end{proof}

\end{document}